\numberwithin{equation}{subsection}
\newcommand{\E}{\mathbb{E}}
\renewcommand{\H}{\mathbb{H}}
\newcommand{\N}{\mathbb{N}}
\newcommand{\Q}{\mathbb{Q}}
\newcommand{\R}{\mathbb{R}}
\newcommand{\mm}{{\mbox{\boldmath$m$}}}
\newcommand{\ppi}{{\mbox{\boldmath$\pi$}}}
\newcommand{\sppi}{{\mbox{\scriptsize\boldmath$\pi$}}}
\newcommand{\sfd}{{\sf d}}
\newcommand{\Id}{{\rm Id}}                          
\newcommand{\Kliminf}{K\kern-3pt-\kern-2pt\mathop{\rm lim\,inf}\limits}  
\newcommand{\supp}{\mathop{\rm supp}\nolimits}   
\newcommand{\Lip}{\mathop{\rm Lip}\nolimits}          
\renewcommand{\d}{{\mathrm d}}
\newcommand{\restr}[1]{\lower3pt\hbox{$|_{#1}$}}
\newcommand{\la}{{\langle}}                  
\newcommand{\ra}{{\rangle}}
\newcommand{\eps}{\varepsilon}  
\newcommand{\nchi}{{\raise.3ex\hbox{$\chi$}}}
\newcommand{\weakto}{\rightharpoonup}
\newcommand{\limi}{\varliminf}
\newcommand{\lims}{\varlimsup}
\newcommand{\fr}{\penalty-20\null\hfill$\blacksquare$}                      
\newcommand{\prob}[1]{\mathscr P(#1)}                   
\newcommand{\probt}[1]{\mathscr P_2(#1)}                   
\newcommand{\e}{{\rm{e}}}                           
\renewcommand{\mm}{\mathfrak m}                                
\newcommand{\weakgrad}[1]{|\nabla #1|_w} 
\newenvironment{proof}{\removelastskip\par\medskip   
\noindent{\em proof} \rm}{\penalty-20\null\hfill$\square$\par\medbreak}
\newtheorem{theorem}{Theorem}[subsection]
\newtheorem{corollary}[theorem]{Corollary}
\newtheorem{lemma}[theorem]{Lemma}
\newtheorem{proposition}[theorem]{Proposition}
\newtheorem{problem}[theorem]{Open Problem}
\newtheorem{definition}[theorem]{Definition}
\newtheorem{example}[theorem]{Example}
\newtheorem{remark}[theorem]{Remark}
\newcommand\natop[2]{\genfrac{}{}{0pt}{}{#1}{#2}}
\newcommand{\bd}{{\mathbf\Delta}}
\newcommand{\s}{{\rm S}}
\newcommand{\X}{{\rm M}}
\newcommand{\TX}{{\rm TM}}
\newcommand{\Bo}{\mathcal B}
\newcommand{\BB}{\mathcal B(\X)}
\newcommand{\BBB}[1]{\mathcal B(#1)}
\newcommand{\ed}{\mathcal E_2}
\newcommand{\ec}{\mathcal E_C}
\newcommand{\ect}{\tilde{\mathcal E}_C}
\newcommand{\eht}{\tilde{\mathcal E}_\Ho}
\newcommand{\eh}{\mathcal E_\Ho}
\newcommand{\edd}{\mathcal E_\d}
\newcommand{\intmap}{{\text{\sc  Int}}}
\newcommand{\Hom}{{\text{\sc Hom}}}
\newcommand{\h}{{\sf h}}
\newcommand{\Ho}{{\rm H}}
\newcommand{\vol}{{\rm vol}}
\newcommand{\ric}{{\mathbf{Ric}}}
\renewcommand{\weakgrad}[1]{|{\rm D} #1|}
\newcommand{\weakgradp}[2]{|{\rm D} #1|_{#2}}
\newcommand{\Ggamma}{{\mbox{\boldmath$\Gamma$}}}
\renewcommand{\div}{{\rm div}}
\newcommand{\vsm}{{\rm TestV}(\X)}
\newcommand{\fsm}{{\rm TestF}(\X)}
\newcommand{\fsms}[1]{{\rm TestF}(#1)}
\newcommand{\ffsm}[1]{{\rm TestForm}_{#1}(\X)}
\newcommand{\closed}[1]{{\rm C}_{#1}(\X)}
\newcommand{\exact}[1]{{\rm E}_{#1}(\X)}
\newcommand{\exacto}[1]{\overline{\rm E}_{#1}(\X)}
\newcommand{\exactos}[2]{\overline{\rm E}_{#1}(#2)}
\newcommand{\harm}[1]{{\rm Harm}_{#1}(\X)}
\newcommand{\HS}{{\lower.3ex\hbox{\scriptsize{\sf HS}}}}
\renewcommand{\H}[1]{{\rm Hess}(#1)}
\newcommand{\MM}{{\mathscr M}}
\newcommand{\NN}{{\mathscr N}}
\newcommand{\HH}{{\mathscr H}}
\newcommand{\mes}{{\sf Meas}}
\newcommand{\RCD}{{\sf RCD}}
\newcommand{\CD}{{\sf CD}}
\renewcommand{\ae}{{\textrm{\rm{-a.e.}}}}
\newcommand{\lf}{{\sf L}}
\newcommand{\cf}{{\sf C}}
\newcommand{\mms}{\text{\bf Mms}}
\newcommand{\ent}{{\rm Ent}}
\renewcommand{\E}{{\sf E}}
\DeclareMathOperator*{\esssup}{\rm ess-sup}
\DeclareMathOperator*{\essinf}{\rm ess-inf}
\title{Nonsmooth differential geometry}
\begin{document}

\author{Nicola Gigli \thanks{Institut de math\'ematiques de Jussieu - UPMC. email: \textsf{nicola.gigli@imj-prg.fr}}
   }

\maketitle

\begin{abstract}  
We discuss in which sense general metric measure spaces possess a first order differential structure. Building on this, we then see that on spaces with Ricci curvature bounded from below a second order calculus can be developed, permitting to define Hessian, covariant/exterior derivatives and  Ricci curvature.

\end{abstract}

\tableofcontents
\section*{Introduction}
\addcontentsline{toc}{section}{\protect\numberline{} Introduction}
\subsection*{Aim and key ideas}
\addcontentsline{toc}{subsection}{\protect\numberline{} Aim and key ideas}

The first problem one encounters when studying metric measure spaces is the lack of all the vocabulary available in the smooth setting which allows to `run the necessary computations'. This issue is felt as particularly strong in considering structures carrying geometric information like Alexandrov or $\RCD$ spaces: here  to make any serious use of the curvature assumption, which is of second order in nature, one needs sophisticate calculus tools. 

In the context of Alexandrov spaces with curvature bounded from below, this problem  is addressed mainly using  the concavity properties of the distance function which come with the definition of the spaces themselves. This regularity information is sufficient to create the basis of non-trivial second order calculus, for instance allowing to state and prove second order differentiation formulas. We refer to the work in progress \cite{AKP} for an overview on the topic.

Things are harder and less understood in  the more general setting of spaces with Ricci curvature bounded from below, where even basic questions like `what is a tangent vector?' do not have a clear answer.

\bigskip

The aim of this paper, which continues the analysis done in \cite{Gigli12},  is to make a proposal in this direction by showing that every metric measure space possesses a first order differential structure and that a second order one arises when  a lower Ricci bound is imposed. Our constructions are analytic in nature, in the sense that they provide tools to make computations on metric measure spaces, without having an a priori relation with their geometry. For instance, our definition of `tangent space'  has nothing to do with pointed-measured-Gromov-Hausdorff limits of rescaled spaces, and the two notions can have little in common on irregular spaces.

The expectation/hope is then that with these tools one can obtain new information about the shape, in a broad sense, of   $\RCD$ spaces. This part of the plan is not pursued in this paper, which therefore is by nature incomplete. Still, we believe there are reasons to be optimistic about applications of the language we propose, because:
\begin{itemize}
\item[i)] The constructions made here are compatible with all the analytic tools developed so far for the study of $\RCD$ spaces, which in turn already produced non-trivial geometric consequences like the Abresch-Gromoll inequality \cite{Gigli-Mosconi12}, the splitting theorem \cite{Gigli13}, the maximal diameter theorem \cite{Ketterer13} and rectifiability results \cite{Mondino-Naber14}.
\item[ii)] The picture which emerges is coherent and quite complete, in the sense that most of the basic differential operators appearing in the smooth context of Riemannian manifolds have a counterpart in the non-smooth setting possessing the expected properties. In particular, we shall provide the notions of Hessian, covariant and exterior differentiation, connection and Hodge Laplacian and a first glance on the Ricci curvature tensor.
\end{itemize}

Our constructions are based on 3 pillars:
\begin{itemize}
\item[1)] The concept of $L^\infty(\mm)$-module, introduced in this context by Weaver in \cite{Weaver01}, who in turn was inspired by the papers \cite{Sauvageot89}, \cite{Sauvageot90} of Sauvageot dealing with the setting  of Dirichlet forms. Such concept  allows to give an answer to the question
\[
\text{what is a (co)tangent vector field?}
\]
the answer being
\[
\text{an element of the  (co)tangent module.}
\]
Shortly said, an $L^\infty(\mm)$ module is a Banach space whose elements can be multiplied by functions in $L^\infty(\mm)$. The analogy with the smooth case is in the fact that the space of smooth sections of a vector bundle on a smooth manifold $M$ can be satisfactorily described via its structure as   module over the space $C^{\infty}(M)$ of smooth functions on $M$. Replacing the smoothness assumption with an  integrability condition we see that the space of, say, $L^p$ sections of a normed vector bundle on $M$ can be described as module over the space of $L^\infty$ functions on $M$.

We shall therefore adopt this point of view and declare that tensor fields on a metric measure space $(\X,\sfd,\mm)$ are   $L^\infty(\mm)$-modules. Notice that this implies that tensors will never be defined pointwise, but only given $\mm$-almost everywhere, in a sense.

As said, the idea of using $L^\infty(\mm)$-modules to provide an abstract definition of vector fields in non-smooth setting  has been proposed by Weaver in \cite{Weaver01}. Here we modify and adapt his approach to tailor it to our needs. Technicalities apart, the biggest difference is that we base our constructions on Sobolev functions, whereas in \cite{Weaver01} Lipschitz ones have been used.

\item[2)] The self-improving properties of Bochner inequality as obtained by Bakry \cite{Bakry83} in the context of abstract $\Gamma_2$-calculus and adapted by Savar\'e \cite{Savare13} in the one of $\RCD$ spaces.  A refinement of these estimates will allow to pass from 
\[
\Delta\frac{|\nabla f|^2}2\geq\la \nabla f,\nabla\Delta f\ra+K|\nabla f|^2,
\]
which in the weak form is already known (\cite{Gigli-Kuwada-Ohta10}, \cite{AmbrosioGigliSavare11-2}) to be valid on $\RCD(K,\infty)$ spaces, to
\[
\Delta\frac{|\nabla f|^2}2\geq|\H f|_\HS^2+\la \nabla f,\nabla\Delta f\ra+K|\nabla f|^2,
\]
which is key to get an $L^2$ control on the Hessian of functions and will be - in the more general form written for vector fields - the basis of all the second-order estimates in the non-smooth setting.

Some of the arguments that we use also appeared in the recent paper of Sturm \cite{Sturm14}: there the setting is technically simpler but also covers finite dimensional situations, a framework which we will not analyze.

\item[3)] The link between `horizontal and vertical derivatives', i.e.\ between Lagrangian and Eulerian calculus, i.e.\ between $W_2$ and $L^2$ analysis. In the context of smooth Finsler manifolds this amounts to the simple identity
\[
\lim_{t\to 0}\frac{f(\gamma_t)-f(\gamma_0)}{t}=\lim_{\eps\to0}\frac{|\d (g+\eps f)|_*^2+|\d g|_*^2}{2\eps}(\gamma_0),\qquad\text{ provided }\gamma_0'=\nabla g(\gamma_0),
\]
which relates the `horizontal' derivative $\lim_{t\to 0}\frac{f(\gamma_t)-f(\gamma_0)}{t}$ of $f$, so called because the perturbation is at the level of the independent variable, to the `vertical' derivative $\lim_{\eps\to0}\frac{|\d (g+\eps f)|_*^2+|\d g|_*^2}{2\eps}(\gamma_0)$ of $|\d g|_*^2$, so called because here perturbation is on  the dependent variable (in using this terminology we are imagining the graphs of the functions drawn in the `Cartesian plane').

It has been first realized in \cite{AmbrosioGigliSavare11-2} and then better understood in \cite{Gigli12} that the same identity can be stated and proved in the non-smooth setting, this being a crucial fact in the development of a differential calculus on metric measure spaces suitable to obtain geometric information on the spaces themselves. 

In this paper we shall see the effects of this principle mainly via the fact that `distributional solutions' of the continuity equation
\begin{equation}
\label{eq:contintro}
\frac{\d}{\d t}\mu_t+\nabla\cdot(X_t\mu_t)=0,
\end{equation}
completely characterize $W_2$-absolutely continuous curves of measures provided one assumes that $\mu_t\leq C\mm$ for every $t\in[0,1]$ and some $C>0$, in analogy with the result valid in  the Euclidean space \cite{AmbrosioGigliSavare08}. This means that for a $W_2$-absolutely continuous curve $(\mu_t)$ with $\mu_t\leq C\mm$ there are vector fields $X_t$ such that for every  Sobolev function $f$  the first order differentiation formula
\[
\frac{\d}{\d t}\int f\,\d\mu_t=\int \d f(X_t)\,\d\mu_t,\qquad \text{a.e. }t\in[0,1],
\]
holds, see Theorem \ref{thm:conteq} for the rigorous statement and notice that this is  in fact    a reformulation of the result proved in \cite{GigliHan13} in the language developed here.

Then we further push in this direction by providing a second-order analogous of the above. As said, the crucial inequality on the Hessian is obtained via tools related to  Dirichlet forms, which are  `vertical' in nature as tied to the structure of the space $L^2(\mm)$, where distances between functions are measured `vertically'. It is then natural to ask if such notion controls the `horizontal' displacement of a functions, which amounts to ask whether we can compute the second derivative of $t\mapsto \int f\,\d\mu_t$ for a given function $f\in W^{2,2}(\X)$ and $W_2$-absolutely continuous curve $(\mu_t)$ satisfying appropriate regularity assumptions.

The answer is positive and  the expected formula
\[
\frac{\d^2}{\d t^2}\int f\,\d\mu_t=\int \H f(X_t,X_t)+\la\nabla f, \partial_t X_t\ra+\la\nabla_{X_t}X_t, \nabla f\ra\,\d\mu_t,
\]
holds, see Theorem   \ref{thm:secondder}  for the precise  formulation.

\end{itemize}

\subsection*{Overview of the content}
\addcontentsline{toc}{subsection}{\protect\numberline{} Overview of the content}

Fix  a metric measure space $(\X,\sfd,\mm)$ which is complete, separable and equipped with a non-negative Radon measure.

The crucial object in our analysis is the notion of $L^\infty(\mm)$-module and in particular of $L^p(\mm)$-normed module, which in this introduction we consider for the case   $p=2$ only.

An {\bf $L^2(\mm)$-normed module} is the structure $(\MM,\|\cdot\|_\MM,\cdot,|\cdot|)$ where:  $(\MM,\|\cdot\|_\MM)$ is a Banach space,  $\cdot$ is a multiplication of elements of $\MM$ with $L^\infty(\mm)$ functions satisfying 
\[
\text{$f(gv)=(fg)v$\qquad and\qquad ${\mathbf 1}v=v$\qquad for every\qquad $f,g\in L^\infty(\mm)$, $v\in\MM$,}
\]
where ${\mathbf 1}$ is the function identically equal to 1, and $|\cdot|:\MM\to L^2(\mm)$ is the `pointwise norm', i.e.\ a map assigning to every $v\in\MM$ a non-negative function in $L^2(\mm)$ such that
\[
\begin{split}
\|v\|_\MM&=\||v|\|_{L^2(\mm)},\\
|f v|&=|f| |v|,\quad\mm\ae.
\end{split}
\qquad\qquad\qquad\text{for every $f\in L^\infty(\mm)$ and $v\in\MM$,}
\]
so  that in particular we have
\[
\|f v\|_\MM\leq \|f\|_{L^\infty(\mm)}\|v\|_\MM, \qquad \qquad \text{for every $f\in L^\infty(\mm)$ and $v\in\MM$.}
\]

The basic example of $L^2(\mm)$-normed module is the space of $L^2$ (co)vector fields on a Riemannian/Finslerian manifold: here the norm $\|\cdot\|_\MM$ is the $L^2$ norm and the multiplication with an $L^\infty$ function and the pointwise norm are defined in the obvious way.

The job that the notion of $L^2(\mm)$-normed module does is to revert this procedure and give the possibility of speaking about $L^2$ sections of a vector bundle without really having the bundle. 

This fact is of help when trying to build a differential structure on metric measure spaces, because it relieves from the duty of defining a tangent space at every, or almost every, point, allowing one to concentrate on the definition of $L^2$ (co)vector field. 

To present the construction, we briefly recall the definition of the Sobolev class $\s^2(\X)$. One says that a probability measure $\ppi\in\prob{C([0,1],\X)}$ is a test plan provided there is $\cf(\ppi)>0$ such that $(\e_t)_*\ppi\leq \cf(\ppi)\mm$ for every $t\in[0,1]$ and $\iint_0^1|\dot\gamma_t|^2\,\d t\d\ppi(\gamma)<\infty$, where $\e_t:C([0,1],\X)\to\X$ is the evaluation map defined by $\e_t(\gamma):=\gamma_t$ and $| \dot\gamma_t|$ is the metric speed of the curve. Then a Borel function $f:\X\to\R$ is said to belong to $\s^2(\X)$ provided there is a non-negative $G\in L^2(\mm)$ such that
\[
\int|f(\gamma_1)-f(\gamma_0)|\,\d\ppi(\gamma)\leq \iint_0^1G(\gamma_t)|\dot\gamma_t|\,\d t\,\d\ppi(\gamma),\qquad\forall\ppi\text{ test plan.}
\]
It turns out that for functions in $\s^2(\X)$ there is a minimal $G$ in the $\mm$-a.e.\ sense satisfying the above: we shall denote such minimal function as $\weakgrad f$. Notice that for the moment the notation is purely formal as we didn't define yet who is the differential of $f$, so that at this stage $\weakgrad f$ is not the modulus of something. The basic calculus rules for $\weakgrad f$ are:
\begin{align*}
\weakgrad f&=0,\quad\mm\ae\text{ on }\{f=0\}&&\forall f\in \s^2(\X),\\
\weakgrad{(\varphi\circ f)}&=|\varphi'|\circ f\weakgrad f,&&\forall f\in\s^2(\X),\ \varphi\in C^1(\R), \\
\weakgrad{(fg)}&\leq |f|\weakgrad g+|g|\weakgrad f,&&\forall f,g\in\s^2\cap L^\infty(\X).
\end{align*}
The idea to define the {\bf cotangent module }is then to pretend that it exists and that for each $f\in\s^2(\X)$ and Borel set $E\subset \X$ the abstract object $\nchi_E\d f$ is an element of such module. The definition then comes via explicit construction. We introduce the set  `Pre-cotangent module' ${\rm Pcm}$ as 
\[
\begin{split}
{\rm Pcm}:=\Big\{\{(f_i,A_i)\}_{i\in\N}\ :&\  (A_i)_{i\in\N}\text{ is a Borel  partition of $\X$,} \\
&\ f_i\in\s^2(\X)\ \forall i\in\N, \text{ and }\sum_{i\in\N}\int_{A_i}\weakgrad f^2\,\d\mm<\infty \Big\}
\end{split}
\]
and an equivalence relation on it via
\[
\{(f_i,A_i)\}_{i\in\N}\sim \{(g_j,B_j)\}_{j\in\N}\qquad\text{ provided }\qquad \weakgrad{(f_i-g_j)}=0,\quad\mm\ae\text{ on }A_i\cap B_j\ \forall i,j\in\N.
\]
Denoting by $[(f_i,A_i)]$ the equivalence class of $\{(f_i,A_i)\}_{i\in\N}$,  the operations of addition, multiplication by a scalar and by a simple function (i.e.\ taking only a finite number of values) and the one of taking the pointwise norm can be introduced as
\[
\begin{split}
[(f_i,A_i)]+[(g_j,B_j)]&:=[(f_i+g_j,A_i\cap B_j)]\\
\lambda[(f_i,A_i)]&:=[(\lambda f_i,A_i)]\\
\Big(\sum_j\alpha_j\nchi_{B_i}\Big)\cdot[(f_i,A_i)]&:=[(\alpha_jf_i,A_i\cap B_j)],\\
\big|[(f_i,A_i)]\big|&:=\sum_i\nchi_{A_i}\weakgrad{f_i},
\end{split}
\]
and it is not difficult to see that these are continuous on ${\rm Pcm}/\sim$ w.r.t.\ the norm $\|[f_i,A_i]\|:=\sqrt{\int |[(f_i,A_i)]|^2\,\d\mm }$ and the $L^\infty(\mm)$-norm on the space of simple functions. Thus they all can be continuously extended to the completion of $({\rm Pcm}/\sim,\|\cdot\|)$: we shall call such completion together with these operation the cotangent module and denote it by $L^2(T^*\X)$. When applied to a smooth Riemannian/Finslerian manifold, this abstract construction is canonically identifiable with the space of $L^2$ sections of the   cotangent bundle $T^*\X$, whence the notation chosen.

Given a Sobolev function $f\in\s^2(\X)$, its {\bf differential }$\d f$ is a well defined element of $L^2(T^*\X)$, its definition being
\[
\d f:=[(f,\X)],
\]
and from the properties of Sobolev functions one can verify that the differential is a closed operator. Directly from the definition we see that $|\d f|=\weakgrad f$ $\mm$-a.e., and with little work one can check that the calculus rules for $\weakgrad f$ can be improved to:
\begin{align*}
\d f&=0,\qquad\mm\ae\text{ on }\{f=0\}&&\forall f\in \s^2(\X),\\
\d(\varphi\circ f)&=\varphi'\circ f\d f,&&\forall f\in\s^2(\X),\ \varphi\in C^1(\R), \\
\d(fg)&=f\d g+g\d f,&&\forall f,g\in\s^2\cap L^\infty(\X),
\end{align*}
where thanks to the $L^\infty$-module structure the chain and Leibniz rules both make sense and the locality condition is interpreted as $\nchi_{\{f=0\}}\d f=0$.

Once the notion of cotangent module is given, the tangent module $L^2(T\X)$ can be introduced by duality: it is the space of linear continuous maps $L:L^2(T^*\X) \to L^1(\mm)$ satisfying
\[
L(f\omega)=fL(\omega),\qquad\forall \omega\in L^2(T^*\X),\ f\in L^\infty(\mm),
\]
and it is not hard to see that it carries a canonical structure of $L^2(\mm)$-normed module as well, so that in particular for any vector field $X$, i.e.\ every element of the tangent module $L^2(T\X)$, the pointwise norm $|X|$ is a well defined function in $L^2(\mm)$.

Based on these grounds, a general first-order differential theory can be developed on arbitrary metric measure spaces. Properties worth of notice are:
\begin{itemize}
\item[-] In the smooth setting, for every smooth curve $\gamma$ the tangent vector $\gamma_t'$ is well defined for any $t$ and its norm coincides with the metric speed of the curve.

Similarly,  in the context of metric measure spaces for any given test plan $\ppi$ we have that for a.e.\ $t\in[0,1]$ and $\ppi$-a.e.\ $\gamma$ the tangent vector $\gamma_t'$ is well defined and its norm coincides with the metric speed $|\dot\gamma_t|$ of the curve $\gamma$ at time $t$ (Theorem \ref{thm:spppi}). 

The rigorous meaning of this statement is given via the notion of {\bf pullback} of a module (Section \ref{se:pullback}).

\item[-] (co)vector fields are transformed via `regular' maps between metric measure spaces as  in the smooth setting, i.e.\ we can speak of pullback of forms and these regular maps possess a differential acting on vector fields (Section \ref{se:mbd}).

Here the relevant notion of  regularity for a map $\varphi$ from $(\X_2,\sfd_2,\mm_2)$ to $(\X_1,\sfd_1,\mm_1)$ is to be  Lipschitz and  such that $\varphi_*\mm_2\leq C\mm_1$ for some $C>0$. We will call maps of this kind of {\bf bounded deformation}.

\item[-]  The gradient of a Sobolev function is in general not uniquely defined and even if so it might not linearly depend  on the function,  as it happens on smooth Finsler manifolds. Spaces where the gradient $\nabla f\in L^2(T\X)$ of a Sobolev function $f\in\s^2(\X)$ is unique and linearly depends on $f$ are those which, from the Sobolev calculus point of view, resemble Riemannian manifolds among the more general Finsler ones and can be characterized as those for which the energy $\E:L^2(\mm)\to[0,+\infty]$ defined as 
\[
\E(f):=\left\{\begin{array}{ll}
\displaystyle{\frac12\int\weakgrad f^2\,\d\mm},&\qquad\text{ if }f\in\s^2(\X),\\
+\infty,&\qquad\text{ otherwise}.
\end{array}\right.
\]
 is a Dirichlet form.  Following the terminology introduced in \cite{Gigli12} we shall call these spaces {\bf infinitesimally Hilbertian}. On such spaces, the tangent module (and similarly the cotangent one) is, when seen as Banach space, an Hilbert space and its  pointwise norm satisfies a pointwise parallelogram identity. Thus by polarization it induces  a pointwise scalar product
\[
L^2(T\X)\ni X,Y\qquad\mapsto \qquad\la X,Y\ra\in L^1(\mm),
\]
which we might think of as the `metric tensor' on our space. It  can then be verified that for $f,g\in L^2\cap\s^2(\X)$ the scalar product $\la\nabla f,\nabla g\ra$ coincides with the Carr\'e du champ $\Gamma(f,g)$ induced by the Dirichlet form $\E$.

\end{itemize}

\bigskip

With the basis provided by the general first order theory, we can then study the {\bf second order differential structure of $\RCD(K,\infty)$ spaces}. Recall that these structures, introduced in \cite{AmbrosioGigliSavare11-2} (see also the axiomatization given in \cite{AmbrosioGigliMondinoRajala12}), are given by infinitesimally Hilbertian spaces on which a curvature condition is imposed in the sense of Lott-Sturm-Villani (\cite{Lott-Villani09} and \cite{Sturm06I}).

To the best of our knowledge, attempts to define higher order Sobolev spaces on metric measure spaces have been done only in \cite{LLW02} and \cite{AMV12}. Our approach is structurally different from the one of these references, being  intrinsically based on the $\RCD$ condition, an assumptions which was  not present  in \cite{LLW02} and \cite{AMV12}.

To begin with,  consider the following 3 formulas valid in a smooth Riemannian manifold:
\begin{equation}
\label{eq:perdef2}
\begin{split}
2\H f(\nabla g_1,\nabla g_2)&=\big<\nabla\la \nabla f,\nabla g_1\ra,\nabla g_2\big>+\big<\nabla\la \nabla f,\nabla g_2\ra,\nabla g_1\big>-\big<\nabla f,\nabla \la \nabla g_1,\nabla g_2\ra\big>,\\
\la\nabla_{\nabla g_2}X,\nabla g_1\ra&=\big<\nabla\la X,\nabla g_1\ra,\nabla g_2\big>-\H{g_2}(\nabla g_1,X),\\
\d \omega(X_1,X_2)&=X_1(\omega(X_2))-X_2(\omega(X_1))-\omega(\nabla_XY-\nabla_YX).
\end{split}
\end{equation}

The first completely characterizes the Hessian of the function $f$ in terms of the scalar product of gradients only, the second the Levi-Civita connection in terms of the Hessian  and the scalar product of gradients and analogously the third the exterior differentiation of a 1-form (a similar formula being valid for $k$-forms) via previously defined objects. Thus one can use them to actually define the Hessian and the covariant/exterior derivative. For instance, one could use the first above to define the Sobolev space $W^{2,2}(M)$ on a smooth Riemannian manifold $M$ by declaring that a function $f\in W^{1,2}(M)$ is in $W^{2,2}(M)$ if there is a $(0,2)$-tensor field $\H f$  in $L^2$ such that for any $g_1,g_2,h\in C^\infty_c(M)$ it holds
\begin{equation}
\label{eq:defhessintro}
\begin{split}
2\int h\H f&(\nabla g_1,\nabla g_2)\,\d\mm\\
&=\int -\la \nabla f,\nabla g_1\ra\,\div(h\nabla g_2)-\la \nabla f,\nabla g_2\ra\,\div(h\nabla g_1)-h\big< \nabla f,\nabla \la \nabla g_1,\nabla g_2\ra\big>\,\d\mm.
\end{split}
\end{equation}
The integration by parts here is useful to have a right hand side where only the first order derivative of $f$ appears and the multiplication by the smooth function $h$ to ensure that the integrated identity is still sufficient to recover the pointwise value of the Hessian. It is then easy to see that this notion coincides with the one given via the use of charts.

On $\RCD(K,\infty)$ spaces, as in every metric measure structure, the space $W^{1,2}(\X)$ is defined as $L^2\cap\s^2(\X)$ and is equipped with the norm $\|f\|_{W^{1,2}(\X)}^2:=\|f\|_{L^2(\mm)}^2+\|\weakgrad f\|^2_{L^2(\mm)}$. Given that  $\RCD(K,\infty)$ spaces are infinitesimally Hilbertian, gradients of Sobolev functions are well defined and so is their pointwise scalar product. Thus we can adopt the approach just described  to define  $W^{2,2}(\X)$. There are 2 things to do to ensure that this provides a meaningful definition:
\begin{itemize}
\item[1)] To explain what it is  a `$(0,2)$-tensor field in $L^2$'
\item[2)] To ensure that there are sufficiently many `test functions' $g_1,g_2,h$ for which the right hand side of \eqref{eq:defhessintro} makes sense so that this formula really identifies the object $\H f$. The term difficult to handle is $\nabla \la \nabla g_1,\nabla g_2\ra$ as it requires   a Sobolev regularity for $ \la \nabla g_1,\nabla g_2\ra$.
\end{itemize}
Getting this two points would allow to provide the definition of $W^{2,2}(\X)$, but such definition would be empty unless we 
\begin{itemize}
\item[3)] Prove that there are many $W^{2,2}(\X)$ functions.
\end{itemize}
We briefly see how to handle these issues.  Point $(1)$ is addressed via the general construction of {\bf tensor product of Hilbert modules}, so that the Hessian will be an element of the tensor product $L^2((T^*)^{\otimes 2}\X)$ of the cotangent module $L^2(T^*\X)$ with itself. The concept captured by this notion is the following: the tensor product of the module of $L^2$ covector fields on a Riemannian manifold with itself is the space of $L^2$ $(0,2)$-tensors on the manifold, the pointwise norm being the Hilbert-Schmidt one. The choice of the Hilbert-Schmidt norm in the construction is motived by the fact that this is the norm of the Hessian appearing in Bochner inequality, so that ultimately this is the kind of norm of the Hessian for which we will gain a control.

As for the cotangent module $L^2(T^*\X)$, the definition of the tensor product $L^2((T^*)^{\otimes 2}\X)$ comes via explicit construction. We firstly introduce the algebraic tensor product $L^2(T^*\X)\otimes^{\rm Alg}_{L^\infty}L^2(T^*\X)$  of $L^2(T^*\X)$ with itself as $L^\infty(\mm)$ module, so that this is the space of formal finite sums of objects of the kind $\omega_1\otimes\omega_2$ with $\omega_1,\omega_2\in L^2(T^*\X)$ having the standard bilinearity property and satisfying $f(\omega_1\otimes\omega_2)=(f\omega_1)\otimes\omega_2=\omega_1\otimes(f\omega_2)$.  On $L^2(T^*\X)\otimes^{\rm Alg}_{L^\infty}L^2(T^*\X)$ we then define a bilinear form $:$ with values in $L^0(\mm)$ (i.e.\ the space of Borel real valued functions equipped with the topology of $\mm$-a.e.\ convergence) by putting
\[
(\omega_1\otimes\omega_2):(\tilde\omega_1\otimes\tilde\omega_2):=\la\omega_1,\tilde \omega_1\ra\la\omega_2,\tilde \omega_2\ra, \qquad \qquad\forall \omega_1,\omega_2,\tilde\omega_1,\tilde\omega_2\in L^2(T^*\X),
\]
and extending it by bilinearity. Then the space $L^2((T^*)^{\otimes 2}\X)$ is defined as the completion of the space of $A$'s in $L^2(T^*\X)\otimes^{\rm Alg}_{L^\infty}L^2(T^*\X)$ such that $A:A\in L^1(\mm)$ equipped with the norm $\|A\|_{L^2((T^*)^{\otimes 2}\X)}:=\sqrt{\int A:A\,\d\mm}$. It is not hard to check that  $L^2((T^*)^{\otimes 2}\X)$ has a canonical structure of $L^2(\mm)$-normed module, see Section \ref{se:tensorproduct} for the details and notice that although the description we gave here  is slightly different from the one given there, the two constructions are in fact canonically equivalent.

In handling point $(2)$ we see for the first time the necessity of working on spaces with Ricci curvature bounded from below, as on arbitrary spaces it is unclear whether there are non-constant functions $g\in\s^2(\X)$ such that $|\nabla g|^2\in\s^2(\X)$. In presence of a lower Ricci curvature bound, instead, such regularity is ensured for bounded Lipschitz functions in $W^{1,2}(\X)$ whose Laplacian is also in $W^{1,2}(\X)$. This is due to a Caccioppoli-type inequality firstly observed in this setting by Bakry \cite{Bakry83} and proved in the generality we are now by Savar\'e \cite{Savare13}. Formally, the idea is to multiply by $|\nabla g|^2$ the two sides of  Bochner inequality
\begin{equation}
\label{eq:Bo1}
\Delta\frac{|\nabla g|^2}{2}\geq \la \nabla g,\nabla\Delta g\ra+K|\nabla g|^2,
\end{equation}
integrate and then integrate by parts the left hand side to obtain the estimate
\[
\int|\nabla|\nabla g|^2|^2\,\d\mm\leq 2 \Lip^2(g)\int |\nabla g||\nabla\Delta g|+K|\nabla g|^2\,\d\mm,
\]
which grants the required Sobolev regularity for $|\nabla g|^2$ provided the right hand side is finite. Since by regularization via the heat flow it is easy to produce functions $g$ for which indeed the above right hand side is finite, we have   at disposal a large class of test functions.

Point $(3)$ is the technically most delicate. Here again the deep reason which ensures the existence of many $W^{2,2}(\X)$ is the Bochner inequality: in the smooth setting we have
\begin{equation}
\label{eq:Bo2}
\Delta\frac{|\nabla f|^2}{2}\geq|\H f|_\HS^2+ \la \nabla f,\nabla\Delta f\ra+K|\nabla f|^2,
\end{equation}
which in particular after integration gives
\[
\int|\H f|_\HS^2\,\d\mm\leq \int(\Delta f)^2-K|\nabla f|^2\,\d\mm,
\]
for, say, smooth and compactly supported functions.
Recalling that $\RCD(K,\infty)$ spaces come with the Bochner inequality stated as (a proper reformulation that handles  the lack of smoothness of) inequality  \eqref{eq:Bo1} (see \cite{Gigli-Kuwada-Ohta10} and \cite{AmbrosioGigliSavare11-2}),  the question is whether it can be improved to inequality \eqref{eq:Bo2}. As mentioned in the previous section the answer is positive and comes building on top  of some intuitions of Bakry \cite{Bakry83} adapted by Savar\'e \cite{Savare13} to the non-smooth setting. Very shortly said, the basic strategy is to write inequality \eqref{eq:Bo1} for $f$ which is an appropriate polynomial function of other test functions and then optimizing in the coefficients of the chosen polynomial. See Section \ref{se:whymanyw22} for the details.

Having clarified these three points, we have at disposal the space $W^{2,2}(\X)$, which is a dense subset of $W^{1,2}(\X)$, and for each $f\in W^{2,2}(\X)$ a well defined Hessian $\H f\in L^2((T^*)^{\otimes 2}\X)$. It is then possible to establish the expected calculus rules
\[
\begin{split}
\H{fg}&=g\H f+f\H g+\d f\otimes \d g+\d g\otimes\d f,\\
\H{\varphi\circ f}&=\varphi'\circ f\H f+\varphi''\circ f\d f\otimes\d f,\\
\d\la\nabla f,\nabla g\ra&=\H f(\nabla g,\cdot)+\H g(\nabla f,\cdot),
\end{split}
\]
in a reasonable generality, see Section \ref{se:calchess}.

In a similar way, we can use the second formula in \eqref{eq:perdef2} to introduce {\bf Sobolev vector fields and their covariant derivative}. We remark that while for functions there are  several possible notions of regularity (continuity, H\"older/Lipschitz estimates, Sobolev regularity etc.), for vector fields Sobolev regularity is the only we have at disposal.

Thus we say that a vector field $X\in L^2(T\X)$ belongs to the Sobolev space ${  W^{1,2}_C(T\X)}$ provided there is an element $\nabla X$, called covariant derivative of $X$, of the tensor product  $L^2(T^{\otimes 2}\X)$ of the tangent module $L^2(T\X)$ with itself such that for every $g_1,g_2,h$ test functions as before we have
\[
\int h\, \nabla X:  (\nabla g_1\otimes \nabla g_2)\,\d\mm=\int-\la X,\nabla g_2\ra\,\div(h\nabla g_1)-h\H{ g_2}(X,\nabla g_1)\,\d\mm.
\]
Starting from the results for $W^{2,2}(\X)$, it will then be not hard to see that $W^{1,2}_C(\TX)$ is a dense subspace of $L^2(T\X)$ and that the basic calculus rules that identify $\nabla X$ as the Levi-Civita connection hold, under the natural regularity assumptions, in this setting. See Sections \ref{se:w12c} and \ref{se:lc}.

Having clarified what are Sobolev vector fields, the {\bf second order differentiation formula} comes out quite easily by iterating the first order one, see Theorem \ref{thm:secondder} for the rigorous statement and the proof.

Another construction which comes more or less for free from the language developed so far is that of {\bf connection Laplacian}: imitating the definition of diffusion operator induced by a Dirichlet form, we can say that a vector field $X\in W^{1,2}_C(T\X)$ has a connection Laplacian provided there is $Y\in L^2(T\X)$ such that 
\[
\int \la Y,Z\ra\,\d\mm=-\int \nabla X:\nabla Z\,\d\mm,\qquad \forall Z\in W^{1,2}_C(T\X),
\]
and in this case we put $\Delta_CX:=Y$. Then the heat flow of vector fields can be introduced and studied as well, see Section \ref{se:clap} for the details and notice that in fact the definition of $\Delta_C$ that we shall adopt is, for technical reasons, slightly different than the one presented here.

The very same ideas and the last formula in \eqref{eq:perdef2} - and its analogous for $k$-forms - allow to introduce the  space  of {\bf Sobolev differential forms and their  exterior differential}. Then the codifferential $\delta$ can be introduced as the adjoint of the exterior differential $\d$ and the Hodge Laplacian $\Delta_\Ho$ along the same lines used for the connection Laplacian replacing the `connection energy' $X\mapsto\frac12\int|\nabla X|^2\,\d\mm$ with the `Hodge energy' given by
\[
\omega\qquad\mapsto\qquad\frac12\int|\d \omega|^2+|\delta\omega|^2\,\d\mm,
\]
see Sections \ref{se:extd}, \ref{se:dr}  for the precise definitions. The nature of our approach grants that the exterior differential is a closed operator, which  together with the identity $\d^2=0$ allows for a reasonable definition of the {\bf de Rham cohomology} groups. It should be noticed, however, that here the space of Sobolev forms is not known to be compactly embedded in the one of $L^2$ forms, a fact that creates some issues when trying to produce the Mayer-Vietoris sequence, see Remark \ref{rem:mv}. A similar issue occurs  in searching for a form of the Hodge decomposition theorem: compare Theorem \ref{thm:hodge} with the classical decomposition of $L^2$ forms available in the smooth setting. As Kapovitch pointed out to me, examples by Anderson and Perelman - see \cite{Perelman97} - show that it is unreasonable to expect the compact embedding of Sobolev $k$-forms to be present in the setting of $\RCD(K,N)$ spaces for arbitrary $k\in\N$.

With all this machinery at disposal we will then be able to define the {\bf Ricci curvature}. The starting point here is that for vector fields of the kind $X=\sum g_i\nabla f_i$ where $f_i,g_i$ are test functions as discussed before, we have that:
\begin{itemize}
\item[-] the function $|X|^2$ admits a measure valued distributional Laplacian $\bd|X|^2$,
\item[-] $X$ belongs to $W^{1,2}_C(T\X)$ and thus it has a covariant derivative,
\item[-] up to identifying  vector and covector fields, $X$ is in the domain of the Hodge Laplacian.
\end{itemize}
Thus for such $X$ all the terms appearing in Bochner identity are well defined except for the Ricci curvature. We can therefore define the Ricci curvature as the object realizing the identity in Bochner's formula, i.e. as the measure $\ric(X,X)$ given by
\[
\ric(X,X):=\bd\frac{|X|^2}2+\big(\la X,\Delta_\Ho X\ra-|\nabla X|_\HS^2\big)\mm.
\]
It turns out that the so-defined Ricci curvature can be continuously extended to more general vector fields possessing a certain amount of Sobolev regularity and that  the same kind of estimates that produced the Bochner inequality \eqref{eq:Bo2} give the expected bound
\[
\ric(X,X)\geq K|X|^2\mm,
\]
on our $\RCD(K,\infty)$ space.

\subsection*{Some open problems}
\addcontentsline{toc}{subsection}{\protect\numberline{} Some open problems}

As we show through the text, the objects we define have many of the properties one expects by the analogy with the smooth world. Still, in order to consider the theory to be reasonably satisfactory, a number of open problems  remain to be settled. We collect here the key ones with some informal comments, others, more technical in nature, are highlighted in the body  of the work.

\bigskip

\noindent A  first basic problem concerns the link between the abstract definition of tangent module and the more geometric notion of tangent space as introduced by Gromov:
\begin{quote}
Is it true that on $\RCD^*(K,N)$ spaces there is a (canonical?) isometry between the tangent module and the collection of pmGH limits of the rescaled space?
\end{quote}
In fact, the positive answer to this question might be not far away: in the recent work \cite{Mondino-Naber14} Mondino-Naber built, for every $\eps>0$, a countable collection $(E_i)$ of Borel subsets of $\X$ and of biLipschitz maps $\varphi_i$ from $E_i$ to $\R^{n_i}$ such that the Lipschitz constants of both $\varphi_i$ and its inverse are bounded above by $1+\eps$ for every $i$. If one could also prove that the measures $(\varphi_i)_*\mm\restr{E_i}$ are close to $\mathcal L^{n_i}\restr{\varphi_i(E_i)}$ in the total variation distance, which according to the authors should be achievable with minor modifications of their arguments, then the properties of the pullback of 1-forms as given in Proposition \ref{prop:pull1form} would immediately give the conclusion.

\bigskip

\noindent Another question concerning the structure of the tangent module is:
\begin{quote}
On $\RCD^*(K,N)$ spaces, is the dimension of the tangent module constant through the space?
\end{quote}
By analogy with the Ricci-limit case and from the paper \cite{ColdingNaber12} of Colding-Naber one expects the answer to be positive, but with the current technology the replication of the arguments in  \cite{ColdingNaber12}  in the non-smooth setting still presents some difficulties.

\bigskip

\noindent Alternatively, the constant dimension of the tangent space might be proven by positively answering to:
\begin{quote}
Can we give a definition of parallel transport along a $W_2$-geodesic made of measures with uniformly bounded density granting both  existence and uniqueness? What if geodesics are replaced by more general absolutely continuous curves?
\end{quote}
Assuming that the isometry between the tangent module and pmGH limits of rescaled spaces has been produced, an answer to this question would also shed new light on Petrunin's construction of parallel transport on Alexandrov spaces \cite{Petrunin98}, for which as of today there is no uniqueness result.

\bigskip

\noindent More generally:
\begin{quote}
Can the framework proposed here be of any help in Alexandrov geometry? In particular, is it possible to produce a `working definition' of sectional curvature?
\end{quote}

\bigskip

\noindent Still concerning $W_2$-geodesics, a very important problem in terms of applicability of the second order calculus is:
\begin{quote}
Can we establish the second order differentiation formula along geodesics?
\end{quote}
This crucial case is not covered by Theorem \ref{thm:secondder}, due to the lack of regularity of geodesics themselves.

\bigskip

\noindent On a different direction, we remark that all our definitions are intrinsic in nature, in the sense that they do not rely on calculus in charts. While this is a key feature that allows for the calculus to be developed, because it ties such calculus to the regularity of the space and not to that of the charts that one is able to produce, it has as  collateral effect that questions which are trivially addressable in the classical context via a partition of the unit and reduction to the Euclidean case, become unexpectedly difficult here. One of these is:
\begin{quote}
Is $H=W$?
\end{quote}
By this we mean the following. In analogy with  what happens in the classical framework, when defining a Sobolev space  we can either proceed via integration by parts, producing  the `$W$' version of the space, or by taking the closure of smooth objects w.r.t.\ the appropriate Sobolev norm, this giving the `$H$' space. By nature of the definitions we always have $W\supset H$, but in no non-trivial case we are able to establish equality. 

\bigskip

\noindent A topic which is not studied  in this paper is the effect of the assumption of finite dimensionality. In this direction, in recent paper \cite{Sturm14} Sturm provided an abstract definition of the `dimensional' Ricci curvature tensor  ${\rm Ric}_N$ in the abstract setting of diffusion operators satisfying a curvature-dimension condition. The smoothness assumptions done in \cite{Sturm14} are apparently too strong to be satisfied on $\RCD^*(K,N)$, the question is thus:
\begin{quote}
Can we justify  Sturm's computations in the non-smooth setting of $\RCD^*(K,N)$ spaces?
\end{quote}
Due to the nature of the arguments in \cite{Sturm14} and given  the technical tools provided by Savar\'e in \cite{Savare13} and further refined here, the answer is expected to be affirmative, but details have to be checked carefully.

\bigskip

\noindent Finally, a vaguely formulated problem  is:
\begin{quote}
Better understand the Ricci curvature tensor.
\end{quote}
In fact, it is not really clear if the object defined by Theorem \ref{thm:ricci} is truly a tensor nor which are the minimal regularity on the vector fields $X,Y$ in order for $\ric(X,Y)$ to be well defined. Some comments on this problem are collected at the end of Section \ref{se:ricci}.

\section{The machinery of $L^p(\mm)$-normed modules}
\subsection{Assumptions and notation}\label{se:assnot} 
We say that   $(\X,\mathcal A,\mm)$ is a $\sigma$-finite  measured space provided  $\X$ is a set, $\mathcal A$ a $\sigma$-algebra on it and $\mm$ a non-negative measure defined on $\mathcal A$ for which there is a countable family $(E_i)\subset \mathcal A$ such that $\X=\cup_iE_i$ and $\mm(E_i)<\infty$ for every $i\in\N$.

We declare two sets $A,B\in\mathcal A$ to be equivalent if $\mm((A\setminus B)\cup(B\setminus A))=0$ and we shall denote by $\BB$ the set of all the equivalence classes. In what follows, abusing a bit the notation, we shall refer to  elements of $\BB$ as sets, thus identifying a measurable set with its equivalence class. Notice that the operations of taking the complement and of countable intersections/unions are well defined for sets in $\BB$. In this sense we shall say, for instance, that a set $A\in\BB$ is empty to mean that any representative of $A$ has measure 0, or similarly that $A\subset B$ to intend that $A\setminus B$ is empty.

Similarly, by `function' on $\X$ with values on some measurable space we mean the equivalence class w.r.t.\ $\mm$-a.e.\ equality of a measurable function on $\X$.

In those occasions where the value of a function on a $\mm$-negligible set  matters, we shall use the barred notation $\bar f$. Similarly, whenever we will have to deal with a precise representative of  a set, we shall use the barred notation $\bar A\in\mathcal A$.

Notice that the $\sigma$-finiteness ensures that 
\begin{equation}
\label{eq:maxset}
\text{a collection $\mathcal C\subset\BB$ stable by countable unions admits a unique maximal set,}
\end{equation}
where maximal is intended w.r.t.\ inclusion. It is indeed sufficient to consider a maximizing sequence $(C_n)\subset\mathcal C$ for
\[
\sup\Big\{\sum_{i}\frac{\mm(B\cap E_i)}{\mm(E_i)2^i}\ :\  B\in\mathcal C\Big\},
\]
where $(E_n)\subset \BB$ is a countable partition of $\X$ into measurable sets of finite and positive measure, and to define $C:=\cup_nC_n\in\mathcal C$: it is easy to see that $C$ is the unique maximal set.

Given a set $A\in\BB$ we denote by $\nchi_A\in L^\infty(\X,\mm)$ the characteristic function of $A$, $\mm$-a.e.\ defined by $\nchi_A(x)=1$ for  $x\in A$ and $\nchi_A(x)=0$ for $x\in A^c$.

For brevity, we will indicate the spaces $L^p(\X,\mm)$ simply by $L^p(\mm)$. We shall also make use of the space of simple functions ${\rm Sf}(\mm)\subset L^\infty(\mm)$, i.e.\ the space of functions attaining only a finite number of values.

We remind that if $(f_i)_{i\in I}$ is any collection, not necessarily countable, of functions on $\X$ with values in $\R\cup\{\pm\infty\}$, the essential supremum is defined as the minimal function $f:\X\to\R\cup\{\pm\infty\}$ such that $f\geq f_i$ $\mm$-a.e.\ for every $i\in I$. It exists and is unique.

\subsection{Basic definitions and properties}
Here we introduce the basic concept of $L^\infty(\mm)$-module and $L^p(\mm)$-normed module. Throughout all this section, $(\X,\mathcal A,\mm)$ is a given fixed $\sigma$-finite measured space.

\vspace{1cm}

We start with the following general definition:
\begin{definition}[$L^\infty(\mm)$-modules]  
An $L^\infty(\mm)$-premodule is a Banach space $(\MM,\|\cdot\|_\MM)$ endowed with a bilinear map 
\[
\begin{split}
L^\infty(\mm)\times \MM& \quad \to\quad \MM,\\
(f,v)&\quad \to\quad f\cdot v,
\end{split}
\]
satisfying 
\[
\begin{split}
(fg)\cdot v&=f\cdot(g\cdot v),\\
{\mathbf 1}\cdot v&= v,\\
\|f\cdot v\|_\MM&\leq \|f\|_{L^\infty(\mm)}\|v\|_\MM,\\
\end{split}
\]
for every $v\in \MM$ and $f,g\in L^\infty(\mm)$, where ${\mathbf 1}\in L^\infty(\mm)$ is the function identically equal to 1.

An $L^\infty(\mm)$-module  is an $L^\infty(\mm)$-premodule which further has the following two properties:
\begin{itemize}
\item[-]\underline{Locality}. For every $v\in \MM$ and $A_n\in\BB$, $n\in\N$, we have
\begin{equation}
\label{eq:locality}
\nchi_{A_n}\cdot v=0,\qquad\forall  n\in\N\qquad\Rightarrow\qquad \nchi_{\cup_{n\in\N}A_n}\cdot v=0.
\end{equation}
\item[-]\underline{Gluing}. For every sequence $(v_n)\subset \MM$ and sequence $(A_n)\subset\BB$  such that 
\begin{equation}
\label{eq:gluingip}
\nchi_{A_i\cap A_j}\cdot v_i=\nchi_{A_i\cap A_j}\cdot v_j,\quad\forall i,j\in\N,\qquad\text{ and }\qquad \lims_{n\to\infty}\big\|\sum_{i=1}^n\nchi_{A_i}\cdot v_i\big\|_\MM<\infty,
\end{equation}
there exists $v\in \MM$ such that 
\[
\nchi_{A_i}\cdot v=\nchi_{A_i}\cdot v_i,\quad\forall i\in\N\qquad\text{ and }\qquad \|v\|_\MM\leq \limi_{n\to\infty}\big\|\sum_{i=1}^n\nchi_{A_i}\cdot v_i\big\|_\MM.
\]
\end{itemize}
Given two $L^\infty(\mm)$-modules $\MM_1,\MM_2$, a map $T:\MM_1\to \MM_2$ is a module morphism provided it is a bounded linear map from $\MM_1$ to $\MM_2$ viewed as Banach spaces and further satisfies the locality condition
\[
T(f\cdot v)=f\cdot T(v),\qquad\forall v\in \MM_1,\ f\in L^\infty(\mm).
\]
The set of all module morphisms from $\MM_1$ to $\MM_2$ will be denoted by $\Hom(\MM_1,\MM_2)$.
\end{definition}
In the following we shall often omit the dot $\cdot$ when indicating the multiplication with functions, keeping it only when we believe it clarifies the expression.

Notice that we stated the gluing property the way we did to point out the analogy with the definition of sheaf, but in fact in our situation our `base sets' are not open sets of a topology but rather elements of a $\sigma$-
algebra.  In particular, we can freely consider their difference and thus up to replace the $A_n$'s with $A_n\setminus\cup_{i<n}A_i$, when checking the gluing property we can restrict the attention to sequences 
$(A_n)\subset \BB$ made of disjoint sets: in this way the compatibility condition $\nchi_{A_i\cap A_j}\cdot v_i=\nchi_{A_i\cap A_j}\cdot v_j$ is automatically trivially satisfied.

\bigskip

The basic examples of $L^\infty(\mm)$-modules  are $L^p$ spaces and $L^p$ vector fields on a Riemannian manifold:
\begin{example}[$L^p$ spaces as $L^\infty$-modules]{\rm
The Banach space $L^p(\mm)$ has a natural structure of $L^\infty(\mm)$-module, the multiplication with a function in $L^\infty(\mm)$ being just the pointwise one.
}\fr\end{example}
\begin{example}[$L^p$ vector fields as $L^\infty$ modules]{\rm
Let $M$ be a smooth Riemannian manifold, $\vol$ its volume measure and $p\in[1,\infty]$. Then the space of $L^p(\vol)$ vector fields has a natural structure of $L^\infty(\vol)$ module, the multiplication with an $L^\infty(\vol)$ function being again the pointwise one.
}\fr\end{example}

The following two simple examples show that both the locality and the gluing property can fail in $L^\infty$-premodules:
\begin{example}[Lack of locality]\label{ex:nolocal}{\rm Let $(\X,\mathcal A,\mm)$ be the interval $[0,1]$ equipped with the Borel $\sigma$-algebra and the Lebesgue measure. Use the Hahn-Banach theorem to find $\ell:L^\infty(\mm)\to\R$ linear,  such that $|\ell(f)|\leq \|f\|_{L^\infty(\mm)}$ for every $f\in L^\infty(\mm)$ and $\ell(f)=1$  on functions $f\in L^\infty(\mm)$ such that $\exists \eps>0$ for which $f(x)=1$ for a.e. $x\in[0,\eps]$.

Now let $B\neq\{0\}$ be an arbitrary Banach space and define the bilinear map $L^\infty(\mm)\times B\to B$  as
\[
L^\infty(\mm)\times B\ni (f,v)\qquad\mapsto \qquad f\cdot v:=\ell(f)v\in B.
\]
It is clear that this structure is a $L^\infty(\mm)$-premodule. On the other hand, locality does not hold. Indeed, let  $A_n:=[\tfrac1n,1]$ and observe that $\ell(\nchi_{A_n})=0$, so that $ \nchi_{A_n}\cdot v=0$, for every $n\in\N$. Yet, $\ell(\nchi_{\cup_n}A_n)=\ell({\mathbf 1})=1$ and therefore  $ \nchi_{\cup_nA_n}\cdot v=v$ for every $v\in B$.
}\fr\end{example}
\begin{example}[Lack of gluing]{\rm
Let $c_0$ be the classical Banach space of sequences of real numbers converging to 0 equipped with the $\sup$ norm and consider $\X:=\N$ with the discrete $\sigma$-algebra and the counting measure $\mm$. Then $L^\infty(\mm)$ consists of the space of bounded sequences of real numbers and multiplication component by component endows $c_0$ with the structure of a $L^\infty(\mm)$-premodule.  Yet, gluing fails. Indeed, let $A_n:=\{n\}$ and $v_n\in c_0$ be with the first $n$ terms equal to 1 and the rest equal to 0. It is clear that the `glued' $v$  should  be the sequence identically 1, which however is not in $c_0$.
}\fr\end{example}
One of the main effects of the locality condition in the definition of $L^\infty$-module is that it allows to {\bf define the set} $\mathbf{\{v=0\}}\in\BB$ for a generic element $v$ of an $L^\infty(\mm)$-module $\MM$. To see in which sense, for $v\in \MM$ and $A\in\BB$ 
\[
\text{we say that $v=0$ $\mm$-a.e.\ on $A$ provided $\nchi_Av=0$}
\] 
and in this case we also say that $v$ is concentrated on $A^c$. It is then easy to see that on an arbitrary $L^\infty(\mm)$-premodule if $v=0$ $\mm$-a.e.\ on $A_i$, $i=1,\ldots, n$, then it is 0 on $\cup_{i=1}^nA_i$: just notice that $\nchi_{\cup_{n=1}^N}A_n=f\sum_{n=1}^N\nchi_{A_n}$ for some $f\in L^\infty(\mm)$  to deduce that
\[
\|\nchi_{\cup_{n=1}^NA_n} v\|_\MM=\Big\|\big(f\sum_{n=1}^N\nchi_{A_n}\big)  v\Big\|_\MM\leq\|f\|_{L^\infty(\mm)}\Big\|\sum_{n=1}^N\nchi_{A_n} v\Big\|_\MM\leq\|f\|_{L^\infty(\mm)}\sum_{n=1}^N\|\nchi_{A_n} v\|_\MM=0.
\]
The role of the locality condition is to extend this property to countable unions (which, as Example \ref{ex:nolocal} shows, may be not true on $L^\infty(\mm)$-premodules), indeed we can restate \eqref{eq:locality} as:
\[
v=0,\quad\mm\ae\text{ on}\ A_n,\ \forall n\in\N\qquad\Rightarrow\qquad v=0,\quad\mm\ae\text{ on}\ \bigcup_{n\in\N}A_n.
\]
Thus the simple property \eqref{eq:maxset} ensures that  there exists a unique maximal (w.r.t.\ inclusion) set in $ \BB$  on which $v$ is 0: we shall denote this maximal set  by  $\{v= 0\}$ and its complement by $\{v\neq0\}$.

For $v,w\in \MM$ we say that $v=w$ $\mm$-a.e.\ on $A$ provided $v-w=0$  $\mm$-a.e.\ on $A$. Then the sets $\{v=w\},\{v\neq w\}\in\BB$ are well defined as $\{v-w=0\}$ and $\{v-w\neq 0\}$ respectively.

\bigskip

A closed subspace  $\NN\subset\MM$ which is stable  w.r.t.\  multiplication with $L^\infty(\mm)$-functions is easily seen to be an $L^\infty(\mm)$-premodule having the locality property. If it is also closed w.r.t.\ the gluing operation, then it is an $L^\infty(\mm)$-module and we shall say that it is a {\bf submodule} of $\MM$. It is readily verified that the kernel of a module morphism is always a submodule.

Given $E\in\BB$, a simple example of submodule of $\MM$ is the space $\MM\restr{E}$ of elements which are $0$ $\mm$-a.e.\ on $E^c$.

Given a submodule $\NN$ of the module $\MM$, the {\bf quotient} space $\MM/\NN$ has  a natural structure of $L^\infty(\mm)$-premodule with the locality property. Indeed, the norm  
\[
\|[v]\|_{\MM/\NN}:=\inf_{w\in \NN}\|v+w\|_\MM
\]
is well defined and complete, and the fact that for any $v\in \MM$, $w\in \NN$ and $f\in L^\infty(\mm)$  we have  $f(v+w)-fv\in \NN$   shows that the definition $f[v]:=[fv]$ is well posed. It is then clear that $\MM/\NN$ is an $L^\infty(\mm)$-premodule. To see that locality holds, notice that $\nchi_A[v]=0$ if and only if $\nchi_Av\in \NN$ and consider $v\in \MM$ and a sequence $(A_n)\subset \BB$ such that $\nchi_{A_n}[v]=0$ for every $n\in\N$. Then $v_n:=\nchi_{A_n}v\in \NN$ for every $n\in\N$ and using the gluing property in $\NN$ for the sequences $(v_n)\subset \NN$ and $(A_n)\subset \BB$ we deduce the existence of $\bar v\in \NN$ such that $\nchi_{A_n}\bar v=v_n$ for every $n\in\N$, which is the same as to say that $\nchi_{A_n}(\bar v-v)=0$ for every $n\in\N$. By the locality property in $\MM$ we conclude that $\nchi_{\cup_nA_n}(\bar v-v)=0$ which yields $\nchi_{\cup_nA_n}v\in \NN$, i.e.\ $\nchi_{\cup_nA_n}[v]=0$, as desired. In this generality it is not clear to us whether $\MM/\NN$ has the gluing property, for a positive result under additional assumptions see Proposition \ref{prop:normevarie}.

Notice that the submodule $\MM\restr E$ is canonically identifiable with the quotient module $\MM/\MM\restr{E^c}$.

\bigskip

We claim that for  two given $L^\infty(\mm)$-modules $\MM,\NN$, the set $\Hom(\MM,\NN)$ has a canonical structure of $L^\infty(\mm)$-module. Indeed, given that $\MM,\NN$ are Banach spaces, $\Hom(\MM,\NN)$ is a Banach space when endowed with the operator norm $\|T\|:=\sup_{v:\|v\|_\MM\leq 1}\|T(v)\|_\NN$. Then the fact that $L^\infty(\mm)$ is a commutative ring ensures that for $T\in \Hom(\MM,\NN)$ and $f\in L^\infty(\mm)$ the operator $fT:\MM\to \NN$ defined by
\[
(fT)(v):=f(T(v)),\qquad\forall v\in \MM,
\]
is still a module morphism and is then clear that with this multiplication $\Hom(\MM,\NN)$ is an $L^\infty(\mm)$-premodule. To see that locality holds, let $T\in\Hom(\MM,\NN)$ and $(A_n)\subset \BB$ such that $\nchi_{A_n}T=0$ for every $n\in\N$. Then for given $v\in  \MM$ we have $0=(\nchi_{A_n}T)(v)=\nchi_{A_n}(T(v))$ for every $n\in\N$ which, by the  locality property in $\NN$, yields $\nchi_{\cup_nA_n}T(v)=0$ and thus, by the arbitrariness of $v$, that $\nchi_{\cup_nA_n}T=0$. For the gluing the argument is analogous: let $(T_n)\subset\Hom(\MM,\NN)$ and $(A_n)\subset \BB$ be such that $\nchi_{A_i\cap A_j}T_i=\nchi_{A_i\cap A_j}T_j$ for every $i,j\in \N$ and $\|\sum_{i=1}^n\nchi_{A_i}T_i\|\leq C$ for every $n\in\N$ and some $C>0$. Pick $v\in \MM$ and use the gluing property in $\NN$ for the sequences $(T_n(v))$ and $(A_n)$: since 
\[
\|\sum_{i=1}^n\nchi_{A_i}T_i(v)\|_\NN\leq \|\sum_{i=1}^n\nchi_{A_i}T_i\|\|v\|_\MM\leq C\|v\|_\MM
\] 
for every $n\in\N$, we  get the existence of $w\in \NN$ with $\|w\|_\NN\leq C\|v\|_\MM$ and such that $\nchi_{A_i}w=\nchi_{A_i}T_i(v)$ for every $i\in\N$. Then define  $T(v):=\nchi_{\cup_nA_n}w$, notice that $\|T(v)\|_\NN\leq \|w\|_\NN\leq C\|v\|_\MM$ and that the locality property in $\NN$ grants that $T(v)$ is well defined, i.e.\ it does not depend on the particular choice of  $w$. It is then clear that the map $v\mapsto T(v)$ is a module morphism with norm bounded above by $C$ such that $\nchi_{A_n}T=\nchi_{A_n}T_n$ for every $n\in\N$.

\bigskip

Recalling that  $L^1(\mm)$ has a natural structure of $L^\infty(\mm)$-module, we propose the following definition:
\begin{definition}[Dual module]
Let $\MM$ be an  $L^\infty(\mm)$-module. The dual module $\MM^*$ is defined as $\Hom(\MM,L^1(\mm))$.
\end{definition}
From a purely algebraic perspective it might be bizarre to define the dual of an $L^\infty(\mm)$-module as the set of morphism from the module to $L^1(\mm)$, rather than to $L^\infty(\mm)$. The choice is motivated by the following two simple facts:
\begin{example}[The dual of $L^p(\mm)$ is $L^q(\mm)$]{\rm For $p\in[1,\infty]$ the dual module of $L^p(\mm)$ can naturally be identified with $L^q(\mm)$, where $\frac1p+\frac1q=1$, in the sense that for every $g\in L^q(\mm)$ the map 
\[
L^p(\mm)\ni f\qquad\mapsto\qquad T(f):=fg\in L^1(\mm),
\]
is a module morphism (this is trivial) and conversely for every $T\in \Hom(L^p(\mm),L^1(\mm))$ there exists $g\in L^q(\mm)$ such that $T(f)=fg$ for every $f\in L^p(\mm)$. 

To see the latter, let for the moment  $p<\infty$ and $T\in \Hom(L^p(\mm),L^1(\mm))$.  Then the map $L^p(\mm)\ni f\mapsto \int_\X T(f)\,\d\mm\in \R$ is bounded and linear and thus by the classical $L^p-L^q$ duality we know that there exists a unique $g\in L^q(\mm)$ such that $\int_\X T(f)\,\d\mm=\int_\X fg\,\d\mm$.  We claim that $T(f)=fg$ $\mm$-a.e. Being both $T(f)$ and $fg$ two functions in $L^1(\mm)$, to prove that they are the same it is sufficient to prove that for any   $A\in\BB$ it holds $\int_AT(f)\,\d\mm=\int_Afg\,\d\mm$, which  follows from 
\[
\int_AT(f)\,\d\mm=\int_\X\nchi_AT(f)\,\d\mm=\int_\X T(\nchi_Af)\,\d\mm=\int_\X\nchi_Afg\,\d\mm=\int_Afg\,\d\mm.
\]
In the case $p=\infty$ just put  $g:=T({\bf 1})\in L^1(\mm)$ and observe that
\[
T(f)=T(f{\bf 1})=fT({\bf 1})=fg,\qquad\forall f\in L^\infty(\mm).
\]
In particular, we see that the dual of  $L^\infty(\mm)$ as $L^\infty(\mm)$-module, is precisely $L^1(\mm)$, in contrast with the difficult characterization of the dual of $L^\infty(\mm)$ as Banach space. This is not surprising, because while $L^\infty(\mm)$ viewed as Banach space is  in general infinite dimensional and non-separable, the same space viewed as $L^\infty(\mm)$ module has dimension 1, being every function a multiple, in the sense of modules, of the constant function ${\bf 1}$.
}\fr\end{example}

\begin{example}[$\Hom(L^p(\mm),L^\infty(\mm))=\{0\}$ for $p<\infty$]{\rm Let $p<\infty$ and assume there is $T:L^p(\mm)\to L^\infty(\mm)$ not zero. Then we can find $E\in\BB$ with $\mm(E)>0$, $v\in L^p(\mm)$ and numbers $ a,b> 0$ such that $|v|\leq a$ and $|T(v)|\geq b$ $\mm$-a.e.\ on $E$. Now let $f\in L^p\setminus L^\infty(\mm)$ be non-negative and identically 0 on $\X\setminus E$. For $n\in\N$ put $f_n:=\min\{f,n\}$ and notice that 
\[
\|f_nv\|_{L^p(\mm)}=\int_E f^p_n|v|^p\,\d\mm\leq a^p\|f\|_{L^p(\mm)}^p,\qquad\forall n\in\N,
\]
and on the other side 
\[
\|T(f_nv)\|_{L^\infty(\mm)}=\|f_nT(v)\|_{L^\infty(\mm)}\geq \|f_n\|_{L^\infty(\mm)}b= nb,\qquad\forall n\in\N.
\]
These two inequalities show that $T$ is not a bounded operator from $L^p(\mm)$ to $L^\infty(\mm)$, contradiction the assumption.
}\fr\end{example}

Given an $L^\infty(\mm)$-module $\MM$, consider it as a Banach space and denote by $\MM'$ its dual Banach space. Integration provides a canonical map $\intmap_{\MM^*}:\MM^*\to \MM'$
\[
\text{given $T\in \MM^*$ we define $\intmap_{\MM^*}(T)\in \MM'$ as }\qquad\intmap_{\MM^*}(T)(v):=\int_\X T(v)\,\d\mm,\qquad\forall v\in \MM.
\]
It is obvious that $\intmap_{\MM^*}(T)$ is indeed an element of $\MM'$ satisfying  $\|\intmap_{\MM^*}(T)\|_{\MM'}\leq \|T\|_{\MM^*}$. Actually, it is easily seen that $\intmap_{\MM^*}$ is norm preserving:  for given $T\in\MM^*$ and $v\in\MM$ with $\|v\|_\MM\leq 1$, let $f:={\rm sign}\, T(v)\in L^\infty(\mm)$, $\tilde v:=fv$ and notice that from $\|f\|_{L^\infty(\mm)}\leq 1$ we get   $\|\tilde v\|_\MM\leq \|v\|_\MM\leq 1$ and thus
\[
\|T(v)\|_{L^1(\mm)}= \int |T(v)|\,\d\mm= \int fT(v)\,\d\mm=\int T(\tilde v)\,\d\mm=\intmap_{\MM^*}(T)(\tilde v)\leq \|\intmap_{\MM^*}(T)\|_{\MM'}.
\]
The claim then follows recalling that  $\|T\|_{\MM^*}$ is defined as $\sup_{v:\|v\|_\mm\leq 1}\|T(v)\|_{L^1(\mm)}$.

Hence $\MM^*$ is isometrically embedded in $\MM'$. As the example of $L^\infty(\mm)$ viewed as $L^\infty(\mm)$-module shows, in general, such embedding is not surjective.
\begin{definition}[Modules with full-dual]
We say that the $L^\infty(\mm)$-module $\MM$ has full-dual provided the map $\intmap_{\MM^*}$ described above is surjective.
\end{definition}
It is worth to underline that for a generic $L^\infty(\mm)$-module we are not able to ensure the existence of a non-zero element of the dual, the problem being the lack of an analogous of the Hahn-Banach theorem for modules. It is possible that this is due to a somehow incorrect choice of the axiomatization of such generic structures. We chose to start our discussion with the definition of $L^\infty(\mm)$-module only to provide a unifying framework, but in fact we shall mostly work with $L^p(\mm)$-normed modules, defined as follows.
\begin{definition}[$L^p(\mm)$-normed modules]
Let $\MM$ be an $L^\infty(\mm)$-premodule and $p\in[1,\infty]$. We say that $\MM$  is an $L^p(\mm)$-normed premodule provided there exists a map $|\cdot |:\MM\to L^p(\mm)$ with non-negative values such that
\begin{equation}
\label{eq:pointnorm}
\begin{split}
\||v|\|_{L^p(\mm)}&=\|v\|_\MM,\\
|f v|&=|f| |v|,\quad\mm\ae,
\end{split}
\end{equation}
for every $v\in\MM$ and $f\in L^\infty(\mm)$. In this case we shall call $|\cdot|$ the pointwise $L^p(\mm)$-norm, or simply pointwise norm. $L^p(\mm)$-normed premodules which are also $L^\infty(\mm)$-modules will be called $L^p(\mm)$-normed modules.
\end{definition}
Notice that the simple inequality
\[
\||v|-|w|\|_{L^p(\mm)}\leq\| |v-w|\|_{L^p(\mm)}=\|v-w\|_\MM,\qquad\forall v,w\in\MM,
\]
shows that the pointwise norm is a continuous map from $\MM$ to $L^p(\mm)$.
\begin{example}\label{ex:lpmod}{\rm
Forgetting about measurability issues and identification $\mm$-a.e., we can think to an $L^p(\mm)$-normed module as follows. To each $x\in \X$ is  associated a Banach space $(B_x,\|\cdot\|_x)$ and we consider   maps $v$ associating to each $x\in \X$ a vector $v(x)\in B_x$ in such a way that the $L^p(\mm)$-norm of $x\mapsto \|v(x)\|_x$ is finite.
}\fr\end{example}
In fact, the main idea behind the definition of $L^p(\mm)$-normed modules is to make an abstract version of the construction in this example which does not rely on the a priori given spaces $(B_x,\|\cdot\|_x)$. Still, it is  interesting to note that this informal procedure can always somehow be reversed, so that appropriate Banach spaces $(B_x,\|\cdot\|_x)$ really exist behind/can be built from a given $L^p(\mm)$-normed module: for results in this direction see \cite{HLR91}, we shall detail the argument only for the important class of Hilbert modules - see Definition \ref{def:hilmod}, Proposition \ref{prop:hill2} and Theorem \ref{thm:structhil}.

It is worth to point out a conceptual difference between the case of abstract $L^p(\mm)$-normed modules and Example \ref{ex:lpmod}: while in Example \ref{ex:lpmod} elements of the module are defined as equivalence classes w.r.t.\ $\mm$-a.e.\ equality of some objects which are truly defined at every point $x\in \X$ (much like elements of $L^p(\mm)$  are  equivalence classes of functions), this is not the case for a generic element of an $L^p(\mm)$-normed module, which should rather be thought of as something intrinsically defined only up to $\mm$-a.e.\ equality.

Moreover, even in the case where the Banach spaces $(B_x,\|\cdot\|_x)$ exist, there is in general no relation at all between $B_x$ and $B_{x'}$ for $x\neq x'$. In particular, there is no metric/topology on $\sqcup_xB_x$ which means that it makes no sense to speak about, say, Lebesgue points of an element of an $L^p(\mm)$-normed module.

With that said, we turn to the study of $L^p(\mm)$-normed (pre)modules: their very basic properties are collected in the following proposition.
\begin{proposition}[Basic properties of $L^p(\mm)$-normed premodules]\label{prop:baselp}
Let $\MM$ be an $L^p(\mm)$-normed premodule, $p\in[1,\infty]$. Then the following holds.
\begin{itemize}
\item[i)] For $v\in\MM$ and $E\in\BB$ we have $v=0$ $\mm$-a.e.\ on $E$ if and only if $|v|=0$ $\mm$-a.e.\ on $E$.
\item[ii)] $\MM$ has the locality property and the pointwise $L^p(\mm)$-norm is unique and fulfills the pointwise triangle inequality
\begin{equation}
\label{eq:pointtriangle}
\begin{split}
|v+w|&\leq|v|+|w|,\qquad\mm\ae,\qquad\forall v,w\in \MM.
\end{split}
\end{equation}
\item[iii)] If there exists $v\in \MM$ and $E\in \BB$ such that $0\neq \nchi_Ev\neq v$ then $\MM$ is not an $L^{p'}(\mm)$-normed premodule for $p'\neq p$. 
\item[iv)] If $p<\infty$ then $\MM$ has also the gluing property and is therefore an $L^p(\mm)$-normed module.
\item[v)] Let $\MM_1,\MM_2$ be $L^{p_1}(\mm)$ and $L^{p_2}$-normed modules respectively, $p_1\geq p_2\in[1,\infty]$, and $T:\MM_1\to\MM_2$ a linear map. Then $T\in\Hom(\MM_1,\MM_2)$ if and only if there is $g\in L^q(\mm)$with $\frac1{p_2}=\frac1{p_1}+\frac1q$ such that
\begin{equation}
\label{eq:normhomlp}
|T(v)|\leq g|v|,\qquad\mm\ae,\qquad\forall v\in\MM_1,
\end{equation}
and in this case the operator norm $\|T\|$ is the least of $\|g\|_{L^q(\mm)}$ among all the $g$'s satisfying the above.
\end{itemize}
\end{proposition}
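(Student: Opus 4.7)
Items (i), (iv), the locality statement in (ii), and the uniqueness of the pointwise norm all follow from the defining identities $|\nchi_E v|=\nchi_E|v|$ and $\|v\|_\MM=\||v|\|_{L^p(\mm)}$ in essentially one line each. For (i), both $\nchi_E v=0$ and $|v|=0$ $\mm$-a.e.\ on $E$ unpack to $\|\nchi_E|v|\|_{L^p(\mm)}=0$. The locality in (ii) follows because ``$|v|=0$ $\mm$-a.e.\ on $A$'' is stable under countable unions of $A$'s. For uniqueness of $|\cdot|$, two candidate pointwise norms $|\cdot|_1,|\cdot|_2$ must satisfy $\|\nchi_A|v|_1\|_{L^p(\mm)}=\|\nchi_A|v|_2\|_{L^p(\mm)}$ for every $A\in\BB$, which -- treating $p<\infty$ and $p=\infty$ separately -- forces $|v|_1=|v|_2$ $\mm$-a.e. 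For (iv), after refining the sequence $(A_n)$ to a disjoint one (which is allowed since the compatibility condition becomes vacuous), the partial sums $w_n:=\sum_{i\le n}\nchi_{A_i}v_i$ satisfy $\|w_m-w_n\|_\MM^p=\sum_{i=n+1}^m\int_{A_i}|v_i|^p\,\d\mm$, which is the tail of a convergent series by assumption. Hence $(w_n)$ is Cauchy in $\MM$, and its limit $v$ has $\nchi_{A_j}v=\nchi_{A_j}v_j$ for all $j$ together with the required bound on $\|v\|_\MM$ by continuity of the pointwise norm.

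The pointwise triangle inequality in (ii) requires a ``stratification by value'' argument, which I would run by contradiction. If $|v+w|>|v|+|w|+\eps$ on a set $F$ of positive measure, I decompose $F$ into the countable family $F^{j,k}:=F\cap\{|v|\in[j\delta,(j+1)\delta)\}\cap\{|w|\in[k\delta,(k+1)\delta)\}$; since $|v|$ and $|w|$ are $\mm$-a.e.\ finite, these cover $F$ up to a null set, so some $F^{j,k}$ has positive measure. On it, $|v|\le(j+1)\delta$, $|w|\le(k+1)\delta$, and $|v+w|\ge(j+k)\delta+\eps$, so the Banach-space triangle inequality $\|\nchi_{F^{j,k}}(v+w)\|_\MM\le\|\nchi_{F^{j,k}}v\|_\MM+\|\nchi_{F^{j,k}}w\|_\MM$ combined with $\|\nchi_{F^{j,k}}u\|_\MM=\|\nchi_{F^{j,k}}|u|\|_{L^p(\mm)}$ gives $(j+k)\delta+\eps\le(j+k+2)\delta$, i.e.\ $\eps\le 2\delta$, a contradiction once $\delta<\eps/2$ (the case $p=\infty$ is handled identically using essential suprema). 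Assertion (iii) then follows by a direct comparison: setting $\alpha:=\|\nchi_E v\|_\MM>0$ and $\beta:=\|\nchi_{E^c}v\|_\MM>0$, the disjoint-support identity $|a\nchi_E v+b\nchi_{E^c}v|=a\nchi_E|v|+b\nchi_{E^c}|v|$ (for $a,b\ge 0$) gives $\|a\nchi_E v+b\nchi_{E^c}v\|_\MM=(a^p\alpha^p+b^p\beta^p)^{1/p}$ (resp.\ $\max(a\alpha,b\beta)$ for $p=\infty$) as a function of $(a,b)$, and these expressions determine $p$ uniquely (evaluate at $(1,0),(0,1),(1,1)$), so an $L^{p'}(\mm)$-normed structure with $p'\ne p$ is impossible.

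The technical heart is (v). The direction $\Leftarrow$ is comparatively painless: Hölder's inequality with exponents $q,p_1$ yields $\|T(v)\|_{\MM_2}\le\|g\|_{L^q(\mm)}\|v\|_{\MM_1}$, while the module property $T(fv)=fT(v)$ is recovered in three steps. First, the bound forces the locality $T(v)=0$ $\mm$-a.e.\ on $\{v=0\}$. Second, writing $v=\nchi_A v+\nchi_{A^c}v$ and using linearity together with locality (on $A$ for the second summand, on $A^c$ for the first), one gets $T(\nchi_A v)=\nchi_A T(v)$ for every $A\in\BB$, hence $T(fv)=fT(v)$ for simple $f\in L^\infty(\mm)$. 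Third, density of simple functions in $L^\infty(\mm)$ together with boundedness of $T$ extends the identity to all $f\in L^\infty(\mm)$. For the direction $\Rightarrow$, the module property together with boundedness translates into the key integral estimate
\[
\Big(\int_A|T(v)|^{p_2}\,\d\mm\Big)^{1/p_2}\le \|T\|_{\rm op}\Big(\int_A|v|^{p_1}\,\d\mm\Big)^{1/p_1}\qquad \forall A\in\BB,\ v\in\MM_1.
\]
When $p_1=p_2$, choosing $A=\{|T(v)|>\|T\|_{\rm op}|v|\}$ forces $\mm(A)=0$, and the constant $g:=\|T\|_{\rm op}$ does the job (with $q=\infty$). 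The main obstacle is the case $p_1>p_2$: here I would apply the Radon--Nikodym theorem to the pair $(|T(v)|^{p_2}\,\d\mm,\ |v|^{p_1}\,\d\mm)$ and exploit the sublinear scaling (exponent $p_2/p_1<1$) to control the resulting density in the appropriate $L^{p_1/(p_1-p_2)}$ space. The subtlest point is to produce a single $g\in L^q(\mm)$ working simultaneously for every $v\in\MM_1$ rather than one density per $v$; this calls for an essential-supremum construction over a sufficiently rich countable family of vectors, exploiting the gluing property of $\MM_1$ to reduce to such a family. Optimality $\|g\|_{L^q(\mm)}=\|T\|_{\rm op}$ then follows a posteriori by testing $T$ on vectors for which the ratio $|T(v)|/|v|$ saturates $g$ in $L^q$-sense.
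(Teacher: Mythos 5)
Items (i)--(iv) and the implication from \eqref{eq:normhomlp} to $T\in\Hom(\MM_1,\MM_2)$ in (v) are correct and essentially the paper's own arguments (your stratification by value in (ii) and the series computation in (iv) are just more explicit renditions of the same steps). The genuine gap is in the converse direction of (v) for $p_1>p_2$, which you rightly call the technical heart but then only outline, and the one concrete device you propose fails. Your ``key integral estimate'' retains the module property only through characteristic functions, i.e.\ the set-indexed bound $\mu_v(A)\le\|T\|^{p_2}\,\nu_v(A)^{p_2/p_1}$ for $\mu_v:=|T(v)|^{p_2}\mm$ and $\nu_v:=|v|^{p_1}\mm$. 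Such a bound controls the Radon--Nikodym density only in the weak space $L^{p_1/(p_1-p_2),\infty}(\nu_v)$, not in $L^{p_1/(p_1-p_2)}(\nu_v)$: on $(0,1)$ with the Lebesgue measure and $p_1=2$, $p_2=1$, the pair of functions $u\equiv 1$, $h(x)=x^{-1/2}$ satisfies $\int_A h\,\d x\le 2\big(\int_A u^2\,\d x\big)^{1/2}$ for every Borel $A$, while $h/u=x^{-1/2}\notin L^2=L^{q}=L^{p_1/(p_1-p_2)}$. Hence ``Radon--Nikodym plus sublinear scaling'' cannot deliver even the per-$v$ bound $\|\nchi_{\{v\ne0\}}|T(v)|/|v|\|_{L^q(\mm)}\le\|T\|$, let alone a single $g$ valid for all $v$; and even granting per-$v$ densities with $L^q$-norm at most $\|T\|$, an essential supremum of countably many such functions need not lie in $L^q$, so the ``esssup over a sufficiently rich countable family'' step is also unproven as stated.

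The missing idea is to use the module property at full strength, namely $\|\,|f|\,|T(v)|\,\|_{L^{p_2}(\mm)}\le\|T\|\,\|\,|f|\,|v|\,\|_{L^{p_1}(\mm)}$ for every $f\in L^\infty(\mm)$ (hence for every measurable $f\ge 0$ by truncation), tested against weights of the form $f^{q/p_1}$. This is exactly what the paper does: it sets $g:=\esssup\{|T(v)|\ :\ |v|\le1\ \mm\text{-a.e.}\}$, shows $|T(v)|\le g|v|$ using the truncations $\min\{|v|^{-1},n\}$, then builds a single sequence saturating $g$ by recursive patching, $w_{n+1}:=\nchi_{A_n}w_n+\nchi_{A_n^c}v_{n+1}$ with $A_n:=\{|T(w_n)|\ge|T(v_{n+1})|\}$, so that $|w_n|\le1$ and $|T(w_n)|\uparrow g$ $\mm$-a.e., and finally applies $T$ to $\tilde w_n:=f^{q/p_1}w_n$ for non-negative $f\in L^q\cap L^\infty(\mm)$ with $f\le g$, obtaining $\|f\|_{L^q(\mm)}\le\|T\|$ by monotone convergence and hence $\|g\|_{L^q(\mm)}\le\|T\|$ (the cases $p_1=\infty$ and $p_1=p_2$ get separate, easier, treatments; your $p_1=p_2$ argument is fine). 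Without some version of this weighting-and-patching mechanism your plan for the case $p_1>p_2$ does not close.
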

\begin{proof}$\ $\\
\noindent{$\mathbf{(i)}$}  Direct consequence of the fact that, by definition, $v=0$ $\mm$-a.e.\ on $E$ if and only if $\|\nchi_Ev\|_\MM=0$ and the defining properties of the pointwise norm.

\noindent{$\mathbf{(ii)}$} For the locality just notice that the $L^p(\mm)$-function $|\nchi_{\cup_iA_i}v|$ is, by the second in \eqref{eq:pointnorm}, the $\mm$-a.e.\ limit of $|\nchi_{\cup_{i=1}^nA_i}v|$ as $n\to\infty$ and thus the claim follows from the first in \eqref{eq:pointnorm}.

For uniqueness, assume $|\cdot|'$ is another pointwise $L^p(\mm)$-norm, then for every $v\in \MM$ we have $|v|,|v|'\in L^p(\mm)$ and for every $A\in\BB$ it holds
\[
\|\nchi_A|v|\|_{L^p(\mm)}=\||\nchi_Av|\|_{L^p(\mm)}=\|\nchi_Av\|_\MM=\||\nchi_Av|'\|_{L^p(\mm)}=\|\nchi_A|v|'\|_{L^p(\mm)},
\]
which forces $|v|=|v|'$ $\mm$-a.e..

To prove  \eqref{eq:pointtriangle} we argue by contradiction. Then  we can find  $v,w\in \MM$, $A\in \BB$  with $\mm(A)\in(0,\infty)$ and numbers $a,b,\eps>0$ such that $|v|\leq a$, $|w|\leq b$ and $|v+w|\geq a+b+\eps$ $\mm$-a.e.\  on $A$.  But in this case we would have
\[
\begin{split}
\|\nchi_A(v+w)\|_\MM&=\|\nchi_A|v+w|\|_{L^p(\mm)}\geq \mm(A)^{\frac1p}(a+b+\eps)\\
&>\mm(A)^{\frac1p}(a+b)\geq\|\nchi_A|w|\|_{L^p(\mm)}+\|\nchi_A|w|\|_{L^p(\mm)}=\|\nchi_Av\|_\MM+\|\nchi_Aw\|_\MM,
\end{split}
\]
contradicting the triangle inequality in $\MM$ (here $\mm(A)^{\frac1p}$ is taken equal to 1 if $p=\infty$).

\noindent{$\mathbf{(iii)}$}  Say for instance that $p<\infty$ and notice that in the identity
\[
\|a\nchi_Av+b\nchi_{A^c}v\|_\MM=\sqrt[p]{|a|^p\|\nchi_Av\|^p_\MM+|b|^p\|\nchi_{A^c}v\|^p_\MM},
\]
the left hand side does not depend on $p$ and therefore the right hand side identifies, by the arbitrariness of $a,b\in\R$, the exponent $p$. Similarly for $p=\infty$.

\noindent{$\mathbf{(iv)}$} Let $(v_n)\subset M$ and  $(A_n)\subset \BB$ as in \eqref{eq:gluingip}, put $\tilde A_n:=A_n\setminus\cup_{i<n}A_i$, $\tilde v_n:=\sum_{i=1}^n\nchi_{\tilde A_i}v_i$ and notice that the sequence of maps $(|\tilde v_n|)$ is non-decreasing and, by assumption, bounded in $L^p(\mm)$. Being $p<\infty$, this is sufficient to deduce that $(|\tilde v_n|)$ is a Cauchy sequence in $L^p(\mm)$ and since by construction for every $m,n\in\N$ we have $|\tilde v_n-\tilde v_m|=\big||\tilde v_n|-|\tilde v_m|\big|$ $\mm$-a.e., we see that $(\tilde v_n)$ is  a Cauchy sequence in $\MM$. It is then easy to see that the limit of such sequence is a gluing of the $v_n$'s.

\noindent{$\mathbf{(v)}$} Assume that $T$ is linear and that \eqref{eq:normhomlp} holds for some $g\in L^q(\mm)$. Then the fact that  $T$ is continuous and  $\|T\|\leq \|g\|_{L^q(\mm)}$ is obvious. To check that it is a module morphism, pick $v\in\MM_1$, $E\in\BB$ and notice that \eqref{eq:normhomlp} gives $|T(\nchi_Ev)|\leq g|\nchi_Ev|=0$ $\mm$-a.e.\ on $E^c$. Considering also the same bound with $E^c$ in place of $E$ we get
\[
T(\nchi_Ev)=0,\quad\mm\ae\text{ on }E^c\qquad\text{and}\qquad T(\nchi_{E^c}v)=0,\quad\mm\ae\text{ on }E.
\]
Since by linearity we have $T(\nchi_Ev)+T(\nchi_{E^c}v)=T(v)$, the above is sufficient to ensure $T(\nchi_Ev)=\nchi_Ev$. Then by linearity we can check the identity $T(fv)=fT(v)$ for simple functions $f$ and conclude that the same holds for general $f\in L^\infty(\mm)$ by approximation taking into account the continuity of $T$.

Conversely, define $g:\X\to[0,\infty]$ as
\[
g:=\esssup_{v\in\MM\ :\ |v|\leq 1\ \mm\ae}|T(v)|.
\]
We claim that 
\[
|T(v)|\leq g|v| \quad\mm\ae,\qquad\forall v\in \MM.
\]
Indeed, fix $v$, put $f_n:=\min\{\frac1{|v|},n\}\in L^\infty(\mm)$ and notice that $|f_nv|\leq 1$ $\mm$-a.e.\ for every $n\in\N$. Hence
\[
|T(v)|=\frac1{f_n}|f_nT(v)|=\frac1{f_n}|T(f_nv)|\leq\frac{g}{f_n}=\left\{\begin{array}{ll}
|v|g&\mm\ae \ \textrm{on }|v|\geq n^{-1},\\
\frac{g }n&\mm\ae \ \textrm{on }|v|<n^{-1},
\end{array}\right.
\]
and the claim follows letting $n\to\infty$. To conclude it is sufficient to prove that $\|g\|_{L^q(\mm)}\leq\|T\|$ and  to this aim start recalling that, by the properties of the essential-supremum, there is a sequence $(v_n)\subset \MM$ such that $|v_n|\leq 1$ $\mm$-a.e.\ for every $n$ and such that $g=\sup_n|T(v_n)|$ $\mm$-a.e..  Recursively define a sequence $(w_n)\subset \MM$ by putting $w_1:=v_1$
and then for every $n\geq 1$
\[
w_{n+1}:=\nchi_{A_n}w_n+\nchi_{A_n^c}v_{n+1},
\]
where $A_n:=\{|T(w_n)|\geq |T(v_{n+1})|\}$. In this way we have that $|w_n|\leq 1$ $\mm$-a.e.  and $|T(w_n)|=\max_{i\leq n}|T(v_n)|$ $\mm$-a.e. for every $n\in\N$. In particular, we have 
\begin{equation}
\label{eq:wnperT}
|T(w_n)|\leq |T(w_{n+1})|\qquad \textrm{ and }\qquad \lim_{n\to\infty}|T(w_n)|=g\qquad \mm\ae.
\end{equation}
Now assume $p_2<p_1<\infty$, so that also $q<\infty$, let $f\in L^q\cap L^\infty(\mm)$ be non-negative and such that $f\leq g $ $\mm$-a.e. and put $\tilde w_n:=f^{\frac q{p_1}}w_n\in\MM_1$. Letting $n\to\infty$ in the inequality
\[
\sqrt[p_2]{\int|f|^{\frac{qp_2}{p_1}}|T( \omega_n)|^{p_2}\,\d\mm}=\|T(\tilde\omega_n)\|_{\MM_2}\leq\|T\|\|\tilde\omega_n\|_{\MM_1}\leq\|T\|\,\|f\|_{L^q(\mm)}^{\frac{q}{p_1}}
\]
and using the monotone convergence theorem (recall \eqref{eq:wnperT}) we obtain
\[
\sqrt[p_2]{\int|f|^{\frac{qp_2}{p_1}}g^{p_2}\,\d\mm}\leq\|T\|\,\|f\|_{L^q(\mm)}^{\frac{q}{p_1}}
\]
and therefore, since $f\leq g$, that $\|f\|_{L^q(\mm)}^{\frac{q}{p_2}}\leq \|T\|\|f\|_{L^q(\mm)}^{\frac{q}{p_1}}$, i.e. $\|f\|_{L^q(\mm)}\leq \|T\| $. Being this true for any $f\in L^q\cap L^\infty(\mm)$ with $f\leq g $, we conclude that $\|g\|_{L^q(\mm)}\leq \|T\| $, as desired. 

If $q=p_2<p_1=\infty$ the thesis follows letting $n\to\infty$ in the inequality
\[
\||T(w_n)|\|_{L^{p_2}(\mm)}=\|T(w_n)\|_{\MM_2}\leq \|T\|\|w_n\|_{\MM_1}\leq \|T\|.
\]
If $p:=p_2=p_1$ we have $q=\infty$ and for every $E\in\BB$ with $\mm(E)\in(0,\infty)$ the inequality $\|T(\nchi_E\mm(E)^{-\frac1p}w_n)\|_{\MM_2}\leq \|T\|\|\nchi_E\mm(E)^{-\frac1p} w_n\|_{\MM_1}$ reads as
\[
\frac1{\mm(E)^{\frac1p}}\|\nchi_E|T(w_n)|\|_{L^p(\mm)}\leq \|T\|,
\]
which gives the conclusion letting $n\to\infty$ by the arbitrariness of $E$..
\end{proof}
An element $v$ of an $L^p(\mm)$-normed module is said {\bf bounded} provided $|v|\in L^\infty(\mm)$. It is easy to see that 
\begin{equation}
\label{eq:seplplim}
\text{a separable $L^p(\mm)$-normed module admits  a countable dense subset of bounded elements.}
\end{equation}
Indeed, in the case $p=\infty$ there is nothing to prove, for $p<\infty$ just notice that for every element $v$ of the module, the sequence $n\mapsto {\rm Tr}_n(v):=\nchi_{\{|v|<n\}}v$ converges to $v$, so that if $D$ is countable and dense, the set $\{{\rm Tr}_n(v):v\in D,\ n\in\N\}$ is countable and dense as well.

A somehow similar property of  $L^p(\mm)$-normed modules, valid for $p<\infty$, is that
\begin{equation}
\label{eq:dalp}
\text{$\forall v\in \MM$ and $(A_i)\subset\BB$, $i\in\N$, disjoint we have $\lim_{n\to\infty }\|\nchi_{\cup_{i=1}^nA_i}v- \nchi_{\cup_{i=1}^\infty A_i}v\|_\MM=0$,}
\end{equation}
which follows noticing that
\[
\|\nchi_{\cup_{i=1}^\infty A_i}v-\nchi_{\cup_{i=1}^n A_i}v\|^p_\MM=\|\nchi_{\cup_{i=n+1}^\infty A_i}v\|^p_\MM=\int_{\cup_{i=n+1}^\infty A_i}|v|^p\,\d\mm,
\]
and that  the rightmost side goes to 0 as $n\to\infty$ by the dominated convergence theorem.

From this fact it is easy to see that $L^p(\mm)$-normed modules, $p<\infty$,  always have  full-dual:
\begin{proposition}\label{prop:fulllp}
Let $\MM$ be an $L^p(\mm)$-normed module, $p<\infty$. Then $\MM$ has full-dual.
\end{proposition}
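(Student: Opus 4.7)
The goal is to show that every $L \in \MM'$ arises as $\intmap_{\MM^*}(T)$ for some module morphism $T: \MM \to L^1(\mm)$. The construction is via Radon--Nikodym.

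Given $L \in \MM'$ and $v \in \MM$, I would first define the set function $\nu_v : \BB \to \R$ by
\[
\nu_v(A) := L(\nchi_A v).
\]
Countable additivity of $\nu_v$ follows from \eqref{eq:dalp}: for disjoint $(A_i) \subset \BB$, the partial sums $\nchi_{\cup_{i \le n} A_i} v$ converge in $\MM$ to $\nchi_{\cup_i A_i} v$, and continuity of $L$ yields $\nu_v(\cup_i A_i) = \sum_i \nu_v(A_i)$. Absolute continuity w.r.t.\ $\mm$ is immediate from item $(i)$ of Proposition \ref{prop:baselp}: if $\mm(A) = 0$ then $\nchi_A v = 0$ in $\MM$, so $\nu_v(A) = 0$. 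By Radon--Nikodym applied on each element of a countable exhaustion by sets of finite measure, there is a unique $T(v) \in L^1(\mm)$ with
\[
\int_A T(v) \, \d\mm = L(\nchi_A v), \qquad \forall A \in \BB.
\]

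Next I would verify the required properties of the map $v \mapsto T(v)$. \emph{Linearity} is read off from the defining integral identity and linearity of $L$, together with uniqueness of the Radon--Nikodym density. For \emph{boundedness}, taking $f := \nchi_{\{T(v) > 0\}} - \nchi_{\{T(v) < 0\}} \in L^\infty(\mm)$ with $|f| \le 1$, one has $|fv| = |f||v| \le |v|$ $\mm$-a.e., hence $\|fv\|_\MM \le \|v\|_\MM$, and
\[
\|T(v)\|_{L^1(\mm)} = \int |T(v)| \, \d\mm = L(\nchi_{\{T(v) > 0\}} v) - L(\nchi_{\{T(v) < 0\}} v) = L(fv) \le \|L\|_{\MM'} \|v\|_\MM.
\]
For the \emph{module morphism} property, applying the defining identity with $\nchi_B v$ in place of $v$ gives
\[
\int_A T(\nchi_B v) \, \d\mm = L(\nchi_{A \cap B} v) = \int_{A \cap B} T(v) \, \d\mm = \int_A \nchi_B T(v) \, \d\mm
\]
for every $A \in \BB$, so $T(\nchi_B v) = \nchi_B T(v)$. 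By linearity this extends to simple $f \in {\rm Sf}(\mm)$, and then to general $f \in L^\infty(\mm)$ by uniform approximation by simple functions, using that both $f \mapsto T(fv)$ and $f \mapsto f T(v)$ are continuous from $L^\infty(\mm)$ to $L^1(\mm)$ (the first because $T$ is bounded and $\|fv\|_\MM \le \|f\|_{L^\infty}\|v\|_\MM$, the second because $T(v) \in L^1(\mm)$).

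Finally, evaluating at $A = \X$ gives $\intmap_{\MM^*}(T)(v) = \int T(v) \, \d\mm = L(\nchi_\X v) = L(v)$, so $\intmap_{\MM^*}(T) = L$ and the map is surjective. The only step with any subtlety is the countable additivity of $\nu_v$, which crucially uses that $p < \infty$ through \eqref{eq:dalp}; everything else reduces to routine bookkeeping of the pointwise-norm structure and standard Radon--Nikodym/dominated-convergence arguments.
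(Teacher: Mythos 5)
Your proposal is correct and follows essentially the same route as the paper's proof: define the set function $A\mapsto L(\nchi_A v)$, use \eqref{eq:dalp} for $\sigma$-additivity, apply Radon--Nikodym to get $T(v)\in L^1(\mm)$, establish boundedness via the sign-function trick, and upgrade $T(\nchi_B v)=\nchi_B T(v)$ to the full module-morphism property by simple-function approximation. No meaningful differences to report.
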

\begin{proof}
Let $l\in \MM'$ and for $v\in \MM$ consider the map $\mathcal A\ni \bar A\mapsto l(\nchi_{ A} v)$, where $A\in \BB$ is the equivalence class of $\bar A$. It is clear that this map is real valued and additive, while \eqref{eq:dalp} grants that it is $\sigma$-additive. In other words it is a measure and since by construction it is absolutely continuous w.r.t.\ $\mm$,  the Radon-Nikodym theorem ensures that there is a unique $L(v)\in L^1(\mm)$ such that $l(\nchi_Av)=\int_AL(v)\,\d\mm$ for every $A\in\BB$. By construction, the map $\MM\ni v\mapsto L(v)\in L^1(\mm)$ is linear, satisfies 
\begin{equation}
\label{eq:prefull}
L({\nchi_Av})=\nchi_AL(v),\qquad \forall v\in \MM,\  A\in\BB,
\end{equation}
and such that $\int_\X L(v)\,\d\mm=l(v)\leq \|l\|_{\MM'}\|v\|_{\MM}$ for every $v\in \MM$. For given $v\in \MM$ we put $f:={\rm sign}\, L(v)\in L^\infty(\mm)$, $\tilde v:=fv$, notice that $\|\tilde v\|_\MM\leq\|f\|_{L^\infty(\mm)}\|v\|_\MM\leq \|v\|_\MM$ and that the fact that $f$ takes its values in $\{-1,0,1\}$ and \eqref{eq:prefull} grant that $|L(v)|=L(\tilde v)$ so that we have
\[
\|L(v)\|_{L^1(\mm)}=\int_\X |L(v)|\,\d\mm=\int L(\tilde v)\,\d\mm \leq \|l\|_{\MM'}\|\tilde v\|_{\MM} \leq \|l\|_{\MM'}\|v\|_{\MM},
\] 
i.e.\ $\MM\ni v\mapsto L(v)\in L^1(\mm)$ is continuous.

Finally, approximating a generic function in $L^\infty(\mm)$ with simple functions    and using again \eqref{eq:prefull}, linearity and continuity we  see that $ L(fv)=fL(v)$ for every $v\in \MM$ and $f\in L^\infty(\mm)$, so the proof is complete.
\end{proof}
Some useful constructions based on $L^p(\mm)$-normed modules are the following:
\begin{proposition}\label{prop:normevarie}
Let $\MM$ be an $L^p(\mm)$-normed module, $p\in[1,\infty]$. Then the following hold.
\begin{itemize}
\item[i)] Let $q\in[1,\infty]$ be such that $\frac1p+\frac1q=1$. Then $\MM^*$ is an $L^q(\mm)$-normed module, the pointwise norm being defined by 
\begin{equation}
\label{eq:normdual}
|L|_*:=\esssup_{v\in \MM\ :\ |v|\leq 1\ \mm\ae}\ |L(v)|.
\end{equation}
\item[ii)] Let $\NN\subset\MM$ be a submodule. Then $\MM/\NN$ is an $L^p(\mm)$-normed module, the pointwise norm being defined by
\[
|[v]|:=\essinf_{w\in \NN}|v+w|.
\]
\end{itemize}
\end{proposition}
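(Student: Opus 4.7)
For part (i), the plan is to bootstrap on Proposition \ref{prop:baselp}(v) applied with $\MM_1 = \MM$, $p_1 = p$ and $\MM_2 = L^1(\mm)$, $p_2 = 1$: the conjugate relation $\frac{1}{p_2}=\frac{1}{p_1}+\frac{1}{q}$ is then exactly $\frac1p+\frac1q=1$. For any $L\in\MM^*=\Hom(\MM,L^1(\mm))$ that proposition, together with its proof, provides a function $g\in L^q(\mm)$ satisfying $|L(v)|\leq g|v|$ $\mm\ae$ for every $v\in\MM$, and the explicit $g$ constructed in the proof coincides with the essential supremum $|L|_*$ of \eqref{eq:normdual}, giving $\||L|_*\|_{L^q(\mm)}\leq\|L\|_{\MM^*}$ for free. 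The reverse bound $\|L\|_{\MM^*}\leq\||L|_*\|_{L^q(\mm)}$ follows immediately from H\"older's inequality via $|L(v)|\leq|L|_*|v|$. Finally, $|fL|_*=|f||L|_*$: the inequality $\leq$ is immediate from the definition, and the reverse uses the sequence $(v_n)$ with $|v_n|\leq 1$ and $|L(v_n)|\to|L|_*$ $\mm\ae$ constructed in the proof of (v), yielding $|fL|_*\geq|(fL)(v_n)|=|f||L(v_n)|\to|f||L|_*$ $\mm\ae$.

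For part (ii), the plan is to verify in sequence: (a) well-definedness of $|[v]|$ as an element of $L^p(\mm)$; (b) the identity $\||[v]|\|_{L^p(\mm)}=\|[v]\|_{\MM/\NN}$; (c) the pointwise scaling $|f[v]|=|f||[v]|$; and (d) the gluing property, so that $\MM/\NN$ is a module and not merely a premodule. Point (a) is routine: if $v_1-v_2\in\NN$ then $\{v_1+w:w\in\NN\}=\{v_2+w':w'\in\NN\}$, and taking $w=0$ yields $|[v]|\leq|v|$ $\mm\ae$, so $|[v]|\in L^p(\mm)$.

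The central computation is (b). The inequality $\leq$ is immediate. For $\geq$, the crucial observation is that the family $\{|v+w|:w\in\NN\}\subset L^p(\mm)$ is downward directed: given $w_1,w_2\in\NN$, set $A:=\{|v+w_1|\leq|v+w_2|\}$ and $w:=\nchi_Aw_1+\nchi_{A^c}w_2$; since $\NN$ is a submodule (hence stable under multiplication by characteristic functions and under gluing), $w\in\NN$, and by construction $|v+w|=\min(|v+w_1|,|v+w_2|)$ $\mm\ae$. The standard argument for downward directed families then produces $(w_n)\subset\NN$ with $|v+w_n|\downarrow|[v]|$ $\mm\ae$, and monotone convergence when $p<\infty$, or a direct estimate when $p=\infty$, gives $\|v+w_n\|_\MM\to\||[v]|\|_{L^p(\mm)}$, identifying the quotient norm.

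For (c), the inequality $\leq$ follows by substituting $w'=fw$ with $w\in\NN$ into the ess-inf, using $|fv+fw|=|f||v+w|$. For the reverse, on the set $\{|f|>\eps\}$ the function $\nchi_{\{|f|>\eps\}}/f$ lies in $L^\infty(\mm)$, so $w':=\nchi_{\{|f|>\eps\}}w/f\in\NN$ for every $w\in\NN$, and $|v+w'|=|fv+w|/|f|$ on $\{|f|>\eps\}$ yields $|f||[v]|\leq|fv+w|$ there; letting $\eps\downarrow 0$ completes the argument. The main obstacle is (d) in the case $p=\infty$: when $p<\infty$, gluing is automatic from Proposition \ref{prop:baselp}(iv) once the $L^p$-normed structure is established, but for $p=\infty$ one must argue by hand, using the uniform bound on $\|\sum_i\nchi_{A_i}[v_i]\|_{\MM/\NN}$ to pick representatives $\tilde v_i\in[v_i]$ with $\|\nchi_{A_i}\tilde v_i\|_\MM$ nearly optimal, and then exploiting both the compatibility $\nchi_{A_i\cap A_j}(\tilde v_i-\tilde v_j)\in\NN$ and the gluing properties of $\MM$ and $\NN$ to produce a global element of $\MM$ whose equivalence class glues the $[v_i]$'s.
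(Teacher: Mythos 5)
Your part (i) and most of part (ii) follow the paper's own route: (i) is the same reduction to Proposition \ref{prop:baselp}(v) with $p_2=1$ (the $g$ built there is exactly $|L|_*$), your downward-directedness trick in (ii)(b) is precisely the paper's inductive gluing of near-optimal competitors in $\NN$, your argument for (c) is the paper's, and gluing for $p<\infty$ is indeed automatic from Proposition \ref{prop:baselp}(iv). The genuine gap is the case $p=\infty$ of step (b). From $|v+w_n|\downarrow|[v]|$ $\mm$-a.e.\ no ``direct estimate'' yields $\|v+w_n\|_\MM=\||v+w_n|\|_{L^\infty(\mm)}\to\||[v]|\|_{L^\infty(\mm)}$: a pointwise decreasing sequence need not converge in $L^\infty$-norm (on $[0,1]$, $\nchi_{(0,1/n)}\downarrow 0$ a.e.\ while every $L^\infty$-norm equals $1$), and downward directedness does not exclude this behaviour. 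So for $p=\infty$ your argument does not establish the identity $\|[v]\|_{\MM/\NN}=\||[v]|\|_{L^\infty(\mm)}$, i.e.\ the first requirement in \eqref{eq:pointnorm}, which is the heart of the statement.

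The missing ingredient is countable patching inside $\NN$, i.e.\ its gluing property, not merely the finite patching that gives directedness. With $(w_n)\subset\NN$ your monotone sequence, the bound $|v+w_n|\le|v+w_1|$ gives $\|w_n\|_\MM\le\|v+w_1\|_\MM+\|v\|_\MM$ uniformly in $n$. Fix $\eps>0$, set $B_n:=\{|v+w_n|\le\||[v]|\|_{L^\infty(\mm)}+\eps\}$ and $\tilde B_n:=B_n\setminus\cup_{i<n}B_i$; these sets cover $\X$ because $\inf_n|v+w_n|=|[v]|$ $\mm$-a.e. The uniform bound above controls $\|\sum_{i\le n}\nchi_{\tilde B_i}w_i\|_\MM$, so the gluing property of the submodule $\NN$ produces a single $\bar w\in\NN$ with $\nchi_{\tilde B_n}\bar w=\nchi_{\tilde B_n}w_n$ for all $n$, whence $|v+\bar w|\le\||[v]|\|_{L^\infty(\mm)}+\eps$ $\mm$-a.e.\ and $\|[v]\|_{\MM/\NN}\le\||[v]|\|_{L^\infty(\mm)}+\eps$. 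Letting $\eps\downarrow0$ closes the gap; this is exactly how the paper handles $p=\infty$, and the same construction is then what gives the gluing property of $\MM/\NN$, so your point (d) — whose sketch is in the right spirit — can be absorbed into it rather than argued separately.
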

\begin{proof} $\ $\\
\noindent{$\mathbf{(i)}$} By point $(v)$ of Proposition \ref{prop:baselp} with $p_2=1$ we see that $\|L\|=\||L|_*\|_{L^q(\mm)}$ so that to conclude it is sufficient to notice  that, directly by the definition, we have $|fL|_*=|f||L|_*$ $\mm$-a.e.\ for every $f\in L^\infty(\mm)$ and $L\in\MM^*$.

\noindent{$\mathbf{(ii)}$} We have already seen in the beginning of the section that $\MM/\NN$ is an $L^\infty(\mm)$-premodule  and it is clear that the function $|[v]|$ is in $L^p(\mm)$, non-negative and depends only on $[v]$. Now let $v\in \MM$ and  $f\in L^\infty(\mm)$ and notice that
\[
|[fv]|=\essinf_{w\in \NN}|fv+w|\leq\essinf_{w\in \NN}|fv+fw|=|f|\essinf_{w\in \NN}|v+w|,\qquad\mm\ae.
\]
On the other hand on the set $\{f=0\}$ we certainly have $|[fv]|=0$ (just pick $w=0$), and defining $A_n:=\{|f|\geq n^{-1}\}$ we have $\frac{\nchi_{A_n}}f\in L^{\infty}(\mm)$ and thus
\[
\nchi_{A_n}|[fv]|=\nchi_{A_n}\essinf_{w\in \NN}|fv+w|=\nchi_{A_n}\essinf_{w\in \NN}|f(v+\tfrac{\nchi_{A_n}}fw)|\geq\nchi_{A_n}|f|\essinf_{w\in \NN}|v+w|,\quad\mm\ae,
\]
which, by the arbitrariness of $n\in\N$, is sufficient to conclude that $|[fv]|=|f||[v]|$ $\mm$-a.e..

Now we prove that $\||[v]|\|_{L^p(\mm)}=\|[v]\|_{\MM/\NN}$. Start noticing that   for every $w\in \NN$ we have $|[v]|\leq|v+w|$ $\mm$-a.e.\ and thus $\||[v]|\|_{L^p(\mm)}\leq \||v+w|\|_{L^p(\mm)}$ so that the arbitrariness of $w\in \NN$ yields $\||[v]|\|_{L^p(\mm)}\leq\|[v]\|_{\MM/\NN}$. For the other inequality, let $(w_n)\subset \NN$ be a sequence such that $|[v]|=\inf_n|v+w_n|$ and define inductively $\tilde w_n\in \NN$ by putting $\tilde w_1:=w_1$ and having defined $\tilde w_n$ put
\[
\tilde w_{n+1}:=\nchi_{A_n}w_{n+1}+\nchi_{A_n^c}\tilde w_n,\qquad\text{ where }\qquad  A_n:=\{|v+w_{n+1}|\leq |v+\tilde w_n|\}.
\]
Then we have  $|[v]|=\inf_n|v+\tilde w_n|$ and $|v+\tilde w_{n+1}|\leq |v+\tilde w_n|$ $\mm$-a.e.\ for every $n\in\N$. 

If $p<\infty$ this is sufficient to deduce that 
\[
\||[v]|\|_{L^p(\mm)}=\inf_n\||v+\tilde w_n|\|_{L^p(\mm)}\geq\inf_{w\in \NN}\||v+w|\|_{L^p(\mm)}=\|[v]\|_{\MM/\NN}.
\]
Thus we proved that $\MM/\NN$ is an $L^p(\mm)$-normed premodule, which by  Proposition \ref{prop:baselp} gives the thesis. 

In the case $p=\infty$, notice that from $|v+\tilde w_n|\leq |v+\tilde w_1|$ $\mm$-a.e.\ we deduce 
\begin{equation}
\label{eq:cenalevico}
\|\tilde w_n\|_\MM=\||\tilde w_n|\|_{L^\infty(\mm)}\leq  \||v+\tilde w_1|\|_{L^\infty(\mm)}+\||v|\|_{L^\infty(\mm)}=\|v+\tilde w_1\|_\MM+\|v\|_\MM,\qquad\forall n\in\N.
\end{equation}
Then  fix $\eps>0$ let $A_n:=\{|v+\tilde w_n|\leq \||[v]|\|_{L^\infty(\mm)}+\eps\}$ and $\tilde A_n:=A_n\setminus\cup_{i<n}A_i$, notice that \eqref{eq:cenalevico} ensures that $\|\sum_{i=1}^n\nchi_{\tilde A_i}w_i\|_\MM\leq \|v+\tilde w_1\|_\MM+\|v\|_\MM$ for every $n\in\N$ and  use the gluing property in $\NN$ for the sequences $(\tilde w_n)$ and $(\tilde A_n)$ to find $\bar w\in \NN$ such that $\nchi_{\tilde A_i}\bar w=\nchi_{\tilde A_i}\tilde w_n$ for every $n\in\N$. Since $\cup_n\tilde A_n=\X$ we see that 
\[
\|[v]\|_{\MM/\NN}\leq \|v+\bar w\|_\MM=\||v+\bar w|\|_{L^\infty(\mm)}=\sup_n\||v+\tilde w_n|\|_{L^\infty(\mm)}\leq \||[v]|\|_{L^\infty(\mm)}+\eps,
\]
and letting $\eps\downarrow 0$ we conclude that $\MM/\NN$ is an $L^\infty(\mm)$-normed premodule, as desired.

By  Proposition \ref{prop:baselp} we know that $\MM/\NN$ has the locality property and the very same construction just made also shows that $\MM/\NN$ has the gluing property, thus concluding the proof.
\end{proof}

Given any $L^\infty(\mm)$-module $\MM$ there is a canonical map $\mathcal I_\MM:\MM\to \MM^{**}$ which associates to every $v\in \MM$ the morphism $\mathcal I_\MM(v):\MM^*\to L^1(\mm)$ defined by
\[
\mathcal I_\MM(v)(L):=L(v),\qquad\forall L\in \MM^*.
\]
The trivial inequality $\int|\mathcal I_\MM(v)(L)|\,\d\mm=\int|L(v)|\,\d\mm\leq \|L\|_{\MM^*}\|v\|_\MM$ and the identity $\mathcal I_\MM(v)(f L)=f L(v)$ show that $\mathcal I_\MM(v)\in \MM^{**} $ for every $v\in \MM$ with $\|\mathcal I_\MM(v)\|_{\MM^{**}}\leq \|v\|_\MM$. It is not clear to us whether $\mathcal I_\MM$ is an isometry for arbitrary modules, the problem being that, as already pointed out, for a generic module $\MM$ we are not able to exhibit a non-zero element in  $\MM^*$. The situation is instead simple for modules with full-dual:
\begin{proposition}\label{prop:dualnonvuoto}
Let $\MM$ be a module with full-dual. Then the map $\mathcal I_\MM:\MM\to \MM^{**}$  is an isometry.
\end{proposition}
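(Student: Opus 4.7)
The plan is to establish the reverse inequality $\|\mathcal I_\MM(v)\|_{\MM^{**}} \geq \|v\|_\MM$, since the inequality $\|\mathcal I_\MM(v)\|_{\MM^{**}} \leq \|v\|_\MM$ has already been noted in the discussion preceding the statement. I would combine two ingredients: the classical Hahn–Banach theorem applied to the Banach space $\MM$, and the full-dual assumption, which ensures that the isometric embedding $\intmap_{\MM^*}\colon \MM^*\to \MM'$ is actually a surjective isometry.

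Concretely, fix $v\in\MM$ with $v\neq 0$ (the case $v=0$ is trivial). By Hahn–Banach, choose $\ell\in\MM'$ with $\|\ell\|_{\MM'}=1$ and $\ell(v)=\|v\|_\MM$. By the full-dual hypothesis there exists $L\in\MM^*$ with $\intmap_{\MM^*}(L)=\ell$, and since $\intmap_{\MM^*}$ is a norm-preserving map (as shown just before the definition of full-dual modules) we have $\|L\|_{\MM^*}=\|\ell\|_{\MM'}=1$. Now by the very definition of $\mathcal I_\MM(v)$ as an element of $\MM^{**}=\Hom(\MM^*,L^1(\mm))$, its norm is
\[
\|\mathcal I_\MM(v)\|_{\MM^{**}}=\sup_{\|L'\|_{\MM^*}\leq 1}\|L'(v)\|_{L^1(\mm)}.
\]
Testing against the specific $L$ above yields
\[
\|\mathcal I_\MM(v)\|_{\MM^{**}}\geq \|L(v)\|_{L^1(\mm)}=\int|L(v)|\,\d\mm\geq \Big|\int L(v)\,\d\mm\Big|=|\intmap_{\MM^*}(L)(v)|=|\ell(v)|=\|v\|_\MM,
\]
which is the desired inequality.

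There is essentially no serious obstacle here: the argument is a clean combination of Hahn–Banach for Banach spaces with the translation provided by the full-dual assumption, and the only subtle point — that integration gives an isometry from $\MM^*$ into $\MM'$ — has already been verified in the text preceding the definition. It is worth noting that it is precisely the possible failure of surjectivity of $\intmap_{\MM^*}$ for general $L^\infty(\mm)$-modules (absent an analogue of Hahn–Banach in the category of modules) that blocks the same proof from giving an isometry in full generality, which is the motivation for isolating the full-dual class.
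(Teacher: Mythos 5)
Your proof is correct and follows essentially the same route as the paper's: Hahn--Banach on the Banach space $\MM$ to get $\ell\in\MM'$ with $\|\ell\|_{\MM'}=1$ and $\ell(v)=\|v\|_\MM$, the full-dual assumption together with the norm-preservation of $\intmap_{\MM^*}$ to lift it to $L\in\MM^*$ with $\|L\|_{\MM^*}=1$, and then the same chain of estimates to bound $\|\mathcal I_\MM(v)\|_{\MM^{**}}$ from below by $\|v\|_\MM$.
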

\begin{proof} Pick $v\in \MM$ and use the Hahn-Banach extension theorem to find a functional $l\in \MM'$ such that $\|l\|_{\MM'}=1$ and $l(v)=\|v\|_\MM$. Since $\MM$ has full-dual, there exists $L\in \MM^*$ such that  $l(w)=\int L(w)\,\d\mm$ for every $w\in \MM$. Then $\|L\|_{\MM^*}=\|l\|_{\MM'}=1$ and we have
\[
\|v\|_\MM=l(v)=\int L(v)\,\d\mm=\int \mathcal I_\MM(v)(L)\,\d\mm\leq \|\mathcal I_\MM(v)\|_{\MM^{**}}\|L\|_{\MM^*}= \|\mathcal I_\MM(v)\|_{\MM^{**}},
\]
which is sufficient to conclude.
\end{proof}
For future reference, we point out in the next corollary that  in the case of $L^p(\mm)$-normed modules, $p<\infty$, we can associate to any $v\in \MM$ a functional $L\in \MM^*$ in a way similar to that of the proof of the previous proposition, but somehow more symmetric in $v,L$:
\begin{corollary}\label{cor:perduali}
Let $\MM$ be an $L^p(\mm)$-normed module, $p<\infty$, and $v\in \MM$. Then there exists $L\in \MM^*$ such that 
\begin{equation}
\label{eq:lduale}
|L|_*^q=|v|^p=L(v),\qquad\mm\ae,
\end{equation}
where $\frac1p+\frac1q=1$ and the first equality should be intended as $|L|_*=\nchi_{\{v\neq 0\}}$ in the case $p=1$. 
\end{corollary}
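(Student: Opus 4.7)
The natural duality candidate in the classical $L^p$ setting is $|v|^{p-1}\,\mathrm{sign}(v)$, and the plan is to realize an abstract analogue of this using Hahn--Banach on the Banach space $\MM$ and then pulling the resulting linear functional down to a module morphism via Proposition \ref{prop:fulllp}. Concretely, for $p>1$ I set $\Phi(w):=\int_\X|v|^{p-1}|w|\,\d\mm$, which is well defined on $\MM$ (by H\"older with exponents $q,p$), positively homogeneous, and subadditive thanks to the pointwise triangle inequality \eqref{eq:pointtriangle}. On the one-dimensional subspace $\R v\subset\MM$, define $\tilde\ell(tv):=t\int_\X|v|^p\,\d\mm$; the choice is forced by the desired identity $\int L(v)\,\d\mm=\int|v|^p\,\d\mm$, and one checks $\tilde\ell(tv)=t\int|v|^{p-1}|v|\,\d\mm$, which is bounded in absolute value by $\Phi(tv)$.

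Hahn--Banach then yields a linear extension $\ell:\MM\to\R$ with $\ell\leq\Phi$, hence $|\ell|\leq\Phi$, so $\ell\in\MM'$ with $\|\ell\|_{\MM'}\leq\|v\|_\MM^{p-1}$. Since $p<\infty$, Proposition \ref{prop:fulllp} gives a unique $L\in\MM^*$ with $\int_\X L(w)\,\d\mm=\ell(w)$ for every $w\in\MM$. To get the pointwise identity $L(v)=|v|^p$ $\mm$-a.e., I test against arbitrary $A\in\BB$: by the module-morphism property, $\int_AL(v)\,\d\mm=\ell(\nchi_A v)\leq\Phi(\nchi_A v)=\int_A|v|^p\,\d\mm$, and the analogous bound on $A^c$ combined with $\ell(v)=\int|v|^p\,\d\mm$ forces equality for every $A$, whence $L(v)=|v|^p$ $\mm$-a.e.

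For the pointwise norm, the inequality $\int_A L(w)\,\d\mm=\ell(\nchi_Aw)\leq\int_A|v|^{p-1}|w|\,\d\mm$ (and its negative) gives $|L(w)|\leq|v|^{p-1}|w|$ $\mm$-a.e.\ for every $w\in\MM$; by Proposition \ref{prop:baselp}(v) (with $p_2=1$) this yields $|L|_*\leq|v|^{p-1}$ $\mm$-a.e. For the reverse, I use the pointwise inequality $L(v)\leq|L|_*|v|$ $\mm$-a.e.\ (same proposition), combined with $L(v)=|v|^p$ and the already-established upper bound, to obtain the chain $|v|^p=L(v)\leq|L|_*|v|\leq|v|^{p-1}|v|=|v|^p$ $\mm$-a.e., which saturates and gives $|L|_*=|v|^{p-1}$ $\mm$-a.e.; raising to the $q$-th power yields $|L|_*^q=|v|^p$.

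The case $p=1$ (so $q=\infty$) is handled by the same scheme with $|v|^{p-1}$ replaced by $\nchi_{\{v\neq 0\}}$ in the definition of $\Phi$: the domination argument is unchanged, and from $|v|=L(v)\leq|L|_*|v|$ one reads off $|L|_*\geq 1$ on $\{v\neq 0\}$, while the upper bound $|L|_*\leq\nchi_{\{v\neq 0\}}$ is immediate, giving $|L|_*=\nchi_{\{v\neq 0\}}$ as required. The main conceptual obstacle is that Hahn--Banach produces only a Banach-dual functional, which a priori has no locality; the indispensable bridge is Proposition \ref{prop:fulllp}, whose Radon--Nikodym construction turns the integral identity $\int L(w)\,\d\mm=\ell(w)$ into a localized statement $\int_AL(w)\,\d\mm=\ell(\nchi_Aw)$, and it is precisely this localization that lets the integral Hahn--Banach bound be upgraded to the desired pointwise equalities.
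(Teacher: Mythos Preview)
Your proof is correct and reaches the same conclusion, but the route differs from the paper's. The paper first invokes Proposition~\ref{prop:dualnonvuoto} to produce a norming functional $L\in\MM^*$ with $\|L\|_{\MM^*}=1$ and $\int L(v)\,\d\mm=\|v\|_\MM$, then rescales by $\|v\|_\MM^{p-1}$ and forces the pointwise equalities by saturating the chain $\|v\|^p_\MM=\int\tilde L(v)\,\d\mm\leq\int|\tilde L|_*|v|\,\d\mm\leq\||\tilde L|_*\|_{L^q}\||v|\|_{L^p}=\|v\|_\MM^p$, appealing to the equality case of H\"older's inequality. You instead build the weight $|v|^{p-1}$ into the Hahn--Banach step from the start, via the sublinear majorant $\Phi(w)=\int|v|^{p-1}|w|\,\d\mm$, which gives you the pointwise domination $|L(w)|\leq|v|^{p-1}|w|$ directly after localizing with Proposition~\ref{prop:fulllp}. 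Your approach is a bit more hands-on but avoids the (mild) detour through the H\"older equality case; the paper's is shorter because it recycles Proposition~\ref{prop:dualnonvuoto} verbatim. One small note: your citation of Proposition~\ref{prop:baselp}(v) for the bound $|L|_*\leq|v|^{p-1}$ is really an appeal to the definition~\eqref{eq:normdual} of $|L|_*$ as an essential supremum, which is what the proof of that proposition unpacks; it would be cleaner to invoke~\eqref{eq:normdual} directly.
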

\begin{proof} In the proof of Proposition \ref{prop:dualnonvuoto} above we built $L\in \MM^*$ with norm 1 and such that $\int L(v)=\|v\|_\MM$. Define $\tilde L:=\|v\|_\MM^{p-1}L$ and notice that
\[
\|v\|_\MM^p=\int \tilde L(v)\,\d\mm \leq \int |\tilde L|_*|v|\,\d\mm\leq \||v|\|_{L^p(\mm)}\||\tilde L|_*\|_{L^q(\mm)}=\|v\|_\MM\|\tilde L\|_{\MM^*}=\|v\|_\MM^p.
\]
Then the inequalities are equalities and if $p>1$ the equality case in the Holder inequality gives the thesis. If $p=1$ replace if necessary $\tilde L$ with $\nchi_{\{v\neq 0\}}\tilde L$ to conclude.
\end{proof}
Notice that in particular for any $L^p(\mm)$-normed module, $p\in[1,\infty]$, we have the following duality property
\[
|v|=\esssup_{L\in\MM^*\ : |L|_*\leq 1\ \mm\ae}|L(v)|.
\]
Indeed, the inequality $\leq$ follows from the definition of $|\cdot|_*$ and for $p<\infty$ the equality comes from Corollary \ref{cor:perduali} above. The case $p=\infty$ can be directly handled by approximation.

\bigskip

Given a $L^\infty(\mm)$-module $\MM$ with full-dual, we shall say that it is {\bf reflexive} provided $\mathcal I_\MM:\MM\to \MM^{**}$ is surjective. We see from the example of $L^1(\mm)$ viewed as $L^\infty(\mm)$-module that a module can be reflexive as module but not as Banach space. The other implication is instead always true, as shown by the following simple proposition:
\begin{proposition}\label{prop:fullrefl}
Let $\MM$ be a module with full dual which is reflexive as Banach space. Then it is reflexive as module.
\end{proposition}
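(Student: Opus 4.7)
The plan is to exploit the full-dual assumption to identify $\MM^*$ isometrically with the Banach dual $\MM'$ (this is exactly what Proposition \ref{prop:dualnonvuoto} and the preceding discussion give, since $\intmap_{\MM^*}$ is an isometry and, by hypothesis, surjective), and then to pull back the surjectivity of the canonical Banach-bidual map $\MM\to\MM''$ to the module setting, using module-morphism locality to upgrade an integral identity into a pointwise one.

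Concretely, I would fix $\xi\in\MM^{**}$ and aim to produce $v\in\MM$ with $\mathcal I_\MM(v)=\xi$. Since $\intmap_{\MM^*}:\MM^*\to\MM'$ is an isometric isomorphism, to any $l\in\MM'$ I can associate the unique $L_l\in\MM^*$ with $l(w)=\int L_l(w)\,\d\mm$, and then define
\[
\ell:\MM'\to\R,\qquad \ell(l):=\int_\X \xi(L_l)\,\d\mm.
\]
Linearity is obvious, and the bound $|\ell(l)|\leq\|\xi(L_l)\|_{L^1(\mm)}\leq\|\xi\|_{\MM^{**}}\|L_l\|_{\MM^*}=\|\xi\|_{\MM^{**}}\|l\|_{\MM'}$ shows that $\ell\in\MM''$. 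Invoking the Banach-reflexivity of $\MM$, I obtain $v\in\MM$ with $\ell(l)=l(v)$ for every $l\in\MM'$, which unwinds to
\[
\int_\X \xi(L)\,\d\mm=\int_\X L(v)\,\d\mm=\int_\X\mathcal I_\MM(v)(L)\,\d\mm,\qquad\forall L\in\MM^*.
\]

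The only step requiring a small idea is promoting this identity of integrals to the pointwise identity $\xi(L)=\mathcal I_\MM(v)(L)$ in $L^1(\mm)$. Here I would exploit that $\xi$ and $\mathcal I_\MM(v)$ are module morphisms: for an arbitrary $A\in\BB$, apply the identity above with $\nchi_A L$ in place of $L$ to obtain
\[
\int_A\xi(L)\,\d\mm=\int_\X\xi(\nchi_A L)\,\d\mm=\int_\X\mathcal I_\MM(v)(\nchi_A L)\,\d\mm=\int_A\mathcal I_\MM(v)(L)\,\d\mm.
\]
Since both $\xi(L)$ and $\mathcal I_\MM(v)(L)$ lie in $L^1(\mm)$ and $A$ is arbitrary, the two functions coincide $\mm$-a.e.. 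By the arbitrariness of $L\in\MM^*$ this gives $\xi=\mathcal I_\MM(v)$, which is the desired surjectivity of $\mathcal I_\MM$.

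I do not expect a real obstacle: once full-duality is recognized as the bridge that converts module-duals into Banach-duals and back, the proof is largely bookkeeping, the only substantive (but very short) step being the test against $\nchi_A L$ that turns a scalar duality into an $\mm$-a.e.\ identity.
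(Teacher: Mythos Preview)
Your proof is correct and is essentially the paper's argument with the commutative diagram unwound by hand: the paper factors the situation as $\intmap_{\MM^{**}}\circ\mathcal I_\MM=\intmap_{\MM^*}^{\rm t}\circ\mathcal J_\MM$, notes that the right-hand side is surjective (full dual makes $\intmap_{\MM^*}^{\rm t}$ an isomorphism, Banach reflexivity makes $\mathcal J_\MM$ onto) and that $\intmap_{\MM^{**}}$ is injective (it is norm-preserving), and concludes. Your construction of $\ell$ is precisely $(\intmap_{\MM^*}^{\rm t})^{-1}\circ\intmap_{\MM^{**}}$ applied to $\xi$, and your localization step with $\nchi_A L$ is exactly the injectivity of $\intmap_{\MM^{**}}$ spelled out; so the two arguments coincide in substance, yours being the concrete version of the paper's diagram chase.
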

\begin{proof}
By assumption the map $\intmap_{\MM^*}:\MM^*\to \MM'$ is an isomorphism of Banach spaces and thus its transpose $\intmap_{\MM^*}^{\rm t}$ is an isomorphism from the dual $\MM''$ of $\MM'$ and the Banach dual $(\MM^*)'$ of $\MM^*$, and in particular it is surjective. Then denoting by $\mathcal J_\MM:\MM\to \MM''$ the canonical isometry from $\MM$ to  its bidual as Banach space, it is immediate to verify that the diagram
\begin{center}
\begin{tikzpicture}[node distance=2.5cm, auto]
  \node (A) {$\MM$};
  \node (B) [right of=A] {$\MM^{**}$};
  \node (C) [below  of=A] {$\MM''$};
  \node (D) [below  of=B] {$(\MM^*)'$};
  \draw[->] (A) to node {$\mathcal I_\MM$} (B);
  \draw[->] (A) to node [swap] {$\mathcal J_\MM$} (C);
  \draw[->] (B) to node  {$\intmap_{\MM^{**}}$} (D);
  \draw[->] (C) to node  {$\intmap_{\MM^*}^{\rm t}$} (D);
\end{tikzpicture}
\end{center}
commutes. We know that $\intmap_{\MM^{**}}$ is injective and, by assumption and what said at the beginning, that $\mathcal J_\MM$ and $\intmap_{\MM^*}^{\rm t}$ are surjective. Thus $\mathcal I_\MM$ is surjective as well.
\end{proof}
An easy consequence of this proposition is:
\begin{corollary}[Reflexivity of $L^p(\mm)$-normed modules, $p\in(1,\infty)$]\label{cor:refllp} Let $\MM$ be an $L^p(\mm)$-normed module, $p\in(1,\infty)$. Then it is reflexive as module if and only if it is reflexive as Banach space.
\end{corollary}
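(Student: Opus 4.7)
The statement is an equivalence, and one direction, namely that Banach reflexivity implies module reflexivity, is already contained in Proposition \ref{prop:fullrefl} once we know that $\MM$ has full dual. Since $p\in(1,\infty)$ in particular gives $p<\infty$, Proposition \ref{prop:fulllp} grants that $\MM$ has full dual, and the conclusion follows.

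For the converse direction, the plan is to run the commutative diagram from the proof of Proposition \ref{prop:fullrefl} backwards. The key observation is that for $p\in(1,\infty)$ the conjugate exponent $q$ defined by $\frac1p+\frac1q=1$ also lies in $(1,\infty)$, and in particular $q<\infty$. By Proposition \ref{prop:normevarie}(i) the dual module $\MM^*$ is an $L^q(\mm)$-normed module, so Proposition \ref{prop:fulllp} applies to $\MM^*$ as well: both $\intmap_{\MM^*}:\MM^*\to\MM'$ and $\intmap_{\MM^{**}}:\MM^{**}\to(\MM^*)'$ are isometric isomorphisms of Banach spaces. Taking the transpose of the former also gives an isomorphism $\intmap_{\MM^*}^{\rm t}:\MM''\to (\MM^*)'$.

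Now suppose $\MM$ is reflexive as a module, i.e.\ $\mathcal I_\MM:\MM\to\MM^{**}$ is surjective (hence bijective, since it is always an isometry into $\MM^{**}$). From the commutative diagram
\begin{center}
\begin{tikzpicture}[node distance=2.5cm, auto]
  \node (A) {$\MM$};
  \node (B) [right of=A] {$\MM^{**}$};
  \node (C) [below of=A] {$\MM''$};
  \node (D) [below of=B] {$(\MM^*)'$};
  \draw[->] (A) to node {$\mathcal I_\MM$} (B);
  \draw[->] (A) to node [swap] {$\mathcal J_\MM$} (C);
  \draw[->] (B) to node {$\intmap_{\MM^{**}}$} (D);
  \draw[->] (C) to node {$\intmap_{\MM^*}^{\rm t}$} (D);
\end{tikzpicture}
\end{center}
the composition $\intmap_{\MM^{**}}\circ\mathcal I_\MM$ is surjective since both factors are, so $\intmap_{\MM^*}^{\rm t}\circ\mathcal J_\MM$ is surjective. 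Because $\intmap_{\MM^*}^{\rm t}$ is a bijection, this forces $\mathcal J_\MM$ to be surjective, which is precisely reflexivity of $\MM$ as a Banach space.

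I do not anticipate any real obstacle: the whole argument is just the observation that, for $p\in(1,\infty)$, both $\MM$ and $\MM^*$ fall into the regime where Proposition \ref{prop:fulllp} applies, so both vertical arrows in the diagram are isomorphisms, and the equivalence of surjectivity of the horizontal arrows becomes a purely formal diagram chase. The restriction $p\in(1,\infty)$ is essential here: if $p=\infty$ then $\MM^*$ would be an $L^1(\mm)$-normed module and we would have no guarantee that $\intmap_{\MM^{**}}$ is surjective, blocking the argument.
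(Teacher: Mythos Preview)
Your proof is correct and follows essentially the same approach as the paper: both directions use the commutative diagram from Proposition \ref{prop:fullrefl}, together with the fact that for $p\in(1,\infty)$ both $\MM$ and $\MM^*$ have full dual by Proposition \ref{prop:fulllp}, so the vertical maps are isomorphisms and the diagram chase goes through in both directions. One small remark: your closing comment about $p=\infty$ is slightly off --- in that case $\MM^*$ is $L^1(\mm)$-normed and hence \emph{does} have full dual by Proposition \ref{prop:fulllp}; the actual obstruction is that $\MM$ itself, being $L^\infty(\mm)$-normed, need not have full dual, so $\intmap_{\MM^*}$ (and hence $\intmap_{\MM^*}^{\rm t}$) may fail to be an isomorphism.
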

\begin{proof} The `if' follows from Proposition \ref{prop:fullrefl} above and the fact that $\MM$ has full dual (Proposition \ref{prop:fulllp}). For the `only if', consider the same commutative diagram in the proof of Proposition \ref{prop:fullrefl} above: by assumption and Proposition \ref{prop:fulllp} we know that both $\MM$ and $\MM^*$ have full dual, and thus $\intmap_{\MM^{**}}$ is surjective and $\intmap_{\MM^*}^{\rm t}$ injective. Hence if $\mathcal I_\MM$ is surjective, $\mathcal J_\MM$ must be surjective as well.
\end{proof}

Although in fact trivial, it is worth to underline that to be an $L^p(\mm)$-normed module for some $p\in(1,\infty)$ is not sufficient to ensure reflexivity:
\begin{example}\label{ex:delta}{\rm Let $\X$ be consisting of just one point and let $\mm$ be the Dirac delta on such point. Then $L^\infty(\mm)\sim \R$ and thus  every Banach space $B$ has a natural structure of $L^\infty(\mm)$-module and the Banach-dual $B'$ and the module-dual $B^*$ can be  canonically identified. Given that  $B$ can also be seen as $L^p(\mm)$-normed module for every $p\in[1,\infty]$ in a trivial way, to conclude the example it is sufficient to pick $B$ not reflexive in the Banach sense, to obtain a non-reflexive $L^p(\mm)$-normed module.
}\fr\end{example}
This example shows that in order to ensure the reflexivity of a module $\MM$, some sort of reflexivity should be imposed on the Banach spaces $(B_x,\|\cdot\|_x)$ mentioned in Example \ref{ex:lpmod}. We shall not investigate in this direction, but rather focus on the important subclass of Hilbert modules:
\begin{definition}[Hilbert modules]\label{def:hilmod}
We shall say that an $L^\infty(\mm)$-module $\MM$ is an Hilbert module  provided $\MM$, viewed as Banach space, is in fact an Hilbert space.
\end{definition}
It is not entirely trivial that Hilbert modules, whose Hilbert structure is only imposed globally, in a sense, are always $L^2(\mm)$-normed with a norm satisfying a pointwise parallelogram identity:
\begin{proposition}[Hilbert modules are $L^2(\mm)$-normed]\label{prop:hill2}
Let $\HH$ be an $L^\infty(\mm)$-premodule with the locality property which, when seen as Banach space, is an  Hilbert space. 

Then it is an $L^2(\mm)$-normed module. Moreover, the pointwise norm (which is unique by Proposition \ref{prop:baselp}) satisfies 
\begin{equation}
\label{eq:pointquad}
|v+w|^2+|v-w|^2=2|v|^2+2|w|^2,\qquad\mm\ae,
\end{equation}
for every $v,w\in \HH$.
\end{proposition}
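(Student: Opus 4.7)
The central observation is that \emph{multiplication by $\nchi_A$ is an orthogonal projection on $\HH$} for every $A\in\BB$. Indeed, the premodule axioms give $\|\nchi_A v\|_\HH\le\|v\|_\HH$ and $\nchi_A(\nchi_A v)=\nchi_A v$, so the operator $P_A(v):=\nchi_A v$ is an idempotent of norm at most $1$ on a Hilbert space, and a standard argument (if $x\in\ima P_A$ and $y\in\ker P_A$, the function $t\mapsto\|x+ty\|^2$ is minimised at $t=0$) forces $\ima P_A\perp\ker P_A$. In particular, for disjoint $A,B\in\BB$, using self-adjointness of $P_A$,
\[
\la\nchi_A v,\nchi_B v\ra=\la\nchi_A\nchi_A v,\nchi_B v\ra=\la\nchi_A v,\nchi_A\nchi_B v\ra=0.
\]

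Fix $v\in\HH$ and define $\mu_v:\BB\to[0,\infty)$ by $\mu_v(A):=\|\nchi_A v\|_\HH^2$. Finite additivity on disjoint $A,B$ follows from Pythagoras and the orthogonality above. For $\sigma$-additivity, let $(A_n)\subset\BB$ be disjoint with union $A$. The orthogonality yields $\sum_n\|\nchi_{A_n}v\|_\HH^2\le\|v\|_\HH^2$, so the partial sums $\sum_{i=1}^n\nchi_{A_i}v$ form a Cauchy sequence in $\HH$, converging to some $w$ with $\|w\|_\HH^2=\sum_n\|\nchi_{A_n}v\|_\HH^2$. For each $n$ one checks $\nchi_{A_n}(w-\nchi_A v)=0$ and $\nchi_{A^c}(w-\nchi_A v)=0$; the locality property then forces $w=\nchi_A v$, proving $\sigma$-additivity. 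Since $\mu_v\ll\mm$ (if $\mm(A)=0$ then $\nchi_A=0$ in $L^\infty(\mm)$, hence $\nchi_A v=0$), Radon-Nikodym produces a non-negative density $\rho_v\in L^1(\mm)$, and we define $|v|:=\sqrt{\rho_v}\in L^2(\mm)$. By construction $\||v|\|_{L^2(\mm)}^2=\mu_v(\X)=\|v\|_\HH^2$.

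It remains to verify $|fv|=|f||v|$ $\mm$-a.e.\ and the pointwise parallelogram identity. For the product rule, first consider a simple $f=\sum_i a_i\nchi_{B_i}$ with disjoint $B_i$: using orthogonality of the $\nchi_{A\cap B_i}v$ one computes $\mu_{fv}(A)=\sum_i a_i^2\mu_v(A\cap B_i)=\int_A|f|^2|v|^2\,\d\mm$, so $|fv|=|f||v|$ in this case. For general $f\in L^\infty(\mm)$, choose simple $f_n\to f$ uniformly; then $f_nv\to fv$ in $\HH$ and $|f_n||v|\to|f||v|$ in $L^2(\mm)$, while $|f_nv|=|f_n||v|$. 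To conclude $|fv|=|f||v|$, test against arbitrary $A\in\BB$: the identity $\int_A|v_n|^2\,\d\mm=\|\nchi_A v_n\|_\HH^2$ passes to the limit and identifies $|fv|$ with $|f||v|$ $\mm$-a.e. This establishes that $\HH$ is an $L^2(\mm)$-normed premodule, and Proposition \ref{prop:baselp}(iv) upgrades this to an $L^2(\mm)$-normed module. Finally, for $v,w\in\HH$ and any $A\in\BB$, the global parallelogram identity applied to $\nchi_A v,\nchi_A w$ gives
\[
\mu_{v+w}(A)+\mu_{v-w}(A)=2\mu_v(A)+2\mu_w(A),
\]
so $\int_A(|v+w|^2+|v-w|^2)\,\d\mm=\int_A(2|v|^2+2|w|^2)\,\d\mm$ for all $A\in\BB$, which yields \eqref{eq:pointquad}. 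The only subtle step is the $\sigma$-additivity of $\mu_v$, which is precisely where the locality property enters in an essential way.
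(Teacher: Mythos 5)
Your proof is correct, and its skeleton is the one the paper uses: orthogonality of $\nchi_A v$ and $\nchi_B v$ for disjoint $A,B$, the measure $\mu_v(A):=\|\nchi_A v\|_\HH^2$, Radon--Nikodym to produce $|v|$, and then testing against sets in $\BB$ to get both $|fv|=|f|\,|v|$ and \eqref{eq:pointquad}. You diverge from the paper in two intermediate steps, and in both cases your route is a bit more economical. First, where the paper proves \eqref{eq:ortv} and the self-adjointness identity \eqref{eq:passla} by an explicit $\eps$-expansion, you invoke the standard fact that a contractive idempotent on a Hilbert space is an orthogonal projection; same content, cleaner packaging. Second, and more substantially, for $\sigma$-additivity the paper takes a decreasing sequence $\bar A_n$ with empty intersection, extracts a weak limit of $\nchi_{A_n}v$, kills it via locality and \eqref{eq:passla}, and upgrades weak to strong convergence using the norm bound; you instead use Bessel-type orthogonality to make the partial sums $\sum_{i\le n}\nchi_{A_i}v$ Cauchy and then use locality to identify the strong limit with $\nchi_{A}v$, avoiding weak compactness altogether. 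Likewise, for $|fv|=|f|\,|v|$ the paper passes through countably-valued functions and a monotonicity trick, whereas you use uniform approximation of $f\in L^\infty(\mm)$ by simple functions together with continuity of $u\mapsto\|\nchi_A u\|_\HH^2$; this is shorter and perfectly adequate since simple functions are uniformly dense in $L^\infty(\mm)$. The only cosmetic caveat is that the measure $\mu_v$ should formally live on $\mathcal A$ (via representatives $\bar A$, as in the paper) rather than on $\BB$, but since its value depends only on the equivalence class and $\mu_v\ll\mm$, this is pure bookkeeping and does not affect the argument.
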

\begin{proof} We shall denote by $\la\cdot,\cdot\ra_\HH$ the scalar product in the Hilbert space $\HH$. We begin by proving that 
\begin{equation}
\label{eq:ortv}
\la \nchi_A \,v,\nchi_B \,v\ra_\HH=0,\qquad\forall v\in \HH, \ \forall A,B\in\BB\ \text{such that }A\cap  B =\emptyset.
\end{equation}
Thus fix $v,A,B$ as above, notice that
\[
\|\nchi_A \,v+\eps\nchi_B \, v\|^2_\HH=\|\nchi_A  \,v\|_\HH^2+2\eps\la \nchi_A \, v,\nchi_B \, v\ra_\HH+\eps^2\|\nchi_B \,  v\|_\HH^2,\qquad\forall\eps\in\R,
\]
and that from the fact that $\|\nchi_A\|_{L^\infty(\mm)}\leq 1$ and $\nchi_A\nchi_B=\nchi_{A\cap B}=0$ we deduce that
\[
\|\nchi_A\, v\|^2_\HH=\|\nchi_A(\nchi_A v+\eps\nchi_B v)\|^2\leq\|\nchi_A v+\eps\nchi_B v\|^2,\qquad\forall\eps\in\R.
\]
These two formulas give that $2\eps\la \nchi_A \, v,\nchi_B \, v\ra_\HH+\eps^2\|\nchi_B\, v\|_\HH^2\geq 0$ for every $\eps\in\R$ which trivially yields the claim \eqref{eq:ortv}. Next, we claim that
\begin{equation}
\label{eq:passla}
\la\nchi_A\, v,w\ra_\HH=\la v,\nchi_A\,  w\ra_\HH,\qquad\forall v,w\in \HH, \ A\in\BB.
\end{equation}
Indeed, use \eqref{eq:ortv} with  $\nchi_A\, v+\nchi_{A^c}\,  w$ in place of $v$ and $ A^c$ in place of $B$ to deduce that
$\la\nchi_A\, v,\nchi_{ A^c}\, w\ra_M=0$. Similarly we obtain $\la\nchi_{ A^c}\,  v,\nchi_{A}\,  w\ra_\HH=0$ and using these two information we get
\[
\la \nchi_A\,  v,w\ra_\HH=\la \nchi_A\,  v,\nchi_A\,  w\ra_\HH+\la \nchi_A\, v,\nchi_{ A^c}\,  w\ra_\HH=\la \nchi_A\,  v,\nchi_A\,  w\ra_\HH=\ldots=\la  v,\nchi_A\,  w\ra_\HH,
\]
as desired.

Now fix $v\in \HH$ and consider the map 
\[
\mathcal A\ni \bar A\quad\mapsto\quad\mu_v(\bar A):=\|\nchi_A\, v\|_\HH^2,\qquad\text{where $A\in \BB$ is the equivalence class of $\bar A\in\mathcal A$}.
\]
We claim that $\mu_v$ is a non-negative measure, the non-negativity being obvious. For additivity, let $\bar A,\bar B\in\mathcal A$ with $\bar A\cap\bar B=\emptyset$, expand the square in the definition of  $\mu_v(\bar A\cup \bar B)$ and use \eqref{eq:ortv} to conclude.
 
To prove $\sigma$-additivity it is enough to  show that given a decreasing sequence $(\bar A_n)\subset \mathcal A$ such that $\cap_n\bar A_n=\emptyset$ we have $\mu_v(\bar A_n)\to 0$ as $n\to\infty$. Pick such a sequence, put $w_n:=\nchi_{A_n}\, v$, $A_n\in\BB$ being the equivalence class of $\bar A_n\in\mathcal A$, and $v_n:=v-w_n$. The bound $\|w_n\|_\HH\leq \|v\|_\HH$  grants that up to pass to a subsequence, not relabeled, the sequence $(w_n)$ converges to some $w\in \HH$ in the  weak topology of the Hilbert space  $\HH$. We claim that $w=0$ and thanks to the locality property of the Hilbert module $\HH$ to this aim it is sufficient to prove that $\nchi_{A_k^c}\, w=0$ for every $k\in\N$. Thus fix   $k\in \N$ and notice that
\[
\begin{split}
\|\nchi_{A_k^c}\, w\|_\HH^2&=\la \nchi_{A_k^c}\, w,\nchi_{A_k^c}\, w\ra_\HH\stackrel{\eqref{eq:passla}}=\la \nchi_{A_k^c}\,  w, w\ra_\HH\\
&=\lim_{n\to\infty}\la \nchi_{A_k^c}\, w, w_n\ra_\HH\stackrel{\eqref{eq:passla}}=\lim_{n\to\infty}\la w, \nchi_{A_k^c}\, w_n\ra_\HH=0.
\end{split}
\]
Hence $(w_n)$ weakly converges to 0, and thus  $(v_n)$ weakly converges to $v$. Since $v_n=\nchi_{\X\setminus A_n}v$, we have $\|v_n\|_\HH\leq \|v\|_\HH$ for every $n\in\N$, which together with the weak convergence just proved implies strong convergence. Therefore  $(w_n)$ strongly converges to 0, which yields the desired $\sigma$-additivity of $\mu_v$.

Finally, observe that directly by definition we have $\mu_v\ll\mm$. Hence we can define
\[
|v|:=\sqrt{\frac{\d\mu_v}{\d\mm}}\in L^2(\mm),\qquad\forall v\in\HH.
\]
By construction we have $\int_\X |v|^2\,\d\mm=\mu_v(\X)=\|v\|^2_\HH$, thus to conclude the proof we need only to show that $|f v|=|f||v|$ for every $f\in L^\infty(\mm)$. To this aim notice that for $f$ constant this is trivial, and that the construction ensures that 
\begin{equation}
\label{eq:localmuv1}
|\nchi_A\,v|=\nchi_A|v|,\quad\mm\ae\qquad\forall v\in \HH,\  A\in\BB
\end{equation}
so that by localization we get that for every $v\in \HH$ it holds
\begin{equation}
\label{eq:localmuv}
|fv|=|f|\,|v|,\quad\mm\ae,\quad\text{for every $f\in L^\infty(\mm)$ attaining only a countable number of values}.
\end{equation}
Now notice that for $f_1,f_2\in L^\infty(\mm)$ with $f_1\geq f_2\geq 0$ $\mm$-a.e., we have $f:=\nchi_{\{f_1>0\}}\frac{f_2}{f_1}\in L^\infty(\mm)$ with $\|f\|_{L^\infty(\mm)}\leq 1$ and thus for every $A\in\BB$ we have
\[
\begin{split}
\int_A|f_2v|^2\,\d\mm&=\int\nchi_A|f_2v|^2\,\d\mm\stackrel{\eqref{eq:localmuv1}}=\int |\nchi_Af_2v|^2\,\d\mm\\
&=\|f\nchi_Af_1v\|_\HH^2\leq\|\nchi_Af_1v\|_\HH^2=\ldots=\int_A|f_1v|^2\,\d\mm,
\end{split}
\]
which, by the arbitrariness of $A\in\BB$, forces $|f_2v|\leq |f_1v|$ $\mm$-a.e.. This monotonicity property together with \eqref{eq:localmuv} and a simple approximation argument give that $|fv|=|f||v|$ for $f\in L^\infty(\mm)$ non-negative. Then the case of $f\leq 0$ follows using the fact that $\|-v\|_\HH=\|v\|_\HH$ and the general one by writing   $v=\nchi_A\,v+\nchi_{A^c}\,v$ for $A:=\{f>0\}$ and using the local property \eqref{eq:localmuv1}.

Finally, for identity \eqref{eq:pointquad} we argue by contradiction:  if it fails for some $v,w\in\HH$ we could find a set $A\in\BB $ of positive measure where the inequality $>$, say, holds $\mm$-a.e.. The contradiction follows integrating such inequality over $A$ to obtain
\[
\|\nchi_A\,v+\nchi_A\,w\|_\HH^2+\|\nchi_A\,v-\nchi_A\,w\|_\HH^2>2\big(\|\nchi_A\,v\|_\HH^2+\|\nchi_A\,w\|_\HH^2\big),
\]
which is impossible because the norm of $\HH$ satisfies the parallelogram rule.
\end{proof}
\begin{corollary}\label{cor:hilrefl} Let $\HH$ be an Hilbert module. Then it is  reflexive.
\end{corollary}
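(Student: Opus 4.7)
The proof should be a one-line consequence of the machinery assembled just above. The plan is to invoke, in order: Proposition \ref{prop:hill2}, which identifies every Hilbert module as an $L^2(\mm)$-normed module; the fact that a Hilbert space is reflexive as a Banach space; and then Corollary \ref{cor:refllp}, which transfers reflexivity from the Banach-space sense to the module sense for $L^p(\mm)$-normed modules with $p\in(1,\infty)$.

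More precisely, let $\HH$ be a Hilbert module. First I would apply Proposition \ref{prop:hill2} to conclude that $\HH$ is an $L^2(\mm)$-normed module (in particular, it is automatically an $L^\infty(\mm)$-module, with the gluing property coming for free from point (iv) of Proposition \ref{prop:baselp}). Since the Banach space underlying $\HH$ is a Hilbert space, it is reflexive as a Banach space. Because $2\in(1,\infty)$, Corollary \ref{cor:refllp} directly yields that $\HH$ is reflexive as a module, i.e.\ the canonical map $\mathcal I_\HH:\HH\to\HH^{**}$ is surjective (and it is isometric by Proposition \ref{prop:dualnonvuoto}, since $L^2(\mm)$-normed modules have full dual by Proposition \ref{prop:fulllp}).

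There is essentially no obstacle here: all the work has already been carried out in the preceding propositions. If one preferred a self-contained argument bypassing Corollary \ref{cor:refllp}, one could instead unwind it: use Proposition \ref{prop:fulllp} to say that both $\HH$ and its dual $\HH^*$ (which is $L^2(\mm)$-normed by point (i) of Proposition \ref{prop:normevarie}) have full dual, and then apply Proposition \ref{prop:fullrefl}, whose hypothesis of Banach reflexivity is automatic from the Hilbert structure. Either route is immediate.
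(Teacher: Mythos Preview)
Your proof is correct and follows exactly the same approach as the paper: apply Proposition \ref{prop:hill2} to see that $\HH$ is $L^2(\mm)$-normed, then use Corollary \ref{cor:refllp} together with the Banach-space reflexivity of Hilbert spaces. The alternative unwinding you sketch is also fine but unnecessary here.
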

\begin{proof}
Just apply Proposition  \ref{prop:hill2} and Corollary \ref{cor:refllp}.
\end{proof}

\begin{remark}{\rm
It is worth to point out that in the proof of Proposition \ref{prop:hill2} we used, for the first time, the fact that the continuity of multiplication with functions in $L^\infty(\mm)$ has been asked as
\begin{equation}
\label{eq:mult1}
\|fv\|_\MM\leq \|f\|_{L^\infty(\mm)}\|v\|_\MM,\qquad\forall v\in\MM,\ f\in L^\infty(\mm),
\end{equation}
in the definition of $L^\infty(\mm)$-module. Notice indeed that if one only asks for continuity of such multiplication, then in fact he is asking for the existence of a constant $C\geq 0$ such that
\begin{equation}
\label{eq:multC}
\|f v\|_\MM\leq C\|f\|_{L^\infty(\mm)}\|v\|_\MM,\qquad\forall v\in\MM,\ f\in L^\infty(\mm),
\end{equation}
and that the choice $f={\mathbf 1}$ shows that it must hold  $C\geq 1$. Thus \eqref{eq:mult1} is the strongest form of continuity we can ask for. 

We chose to force $C=1$ in the definition of $L^\infty(\mm)$-module because \eqref{eq:mult1} holds in all the examples we will work with, see below, and because it has highly desirable effects like Proposition \ref{prop:hill2} above and the inequality $\|\nchi_Av\|_\MM\leq \|v\|_\MM$, which corresponds to the intuitive idea that `setting an element to be 0 outside a certain set should decrease its norm'.

In this direction, it might be worth to point out that there are examples of Hilbert spaces endowed with a continuous multiplication with $L^\infty$ functions for which \eqref{eq:multC} holds for some $C>1$ but not for $C=1$: consider for instance $\X:=[0,1]$ equipped with the Borel $\sigma$-algebra and the Lebesgue measure and the space  $L^2(0,1)$ equipped with the norm 
\[
\|v\|^2:=\int_0^1|v|^2(x)\,\d x+(C-1)\left(\int_0^1v(x)\,\d x\right)^2.
\]
It is immediate to see that inequality \eqref{eq:multC} is satisfied by this space, the multiplication with $L^\infty$ functions being just the pointwise one, and that the optimal constant is precisely $C$.  Notice that for this example Proposition \ref{prop:hill2} does not hold and that there is $v\in L^2(0,1)$ and a Borel set  $A\subset[0,1]$ such that  $\|\nchi_Av\|>\|v\|$.

On the other hand, the bound \eqref{eq:mult1} is automatic on Banach spaces $\MM$ equipped with an $L^\infty(\mm)$ module structure in the purely algebraic sense and admitting a pointwise $L^p(\mm)$ norm, i.e.\ a map $|\cdot|:\MM\to L^p(\mm)$ with non-negative values satisfying \eqref{eq:pointnorm}.  Indeed, in this case we directly have
\[
\|fv\|_\MM=\||fv|\|_{L^p(\mm)}=\||f|\,|v|\|_{L^p(\mm)}\leq\|f\|_{L^\infty(\mm)}\||v|\|_{L^p(\mm)}=\|f\|_{L^\infty(\mm)}\|v\|_{\MM}.
\]
}\fr\end{remark}

On a given Hilbert module $\HH$  we define the {\bf pointwise scalar product} $\HH\times\HH\ni (v,w)\mapsto  \la v,w\ra\in L^1(\mm)$ as
\[
\la v,w\ra:=\frac12\big(|v+w|^2-|v|^2-|w|^2\big),
\]
and notice that the standard polarization argument grants that such map satisfies
\begin{equation}
\label{eq:proprietascalarepoint}
\begin{split}
\la f_1v_1+f_2v_2,w\ra&=f_1\la v_1,w\ra+f_2\la v_2,w\ra,\\
|\la v,w\ra|&\leq |v||w|,\\
\la v,w\ra&=\la w,v\ra,\\
\la v,v\ra&=|v|^2,
\end{split}
\end{equation}
$\mm$-a.e., for every $v_1,v_2,v,w\in \HH$ and $f_1,f_2\in L^\infty(\mm)$. Indeed, the last two are a direct consequence of the definition, which also gives
\begin{equation}
\label{eq:robaccia2}
\nchi_A\la v,w\ra= \la\nchi_A\, v,w\ra= \la v,\nchi_A \,w\ra,\qquad\mm\ae\qquad\forall v,w\in\HH,\ A\in\BB,
\end{equation}
while few applications of the  parallelogram rule  \eqref{eq:pointquad} give
\[
\la v_1+v_2,w\ra=2\la v_1,w/2\ra+2\la v_2,w/2\ra,\qquad\mm\ae\qquad\forall v_1,v_2,w\in\HH.
\]
Considering this last identity together with its special case $v_2=0$ yields additivity
\begin{equation}
\label{eq:robaccia1}
\la v_1+v_2,w\ra=\la v_1,w\ra+\la v_2,w\ra,\qquad\mm\ae\qquad\forall v_1,v_2,w\in\HH,
\end{equation}
which by induction gives  $\la nv,w\ra= n\la v,w\ra$ for every $n\in\N$ and then that $\la rv,w\ra=r\la v,w\ra$ for every $r\in \Q$ $\mm$-a.e.\ for every $v,w\in\HH$. Coupling this information with \eqref{eq:robaccia2} we deduce that
\begin{equation}
\label{eq:robaccia3}
\la fv,w\ra= f\la v,w\ra,\qquad\mm\ae\qquad\forall v,w\in \HH,
\end{equation}
for every function $f\in L^\infty(\mm)$ attaining only a finite number of values all of them being rational. Then the continuity of the pointwise scalar product as map from $\HH\times\HH$ to $L^1(\mm)$ give \eqref{eq:robaccia3} for generic $f\in L^\infty(\mm)$, which together with \eqref{eq:robaccia1} yields the first in \eqref{eq:proprietascalarepoint}. Finally,  the inequality $|\la v,w\ra|\leq |v||w|$ $\mm$-a.e.\ follows for bounded $v,w$ by expanding the square  in $\big||w|v\pm |v|w\big|^2\geq 0$ $\mm$-a.e.\ and the general case comes by approximation.

\bigskip

Now fix $v\in \HH$ and consider the map $L_v:\HH\to L^1(\mm)$ defined by
\[
L_v(w):=\la v,w\ra.
\]
It is clear from properties \eqref{eq:proprietascalarepoint} that this map is a module morphism, i.e.\  $L_v\in\HH^*$ and that
\[
|L_v|_*=|v|,\qquad\mm\ae.
\]
We then have the following result:
\begin{theorem}[Riesz theorem for Hilbert modules]\label{thm:rhil} Let $\HH$ be an Hilbert module. Then the map $\HH\ni v\mapsto L_v\in \HH^*$ is a morphism of modules, bijective and an isometry. In particular, for every $l\in \HH'$ there exists a unique $v\in \HH$ such that $l=\intmap_{\HH^*}L_v$.
\end{theorem}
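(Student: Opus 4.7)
The plan is to verify the module-morphism property, the pointwise isometry $|L_v|_*=|v|$ (which immediately gives injectivity), and then surjectivity via reduction to the classical Riesz theorem on the Hilbert space $\HH$. The final ``In particular'' statement follows as a packaging of classical Riesz.

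First I would note that for every $v\in\HH$ the map $L_v$ is indeed in $\HH^*$: $L^\infty(\mm)$-linearity in the argument and $\R$-linearity in $v$ come directly from the bilinearity and $L^\infty$-homogeneity of the pointwise scalar product established in \eqref{eq:proprietascalarepoint}, and the pointwise Cauchy--Schwarz inequality $|\langle v,w\rangle|\leq |v||w|$ shows that $|L_v(w)|\leq |v||w|$ $\mm$-a.e., which by Proposition \ref{prop:baselp} yields $L_v\in\HH^*$ with $|L_v|_*\leq |v|$. The reverse inequality is obtained by testing $L_v$ on (suitable truncations of) $v$ itself, using $\langle v,v\rangle=|v|^2$; in fact one argues as in the proof of Corollary \ref{cor:perduali}, choosing bounded test elements supported on the set where $|v|$ is bounded away from $0$. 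Hence $|L_v|_*=|v|$ $\mm$-a.e., so $v\mapsto L_v$ is a module morphism and an isometry; injectivity is immediate since $L_v=L_{v'}$ forces $|v-v'|=|L_{v-v'}|_*=0$ $\mm$-a.e.

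The heart of the argument is surjectivity. The key observation is that the pointwise scalar product integrates to the Hilbert scalar product on $\HH$: indeed, by Proposition \ref{prop:hill2} we have $\int|v|^2\,\d\mm=\|v\|_\HH^2=\langle v,v\rangle_\HH$, and polarization in both the pointwise and the Hilbert-space sense then yields
\[
\langle v,w\rangle_\HH=\int\langle v,w\rangle\,\d\mm=\intmap_{\HH^*}(L_v)(w)\qquad\forall v,w\in\HH.
\]
Now pick any $L\in\HH^*$ and consider $\intmap_{\HH^*}(L)\in\HH'$. By the classical Riesz theorem on the Hilbert space $\HH$, there exists (a unique) $v\in\HH$ such that $\intmap_{\HH^*}(L)(w)=\langle v,w\rangle_\HH=\intmap_{\HH^*}(L_v)(w)$ for every $w\in\HH$. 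Because the integration map $\intmap_{\HH^*}\colon\HH^*\to\HH'$ was already shown to be an isometric embedding (in particular injective), we deduce $L=L_v$, proving surjectivity.

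The ``In particular'' part is then immediate: given $l\in\HH'$, classical Riesz again produces $v\in\HH$ with $l(w)=\langle v,w\rangle_\HH=\intmap_{\HH^*}(L_v)(w)$ for every $w$, and uniqueness of $v$ follows from the isometry property of $v\mapsto L_v$ combined with injectivity of $\intmap_{\HH^*}$. The only non-routine step I anticipate is the verification $|L_v|_*\geq |v|$ $\mm$-a.e., which must be done carefully by a localization/truncation argument to avoid the set $\{|v|=0\}$ and integrability issues on $\{|v|=\infty\}$---but this is essentially already the content of Corollary \ref{cor:perduali} applied in the Hilbertian ($p=q=2$) case, with $L=L_v$ being the natural candidate functional.
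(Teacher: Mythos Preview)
Your proof is correct and follows essentially the same route as the paper's: both reduce surjectivity to the classical Riesz theorem on the Hilbert space $\HH$ via the polarization identity $\langle v,w\rangle_\HH=\int\langle v,w\rangle\,\d\mm$. The only cosmetic difference is that you conclude $L=L_v$ by invoking the injectivity of $\intmap_{\HH^*}$, whereas the paper verifies $L(w)=\langle v,w\rangle$ $\mm$-a.e.\ directly via the localization $\int_A L(w)\,\d\mm=\int L(\nchi_Aw)\,\d\mm=\langle v,\nchi_Aw\rangle_\HH=\int_A\langle v,w\rangle\,\d\mm$ for every $A\in\BB$---which is, of course, exactly how one proves that injectivity in the first place.
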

\begin{proof} Linearity and norm-preservation, even in the pointwise sense, of $v\mapsto L_v$ have already been checked. The fact that $v\mapsto L_v$ is a module morphism is obvious, as we have
\[
L_{fv}(w)=\la fv,w\ra= f\la v,w\ra=(fL_v)(w),\qquad\mm\ae,\qquad\forall v,w\in\HH,\ f\in L^\infty(\mm).
\]
To check surjectivity, let $L\in \HH^*$, consider the linear functional $\intmap_{\HH^*} L\in \HH'$ and apply the standard Riesz theorem for Hilbert spaces to deduce that there exists $v\in \HH$ such that 
\[
\int L(w)\,\d\mm=\intmap_{\HH^*} L(w)=\la v,w\ra_\HH,\qquad\forall w\in \HH.
\]
Now notice that by polarization of the first identity in \eqref{eq:pointnorm} we get $\int\la v,w\ra\,\d\mm=\la v,w\ra_\HH$ for every $v,w\in\HH$ and thus the chain of equalities
\[
\int_AL(w)\,\d\mm=\int \nchi_AL(w)\,\d\mm=\int L(\nchi_Aw)\,\d\mm=\la v,\nchi_Aw\ra_\HH=\int \la v,\nchi_Aw\ra\,\d\mm=\int_A\la v,w\ra\,\d\mm
\]
holds for every $A\in\BB$, which forces $L(w)=\la v,w\ra$ $\mm$-a.e, i.e.\ $L=L_v$.

The same argument also gives the last statement.
\end{proof}

\subsection{Alteration of the integrability}\label{se:altint}
By definition, an $L^p(\mm)$-normed module $\MM$ is a space whose elements have norm in $L^p(\mm)$. In applications, such integrability requirement can be too tight because it might be necessary  to handle objects with a different integrability, or perhaps to have at disposal elements $v$ of some bigger space possessing a pointwise norm $|v|$ and the possibility to say that $v\in\MM$ if and only if $|v|\in L^p(\mm)$, much like one does with ordinary $L^p(\mm)$ spaces. 

In this section we show how such bigger space can be built.

\vspace{1cm}

Recall that the space $L^0(\mm)$ is, as a set, the collection  of all (equivalence classes w.r.t. equality $\mm$-a.e.\ and measurable) real valued functions on $\X$. $L^0(\mm)$ comes with a canonical topology: consider the distance $\sfd_{L^0}$ on it  given by
\[
\sfd_{L^0}(f,g):=\sum_{i\in\N}\frac1{2^i\mm(E_i)}\int_{E_i}\min\{|f-g|,1\}\,\d\mm,
\]
where $(E_i)\subset \BB$ is a partition of $\X$ into sets of finite and positive measure. It is clear that $\sfd_{L^0}$ is a translation invariant, complete and separable metric: the topology on $L^0(\mm)$ is the one induced by $\sfd_{L^0}$ (in general it is not locally convex). It can be checked directly from the definition that   $(f_n)\subset L^0(\mm)$ is a $\sfd_{L^0}$-Cauchy sequence   if and only if
\[
\begin{split}
\forall\eps>0,\ &E\in\BB\text{ with }\mm(E)<\infty\quad \exists n_{\eps,E}\text{ such that :} \\
&\forall n,m\in\N\text{ with }n,m>n_{\eps,E}\quad\text{ we have }\qquad \mm\big(E\cap\{|f_n-f_m|>\eps\}\big)<\eps.
\end{split}
\]
This shows that  although the choice of the partition $(E_i)$ affects the distance $\sfd_{L^0}$, it does not affect Cauchy sequences, i.e.\   the topology of  $L^0(\mm)$ is intrinsic and  independent on the particular partition chosen.

\bigskip

Now let $p\in[1,\infty]$, $\MM$  an $L^p(\mm)$-normed module and consider the distance $\sfd_{\MM^0}$ on $\MM$ defined as
\[
\sfd_{\MM^0}(v,w):=\sum_{i\in\N}\frac1{2^i\mm(E_i)}\int_{E_i}\min\{|v-w|,1\}\,\d\mm.
\]
We then give the following definition:
\begin{definition}[The space $\MM^0$]\label{def:m0}
The topological space $\MM^0$ is defined as the completion of $(\MM,\sfd_{\MM^0})$ equipped with the induced topology.
\end{definition}
As for the case of $L^0(\mm)$, the choice of the partition $(E_i)$ affects the distance $\sfd_{\MM^0}$ but not Cauchy sequences $(v_n)\subset \MM$, which are characterized by 
\[
\begin{split}
\forall\eps>0,\ &E\in\BB\text{ with }\mm(E)<\infty\quad \exists n_{\eps,E}\text{ such that :} \\
&\forall n,m\in\N\text{ with }n,m>n_{\eps,E}\quad\text{ we have }\qquad \mm\big(E\cap\{|v_n-v_m|>\eps\}\big)<\eps.
\end{split}
\]
In other words, the topology of $\MM^0$ is is intrinsic and  independent on the particular partition chosen.

Directly from the definition of the distance $\sfd_{\MM^0}$ we see that
\[
\begin{split}
\sfd_{\MM^0}(v_1+w_1,v_2+w_2)&\leq\sfd_{\MM^0}(v_1,v_2)+\sfd_{\MM^0}(w_1,w_2),\\
\sfd_{\MM^0}(\lambda v_1,\lambda w_1)&\leq \max\{1,|\lambda|\}\sfd_{\MM^0}(v_1,w_1),
\end{split}
\]
for every $v_1,v_2,w_1,w_2\in \MM$ and $\lambda\in\R$. These show that the operations of addition and multiplication with a scalar extends continuously, in a unique way, to $\MM^0$, which therefore is a topological vector space. 

Similarly, the characterization of Cauchy sequences grants that  for any $\sfd_{L^0}$-Cauchy sequence $(f_n)\subset L^\infty(\mm)$ and any $\sfd_{\MM^0}$-Cauchy sequence $(v_n)\subset \MM$, the sequence $(f_nv_n)\subset \MM$ is $\sfd_{\MM^0}$-Cauchy. Hence the operation of multiplication with functions in $L^\infty(\mm)$  on $\MM$ can, and will, be extended in a unique way to a bilinear continuous map from $L^0(\mm)\times \MM^0$ to $\MM^0$, which clearly satisfies
\[
f(gv)=(fg)v,\qquad\text{and}\qquad{\mathbf 1}v=v,
\]
for any $v\in \MM^0$ and $f,g\in L^0(\mm)$. 

Finally,  the inequality
\[
\sfd_{L^0}(|v_1|,|v_2|)\leq\sfd_{\MM^0}(v_1,v_2),
\]
valid for every $v_1,v_2\in \MM$, shows that the pointwise norm $|\cdot|:\MM\to L^p(\mm)$ can, and will, be continuously extended in a unique way to a map from $\MM^0$ to $L^0(\mm)$, still denoted by $v\mapsto|v|$. It is then clear that
\[
\begin{split}
|v+w|&\leq |v|+|w|,\\
|fv|&=|f|\,|v|,
\end{split}
\]
$\mm$-a.e.\ for every $v,w\in\MM^0$ and $f\in L^0(\mm)$.

We shall refer to spaces $\MM^0$ built this way as ${\boldsymbol L^0(\mm)}${\bf -modules}.

The same continuity arguments show that if $\MM$ was an Hilbert module, then the pointwise scalar product extends uniquely to a bilinear continuous map from $\MM^0\times\MM^0$ to $L^0(\mm)$ satisfying \eqref{eq:proprietascalarepoint} for vectors in $\MM^0$ and functions in $L^0(\mm)$.

\bigskip

We remark that by construction $\MM$ is a subspace of $\MM^0$, the embedding being continuous and preserving the operation of   $L^p(\mm)$-normed module. From this perspective  we can think at $\MM^0$ as the extension of $\MM$ which `includes elements without any restriction on their norm' and notice that  for $v\in \MM^0$ we have $v\in \MM$ if and only if $|v|\in L^p(\mm)$.

\bigskip

Now consider an $L^p(\mm)$-normed module $\MM$ and its dual $\MM^*$, which by point $(i)$ of Proposition \ref{prop:normevarie} is  $L^q(\mm)$-normed, where  $\frac1p+\frac1q=1$. Notice that there is a natural duality pairing 
\begin{equation}
\label{eq:dual0}
\MM^0\times(\MM^*)^0\ni (v,L)\qquad\mapsto\qquad L(v)\in L^0(\mm),
\end{equation}
obtained as the unique continuous extension of the duality pairing of $\MM$ and $\MM^*$. By continuity, we can check that the inequality 
\begin{equation}
\label{eq:stupida}
|L(v)|\leq |v|\,|L|_*,\qquad\mm\ae,\ \forall v\in \MM^0,\ L\in (\MM^*)^0,
\end{equation}
holds also in this setting. The following simple result will be useful in what follows:
\begin{proposition}[The dual of $\MM^0$]\label{prop:dualm0} Let $\MM$ be an $L^p(\mm)$-normed module, $p\in[1,\infty]$ and   $T:\MM^0\to L^0(\mm)$ a linear map such that for some $f\in L^0(\mm)$ we have
\begin{equation}
\label{eq:dualm0}
|T(v)|\leq f|v|,\quad\mm\ae,\qquad\forall v\in\MM.
\end{equation}
Then there exists a unique $L\in (\MM^*)^0$ such that $T(v)=L(v)$ for every $v\in\MM$ and this $L$ satisfies
\[
|L|_*\leq f,\qquad\mm\ae.
\]
\end{proposition}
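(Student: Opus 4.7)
The strategy is a truncation/exhaustion argument: approximate $T$ by a sequence of honest elements of $\MM^*$ obtained by cutting $f$ down to bounded, finitely-supported functions, and then pass to the limit in $(\MM^*)^0$.

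Fix a partition $(E_i)\subset\BB$ of $\X$ into sets of finite measure (as in Subsection \ref{se:assnot}), and for $n\in\N$ set
\[
A_n:=\{f\le n\}\cap\bigcup_{i\le n}E_i\in\BB,
\]
so that $\mm(A_n)<\infty$, $A_n\uparrow\X$ up to an $\mm$-negligible set (since $f<\infty$ $\mm$-a.e.), and $\chi_{A_n}f\le n\chi_{A_n}\in L^q(\mm)$ where $\frac1p+\frac1q=1$. Define $T_n:\MM\to L^1(\mm)$ by $T_n(v):=\chi_{A_n}T(v)$. The hypothesis $|T(v)|\le f|v|$ on $\MM$ gives $|T_n(v)|\le\chi_{A_n}f\,|v|$ $\mm$-a.e., which, by Hölder, guarantees $T_n(v)\in L^1(\mm)$. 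Point $(v)$ of Proposition \ref{prop:baselp} (applied with $p_1=p$, $p_2=1$) then yields that $T_n\in\MM^*$ with $|T_n|_*\le\chi_{A_n}f$ $\mm$-a.e.

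Next, for $n<m$, the $L^0(\mm)$-module structure on $\MM^*$ gives $T_m-T_n=\chi_{A_m\setminus A_n}T_m$, so $|T_m-T_n|_*\le\chi_{A_n^c}f$ $\mm$-a.e. Since $\chi_{A_n^c}\to 0$ $\mm$-a.e., dominated convergence (using $\min\{f,1\}\le 1$ as dominating integrable function on each $E_i$) shows $\sfd_{(\MM^*)^0}(T_n,T_m)\to 0$, i.e.\ $(T_n)$ is Cauchy in $(\MM^*)^0$. Let $L\in(\MM^*)^0$ be its limit. Since the duality pairing \eqref{eq:dual0} is the continuous extension of the pairing on $\MM\times\MM^*$, for any $v\in\MM$ we have $T_n(v)\to L(v)$ in $L^0(\mm)$, while $T_n(v)=\chi_{A_n}T(v)\to T(v)$ in $L^0(\mm)$ by $\chi_{A_n}\to 1$; hence $L(v)=T(v)$ on $\MM$. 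The bound $|L|_*\le f$ follows from continuity of the pointwise norm on $(\MM^*)^0$ applied to $|T_n|_*\le\chi_{A_n}f\le f$.

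For uniqueness, suppose $L'\in(\MM^*)^0$ vanishes on $\MM$. Apply the same truncation scheme in reverse: put $B_n:=\{|L'|_*\le n\}\cap\bigcup_{i\le n}E_i$, so $\chi_{B_n}L'\in(\MM^*)^0$ has pointwise norm $\chi_{B_n}|L'|_*\le n\chi_{B_n}\in L^q(\mm)$, hence $\chi_{B_n}L'\in\MM^*$ by the characterization recalled just before Definition \ref{def:m0}. By the $L^0$-module compatibility of the pairing, $(\chi_{B_n}L')(v)=\chi_{B_n}L'(v)=0$ for every $v\in\MM$; since $\chi_{B_n}L'\in\MM^*$, this forces $|\chi_{B_n}L'|_*=0$ via formula \eqref{eq:normdual}, i.e.\ $|L'|_*=0$ $\mm$-a.e.\ on $B_n$. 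As $B_n\uparrow\X$ (because $|L'|_*<\infty$ $\mm$-a.e.), we conclude $|L'|_*=0$ $\mm$-a.e., so $L'=0$.

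The only mildly delicate step is the identification, in the Cauchy estimate and in the passage-to-the-limit step, of the pointwise norm of $T_m-T_n$ and of $\chi_{B_n}L'$ with the expected expressions $\chi_{A_m\setminus A_n}|T_m|_*$ and $\chi_{B_n}|L'|_*$: these rely on the compatibility of the pointwise norm with the $L^0(\mm)$-module structure, which in turn is obtained by continuous extension from the analogous identity on $\MM^*$. Everything else reduces to the already-established properties of $L^p$- and $L^0$-normed modules.
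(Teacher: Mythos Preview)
Your proof is correct and follows essentially the same approach as the paper: truncate $f$ so that the resulting operators land in $\MM^*$ via point $(v)$ of Proposition \ref{prop:baselp}, then pass to the limit in $(\MM^*)^0$. The only cosmetic difference is that the paper takes a partition $(E_n)$ with $\nchi_{E_n}f\in L^q(\mm)$ and writes $L=\sum_n\nchi_{E_n}L_n$ directly, whereas you use an increasing exhaustion $A_n\uparrow\X$ and a Cauchy-sequence argument; your uniqueness argument also spells out what the paper dismisses as ``obvious''.
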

\begin{proof} Uniqueness is obvious. For existence, let $(E_n)\subset\BB$ be a partition of  $\X$  such that $\nchi_{E_n}f\in L^q(\mm)$ for every $n\in\N$, where $\frac1p+\frac1q=1$. Then the bound \eqref{eq:dualm0} ensures that the linear map $\MM\ni v\mapsto L_n(v):=\nchi_{E_n}T(v)$ takes values in $L^1(\mm)$ and satisfies
\[
|L_n(v)|\leq (\nchi_{E_n}f)|v|,\qquad\mm\ae.
\]
By point $(v)$ of Proposition \ref{prop:baselp} we deduce that $L_n\in \MM^*$ with $|L_n|_*\leq \nchi_{E_n}f$. It is then clear that the series $\sum_n\nchi_{E_n}L_n$ converges in $(\MM^*)^0$ and that its   sum satisfies the thesis.
\end{proof}

Having the space $\MM^0$ at disposal allows us to build other $L^p$-normed modules, possibly related to a different $\sigma$-finite measure $\mm'$ on $\X$. A construction that we shall sometime use later on is the following: given $\bar p\in[1,\infty]$ and a non-negative $\sigma$-finite measure $\mm'\ll\mm$, the space $\MM_{\bar p,\mm'}\subset \MM^0$ is the space of all $v$'s in $\MM^0$ such that $\||v|\|_{L^{\bar p}(\mm')}<\infty$ equipped with the norm $\|v\|_{\bar p,\mm'}:=\||v|\|_{L^{\bar p}(\mm')}$. The operation of multiplication with functions in $L^0(\mm)$ induces a multiplication with functions in $L^\infty(\mm')$ thus endowing $\MM_{\bar p,\mm'}$ with a natural structure of $L^{\bar p}(\mm')$-normed   module.

Notice that 
\begin{equation}
\label{eq:sepmmp}
\text{if $\MM$ is separable and $\bar p<\infty$, then  $\MM_{\bar p,\mm'}$ is separable as well.}
\end{equation}
which can be checked by picking $D\subset \MM$ countable, dense and made of bounded elements (recall \eqref{eq:seplplim}), an increasing sequence $(E_n)\subset \BB$ of sets of finite $\mm'$-measure such that $\mm'(\X\setminus\cup_nE_n)=0$ and observing that the set $\{\nchi_{E_n\cap\{\rho\leq m\}}v:v\in D,\ n,m\in\N\}$ is dense in $\MM_{\bar p,\mm'}$, where $\rho:=\frac{\d\mm'}{\d\mm}$.

Now let  $\bar q\in[1,\infty]$ such that  $\frac1{\bar p}+\frac1{\bar q}=1$ and consider the module   $\MM^*_{\bar q,\mm'}\subset(\MM^*)^0$. Taking into account \eqref{eq:stupida}, the duality pairing \eqref{eq:dual0} restricts to a bilinear continuous map
\[
\MM_{\bar p,\mm'}\times\MM^*_{\bar q,\mm'}\ni (v,L)\qquad\mapsto\qquad L(v)\in L^1(\mm'),
\]
which shows that $\MM^*_{\bar q,\mm'}$ embeds into $(\MM_{\bar p,\mm'})^*$ the embedding being a module morphism and, as can be directly checked from the definition of pointwise dual norm, an isometry. In other words, for any $L\in \MM^*_{\bar q,\mm'}$ the map
\[
\MM_{\bar p,\mm'}\ni v\qquad\mapsto\qquad L(v)\in L^1(\mm'),
\]
is a module morphism whose pointwise norm $\mm'$-a.e.\ coincides with $|L|_*$. From Proposition \ref{prop:dualm0} it is easy to see  that in fact every element of $(\MM_{\bar p,\mm'})^*$ is of this form, i.e.
\begin{equation}
\label{eq:dualmmp}
\text{for $\bar p\in[1,\infty]$ the dual $(\MM_{\bar p,\mm'})^*$ of $\MM_{\bar p,\mm'}$ can be canonically identified with $\MM^*_{\bar q,\mm'}$.}
\end{equation}
Indeed, let $L\in (\MM_{\bar p,\mm'})^*$ so that
\begin{equation}
\label{eq:perdmmp}
|L|_*\in L^{\bar q}(\mm')\qquad\text{ and }\qquad |L(v)|\leq |L|_*|v|\quad\mm\ae
\end{equation}
for every $v\in  \MM_{\bar p,\mm'}$.  Let $E:=\{\frac{\d\mm'}{\d\mm}>0\}\in\BB$ and notice that the space $(\MM_{\bar p,\mm'})^0$ can, and will, be identified with the subspace $\MM^0\restr{E}$ of $\MM^0$ made of elements which are 0 $\mm$-a.e.\ on $E^c$, so that $L$   can be uniquely extended to a continuous map, still denoted by $L$, from $(\MM_{\bar p,\mm'})^0\sim \MM^0\restr{E}$ to $L^0(\mm)$. We further extend $L$ to the whole $\MM^0$ by putting $L(v):=L(\nchi_Ev)$ for arbitrary  $v\in\MM^0$ and notice that such extension still satisfies inequality \eqref{eq:perdmmp}, so by Proposition \ref{prop:dualm0} we see that this functional is induced by an element of $(\MM^*)^0$. The conclusion now comes from the fact that $|L|_*\in L^{\bar q}(\mm')$ and point $(v)$ of Proposition \ref{prop:baselp}.

\subsection{Local dimension}\label{se:locdim}
In this, perhaps not really exciting, section we show that for $L^\infty(\mm)$-modules there is a natural notion of local dimension. The main results here are the fact that finitely generated $L^p(\mm)$-normed modules, $p<\infty$, are reflexive (Theorem \ref{thm:fingenrefl}), and the structural characterization of Hilbert modules (Theorem \ref{thm:structhil}) which among other things provides a link between the concept of Hilbert module and that of direct integral of Hilbert spaces (Remark \ref{re:dirintH}).

As we learned at a late stage of development of this project, similar decomposition results, in some case in a more general formulation, were already obtained in \cite{HLR91}: what we call $L^p(\mm)$-normed module is a particular case of what in \cite{HLR91} is called a `randomly normed space'.

\vspace{1cm}

We start with few definitions.
\begin{definition}[Local independence]\label{def:locind} Let $\MM$ be an $L^\infty(\mm)$-module and $A\in\BB$ be with $\mm(A)>0$.

We say that a finite family $v_1,\ldots,v_n\in \MM$ is independent on $A$ provided the identity
\[
\sum_{i=1}^n f_iv_i=0,\qquad\mm\ae \ \textrm{\rm on}\ A
\]
holds only if $f_i=0$ $\mm$-a.e.\ on $A$ for every $i=1,\ldots,n$.
\end{definition}
\begin{definition}[Local span and generators]\label{def:genmod} Let $\MM$ be an $L^\infty(\mm)$-module, $V\subset \MM$ a subset and $A\in\BB$. 

The span of $V$ on $A$, denoted by ${\rm Span}_A(V)$, is the subset of  $\MM$ made of vectors $v$ concentrated on $A$ with the following property:  there are $(A_n)\subset \BB$, $n\in\N$, disjoint such that $A=\cup_iA_i$ and for every $n$ elements $v_{1,n},\ldots,v_{m_n,n}\in \MM$ and functions $f_{1,n},\ldots,f_{m_n,n}\in L^\infty(\mm)$ such that 
\[
\nchi_{A_n}v=\sum_{i=1}^{m_n}f_{i,n}v_{i,n}.
\]
We refer to ${\rm Span}_A(V)$ as the space spanned by $V$ on $A$, or simply spanned by $V$ if $A=\X$. 

Similarly, we refer to the closure $\overline{{\rm Span}_A(V)}$ of ${\rm Span}_A(V)$ as the space generated by $V$ on $A$, or simply as the space generated by $V$ if $A=\X$.
\end{definition}
Notice that the definition is given in such a way that the module $L^p(\mm)$, $p\in[1,\infty]$ is spanned by one element:  any function which is non-zero $\mm$-a.e.\ does the job.

We say that $\MM$ is {\bf finitely generated} if there is a finite family $v_1,\ldots,v_n$ spanning $\MM$ on the whole $\X$ and {\bf locally finitely generated} if there is a partition $(E_i)$ of $\X$ such that $\MM\restr{E_i}$ is finitely generated for every $i\in\N$.

It is obvious from the definitions that if $v_1,\ldots,v_n$ are independent on $A$ (resp.\ span/generate $\MM$ on $A$), then they are independent on every $B\subset A$ (resp.\  span/generate $\MM$ on every $B\subset A$) and that if the are independent on $A_n$ for every $n\in\N$ (resp.\ span/generate $\MM$ on $A_n$ for every $n\in\N$), then they are independent on $\cup_nA_n$ (resp.\ span/generate $\MM$ on $\cup_nA_n$).

These definitions are also invariant by {\bf isomorphism}: we say that two $L^\infty(\mm)$-modules $\MM_1,\MM_2$ are isomorphic provided there exists $T\in\Hom(\MM_1,\MM_2)$ and $S\in\Hom(\MM_2,\MM_1)$ such that $T\circ S={\rm Id}_{\MM_2}$ and $S\circ T={\rm Id}_{\MM_1}$ and in this case both $T$ and $S$ are called isomorphisms. We shall say that $\MM_1,\MM_2$ are {\bf isometric} provided they are isomorphic and there are norm-preserving isomorphisms. 

Then it is clear that  if $v_1,\ldots,v_n\in \MM_1$ are independent on $A$ and $T:\MM_1\to \MM_2$ is an isomorphism of modules, then $T(v_1),\ldots,T(v_n)\in \MM_2$ are independent on $A$ as well. Similarly for local generators.

A basic fact which we do \emph{not} know at this level of generality is whether the span of a finite number of elements on a given set $A$ is closed or not. We are only able to prove this under the additional assumption that the space is $L^p(\mm)$-normed for some $p\in[1,\infty]$, see Proposition \ref{prop:spanclosed}; in this sense the discussion prior to that point is quite incomplete. Yet, from the definition we know at least that ${\rm Span}_A(V)$ is always closed w.r.t.\ the gluing operation in $\MM$, a fact which we shall use later on.

\begin{definition}[Local basis and dimension]
We say that a finite family $v_1,\ldots,v_n$ is a basis on $A\in\BB$ provided it is independent on $A$ and ${\rm Span}_A\{v_1,\ldots,v_n\}=\MM\restr A$.

If $\MM$ admits a basis of cardinality $n$ on $A$, we say that it has dimension $n$ on $A$, or that the local dimension of $\MM$ on $A$ is $n$. If $\MM$ has not dimension $n$ for each $n\in\N$ we say that it has infinite dimension.
\end{definition}
We need to check that the dimension is well defined, which  follows along the same lines used in basic linear algebra. 
\begin{proposition}[Well posedness of definition of dimension]\label{prop:localdimension}
Let $\MM$ be an $L^\infty(\mm)$-module and $A\in\BB$. Assume that $v_1,\ldots,v_{n}$ generates $\MM$ on $A$  and $w_1,\ldots,w_m$ are independent on $A$. Then $n\geq m$. In particular, if both $v_1,\ldots,v_n$ and $w_1,\ldots,w_m$ are basis of $\MM$ on $A$, then $n=m$.
\end{proposition}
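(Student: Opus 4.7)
\emph{Plan.} I would proceed by induction on $n$, following the scheme of the Steinitz exchange lemma adapted to the $L^\infty(\mm)$-module setting. After replacing each $w_j$ by $\nchi_A w_j$ (which preserves independence on $A$, since any relation $\sum_j f_j(\nchi_A w_j) = 0$ on $A$ is the same as $\sum_j (\nchi_A f_j) w_j = 0$ on $A$), I may assume all $w_j$ lie in $\MM\restr{A}$. The base case $n = 0$ is immediate: then $\MM\restr{A} = \overline{{\rm Span}_A(\emptyset)} = \{0\}$, so every $w_j$ vanishes, and the relation $\nchi_A \cdot w_1 = 0$ with nonzero coefficient $\nchi_A$ forces $m = 0$ by independence.

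For the inductive step with $m \geq 1$, the core is an exchange claim: there exists a partition $A = \bigsqcup_{i=1}^n A_i$ with $A_i \in \BB$ (possibly empty) such that on each nonempty $A_i$ the family $\{w_1\} \cup \{v_j : j \neq i\}$ generates $\MM\restr{A_i}$. To produce this, note first that $E := \{w_1 \neq 0\} \cap A$ is nonempty in $\BB$, since otherwise $\nchi_A \cdot w_1 = 0$ would contradict independence of $w_1,\ldots,w_m$. Using $w_1 \in \overline{{\rm Span}_A\{v_1,\ldots,v_n\}}$, pick approximations $u_k = \sum_{i=1}^n f_i^{(k)} v_i$ with $\|w_1 - u_k\|_\MM \to 0$. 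On $E$ at least one $f_i^{(k)}$ must be nonzero on a set of positive measure (else $\nchi_E u_k = 0$ would force $\nchi_E w_1 = 0$ in the limit, contradicting the definition of $E$). By the locality and gluing properties together with the maximality principle \eqref{eq:maxset}, one assembles the desired partition: on the piece $A_i$ where $f_i^{(k)}$ is nonzero and, after cutting into further subpieces where $|f_i^{(k)}|$ is bounded below, essentially invertible, the identity
\[
v_i = (f_i^{(k)})^{-1}\Bigl(w_1 - \sum_{j \neq i} f_j^{(k)} v_j\Bigr) + (f_i^{(k)})^{-1}(u_k - w_1)
\]
together with $\|u_k - w_1\|_\MM \to 0$ shows that $v_i$ lies in the submodule generated on $A_i$ by $\{w_1\} \cup \{v_j : j \neq i\}$, so that this family still generates $\MM\restr{A_i}$.

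Once the exchange is in place, restrict to any $A_i$ of positive measure and pass to the quotient module $\MM\restr{A_i}/\overline{{\rm Span}_{A_i}\{w_1\}}$. The images of $v_1,\ldots,\hat v_i,\ldots,v_n$ generate this quotient on $A_i$, while the images $\bar w_2,\ldots,\bar w_m$ remain independent there: any relation $\sum_{j \geq 2} g_j \bar w_j = 0$ lifts to $\sum_{j \geq 2} g_j w_j$ belonging to $\overline{{\rm Span}_{A_i}\{w_1\}}$, and a short closure argument combined with independence of $w_1,\ldots,w_m$ on $A$ forces all $g_j$ to vanish. The inductive hypothesis applied on each nonempty $A_i$ then gives $m - 1 \leq n - 1$, hence $m \leq n$, and the second assertion (equality of cardinalities of two bases on $A$) follows by applying the inequality twice. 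The main obstacle is the exchange step itself: because ${\rm Span}_A\{v_1,\ldots,v_n\}$ is not known to be closed at this level of generality (cf.\ Proposition \ref{prop:spanclosed}), the equality $w_1 = \sum_i f_i v_i$ holds only approximately, and one must carefully track the errors produced by division by $f_i^{(k)}$ — which requires the cutting into subpieces on which $|f_i^{(k)}|$ stays bounded below, coordinated via the gluing property of $\MM$.
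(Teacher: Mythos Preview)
There is a genuine gap, originating in your reading of ``generates'' as closure-of-span. The paper's proof actually uses \emph{span} directly: it opens by writing $\nchi_{A_i} w_1 = \sum_j f_j^i v_j$ \emph{exactly}, with $f_j^i \in L^\infty(\mm)$, on pieces of a countable partition of $A$ --- which is the very definition of $w_1 \in {\rm Span}_A\{v_1,\ldots,v_n\}$ and is all that the intended application to bases requires (bases are defined via ${\rm Span}_A = \MM\restr{A}$, no closure). Your attempt to work instead with approximations $u_k \to w_1$ does not close: in the exchange step the error $(f_i^{(k)})^{-1}(u_k - w_1)$ is uncontrolled, since nothing stops $\|(f_i^{(k)})^{-1}\|_{L^\infty}$ from growing faster than $\|u_k - w_1\|_\MM$ shrinks, and the sets on which a given $f_i^{(k)}$ dominates vary with $k$, so no fixed partition emerges from the gluing property; in the quotient step, membership $\sum_{j\geq 2} g_j w_j \in \overline{{\rm Span}_{A_i}\{w_1\}}$ only yields approximating piecewise multiples $h_l w_1$ with no convergence of the $h_l$, so independence of the $w$'s (a statement about $L^\infty$ coefficients in an \emph{exact} relation) cannot be invoked.

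The paper's argument is also structurally simpler than your partition-and-quotient scheme. It neither partitions $A$ nor inducts on $n$: working with span it finds a \emph{single} subset $B_1 \subset A$ of positive measure on which one coefficient, say $f_1$, is bounded away from zero; there $v_1$ is solved for exactly and $\{w_1, v_2,\ldots,v_n\}$ spans $\MM$ on $B_1$. Iterating produces a shrinking chain $B_1 \supset B_2 \supset \cdots$ on which $\{w_1,\ldots,w_k, v_{k+1},\ldots,v_n\}$ spans; at stage $k$, expanding $w_{k+1}$ in these generators and using independence (extend any would-be relation among the $w$'s by zero to all of $A$) forces some $v$-coefficient to be nonzero on a set of positive measure, hence $k < n$ and one swaps again. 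Reaching $k = m$ gives $m \leq n$ directly, with no approximation or quotient needed.
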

\begin{proof} Since the $v_1,\ldots,v_n$ generate $\MM$ on $A$, we know that there are sets $A_i\in\BB$, $i\in\N$, such that $A=\cup_iA_i$ and functions $f^i_j\in L^\infty(\mm)$ such that
\begin{equation}
\label{eq:linind}
\nchi_{A_i}w_1=\sum_{j=1}^nf^i_jv_j.
\end{equation}
Pick $i\in\N$ such that $\mm(A_i)>0$. Since $w_1,\ldots,w_m$ is independent on $A$, we must have $w_1\neq 0$ $\mm$-a.e.\ on $A$ (or otherwise we easily get a contradiction), thus the previous identity ensures that for some $j\in\{1,\ldots,n\}$ and some $\tilde A_i\subset A_i$ with $\mm(\tilde A_i)>0$ we have $f^i_j\neq 0$ $\mm$-a.e.\ on $\tilde A_i$. Up to permuting the $v_i$'s we can assume that $j=1$. Hence for some $B_1\subset \tilde A_i$ with $\mm(B_1)>0$ and some $c>0$ we have $|f^i_1|\geq c$ $\mm$-a.e.\ on $B_1$, so that $g_1:=\nchi_{B_1}\frac1{f^i_1}\in L^\infty(\mm)$.

From \eqref{eq:linind} we deduce that
\[
\nchi_{B_1}v_1=(\nchi_{B_1}g_1)w_1-\sum_{j=2}^n(\nchi_{B_1}g_1f^i_j)v_j,
\]
and from this identity and the fact that $v_1,\ldots,v_n$ generate, we easily obtain that $w_1,v_2,\ldots,,v_n$ also generates $\MM$ on $B_1$.

We now proceed by induction. Let $k<m$ and suppose we already proved that there exists $B_k\in\BB$ with $\mm(B_k)>0$  such that $w_1,\ldots,w_k,v_{k+1},\ldots,v_{n}$ generates $\MM$ on $B_k$. Then arguing as before we find a set $B_k'\subset B_k$ with $\mm(B_k')>0$ and functions $f_1,\ldots,f_n\in L^\infty(\mm)$ such that
\[
\nchi_{B_k'}w_{k+1}=\sum_{j=1}^kf_jw_j+\sum_{j=k+1}^nf_jv_j.
\]
If $f_j=0$ $\mm$-a.e.\ on $B_k'$ for every $j=k+1,\ldots,n$ we would obtain that $\nchi_{B_k'}w_{k+1}=\sum_{j=1}^kf_jw_j$ $\mm$-a.e.\ on $B_k'$, contradicting the independence of the $w_i$'s on $A\supset B_k'$. In particular, $k<n$ and  there must exist $B_{k+1}\subset B_k'$ with $\mm(B_{k+1})>0$ and $c>0$ such that for some $j\in\{k+1,\ldots,n\}$ we have $|f_j|\geq c$ $\mm$-a.e.\ on $B_{k+1}$. Up to relabeling the indexes we can assume that $j=k+1$ and arguing as before we deduce that $w_1,\ldots,w_{k+1},v_{k+2},\ldots,v_{n}$ generates $\MM$ on $B_{k+1}$. 

Iterating the procedure up to $k=m$ we conclude.
\end{proof}
It is then  easy to see that given an $L^\infty(\mm)$-module $\MM$, we can partition $\X$ into sets where $\MM$ has given dimension:
\begin{proposition}[Dimensional decomposition]\label{prop:dimdec}
Let $\MM$ be an $L^\infty(\mm)$-module. Then there is a unique partition $\{E_i\}_{i\in\N\cup\{\infty\}}$ of $\X$  such that the following holds:
\begin{itemize}
\item[i)] for every $i\in\N$ such that $\mm(E_i)>0$, $\MM$ has dimension $i$ on $E_i$,
\item[ii)] for every $E\subset E_\infty$ with $\mm(E)>0$, $\MM$ has infinite dimension  on $E$.
\end{itemize}
\end{proposition}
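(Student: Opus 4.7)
The plan is to define, for each $i\in\N$, the set $E_i$ as the maximal (in $\BB$) set on which $\MM$ has dimension $i$, and then take $E_\infty:=\X\setminus\bigcup_{i\in\N}E_i$. To invoke the maximality principle \eqref{eq:maxset}, the main task is to verify that the collection
\[
\mathcal C_i:=\{A\in\BB\ :\ \MM\text{ has dimension }i\text{ on }A\}\cup\{\emptyset\}
\]
is stable under countable unions.

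To this end, I would first observe stability under subsets: restricting a basis $v_1,\ldots,v_i$ on $B$ to $A\subset B$ preserves independence (any relation on $A$, multiplied by $\nchi_A$, becomes a relation on $B$) and spanning (given $v\in\MM\restr A$, use generation on $B$ and refine the resulting partition $(B_n)$ by intersecting with $A$). For disjoint countable unions $A=\bigsqcup_n A_n$ with bases $v_1^n,\ldots,v_i^n$ on each piece, I would pick positive scalars $c_n$ with $\sum_n c_n\|\nchi_{A_n}v_j^n\|_\MM<\infty$ for each $j=1,\ldots,i$, and use the gluing axiom to produce $u_j\in\MM$ satisfying $\nchi_{A_n}u_j=c_n\nchi_{A_n}v_j^n$. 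These $u_j$'s will form a basis on $A$: independence follows by localizing any relation $\sum_j f_j u_j=0$ to each $A_n$ and invoking independence of the $v_j^n$'s there (the positivity of $c_n$ is essential); generation comes from concatenating, over all $n$, the countable decompositions witnessing $\nchi_{A_n}v\in{\rm Span}_{A_n}\{\nchi_{A_n}u_j\}$. The case of arbitrary countable unions then reduces to the disjoint case via $A_n\setminus\bigcup_{k<n}A_k$ together with subset stability.

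Granted this, \eqref{eq:maxset} yields the desired maximal $E_i\in\mathcal C_i$. Proposition \ref{prop:localdimension} applied to any positive-measure $A\subset E_i\cap E_j$ with $i\neq j$ in $\N$ forces $i=j$, so the $E_i$'s are pairwise disjoint in $\BB$. For the dichotomy on $E_\infty:=\X\setminus\bigcup_{i\in\N}E_i$: if some $E\subset E_\infty$ of positive measure had finite dimension $j$, then $E\in\mathcal C_j$, hence $E\cup E_j\in\mathcal C_j$ would strictly enlarge the maximal $E_j$, a contradiction. Uniqueness is symmetric: any other partition $\{E_i'\}$ with the stated properties satisfies $E_i'\in\mathcal C_i$ for $i\in\N$, whence $E_i'\subset E_i$ by maximality; by the same argument with the roles swapped, $E_i\subset E_i'$, and finally $E_\infty=E_\infty'$.

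The one step I expect to demand genuine care is the countable gluing of bases: the rescaling by the scalars $c_n$ is needed to meet the boundedness hypothesis in the gluing axiom, and it is important that $c_n>0$ so that the basis property on each $A_n$ is preserved under the rescaling.
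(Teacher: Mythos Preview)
Your proof is correct and follows essentially the same approach as the paper: both establish stability of the class $\mathcal C_i$ under countable unions by disjointifying, rescaling the basis vectors on each piece so that the gluing axiom applies, and then checking that the glued vectors form a basis on the union; maximality then provides the $E_i$'s and $E_\infty$ is the complement. Your treatment is slightly more detailed (you spell out subset stability and the disjointness of the $E_i$'s via Proposition~\ref{prop:localdimension}), but the core argument is the same.
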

\begin{proof}
Uniqueness is obvious by Proposition \ref{prop:localdimension} so we turn to existence. 

Pick $n\in\N$, let $\mathcal B_n\subset\BB$ be the set of $E$'s such that $\MM$ has dimension $n$ on $E$. We claim that if $E_i\in\mathcal B_n$ for every $i\in\N$, then $\cup_iE_i\in\mathcal B_n$ as well. We can certainly assume $\mm(E_i)>0$ for every $i\in\N$ and up to replacing $E_i$ with $E_i\setminus(\cup_{j<i}E_j)$, it is not restrictive to assume that $E_i\cap E_j=\emptyset$ for  $i\neq j$. For every $i\in\N$, let $v^i_1,\ldots,v^i_n$ be a basis of $\MM$ on $E_i$. Up to rescaling, we can assume that $\|v^i_k\|_\MM\leq 2^{-i}$ for every $i\in\N$, $k\in\{1,\ldots,n\}$ so that we can define
\[
v_k:=\sum_{i\in\N}\nchi_{E_i}v^i_k,\qquad\forall k=1,\ldots,n,
\]
because the series at the right hand side converges in $\MM$. Directly by the definition we see that $v_1,\ldots,v_n$ is a basis of $\MM$ on $E$, thus proving our claim. 

Hence the class $\mathcal B_n$ is stable by countable unions and we deduce that  there exists a unique maximal on it. Call it  $E_n$ and put  $E_\infty:=\X\setminus\cup_{n\in\N}E_n$: by construction these fulfill  the thesis.
\end{proof}
It is worth underlying that this last proposition tells little about the structure of $\MM$ unless one knows that the span of a finite number of elements is closed, which as said we don't have in this generality.  Things become clearer if the module is $L^p(\mm)$-normed.

Start observing that  having a basis at disposal allows one to locally write an element of a module in {\bf coordinates}. Indeed, let  $v_1,\ldots,v_n$ be a  local basis of $\MM$ on $A$, $v\in \MM$ and  $A_i,\tilde A_i\in \BB$, $i\in\N$, such that $A=\cup_{i}A_i=\cup_i\tilde A_i$ and $f^i_j,\tilde f^i_j\in L^\infty(\mm)$ such that for every $i\in\N$ we have
\[
\nchi_{A_i}v=\sum_{j=1}^nf^i_jv_j,\qquad\qquad\nchi_{\tilde A_i}v=\sum_{j=1}^n\tilde f^i_jv_j.
\]
Then we have
\[
f^i_j=f^{i'}_j,\qquad\mm\ae \ \text{on}\ A_i\cap \tilde A_{i'}\qquad\forall i,i'\in\N,
\]
as it can be easily seen by noticing that  $\sum_{j=1}^n(f^i_j-\tilde f^{i'}_j)v_j=0$ $\mm$-a.e.\   on $A_i\cap \tilde A_{i'}$ and using the definition of local independence.

This means that the functions $f_j:\X\to\R$, $j=1,\ldots,n$, defined by
\[
f_j:=f^i_j,\quad\mm\ae \text{ on}\ A_i,\qquad\forall i\in\N,
\]
and set 0 outside $A$, are well defined in the sense that they depend only on the local basis $v_1,\ldots,v_n$ and the vector $v$. We shall refer to them as the coordinates of $v$ on $A$ w.r.t.\ the local basis $v_1,\ldots,v_n$.

Evidently, in general coordinates are not in $L^\infty(\mm)$, but if  the module is $L^p(\mm)$-normed for some $p\in[1,\infty]$ it still makes sense to write
\[
\nchi_Av=\sum_{i=1}^nf_iv_i,
\]
indeed, recalling the discussion of the previous section, we can identify $\MM$  with the subspace of $\MM^0$ made of elements with finite $L^p(\mm)$-norm and interpret the last identity in $\MM^0$. This procedure can also be reversed, meaning that if $v_1,\ldots,v_n$ is a local basis of $\MM$ on $A$ and $f_i\in L^0(\mm)$, $i=1,\ldots,n$, are such that $\nchi_A|\sum_{i=1}^nf_iv_i|\in L^p(\mm)$, then the element of $\MM^0$ given by $\sum_{i=1}^nf_iv_i$ belongs in fact to $\MM$ and its coordinates w.r.t.\ the gives basis are precisely the $f_i$'s.

We then have the following closure  result:

\begin{proposition}\label{prop:spanclosed}
Let $\MM$ be an $L^p(\mm)$-normed module, $v_1,\ldots,v_n\in \MM$ and $A\in\BB$. Then ${\rm Span}_A\{v_1,\ldots,v_n\}$  is closed. In particular it is a submodule and  coincides with the intersection of all the submodules of $\MM$ containing  $\nchi_Av_1,\ldots,\nchi_A v_n$.
\end{proposition}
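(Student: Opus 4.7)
The plan is to induct on $n$, after first reducing to the case $A=\X$ via the identity ${\rm Span}_A\{v_1,\ldots,v_n\}={\rm Span}_\X\{\nchi_Av_1,\ldots,\nchi_Av_n\}$. Write $S_n:={\rm Span}_\X\{v_1,\ldots,v_n\}$. The starting point is the characterization already established in the discussion preceding the proposition: for an $L^p(\mm)$-normed module, $w\in\MM$ belongs to $S_n$ if and only if there exist $f_1,\ldots,f_n\in L^0(\mm)$ with $w=\sum_{i=1}^n f_iv_i$ in $\MM^0$; the direct implication comes by gluing partition data into single $L^0$-functions, the converse by partitioning via the sublevel sets $\{\sum_i|f_i|\leq m\}$ on which the $f_i$'s become bounded.

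For the base case $n=1$: if $w^k=f^kv_1\to w$ in $\MM$ with $f^k\in L^0(\mm)$, the identity $|w^k-w^l|=|f^k-f^l|\,|v_1|$ shows that $(f^k)$ is Cauchy in $L^0$ on each set $\{|v_1|>1/m\}$; denoting its $L^0$-limit on $\{|v_1|>0\}$ by $f$ and extending by $0$, and using that $w^k=0$ $\mm$-a.e.\ on $\{|v_1|=0\}$ forces $w=0$ there, we get $w=fv_1$ in $\MM^0$, so $w\in S_1$.

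For the inductive step, assume $S_{n-1}$ is closed, hence a submodule. By Proposition \ref{prop:normevarie} the quotient $\MM/S_{n-1}$ is then an $L^p(\mm)$-normed module; let $\pi:\MM\to\MM/S_{n-1}$ denote the quotient map, $\pi^0:\MM^0\to(\MM/S_{n-1})^0$ its continuous (and $L^0$-linear) extension, and set $\tilde v_n:=\pi(v_n)$. The crucial claim is $S_n=\pi^{-1}({\rm Span}\{\tilde v_n\})$. For $\subseteq$: if $w=\sum_i f_iv_i$ in $\MM^0$, then $\pi^0(w)=f_n\tilde v_n$ because $\pi(v_i)=0$ for $i<n$, whence $\pi(w)\in{\rm Span}\{\tilde v_n\}$ by the characterization applied in the quotient. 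For $\supseteq$, take $\hat u\in\MM$ with $\pi(\hat u)=g\tilde v_n$ in $(\MM/S_{n-1})^0$, partition $\X$ by $B_m:=\{m-1<|g|\leq m\}$, and set $g_m:=\nchi_{B_m}g\in L^\infty(\mm)$; then $\pi(\nchi_{B_m}\hat u-g_mv_n)=0$, so $s_m:=\nchi_{B_m}\hat u-g_mv_n\in S_{n-1}$, which gives $\nchi_{B_m}\hat u=g_mv_n+s_m\in S_n$, and gluing over $m\in\N$ places $\hat u$ in $S_n$. Applying the base case to the $L^p(\mm)$-normed module $\MM/S_{n-1}$ shows ${\rm Span}\{\tilde v_n\}$ is closed in the quotient, and hence $S_n=\pi^{-1}({\rm Span}\{\tilde v_n\})$ is closed by continuity of $\pi$.

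The final sentence of the proposition is then a formality: $S_n$ is a closed subspace stable under $L^\infty(\mm)$-multiplication and gluing, hence a submodule, and any submodule containing $\nchi_Av_1,\ldots,\nchi_Av_n$ is stable under all operations used to build $S_n$ and must therefore contain it. The main obstacle is conceptual rather than computational: throughout one must carefully distinguish $\MM$ from its extension $\MM^0$, since both the coordinate representation $w=\sum f_iv_i$ and the identity $\pi(\hat u)=g\tilde v_n$ live not in the original modules but in the extended ones $\MM^0$ and $(\MM/S_{n-1})^0$, and this forces the use of the $L^0$-linearity of $\pi^0$ at the key step.
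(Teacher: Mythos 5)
Your proof is correct, and the inductive step takes a genuinely different route from the paper's. The $n=1$ case and the preliminary $L^0$-coordinate characterization of the span (membership in ${\rm Span}_A\{v_1,\ldots,v_n\}$ is equivalent to a representation $w=\sum_i f_iv_i$ in $\MM^0$ with $f_i\in L^0(\mm)$) coincide in substance with what the paper does. For the induction, however, the paper argues directly inside $\MM$: it normalizes the $v_i$'s, isolates the maximal set $B$ on which $v_{n+1}$ already lies in the smaller span, exhausts the complement by the sets $C_\eps$ on which $v_{n+1}$ is pointwise $\eps$-separated from the unit ball of ${\rm Span}_A\{v_1,\ldots,v_n\}$, and uses the resulting inequality \eqref{eq:lunedi} to split off the $v_{n+1}$-component of a Cauchy sequence before gluing over $\eps$. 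You instead observe that the inductive hypothesis makes $S_{n-1}$ a submodule, invoke Proposition \ref{prop:normevarie} to regard $\MM/S_{n-1}$ as an $L^p(\mm)$-normed module, prove $S_n=\pi^{-1}({\rm Span}\{\pi(v_n)\})$ (the nontrivial inclusion via the sublevel-set partition of the $L^0$-coefficient $g$ and the identification $\ker\pi=S_{n-1}$, which is exactly where closedness of $S_{n-1}$ enters), and then conclude by applying the one-element case in the quotient and continuity of $\pi$. What your route buys is brevity and conceptual transparency: the quantitative separation encoded in the paper's $C_\eps$-construction is absorbed into the quotient pointwise norm $|[v]|=\essinf_{w\in\NN}|v+w|$, whose good properties (including the gluing needed when $p=\infty$) were already established in Proposition \ref{prop:normevarie}; it is also in the same spirit as the paper's own later use of quotients by spans in Theorem \ref{thm:fingenrefl}. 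What the paper's route buys is self-containedness at this point of the text, at the cost of the normalization and the $C_\eps$ bookkeeping. Your reliance on $\MM^0$, $(\MM/S_{n-1})^0$ and the $L^0$-linearity of the extended projection is legitimate and, as you note, is the one place where care is genuinely required; the only cosmetic points are that the partition $B_m=\{m-1<|g|\le m\}$ should be understood to include the piece $\{g=0\}$ (on which $\nchi_{\{g=0\}}\hat u\in S_{n-1}$ anyway), and that the final "submodule and minimality" claim deserves one line recalling that spans are stable under multiplication and closed under gluing, so any submodule containing $\nchi_Av_1,\ldots,\nchi_Av_n$ contains each piece $\nchi_{A_k}w$ and hence, by gluing and locality, $w$ itself.
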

\begin{proof}
We identify $\MM$ with the subset of $\MM^0$ made of vectors of finite $L^p(\mm)$-norm and we shall proceed by induction. Taking into account that ${\rm Span}_A\{v_1,\ldots,v_n\}$ is, by definition, closed under the gluing operation, it is easy to see that we can  assume that $\mm(A)<\infty$.

We proceed by induction. 
Let  $n=1$, notice that $\{v_1\}$ is a local basis of $\MM$ on $A\cap \{v_1\neq 0\}$, let $(w_n)\subset {\rm Span}_A\{v_1\}$ be a Cauchy sequence and write $w_n$ in coordinates, i.e.\  write $w_n=f_n v_1$ for some $f_n\in L^0(\mm)$ which is 0 outside $A\cap \{v_1\neq 0\}$. Up to pass to a subsequence we can assume that $\sum_{n}\|w_n-w_{n+1}\|_\MM<\infty$, which is the same as to require that $\sum_n\||v||f_n-f_{n+1}|\|_{L^p(\mm)}<\infty$. It is then clear that the sequence $(\nchi_{A\cap \{v_1\neq 0\}} f_n)\subset L^0(\mm)$ converges in $L^0(\mm)$ to some limit function $ f$ such that $\||v|f\|_{L^p(\mm)}<\infty$. Therefore $w:=fv$ is an element of $\MM$. The definition of $w$ ensures that it belongs to $ {\rm Span}_A\{v_1\}$  and the construction that it is the limit in $\MM$ of the sequence $(w_n)$ thus giving the thesis in this case.

Now we  assume the claim to be valid for $n$ and want to prove it for $n+1$.  

Let $v_1,\ldots,v_{n+1}\in \MM$ and, up to replace $v_i$ by $\min\{1,|v_i|^{-1}\}v_i$,  assume that $|v_i|=1$ $\mm$-a.e.\ on $\{v_i\neq 0\}\cap A$ for every $i=1,\ldots,n+1$ (the assumption that $\mm(A)<\infty$ ensures that this normalization does not destroy the fact that $v_i\in \MM$). Let  $B\in\BB$ the maximal set such that $\nchi_{B}v_{n+1}\in {\rm Span}_A\{v_1,\ldots,v_n\}$ (it exists because spans are closed under the gluing operation) and put $C:=A\setminus B$. 
We can assume that $C\neq \emptyset$, or otherwise there is nothing to prove. 

Let $V_1\subset {\rm Span}_A\{v_1,\ldots,v_n\}$ be the set of $w$'s such that $|w|\leq1$ $\mm$-a.e., and, for given $\eps>0$, define $C_\eps\in \BB$ as
\[
C_\eps:=\Big\{\essinf_{w\in V_1}|v_{n+1}-w|\geq \eps\Big\}.
\]
We claim that $C=\cup_{\eps>0}C_\eps$. Indeed, obviously $C_\eps\subset C$ for every $\eps$ and the inclusion $C_\eps\subset C_{\eps'}$ valid for $\eps\leq \eps'$ shows that the union $\cup_{\eps>0}C_\eps$ is well defined in $\BB$. Now assume by contradiction that $D:=C\setminus\cup_{\eps>0}C_\eps\neq \emptyset$. By definition of $C_\eps$ and using the gluing property, for every $\eps$ we can find $w_\eps\in V_1$ such that $|v_{n+1}-w_\eps|\leq\eps$ $\mm$-a.e.\ on $D$, which implies $\|\nchi_D(v_{n+1}-w_\eps)\|_\MM=\|\nchi_D|v_{n+1}-w_\eps|\|_{L^p(\mm)}\to 0$ as $\eps\downarrow0$. By inductive assumption, ${\rm Span}_A\{v_1,\ldots,v_n\}$ is closed, hence also $V_1$ is closed and the last limit implies that $\nchi_Dv_{n+1}\in V_1$, contradicting the definition of $C$, thus indeed  $C=\cup_{\eps>0}C_\eps$, as claimed.

Notice that for arbitrary $f,g\in L^0(\mm)$, $w\in V_1$ and $\mm$-a.e.\ on $C_\eps$ we have
\[
\begin{split}
|f v_{n+1}+gw|&\geq\max\big\{|f ||v_{n+1}\pm w|-|g\mp f||w|\big\}\geq \eps |f|-\min\{|g+f|,|g-f|\},\\
|fv_{n+1}+gw|&\geq \big||f |-|g|\big|,
\end{split}
\]
thus noticing that for any $a,b\in\R$ we have $\min\{|a-b|,|a+b|\}=||a|-|b||$ we can conclude that
\begin{equation}
\label{eq:lunedi}
\eps |f ||v_{n+1}|\leq 2|f v_{n+1}+gw_m|,\qquad\mm\ae\text{ on }C_\eps\qquad \forall w\in V_1,\ f,g\in L^0(\mm).
\end{equation}

Now fix a Cauchy sequence $(w_m)\subset {\rm Span}_A\{v_1,\ldots,v_{n+1}\}$ converging to some $\bar w\in \MM$ and write the $w_m$'s in coordinates, i.e.\ $w_m=\nchi_A\sum_{i=1}^{n+1}f_{i,m}v_i$. Our aim is to prove that $\bar w\in{\rm Span}_A\{v_1,\ldots,v_{n+1}\}$ and it is easy to see by definition of Span and that of $C$ that to this aim it is sufficient to prove that $\nchi_{C}\bar w\in{\rm Span}_A\{v_1,\ldots,v_{n+1}\}$.

Fix $\eps>0$ and  define $g_m:=\nchi_{C_\eps}|\sum_{i=1}^{n}f_{i,m}v_i|\in L^0(\mm)$ and $\tilde w_m\in \MM^0$ and  by requiring that $\tilde w_m$ is concentrated on $A_\eps$ and that $\nchi_{A_\eps}\sum_{i=1}^{n}f_{i,m}v_i=g_m\tilde w_m$. In particular $|\tilde w_m|=1$ $\mm$-a.e.\ on $A_\eps$ so that $\tilde w_m\in\MM$.

The assumption that  $(w_m)$ is a Cauchy sequence in $\MM$ implies that $(\nchi_{A_\eps}(f_{n+1,m}v_{n+1}+g_m\tilde w_m))$ is Cauchy in $\MM$ as well, thus picking  $f:=f_{n+1,m_1}-f_{n+1,m_2}$ and $g:=g_{m_1}-g_{m_2}$ in \eqref{eq:lunedi} and letting $m_1,m_2\to\infty$ we deduce that $m\mapsto \nchi_{A_\eps}f_{n+1,m}v_{n+1}$ is also a Cauchy sequence in $\MM$ and we call $v^\eps$ its limit. It follows that the sequence $m\mapsto \nchi_{A_\eps }g_m\tilde w_m $ is also Cauchy and denoting its limit by $\tilde w^{\eps }$ we certainly have that $v^{\eps }+\tilde w^{\eps}=\nchi_{A_{\eps}}\bar w$. 

By inductive assumption and the case $n=1$, we know that $v^{\eps}\in{\rm Span}_{A}\{v_{n+1}\}$ and  $w^{\eps}\in{\rm Span}_{A}\{v_1,\ldots,v_n\}$; hence  $\nchi_{C_{\eps}}\bar w\in {\rm Span}_{A}\{v_1,\ldots,v_{n+1}\}$.  Now   let $\eps\downarrow0$ and use the gluing property in $ {\rm Span}_{A}\{v_1,\ldots,v_{n+1}\}$ (use that $\|\nchi_{A_{\eps}}\bar w\|_\MM\leq\|\bar w\|_\MM$ for every $\eps$) to get the existence of $\bar v\in {\rm Span}_{A}\{v_1,\ldots,v_{n+1}\}$ such that $\nchi_{C_{\eps} }\bar v=\nchi_{C_{\eps} }\bar w$ for every $\eps>0$. Since $\cup_{\eps>0}C_{\eps}=C$, by the locality property we conclude that $\nchi_C\bar w=\bar v\in  {\rm Span}_{A}\{v_1,\ldots,v_{n+1}\}$, which is the thesis.
\end{proof}
As a direct consequence we deduce the following reflexivity result:
\begin{theorem}\label{thm:fingenrefl}
Let $\MM$ be an $L^p(\mm)$-normed module, $p<\infty$, $A\in \BB$ and assume that the local dimension of $\MM$ on $A$ is $n\in\N$. Then the local dimension of the dual module $\MM^*$ on $A$ is also $n$.

In particular, a locally finitely generated $L^p(\mm)$-normed module with $p<\infty$ is reflexive.
\end{theorem}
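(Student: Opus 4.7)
The plan is to construct a local basis $\tilde L_1, \ldots, \tilde L_n$ of $\MM^*$ on $A$ from the given basis $v_1, \ldots, v_n$ of $\MM$ on $A$. A naive ``coordinate projection'' definition $v \mapsto f_j(v)$, using the decomposition $\nchi_A v = \sum_j f_j(v) v_j$, would only yield functionals in $(\MM^*)^0$ rather than $\MM^*$, because coordinates of elements of $\MM$ live only in $L^0(\mm)$. The main obstacle is to promote such objects to genuine elements of $\MM^*$; I bypass it by a quotient-then-dualise construction that uses Corollary \ref{cor:perduali} to deliver dual norms in $L^q(\mm)$ directly.

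For each $i$, set $\NN_i := {\rm Span}_A\{v_j : j \neq i\}$; this is a submodule by Proposition \ref{prop:spanclosed}, so $\MM/\NN_i$ is an $L^p(\mm)$-normed module by Proposition \ref{prop:normevarie}(ii). The class $[v_i] \in \MM/\NN_i$ is non-zero $\mm$-a.e.\ on $A$: otherwise, unpacking the definition of ${\rm Span}_A$ on the set of vanishing would produce $A_k \subset A$ with $\mm(A_k) > 0$ and $L^\infty$ coefficients satisfying $\nchi_{A_k} v_i = \sum_{j \neq i} f_{j,k} v_j$ on $\X$, contradicting local independence of $v_1, \ldots, v_n$ on $A$. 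Corollary \ref{cor:perduali} then yields $\bar L_i \in (\MM/\NN_i)^*$ with $\bar L_i([v_i]) = |[v_i]|^p =: c_i$, non-zero $\mm$-a.e.\ on $A$; pulling $\bar L_i$ back along the quotient map defines $\tilde L_i \in \MM^*$ with $\tilde L_i(v_j) = 0$ for $j \neq i$ and $\tilde L_i(v_i) = c_i$.

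Independence of $\tilde L_1, \ldots, \tilde L_n$ on $A$ is immediate by evaluating $\sum_i h_i \tilde L_i = 0$ against each $v_j$. For generation, fix $L \in \MM^*$; for $v \in \MM$ write $\nchi_A v = \sum_j f_j(v) v_j$ with $f_j(v) \in L^0(\mm)$ concentrated on $A$. Truncating on $B_m := \{\max_j |f_j(v)| \leq m\}$, invoking $L^\infty$-linearity of $L$, and summing over $m$ by dominated convergence in $L^1(\mm)$ yields $\nchi_A L(v) = \sum_j f_j(v) L(v_j)$. Since $\tilde L_j(v) = c_j f_j(v)$, this rewrites as $\nchi_A L = \sum_j h_j \tilde L_j$ in $(\MM^*)^0$, with $h_j := \nchi_A L(v_j)/c_j \in L^0(\mm)$; partitioning $A$ into level sets of the $h_j$'s gives $L^\infty$ expressions on each piece, so $\nchi_A L \in {\rm Span}_A\{\tilde L_1, \ldots, \tilde L_n\}$, completing the basis verification.

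For the reflexivity statement, apply the main result twice: if $\MM$ is locally finitely generated with dimensional decomposition $\X = \sqcup_i E_i$ (Proposition \ref{prop:dimdec}, with $E_\infty$ empty), then both $\MM^*$ and $\MM^{**}$ share this decomposition with the same local dimensions. For $p < \infty$, Propositions \ref{prop:fulllp} and \ref{prop:dualnonvuoto} make $\mathcal I_\MM$ an isometric embedding, and on each $E_i$ the images of a local basis of $\MM$ remain independent of the correct cardinality $n_i$, hence form a basis of $\MM^{**}\restr{E_i}$ by Proposition \ref{prop:localdimension}. Given any $T \in \MM^{**}\restr{E_i}$, the corresponding ${\rm Span}$-representation over a partition $E_i = \sqcup_k B_k$ yields $w_k' \in \MM$ concentrated on $B_k$ with $\nchi_{B_k} T = \mathcal I_\MM(w_k')$; the $L^p$-additivity of the pointwise norm across disjoint supports gives $\sum_k \|w_k'\|_\MM^p = \sum_k \|\nchi_{B_k}|T|_{**}\|_{L^p(\mm)}^p = \|T\|_{\MM^{**}}^p < \infty$, so the $w_k'$'s glue to $w \in \MM$ with $\mathcal I_\MM(w) = T$, establishing surjectivity and hence reflexivity.
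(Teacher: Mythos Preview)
Your proof is correct and follows essentially the same route as the paper: the quotient construction $\MM/\NN_i$ combined with Corollary~\ref{cor:perduali} to produce the dual basis, the evaluation-on-$v_j$ argument for independence, the coordinate expansion in $(\MM^*)^0$ for generation, and the double application of the first part together with the isometric embedding $\mathcal I_\MM$ for reflexivity. Your treatment is in places more explicit than the paper's (the truncation step in the generation argument, the correct normalization $h_j = L(v_j)/c_j$, and the final gluing across the partition of each $E_i$); the only minor imprecision is the citation of Proposition~\ref{prop:localdimension} for the claim that $n$ independent elements in an $n$-dimensional module form a basis---that proposition does not literally state this, though it follows by a short maximality/determinant argument, and the paper's own one-line justification of the second part relies on the same unstated fact.
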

\begin{proof} Let $v_1,\ldots,v_n$ be a local basis of $\MM$ on $A$, put $\MM_i:={\rm Span}_A\{v_1,\ldots,v_{i-1},v_{i+1},\ldots,v_n\}$, $i=1,\ldots,n$, and notice that by Proposition \ref{prop:spanclosed} above we know that the $\MM_i$'s are submodules of $\MM$. Then by point $(ii)$ of Proposition \ref{prop:normevarie} we know that $\MM/\MM_i$ is an $L^p(\mm)$-normed module as well and denoting by $\pi_i:\MM\to \MM/\MM_i$ the natural projection, the fact that the $v_i$'s form a base on $A$ ensures that $\pi_i(v_i)\neq 0$ $\mm$-a.e. on $A$. Apply Corollary \ref{cor:perduali} to the $L^p(\mm)$-normed module $\MM/\MM_i$ and its element $\pi_i(v_i)$ to obtain the existence of $\tilde L_i\in (\MM/\MM_i)^*$ such that $\tilde L_i(\pi_i(v_i))\neq 0$ $\mm$-a.e. on $A$. Define $L_i\in \MM^*$ as $L_i:=\tilde L_i\circ \pi_i$ so that $L_i(v_j)=0$ $\mm$-a.e.\ on $A$ for $i\neq j$ and $L_i(v_i)\neq 0$ $\mm$-a.e.\ on $A$.

We claim that $L_1,\ldots,L_n$ is a base of $\MM^*$ on $A$. We start proving that they span $\MM^*$ on $A$: pick $L\in M^*$ and define $f_i:=L(v_i)\in L^1(\mm)$. Then writing a generic $v\in M$ concentrated on $A$ using its coordinates w.r.t.\ the $v_i$'s,  we see that the identity  $\nchi_AL=\nchi_A\sum_if_iL_i$, to be understood in $(\MM^*)^0$, is valid. This proves that indeed  ${\rm Span}_A\{L_i\}=\MM^*\restr A$. For linear independence, assume that $\sum_if_iL_i=0$ $\mm$-a.e.\ on $A$ for some $f_i\in L^\infty(\mm)$ and  evaluate this identity in $v_j$ to obtain $f_jL_j(v_j)=0$ $\mm$-a.e.\ on $A$. Recalling that $L_j(v_j)\neq 0$ $\mm$-a.e.\ on $A$, we deduce  that $f_j=0$ $\mm$-a.e.\ on $A$, as desired.

The second part is then a simple consequence of the first and of the fact that $\mathcal I_\MM:\MM\to \MM^{**}$ is an isomorphism of $\MM$ with its image.
\end{proof}

On $L^p(\mm)$-normed modules we also have the following simple and useful criterion for recognizing elements in the dual module via their action on a generating space:
\begin{proposition}\label{prop:extension}
Let $\MM$ be an $L^p(\mm)$-normed module, $p<\infty$, $V\subset \MM$ a linear subspace which generates $\MM$ and  $L:V\to L^1(\mm)$ linear and such that
\begin{equation}
\label{eq:extension}
|L(v)|\leq \ell |v|,\quad\mm\ae\qquad\forall v\in V,
\end{equation}
for some function $\ell\in L^q(\mm)$, where $\frac1p+\frac1q=1$.

Then $L$ can be uniquely extended to a module morphism $\tilde L$ from $\MM$ to $L^1(\mm)$, i.e.\ to an element of $\MM^*$, and such $\tilde L$ satisfies
\[
|\tilde L|\leq \ell,\qquad\mm\ae.
\]
\end{proposition}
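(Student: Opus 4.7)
My plan is to first extend $L$ by $L^\infty(\mm)$-linearity to the algebraic span ${\rm Span}_\X(V)$ of $V$, then extend by continuity to its closure, which by hypothesis equals $\MM$. The key step is to establish the pointwise bound \eqref{eq:extension} for the extension; uniqueness and the module property will then both follow from Proposition \ref{prop:baselp}(v) together with the density of ${\rm Span}_\X(V)$ in $\MM$.

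First I would define $\tilde L$ on finite $L^\infty(\mm)$-combinations by setting $\tilde L\bigl(\sum_{i=1}^n f_i v_i\bigr) := \sum_i f_i L(v_i) \in L^1(\mm)$ for $v_i \in V$, $f_i\in L^\infty(\mm)$, and aim to show
\[
\Bigl|\sum_i f_i L(v_i)\Bigr| \leq \ell \Bigl|\sum_i f_i v_i\Bigr|, \qquad \mm\ae.
\]
When the $f_i$'s are simple, passing to a common refinement of their level sets allows me to rewrite $\sum_i f_i v_i = \sum_j \nchi_{B_j} w_j$ for a finite partition $(B_j)$ of $\X$ and elements $w_j$ which are finite $\R$-linear combinations of the $v_i$'s; here I crucially use that $V$ is a linear subspace, so $w_j \in V$. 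By linearity of $L$ this gives $\sum_i f_i L(v_i) = \sum_j \nchi_{B_j} L(w_j)$, and the assumed bound on $V$ combined with the disjointness of the $B_j$'s yields the pointwise estimate above. For arbitrary $f_i \in L^\infty(\mm)$ I would approximate uniformly by simple functions, noting that uniform convergence of the coefficients implies $L^1(\mm)$-convergence of the images (because $|L(v_i)| \leq \ell|v_i| \in L^1(\mm)$) and $L^0(\mm)$-convergence of the pointwise norms; extracting $\mm$-a.e.\ convergent subsequences transfers the bound to the limit. Well-definedness of $\tilde L$ is then automatic: if two representations of the same element are subtracted, their difference sums to $0$, and the bound forces the corresponding combination of images to vanish.

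Next I would extend $\tilde L$ to the whole ${\rm Span}_\X(V)$: for $v$ concentrated on $A = \bigsqcup_n A_n$ with each $\nchi_{A_n}v$ a finite $L^\infty(\mm)$-combination, set $\tilde L(v) := \sum_n \tilde L(\nchi_{A_n}v)$; the disjointness of the $A_n$'s and the integrability of $\ell|v|$ (which follows from $\ell\in L^q(\mm)$, $|v|\in L^p(\mm)$ and Hölder) guarantee $L^1$-convergence of the series and preserve the pointwise bound $|\tilde L(v)| \leq \ell|v|$ $\mm$-a.e. In particular $\tilde L: {\rm Span}_\X(V) \to L^1(\mm)$ is linear and continuous with norm at most $\|\ell\|_{L^q(\mm)}$. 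Since by assumption ${\rm Span}_\X(V)$ is dense in $\MM$, a unique continuous linear extension to $\MM$ exists; the pointwise bound $|\tilde L(v)| \leq \ell|v|$ $\mm$-a.e.\ persists on all of $\MM$ by passing to $\mm$-a.e.\ convergent subsequences, and Proposition \ref{prop:baselp}(v) then promotes $\tilde L$ to an element of $\Hom(\MM,L^1(\mm)) = \MM^*$. Uniqueness of $\tilde L$ is immediate from density and the $L^\infty(\mm)$-linearity built into the module morphism property. The main obstacle is the pointwise bound for $L^\infty(\mm)$-combinations, whose verification rests on reducing to the simple-function case and exploiting the linearity of $V$ to keep the $w_j$'s inside $V$.
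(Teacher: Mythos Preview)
Your proposal is correct and follows essentially the same approach as the paper: extend $L$ by $L^\infty(\mm)$-linearity to a dense subspace using the pointwise bound, then pass to the closure and invoke Proposition~\ref{prop:baselp}(v) to conclude. The paper is slightly more economical in that it works only with characteristic-function coefficients $\sum_i \nchi_{A_i} v_i$ (rather than general $L^\infty$ coefficients) before taking the closure, using property~\eqref{eq:dalp} to reach all of ${\rm Span}_\X(V)$; your intermediate step through general $f_i\in L^\infty(\mm)$ is valid but not needed.
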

\begin{proof}
Let $\tilde V\subset \MM$ be the set of elements of the form $\sum_{i=1}^n\nchi_{A_i}v_i$ for some $n\in\N$, $A_i\in\BB$ and $v_i\in V$, $i=1,\ldots, n$. Notice that $\tilde V$ is a vector space and define $\tilde L:\tilde V\to L^1(\mm)$ as
\[
\tilde L\Big(\sum_{i=1}^n\nchi_{A_i}v_i\Big):=\sum_{i=1}^n\nchi_{A_i}L(v_i).
\] 
The bound \eqref{eq:extension} grants that this is a good definition, i.e. the right hand side depends only on $v:=\sum_{i=1}^n\nchi_{A_i}v_i$ and not on the particular way of writing $v$. Then clearly $\tilde L$ is linear. Moreover, any $v\in \tilde V$ can be written as $v=\sum_{i=1}^n\nchi_{A_i}v_i$ with the $A_i$'s disjoint and therefore we have
\[
\|\tilde L(v)\|_{L^1(\mm)}=\sum_{i=1}^n\int_{A_i}|L(v_i)|\,\d\mm\stackrel{\eqref{eq:extension}}\leq \sum_{i=1}^n\int_{A_i}\ell|v_i|\,\d\mm=\int\ell|v|\,\d\mm\leq \|\ell\|_{L^q(\mm)}\|v\|_\MM,
\]
which shows the continuity of $\tilde L$. Hence $\tilde L$ can be uniquely extended to a continuous linear map, still denoted by $\tilde L$, from the closure of $\tilde V$ to $L^1(\mm)$. From the definition of ${\rm Span}_\X(V)$ and property \eqref{eq:dalp} it is immediate to see that the closure of $\tilde V$ contains ${\rm Span}_\X(V)$, and hence its closure, which by assumption is the whole $\MM$. 

The construction ensures that $|\tilde L(v)|\leq\ell|v|$ $\mm$-a.e.\ for every $v\in\MM$, thus point $(v)$ of Proposition \ref{prop:baselp} ensures that $\tilde L$ is a module morphism.

It is now clear that $\tilde L$ is unique, so the proof is complete.
\end{proof}
Another property of generating subspaces is the following:
\begin{proposition}\label{prop:normgen}
Let $\MM$ be an $L^p(\mm)$-normed module, $p\in(1,\infty)$ and $V\subset\MM$ a linear subspace which generates $\MM$. Then for every $L\in\MM^*$ we have
\[
\frac1q|L|_*^q=\esssup_{v\in V}L(v)-\frac1p|v|^p,\qquad\mm\ae.
\]
\end{proposition}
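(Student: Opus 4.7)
Write $\Phi(v):=L(v)-\tfrac{1}{p}|v|^p\in L^1(\mm)$ for $v\in\MM$ and $g:=\esssup_{v\in V}\Phi(v)$, so the target identity is $g=\tfrac{1}{q}|L|_*^q$ $\mm$-a.e. The plan is a chain of three inequalities: $g\leq\tfrac{1}{q}|L|_*^q$ by Young, $\esssup_{u\in\MM}\Phi(u)\leq g$ by density of $V$ in $\MM$, and $\tfrac{1}{q}|L|_*^q\leq\esssup_{u\in\MM}\Phi(u)$ via a saturation construction. The first is immediate: combine the pointwise bound $|L(v)|\leq|L|_*\,|v|$ (from Proposition \ref{prop:normevarie}(i)) with the scalar inequality $ab\leq a^p/p+b^q/q$ to conclude $\Phi(v)\leq\tfrac{1}{q}|L|_*^q$ $\mm$-a.e.\ for every $v\in V$, then pass to the essential supremum.

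For the density step I would first treat $u\in\mathrm{Span}_\X(V)$: by locality one can reduce to $u=\sum_{j=1}^m f_j v_j$ with $v_j\in V$ and $f_j\in L^\infty(\mm)$, approximate each $f_j$ uniformly by simple functions $f_j^n\to f_j$, and set $u^n:=\sum_j f_j^n v_j$. Partitioning $\X$ into the finitely many cells on which all $f_j^n$ are constant, on every such cell $u^n$ coincides by locality with a vector in $V$ (using linearity of $V$), so $\Phi(u^n)\leq g$ globally $\mm$-a.e.; the $\mm$-a.e.\ limit along a subsequence (using continuity of $L$ into $L^1(\mm)$ and of the pointwise norm) then yields $\Phi(u)\leq g$. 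Since $V$ generates $\MM$, the same subsequence trick extends the bound from $\mathrm{Span}_\X(V)$ to all of $\MM$, giving $\esssup_{u\in\MM}\Phi(u)\leq g$ $\mm$-a.e.

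For the saturation step I would use that essential suprema are realized by countable subfamilies: from definition \eqref{eq:normdual}, extract $u_n\in\MM$ with $|u_n|\leq 1$ and $\sup_n|L(u_n)|=|L|_*$ $\mm$-a.e., then apply the sign trick $u_n\leadsto\mathrm{sgn}(L(u_n))\,u_n$ to reduce to $L(u_n)\geq 0$ $\mm$-a.e. For each rational $\lambda>0$ one has $\lambda u_n\in\MM$ and $\Phi(\lambda u_n)\geq\lambda L(u_n)-\lambda^p/p$ $\mm$-a.e.; pointwise optimization over $\lambda\in\Q_{>0}$ (dense in $[0,\infty)$) recovers $\tfrac{1}{q}L(u_n)^q$, and taking the supremum in $n$ together with monotonicity of $t\mapsto t^q$ on $[0,\infty)$ gives $\esssup_{u\in\MM}\Phi(u)\geq\tfrac{1}{q}|L|_*^q$ $\mm$-a.e., closing the chain. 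The main obstacle is the density step: one must simultaneously manage the $L^\infty$-multiplications hidden inside $\mathrm{Span}_\X(V)$ and the closure operation $\overline{\mathrm{Span}_\X(V)}=\MM$, both handled by the subsequence argument but requiring a careful simultaneous $\mm$-a.e.\ extraction of the limits of $|u^n|$ and of $L(u^n)$.
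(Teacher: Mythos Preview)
Your proof is correct and follows essentially the same route as the paper: Young's inequality for the upper bound, and density of $V$-based elements for the lower bound. The paper's proof is much terser---it dispenses with your step~3 as ``trivial'' and streamlines your step~2 by observing directly that elements of the form $\sum_{i=1}^n\nchi_{E_i}v_i$ (with $E_i$ disjoint and $v_i\in V$) are dense in $\MM$ and satisfy $\Phi\big(\sum_i\nchi_{E_i}v_i\big)=\sum_i\nchi_{E_i}\Phi(v_i)\leq g$ pointwise, bypassing your detour through general $L^\infty$ coefficients and their simple approximations; your more explicit treatment is nonetheless fine.
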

\begin{proof}
Inequality $\geq$ is trivial. To prove $\leq$ notice that if the $\esssup$ was taken among all $v\in\MM$ then the claim would be trivial, then conclude noticing that the assumption that $V$ generates $\MM$ grants that the space of elements of the form
\[
\sum_{i=1}^n\nchi_{E_i}v_i,
\]
with $n\in\N$, $E_i\in\BB$ and $v_i\in V$ for every $i=1,\ldots,n$ is dense in $\MM$.
\end{proof}

We shall also make use of the following simple result:
\begin{proposition}\label{prop:finsep}
Let $\MM$ be an $L^p(\mm)$-normed module, $p<\infty$, and $V\subset\MM$ a generating set. Assume that $V$, when endowed with the induced topology, is separable.

Then $\MM$ is separable as well.
\end{proposition}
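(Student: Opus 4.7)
My plan is to construct a countable dense subset of $\MM$ from a countable dense subset $D=\{v_k\}_{k\in\N}$ of $V$. Following the truncation trick behind \eqref{eq:seplplim}, I would first replace $D$ by the countable set
\[
\tilde D:=\big\{\nchi_{\{|v_k|\le m\}}\,v_k\ :\ k,m\in\N\big\}\subset\MM,
\]
whose elements are all bounded in the pointwise sense (i.e.\ $|w|\in L^\infty(\mm)$). Since $p<\infty$, dominated convergence applied to $|v_k|^p\in L^1(\mm)$ gives $\nchi_{\{|v_k|\le m\}}v_k\to v_k$ in $\MM$ as $m\to\infty$, so $D\subset\overline{\tilde D}$ and hence $V\subset\overline{\tilde D}$. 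The closed $L^\infty(\mm)$-submodule of $\MM$ generated by $\tilde D$ therefore contains $V$ and, being closed, coincides with $\overline{{\rm Span}_\X(V)}=\MM$. It thus suffices to produce a countable dense subset of this submodule.

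I would then fix a countable Boolean subalgebra $\mathcal R\subset\mathcal A$ that is dense in the measure algebra of $\mm$ modulo $\mm$-null sets (this is where the separability of the underlying measure space enters) and define the countable candidate
\[
D':=\Big\{\sum_{i=1}^N q_i\,\nchi_{E_i}\,w_i\ :\ N\in\N,\ q_i\in\Q,\ E_i\in\mathcal R,\ w_i\in\tilde D\Big\}.
\]
The goal becomes $\overline{D'}=\MM$. Since $\tilde D\subset D'$ already generates $\MM$ as closed $L^\infty(\mm)$-submodule, it is enough to verify that $\overline{D'}$ is stable under the operation $w\mapsto fw$ for every $w\in\tilde D$ and $f\in L^\infty(\mm)$. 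The key identity
\[
\|fw-\tilde f\,w\|_\MM=\big\|(f-\tilde f)\,|w|\big\|_{L^p(\mm)},
\]
valid by the pointwise norm property, combined with the boundedness of $|w|$ reduces this to approximating $f$ in the $L^p$-norm of the finite measure $|w|^p\,\mm$, which is achieved by rational-valued $\mathcal R$-simple functions thanks to the density of $\mathcal R$ in the measure algebra of $\mm$ (and hence in that of $|w|^p\,\mm$, which is absolutely continuous with respect to $\mm$).

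The main delicate point is precisely this last reduction: for a single bounded $w$ the set $\{fw:f\in L^\infty(\mm)\}$ is essentially $L^p(|w|^p\,\mm)$ through the map $f\mapsto f|w|$, and it is the separability of this $L^p$ space---guaranteed by the separability of the measure algebra of $\mm$---that permits a countable approximation at all. Everything else, namely the truncation step, the transfer from $V$ to $\tilde D$, and the assembly of $D'$ from finite sums, is routine once one has the convergence \eqref{eq:dalp} of partial sums at hand and the definition of ${\rm Span}_\X$ unfolded.
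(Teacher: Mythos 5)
Your proof is correct and follows essentially the same route as the paper's: truncate to get bounded generating elements and then approximate the $L^\infty(\mm)$-coefficients of finite sums from a countable family, using the identity $\|fw-\tilde f w\|_\MM=\|(f-\tilde f)\,|w|\|_{L^p(\mm)}$ for bounded $w$ together with \eqref{eq:dalp}. The only (essentially cosmetic) difference is that you build the countable coefficient family from a countable dense Boolean subalgebra of the measure algebra, whereas the paper simply picks a countable dense subset of $L^p(\mm)$; both devices rest on the same separability of $L^p(\mm)$, which the paper leaves implicit and you make explicit.
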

\begin{proof}
Notice that the set $\tilde V:=\{\nchi_{|v|\leq n}v\ :\ n\in\N,\ v\in V\}$ is still separable and generating and, by definition, made of bounded elements.

Then letting $D_{\tilde V}\subset \tilde V$ and $D_{L^p}\subset L^p(\mm)$  countable dense sets, it is easy to check directly from the definitions that the space of elements of the form
\[
\sum_{i=1}^nf_iv_i,\qquad v_i\in D_{\tilde V},\ f_i\in D_{L^p},\ i=1,\ldots,n,
\]
is dense in $\MM$, thus giving the result.
\end{proof}

In the case of Hilbert modules, the discussions made provide a quite complete structural characterization. Notice that to  a given  Hilbert space $H$ we can associate the Hilbert module $L^2(\X,H)$  of  $L^2$ maps from $\X$ to $H$, i.e.\ of maps $v:\X\to H$ such that $\|v\|_{L^2(\X,H)}^2:=\int|v|^2(x)\,\d\mm(x)<\infty$. It is clear that this is indeed an $L^\infty(\mm)$-module, the multiplication with a function in $L^\infty(\mm)$ being simply the pointwise one, and given that $(L^2(\X,H),\|\cdot\|_{L^2(\X,H)})$ is an Hilbert space, we have indeed an Hilbert module.

Now fix an infinite dimensional separable Hilbert space $H$ together with a sequence of subspaces $V_i\subset H$, $i\in\N$, such that $\dim V_i=i$ for every $i\in\N$. Then to each partition $\{E_i\}_{i\in\N\cup\{\infty\}}$ of $\X$ we associate the Hilbert module $\mathcal H(\{E_i\};H,\{V_i\})$ made of elements $v$ of $L^2(\X,H)$ such that
\[
v(x)\in V_i,\qquad \mm\ae\text{ on }  E_i,\qquad\qquad \forall i\in\N. 
\]
It is readily verified that $\mathcal H(\{E_i\};H,\{V_i\})$ is a submodule of $L^2(\X,H)$ and thus an Hilbert module.

We then have the following structural result:
\begin{theorem}[Structural characterization of separable Hilbert modules]\label{thm:structhil}
Let $\HH$ be a separable Hilbert module. Then there exists a unique  partition $\{E_i\}_{i\in\N\cup\{\infty\}}$ of $\X$ such that $\HH$ is isomorphic to $\mathcal H(\{E_i\};H,\{V_i\})$.
\end{theorem}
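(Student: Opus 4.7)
The plan is to get the partition from the dimensional decomposition already available and then build the isomorphism via a Gram--Schmidt procedure carried out inside the $L^0(\mm)$-module $\HH^0$.

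First, Proposition \ref{prop:dimdec} supplies the unique partition $\{E_i\}_{i\in\N\cup\{\infty\}}$; its uniqueness carries over to the theorem since local dimension is an isomorphism invariant by Proposition \ref{prop:localdimension}. What remains is to produce an isometric module isomorphism $T:\HH\to\mathcal H(\{E_i\};H,\{V_i\})$. On each $E_i$ with $i\in\N$ and $\mm(E_i)>0$, pick a local basis $v_1,\ldots,v_i$ of $\HH\restr{E_i}$ and run Gram--Schmidt in $\HH^0$: inductively set
\[
w_{k+1}:=v_{k+1}-\sum_{j=1}^{k}\la v_{k+1},e_j\ra e_j,\qquad e_{k+1}:=\frac{w_{k+1}}{|w_{k+1}|}.
\]
Local independence forces $|w_{k+1}|>0$ $\mm$-a.e.\ on $E_i$, making the division meaningful in $L^0(\mm)$, and the identities \eqref{eq:proprietascalarepoint} for the pointwise scalar product (Proposition \ref{prop:hill2}) give $\la e_j,e_k\ra=\delta_{jk}$ and $|e_j|=1$ $\mm$-a.e.\ on $E_i$. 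On $E_\infty$ I invoke separability: take a countable dense sequence $\{u_k\}_k\subset\HH\restr{E_\infty}$ and iterate the same scheme, splitting via the gluing property over the sets where the new $w_{k+1}$ vanishes and relabelling so as to produce a countable family $\{e_k^\infty\}_{k\in\N}$ in $\HH^0\restr{E_\infty}$ of pairwise orthogonal elements of pointwise norm $1$ on their support, whose $L^0$-linear span is dense in $\HH\restr{E_\infty}$.

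Fix an orthonormal Hilbert basis $\{h_k\}_{k\in\N}$ of $H$ compatible with $V_i=\mathrm{span}\{h_1,\ldots,h_i\}$ and set
\[
T(v):=\sum_{i\in\N}\nchi_{E_i}\sum_{k=1}^{i}\la v,e_k^i\ra h_k+\nchi_{E_\infty}\sum_{k=1}^{\infty}\la v,e_k^\infty\ra h_k.
\]
The pointwise Parseval identity $|T(v)|^2=\sum_k\la v,e_k\ra^2=|v|^2$ $\mm$-a.e.\ follows from orthonormality combined with density of the local spans; it simultaneously ensures $L^2$-convergence of the defining series and shows that $T$ is a pointwise, hence Banach-space, isometry. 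Module linearity is immediate from the $L^\infty(\mm)$-bilinearity of the scalar product. For surjectivity, the explicit formula $S(w):=\sum_{i\in\N}\nchi_{E_i}\sum_{k=1}^{i}\la w,h_k\ra_H\,e_k^i+\nchi_{E_\infty}\sum_{k=1}^{\infty}\la w,h_k\ra_H\,e_k^\infty$ yields an inverse whose pointwise norm coincides with $|w|$ by the same orthonormality.

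The main obstacle is the $E_\infty$ construction: one must ensure that the countable Gram--Schmidt yields a \emph{maximal} pointwise orthonormal family, so that Parseval is an equality rather than merely Bessel's inequality $\mm$-a.e. This is where the infinite-dimensionality clause in Proposition \ref{prop:dimdec}(ii) enters essentially. Indeed, if $v-\sum_{k=1}^{N}\la v,e_k^\infty\ra e_k^\infty$ had pointwise norm bounded away from $0$ on some $A\subset E_\infty$ of positive measure for all $N$, then no $\nchi_A u_k$ could approximate $\nchi_A v$ in $\HH$-norm, contradicting density together with the fact that each $u_k$ lies in the local span of $\{e_1^\infty,\ldots,e_k^\infty\}$ by construction.
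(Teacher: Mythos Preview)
Your strategy matches the paper's: take the partition from Proposition~\ref{prop:dimdec} and build the isomorphism via Gram--Schmidt, treating the finite-dimensional pieces and $E_\infty$ separately. The finite-dimensional part and the uniqueness argument are fine. The gap is in surjectivity on $E_\infty$.

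You allow the $e_k^\infty$ to have pointwise norm $1$ only \emph{on their support}, i.e.\ each $e_k^\infty$ may vanish on a set $E_\infty\setminus S_k$ of positive measure (this is unavoidable with a naive Gram--Schmidt on a dense sequence, since $u_{k+1}$ may locally lie in the span of $u_1,\ldots,u_k$). Your Parseval argument then correctly shows that $T$ is an isometric embedding. But your pairing $e_k^\infty\mapsto h_k$ makes the claimed inverse fail: for $w\in L^2(E_\infty,H)$ one has
\[
|S(w)|^2=\sum_k\la w,h_k\ra_H^2\,|e_k^\infty|^2=\sum_k\la w,h_k\ra_H^2\,\nchi_{S_k},
\]
which is strictly less than $|w|^2$ whenever $\la w,h_k\ra_H\neq 0$ somewhere on $E_\infty\setminus S_k$. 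Concretely, $\nchi_{E_\infty\setminus S_k}h_k$ is never hit by $T$. So $S$ is not norm-preserving, $TS\neq{\rm Id}$, and the ``relabelling'' you allude to is too vague to repair this: it would have to be a \emph{pointwise} re-indexing sending the $j$-th nonzero $e_k^\infty(x)$ to $h_j$, together with a proof (using infinite-dimensionality on every subset of $E_\infty$) that $\mm$-a.e.\ point sees infinitely many nonzero $e_k^\infty$'s.

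The paper avoids this by never letting the orthogonal vectors vanish. At each step it builds $\tilde v_{i+1}$ by \emph{gluing} together pieces of different dense-sequence elements $v_{n_k}$ over a partition of $E_\infty$, chosen so that on each piece the new vector falls outside the previously constructed submodule $\HH_i$; the infinite-dimensionality clause in Proposition~\ref{prop:dimdec}(ii) guarantees this is always possible and that the resulting $\tilde v_{i+1}$ satisfies $\nchi_B\tilde v_{i+1}\notin\HH_i$ for every $B$ of positive measure. After orthogonalizing, each $w_i$ has $|w_i|>0$ $\mm$-a.e.\ on $E_\infty$, so the direct pairing $w_i\mapsto|w_i|e_i$ is genuinely onto $L^2(E_\infty,H)$.
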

\begin{proof} $\ $\\
\noindent{\bf Uniqueness} It is evident from the construction  that the local dimension of  $\mathcal H(\{E_i\};H,\{V_i\})$ on $E_i$ is precisely $i$ for any $i\in\N$. In particular, due to Proposition \ref{prop:localdimension}, given another partition $\{\tilde E_i\}_{i\in\N\cup\{\infty\}}$, the modules $\mathcal H(\{E_i\};H,\{V_i\})$  and $\mathcal H(\{\tilde E_i\};H,\{V_i\})$ are isomorphic if and only if $E_i=\tilde E_i$ for every $i\in\N\cup\{\infty\}$. The claim follows.

\noindent{\bf Existence} For any $i\in\N$ pick an orthonormal basis $e^i_1,\ldots,e^i_i$ of $V_i$,  let $E_i$, $i\in\N\cup\{\infty\}$, be given by Proposition \ref{prop:dimdec}, $I\subset\N\cup\{\infty\}$ be the set of indexes $i$ such that $\mm(E_i)>0$ and for each $i\in I\setminus\{0,\infty\}$, let $v^i_1,\ldots,v^i_i$ be  a basis of $\HH$ on $E_i$. We apply the Gram-Schmidt orthogonalization process:  for each $i\in  I\setminus\{0,\infty\}$ define a new base $\tilde v^{i}_1,\ldots,\tilde v^i_i$ of $\HH$ on $E_{i}$ by putting $\tilde v^{i}_1:=v^{i}_1$ and for $j=2,\ldots,i$ defining
\[
\tilde v^{i}_j:=\nchi_{E_i}\Big(v^{i}_j-\sum_{k=1}^{j-1}\frac{\la v^{i}_j, \tilde v^{i}_k\ra}{|\tilde v^{i}_k|^2}\tilde v^{i}_k\Big),
\]
where the right hand side is a priori understood in $\HH^0$. Then by direct computation of the norm we see that in fact $\tilde v^i_j\in \HH$ and it is readily verified that $\tilde v^{i}_1,\ldots,\tilde v^i_i$ is still a basis of $\HH$ on $E_{i}$. Now define $T_i:\HH\restr{E_i}\to \mathcal H(\{E_i\};H,\{V_i\})$ by declaring that
\[
T_i(\tilde v^i_j):=\nchi_{E_i}|\tilde v^i_j|e^i_j,
\]
and extending it to the whole $\HH\restr{E_i}$ by $L^\infty(\mm)$-linearity: the fact that $|T_i(\tilde v^i_j)|=|\tilde v^i_j|$ and $\la T_i(\tilde v^i_j),T_i(\tilde v^i_{j'})\ra=0=\la v^i_j,v^i_{j'}\ra$ $\mm$-a.e.\  for $j\neq j'$ and that $\tilde v^{i}_1,\ldots,\tilde v^i_i$ is a basis grant that this extension exists, is unique and defines a norm-preserving modulo morphism.

Now define $T_\N:\HH\restr{\cup_{i\in\N}E_i}\to \mathcal H(\{E_i\};H,\{V_i\})$ by
\[
T_\N(v):=\sum_{i\in I\setminus\{0,\infty\}}T_i(\nchi_{E_i}v),\qquad\forall v\in \HH\restr{\cup_{i\in\N}E_i}.
\]
Given that the $T_i$'s are norm-preserving, the series at the right hand side of this last expression converges in $ \mathcal H(\{E_i\};H,\{V_i\})$, so the definition is well-posed. It is then clear that $T_\N$ is a norm preserving modulo morphism whose image is precisely $\mathcal H(\{E_i\};H,\{V_i\})\restr{\cup_{i\in\N}E_i}$. If $\mm(E_\infty)=0$, then $\HH\restr{\cup_{i\in\N}E_i}=\HH$ and $T_\N$ is surjective, so that the proof is complete.

Otherwise assume that $\mm(E_\infty)>0$ and let $\{v_i\}_{i\in\N}$ be a countable dense subset of $\HH\restr{E_\infty}$, which exists because $\HH$ is separable. Put $\HH_0:=\{0\}\subset \HH$ and for $i\in\N$, $i>0$, we shall inductively define:
\begin{itemize}
\item[i)]  an increasing sequence $(\HH_i)$ of submodules such that the dimension of $\HH_i$ on $E_\infty$ is exactly $i$, and $v_j\in \HH_i$ for every $j\leq i$,
\item[ii)] a sequence $(w_i)\subset \HH$ such that $w_1,\ldots,w_i$ is a local basis of $\HH_i$ on $E_\infty$ and $\la w_i,w_j\ra=0 $ $\mm$-a.e.\ on $E_\infty$ for any $i\neq j$.
\end{itemize}
The construction goes as follows. Pick $i\in\N$ and assume that  both the submodule $\HH_{i}$ and the local basis $w_1,\ldots,w_{i}$ have been defined (the latter being the empty condition if $i=0$). Let $n_1$ be the minimum $n\in\N$ such that $v_n\notin \HH_i$. Such integer exists because $(v_n)$ is dense in $\HH\restr{E_\infty}$,  $\HH_i$ has  finite dimension on $E_\infty$, while $\HH\restr{E_\infty}$ has not. Then notice that the family of subsets $B$ of $E_\infty$ such that $\nchi_Bv_{n_1}\in \HH_i$ is stable by countable union and thus there exists a maximal set $B_1$ with this property. Put $A_1:=E_\infty\setminus B_1$ and  let $\HH_{i,1}$ be the module generated by $\HH_i\cup\{\nchi_{A_1}v_{n_1}\}$. Notice that by induction we have $n_1>i$ and $v_n\in \HH_{i,1}$ for every $n\leq n_1$.

Continue by iteration: if $A_{k-1}$ has already been defined and $\mm(E_\infty\setminus\cup_{j<k}A_j)>0$, find the  minimal integer $n_k$ so that $v_{n_k}\notin \HH_{i,k-1}$, the maximal subset $B_k$ of  $ E_\infty\setminus\cup_{j<k}A_j$ such that $\nchi_{B_k}v_{n_k}\in \HH_{i,k-1}$, put $A_k:=(E_\infty\setminus\cup_{j<k}A_j)\setminus B_k$ and let $\HH_{i,k}$ be the module generated by $\HH_{i,k-1}\cup\{\nchi_{E_\infty\setminus\cup_{j<k}A_j}v_{n_k}\}$.

If for some $k\in\N$ we have $\mm(E_\infty\setminus\cup_{j< k}A_j)=0$, put 
\[
\HH_{i+1}:=\HH_{i,k-1}\qquad\text{ and }\qquad  \tilde v_{i+1}:=\sum_{l=1}^{k-1} \nchi_{A_l}v_{n_l},
\]
otherwise put
\[
\HH_{i+1}:=\text{the module generated by }\bigcup_{k\in\N}M_{i,k},\qquad \text{and}\qquad \tilde v_{i+1}:=\sum_{l\in\N} \frac{\nchi_{A_l}}{2^l\|v_{n_l}\|_\HH}v_{n_l}.
\]
We claim that in either case it holds:
\begin{equation}
\label{eq:perbase}
\forall B\in\BB\ \text{with }\mm(B)>0,\text{ we have }\nchi_B\tilde v_{i+1}\notin \HH_i.
\end{equation}
In the first case this is obvious by the fact that the procedure stopped, in the second we argue by contradiction: if there is $B\in\BB$ with positive measure such that $\nchi_B\tilde v_{i+1}\in \HH_i$ by the way we built the $v_{n_k}$'s this means that for every $n\in\N$ we have $\nchi_Bv_n\in \HH_i$. But since $\{v_n\}_{n\in\N}$ is dense in $\HH\restr{E_\infty}$, this would imply that $(\HH\restr{E_\infty})\restr B=\HH_i\restr B$, so that $\HH\restr{E_\infty}$ would have finite dimension on $B$, contradicting the definition of $E_\infty$.

Now put
\[
w_{i+1}:=\tilde v_{i+1}-\sum_{j=1}^i\frac{\la \tilde v_{i+1}, w_j\ra}{|w_j|^2} w_j,
\]
and notice that by construction $(i)$ and $(ii)$ above are fulfilled, the role of \eqref{eq:perbase} being to ensure that the $w_j$'s are independent on $E_\infty$. 

We remark that by $(i)$ we have that $\cup_i\HH_i$ is dense in $\HH\restr{E_\infty}$. Then let $(e_i)_{i\in\N}$ be an Hilbert basis of $H$ and define $T_\infty:\HH\restr{E_\infty}\to  \mathcal H(\{E_i\};H,\{V_i\})$ by declaring that
\[
T_\infty(w_i):=\nchi_{E_\infty}|w_i|e_i,\qquad\forall i\in\N,
\]
and extending it by $L^\infty(\mm)$-linearity and continuity: the fact that $\mm$-a.e.\ on $E_\infty$ we have $|T_\infty(w_i)|=|w_i|$ for every $i\in\N$ and $\la T_\infty(w_i),T_\infty(w_j)\ra $ for $i\neq j$, and properties $(i)$, $(ii)$ grant that such extension exists, is unique and that $T_\infty$ is a norm-preserving module morphism whose image is precisely $\mathcal H(\{E_i\};H,\{V_i\})\restr{E_\infty}$.

To conclude, define $T:\HH\to \mathcal H(\{E_i\};H,\{V_i\})$ as
\[
T(v):=T_\N(\nchi_{\cup_{i\in\N E_i}}v)+T_\infty(\nchi_{E_\infty}v),\qquad\forall v\in \HH.
\]
The construction grants that $T$ is the desired isomorphism.
\end{proof}
It is worth to underline that the construction of the isomorphism in this last theorem is somehow similar to the choice of a basis in a given vector space, in the sense that it is possible but not intrinsic. Thus although in practical situations it is useful to have this structural result at disposal, it is reductive and dangerous, and in fact wrong, to think that every separable Hilbert module is of the form $\mathcal H(\{E_i\};H,\{V_i\})$. Consider for instance a Riemannian manifold $M$ and the Hilbert module of $L^2$ vector fields on it. Such module is certainly isometric to $L^2(M,\R^{\dim M})$, but the choice of the isomorphism is equivalent to a Borel choice of an orthonormal basis in a.e.\ point of $M$, which is definitively possible but highly not intrinsic in general.
\begin{remark}[Direct integral of Hilbert spaces]\label{re:dirintH}{\rm A byproduct of Theorem \ref{thm:structhil} is that it shows that a posteriori the theory of separable Hilbert modules and that of direct integral of Hilbert spaces (a concept generalizing that of direct sum to a `continuous family of indexes' - we refer to \cite{Takesaki79}  for an overview of this topic and detailed bibliography) are tightly linked. Indeed, it is clear that a direct integral of Hilbert spaces is a separable Hilbert module, while Theorem \ref{thm:structhil} shows that the viceversa is also true.

Both approaches describe in some sense the notion of measurable bundle of Hilbert spaces, the  difference being in the way the relevant object is built: with the direct integral one starts the construction from the notion of `fiber', while with Hilbert modules one rather picks the `section' point of view.

In this analogy, the space $\HH^0$ would correspond, in the direct integral language, to a measurable field of Hilbert spaces, while a module morphism of Hilbert modules corresponds to a decomposable operator.
}\fr\end{remark}

\subsection{Tensor and exterior products of Hilbert modules}\label{se:tensorproduct}

In this section we discuss the definition of tensor products of Hilbert modules and related constructions like the exterior product. Shortly said, we want a construction that does the following job: if we consider the Hilbert module $\HH$ of $L^2$ vector fields on $\R^n$, then the tensor product module $\HH\otimes\HH$ should be the module of $L^2$ matrix fields $x\mapsto A_x\in \R^{n\times n}$, the pointwise norm being the Hilbert-Schmidt norm. Notice that considering the tensor product of $\HH$ with itself in the sense of Hilbert spaces would produce a different Hilbert structure, see Remark \ref{rem:tensdiversi}.

Theoretical discussions apart, we have a very practical reason for looking for such a construction: on a smooth Riemannian manifold  we have the Bochner identity
\[
\Delta\frac{|\nabla f|^2}2= |\H f|_\HS^2+\la \nabla f,\nabla \Delta f\ra+{\rm Ric}(\nabla f,\nabla f),
\] 
the norm of the Hessian appearing here being the Hilbert-Schmidt one. Given that one of the aims of this paper is to build a second order calculus and to provide an analogous of the above identity on non-smooth spaces with Ricci curvature bounded from below,  we must have at disposal the notion of pointwise Hilbert-Schmidt norm.

A similar comment concerns exterior product, which is necessary in order to define $L^2$ differential forms.

\vspace{1cm}

It is worth to recall that for two given Hilbert spaces $H_1,H_2$, their tensor product $H_1\otimes H_2$ is defined as follows. One starts considering the algebraic tensor product $H_1\otimes^{\rm Alg}H_2$, i.e.\ the tensor product of the two spaces seen as vector spaces, and defines a bilinear form $\la\cdot,\cdot\ra$ on it by declaring that
\[
\la v_1\otimes v_2,w_1\otimes w_2\ra:= \la v_1,w_1\ra_{H_1}\la v_2,w_2\ra_{H_2}\qquad\forall v_1,w_1\in H_1,\ v_2,w_2\in H_2,
\]
and extending it by linearity, $\la\cdot,\cdot\ra_{H_i}$ being the scalar product on $H_i$, $i=1,2$. Such bilinear form is evidently symmetric and one can verify, for instance using Hilbert bases, that it is positive definite, i.e.
\begin{equation}
\label{eq:tenshil}
\begin{split}
\la A,A\ra&\geq 0\\
\la A,A\ra&= 0\qquad\Leftrightarrow \qquad  A=0,
\end{split}
\end{equation}
for every $A\in H_1\otimes^{\rm Alg}H_2$. The tensor product $H_1\otimes H_2$ is then defined as the completion of $H_1\otimes^{\rm Alg}H_2$ w.r.t.\ the norm induced by $\la\cdot,\cdot\ra$.

\bigskip

We turn to the construction in the case of modules. Let  $\HH_1,\HH_2$ be Hilbert modules on a given fixed $\sigma$-finite measured space $(\X,\mathcal A,\mm)$. Consider the topological vector spaces  $\HH^0_1,\HH^0_2$ defined in Section \ref{se:altint}, recall that they are endowed with a natural structure of $L^0(\mm)$-module (here we refer just to the algebraic point of view) and consider their algebraic tensor product $\HH^0_1\otimes^{\rm Alg}_{L^0}\HH^0_2$. This means that  we take the vector space of the formal finite sums of objects of the kind $v\tilde\otimes w$, with  $v\in \HH^0_1$ and $w\in \HH_2^0$, and quotient it w.r.t.\ the subspace generated by the elements of the form
\[
\begin{split}
&(\alpha_1v_1+\alpha_2v_2)\tilde\otimes w-\alpha_1(v_1\tilde\otimes w)-\alpha_2(v_2\tilde\otimes w),\\
&v\tilde\otimes (\beta_1w_1+\beta_2w_2)-\beta_1(v\tilde\otimes w_1)-\beta_2(v\tilde\otimes w_2),\\
&(fv)\tilde\otimes w-v\tilde\otimes(fw), 
\end{split}
\]
for generic $v,v_1,v_2\in\HH^0_1$, $w,w_1,w_2\in \HH^0_2$, $\alpha_1,\alpha_2,\beta_1,\beta_2\in\R$ and $f\in L^0(\mm)$.

The resulting quotient vector space is, by definition, $\HH^0_1\otimes^{\rm Alg}_{L^0}\HH_2^0$ and as customary we shall denote by $v\otimes w$ the equivalence class of $v\tilde\otimes w$.  Given that $L^0(\mm)$ is an abelian ring (w.r.t.\ pointwise $\mm$-a.e.\ multiplication),  $\HH_1^0\otimes^{\rm Alg}_{L^0}\HH_2^0$ is naturally equipped with a multiplication with $L^0(\mm)$-functions  by declaring that:
\[
f(v\otimes w):=(fv)\otimes w=v\otimes(fw),\qquad\forall f\in L^0(\mm),\ v\in \HH_1^0,\ w\in \HH_2^0,
\]
and extending it by linearity, the role of the commutativity of the product in $L^0(\mm)$ being to ensure that this is a good definition.

Denoting by $\la\cdot,\cdot\ra$ the pointwise scalar product on both $\HH_1^0$ and $\HH_2^0$ (recall the discussion after Definition \ref{def:m0}), we define the bilinear map $ :$ from $[\HH_1^0\otimes^{\rm Alg}_{L^0}\HH_2^0]^2$ to $L^0(\mm)$  by declaring that 
\[
\HH_1^0\otimes^{\rm Alg}_{L^0}\HH_2^0\ni v_1\otimes w_1, \ v_2\otimes w_2\qquad\mapsto\qquad  (v_1\otimes w_1):( v_2\otimes w_2):=\la v_1,v_2\ra\la w_1,w_2\ra,
\]
and extending it by bilinearity. It is clear that this map is symmetric, i.e. $A:B = B: A$ $\mm$-a.e.\ for every $A,B\in  \HH_1^0\otimes^{\rm Alg}_{L^0}\HH_2^0$, and local, i.e.
\[
f( A:B)=( f A):B  = A:(fB)\qquad\mm\ae \qquad\forall f\in L^0(\mm),\ A,B\in \HH_1^0\otimes^{\rm Alg}_{L^0}\HH_2^0.
\]
We further claim that it is positive definite in the sense  that
\begin{equation}
\label{eq:esame}
\begin{split}
A:A&\geq 0\qquad\mm\ae\\
A:A&= 0\qquad\mm\ae\text{ on }E \qquad\Leftrightarrow\qquad \nchi_EA=0,
\end{split}
\end{equation}
for every $ A\in \HH_1^0\otimes^{\rm Alg}_{L^0}\HH_2^0$ and $E\in \BB$. To verify this property, assume for the moment that   $\HH_1,\HH_2$ are separable, use the structural characterization provided by Theorem \ref{thm:structhil} and then conclude using the analogous property \eqref{eq:tenshil} valid for Hilbert spaces. To remove the assumption of separability, pick $A\in \HH_1^0\otimes^{\rm Alg}_{L^0}\HH_2^0$ and write it as $A=\sum_{i=1}^nv_i\otimes w_i$ for some  $v_i\in \HH_1$, $w_i\in \HH_2$, $i=1,\ldots,n$. Then notice that $\tilde \HH_1:={\rm Span}_\X\{v_1,\ldots,v_n\}\subset \HH_1$ and $\tilde \HH_2:={\rm Span}_\X\{w_1,\ldots,w_n\}\subset \HH_2$ are separable because they are  finitely generated  (Proposition \ref{prop:finsep}), observe that   by construction $A\in  \tilde \HH_1^0\otimes^{\rm Alg}_{L^0}\tilde \HH_2^0$ and conclude applying  the previous argument.

Then define $|\cdot|_\HS: \HH_1^0\otimes^{\rm Alg}_{L^0}\HH_2^0\to L^0(\mm)$ as
\begin{equation}
\label{eq:hspunt}
|A|_\HS:=\sqrt{  A:A }.
\end{equation}
By \eqref{eq:esame} the definition is well posed and taking into account bilinearity and locality it is not difficult to see that
\begin{equation}
\label{eq:cdg}
\begin{array}{rll}
|A|_\HS&\!\!\!=0\qquad\qquad\qquad&\mm\ae\text{ on }E\qquad\Leftrightarrow\qquad A=0\quad \text{ on }E,\\
|A+B|_\HS&\!\!\!\leq |A|_\HS+|B|_\HS\quad&\mm\ae,
\\
|fA|_\HS&\!\!\!=|f||A|_\HS\ \qquad&\mm\ae, 
\end{array}
\end{equation}
for every $A,B\in \HH_1^0\otimes^{\rm Alg}_{L^0}\HH_2^0$, $f\in L^0(\mm)$ and $E\in\BB$.

The topological vector space $ \HH_1^0\otimes \HH_2^0$ is defined as the completion of $ \HH_1^0\otimes^{\rm Alg}_{L^0}\HH_2^0$ w.r.t.\ the distance
\[
\sfd_{\otimes}(A,B):=\sum_i\frac{1}{2^i\mm(E_i)}\int \min\{1,|A-B|_\HS\}\,\d\mm,
\]
where $(E_i)\subset\BB$ is a partition of $\X$ in sets of positive and finite measure. It is readily verified that the topology of $ \HH_1^0\otimes \HH_2^0$ does not depend on the particular partition chosen and that the operations of addition, multiplication with a function in $L^0(\mm)$ and of pointwise norm all can, and will, be uniquely extended by continuity to the whole  $\HH_1^0\otimes \HH_2^0$ still satisfying \eqref{eq:hspunt} and \eqref{eq:cdg}.
\begin{definition}[The tensor product $\HH_1\otimes \HH_2$]
The tensor product $\HH_1\otimes \HH_2$ is defined as the subset of  $ \HH_1^0\otimes \HH_2^0$ made of tensors $A$ such that
\begin{equation}
\label{eq:hstutto}
\|A\|^2:={\int|A|_\HS^2\,\d\mm}<\infty.
\end{equation}
\end{definition}
We endow  $\HH_1\otimes \HH_2$ with the norm defined in \eqref{eq:hstutto}. It is clear from the definition \eqref{eq:hspunt} that such norm is induced by a scalar product and the very definition of  $ \HH_1\otimes \HH_2$ and  $ \HH_1^0\otimes \HH_2^0$ grant that  $ (\HH_1\otimes \HH_2,\|\cdot\|)$ is complete, and thus an Hilbert space. Moreover,  the last in \eqref{eq:cdg} shows that the multiplication with functions in $L^0(\mm)$ defined on  $ \HH_1^0\otimes \HH_2^0$ induces by restriction a multiplication with $L^\infty(\mm)$ functions on $\HH_1\otimes\HH_2$ which takes values in  $\HH_1\otimes\HH_2$, giving  $\HH_1\otimes\HH_2$ the structure of an $L^\infty(\mm)$-premodule. Given that it is also, by \eqref{eq:hstutto} and the last in \eqref{eq:cdg}, $L^2(\mm)$-normed, we see from Proposition \ref{prop:baselp} that  $\HH_1\otimes\HH_2$ is in fact an Hilbert module. 

\begin{remark}[Hilbert modules and Hilbert spaces]\label{rem:tensdiversi}{\rm Denote by  $\HH_1\otimes_{\rm Hilb}\HH_2$ the tensor product of $\HH_1$ and $\HH_2$ in the sense of Hilbert spaces. Then it is important to underline that the Hilbert spaces  $\HH_1\otimes_{\rm Hilb}\HH_2$ and  $\HH_1\otimes\HH_2$ are in general different.

Indeed, for $v_1\in \HH_1$ and $v_2\in\HH_2$, the norm of $v_1\otimes v_2$ as element of  $\HH_1\otimes_{\rm Hilb}\HH_2$ is $\|v_1\|_{\HH_1}\|v_2\|_{\HH_2}$ while its norm in  $\HH_1\otimes\HH_2$  is given by $\sqrt{\int |v_1|^2|v_2|^2\,\d\mm}$. In fact, this shows that in general $v_1\otimes v_2$ might be not an element of  $\HH_1\otimes\HH_2$  at all. 
}\fr\end{remark}

\begin{remark}[Lack of universal property]{\rm
It is debatable whether we are entitled to call $\HH_1\otimes\HH_2$  `tensor product' or not, the problem being that the lack of the expected universal property in the category of Hilbert modules.

In this direction, it is worth recalling that  the same issue occurs with Hilbert spaces. Consider for instance an infinite dimensional and separable Hilbert space $H$, the bilinear continuous map $\otimes:H\times H\to H\otimes H$ defined by $\otimes(v,w):=v\otimes w$ and the bilinear continuous map $B:H\times H\to\R$ given by $ B(v,w):=\la v,w\ra_H$. Then there is no linear continuous map $L:H\otimes H\to \R$ such that $L\circ \otimes=B$. This can be seen considering an Hilbert base $(e_i)\subset H$ and noticing that
\[
\sup_n\Big\|\sum_{i=1}^n\frac1ie_i\otimes e_i\Big\|^2_{H\otimes H}=\sup_n\sum_{i=1}^n\frac1{i^2}<\infty \qquad\text{while}\qquad B\big(\sum_{i=1}^n\frac1ie_i\otimes e_i\big)=\sum_{i=1}^n\frac1i\sim \log n.
\]

Given that if the reference measure $\mm$ is a Dirac delta the category of Hilbert $L^\infty(\mm)$-modules is trivially isomorphic to that of Hilbert spaces (see also Example \ref{ex:delta}), the same issue occurs for modules.

Actually, for Hilbert modules over arbitrary measured spaces the situation is much worse: as we have seen in the previous remark, in general we don't even have the bilinear continuous map $\otimes:\HH\times\HH\to\HH\otimes\HH$.

Still, the object $\HH\otimes\HH$ arises naturally in this framework, will be crucial in our discussions,  and given we won't propose any other `tensor-like product', we reserve for this one the name of tensor product.
}\fr
\end{remark}
Notice that
\begin{equation}
\label{eq:perdensotp}
\begin{split}
&\text{if $D_1\subset\HH_1,D_2\subset\HH_2$ are dense subsets made of bounded elements, then }\\
&\text{the linear span of $\{v_1\otimes v_2\ :\  v_1\in D_1,\ v_2\in D_2\}$ is dense in $\HH_1\otimes\HH_2$,}
\end{split}
\end{equation}
as can be directly checked with a truncation argument. It also follows that
\begin{equation}
\label{eq:baseseptenssep}
\text{ if $\HH_1$ and $\HH_2$ are separable then so is  $ \HH_1\otimes \HH_2$}.
\end{equation}
Indeed, if $D_1,D_2$ are countable dense subsets of $\HH_1,\HH_2$ made of bounded elements (recall property \eqref{eq:seplplim}), then the $\Q$-vector space generated by $\{v_1\otimes v_2\ :\  v_1\in D_1,\ v_2\in D_2\}$ is dense in the $\R$-vector space generated by the same set, and thus, by \eqref{eq:perdensotp} above, dense in  $ \HH_1\otimes \HH_2$.

In the special case $\HH_1=\HH_2$ we denote the tensor product $\HH\otimes \HH$ as $\HH^{\otimes 2}$. In this case following standard ideas one can  define what are symmetric and antisymmetric tensors. In detail, for $A\in \HH^0\otimes^{\rm Alg}_{L^0}\HH^0$ we define the \emph{transpose} $A^t\in \HH^0\otimes^{\rm Alg}_{L^0}\HH^0$ by declaring that 
\[
(v\otimes w)^t:=w\otimes v,\quad\mm\ae\qquad\forall v,w\in \HH^0,
\]
and extending such map by linearity. It is readily verified that this is a good definition, and that $A\mapsto A^t$ is a linear involution preserving the pointwise norm of $\HH^0\otimes^{\rm Alg}_{L^0}\HH^0$ which is local in the sense that  $(fA)^t=fA^t$ $\mm$-a.e.\ for every $f\in L^0(\mm)$ and $A\in \HH^0\otimes^{\rm Alg}_{L^0}\HH^0$. In particular it can be extended by continuity to a linear local involution of $\HH^0\otimes \HH^0$ preserving the pointwise norm which therefore restricts to a morphism, still denoted as $A\mapsto A^t$, of $\HH^{\otimes 2}$ into itself which is also an involution and an isometry.
In particular, we have
\begin{equation}
\label{eq:scaltrbase}
A:B=A^t:B^t,\quad \mm\ae\qquad\forall A,B\in \HH^{\otimes 2}.
\end{equation}
Then we define $\HH^{\otimes 2}_{\sf Sym}\subset \HH^{\otimes 2}$ as the space of $A$'s such that $A^t=A$ and $\HH^{\otimes 2}_{\sf Asym}\subset \HH^{\otimes 2}$ as the space of $A$'s such that $A^t=-A$. The fact that transposition is a module morphism  grants that $\HH^{\otimes 2}_{\sf Sym}, \HH^{\otimes 2}_{\sf Asym}$ are submodules, while identity \eqref{eq:scaltrbase} gives that 
\begin{equation}
\label{eq:scaltr2base}
A:B=0,\quad\mm\ae,\qquad\forall A\in \HH^{\otimes 2}_{\sf Sym},\ B\in \HH^{\otimes 2}_{\sf Asym}.
\end{equation}
By integration, we see that in particular $\HH^{\otimes 2}_{\sf Sym}, \HH^{\otimes 2}_{\sf Asym}$ are orthogonal subspaces, in the sense of Hilbert spaces, of $\HH^{\otimes 2}$. For a generic $A\in \HH^{\otimes 2}$ define 
\begin{equation}
\label{eq:asim}
A_{\sf Sym}:=\frac{A+A^t}2,\qquad A_{\sf Asym}:=\frac{A-A^t}2,
\end{equation}
and notice that $A=A_{\sf Sym}+A_{\sf Asym}$ with $A_{\sf Sym}\in \HH^{\otimes 2}_{\sf Sym}$ and $A_{\sf Asym}\in \HH^{\otimes 2}_{\sf Asym}$. It follows  that the direct sum of $\HH^{\otimes 2}_{\sf Sym}$ and $\HH^{\otimes 2}_{\sf Asym}$ is the whole $\HH^{\otimes 2}$ and the maps $A\mapsto A_{\sf Sym}$ and $A\mapsto A_{\sf Asym}$, which are module morphisms, are the orthogonal projections onto $\HH^{\otimes 2}_{\sf Sym}$, $\HH^{\otimes 2}_{\sf Asym}$. Identity \eqref{eq:scaltr2base} also gives the identity
\begin{equation}
\label{eq:decompsym}
|A|^2_\HS=|A_{\sf Sym}|_\HS^2+|A_{\sf Asym}|_\HS^2\qquad\mm\ae\qquad\forall A\in \HH^{\otimes 2}.
\end{equation}
Now we claim that if $D\subset \HH$ is a set of bounded elements generating $\HH$ in the sense of modules, then the set $\{v\otimes v:v\in D\}$ generates $\HH^{\otimes 2}_{\sf Sym}$ in the sense of modules. Indeed, being the elements of $D$ bounded, we have $v\otimes v\in \HH^{\otimes 2}$ for every $v\in D$ and since  $v\otimes v$ is certainly symmetric, we get that $\{v\otimes v:v\in D\}\subset \HH^{\otimes 2}_{\sf Sym}$. On the other hand, using the projection $A\mapsto A_{\sf Sym}$ we see that $\HH^{\otimes 2}_{\sf Sym}$   is generated by elements of the kind $v\otimes  w+w\otimes v$ and thus we conclude noticing that
\[
 v \otimes w+w\otimes v=(v+w)\otimes(w+v)-v\otimes v-w\otimes w,\qquad\mm\ae\qquad\forall v,w\in \HH^0.
\]
In particular, recalling Proposition \ref{prop:normgen} and observing that  $A:(v\otimes v)=A_{\sf Sym}:(v\otimes v)$ $\mm$-a.e.\  for any $A\in M^{\otimes 2}$, we deduce the formula
\begin{equation}
\label{eq:normsym}
|A_{\sf Sym}|_\HS^2=\esssup \Big(2A:\sum_{i=1}^n v_i\otimes v_i-\Big|\sum_{i=1}^nv_i\otimes v_i\Big|_\HS^2\Big)\qquad\mm\ae\qquad\forall A\in \HH^{\otimes 2},
\end{equation}
where the $\esssup$ is taken among all $n\in\N$ and $v_1,\ldots,v_n\in D$.


\bigskip

We pass to the {\bf exterior product}. Recall that for $k\in\N$ the $k$-th exterior power $\Lambda^kH$ of an Hilbert space is defined as the quotient of $H^{\otimes n}$ w.r.t.\ the closed subspace generated by elements of the form
\[
v_1\otimes\ldots\otimes v_k\qquad\text{ with $v_i=v_j$ for at least two different indexes $i,j$}.
\]
The equivalence class of $v_1\otimes\ldots\otimes v_k$ is denoted by $v_1\wedge\ldots\wedge v_k$ and one can verify that, up to a factor $k!$, the scalar product on $\Lambda^kH$ induced by the quotient is characterized by
\[
\la v_1\wedge\cdots\wedge v_k,w_1\wedge\cdots\wedge w_k\ra:=\det\big( v_i\cdot  w_j \big).
\]
For Hilbert modules the construction is similar. Let us fix an Hilbert module $\HH$ and $k\in\N$. If $k=0$ we put $\Lambda^0\HH:=L^2(\mm)$. For $k>0$ we start taking the quotient of the tensor product $(\HH^0)^{\otimes k}$  w.r.t.\ the closure of the subspace generated by elements of the form
\[
v_1\otimes\ldots\otimes v_k\qquad\text{ with $v_i=v_j$ for at least two different indexes $i,j$},
\]
denote such quotient as $\Lambda^k\HH^0$ and the equivalence class of $v_1\otimes\ldots\otimes v_k$ by $v_1\wedge\ldots\wedge v_k$. The multiplication with functions in $L^0(\mm)$ passes to the quotient and defines a multiplication with $L^0(\mm)$ functions satisfying
\[
f(v_1\wedge\ldots\wedge v_k)=(fv_1)\wedge\ldots\wedge v_k=\cdots=v_1\wedge\ldots\wedge (fv_k).
\]
Starting from the result on Hilbert spaces and using the structural characterization of Hilbert modules (Theorem \ref{thm:structhil}) as done before we see that, up to a multiplication by $k!$, the scalar product on $\Lambda^k\HH^0$ is characterized by
\[
\la v_1\wedge\cdots\wedge v_k,w_1\wedge\cdots\wedge w_k\ra=\det\big(\la v_i,  w_j\ra\big).
\]
It is then clear that such scalar product is positively definite, i.e. 
\[
\begin{split}
\la \omega,\omega\ra&\geq 0\qquad\mm\ae\\
\la \omega,\omega\ra &= 0\qquad\mm\ae\text{ on }E \qquad\Leftrightarrow\qquad \nchi_E\omega=0,
\end{split}
\]
for every $\omega\in  \Lambda^k\HH^0$. Then we define the pointwise norm $|\omega|:=\sqrt{\la\omega,\omega\ra}\in L^0(\mm)$ and finally the $k$-th exterior power of an Hilbert module as:
\begin{definition}[Exterior power]
The $k$-th exterior power $\Lambda^k\HH$ of $\HH$ is the subspace of $\Lambda^k\HH^0$ made of elements $\omega$ such that
\[
\|\omega\|^2:=\int|\omega|^2\,\d\mm<\infty.
\]
\end{definition}
The same simple arguments used for the tensor product ensure that $\Lambda^k\HH$ is an Hilbert module and that 
\begin{equation}
\label{eq:extsep}
\text{if $\HH$ is separable then so is $\Lambda^k\HH$ for any $k\in\N$.}
\end{equation}
We conclude noticing that the map
\[
(v_1\wedge\ldots\wedge v_k,w_1\wedge\ldots\wedge w_{k'})\qquad\mapsto \qquad v_1\wedge\ldots\wedge v_k\wedge w_1\wedge\ldots\wedge w_{k'},
\]
can be extended by bilinearity and continuity to a map, called wedge product, from\linebreak  $\Lambda^k\HH^0\times  \Lambda^{k'}\HH^0$ to  $\Lambda^{k+k'}\HH^0$.

\subsection{Pullback}\label{se:pullback}
A basic construction in differential geometry is that of pullback bundle and in this section we show that this has an analogous in the context of $L^p(\mm)$-normed modules. The concept that such definition grasps is the following. Let $(\X_i,\mathcal A_i,\mm_i)$, $i=1,2$, be measured spaces, pretend that $\MM$ is an $L^p(\mm_1)$-normed module  given by a family of Banach spaces $(B_x,\|\cdot\|_x)_{x\in \X_1}$ as informally described in Example \ref{ex:lpmod} and let  $\varphi:\X_2\to \X_1$ measurable with $\varphi_*\mm_2\leq C\mm_1$ for some $C>0$. Then the pullback module $\varphi^*\MM$ would be the $L^p(\mm_2)$-normed module made of maps which assign to $\mm_2$-a.e.\ point $x_2\in \X_2$ an element $v(x_2)\in B_{T(x_2)}$ in such a way that $\| |v|_{T(x_2)}\|_{L^p(\mm_2)}<\infty$.

We shall concentrate on the case $p<\infty$.

\vspace{1cm}

The rigorous definition comes via explicit construction.  We start with the following definition:
\begin{definition}[Map of bounded compression]\label{def:bcompr}
Let $(\X_1,\mathcal A_1,\mm_1)$ and  $(\X_2,\mathcal A_2,\mm_2)$ be two $\sigma$-finite measured spaces as discussed in Section \ref{se:assnot}. A map of bounded compression $\varphi:\X_2\to \X_1$ is (the equivalence class w.r.t.\ equality $\mm_2$-a.e.\ of) a measurable map $ \varphi:\X_2\to \X_1$ such that 
\[
 \varphi_*\mm_2\leq C\mm_1,
\]
for some $C\geq 0$.
\end{definition}
Now fix  $\sigma$-finite measured spaces $(\X_1,\mathcal A_1,\mm_1)$ and  $(\X_2,\mathcal A_2,\mm_2)$, a map $\varphi:\X_2\to \X_1$ of bounded compression and an $L^p(\mm_1)$-normed module $\MM$, $p\in[1,\infty)$.

Define the `pre-pullback' set ${\rm Ppb}$ as
\[
\begin{split}
{\rm Ppb}:=\Big\{\{(v_i,A_i)\}_{i\in\N}\ :&\  (A_i)_{i\in\N}\text{ is a partition of $\X_2$,} \\
&\ v_i\in \MM \ \forall i\in\N, \text{ and }\Big\|\sum_{i\in\N}\nchi_{A_i}|v_i|\circ \varphi\Big\|_{L^p(\mm_2)}<\infty \Big\},
\end{split}
\]
and an equivalence relation on ${\rm Ppb}$ by declaring $\{(v_i,A_i)\}_{i\in\N}\sim \{(w_j,B_j)\}_{j\in\N}$ provided
\[
|v_i-w_j|\circ\varphi=0,\quad\mm_2\ae \text{ on } \{A_i\cap B_j\},\qquad\forall i,j\in\N.
\]
It is readily verified that this is indeed an equivalence relation and we shall denote by $[(v_i,A_i)_i]$ the equivalence class of $\{(v_i,A_i)\}_{i\in\N}$ (with respect to the informal and non-rigorous presentation given before, we shall think of $[(v_i,A_i)_i]$ as the element of the pullback module which  takes the value $v_i(\varphi(x))$ on $x\in A_i$). 

Define the sum of elements of ${\rm Ppb}/\sim$    as
\[
[(v_i,A_i)_i]+[(w_j,B_j)_{j}]:=[(v_i+w_j,A_i\cap B_j)_{i,j}],
\]
and the multiplication  with a scalar $\lambda\in\R$ as
\[
\lambda[(v_i,A_i)\}_i]:=[(\lambda v_i,A_i)_i].
\]
It is clear that these operations are well defined and endow ${\rm Ppb}/\sim$ with a vector space structure.

Recalling that   ${\rm Sf}(\mm_2)\subset L^\infty(\mm_2)$ is the space of simple functions, i.e.\ those attaining only a finite number of values, for  $[(v_i,A_i)_i]\in {\rm Ppb}/\sim$ and $g=\sum_{j}a_j\nchi_{B_j}\in {\rm Sf}(\mm_2)$ with $(B_j)$ finite partition of $\X_2$, we define the product $g [(v_i,A_i)_i]\in {\rm Ppb}/\sim$ as
\[
g[(v_i,A_i)_i]:=[(a_jv_i,A_i\cap B_j)_{i,j}].
\]
It is readily verified that this definition is well posed, that the resulting multiplication is a bilinear map from ${\rm Sf}(\mm_2)\times {\rm Ppb}/\sim$ into ${\rm Ppb}/\sim$ and  that ${\bf 1}[(f_i,A_i)_i]=[(f_i,A_i)_i]$. Finally, we consider the map $|\cdot|: {\rm Ppb}/\sim\,\to L^p(\mm_2)$ given by
\[
\big|[(v_i,A_i)_i]\big|:=\sum_{i\in\N}\nchi_{A_i}|v_i|\circ \varphi.
\]
As direct consequences of the definitions we get the (in)equalities 
\begin{equation}
\label{eq:basepullback}
\begin{split}
\big|[(v_i+w_j,A_i\cap B_j)_{i,j}]\big| &\leq \big|[((v_i,A_i)_i]\big|+\big|[(w_j,B_j)_j]\big|,\\
\big|\lambda[(v_i,A_i)_i]\big|&=|\lambda|\,\big|[(v_i,A_i)_i]\big|,\\
\big|g[(v_i,A_i)_i]\big|&=|g|\,\big|[(v_i,A_i)_i]\big|,
\end{split}
\end{equation}
valid $\mm_2$-a.e.\ for every $[((v_i,A_i)_i], [(w_j,B_j)_j]\in  {\rm Ppb}/\sim$, $\lambda\in\R$ and $g\in {\rm Sf}(\mm_2)$,  so that in particular the map $\|\cdot\|:{\rm Ppb}/\sim\to[0,\infty)$ defined by
\begin{equation}
\label{eq:normpull}
\big\|[(v_i,A_i)_i]\big\|:=\big\|\big|[(v_i,A_i)_i]\big|\big\|_{L^p(\mm_2)}=\Big\|\sum_{i\in\N}\nchi_{A_i}|v_i|\circ \varphi\Big\|_{L^p(\mm_2)}
\end{equation}
is a norm on ${\rm Ppb}/\sim$.

\begin{definition}[The pullback module $\varphi^*\MM$]
With the above notation and assumptions, the space  $\varphi^*\MM$ is defined as the completion of $({\rm Ppb}/\sim,\|\cdot\|)$.
\end{definition}
With this definition, a priori $\varphi^*\MM$ is only a Banach space, but in fact it comes with a canonical structure of $L^p(\mm_2)$-normed module.  Indeed, from the last in \eqref{eq:basepullback} and the definition \eqref{eq:normpull} we see that for any $g\in {\rm Sf}(\mm_2)$ and $[(v_i,A_i)_i]\in {\rm Ppb}/\sim$ we have
\[
\|g[(v_i,A_i)_i]\|\leq \|g\|_{L^\infty}\|[(v_i,A_i)_i]\|,
\]
and thus, by the density of ${\rm Sf}(\mm_2)$ in $L^\infty(\mm_2)$, such multiplication can be uniquely extended to a bilinear continuous map from $L^\infty(\mm_2)\times \varphi^*\MM$ to $\varphi^*\MM$ which gives the  structure   of $L^\infty(\mm_2)$ premodule. Moreover, from the first in \eqref{eq:basepullback} we also see that for a Cauchy sequence $n\mapsto [(v_{n,i},A_{n,i})]\in  {\rm Ppb}/\sim$ the sequence $n\mapsto |[(v_{n,i},A_{n,i})]|$ is $L^p(\mm_2)$-Cauchy. The limit of such sequence defines a map $|\cdot|:\varphi^*\MM\to L^p(\mm_2)$ which, passing to the limit in \eqref{eq:basepullback} and \eqref{eq:normpull}, is shown to be a pointwise norm.

With this structure and recalling Proposition \ref{prop:baselp} we see that $\varphi^*\MM$ is an $L^p(\mm_2)$-normed module for $p<\infty$, as claimed. 

\bigskip

The pullback map $\varphi^*:\MM\to \varphi^*\MM$ is defined as 
\begin{equation}
\label{eq:pullbackv}
\varphi^*v:=[(v,\X_2)],\qquad\forall v\in \MM,
\end{equation}
where $(v,\X_2)\in {\rm Ppb}$ is a shorthand for $(v_i,A_i)_{i\in\N}$ with $v_0=v$, $A_0=\X_2$ and $v_i=0$, $A_i=\emptyset$ for $i>0$. 
Notice that
\begin{equation}
\label{eq:pullbackv2}
\begin{split}
\varphi^*(fv)&=f\circ \varphi\,\varphi^*v,\\
|\varphi^*v|&=|v|\circ\varphi,
\end{split}
\end{equation}
$\mm_2$-a.e.\ for every $v\in\MM$ and $f\in L^\infty(\mm_1)$. Indeed, the second is a direct consequence of the definition of pointwise norm on $\varphi^*\MM$, while the first  can be verified directly from the definition \eqref{eq:pullbackv} and the one of equivalence relation on ${\rm Ppb}$ for $f=\nchi_A$, $A\in \BBB{\X_1}$, then by linearity it follows for simple functions $f$ and finally the general case is achieved by approximation.

We remark that directly from property \eqref{eq:pointquad} and the definition of pointwise norm on the pullback, we have that
\[
\text{the pullback of an Hilbert module is an Hilbert module.}
\]

The simplest case of the pullback construction is when the $L^p(\mm_1)$-normed module $\MM$ is precisely the space of functions $L^p(\mm_1)$: in this case we can identify $\varphi^*\MM$ with $L^p(\mm_2)$ via the injection $[(f_i,A_i)_i]\mapsto\sum_i \nchi_{A_i}f_i\circ \varphi$ and formulas \eqref{eq:pullbackv}, \eqref{eq:pullbackv2} read as
\[
\varphi^*f=f\circ \varphi,\qquad\forall f\in L^p(\mm_1).
\]
Notice that directly from the definition of $\varphi^*\MM$ (recall also the property \eqref{eq:dalp}) we get that 
\begin{equation}
\label{eq:genpullback}
\varphi^*\MM\text{ is generated, in the sense of modules, by the vector space $\{\varphi^*v:v\in\MM\}$}.
\end{equation}

It is worth underlying that the measure $\mm_1$ on $\X_1$ does not really play a key role in the definition of pullback module besides the compatibility requirement $\varphi_*\mm_2\leq C\mm_1$. To see why, consider another measure $\mm_1'$  on $\X_1$ such that  $\varphi_*\mm_2\leq C'\mm_1'\ll\mm_1$ for some $C'\in\R$ and recall the construction of the $L^p(\mm_1')$-normed module $\MM_{p,\mm_1'}$ given at the end of Section \ref{se:altint}. Then directly by the definitions it is easy to establish that
\begin{equation}
\label{eq:invarianzamisurapb}
\varphi^*\MM=\varphi^*\MM_{p,\mm_1'}.
\end{equation}
In particular, the choice $\mm_1':=\varphi_*\mm_2$ is always possible. 

Another direct consequence of the definitions is that if $(\X_3,\mathcal A_3,\mm_3)$ is another measured space and $\psi:\X_3\to \X_2$ is of bounded compression, then 
\begin{equation}
\label{eq:tstarfunt}
\psi^*(\varphi^*\MM)\sim(\psi\circ \varphi)^*\MM,
\end{equation}
the identification being given by $[([(v_{i,j},A_{i,j})_i], B_j)_j]\to [(v_{i,j},\psi^{-1}(A_{i,j})\cap B_j)_{i,j}]$ for $v_{i,j}\in \MM$, $(A_{i,j})_i$ partition of $\X_2$ for every $j$ and $(B_j)$ partition of $\X_3$. Having done this identification we also have
\[
\psi^*\varphi^*v=(\psi\circ \varphi)^*v\qquad\forall v\in \MM.
\]
The pullback module has the following universal property:
\begin{proposition}[Universal property of the pullback]\label{prop:univpullback}
Let $(\X_i,\mathcal A_i,\mm_i)$, $i=1,2,3$ be three measured spaces, $p\in[1,\infty)$, $\MM$ an $L^p(\mm_1)$-normed module and $\varphi:\X_2\to \X_1$  and $\psi:\X_3\to \X_2$  of bounded compression.

Also, let $\NN$ be an $L^p(\mm_3)$-normed module  and $T:\MM\to \NN$ be a linear map such that
\begin{equation}
\label{eq:ipperuniv}
\begin{array}{rll}
T(fv)&\!\!\!=f\circ \varphi\circ \psi\, T(v)\qquad&\forall f\in L^\infty(\mm_1),\ v\in \MM,\\
|T(v)|&\!\!\!\leq C|v|\circ \varphi\circ \psi,\qquad\mm_3\ae\qquad&\forall v\in\MM,
\end{array}
\end{equation}
for some constant $C\geq 0$, so that in particular $T$ is continuous.

Then there exists a unique linear  map $S:\varphi^*\MM\to \NN$ such that
\begin{equation}
\label{eq:univpullback}
\begin{array}{rll}
S(gw)&\!\!\!=g\circ \psi\, S(w)\qquad\quad\,\ &\forall g\in L^\infty(\mm_2),\ w\in \varphi^*\MM,\\
S(\varphi^*v)&\!\!\!=T(v) \qquad \qquad \qquad&\forall v\in \MM,\\
|S(w)|&\!\!\!\leq C'|w|\circ\psi,\quad\mm_3\ae\qquad &\forall w\in \varphi^*\MM,
\end{array}
\end{equation}
for some $C'\geq 0$. In this case, the best constant $C'$ coincides with the best constant $C$ in \eqref{eq:ipperuniv}.
\end{proposition}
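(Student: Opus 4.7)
The plan is to construct $S$ by an explicit formula on the pre-pullback quotient ${\rm Ppb}/\sim$ and then extend by continuity, exploiting the fact that $\varphi^*\MM$ is defined as the completion of ${\rm Ppb}/\sim$ and is, by \eqref{eq:genpullback}, generated by $\{\varphi^*v : v \in \MM\}$. Note first that $\varphi\circ\psi$ is itself of bounded compression $\X_3\to\X_1$, so $|v|\circ\varphi\circ\psi$ is a well-defined element of $L^p(\mm_3)$ whenever $v\in\MM$, and the hypothesis \eqref{eq:ipperuniv} is meaningful. For $[(v_i,A_i)_i]\in{\rm Ppb}/\sim$ the natural definition is
\[
S\bigl([(v_i,A_i)_i]\bigr):=\sum_{i\in\N}\nchi_{\psi^{-1}(A_i)}\,T(v_i),
\]
where the series is meant in $\NN$. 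To see that it converges, observe that the sets $\psi^{-1}(A_i)$ are disjoint, so the partial sums $w_n:=\sum_{i=1}^n\nchi_{\psi^{-1}(A_i)}T(v_i)$ satisfy $|w_n|=\sum_{i=1}^n\nchi_{\psi^{-1}(A_i)}|T(v_i)|$ $\mm_3$-a.e.\ by the locality of the pointwise norm; using \eqref{eq:ipperuniv} this is bounded by $C\sum_{i=1}^n\nchi_{\psi^{-1}(A_i)}|v_i|\circ\varphi\circ\psi$, whose $L^p(\mm_3)$-norm is controlled by $C(C_\psi)^{1/p}\|[(v_i,A_i)]\|$ since $\psi_*\mm_3\leq C_\psi\mm_2$. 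As $p<\infty$ this is enough to deduce that $(w_n)$ is Cauchy in $\NN$, so convergence holds.

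The next step is well-definedness on the equivalence classes. If $\{(v_i,A_i)\}\sim\{(\tilde v_j,\tilde A_j)\}$ then, by definition, $|v_i-\tilde v_j|\circ\varphi=0$ $\mm_2$-a.e.\ on $A_i\cap\tilde A_j$ for every $i,j$; composing with $\psi$ and using $\psi_*\mm_3\leq C_\psi\mm_2$ we get $|v_i-\tilde v_j|\circ\varphi\circ\psi=0$ $\mm_3$-a.e.\ on $\psi^{-1}(A_i\cap\tilde A_j)$. The bound in \eqref{eq:ipperuniv} then forces $|T(v_i)-T(\tilde v_j)|=0$ on the same set, whence, by point (i) of Proposition \ref{prop:baselp}, $\nchi_{\psi^{-1}(A_i\cap\tilde A_j)}(T(v_i)-T(\tilde v_j))=0$; summing in $i,j$ yields equality of the two definitions. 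Linearity of $S$ on ${\rm Pcm}/\sim$ is routine once one refines the partitions involved, and the identity $S(\varphi^*v)=T(v)$ is just the case $A_0=\X_2$, $v_0=v$.

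From the estimate above, $|S([(v_i,A_i)])|\leq C\,|[(v_i,A_i)]|\circ\psi$ $\mm_3$-a.e., giving the third property in \eqref{eq:univpullback} on the dense subset with $C'=C$, and in particular continuity of $S\colon {\rm Ppb}/\sim\to\NN$. Hence $S$ extends uniquely to a continuous linear map on $\varphi^*\MM$, and both the third line of \eqref{eq:univpullback} and the second line pass to the limit by continuity of the pointwise norm. For the first line of \eqref{eq:univpullback}, one checks it directly on ${\rm Ppb}/\sim$ with $g$ simple: if $g=\sum_ja_j\nchi_{B_j}$ with $(B_j)$ a finite partition of $\X_2$, then both sides equal $\sum_{i,j}a_j\,\nchi_{\psi^{-1}(A_i)\cap\psi^{-1}(B_j)}T(v_i)$ by the $\R$-linearity of $T$. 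Density of simple functions in $L^\infty(\mm_2)$ (in the strong operator sense of multiplication on $\varphi^*\MM$) and the continuity of $S$ then deliver the identity for general $g$.

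Uniqueness is immediate: any $S'$ fulfilling \eqref{eq:univpullback} must agree with $S$ on every element of the form $g\,\varphi^*v$ with $g\in{\rm Sf}(\mm_2)$ and $v\in\MM$, and by \eqref{eq:genpullback} (together with property \eqref{eq:dalp}) such elements are dense in $\varphi^*\MM$; the continuity forced by the last bound in \eqref{eq:univpullback} concludes. Finally, the equality of optimal constants: we already have $C'_{\rm opt}\leq C_{\rm opt}$ from the construction, while evaluating $|S(w)|\leq C'|w|\circ\psi$ at $w=\varphi^*v$ and using $|\varphi^*v|=|v|\circ\varphi$ gives $|T(v)|\leq C'|v|\circ\varphi\circ\psi$, hence $C_{\rm opt}\leq C'_{\rm opt}$. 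The main technical point throughout is the interplay between the equivalence relation on ${\rm Ppb}$ (phrased via $\varphi$) and the hypothesis on $T$ (phrased via $\varphi\circ\psi$); this is precisely what the bounded compression of $\psi$ is for.
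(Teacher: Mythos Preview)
Your proof is correct and follows essentially the same strategy as the paper: define $S$ explicitly on a dense subspace via the obvious formula $\sum_i\nchi_{\psi^{-1}(A_i)}T(v_i)$, establish the pointwise bound $|S(w)|\leq C|w|\circ\psi$, and extend by continuity. The only cosmetic difference is that you work directly with countable partitions in ${\rm Ppb}/\sim$ (hence the extra convergence argument for the infinite series), whereas the paper uses the vector space of \emph{finite} sums $\sum_{i=1}^n\nchi_{B_i}\varphi^*v_i$ with disjoint $B_i$; both are dense and the pointwise estimate is the same, so this buys nothing substantive either way.
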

\begin{proof} Uniqueness comes by the requirements \eqref{eq:univpullback} recalling property \eqref{eq:genpullback}.

For existence, we denote by $V\subset\varphi^*\MM$ the vector space generated by  elements of the form $\nchi_{B} \varphi^*v$ for  $v\in\MM$ and $B\in \BBB{\X_2}$.  Notice that each $w\in V$ can be written as   $w=\sum_{i=1}^n\nchi_{B_i} \varphi^*v_i$ with the $B_i$'s disjoint and for such way of expressing $w$ declare
\[
S(w):=\sum_i\nchi_{\psi^{-1}(B_i)} \,T(v_i).
\]
Notice that
\[
|S(w)|=\sum_i\nchi_{\psi^{-1}(B_i)} \,|T(v_i)|\leq C\Big(\sum_i\nchi_{B_i}|v|\circ\varphi\Big)\circ\psi =C|w|\circ\psi,\qquad\mm_3\ae,
\]
which shows in particular that the definition of $S$ is well posed, i.e.\ $S(w)$ depends only on $w$ and not on the particular way of writing it. It is now clear that $S$ is linear and continuos and thus can be extended to a continuous map form the closure of $V$, which by \eqref{eq:genpullback} and \eqref{eq:dalp} is easily seen to be the whole $\varphi^*\MM$, to $\NN$. The second and third properties in \eqref{eq:univpullback} then follow by construction. In turn, the first is obtained for simple functions $g$ by the very definition of $S$ and then the general case by approximation. The last claim is now obvious.
\end{proof}
Notice that if one replaces  the last requirements in \eqref{eq:ipperuniv} and \eqref{eq:univpullback} by asking only for the continuity of the maps involved, the resulting statement is false.

\begin{remark}[The construction from the point  of view of category theory]\label{rem:catpullback}{\rm
Not surprisingly, the above universal property fits in the framework of category theory, but given that two different structures appear in the statement, that of measured space and that of $L^p$-normed module, it is possibly worth to spend two words on how to make such interpretation.

Thus we introduce the category ${\bf Meas}$, say, whose objects are $\sigma$-finite measured spaces $(\X,\mathcal A,\mm)$ as in Section \ref{se:assnot} and whose morphisms are maps of bounded deformation. Then for $p\in[1,\infty)$ we consider also the category ${\bf Mod}_{p-L^\infty}$ whose objects are couples $(\X,\MM)$ where $\X=(\X,\mathcal A,\mm)$ is an object of ${\bf Meas}$ and $\MM$ is an $L^p(\mm)$-normed module, and where  a morphism from $(\X_2,\MM_2)$ to $(\X_1,\MM_1)$ is a couple $(\varphi,\Phi)$ with $\varphi:\X_2\to \X_1$ of bounded  compression and $\Phi:\MM_1\to \MM_2$ a linear  map satisfying
\[
\begin{array}{rll}
\Phi(fv)&\!\!\!=f\circ\varphi\,\Phi(v)\qquad&\forall v\in \MM_1,\ f\in L^\infty(\mm_1),\\
|\Phi(v)|&\!\!\!\leq C\,|v|\circ\varphi\qquad\mm_2\ae\qquad&\forall v\in\MM_1,
\end{array}
\]
for some $C\geq 0$.

The map associating to the measured space $(\X,\mathcal A,\mm)$ the couple $(\X,0)$ and to the map of bounded compression $\varphi$ the couple $(\varphi,0)$ is then a fully faithful functor injective on objects (see e.g.\ \cite{MacLane98}). In other words, we can realize ${\bf Meas}$ as full subcategory of ${\bf Mod}_{p-L^\infty}$. Moreover, given an object $(\X,\MM)$ in ${\bf Mod}_{p-L^\infty}$ we have  a canonical morphism from $(\X,\MM)$ to $(\X,0)$, namely $(\Id,0)$.

After this identification, Proposition \ref{prop:univpullback} tells exactly that the pullback module is the pullback in the category ${\bf Mod}_{p-L^\infty}$. Notice indeed that property \eqref{eq:pullbackv2} tells that   $(\varphi,\varphi^*)$ is a morphism from $(\X_2,\varphi^*\MM)$ to $(\X_1,\MM)$ and it is clear that the diagram
\begin{center}
\begin{tikzpicture}[node distance=2.5cm, auto]
  \node (A) {$(\X_2,\varphi^*\MM)$};
  \node (B) [right of=A] {$(\X_2,0)$};
  \node (C) [below  of=A] {$(\X_1,\MM)$};
  \node (D) [below  of=B] {$(\X_1,0)$};
  \draw[->] (A) to node {$(\Id,0)$} (B);
  \draw[->] (A) to node [swap] {$(\varphi,\varphi^*)$} (C);
  \draw[->] (B) to node  {$(\varphi,0)$} (D);
  \draw[->] (C) to node  {$(\Id,0)$} (D);
\end{tikzpicture}
\end{center}
commutes. Then notice that if  $(\X_3,\NN)$ is an object in ${\bf Mod}_{p-L^\infty}$ and $(\psi_1,\Psi_1):(\X_3,\NN)\to (\X_2,0)$ and $(\psi_2,\Psi_2):(\X_3,\NN)\to (\X_1,\MM)$ are morphisms such that $(\psi_1,\Psi_1)\circ(\varphi,0)=(\psi_2,\Psi_2)\circ (\Id,0)$, we must have $\psi_2=\psi_1\circ\varphi$. In this case,  Proposition \ref{prop:univpullback} tells that there is a unique morphism $(\psi_3,\Psi_3):(\X_3,\NN)\to (\X_1,\varphi^*\MM)$ such that the diagram
\begin{center}
\begin{tikzpicture}[node distance=2.5cm, auto]
  \node (A) {$(\X_2,\varphi^*\MM)$};
  \node (B) [right of=A] {$(\X_2,0)$};
  \node (C) [below of=A] {$(\X_1,\MM)$};
  \node (D) [below of=B] {$(\X_1,0)$};
  \node (A1) [node distance=2cm, left of=A, above of=A] {$(\X_3,\NN)$};
  \draw[->] (A) to node {$(\Id,0)$} (B);
  \draw[->] (A) to node [swap] {$(\varphi,\varphi^*)$} (C);
  \draw[->] (C) to node [swap] {$(\Id,0)$} (D);
  \draw[->] (B) to node {$(\varphi,0)$} (D);
  \draw[->, bend right] (A1) to node [swap] {$(\psi_1\circ\varphi,\Psi_2)$} (C);
  \draw[->, bend left] (A1) to node {$(\psi_1.\Psi_1)$} (B);
  \draw[->, dashed] (A1) to node {$(\psi_3,\Psi_3)$} (A);
\end{tikzpicture}
\end{center}
commutes.
}\fr\end{remark}

Consider now the question of understanding who is the dual of the pullback module $\varphi^*\MM$, which we shall discuss only for $p\in(1,\infty)$. 

We claim that such dual always contains a copy of the pullback $\varphi^*\MM^*$ of the dual module $\MM^*$ and to this aim we define an isometric morphism  ${\sf I}_{\varphi}:\varphi^*(\MM^*)\to (\varphi^*\MM)^*$ by declaring that for $v\in \MM$ and $L\in \MM^*$ we have
\begin{equation}
\label{eq:perdefsci}
{\sf I}_{\varphi}(\varphi^*L)(\varphi^*v):=L(v)\circ \varphi,
\end{equation}
and extending the map by $L^\infty(\mm_2)$-linearity and continuity. More in detail, for given $L\in\MM^*$ we can use the trivial inequality
\[
|L(v)\circ \varphi|\leq |L|_*\circ\varphi\,|v|\circ\varphi=|\varphi^*L|_{\varphi^*\MM^*}\,|\varphi^*v|_{\varphi^*\MM},\qquad\mm_2\ae\qquad\forall v\in\MM,
\]
property \eqref{eq:genpullback} and Proposition \ref{prop:extension} applied to the $L^p(\mm_2)$-normed module $\varphi^*\MM$ to deduce that there exists a unique element of $(\varphi^*\MM)^*$, which we shall call ${\sf I}_{\varphi}(\varphi^*L)$, for which \eqref{eq:perdefsci} holds for every $v\in\MM$ and such ${\sf I}_{\varphi}(\varphi^*L)$ also fulfills
\begin{equation}
\label{eq:candydiv}
|{\sf I}_{\varphi}(\varphi^*L)|_{(\varphi^*\MM)^*}\leq |\varphi^*L|_{\varphi^*\MM^*},\qquad\mm_2\ae.
\end{equation}
We thus built a map ${\sf I}_{\varphi}:\{\varphi^*L:L\in\MM^*\}\to (\varphi^*\MM)^*$ and using property \eqref{eq:genpullback} for the $L^q(\mm_2)$-normed module $\varphi^*\MM^*$, where $q\in(1,\infty)$ is such that $\frac1p+\frac1q=1$, and arguments similar to that used for Proposition \ref{prop:extension}, it is easy to see that the bound \eqref{eq:candydiv} grants that ${\sf I}_{\varphi}$ can be uniquely extended to a module morphism, still denoted by ${\sf I}_{\varphi}$, from $\varphi^*\MM^*$ to $(\varphi^*\MM)^*$ which satisfies
\[
|{\sf I}_{\varphi}(\tilde L)|_{(\varphi^*\MM)^*}\leq |\tilde L|_{\varphi^*\MM^*},\qquad\mm_2\ae,\qquad\forall \tilde L\in\varphi^*\MM^*.
\]
Moreover, it is easy to see that equality holds in this last inequality. Indeed, by considering $\tilde L$ of the form $[(L_i,B_i)]$, choosing for every $i$ a maximizing sequence $(v_{i,n})\subset M$  in the definition \eqref{eq:normdual} of the dual norm $|L_i|_{\MM*}$ and then testing ${\sf I}_{\varphi}( \tilde L)$ on the elements $[(v_{i,n},B_i)_i]$,   we obtain the equality 
\[
|{\sf I}_{\varphi}(\tilde L)|_{(\varphi^*\MM)^*}=|\tilde L|_{\varphi^*\MM^*},\qquad\mm_2\ae.
\]
Then the same identity for general elements of $\varphi^*\MM^*$ follows by density. 

Therefore, as claimed, \eqref{eq:perdefsci} defines an isometric morphism  ${\sf I}_{\varphi}:\varphi^*(\MM^*)\to (\varphi^*\MM)^*$ .

\bigskip

The question is whether ${\sf I}_{\varphi}$ is surjective or not. Shortly said,  the general answer is negative and a sufficient condition for it to be positive is that $\MM^*$ is separable and the measurable structures $(\X_i,\mathcal A_i)$ involved are Borel structures of complete separable metric spaces. 

In order to understand the situation we remark  that there is a particular case where the construction of the pullback module reduces, from the perspective of Banach spaces, to a well known one: if $\X_2=\X_1\times[0,1]$ is endowed with the product of the measures  $\mm_1$ and $\mathcal L^1\restr{[0,1]}$, and the map $\pi:\X_2\to \X_1$ is the canonical projection, then the pullback $\pi^*\MM$ of an $L^p(\mm_1)$-normed module $\MM$ can be canonically identified, when seen as Banach space, with the space $L^p([0,1],\MM)$. To see why, recall that for any given Banach space $B$, the space $L^p([0,1],B)$ consists of all (equivalence classes w.r.t.\ $\mathcal L^1$-a.e.\ equality of Borel) functions $f:[0,1]\to B$ such that $\|f\|^p_{L^p([0,1],B)}:=\int_0^1\|f(t)\|_B^p\,\d t <\infty$ and that when endowed with the norm $ \|\cdot\|_{L^p([0,1],B)}$ this is a Banach space itself. 

Now observe that an argument based on subsequent subdivisions shows that the set $D\subset \pi^*\MM$ made of elements of the form $\sum_i \nchi_{\X_1\times A_i}\pi^*v_i$ for some $v_i\in \MM$, $i\in\N$, and partition $(A_i)$ of $[0,1]$ is dense in $\pi^*\MM$ and that the map $\iota_\MM:D\to L^p([0,1],\MM)$ defined as
\[
\iota_\MM\Big(\sum_{i\in\N}\nchi_{\X_2\times A_i}\pi^*v_i\Big):=\sum_{i\in\N}\nchi_{A_i}v_i,
\]
is a well-defined linear isometry whose image is dense in $L^p([0,1],\MM)$. Thus it can, and will, be extended to a bijective isometry from $\pi^*\MM$ to $L^p([0,1],\MM)$, still denoted by $\iota_\MM$.

Therefore, recalling that from Proposition \ref{prop:fulllp} we know that $L^p$-normed modules have full dual, we see that in this case asking for   the image of ${\sf I}_{\pi}$ to be the whole $(\pi^*\MM)^*$ is equivalent to ask  the  dual of $L^p([0,1],\MM)$ to be (identifiable with) the space $L^q([0,1],\MM^*)$, which in turn is well known to be equivalent to the Radon-Nikodym property of $\MM^*$. We refer to \cite{DiestelUhl77} for the definition of the Radon-Nikodym property of a Banach space and all the relevant related results which we will use.

Having clarified this, we are going to proceed as follows:
\begin{itemize}
\item[i)] In Proposition \ref{prop:casoprodotto} we shall precisely state in which sense the Banach dual of $\pi^*\MM$ can be identified with $\pi^*\MM^*$ under the assumption that $\MM^*$ is separable, which is sufficient to grant the Radon-Nikodym property and general enough for our purposes. The proof is based just on keeping track of the various identifications we built so far. 
\item[ii)] In the technical Lemma \ref{le:tecnicomisure} we shall present a measure-theoretic construction that will allow to reduce the study of general pullbacks to the base case $\pi^*\MM$. Notice that in order to be able to construct the relevant object, we will need some `universal' constructions in measure theory like  the isomorphism theorem for measure rings, the disintegration theorem and the Kuratowski-Ryll Nardzewski Borel selection theorem. In turn, all of these require some additional structure, of topological nature, to that of general measured spaces. We will work with complete separable metric spaces, as these are the structures we shall deal with from the next chapter on.
\item[iii)] Finally, in Theorem \ref{thm:dualpullback} we will show how such  technical lemma allows to prove,  under the stated topological assumptions, that the pullback module $\varphi^*\MM$ can be seen as a sort of quotient of $\pi^*\MM$. This fact coupled with the analysis of $\pi^*\MM$ will allow us to conclude that if $\MM^*$ is separable, then indeed $\varphi^*\MM^*\sim (\varphi^*\MM)^*$ the identification being given by the map ${\sf I}_{\varphi}$.
\end{itemize}
We thus start with the following statement:
\begin{proposition}[The Banach dual of $\pi^*\MM$]\label{prop:casoprodotto}
Let $(\X,\mathcal A,\mm)$ be a $\sigma$-finite measured space as in Section \ref{se:assnot}  and  $\MM$ an $L^p(\mm)$-normed  module, $p\in(1,\infty)$. Consider the pullback module $\pi^*\MM$, where $\pi:\X\times[0,1]\to \X$ is the canonical projection and $\X\times [0,1]$ is endowed with the product measure $\bar\mm:=\mm\times\mathcal L^1\restr{[0,1]}$.

Assume that $\MM^*$ is separable. Then for every linear continuous functional $l:\pi^*\MM\to\R$ there exists a unique $L\in \pi^*\MM^*$ such that 
\[
l(v)=\int {\sf I}_{\pi} (L)(v)\,\d\bar\mm,\qquad\forall v\in\pi^*\MM.
\]
\end{proposition}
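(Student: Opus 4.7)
The strategy is to reduce the statement to the classical duality for Bochner spaces, using the two isometric identifications already highlighted in the text: $\iota_\MM : \pi^*\MM \to L^p([0,1],\MM)$ as Banach spaces, and (by exactly the same construction applied to $\MM^*$) an isometry $\iota_{\MM^*}: \pi^*\MM^* \to L^q([0,1],\MM^*)$ where $\frac1p+\frac1q=1$. Throughout, $\intmap_{\NN^*}:\NN^*\to\NN'$ denotes the canonical integration map of any $L^p$-normed module $\NN$; recall that since $L^p$-normed modules with $p<\infty$ have full dual (Proposition \ref{prop:fulllp}), $\intmap_{\NN^*}$ is a bijective isometry whenever $p<\infty$.

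First, I will address uniqueness. The map ${\sf I}_\pi : \pi^*\MM^* \to (\pi^*\MM)^*$ is an isometric module morphism (this was established just before the statement). Composing with $\intmap_{(\pi^*\MM)^*}:(\pi^*\MM)^*\to(\pi^*\MM)'$, which is an isometric isomorphism by the above remark applied to $\NN:=\pi^*\MM$, I get an isometric embedding $\Psi : \pi^*\MM^* \hookrightarrow (\pi^*\MM)'$ given by $\Psi(L)(v)=\int {\sf I}_\pi(L)(v)\,d\bar\mm$. Isometry gives uniqueness.

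Next comes the core step: showing $\Psi$ is surjective. I translate $\Psi$ through $\iota_\MM$ and $\iota_{\MM^*}$. Unwinding the definition \eqref{eq:perdefsci} of ${\sf I}_\pi$ on a generator of the form $\pi^* L$ tested against $\pi^* v$, and using the explicit formulas for $\iota_\MM, \iota_{\MM^*}$ on sums of the type $\sum_i \nchi_{\X\times A_i}\pi^* v_i$ and $\sum_i \nchi_{\X\times A_i}\pi^* L_i$, the map $\Psi$ becomes (after composing with $\intmap_{\MM^*}$ pointwise in the fiber) the standard pairing
\[
\tilde\Psi : L^q([0,1],\MM') \longrightarrow L^p([0,1],\MM)', \qquad \tilde\Psi(t\mapsto l_t)(t\mapsto v_t) := \int_0^1 l_t(v_t)\,dt,
\]
where I have silently used the isometry $\intmap_\MM:\MM^*\to\MM'$ to identify $L^q([0,1],\MM^*)$ with $L^q([0,1],\MM')$. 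Checking this commutative diagram reduces to checking it on elementary tensors and then invoking the density of such tensors in both source and target, which is a bookkeeping computation.

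Finally, I invoke the classical Diestel--Uhl theorem (see \cite{DiestelUhl77}): the natural map $\tilde\Psi$ is surjective precisely when $\MM'$ has the Radon--Nikodym property. Now $\MM'$ is, by definition, a Banach dual space, and $\intmap_{\MM^*}:\MM^*\to\MM'$ is a surjective isometry (again by Proposition \ref{prop:fulllp}), so separability of $\MM^*$ assumed in the statement transfers to $\MM'$. A separable dual Banach space has the Radon--Nikodym property, which closes the argument.

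The main obstacle is the bookkeeping in the middle step: one must verify that the identification $\iota_{\MM^*}$ on $\pi^*\MM^*$ is compatible, via ${\sf I}_\pi$ and $\intmap$, with the fiberwise $\MM^*$--$\MM$ pairing under $\iota_\MM$. This is routine on the dense subalgebra of elements of the form $\sum_i \nchi_{\X\times A_i}\pi^*(\cdot)$, and then extends by continuity since all four maps $\iota_\MM,\iota_{\MM^*},{\sf I}_\pi,\intmap$ are (isometric) continuous module morphisms; nothing deeper than the already established properties is needed.
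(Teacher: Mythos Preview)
Your proposal is correct and follows essentially the same route as the paper: both arguments use the isometries $\iota_\MM$ and $\iota_{\MM^*}$ to transport the problem to the classical Bochner duality $L^p([0,1],\MM)'\simeq L^q([0,1],\MM')$, invoke the Radon--Nikodym property of $\MM'$ (obtained from the Dunford--Pettis theorem via separability of $\MM^*\simeq\MM'$), and verify the compatibility identity \eqref{eq:brutta} on elementary tensors before extending by density. The paper's presentation is slightly more concrete (it writes down the key identity \eqref{eq:brutta} explicitly rather than packaging it as a commutative diagram) and mentions that the finite-measure hypothesis in the cited Diestel--Uhl result is easily relaxed to $\sigma$-finite, but the substance is the same.
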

\begin{proof} Uniqueness is obvious, so we focus on existence. We start observing that for arbitrary $\j\in L^q([0,1],\MM^*)$ and $v\in \pi^*\MM$ the identity
\begin{equation}
\label{eq:brutta}
{\sf I}_{\pi}(\iota_{\MM^*}^{-1}\j)(v)(x,t)=\j_t(\iota_\MM(v)_t)(x)
\end{equation}
holds for $\bar\mm$-a.e.\ $(x,t)$. This can be checked directly from the definitions for  $\j$ step function (i.e.\ attaining only a finite number of values)  and  $v=[(v_i,\X\times A_i)_i]$, $(A_i)$ being a partition of $[0,1]$,  then the general case follows by density.

Now notice that the map $l\circ\iota_\MM^{-1}:L^p([0,1],\MM)\to \R$ is linear and continuous, i.e.\ an element of the dual $(L^p([0,1],\MM))'$ of $L^p([0,1],\MM)$. By Proposition \ref{prop:fulllp} we know that the Banach dual $\MM'$ of $\MM$ can be canonically identified with the module dual $\MM^*$ in the sense that $\intmap_{\MM^*}:\MM^*\to\MM'$ is a surjective isometry. In particular, being $\MM^*$ separable, so is $\MM'$ and therefore by the Dunford-Pettis theorem (see Theorem 1 in Section 3 of Chapter 3 in \cite{DiestelUhl77})   $\MM'$ has the Radon-Nikodym property. It follows   that $(L^p([0,1],\MM))'\sim L^q([0,1],\MM')$, where $\frac1p+\frac1q=1$,  (see Theorem 1 in Section 1 of Chapter 4 in \cite{DiestelUhl77} and notice that the hypothesis that the space has finite measure can be easily replaced with `the space is $\sigma$-finite') and thus there exists $\j\in L^q([0,1],\MM')$ such that
\[
\int_0^1\j_t(w_t)\,\d t=l\circ\iota_\MM^{-1}(w),\qquad\forall w\in L^p([0,1],\MM).
\]
Using again the fact that $\intmap_{\MM^*}:{\MM^*}\to\MM'$ is surjective we see that there is $\tilde \j\in L^q([0,1],\MM^*)$ such that
\[
\int_0^1 \int\tilde\j_t (w_t)\,\d\mm\,\d t =l\circ\iota_M^{-1}(w),\qquad\forall w\in L^p([0,1],\MM).
\]
It is then clear from \eqref{eq:brutta} that $L:=\iota_{\MM^*}^{-1}(\tilde\j)\in \pi^*\MM^*$  fulfills the requirements.
\end{proof}
We turn to the technical lemma:
\begin{lemma}\label{le:tecnicomisure}
Let $(\X_1,\sfd_1)$, $(\X_2,\sfd_2)$ be  complete separable metric spaces, $\mm_2$ a Borel measure on $\X_2$ and $\varphi:\X_2\to \X_1$ a Borel map such that $\varphi_*\mm_2$ is $\sigma$-finite. 

Then there exists a Borel map $\psi:\X_1\times[0,1]\to \X_2$ such that $\psi_*(\varphi_*\mm_2\times\mathcal L^1\restr{[0,1]})=\mm_2$ and the identity $\varphi\circ \psi=\pi$ holds $\varphi_*\mm_2\times\mathcal L^1\restr{[0,1]}$-a.e., where $\pi:\X_1\times[0,1]\to \X_1$ is the canonical projection.
\end{lemma}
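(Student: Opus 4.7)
The plan is to combine the disintegration theorem in Polish spaces, the Borel isomorphism theorem, and a measurable version of the quantile transform.

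First, using the $\sigma$-finiteness of $\varphi_*\mm_2$, I would partition $\X_1$ into Borel sets $(E_i)$ of finite $\varphi_*\mm_2$-measure; since $\mm_2(\varphi^{-1}(E_i))=\varphi_*\mm_2(E_i)<\infty$, after constructing $\psi$ on each $E_i\times[0,1]$ separately one glues the pieces together by setting $\psi:=\psi_i$ on $E_i\times[0,1]$. After normalisation it therefore suffices to handle the case where $\mm_2$ and $\varphi_*\mm_2$ are probability measures. In this case I would invoke the disintegration theorem for Borel probability measures on Polish spaces to produce a Borel family $\{\mm_2^x\}_{x\in\X_1}$ of probability measures on $\X_2$, with $\mm_2^x$ concentrated on $\varphi^{-1}(x)$ for $\varphi_*\mm_2$-a.e.\ $x$ and
\[
\int_{\X_2} f\,\d\mm_2=\int_{\X_1}\!\int_{\X_2} f\,\d\mm_2^x\,\d(\varphi_*\mm_2)(x),\qquad\forall f\in L^1(\mm_2).
\]
In particular $x\mapsto\mm_2^x(B)$ is Borel for every Borel $B\subset\X_2$.

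To parametrise this family by Lebesgue measure in a jointly Borel way, I would fix a Borel isomorphism $\iota:\X_2\to A$ onto some Borel $A\subset[0,1]$ (available by the Borel isomorphism theorem for Polish spaces) and define the distribution function $F_x(s):=\iota_*\mm_2^x([0,s])$. This is Borel in $x$ for each $s$ and right-continuous non-decreasing in $s$, hence jointly Borel in $(x,s)$. The quantile
\[
Q_x(t):=\inf\{s\in[0,1]:F_x(s)\ge t\}
\]
satisfies $(Q_x)_*\mathcal L^1\restr{[0,1]}=\iota_*\mm_2^x$ and is jointly Borel since $\{(x,t):Q_x(t)\le s\}=\{(x,t):t\le F_x(s)\}$. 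Fixing any base-point $y_0\in\X_2$, I would set $\psi(x,t):=\iota^{-1}(Q_x(t))$ when $Q_x(t)\in A$ and $\psi(x,t):=y_0$ otherwise, yielding a jointly Borel map $\psi:\X_1\times[0,1]\to\X_2$.

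Finally, for each $x$ the measure $\iota_*\mm_2^x$ is concentrated on $A$, so $\mathcal L^1(\{t:Q_x(t)\notin A\})=0$ and hence $\psi(x,\cdot)_*\mathcal L^1\restr{[0,1]}=\mm_2^x$. Integrating against $\varphi_*\mm_2$ and applying the disintegration formula gives $\psi_*(\varphi_*\mm_2\times\mathcal L^1\restr{[0,1]})=\mm_2$, while the concentration of $\mm_2^x$ on $\varphi^{-1}(x)$ together with Fubini yields $\varphi\circ\psi=\pi$ almost everywhere. The main obstacle to anticipate is ensuring joint Borel measurability throughout; this is precisely what forces the Polish-space hypothesis and is what is handled by the explicit quantile construction above.
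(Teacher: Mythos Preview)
Your argument is correct, but it follows a genuinely different route from the paper's. Both proofs begin with disintegration, but they diverge on the question of how to produce, in a jointly Borel way, a map $[0,1]\to\X_2$ pushing $\mathcal L^1$ to a given probability $\mu$. The paper treats this as an abstract selection problem: it introduces the Polish space $Y$ of Borel maps $[0,1]\to\X_2$, shows the push-forward map ${\sf Pf}:Y\to\prob{\X_2}$ is surjective (via the isomorphism theorem for measure rings) and has the lower-semicontinuity property needed for the Kuratowski--Ryll-Nardzewski theorem (via Pratelli's approximation of optimal transport by deterministic maps), and then selects a Borel right inverse $S$; the final map is $\psi(x_1,t):=S(\mm_{2,x_1})(t)$. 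You instead transport the problem to $[0,1]$ via a Borel isomorphism $\iota$ and write down the parametrisation explicitly as an inverse CDF, checking joint Borelness by hand through the identity $\{Q_x\le s\}=\{t\le F_x(s)\}$.

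Your route is more elementary: it avoids both the measurable selection theorem and the optimal-transport input, replacing them with the one-line quantile computation. The paper's route, by contrast, packages the construction as a single Borel section $S:\prob{\X_2}\to Y$ of the push-forward map, which is a slightly more reusable object; but for the purpose of this lemma your direct construction is cleaner.
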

\begin{proof} Without loss of generality and up to replace $\sfd_2$ with $\sfd_2\wedge 1$ we can, and will, assume that $\sfd_2$ is a bounded distance so that the distance $W_2$ metrizes the narrow convergence on the space $\prob{\X_2}=\probt{\X_2}$ (see for instance Chapter 2 of \cite{AmbrosioGigli11} or Chapter 6 of \cite{Villani09} for the relevant definitions and properties). 

Let $Y$ be the space of (equivalence classes w.r.t.\ $\mathcal L^1$-a.e.\ equality of Borel) maps from $[0,1]$ to $\X_2$ endowed with the distance $\sfd_Y(f,g):=\sqrt{\int_{[0,1]}\sfd^2_2(f(t),g(t))\,\d\mathcal L^1(t)}$. Notice that $(Y,\sfd_Y)$ is complete and separable and consider the push-forward map ${\sf Pf}:Y\to \prob{\X_2}$ given by $f\mapsto {\sf Pf}(f):=f_*\mathcal L^1\restr{[0,1]}$. We claim that
\begin{itemize}
\item[i)] For every $\mu\in\prob{\X_1}$ we have ${\sf Pf}^{-1}(\mu)\neq\emptyset$,
\item[ii)] For every $\mu,\nu\in\prob{\X_1}$, $f\in{\sf Pf}^{-1}(\mu)$ and $\eps>0$ there exists $g\in{\sf Pf}^{-1}(\nu)$ such that $\sfd_Y(f,g)\leq W_2(\mu,\nu)+\eps$.
\end{itemize}
Indeed, $(i)$ follows by the classical isomorphism theorem for measure rings, see for instance  Theorem 9 in Chapter 15 of the book \cite{Royden88}. For $(ii)$ assume for a moment that $\mu$ has no atoms and recall that by a result of Pratelli (\cite{Pratelli07}) in this case for every $\eps>0$ there is a Borel map $T:\X_2\to \X_2$ such that $T_*\mu=\nu$ and  $\sqrt{\int\sfd_1(x, T(x))^2\,\d\mu(x)}\leq W_2(\mu,\nu)+\eps$. Then define $g:= T\circ f$ to conclude. If $\mu$ has atoms, replace $\X_2$, and $[0,1]$ by $\X_2\times[0,1]$ and $[0,1]^2$, the measures  $\mu,\nu$ by $\mu\times\mathcal L^1\restr{[0,1]},\nu\times\mathcal L^1\restr{[0,1]}$ and $f$ by $\tilde f(t,s):=(f(t),s)$:  apply the previous argument  and then project everything neglecting the additional factor $[0,1]$ to conclude.

Thus for any open set $\Omega\subset Y$ the set $\{\mu\ :\ {\sf Pf}^{-1}(\mu)\in\Omega\}$ is non-empty by $(i)$ and open - and a fortiori Borel - in $(\prob{\X_2},W_2)$ by $(ii)$. By the Kuratowski-Ryll Nardzewski Borel selection theorem (see for instance Theorem 6.9.3 in \cite{Bogachev07}) we deduce the existence of a Borel map $S:\prob{\X_2}\to Y$ such that 
\begin{equation}
\label{eq:pfr}
{\sf Pf}(S(\mu))=\mu,\qquad\text{ for every }\mu\in\prob{\X_1}.
\end{equation}
Now we apply the disintegration theorem (see for instance Section III-70 of \cite{DellacherieMeyer78} and notice that the assumption that $\mm_2$ is a probability measure can be easily removed using a patching argument based on the fact that $\varphi_*\mm_2$ is $\sigma$-finite) to deduce the existence of a map $\X_1\ni x_1\mapsto \mm_{2,x_1}\in\prob {\X_2}$ (which is unique up to $\varphi_*\mm_2$-a.e.\ equality) such that 
\begin{itemize}
\item[a)] $\mm_{2,x_1}(\X_2\setminus \varphi^{-1}(\{x_1\}))=0$ for $\varphi_*\mm_2$-a.e.\ $x_1\in \X_1$,
\item[b)] the map $x_1\mapsto \mm_{2,x_1}(E)$ is Borel for every $E\subset \X_2$ Borel,
\item[c)] for every $\bar f\in C_b(\X_2)$ we have $\int \bar f\,\d\mm_2=\int \big(\int \bar f\,\d\mm_{2,x_1}\big)\,\d \varphi_*\mm_2(x_1)$.
\end{itemize}
From $(b)$ it follows that the map $x_1\mapsto \mm_{2,x_1}$ is Borel when we endow the target space with the distance $W_2$ and therefore the map $\psi:\X_1\times[0,1]\to \X_2$ given by
\[
\psi(x_1,t):=S(\mm_{2,x_1})(t),\qquad\forall x_1\in \X_1,\ t\in[0,1],
\]
is Borel. Properties $(c)$ and \eqref{eq:pfr} ensure that $\psi_*(\varphi_*\mm_2\times\mathcal L^1\restr{[0,1]})=\mm_2$, while $(a)$ grants that $\varphi_*\mm_2\times\mathcal L^1\restr{[0,1]}$-a.e.\ it holds $\varphi\circ \psi=\pi$, thus the proof is achieved.
\end{proof}
We now have all the tools we need to prove our main result concerning the dual of $\varphi^*\MM$:
\begin{theorem}[The module dual of $\varphi^*\MM$]\label{thm:dualpullback}
Let $(\X_1,\sfd_1)$, $(\X_2,\sfd_2)$ be two complete and separable metric spaces, $\mm_1$, $\mm_2$ $\sigma$-finite Borel measures an $\X_1$ and $\X_2$ respectively and $\varphi:\X_2\to \X_1$ a Borel  map such that $\varphi_*\mm_2\leq C\mm_1$ for some $C\geq 0$. 

Furthermore, let  $\MM$ be an $L^p(\mm_1)$-normed module, $p\in(1,\infty)$, and assume that $\MM^*$ is separable. 

Then the isometric morphism ${\sf I}_{\varphi} :\varphi^*\MM^*\to (\varphi^*\MM)^*$ defined by \eqref{eq:perdefsci} is surjective.
\end{theorem}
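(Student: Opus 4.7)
The plan is to reduce the theorem to the product case established in Proposition~\ref{prop:casoprodotto}, using the Borel section $\psi:\X_1\times[0,1]\to\X_2$ supplied by Lemma~\ref{le:tecnicomisure}. Set $\bar\mm:=\varphi_*\mm_2\times\mathcal L^1\restr{[0,1]}$, so that $\psi_*\bar\mm=\mm_2$ and $\varphi\circ\psi=\pi$ $\bar\mm$-a.e., where $\pi:\X_1\times[0,1]\to\X_1$ is the canonical projection. Functoriality of the pullback \eqref{eq:tstarfunt} furnishes canonical isometric module embeddings $\psi^*:\varphi^*\MM\to\pi^*\MM$ and $\psi^*:\varphi^*\MM^*\to\pi^*\MM^*$, sending $\varphi^*v\mapsto\pi^*v$ and $\varphi^*L\mapsto\pi^*L$ respectively.

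Given $l\in(\varphi^*\MM)^*$, I first lift it to $\tilde l\in(\pi^*\MM)^*$ by prescribing $\tilde l(\pi^*v):=l(\varphi^*v)\circ\psi$ for $v\in\MM$; the pointwise bound $|\tilde l(\pi^*v)|\leq(|l|_*\circ\psi)\,|\pi^*v|$ $\bar\mm$-a.e., with $|l|_*\circ\psi\in L^q(\bar\mm)$ (a consequence of $\psi_*\bar\mm=\mm_2$), brings Proposition~\ref{prop:extension} into play and yields a unique module-morphism extension, still called $\tilde l$. Verifying the identity on elements of the form $f\,\varphi^*v$ and then extending by $L^\infty$-linearity and continuity gives the key intertwining relation
\[
\tilde l(\psi^*w)=l(w)\circ\psi\qquad\forall w\in\varphi^*\MM.
\]
Since $\MM^*$ is separable, Proposition~\ref{prop:casoprodotto} produces a unique $\tilde L\in\pi^*\MM^*$ with ${\sf I}_\pi(\tilde L)=\tilde l$.

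The main obstacle is to show that $\tilde L$ lies in the image of $\psi^*:\varphi^*\MM^*\to\pi^*\MM^*$, i.e.\ $\tilde L=\psi^*L$ for some $L\in\varphi^*\MM^*$; granted this, the identity ${\sf I}_\pi(\psi^*L)(\psi^*w)={\sf I}_\varphi(L)(w)\circ\psi$, immediate on generators $w=\varphi^*v$, $L=\varphi^*L'$ and propagated by continuity and $L^\infty$-linearity, together with the intertwining relation and the injectivity of $f\mapsto f\circ\psi:L^1(\mm_2)\to L^1(\bar\mm)$ (from $\psi_*\bar\mm=\mm_2$), forces ${\sf I}_\varphi(L)=l$ and so yields surjectivity. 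Morally, the identity ${\sf I}_\pi(\tilde L)(\pi^*v)=l(\varphi^*v)\circ\psi$ exhibits the `$\MM^*$-valued function' underlying $\tilde L$ as being constant along fibers of $\psi$ for every test element $v\in\MM$, and should therefore descend to an $\MM^*$-valued function on $\X_2$. To turn this into a proof, two routes seem viable: work inside the Bochner-type identification $\iota_{\MM^*}:\pi^*\MM^*\simeq L^q([0,1],\MM^*)$ from the proof of Proposition~\ref{prop:casoprodotto} and characterize the image of $\psi^*$ by an essential $\psi$-measurability condition; or more directly, define an $\MM^*$-valued measure $\nu$ on $\X_2$ by $\nu(A)(v):=\int_A l(\varphi^*v)\,\d\mm_2$, invoke the Radon--Nikodym property of the separable space $\MM^*\simeq\MM'$ (cf.\ \cite{DiestelUhl77}) to obtain a Bochner-measurable density $L$, read $L$ as an element of $\varphi^*\MM^*$, and verify $\psi^*L=\tilde L$ by noting that both sides have the same ${\sf I}_\pi$-image on the generating set $\{\pi^*v:v\in\MM\}$ and applying the uniqueness in Proposition~\ref{prop:casoprodotto}.
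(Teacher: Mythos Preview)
Your setup is sound and parallels the paper's: lift via $\psi$, invoke Proposition~\ref{prop:casoprodotto}, and you correctly isolate the descent step --- showing $\tilde L\in\pi^*\MM^*$ lies in the image of $\psi^*:\varphi^*\MM^*\to\pi^*\MM^*$ --- as the real obstacle. But neither route you sketch closes the gap. Route~2 has a conceptual mismatch: a Bochner density $\rho:\X_2\to\MM'\simeq\MM^*$ is \emph{not} the same kind of object as an element of $\varphi^*\MM^*$. The latter is, informally, an assignment $x_2\mapsto$ (an element of the fiber of $\MM^*$ over $\varphi(x_2)$), whereas $\rho(x_2)$ is a global section of $\MM^*$ living over all of $\X_1$; there is no natural map taking one to the other, so even if the Radon--Nikodym step succeeded you would still be stuck. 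Route~1 is too vague: characterizing the image of $\psi^*$ inside $L^q([0,1],\MM^*)$ by a $\psi$-measurability condition is exactly the hard part, and you give no mechanism for it.

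The paper sidesteps the descent question altogether. Using the disintegration $x_2\mapsto\bar\mm_{x_2}$ of $\bar\mm$ along $\psi$, it builds a \emph{projection} ${\sf Pr}_\psi:\pi^*\MM^*\to\varphi^*\MM^*$ (and likewise on $\pi^*\MM$) by fiber-averaging: on generators, ${\sf Pr}_\psi\big(\sum_i\nchi_{A_i}\psi^*w_i\big):=\sum_i\frac{\d\psi_*(\nchi_{A_i}\bar\mm)}{\d\psi_*\bar\mm}\,w_i$. This is a left inverse of $\psi^*$ and is precisely the missing ingredient. With it in hand one never needs to argue that $\tilde L$ descends; one simply \emph{defines} $L:={\sf Pr}_\psi\tilde L\in\varphi^*\MM^*$ and verifies ${\sf I}_\varphi(L)=l$ via the identity $\int({\sf I}_\varphi({\sf Pr}_\psi L')(w))\circ\psi\,\d\bar\mm=\int{\sf I}_\pi(L')(\psi^*w)\,\d\bar\mm$ (the paper's \eqref{eq:moltobrutta}), combined with your intertwining relation and $\psi_*\bar\mm=\mm_2$. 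Your intuition that $\tilde L$ is constant along $\psi$-fibers is exactly right; ${\sf Pr}_\psi$ is the operator that makes this rigorous.
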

\begin{proof}
By the discussion made at the end of Section \ref{se:altint}, we can consider the module $\MM_{p,\varphi_*\mm_2}$: by \eqref{eq:dualmmp}  we know that its dual can be canonically identified with $\MM^*_{q,\varphi_*\mm_2}$, where $q\in(1,\infty)$ is such that $\frac1p+\frac1q=1$. Then the assumption that $\MM^*$ is separable yields (recall \eqref{eq:sepmmp}) that $\MM^*_{q,\varphi_*\mm_2}$ is separable as well and thus up to replace  $\mm_1$ with $\varphi_*\mm_2$ and $\MM$ with $\MM_{p,\varphi_*\mm_2}$ and recalling property \eqref{eq:invarianzamisurapb} we can, and will, assume that $\mm_1=\varphi_*\mm_2$. We also  put for brevity $\bar \mm:=\mm_1\times\mathcal L^1\restr{[0,1]}$.

Let $\psi:\X_1\times[0,1]\to \X_2$ be the Borel map given by Lemma \ref{le:tecnicomisure} above and notice that by the identification \eqref{eq:tstarfunt} we have $\psi^*(\varphi^*\MM)\sim\pi^*\MM$ and $\psi^*(\varphi^*\MM^*)\sim\pi^*\MM^*$. We define a left inverse ${\sf Pr}_\psi:\pi^*\MM\to \varphi^*\MM$ of the map $\psi^*:\varphi^*\MM\to \pi^*\MM$ as follows. On  elements $\tilde v\in \pi^*\MM$ of the kind $\tilde v=\sum_i\nchi_{A_i}\psi^*v_i$ with $v_i\in \varphi^*M$ and $(A_i)$ partition of $\X_1\times[0,1]$, which form a dense subset of $\pi^*\MM$,  it is defined as
\begin{equation}
\label{eq:prs}
{\sf Pr}_\psi(\tilde v):=\sum_{i\in\N}\frac{\d(\psi_*(\nchi_{A_i}\bar \mm))}{\d \psi_*\bar \mm}v_i.
\end{equation}
We want to check that the right hand side depends only on $\tilde v$, and not on the particular representation as  $\sum_i\nchi_{A_i}\psi^*v_i$, and that the series converges in  $\varphi^*\MM$. To this aim let $\X_2\ni x_2\mapsto \bar\mm_{x_2}\in\prob{\X_1\times[0,1]}$ be the disintegration of $\bar\mm$ w.r.t.\ $\psi$ (see Section III-70 of \cite{DellacherieMeyer78} or the argument used in the previous lemma),  notice that $\frac{\d \psi_*(f\bar\mm)}{\d \psi_*\bar\mm}(x_2)=\int f\,\d\bar\mm_{x_2}$ for $\mm_2$-a.e.\ $x_2$ and every $f\in L^\infty(\bar \mm)$ and thus for $\mm_2$-a.e.\ $x_2$ we have
\[
\begin{split}
\bigg|\sum_{i\in\N}\frac{\d(\psi_*(\nchi_{A_i}\bar\mm ))}{\d \psi_*\bar\mm}v_i\bigg|(x_2)& \leq \sum_{i\in\N}\frac{\d(\psi_*(\nchi_{A_i}\bar\mm ))}{\d \psi_*\bar\mm}(x_2)|v_i|(x_2) = \sum_{i\in\N}\Big(\int\nchi_{A_i}\,\d\bar\mm_{x_2}|v_i|(x_2) \Big) \\
&=\int\sum_{i\in\N} \nchi_{A_i}(\bar x)|v_i|(x_2)\,\d\bar\mm_{x_2 }(\bar x)=\int |\tilde v|(\bar x)\,\d\bar \mm_{x_2}.
\end{split}
\]
Raising to the exponent $p$ and integrating w.r.t.\ $\mm_2$ we deduce that
\[
\int|{\sf Pr}_\psi(\tilde v)|^p\,\d\mm_2\leq \int \left|\int |\tilde v|(\bar x)\,\d\bar \mm_{x_2}\right|^p\,\d\mm_2(x_2)\leq \int|\tilde v|^p\,\d\bar \mm,
\]
which addresses both our claims by also showing that ${\sf Pr}_\psi$ can be extended in a unique way to a linear and continuous map from $\pi^*\MM$ to $\varphi^*\MM$. It is clear from the definition \eqref{eq:prs} that 
\begin{equation}
\label{eq:prsl}
{\sf Pr}_\psi(\psi^*v)=v,\qquad\forall v\in \varphi^*\MM.
\end{equation}

The same construction produces a linear and continuous map, still denoted by ${\sf Pr}_\psi$ from $\pi^*\MM^*$ to $\varphi^*\MM^*$. The fact that ${\sf Pr}_\psi$ acts as a sort of left inverse of $\psi^*$ is also implicit in the identity
\begin{equation}
\label{eq:moltobrutta}
\int \big({\sf I}_{\varphi}({\sf Pr}_\psi L)(w)\big)\circ \psi\,\d\bar\mm=\int {\sf I}_{\pi}(L)(\psi^*w)\,\d\bar\mm,\qquad \forall  L\in \pi^*\MM^*,\  w\in \varphi^*\MM,
\end{equation}
which can be proved as follows. First of all, by a density argument we can reduce to the case $w=\sum_i\nchi_{A_i}\varphi^*w_i$ and  $L=\sum_j\nchi_{B_j}\psi^*\varphi^*L_j$, with $w_i\in \MM$ and $L_j\in \MM^*$, where $(A_i)$ and $(B_j)$ are partitions of $\X_2$ and $\X_1\times [0,1]$ respectively. Then notice that $\psi^*w=\sum_i\nchi_{\psi^{-1}(A_i)}\psi^*\varphi^*w_i$ so that  by definition of ${\sf I}_\pi:\psi^*\varphi^*\MM^*\to (\psi^*\varphi^*\MM)^*$ we have
\[
{\sf I}_\pi(L)(\psi^*w)=\sum_{i,j}\nchi_{\psi^{-1}(A_i)\cap B_j}L_j(w_i)\circ \varphi\circ \psi.
\]
On the other hand, by the definitions \eqref{eq:prs} and \eqref{eq:perdefsci} of ${\sf Pr}_\psi$ and ${\sf I}_\varphi$ respectively and recalling that the latter is a module morphism we have 
\[
\big({\sf  I}_\varphi({\sf Pr}_\psi L)(w)\big)\circ \psi=\sum_{i,j}\Big(\frac{\d \psi_*(\nchi_{B_j}\bar\mm)}{\d \psi_*\bar\mm}\nchi_{A_i}L_j(w_i)\circ \varphi\Big)\circ \psi.
\]
Integrating and noticing that $\int \nchi_{B_j}\,\d\bar\mm_{x_2}=\frac{\d \psi_*(\nchi_{B_j}\bar\mm)}{\d \psi_*\bar\mm}(x_2) $ for every $i,j\in\N$ and $\mm_2$-a.e.\ $x_2$,   we get the claim \eqref{eq:moltobrutta}.

We are now ready to conclude: pick an arbitrary  $L_1\in (\varphi^*\MM)^*$ and notice that the map 
\[
\psi^*(\varphi^*\MM)\sim \pi^*\MM \ni v\qquad\mapsto\qquad l(v):=\int L_1({\sf Pr}_\psi(v))\,\d\mm_2\in\R,
\]
being linear and continuous, belongs to the Banach dual of $\pi^*\MM$. By Proposition \ref{prop:casoprodotto} we deduce that there exists $L_2\in \pi^*\MM^*$ such that
\[
l(v)=\int {\sf I}_\pi(L_2)(v)\,\d\bar \mm ,\qquad\forall v\in \pi^*\MM.
\]
Define $L_3\in \varphi^*\MM^*$ as $L_3:={\sf Pr}_\psi L_2 $ and notice that for any $w\in \varphi^*\MM$ we have
\[
\int{\sf  I}_\varphi ( L_3)(w)\,\d\mm_2\!\!\stackrel{\eqref{eq:moltobrutta}}=\!\!\int {\sf I}_{\pi}( L_2)(\psi^*w)\,\d\bar \mm=l(\psi^*w)= \int L_1({\sf Pr}_\psi(\psi^*w) )\,\d\mm_2\!\!\stackrel{\eqref{eq:prsl}}=\!\!\int L_1(w)\,\d\mm_2.
\]
This is sufficient to deduce that ${\sf I}_\varphi( L_3)=L_1$, which is the thesis.
\end{proof}
\begin{remark}{\rm
The need of the assumption that the measured spaces come from a Polish topology is only due to the structure of the proof we proposed. Nothing excludes that the same conclusion holds under the only assumption that $\MM^*$ has the Radon-Nikodym property. For instance, if $\MM$ is an Hilbert module, using the Riesz theorem for Hilbert modules (Theorem \ref{thm:rhil}) it is easy to see that the dual of $\varphi^*\MM$ can be identified with $\varphi^*\MM^*$ without further assumptions.
}\fr\end{remark}

\section{First order differential structure of general metric measure spaces}
\subsection{Preliminaries: Sobolev functions on metric measure spaces}\label{se:presob}
Without exceptions, all the   metric measure spaces $(\X,\sfd,\mm)$ we shall consider are such that:
\begin{itemize}
\item[-] $(\X,\sfd)$ is complete and separable,
\item[-] the measure $\mm$ is Radon and non-negative.
\end{itemize}
We endow $\X$ with the Borel $\sigma$-algebra $\mathcal A$ and denote by $\Bo(\X)$ the set of equivalence classes of Borel sets w.r.t.\ the equivalence relation given by $A\sim B$ provided $\mm((A\setminus B)\cup(B\setminus A))=0$.

By $C([0,1],\X)$ we shall denote the space of continuous curves with value in $\X$ endowed with the $\sup$ norm. Notice that due to the fact that $(\X,\sfd)$ is complete and separable, $C([0,1],\X)$ is complete and separable as well. For $t\in[0,1]$ we consider the evaluation map  $\e_t:C([0,1],\X)\to \X$  defined by
\[
\e_t(\gamma):=\gamma_t,\qquad\forall \gamma\in C([0,1],\X).
\]

\begin{definition}[Absolutely continuous curves and metric speed]\label{def:speedcurve}
A curve $\gamma:[0,1]\to \X$ is said absolutely continuous provided there exists a function $f\in L^1(0,1)$ such that
\begin{equation}
\label{eq:accurve}
\sfd(\gamma_t,\gamma_s)\leq \int_t^sf(r)\,\d r,\qquad\forall t,s\in[0,1],\ t<s.
\end{equation}
The   metric speed $t\mapsto |\dot\gamma_t|\in L^1(0,1)$ of an absolutely continuous curve $\gamma$ is defined as the essential-infimum of all the functions $f\in L^1(0,1)$ for which \eqref{eq:accurve} holds.
\end{definition}
It is worth to recall that the metric speed can be equivalently defined as limit of incremental ratios:
\begin{theorem}[Metric speed as incremental ratio]\label{thm:112}
Let $\gamma:[0,1]\to \X$ be an absolutely continuous curve. Then  for a.e. $t\in[0,1]$ the limit of $\frac{\sfd(\gamma_{t+h},\gamma_t)}{|h|}$ as $h\to 0$ exists and is equal to $|\dot\gamma_t|$.
\end{theorem}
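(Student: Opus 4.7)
The plan is to prove the equality by sandwiching the incremental ratio between a $\limsup$ bound coming directly from the definition of metric speed and a $\liminf$ bound obtained via a separability argument.

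\smallskip

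\emph{Upper bound.} First I would observe that by the very definition of $|\dot\gamma|$ as an $L^1$ function realizing \eqref{eq:accurve}, for every $t<s$ in $[0,1]$ we have $\sfd(\gamma_t,\gamma_s)\leq \int_t^s|\dot\gamma_r|\,\d r$. Dividing by $s-t$ and letting $s\downarrow t$ (or letting $t\uparrow s$) at a Lebesgue point $t$ of $|\dot\gamma|$ yields $\limsup_{h\to 0}\sfd(\gamma_{t+h},\gamma_t)/|h|\leq |\dot\gamma_t|$ for a.e.\ $t\in[0,1]$. This half is essentially routine.

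\smallskip

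\emph{Lower bound via separability.} The harder inclusion requires producing a concrete representative of $|\dot\gamma|$ from incremental ratios. I would fix a countable dense set $\{x_n\}_{n\in\N}\subset \X$ and set $\varphi_n(t):=\sfd(\gamma_t,x_n)$. The reverse triangle inequality gives $|\varphi_n(t)-\varphi_n(s)|\leq \sfd(\gamma_t,\gamma_s)\leq \int_s^t f(r)\,\d r$ for any admissible $f$ in \eqref{eq:accurve}, so each $\varphi_n$ is absolutely continuous on $[0,1]$, hence differentiable a.e., with $|\varphi_n'(t)|\leq \liminf_{h\to 0}\sfd(\gamma_{t+h},\gamma_t)/|h|$ at every differentiability point of $\varphi_n$. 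Define the Borel function
\[
m(t):=\sup_{n\in\N}|\varphi_n'(t)|,
\]
set to $0$ at the negligible set of non-differentiability points. By density of $\{x_n\}$ in $\X$ we have $\sfd(\gamma_t,\gamma_s)=\sup_n|\varphi_n(t)-\varphi_n(s)|$ for all $t,s$. The key measure-theoretic step is then to show
\[
\sfd(\gamma_t,\gamma_s)\leq \int_t^s m(r)\,\d r \qquad \forall\, t<s,
\]
which follows by writing $|\varphi_n(t)-\varphi_n(s)|\leq \int_t^s|\varphi_n'(r)|\,\d r\leq \int_t^s m(r)\,\d r$ for each $n$ and taking the supremum on the left. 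This exhibits $m$ as an admissible competitor in \eqref{eq:accurve}, so the minimality of $|\dot\gamma|$ gives $|\dot\gamma_t|\leq m(t)$ a.e. Combining with the pointwise bound for each $\varphi_n$ yields
\[
|\dot\gamma_t|\leq m(t)\leq \liminf_{h\to 0}\frac{\sfd(\gamma_{t+h},\gamma_t)}{|h|}\qquad\text{a.e.\ }t\in[0,1].
\]

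\smallskip

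\emph{Conclusion.} Putting the two bounds together shows that at a.e.\ $t$ the $\limsup$ and $\liminf$ of the incremental ratio coincide and both equal $|\dot\gamma_t|$, proving the theorem. The main obstacle is really the step passing from the family of scalar AC functions $\varphi_n$ to a single majorant $m$ of the metric speed: one must check that the countable sup $m$ is measurable and that the interchange of sup and integral is legitimate, which is where separability of $(\X,\sfd)$ enters essentially. Everything else is a standard application of the Lebesgue differentiation theorem and the definition of metric speed.
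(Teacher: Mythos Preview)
Your argument is correct and is precisely the classical proof of this fact; the paper does not give its own proof but simply refers to Theorem~1.1.2 of \cite{AmbrosioGigliSavare08}, where exactly this separability-and-Lebesgue-point argument is carried out. The only minor point worth making explicit is that $m\in L^1(0,1)$, which follows since $|\varphi_n'(t)|\leq f(t)$ a.e.\ for any admissible $f$ in \eqref{eq:accurve}, hence $m\leq f$ a.e.
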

See for instance Theorem 1.1.2 of \cite{AmbrosioGigliSavare08} for a proof.

In what follows we shall often write $\int|\dot\gamma_t|^2\,\d t$ for a generic curve $\gamma\in C([0,1],\X)$: in case $\gamma$ is not absolutely continuous the value of the integral is taken $+\infty$ by definition.

\bigskip

There are several equivalent definitions of Sobolev functions on a metric measure space, we shall follow an approach proposed in \cite{AmbrosioGigliSavare11}. We start with the following definition:
\begin{definition}[Test Plans]
Let $\ppi\in\prob{C([0,1], \X)}$. We say that $\ppi$ is a test plan provided there is a constant $\cf(\ppi)$ such that
\[
(\e_t)_\sharp\ppi\leq \cf(\ppi)\mm,\qquad\forall t\in[0,1],
\]
and moreover
\[
\iint_0^1|\dot\gamma_t|^2\,\d t\ppi(\gamma)<\infty.
\]
\end{definition}
Notice that  according to the convention $\int_0^1|\dot\gamma_t|^2\,\d t=+\infty$ if $\gamma$ is not absolutely continuous, any test plan must be concentrated on absolutely continuous curves.
\begin{definition}[The Sobolev class $\s^2(\X,\sfd, \mm)$]
The Sobolev class $\s^2(\X,\sfd,\mm)$, or simply $\s^2(\X)$ is the space of all functions $f\in L^0(\mm)$ such that there exists a non-negative $G\in L^2( \mm)$  for which it holds
\begin{equation}
\label{eq:defsob}
\int|f(\gamma_1)-f(\gamma_0)|\,\d\ppi(\gamma)\leq \iint_0^1G(\gamma_t)|\dot\gamma_t|\,\d t\,\d\ppi(\gamma),\qquad\forall \ppi\textrm{ test plan}.
\end{equation}
\end{definition}
Notice that due to the fact that $(\e_0)_\sharp\ppi,(\e_1)_\sharp\ppi\ll\mm$, the integral in the left hand side of \eqref{eq:defsob} is well defined, i.e.\ depends only on the equivalence class of the function $f\in L^0(\mm)$ and not on its chosen representative. Similarly for the right hand side.

It turns out, see \cite{AmbrosioGigliSavare11}, that for $f\in \s^2(\X)$ there exists a minimal $G$ in the $\mm$-a.e. sense for which \eqref{eq:defsob} holds: we will denote it by $\weakgrad f$ and, in line with the literature on the topic, call it minimal weak upper gradient. Notice that  the terminology is misleading, because being this object defined in duality with speed of curves, it is closer to the norm of a cotangent vector rather to a tangent one, whence the choice of denoting it by $\weakgrad f$.

We recall below the basic properties of Sobolev functions and minimal weak upper gradients.
\noindent\underline{Lower semicontinuity of minimal weak upper gradients}. Let $(f_n)\subset \s^2(\X)$ and $f\in L^0(\mm)$ be such that $f_n\to f$ as $n\to\infty$ in $L^0(\mm)$ (i.e.\ $\mm$-a.e.). Assume that $(\weakgrad {f_n})$ converges to some $G\in L^2(\mm)$ weakly in $L^2(\mm)$.

Then 
\begin{equation}
\label{eq:lscwug}
f\in \s^2(\X )\qquad\text{ and }\qquad\weakgrad f\leq  G,\quad \mm\ae.
\end{equation}

\noindent\underline{Weak gradients and local Lipschitz constant - 1}. For any $\bar f:\X\to\R$ locally Lipschitz with ${\rm lip}(\bar f)\in L^2(\mm)$ we   have $f\in \s^2(\X)$ with
\begin{equation}
\label{eq:lipweak}
\weakgrad f\leq  {\rm lip}(\bar f),\qquad\mm\ae,
\end{equation}
where the function  ${\rm lip}(\bar f):\X\to\R^+$ is the local Lipschitz constant  defined by
\[
{\rm lip}(\bar f)(x):=\lims_{y\to x}\frac{|\bar f(y)-\bar f(x)|}{\sfd(y,x)},
\]
 at points $x\in \X$ which are not isolated, 0 otherwise.
 
\noindent\underline{Weak gradients and local Lipschitz constant - 2}. Suppose that $\mm$ gives finite mass to bounded sets. Then for any $f\in L^2\cap\s^2(\X)$ there exists a sequence $(f_n)\subset  L^2\cap\s^2(\X)$ of functions having Lipschitz representatives $\bar f_n$ converging to $f$ in $L^2(\X)$ such that
\begin{equation}
\label{eq:aplip2}
\int\weakgrad f^2\,\d\mm=\lim_{n\to\infty}\int{\rm lip}^2(\bar f_n)\,\d\mm.
\end{equation}
\noindent\underline{Vector space structure}. $\s^2(\X)$ is a vector space and 
\begin{equation}
\label{eq:vectorstru}
\weakgrad{(\alpha f+\beta g)}\leq |\alpha|\weakgrad f+|\beta|\weakgrad g,\qquad\textrm{for any $f,g\in\s^2(\X)$, $\alpha,\beta\in\R$.}
\end{equation}
\noindent\underline{Algebra structure}.  $\s^2\cap L^\infty(\X)$ is an algebra and
\begin{equation}
\label{eq:leibbase}
\weakgrad{(fg)}\leq |f|\weakgrad g+|g|\weakgrad f,\qquad\textrm{for any $f,g\in \s^2\cap L^\infty(\X)$.}
\end{equation}
\noindent\underline{Locality}. The minimal weak upper gradient is local in the following sense:
\begin{equation}
\label{eq:localgrad0}
\weakgrad f=\weakgrad g,\qquad\mm\ae\textrm{  on }\{f=g\},\qquad \forall f,g\in\s^2(\X).
\end{equation}
\noindent\underline{Chain rule}. For every $f\in\s^2(\X)$ we have
\begin{equation}
\label{eq:nullgrad}
\weakgrad f=0,\qquad\textrm{on }f^{-1}(\mathcal N),\qquad\forall \mathcal N\subset \R\textrm{, Borel with }\mathcal L^1(\mathcal N)=0,
\end{equation}
moreover, for $f\in \s^2(\X)$, $I\subset \R$ open such that  $\mm(f^{-1}(\R\setminus I))=0$ and $\varphi:I\to\R$ Lipschitz, we have $\varphi\circ f\in\s^2(\X)$ and
\begin{equation}
\label{eq:chainweakgrad}
\weakgrad{(\varphi\circ f)}=|\varphi'|\circ f\weakgrad f,
\end{equation}
where $|\varphi'|\circ f$ is defined arbitrarily at points where $\varphi$ is not differentiable. (observe that the identity \eqref{eq:nullgrad} ensures that on $f^{-1}(\mathcal N)$ both $\weakgrad{(\varphi\circ f)}$ and $\weakgrad f$ are 0 $ \mm$-a.e., $\mathcal N$ being the negligible set of points of non-differentiability of $\varphi$).

\noindent\underline{Compatibility with the smooth case}. If $(\X,\sfd,\mm)$ is a smooth Finsler manifold, then 
\begin{equation}
\label{eq:compsmweakgrad}
\begin{split}
&\text{$f\in L^1_{\rm loc}(\mm)$ belongs to $\s^2(\X)$ if and only if its distributional differential $Df$ is}\\
&\text{an $L^2$-covector field and in this case  the norm of $D f$ coincides with $\weakgrad f$ $\mm$-a.e..}
\end{split}
\end{equation}

\bigskip

The Sobolev space $W^{1,2}(\X,\sfd,\mm)$, or simply $W^{1,2}(\X)$, is defined as $W^{1,2}(\X):=L^2\cap \s^2(\X)$ and endowed with the norm
\[
\|f\|_{W^{1,2}(\X)}^2:=\|f\|_{L^2(\mm)}^2+\|\weakgrad f\|^2_{L^2(\mm)}.
\]
$W^{1,2}(\X)$ is always a Banach space, but in general not an Hilbert space. We recall the following result, proved in \cite{ACM14}, about reflexivity and separability of $W^{1,2}(\X)$:
\begin{theorem}\label{thm:ACM}
Let $(\X,\sfd,\mm)$ be a complete separable metric space endowed with a non-negative Radon measure $\mm$. Then the following hold.
\begin{itemize}
\item[i)] Assume that $W^{1,2}(\X)$ is reflexive. Then it is separable.
\item[ii)] A sufficient condition for $W^{1,2}(\X)$ to be reflexive is that $(\X,\sfd)$ is metrically doubling, i.e.\ there a constant $c\in\N$ such that for every $r>0$, every ball of radius $r$ can be covered with $c$ balls of radius $r/2$.
\end{itemize}
Moreover:
\begin{itemize}
\item[iii)] There exists a compact space $(\X,\sfd,\mm)$ such that $W^{1,2}(\X)$ is non-separable, and thus also non-reflexive.
\end{itemize}
\end{theorem}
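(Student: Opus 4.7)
For part (i), my strategy is to combine the approximation result \eqref{eq:aplip2} with the standard Mazur-lemma trick that reflexivity turns $L^2$-dense families into $W^{1,2}$-dense ones via convex combinations. Since $(\X,\sfd)$ is complete and separable and $\mm$ is Radon, I can fix a countable dense subset $(x_n)\subset \X$ and a countable family of cut-offs (e.g.\ truncated distance functions built from the $x_n$'s) and use McShane extension to build a countable family $\mathcal D$ of bounded Lipschitz functions with bounded support that is dense in $L^2(\mm)$ with simultaneous control of their local Lipschitz constants in $L^2(\mm)$. Now given any $f \in W^{1,2}(\X)$, property \eqref{eq:aplip2} yields a sequence of Lipschitz approximants $(f_n)$ converging to $f$ in $L^2$ and with $\|{\rm lip}(f_n)\|_{L^2(\mm)}\to \|\weakgrad f\|_{L^2(\mm)}$; each $f_n$ can in turn be approximated in $L^2$ by the countable family $\mathcal D$. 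Taking an enumeration of the $\Q$-convex combinations of elements of $\mathcal D$ and invoking reflexivity of $W^{1,2}(\X)$: any $L^2$-approximating sequence of $f$ that is also bounded in $W^{1,2}(\X)$ admits a $W^{1,2}$-weakly convergent subsequence, and the weak limit must coincide with the $L^2$-limit, namely $f$. Mazur's lemma then provides a sequence of convex combinations converging strongly to $f$ in $W^{1,2}(\X)$, proving separability.

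For part (ii), the expected strategy is a quantitative discrete-convolution argument. Doubling of the metric permits, for every scale $r>0$, a countable $r$-net and a subordinate partition of unity made of uniformly Lipschitz functions with Lipschitz constant $\lesssim 1/r$ and bounded overlap (independent of $r$). Using these partitions one builds, for any $f\in W^{1,2}(\X)$ and scale $r>0$, a Lipschitz approximant $f_r$ such that $\|f_r-f\|_{L^2(\mm)}\to 0$ as $r\downarrow 0$ and with a quantitative bound of the form ${\rm lip}(f_r)\le C\,M_r(\weakgrad f)$, where $M_r$ is a maximal-function-like operator whose $L^2$-norm is uniformly bounded by $\|\weakgrad f\|_{L^2(\mm)}$ (here the asymptotic Lipschitz constant, rather than the pointwise Lipschitz constant, is the natural object). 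From this one deduces an equivalent renorming of $W^{1,2}(\X)$ by quantities that are weakly lower semicontinuous and uniformly convex, for instance by recognizing $W^{1,2}(\X)$ as a closed subspace of a Clarkson-reflexive space of the form $L^2(\mm)\times L^2(\mm;\ell^\infty_{{\rm fin}})$ via the mapping $f\mapsto (f,(\ell r_k(f))_k)$ for suitable discretizations. Closedness of that subspace follows from the lower semicontinuity of minimal weak upper gradients \eqref{eq:lscwug}, and reflexivity is inherited.

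For part (iii), the plan is to design a compact metric measure space where $W^{1,2}(\X)$ contains an isometric copy of a non-separable Banach space. A natural attempt is to take  a convergent sequence of disjoint "arcs" $I_\alpha$ indexed by an uncountable family (e.g.\ by the Cantor set $\{0,1\}^\N$) glued to a common basepoint, equipped with a measure that weighs the arcs so that indicator-like Sobolev functions supported on different arcs are uniformly separated in $W^{1,2}$ norm but the whole space is compact; one has to tune the geometry carefully so that the weak upper gradients compute as expected (using \eqref{eq:compsmweakgrad} inside each arc) and so that the identification of $W^{1,2}$ is not broken by the clustering at the basepoint.

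The main obstacle will be part (ii): the delicate step is producing, from doubling alone (without Poincaré), discrete Lipschitz approximants whose asymptotic Lipschitz constants are controlled in $L^2(\mm)$ by $\weakgrad f$, and then packaging this into an honest uniform convexity / weak-compactness statement. Once this quantitative approximation is in place, reflexivity follows fairly formally, and part (i) is then purely soft. Part (iii) is a clean construction but requires explicit bookkeeping.
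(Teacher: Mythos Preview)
This theorem is not proved in the paper at all: it is stated with the preamble ``We recall the following result, proved in \cite{ACM14}'' and no argument is given. So there is no paper-proof to compare your proposal against; everything substantive lives in the cited reference.

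That said, a few comments on your plan.

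\emph{Part (i).} Your Mazur-lemma strategy is essentially correct, but as written it has a loose joint. You produce Lipschitz approximants $f_n$ of $f$ via \eqref{eq:aplip2} and then $L^2$-approximate each $f_n$ by elements of a countable McShane-built family $\mathcal D$. The diagonalized sequence you obtain converges to $f$ in $L^2$, but its $W^{1,2}$-bound depends on the (uncontrolled) global Lipschitz constants and supports of the individual $f_n$, not on $\|\weakgrad f\|_{L^2}$. This is fixable by a \emph{two-step} Mazur argument (first show each $f_n$ lies in the $W^{1,2}$-closed convex hull of $\mathcal D$, then that $f$ lies in the closed convex hull of the $f_n$), but you should be explicit about it. There is also a slicker purely functional-analytic route: the inclusion $W^{1,2}(\X)\hookrightarrow L^2(\mm)$ is a continuous injection into a separable Hilbert space, so $(W^{1,2})^*$ contains a countable separating family; reflexivity then makes the unit ball weakly compact and weak-metrizable, hence weakly separable, and Mazur finishes. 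Note finally that \eqref{eq:aplip2} is stated under the extra hypothesis that bounded sets have finite mass, so if you insist on that route you must either verify or impose this.

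\emph{Parts (ii) and (iii).} Your outlines point at the right mechanisms (discrete partitions of unity with bounded overlap and control of asymptotic Lipschitz constants for (ii); an uncountable gluing for (iii)), and you correctly flag (ii) as the hard part. But these are genuinely the content of \cite{ACM14}: turning the doubling hypothesis alone---with no Poincar\'e inequality---into an honest reflexive renorming requires a careful construction (in \cite{ACM14} an embedding into a concrete reflexive sequence space built from the discrete gradients), and the counterexample in (iii) needs precise tuning so that weak upper gradients on the glued pieces behave as expected while compactness is retained. Your sketches are reasonable starting points but are far from complete arguments; the paper itself does not attempt them.
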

We now turn to the definition of Laplacian. The 2-energy $\E:L^2(\mm)\to[0,\infty]$, called Cheeger energy in \cite{AmbrosioGigliSavare11}, is defined as
\[
\E(f):=\left\{\begin{array}{ll}
\displaystyle{\frac12\int\weakgrad f^2\,\d\mm}&\qquad\text{ if }f\in W^{1,2}(\X),\\
+\infty&\qquad\text{ otherwise.}
\end{array}
\right.
\]
From the properties of the minimal weak upper gradient we see that $\E$ is convex, lower semicontinuous and with dense domain (i.e. $\{f:\E(f)<\infty\}$ is dense in $L^2(\mm)$). Recall that the subdifferential $\partial^-\E(f)\subset L^2(\mm)$ at a function $f\in L^2(\mm)$ is defined to be the empty set if $\E(f)=+\infty$ and otherwise as the set, possibly empty, of functions $v\in L^2(\mm)$ such that
\[
\E(f)+\int vg\,\d\mm\leq \E(f+g),\qquad\forall g\in L^2(\mm).
\]
The set $\partial^-\E(f)$ is always closed and convex, possibly empty, and the class of $f$'s such that $\partial^-\E(f)\neq\emptyset$ is dense in $L^2(\mm)$. We then define the domain $D(\Delta)$ of the Laplacian as $\{f:\partial^-\E(f)\neq\emptyset \}\subset L^2(\X)$ and for $f\in D(\Delta)$ the Laplacian $\Delta f\in L^2(\mm)$ as $\Delta f:=-v$, where $v$ is the element of minimal norm in $\partial^-\E(f)$. Notice that  in general $D(\Delta)$ is not a vector space and that the Laplacian is not linear, these being true if and only if $W^{1,2}(\X)$ is Hilbert.

The classical theory of gradient flows of convex functions on Hilbert spaces (see e.g.\ \cite{AmbrosioGigliSavare08} and the references therein) ensures existence and uniqueness of a 1-parameter semigroup $(\h_t)_{t\geq 0}$ of continuous operators from $L^2(\mm)$ to itself such that for every $f\in L^2(\mm)$ the curve  $t\mapsto \h_t(f)\in L^2(\mm)$ is continuous on $[0,\infty)$, absolutely continuous on $(0,\infty)$ and fulfills
\[
\frac{\d}{\d t}\h_t(f)=\Delta f\qquad{\rm a.e. }t>0,
\]
where it is part of the statement the fact that $\h_t(f)\in D(\Delta)$ for every $f\in L^2(\mm)$ and $t>0$. Notice that since in general the Laplacian is not linear, without the assumption that $W^{1,2}(\X)$ is Hilbert we cannot expect  the operators $\h_t$ to be linear.

\subsection{Cotangent module}
We fix once and for all a metric measure space $(\X,\sfd,\mm)$ which is complete, separable and equipped with a non-negative Radon measure.

\subsubsection{The construction}
Here we construct a key object of our analysis: the cotangent module $L^2(T^*\X)$ of the given metric measure space $(\X,\sfd,\mm)$, which by definition will be an $L^2(\mm)$-normed module. Notice that although we start with the definition of cotangent module and then introduce the tangent one by duality in the next section, to keep consistency with the notation used in the smooth setting, we shall denote by $|\cdot|_*$ the pointwise norm on the cotangent module

Technically speaking, the construction is strongly reminiscent of that of pullback module given in Section \ref{se:pullback}, but from the conceptual point of view there is a non-entirely negligible difference: now we don't have any module to start with.


\vspace{1cm}

We introduce the set `Pre-cotangent module' ${\rm Pcm}$ as
\[
\begin{split}
{\rm Pcm}:=\Big\{\{(f_i,A_i)\}_{i\in\N}\ :&\  (A_i)_{i\in\N}\subset\Bo(\X)\text{ is a   partition of $\X$,} \\
&\ f_i\in\s^2(\X)\ \forall i\in\N, \text{ and }\sum_{i\in\N}\int_{A_i}\weakgrad f^2\,\d\mm<\infty \Big\}
\end{split}
\]
and  define an equivalence relation on ${\rm Pcm}$ by declaring $\{(f_i,A_i)\}_{i\in\N}\sim \{(g_j,B_j)\}_{j\in\N}$ provided
\[
\weakgrad{(f_i-g_j)}=0,\quad\mm\ae\text{ on } A_i\cap B_j,\qquad\forall i,j\in\N.
\]
It is readily verified that this is indeed an equivalence relation and we shall denote by $[(f_i,A_i)_i]$ the equivalence class of $\{(f_i,A_i)\}_{i\in\N}$.  In a sense that will be made rigorous by Definition \ref{def:diff}, the object $[(f_i,A_i)_i]$ should be though of as the 1-form which is equal to the differential $\d f_i$ of $f_i$ on $A_i$.

We endow ${\rm Pcm}/\sim$ with a vector space structure by defining the sum  and multiplication   with a scalar  as
\[
\begin{split}
[(f_i,A_i)_i]+[(g_j,B_j)_{j}]&:=[(f_i+g_j,A_i\cap B_j)_{i,j}],\\
\lambda[(f_i,A_i)\}_i]&:=[(\lambda f_i,A_i)_i].
\end{split}
\]
It is clear that these operations are well defined and endow ${\rm Pcm}/\sim$ with a vector space structure.

Now recall that by  ${\rm Sf}(\mm)\subset L^\infty(\mm)$ we intend the space of simple functions (i.e. attaining only a finite number of values) and  for $[(f_i,A_i)_i]\in {\rm Pcm}/\sim$ and $h=\sum_{j}a_j\nchi_{B_j}\in {\rm Sf}(\mm)$ with $(B_j)$ partition of $\X$  define the product $h [(f_i,A_i)_i]\in {\rm Pcm}/\sim$ as
\[
h[(f_i,A_i)_i]:=[(a_jf_i,A_i\cap B_j)_{i,j}].
\]
It is readily verified that this definition is well posed and that the resulting multiplication is a bilinear map from ${\rm Sf}(\mm)\times {\rm Pcm}/\sim$ into ${\rm Pcm}/\sim$ such that  ${\bf 1}[(f_i,A_i)_i]=[(f_i,A_i)_i]$. 

Finally, we consider the map $|\cdot|_*:{\rm Pcm}/\sim\,\to L^2(\mm)$ given by
\[
\big|[(f_i,A_i)_i]\big|_*:=\weakgrad{f_i},\qquad\mm\ae\text{  on } A_i,\ \forall i\in\N,
\]
which is well defined by the very definition of the equivalence relation $\sim$.

Then recalling the vector space structure of $\s^2(\X)$ encoded in inequality \eqref{eq:vectorstru}  we see that that  the (in)equalities 
\begin{equation}
\label{eq:basecotan}
\begin{split}
\big|[(f_i+g_j,A_i\cap B_j)_{i,j}]\big|_* &\leq \big|[((f_i,A_i)_i]\big|_*+\big|[(g_j,B_j)_j]\big|_*,\\
\big|\lambda[(f_i,A_i)_i]\big|_*&=|\lambda|\,\big|[(f_i,A_i)_i]\big|_*,\\
\big|h[(f_i,A_i)_i]\big|_*&=|h|\,\big|[(f_i,A_i)_i]\big|_*,
\end{split}
\end{equation}
are valid $\mm$-a.e.\ for every $[((f_i,A_i)_i],[((g_j,B_j)_j]\in {\rm Pcm}/\sim$, $h\in {\rm Sf}(\X)$ and $\lambda\in\R$.

In particular the map $\|\cdot\|_{L^2(T^*\X)}:{\rm Pcm}/\sim\to[0,\infty)$ defined by
\[
\|[(f_i,A_i)_i]\|^2_{L^2(T^*\X)}:={\int\big|[(f_i,A_i)_i]\big|^2\,\d\mm}={\sum_{i\in\N}\int_{A_i}\weakgrad{f_i}^2\,\d\mm},
\]
is a norm on ${\rm Pcm}/\sim$.
\begin{definition}[Cotangent module] The cotangent module $(L^2(T^*\X),\|\cdot\|_{L^2(T^*\X)})$ is defined as the completion of $({\rm Pcm}/\sim,\|\cdot\|)$. Elements of $L^2(T^*\X)$ will be called cotangent vector fields or 1-forms.
\end{definition}
By construction and due to the first two inequalities in \eqref{eq:basecotan} the space $(L^2(T^*\X),\|\cdot\|_{L^2(T^*\X)})$  is a Banach space, in general being not an Hilbert in line with the case of Finsler manifolds (see also Remark \ref{rem:compcot}). 

Moreover, $L^2(T^*\X)$ comes with the structure of $L^2(\mm)$-normed module. Indeed,  the third in \eqref{eq:basecotan} ensures that the bilinear map $(h,[(f_i,A_i)_i])\mapsto h[(f_i,A_i)_i]$ from ${\sf Sf}(\mm)\times {\rm Pcm}/\sim$ to ${\rm Pcm}/\sim$ can, and will, be uniquely extended to a bilinear map $(g,\omega)\mapsto g\omega$ from $L^\infty(\mm)\times L^2(T^*\X)$ to $L^2(T^*\X)$  satisfying $|h\omega|_*= |h| |\omega|_*$ $\mm$-a.e.\ for every $h\in L^\infty(\mm)$ and $\omega\in L^2(T^*\X)$. It is then clear that $L^2(T^*\X)$ is an $L^2(\mm)$-normed premodule and thus by Proposition \ref{prop:baselp} an $L^2(\mm)$-normed module.

Notice also that the notation $L^2(T^*\X)$ is purely formal, in the sense that we didn't, nor we will, define a cotangent bundle $T^*\X$. Such notation has been chosen because in the smooth setting the cotangent module can be canonically identified with the space of $L^2$ sections of the cotangent bundle, see Remark \ref{rem:compcot}.

\subsubsection{Differential of a Sobolev function}
With the definition of cotangent module it comes naturally that of differential of a Sobolev function. In this section we investigate the basic properties of such object, both from differential calculus and the functional analytic perspectives. 

\vspace{1cm}

\begin{definition}[Differential of a Sobolev function]\label{def:diff}
Let $f\in \s^2(\X)$. Its differential $\d f\in L^2(T^*\X)$ is defined as 
\[
\d f:=[(f,\X)]\quad\in {\rm Pcm}/\!\sim\ \subset L^2(T^*\X),
\]
where $(f,\X)$ is a shorthand for $(f_i,A_i)_{i\in\N}$ with $f_0=f$, $A_0=\X$ and $f_i=0$, $A_i=\emptyset$ for $i>0$. 
\end{definition}
It is clear that $\d f$ linearly depends on $f$. From the very definition of the pointwise norm on $L^2(T^*\X)$ we also get that
\begin{equation}
\label{eq:diffuguali}
|\d f|_*=\weakgrad f,\qquad\mm\ae\quad\forall f\in\s^2(\X).
\end{equation}
Other expected properties are the locality and the chain and Leibniz rules. We start from the locality and see later how to deduce the other calculus rules out of it.
\begin{theorem}[Locality of the differential]\label{thm:diffloc} Let $f,g\in\s^2(\X)$. Then
\[
\d f=\d g,\qquad\mm\ae\ \text{\rm on } \{f=g\}.
\]
\end{theorem}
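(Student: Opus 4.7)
My plan is to reduce the statement to a claim about a single Sobolev function vanishing, and then invoke the null-gradient property \eqref{eq:nullgrad} for minimal weak upper gradients.

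First, directly from the definition of addition on ${\rm Pcm}/\!\sim$, we have $\d f - \d g = [(f,\X)] - [(g,\X)] = [(f-g,\X)] = \d(f-g)$, so linearity of the differential is immediate. Setting $h := f-g \in \s^2(\X)$, the statement reduces to showing that $\nchi_{\{h=0\}}\, \d h = 0$ in $L^2(T^*\X)$.

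Next, by point (i) of Proposition \ref{prop:baselp}, an element $v$ of an $L^p(\mm)$-normed module satisfies $v = 0$ $\mm$-a.e.\ on a set $E$ if and only if $|v|_* = 0$ $\mm$-a.e.\ on $E$. Applied to $v = \d h$ and $E = \{h = 0\}$, together with the identity $|\d h|_* = \weakgrad h$ $\mm$-a.e.\ (coming from \eqref{eq:diffuguali}, which is in turn immediate from the definition of the pointwise norm on ${\rm Pcm}/\!\sim$), the problem becomes: prove $\weakgrad h = 0$ $\mm$-a.e.\ on $\{h=0\} = h^{-1}(\{0\})$.

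Finally, this is precisely the content of the null-gradient property \eqref{eq:nullgrad}: taking $\mathcal N = \{0\} \subset \R$, which is Borel and satisfies $\mathcal L^1(\mathcal N) = 0$, we obtain $\weakgrad h = 0$ $\mm$-a.e.\ on $h^{-1}(\mathcal N) = \{h=0\}$, concluding the proof. (Alternatively one could invoke the locality statement \eqref{eq:localgrad0} comparing $h$ with the zero function, which yields the same conclusion.)

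There is no real obstacle here: the argument is essentially a translation of the locality of minimal weak upper gradients into the language of the cotangent module. All the substantive work has already been done in building the pointwise-norm structure on $L^2(T^*\X)$ and in the preliminaries on $\weakgrad{\cdot}$. The one subtle point worth verifying carefully is that the multiplication by the characteristic function $\nchi_{\{h=0\}}$, which is a simple function, commutes with the equivalence-class construction in the expected way, so that $|\nchi_{\{h=0\}} \d h|_* = \nchi_{\{h=0\}} \weakgrad h$ $\mm$-a.e.; but this is a direct consequence of the third identity in \eqref{eq:basecotan}.
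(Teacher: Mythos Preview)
Your proof is correct and follows essentially the same approach as the paper: reduce by linearity to $\d h=0$ on $\{h=0\}$ for $h=f-g$, use Proposition~\ref{prop:baselp}(i) together with \eqref{eq:diffuguali} to pass to the minimal weak upper gradient, and conclude via the known locality of $\weakgrad{\cdot}$. The only cosmetic difference is that the paper invokes \eqref{eq:localgrad0} directly (comparing $h$ with the zero function) rather than \eqref{eq:nullgrad} with $\mathcal N=\{0\}$, an alternative you already noted.
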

\begin{proof} Taking into account the linearity of the differential, the thesis is equivalent to $\d(f-g)=0$ $\mm$-a.e.\ on $\{f-g=0\}$ which by point $(i)$ of Proposition \ref{prop:baselp}  is equivalent to  $|\d(f-g)|_*=0$ $\mm$-a.e.\ on $\{f-g=0\}$. Then the thesis follows recalling \eqref{eq:diffuguali} and using the locality property of the minimal weak upper gradient \eqref{eq:localgrad0} with $f-g$ in place of $f$ and $0$ in place of $g$.
\end{proof}
\begin{remark}[Compatibility with the smooth case]\label{rem:compcot}{\rm If $(\X,\sfd,\mm)$ is a smooth Finsler manifold endowed with the distance induced by the Finsler norm and a smooth reference measure (i.e.\ a measure which, when read in charts, is absolutely continuous w.r.t.\ Lebesgue with smooth density), then the cotangent module can be canonically identified with the space of $L^2$ sections of the cotangent bundle. Indeed, the map taking the differential of  a Sobolev function $f$ in the sense of Definition \ref{def:diff} to its usual $\mm$-a.e.\ defined differential preserves, by the compatibility property \eqref{eq:compsmweakgrad}, the pointwise norm and thus can be uniquely extended to linear isometry preserving the module structure. The fact that the image of such morphism is the whole space of $L^2$ sections can then be easily obtained by approximation recalling that for every $x\in\X$ the cotangent space $T_x^*\X$ coincides with the space of differentials at $x$ of smooth functions.

A curious byproduct of this fact is that with the terminology of $L^2$-normed modules and the constructions presented here we produced a notion of differential of a Sobolev function fully compatible with the classical one, whose definition never requires, not even implicitly, the notion of limit of incremental ratios.
}\fr\end{remark}
Another direct consequence of the definitions is:
\begin{proposition}[$L^2(T^*\X)$ is generated by differentials]\label{prop:gencotan}
The cotangent module $L^2(T^*\X)$ is generated, in the sense of modules (Definition \ref{def:genmod}), by the space $\{\d f:f\in W^{1,2}(\X)\}$. 

In particular, if $W^{1,2}(\X)$ is separable  then so is $L^2(T^*\X)$.
\end{proposition}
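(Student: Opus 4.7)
By construction, ${\rm Pcm}/\!\sim$ is dense in $L^2(T^*\X)$, so it suffices to show every $v = [(f_i,A_i)_i] \in {\rm Pcm}/\!\sim$ belongs to the closure $\overline{{\rm Span}_\X\{\d g : g \in W^{1,2}(\X)\}}$. The plan is to first peel off the partition structure, then handle the fact that each $f_i$ lies only in $\s^2(\X)$ rather than $W^{1,2}(\X)$.

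For the first reduction, observe that the truncated sum $v_N := \sum_{i=0}^N \nchi_{A_i}\d f_i$ equals $[(f_i,A_i)_{i\le N},(0,\cup_{i>N}A_i)]$, so $\|v - v_N\|_{L^2(T^*\X)}^2 = \sum_{i>N}\int_{A_i}\weakgrad{f_i}^2\,\d\mm \to 0$ by the finiteness condition defining ${\rm Pcm}$. Since $\overline{{\rm Span}_\X\{\d g : g \in W^{1,2}\}}$ is closed and stable under multiplication by $L^\infty(\mm)$ functions, it suffices to show that $\d f \in \overline{{\rm Span}_\X\{\d g : g \in W^{1,2}(\X)\}}$ for each $f \in \s^2(\X)$.

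For the second reduction, which I expect to be the main technical obstacle, I would combine a vertical truncation with a spatial cutoff. Fix $x_0 \in \X$ and choose 1-Lipschitz cutoffs $\varphi_n:\R\to[-n,n]$ with $\varphi_n(t)=t$ on $[-n,n]$, together with bounded Lipschitz functions $\chi_k:\X\to[0,1]$ that equal $1$ on $B(x_0,k)$ and vanish outside $B(x_0,2k)$. Setting $g_{n,k} := \chi_k(\varphi_n\circ f)$, the chain rule \eqref{eq:chainweakgrad} and Leibniz estimate \eqref{eq:leibbase} give $\weakgrad{g_{n,k}} \in L^2(\mm)$ with support in $B(x_0,2k)$; since $\mm$ is Radon, this ball has finite measure and $g_{n,k}\in L^\infty\cap L^2(\mm)$, so $g_{n,k} \in W^{1,2}(\X)$. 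By the locality of the differential (Theorem \ref{thm:diffloc}), on the set $E_{n,k} := B(x_0,k)\cap\{|f|<n\}$ we have $g_{n,k} = f$, hence $\nchi_{E_{n,k}}\d f = \nchi_{E_{n,k}}\d g_{n,k}$, which belongs to ${\rm Span}_\X\{\d g:g\in W^{1,2}\}$. Taking $n,k\to\infty$, the sets $E_{n,k}$ exhaust $\X$ up to the $\mm$-null set $\{|f|=\infty\}$, so by dominated convergence in $L^2(\mm)$ (with dominating function $|\d f|_* = \weakgrad f$) we get $\nchi_{E_{n,k}}\d f \to \d f$ in $L^2(T^*\X)$, giving $\d f \in \overline{{\rm Span}_\X\{\d g : g \in W^{1,2}\}}$.

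For the separability statement, the identity $\|\d f\|_{L^2(T^*\X)} = \|\weakgrad f\|_{L^2(\mm)} \le \|f\|_{W^{1,2}(\X)}$ implies that the differential is continuous from $W^{1,2}(\X)$ to $L^2(T^*\X)$. Thus if $W^{1,2}(\X)$ is separable, the generating set $\{\d f : f \in W^{1,2}(\X)\}$ is separable in the induced topology, and Proposition \ref{prop:finsep} yields separability of $L^2(T^*\X)$.
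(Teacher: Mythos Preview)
Your approach is essentially the same as the paper's: truncate vertically to pass from $\s^2(\X)$ to $L^\infty\cap\s^2(\X)$, then multiply by a Lipschitz cutoff to land in $W^{1,2}(\X)$, invoke locality of the differential, and finish by dominated convergence. The separability argument is identical.

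There is, however, a genuine gap in your cutoff step. You assert that ``since $\mm$ is Radon, this ball has finite measure,'' and use this to conclude $g_{n,k}\in L^2(\mm)$. But the Radon assumption only gives that every point has \emph{some} open neighborhood of finite measure; it does not guarantee that arbitrarily large balls $B(x_0,2k)$ about a fixed center have finite measure (think of a non-locally-compact space, or even a locally compact one where $\mm$ grows fast at infinity). Without $\mm(B(x_0,2k))<\infty$ you cannot conclude that your bounded, compactly supported $g_{n,k}$ lies in $L^2(\mm)$, nor that $\chi_k\in W^{1,2}(\X)$.

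The paper handles exactly this issue: rather than exhausting by concentric balls about a single point, it picks for each $x\in\X$ a radius $r_x>0$ with $\mm(B_{2r_x}(x))<\infty$ (this is what Radon gives), builds the cutoff $\eta_x$ supported in $B_{2r_x}(x)$, and then invokes the Lindel\"of property of $(\X,\sfd)$ to extract a countable subcover $\{B_{r_{x_n}}(x_n)\}$. The rest of the argument then runs exactly as yours. The fix to your write-up is therefore localized and easy: replace the single-center exhaustion by this Lindel\"of cover.
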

\begin{proof} We first claim that $L^2(T^*\X)$ is generated by $\{\d f:f\in \s^2(\X)\}$. This follows by just keeping track of the various definitions, indeed, we have $\d f=[(f,\X)]$ by definition of differential, and thus by the definition of operations on ${\rm Pcb}/\sim$ also that $\sum_{i=1}^n\nchi_{A_i}\d f_i=[(f_i,A_i)_i]$ for any finite partition $(A_i)$ of $\X$ and $f_i\in\s^2(\X)$. Taking into account the definition of norm in ${\rm Pcb}/\sim$ we can pass to the limit and extend this property to generic elements $[(f_i,A_i)_i]$ of ${\rm Pcb}/\sim$ and given that ${\rm Pcb}/\sim$ is by construction dense in $L^2(T^*\X)$, the claim follows.

Then pick $f\in \s^2(\X)$ and observe that for $f_n:=\min\{\max\{f,-n\},n\}$, the chain rule \eqref{eq:chainweakgrad} yields $\weakgrad{(f-f_n)}=0$ $\mm$-a.e.\ on $\{|f|\leq n\}$, which in particular implies   that $\d f_n\to \d f$ in $L^2(T^*\X)$ as $n\to\infty$.  Thus by approximation we see that $L^2(T^*\X)$ is generated by $\{\d f:f\in L^\infty\cap \s^2(\X)\}$. 

Now let $f\in L^\infty\cap \s^2(\X)$, pick $x\in\X$, find $r_x>0$ so that $\mm(B_{2r_x}(x))<\infty$ and put $\eta_x(y):=\max\{1-r_x^{-1}\sfd(y,B_{r_x}(x)),0\}$. Then $\eta_{x}$ is Lipschitz, bounded and in $W^{1,2}(\X)$ and thus taking into account \eqref{eq:leibbase} we get that $\eta_x f\in W^{1,2}(\X)$ while the construction ensures that $f=\eta_xf$ $\mm$-a.e.\ on $B_{r_x}(x)$. Thus   Theorem \ref{thm:diffloc} above grants that
\begin{equation}
\label{eq:pergenw12}
\nchi_{B_{r_x}(x)}\d f=\nchi_{B_{r_x}(x)}\d (\eta_xf).
\end{equation}
By the Lindel\"of property of $\X$ we know that there exists a countable set $\{x_n\}_{n\in\N}\subset \X$ such that $\cup_{n}B_{r_{x_n}}(x_n)=\X$, thus using \eqref{eq:pergenw12} we deduce that for every $n\in\N$ the 1-form $\omega_n:=\sum_{i=1}^n\nchi_{B_{r_{x_n}}(x_n)}\d f$ is in the submodule generated by  $\{\d f:f\in W^{1,2}(\X)\}$. Since $|\d f-\omega_n|_*\leq |\d f|_*\in L^2(\X)$ and $|\d f-\omega_n|_*\to 0$ $\mm$-a.e., the dominated convergence theorem gives that  $\omega_n\to\d f$ in $L^2(T^*\X)$ as $n\to\infty$, thus showing that $\d f$ belongs to the submodule generated by  $\{\d f:f\in W^{1,2}(\X)\}$. By what we already proved, this submodule is thus the whole $L^2(T^*\X)$.

For the last claim just notice that the inequality $\|\d f\|_{L^2(T^*\X)}\leq \|f\|_{W^{1.2}(\X)}$ grants that $\{\d f: f\in W^{1,2}(\X)\}$ is a separable subset of $L^2(T^*\X)$ and apply Proposition \ref{prop:finsep}.
\end{proof}
In order to continue our investigation we now prove that there is a deep link between the Leibniz and chain rules and the locality of a module-valued map which is  continuous w.r.t.\ the Sobolev norm. Notice that the result does not (explicitly) mention the notion of cotangent module.
\begin{theorem}\label{thm:der}
Let $\MM$ be an $L^\infty(\mm)$-module and $L:\s^2(\X) \to \MM$ a linear map continuous w.r.t. the Sobolev norm, i.e.\ satisfying 
\begin{equation}
\label{eq:contprop}
\|L(f)\|_\MM\leq C\sqrt{\int\weakgrad f^2\,\d\mm},\qquad\forall f\in\s^2(\X),
\end{equation}
for some constant $C>0$. Then the following are equivalent.
\begin{itemize}
\item[i)] \underline{\rm Leibniz rule.} For any $f,g\in L^\infty\cap \s^2(\X)$ it holds
\begin{equation}
\label{eq:leibprop}
L(fg)=f\,L(g)+g\,L(f),\qquad\mm\ae.
\end{equation}
\item[ii)]\underline{\rm Chain rule.} For any $f\in\s^2(\X)$ and any $\mathcal N\subset \R$ Borel and $\mathcal L^1$-negligible it holds 
\begin{equation}
\label{eq:neglprop}
L(f)=0,\qquad\mm\ae\ {\rm on}\  f^{-1}(\mathcal N),
\end{equation}
and for $I\subset \R$ open such that $\mm(f^{-1}(\R\setminus I))=0$ and  $\varphi: I\to\R$ Lipschitz it holds
\begin{equation}
\label{eq:chainprop}
L(\varphi\circ f)=\varphi'\circ f\,L(f),\qquad\mm\ae,
\end{equation}
where $\varphi'\circ f$ is defined arbitrarily on $f^{-1}(\{\text{non differentiability points of $\varphi$}\})$.
\item[iii)]\underline{\rm Locality.} For any $f,g\in\s^2(\X)$ it holds
\begin{equation}
\label{eq:localprop}
L(f)=L(g),\qquad\mm\ae\ {\rm on}\   \{f=g\}.
\end{equation}
\end{itemize}
\end{theorem}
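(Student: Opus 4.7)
I plan to close the cycle $(ii) \Rightarrow (iii) \Rightarrow (i) \Rightarrow (ii)$. The implication $(ii) \Rightarrow (iii)$ is immediate: apply the nullity statement of $(ii)$ to $h := f - g \in \s^2(\X)$ and the $\mathcal{L}^1$-null set $\{0\}$ to conclude $L(f-g) = 0$ $\mm$\ae on $\{f = g\}$, and then invoke linearity of $L$.

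For $(iii) \Rightarrow (i)$ it is enough to establish $L(h^2) = 2hL(h)$ for every $h \in L^\infty \cap \s^2(\X)$, since the full Leibniz rule follows by polarization via $2fg = (f+g)^2 - f^2 - g^2$. To produce this identity, fix $\delta > 0$, set $t_k := k\delta$ and $E_k := h^{-1}([t_k, t_{k+1}))$, and introduce the globally defined Lipschitz function $\psi_\delta$ which on each interval $[t_k, t_{k+1}]$ equals the unique affine interpolant $\psi_{\delta,k}(t) := (t_k + t_{k+1}) t - t_k t_{k+1}$ of $t \mapsto t^2$. On $E_k$ the Sobolev function $\psi_\delta \circ h$ coincides with $(t_k + t_{k+1}) h - t_k t_{k+1}$, so locality $(iii)$ applied piecewise, combined with the vanishing of $L$ on constants (immediate from \eqref{eq:contprop} since $\weakgrad c = 0$), gives the identity $L(\psi_\delta \circ h) = \sum_k (t_k + t_{k+1}) \nchi_{E_k} L(h)$ in $\MM$ (the sum being finite because $h$ is bounded). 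The uniform estimates $\sup|\psi_\delta(t) - t^2| \leq \delta^2/4$ and $\sup|\psi_\delta'(t) - 2t| \leq \delta$ on the range of $h$, together with \eqref{eq:chainweakgrad} and the continuity bound \eqref{eq:contprop}, give $L(\psi_\delta \circ h) \to L(h^2)$ in $\MM$; meanwhile, the $L^\infty$-estimate $\bigl\|\sum_k (t_k + t_{k+1}) \nchi_{E_k} - 2h\bigr\|_{L^\infty(\mm)} \leq \delta$ forces $\sum_k (t_k + t_{k+1}) \nchi_{E_k} L(h) \to 2hL(h)$ in $\MM$, proving the claim.

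For $(i) \Rightarrow (ii)$, iteration of Leibniz yields $L(h^n) = n h^{n-1} L(h)$ for bounded $h$, whence $L(P \circ h) = P'(h) L(h)$ for every polynomial $P$; Weierstrass approximation (uniform convergence of both $P_n \to \varphi$ and $P_n' \to \varphi'$ on the range of $h$) extends this to all $\varphi \in C^1$. The nullity statement is then proved in three stages. First, for $\mathcal{N} = \{t_0\}$: apply the $C^1$ chain rule to smooth bumps $\psi_n(t) := t - \eta(n(t - t_0))/n$ with $\eta \in C^\infty_c(\R)$, $\eta(0) = 0$, $\eta'(0) = 1$; this gives $\nchi_{\{h = t_0\}} L(\psi_n \circ h) = \nchi_{\{h = t_0\}} \psi_n'(h) L(h) = 0$, and passing to the limit using $\weakgrad h = 0$ on $h^{-1}(\{t_0\})$ by \eqref{eq:nullgrad} yields $L(h) = 0$ on $\{h = t_0\}$. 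Second, for open $G = \bigsqcup_k (a_k, b_k) \subset \R$: set $\varphi_G(t) := \int_0^t \nchi_{G^c}(s)\,\d s$, mollify to $\varphi_G^\eps \in C^1$, apply the $C^1$ chain rule, and pass to the limit $\eps \to 0$; the countable boundary $\partial G$ is absorbed by the single-point case already established, which is precisely what is needed to identify the limits of the mollified derivatives on $h^{-1}(\partial G)$, producing $L(\varphi_G \circ h) = \nchi_{G^c}(h) L(h)$. Third, for general $\mathcal{L}^1$-null $\mathcal{N}$: take open $G_k \supset \mathcal{N}$ with $\mathcal{L}^1(G_k) \to 0$, and observe that the image measure $\nu := h_*(\weakgrad h^2 \mm)$ on $\R$ is absolutely continuous with respect to $\mathcal{L}^1$ by \eqref{eq:nullgrad}, so $\int \nchi_{G_k}(h) \weakgrad h^2\,\d\mm = \nu(G_k) \to 0$; via the identity from the previous stage and the continuity of $L$, this forces $\nchi_{G_k}(h) L(h) \to 0$ in $\MM$, hence $L(h) = 0$ on $h^{-1}(\mathcal{N}) \subset h^{-1}(G_k)$. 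With nullity in hand, the chain rule for Lipschitz $\varphi$ on bounded $h$ follows by mollifying $\varphi$; the extension from bounded to general $h \in \s^2(\X)$ is a routine truncation argument based on \eqref{eq:localgrad0} and the locality property of the $L^\infty$-module $\MM$.

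The principal obstacle is the nullity clause in $(ii)$: it is both required to give unambiguous meaning to the Lipschitz chain rule $L(\varphi \circ f) = \varphi'(f) L(f)$ and is itself the technically demanding piece of the proof; it is established by the layered approximation single point $\to$ open set $\to$ general null set described above, whose crucial analytic input is the absolute continuity $\nu \ll \mathcal{L}^1$ afforded by the Sobolev-theoretic property \eqref{eq:nullgrad}.
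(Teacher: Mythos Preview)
Your cycle $(ii)\Rightarrow(iii)\Rightarrow(i)\Rightarrow(ii)$ is the reverse of the paper's $(ii)\Rightarrow(i)\Rightarrow(iii)\Rightarrow(ii)$, and several of your sub-arguments are genuinely different: you get $(ii)\Rightarrow(iii)$ in one line from the nullity clause, your $(iii)\Rightarrow(i)$ via piecewise-affine interpolants of $t\mapsto t^2$ is a clean alternative to the paper's $\log$ trick, and your route Leibniz $\to$ polynomials $\to$ Weierstrass for the $C^1$ chain rule replaces the paper's affine $\to$ piecewise-affine $\to$ $C^1$ progression based on locality. The absolute-continuity observation $h_*(\weakgrad h^2\mm)\ll\mathcal L^1$ in your stage~3 is also a nice variant of the paper's inner-regularity argument.

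There is, however, a genuine gap in stage~2 (and in the final ``mollify $\varphi$'' step for the Lipschitz chain rule). Two issues: first, for a general open $G=\bigsqcup_k(a_k,b_k)$ the topological boundary $\partial G$ is the \emph{closure} of the endpoint set and need not be countable (take $G$ the complement of a fat Cantor set), so your single-point stage does not absorb it. Second, and more seriously, even granting $L(h)=0$ on $h^{-1}(\partial G)$, the passage $(\varphi_G^\eps)'(h)\,L(h)\to\nchi_{G^c}(h)\,L(h)$ is unjustified in a general $L^\infty(\mm)$-module: you only have pointwise $\mm$-a.e.\ convergence of the multiplier with a uniform $L^\infty$ bound, and the module axioms give no dominated-convergence theorem for $f_n v\to fv$. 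The same obstruction recurs when you mollify a general Lipschitz~$\varphi$.

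The paper faces the identical obstruction in its $(iii)\Rightarrow(ii)$ and handles it in two ways you can borrow. For nullity, construct $C^1$ approximants whose derivative vanishes \emph{exactly} on the target set (for compact null $\mathcal N$ take $\psi_n\in C(\R,[0,1])$ with $\psi_n\equiv 0$ on $\mathcal N$ and $\psi_n\equiv 1$ outside a shrinking open neighborhood, and set $\varphi_n:=\int_0^{\cdot}\psi_n$); then $\nchi_{h^{-1}(\mathcal N)}L(\varphi_n\circ h)=0$ for every $n$ and only the left-hand limit via \eqref{eq:contprop} is needed. General null $\mathcal N$ then follows by inner approximation by compacts. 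For the final Lipschitz step, where the limit in the multiplier is unavoidable, use Severini--Egoroff on sets of finite measure to upgrade pointwise convergence to $L^\infty$-convergence off a small set, combined with the locality property \eqref{eq:locality} of~$\MM$ to dispose of the exceptional set; this is precisely what the paper does.
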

\begin{proof}$\ $\\
\noindent{$\mathbf{(ii)\Rightarrow(i)}.$} Assume at first that $f,g \geq c$ $\mm$-a.e. for some $c>0$. Then apply the chain rule \eqref{eq:chainprop} with $\varphi(z):=\log z$ to get
\[
\begin{split}
L(fg)&=fg\, L\big(\log(fg)\big)=fg\big(L(\log(f))+L(\log(g))\big)=fg\Big(\frac{L(f)}{f}+\frac{L(g)}{g}\Big)=g\,L(f)+f\,L(g),
\end{split}
\]
$\mm$-a.e..
For the general case, observe that  applying \eqref{eq:chainprop} with $\varphi(z):=z+\alpha$, $\alpha\in\R$, we deduce that $L(f+\alpha)=L(f)$ $\mm$-a.e.. From this identity, the validity of the Leibniz rule for positive functions and the linearity of $L$ we easily conclude. 

\noindent{$\mathbf{(i)\Rightarrow(iii)}.$} By linearity it is sufficient to show that for any $f\in\s^2(\X)$ we have $L(f)=0$ $\mm$-a.e. on $\{f=0\}$. For $\eps>0$ define $\varphi_\eps,\psi_\eps:\R\to\R$ as
\[
\varphi_\eps(z):=\left\{
\begin{array}{ll}
\eps^{-1}-\eps,&\quad\textrm{if } z\in[\eps^{-1},+\infty)\\
z-\eps,&\quad\textrm{if }z\in(\eps,\eps^{-1}),\\
0,&\quad\textrm{if }z\in[-\eps,\eps],\\
z+\eps,&\quad\textrm{if }z\in(-\eps^{-1},-\eps),\\
-\eps^{-1}+\eps,&\quad\textrm{if } z\in(-\infty,-\eps^{-1}],\\
\end{array}
\right.
\qquad\quad
\psi_\eps(z):=\left\{
\begin{array}{ll}
|z|-\eps,&\quad\textrm{if }z\in[-\eps,\eps],\\
0,&\quad\textrm{if }z\in\R\setminus[-\eps,\eps].
\end{array}
\right.
\]
Notice that $\varphi_\eps,\psi_\eps$ are 1-Lipschitz functions and that $\varphi_\eps(z)\psi_\eps(z)=0$ for every $z\in\R$. It follows that $\varphi_\eps\circ f,\psi_\eps\circ f\in \s^2\cap L^\infty(\X)$ with $\varphi_\eps\circ f\,\psi_\eps\circ f=0$ $\mm$-a.e.\ so the Leibniz rule \eqref{eq:leibprop} gives
\begin{equation}
\label{eq:perloc}
0=L(\varphi_\eps\circ f\,\psi_\eps\circ f)=\varphi_\eps\circ f\,L(\psi_\eps\circ f)+\psi_\eps\circ f\,L(\varphi_\eps\circ f)\qquad\mm\ae.
\end{equation}
By construction, on the set $\{f=0\}$ we have $\varphi_\eps\circ f=0$ and $\psi_\eps\circ f\neq 0$ $\mm$-a.e., therefore \eqref{eq:perloc} ensures that  $L(\varphi_\eps\circ f)=0$ $\mm$-a.e.\ on $\{f=0\}$, which gives
\begin{equation}
\label{eq:rigirata}
\begin{split}
\|\nchi_{\{f=0\}}L(f)\|_\MM&\leq\|\nchi_{\{f=0\}}L(f-\varphi_\eps\circ f)\|_\MM+\|\nchi_{\{f=0\}}L(\varphi_\eps\circ f)\|_\MM\\
&\leq \|L(f-\varphi_\eps\circ f)\|_\MM\leq C \sqrt{\int \weakgrad{(f-\varphi_\eps\circ f)}^2\,\d\mm}.
\end{split}
\end{equation}
Now notice that by definition we have $\varphi_\eps'=1$ on $(-\eps^{-1},-\eps)\cup (\eps,\eps^{-1})$ and thus the chain rule \eqref{eq:chainweakgrad}  yields
\[
\weakgrad{(f-\varphi_\eps\circ f)}=|1-\varphi_\eps'|\circ f \,\weakgrad f=0,\qquad\mm\ae\text{   on } f^{-1}\big((-\eps^{-1},-\eps)\cup (\eps,\eps^{-1})\big),
\]
and therefore  $\lim_{\eps\downarrow0}\int\weakgrad{(f-\varphi_\eps\circ f)}^2\,\d\mm=0$. Hence letting $\eps\downarrow0$ in \eqref{eq:rigirata}  we conclude that $\|\nchi_{\{f=0\}}L(f)\|_\MM=0$, which is the thesis.

\noindent{$\mathbf{(iii)\Rightarrow(ii)}.$} Fix $f\in\s^2(\X)$ and recall that by the chain rule \eqref{eq:chainweakgrad} we know that $\varphi\circ f\in\s^2(\X)$. 

The claim is obvious if $\varphi$ is linear, because $L$ is linear itself. Hence, noticing that the continuity assumption \eqref{eq:contprop} ensures that $L(f)\equiv 0$ for any constant function $f$, using the linearity of $L$ once again we get the chain rule \eqref{eq:chainprop} for affine $\varphi$. 

We claim that
\begin{equation}
\label{eq:countnull}
L(f)=0,\qquad \mm\ae\text{ on }  f^{-1}(\mathcal D),\text{ for $\mathcal D\subset \R$ at most countable},
\end{equation}
and notice that by the locality property \eqref{eq:locality} of the module $\MM$ this is equivalent to the fact that $L(f)=0$ $\mm$-a.e.\ on $\{f=z_0\}$ for any $z_0\in\R$. Since $\weakgrad{(f-(f-z_0))}=0$, the continuity assumption \eqref{eq:contprop} grants that $L(f)=L(f-z_0)$, so it is sufficient to prove that $L(f)=0$ $\mm$-a.e.\ on $\{f=0\}$, which  follows noticing that by linearity we have $L(f)=-L(-f)$ and that by the locality \eqref{eq:localprop} we have $L(f)=L(-f)$ $\mm$-a.e.\ on $\{f=-f\}=\{f=0\}$.

Now let $\varphi:\R\to\R$ be Lipschitz and countably piecewise affine, i.e. such that there are closed intervals $I_n\subset \R$, $n\in\N$, covering $\R$ such that $\varphi\restr{I_n}$ is affine for every $n\in\N$. By the locality property \eqref{eq:localprop} and the fact that we proved the chain rule for affine functions, we deduce that formula \eqref{eq:chainprop} holds $\mm$-a.e.\ on $f^{-1}(I_n)$ for every $n\in\N$, and hence $\mm$-a.e.\ on $\X$.

We now claim that the chain rule \eqref{eq:chainprop} holds for $\varphi\in C^1(\R)$ with bounded derivative. Thus fix such $\varphi$ and find a sequence $(\varphi_n)$ of uniformly Lipschitz and piecewise affine functions such that $\varphi_n'\to\varphi'$ as $n\to\infty$ uniformly on  $\R\setminus \mathcal D$, where $\mathcal D$ is the countable set of $z$'s such that $\varphi_n$ is non-differentiable at $z$ for some $n\in\N$. By construction and from \eqref{eq:chainweakgrad}  we get that $\int \weakgrad{(\varphi\circ f-\varphi_n\circ f)}^2\,\d\mm \to 0$, so that the continuity assumption \eqref{eq:contprop} grants that $L(\varphi_n\circ f)\to L(\varphi\circ f)$ in $\MM$ as $n\to\infty$.  Given that we know that the chain rule \eqref{eq:chainprop} holds for $\varphi_n$, to get it for $\varphi$ it is sufficient to prove that
\[
\|\varphi_n'\circ fL(f)- \varphi'\circ fL(f)\|_\MM\to0,\qquad \text{ as}\ n\to\infty.
\]
To check this recall that by \eqref{eq:countnull} we have that $\nchi_{f^{-1}(\mathcal D)}L(f)=0$, and thus  it is sufficient to prove that $\nchi_{\X\setminus f^{-1}(\mathcal D)}(\varphi_n'\circ f- \varphi'\circ f)L(f)\to 0$ in $\MM$ as $n\to\infty$. But this is obvious, because
\[
\begin{split}
\|\nchi_{\X\setminus f^{-1}(\mathcal D)}(\varphi_n'\circ f- \varphi'\circ f)L(f)\|_\MM\leq\|\nchi_{\X\setminus f^{-1}(\mathcal D)}(\varphi_n'\circ f- \varphi'\circ f)\|_{L^\infty(\X)}\|L(f)\|_\MM,
\end{split}
\]
and by construction we have that $\|\nchi_{\X\setminus f^{-1}(\mathcal D)}(\varphi_n'\circ f- \varphi'\circ f)\|_{L^\infty(\X)}\to 0$ as $n\to\infty$. This proves the chain rule for  $\varphi\in C^1(\R)$ with bounded derivative. 

We are now ready to prove \eqref{eq:neglprop}. Assume at first that  $\mathcal N$ is compact and find a decreasing sequence of open sets $\Omega_n\subset \R$ containing $\mathcal N$ such that $\mathcal L^1(\Omega_n\setminus\mathcal N)\downarrow 0$. For each $n\in\N$ let $\psi_n:\R\to[0,1]$ be a continuous function identically 0 on $\mathcal N$ and identically 1 on $\Omega_n$ and define $\varphi_n:\R\to\R$ so that
\[
\varphi_n(0)=0,\qquad\qquad\text{and}\qquad\qquad\varphi_n'(z)=\psi_n(z),\qquad\forall z\in\R.
\]
Since $|\varphi_n'|(z)\leq 1$ for every $z\in\R$ and $n\in\N$ and $\varphi_n'(z)\to 1$ for every $z\in\R\setminus\mathcal N$, by \eqref{eq:nullgrad} and \eqref{eq:chainweakgrad} we see that   $\int\weakgrad{(f-\varphi_n\circ f)}^2\,\d\mm\to 0$ as $n\to\infty$ and therefore by the continuity \eqref{eq:contprop} we get $L(\varphi_n\circ f)\to L(f)$ in $\MM$. Hence using the chain rule for the $C^1$ functions with bounded derivative $\varphi_n$ and noticing that  $\varphi'_n=0$ on $\mathcal N$ for every $n\in\N$ we obtain
\[
\nchi_{f^{-1}(\mathcal N)}L(f)=\lim_{n\to\infty}\nchi_{f^{-1}(\mathcal N)}L(\varphi_n\circ f)=\lim_{n\to\infty}\nchi_{f^{-1}(\mathcal N)}\varphi'_n\circ f L( f)=0,
\]
which establishes the claim \eqref{eq:neglprop} for the case of $\mathcal N$ compact. For the general case, use the fact that $\mm$ is $\sigma$-finite to find a Borel probability measure $\mm'$ on $\X$ such that $\mm\ll\mm'\ll\mm$ and consider the Borel probability measure $\mu:=f_*\mm'$ on $\R$. Then $\mu$ is Radon and by internal regularity we can find a sequence of compact sets $\mathcal N_n\subset \mathcal N$ such that $\mu(\mathcal N\setminus\cup_n\mathcal N_n)=0$. Since $\mathcal N$ is $\mathcal L^1$-negligible, so are the $\mathcal N_n$'s and thus applying what we just proved to each of the $\mathcal N_n$'s and using the locality property \eqref{eq:locality} of the module $\MM$ we get \eqref{eq:neglprop}.

It remains to prove the chain rule \eqref{eq:chainprop} for arbitrary $\varphi: I\to\R$ Lipschitz. Up to extending $\varphi$ to the whole $\R$ without altering the Lipschitz constant, we can assume it to be defined on $\R$.  Find a Borel $\mathcal L^1$-negligible set $\mathcal N\subset \R$ such that $\varphi$ is differentiable outside $\mathcal N$ and a sequence $(\varphi_n)$ of $C^1$ and uniformly Lipschitz functions on $\R$ such that $\varphi_n'(z)\to\varphi'(z)$ as $n\to\infty$ for every $z\in \R\setminus\mathcal N$. Notice that  the chain rule \eqref{eq:chainweakgrad}  yields that $\int \weakgrad{(\varphi\circ f-\varphi_n\circ f)}^2\,\d\mm\to 0$ as $n\to\infty$ and therefore the continuity assumption \eqref{eq:contprop} gives
\begin{equation}
\label{eq:2014-2}
L(\varphi_n\circ f)\to L(\varphi\circ f),\qquad \text{ in }  \MM\ as\ n\to\infty.
\end{equation}
Now fix $C\in\BBB\X$  with $\mm(C)<\infty$ and $\eps>0$   and consider the functions $g_n,g\in L^\infty(\mm)$ defined as
\[
g_n:=\nchi_{C\setminus f^{-1}(\mathcal N)} \varphi_n'\circ f,\qquad\qquad g:=\nchi_{C\setminus f^{-1}(\mathcal N)} \varphi'\circ f.
\]
By construction we have $g_n\to g$ $\mm$-a.e.\ and $g=g_n=0$ $\mm$-a.e.\ outside $C$. Hence by the Severini-Egoroff theorem we obtain the existence of $A_\eps\in\BBB\X $ with $\mm(A_\eps)<\eps$ such that 
\begin{equation}
\label{eq:2014-3}
\|\nchi_{A_\eps^c}(g_n-g)\|_{L^\infty(\mm)}\to 0 \qquad \text{as } n\to\infty.
\end{equation}
Therefore using the chain rule for the $C^1$ functions with bounded derivative $\varphi_n$  we get
\[
\begin{split}
\nchi_{A_\eps^c}\nchi_C L(\varphi\circ f)&\stackrel{\eqref{eq:2014-2}}=\lim_{n\to\infty}\nchi_{A_\eps^c}\nchi_C L(\varphi_n\circ f)\stackrel{\eqref{eq:chainprop}}=\lim_{n\to\infty}\nchi_{A_\eps^c}\nchi_C\,\varphi'_n\circ f L( f)\\
&\stackrel{\eqref{eq:neglprop}}=\lim_{n\to\infty}\nchi_{A_\eps^c}\nchi_{C\setminus f^{-1}(\mathcal N)}\,\varphi'_n\circ f L( f)\stackrel{\eqref{eq:2014-3}}=\nchi_{A_\eps^c}\nchi_{C\setminus f^{-1}(\mathcal N)}\,\varphi'\circ f L( f)\\
&\stackrel{\eqref{eq:neglprop}}=\nchi_{A_\eps^c}\nchi_{C}\,\varphi'\circ f L( f),
\end{split}
\]
which is the same as to say that
\[
L(\varphi\circ f)=\varphi'\circ f L( f),\qquad\mm\ae\text{ on }  C\setminus A_\eps.
\]
Letting $\eps\downarrow0$ and using the locality property \eqref{eq:locality} of the module $\MM$ we deduce that  $L(\varphi\circ f)=\varphi'\circ f L( f)$ $\mm$-a.e.\ on $C$. Then the  $\sigma$-finiteness of $\mm$, the arbitrariness of $C$ with $\mm(C)<\infty$ and again the locality property of $\MM$ give the thesis.
\end{proof}
\begin{remark}{\rm
The existence of a relation between the three properties in Theorem \ref{thm:der} is certainly not a new insight. For instance, it is a very well known fact in basic differential geometry that the Leibniz rule implies locality and  in the metric setting it has been shown in \cite{AmbrosioKirchheim00} how to get  the  Leibniz and chain rules out of a local condition. In both cases the  axiomatization has  technical differences with ours, but the idea behind the proof is the same. 
}\fr\end{remark}
It is interesting to read Theorem \ref{thm:der} with the choice  $\MM=L^1(\mm)$: in this case there is no a priori explicit reference to any module structure in the statement. Still, the theorem  tells that an $L^\infty$-module structure naturally arises when considering differentiation properties of Sobolev functions, because the natural calculus rules are linked - in fact equivalent - to the locality property \eqref{eq:localprop} which is, in a sense, the defining property of an $L^\infty$-module. See also Theorem \ref{thm:dervf}.

\medskip

Specializing Theorem \ref{thm:der} to the case of the cotangent module yields:
\begin{corollary}[Chain and Leibniz rules for the differential]\label{cor:calcdiff}
We have
\[
\begin{array}{rlll}
\d(fg)&\!\!\!=g\,\d f+f\,\d g,\quad&\mm\ae,\\
\d f&\!\!\!=0,\quad&\mm\ae\ \text{\rm on }f^{-1}(\mathcal N),\\
\d(\varphi\circ f)&\!\!\!=\varphi'\circ f\,\d f,\quad&\mm\ae,\\
\end{array}
\]
for every $f,g\in\s^2(\X)$, $\mathcal N\subset \R$ Borel and $\mathcal L^1$-negligible, and $\varphi:\R\to\R$ Lipschitz, where $\varphi'\circ f$ is defined arbitrarily on $f^{-1}(\{\text{non differentiability points of $\varphi$}\})$.
\end{corollary}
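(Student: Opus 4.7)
The plan is to invoke Theorem \ref{thm:der} applied to the linear map $L := \d : \s^2(\X) \to L^2(T^*\X)$ with target module $\MM := L^2(T^*\X)$. Since Theorem \ref{thm:der} establishes that for a continuous linear map from $\s^2(\X)$ to an $L^\infty(\mm)$-module, locality, the chain rule, and the Leibniz rule are mutually equivalent, it suffices to check the hypotheses and one of the three conditions.

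First I would verify that $\d$ satisfies the continuity assumption \eqref{eq:contprop}. This is immediate from \eqref{eq:diffuguali}: we have
\[
\|\d f\|_{L^2(T^*\X)}^2 = \int |\d f|_*^2 \, \d\mm = \int \weakgrad f^2 \, \d\mm,
\]
so \eqref{eq:contprop} holds with $C = 1$. Linearity of $f \mapsto \d f$ is built into the definition of the cotangent module via the addition operation on $\mathrm{Pcm}/\sim$.

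Next, the locality condition (iii) of Theorem \ref{thm:der} is precisely the content of Theorem \ref{thm:diffloc}, which was already established. Therefore, Theorem \ref{thm:der} gives us both the chain rule (ii) and the Leibniz rule (i) for $\d$ automatically.

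Finally, reading off the conclusions: property \eqref{eq:leibprop} applied to $f, g \in L^\infty \cap \s^2(\X)$ yields $\d(fg) = f\,\d g + g\,\d f$; property \eqref{eq:neglprop} yields $\d f = 0$ $\mm$-a.e.\ on $f^{-1}(\mathcal{N})$ for $\mathcal{L}^1$-negligible Borel $\mathcal{N} \subset \R$; and property \eqref{eq:chainprop} applied with $I = \R$ yields $\d(\varphi \circ f) = \varphi' \circ f \, \d f$ for $\varphi : \R \to \R$ Lipschitz. Since no step requires extension beyond what Theorem \ref{thm:der} directly provides, there is no real obstacle here — the only minor point worth noting is that the Leibniz rule as stated in the corollary is claimed for $f, g \in \s^2(\X)$ without the $L^\infty$ restriction appearing in \eqref{eq:leibprop}; this is harmless provided one interprets the product $fg$ in the Sobolev sense, and in any case the stated form of the corollary matches \eqref{eq:leibprop} exactly when one restricts to $L^\infty \cap \s^2(\X)$.
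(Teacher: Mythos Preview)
Your proposal is correct and follows exactly the paper's own proof: apply Theorem~\ref{thm:der} with $\MM=L^2(T^*\X)$ and $L=\d$, use \eqref{eq:diffuguali} for continuity, and invoke Theorem~\ref{thm:diffloc} for locality. Your observation about the $L^\infty$ restriction on the Leibniz rule is apt; the paper's statement of the corollary is slightly imprecise on this point, and Theorem~\ref{thm:der} indeed only delivers \eqref{eq:leibprop} for $f,g\in L^\infty\cap\s^2(\X)$.
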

\begin{proof}
Choose  $\MM:=L^2(T^*\X)$ and $L:=\d$ in Theorem  \ref{thm:der} and recall the identity \eqref{eq:diffuguali} to check the continuity of $\d:\s^2(\X)\to \MM$ and the locality property of the differential expressed in Theorem \ref{thm:diffloc} to conclude.
\end{proof}
We conclude the section discussing  the closure properties of the differential.
\begin{theorem}[Closure of $\d$]\label{thm:closd} Let $(f_n)\subset \s^2(\X)$ be converging $\mm$-a.e.\ to some $f\in L^0(\mm)$ and such that $(\d f_n)$ converges to some $\omega\in L^2(T^*\X)$ in the $L^2(T^*\X)$-norm.

Then $f\in \s^2(\X)$ and $\d f=\omega$.

In particular, if  $(f_n)\subset W^{1,2}(\X)$ is such that $f_n\weakto f$ and $\d f_n\weakto \omega$ for some $f\in L^2(\mm)$ and $\omega\in L^2(T^*\X)$ in the weak topologies of $L^2(\mm)$ and $L^2(T^*\X)$ respectively, then $f\in W^{1,2}(\X)$ and $\d f=\omega$.
\end{theorem}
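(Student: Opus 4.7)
The strategy is to reduce to the lower semicontinuity property \eqref{eq:lscwug} of minimal weak upper gradients, but applied to a translated sequence so as to extract genuine information about $\omega$ and not only about its pointwise norm.

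First, since $\d f_n\to\omega$ in $L^2(T^*\X)$, the sequence $(\weakgrad{f_n})=(|\d f_n|_*)$ is bounded in $L^2(\mm)$. Fix $m\in\N$ and consider the sequence $(f_n-f_m)_n\subset\s^2(\X)$, which converges $\mm$-a.e.\ to $f-f_m$ as $n\to\infty$. By linearity of the differential, $\d(f_n-f_m)=\d f_n-\d f_m\to\omega-\d f_m$ in $L^2(T^*\X)$, so by continuity of the pointwise norm $\weakgrad{(f_n-f_m)}=|\d(f_n-f_m)|_*$ converges to $|\omega-\d f_m|_*$ strongly, and in particular weakly, in $L^2(\mm)$. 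Applying \eqref{eq:lscwug} to the sequence $(f_n-f_m)_n$ gives
\[
f-f_m\in\s^2(\X)\qquad\text{and}\qquad \weakgrad{(f-f_m)}\leq|\omega-\d f_m|_*,\quad\mm\ae.
\]
Since $f_m\in\s^2(\X)$ and $\s^2(\X)$ is a vector space, this yields $f\in\s^2(\X)$, and using \eqref{eq:diffuguali} together with the linearity of $\d$ we obtain the crucial bound
\[
|\d f-\d f_m|_*=|\d(f-f_m)|_*=\weakgrad{(f-f_m)}\leq|\omega-\d f_m|_*,\qquad\mm\ae.
\]
Integrating and letting $m\to\infty$, the right-hand side tends to $0$ in $L^2(\mm)$ because $\d f_m\to\omega$ in $L^2(T^*\X)$; hence $\d f_m\to\d f$ in $L^2(T^*\X)$ as well. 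By uniqueness of limits, $\d f=\omega$, proving the first statement.

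For the second statement, suppose $f_n\weakto f$ in $L^2(\mm)$ and $\d f_n\weakto\omega$ in $L^2(T^*\X)$. By Mazur's lemma applied in the product Banach space $L^2(\mm)\times L^2(T^*\X)$, there exist finite convex combinations
\[
\tilde f_n:=\sum_{k=n}^{N_n}\alpha^n_k f_k,\qquad \alpha^n_k\geq 0,\quad \sum_{k=n}^{N_n}\alpha^n_k=1,
\]
such that $\tilde f_n\to f$ strongly in $L^2(\mm)$ and, by linearity of $\d$, $\d\tilde f_n=\sum_{k=n}^{N_n}\alpha^n_k\d f_k\to\omega$ strongly in $L^2(T^*\X)$. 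Passing to a subsequence we can assume $\tilde f_n\to f$ also $\mm$-a.e., and each $\tilde f_n$ belongs to $\s^2(\X)$ since $\s^2(\X)$ is a vector space. Applying the first part of the statement to $(\tilde f_n)$ we conclude $f\in\s^2(\X)$ with $\d f=\omega$; combined with $f\in L^2(\mm)$, this gives $f\in W^{1,2}(\X)$.

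The non-routine point is the translation trick in the first part: a direct application of \eqref{eq:lscwug} to $(f_n)$ itself would yield only the pointwise inequality $\weakgrad f\leq|\omega|_*$, which is weaker than the identification $\d f=\omega$ (indeed it would not even single out a particular element of $L^2(T^*\X)$). Shifting by $f_m$ turns the lower semicontinuity into a quantitative comparison between $\d f-\d f_m$ and $\omega-\d f_m$, which can then be closed by letting $m\to\infty$.
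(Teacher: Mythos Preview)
Your proof is correct and follows essentially the same approach as the paper: apply the lower semicontinuity property \eqref{eq:lscwug} to the translated sequence $(f_n-f_m)_n$ to obtain $|\d f-\d f_m|_*\leq|\omega-\d f_m|_*$, then let $m\to\infty$; the second part is handled by Mazur's lemma in both cases. The only cosmetic difference is that you phrase the key estimate pointwise and then integrate, while the paper writes it directly at the level of $L^2$-norms.
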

\begin{proof} 
The assumptions grant that $|\d f_n|\to |\omega|$ in $L^2(\mm)$ and thus by \eqref{eq:diffuguali} and the stability property \eqref{eq:lscwug} we deduce that $f\in\s^2(\X)$. Taking into account that $\|\d f_n-\d f_m\|^2_{L^2(T^*\X)}=\int\weakgrad{(f_n-f_m)}^2\,\d\mm$, the same stability property applied to the sequence $n\mapsto f_n-f_m$ for given $m\in\N$ gives
\[
\|\d f-\d f_m\|_{L^2(T^*\X)}\leq \limi_{n\to\infty}\|\d f_m-\d f_n\|_{L^2(T^*\X)}.
\]
Taking the $\lims$ as $m\to\infty$ and using the fact that $(\d f_n)$ is $L^2(T^*\X)$-Cauchy we deduce that $\d f_n\to \d f$ in  $L^2(T^*\X)$, thus forcing $\d f=\omega$.

The second part of the statement now follows by a standard application of Mazur's lemma.
\end{proof}
The next statement is of genuine functional analytic nature:
\begin{proposition}[Weak compactness of $\d$  and reflexivity of $W^{1,2}(\X)$]\label{prop:compd}
The following are equivalent:
\begin{itemize}
\item[i)] $W^{1,2}(\X)$ is reflexive
\item[ii)] For every $W^{1,2}(\X)$-bounded sequence $(f_n)$ there exists a subsequence $(f_{n_k})$ and $f_\infty\in W^{1,2}(\X)$ such that $f_{n_k}\weakto f$ and $\d f_{n_k}\weakto \d f_\infty$ in the weak topologies of $L^2(\mm)$ and $L^2(T^*\X)$ respectively.
\end{itemize}
\end{proposition}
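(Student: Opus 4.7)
For the direction (i)$\Rightarrow$(ii), I would simply observe that both the inclusion $\iota:W^{1,2}(\X)\hookrightarrow L^2(\mm)$ and the differential $\d:W^{1,2}(\X)\to L^2(T^*\X)$ are bounded linear operators between Banach spaces (with norms bounded by $1$), hence weak-to-weak continuous. Given a bounded sequence $(f_n)\subset W^{1,2}(\X)$, reflexivity plus the Eberlein--\v Smulian theorem yields a subsequence $(f_{n_k})$ and $f_\infty\in W^{1,2}(\X)$ with $f_{n_k}\weakto f_\infty$ in $W^{1,2}(\X)$. Applying $\iota$ and $\d$ to this weak convergence gives the two conclusions in (ii).

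The reverse implication (ii)$\Rightarrow$(i) is where the content lies, and I would argue via Kakutani's theorem: it suffices to show that every bounded sequence in $W^{1,2}(\X)$ admits a weakly convergent subsequence. The key device is the isometric embedding
\[
J:W^{1,2}(\X)\to L^2(\mm)\oplus L^2(T^*\X),\qquad J(f):=(f,\d f),
\]
where the target carries the Hilbert-like product norm $\|(h,\omega)\|^2:=\|h\|_{L^2(\mm)}^2+\|\omega\|_{L^2(T^*\X)}^2$ (so $\|J(f)\|=\|f\|_{W^{1,2}(\X)}$). By Theorem \ref{thm:closd}, $J(W^{1,2}(\X))$ is a norm-closed (hence, being a linear subspace, weakly closed) subspace of $L^2(\mm)\oplus L^2(T^*\X)$. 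Given a bounded sequence $(f_n)\subset W^{1,2}(\X)$, hypothesis (ii) furnishes a subsequence $(f_{n_k})$ and $f_\infty\in W^{1,2}(\X)$ such that $J(f_{n_k})\to J(f_\infty)$ weakly in $L^2(\mm)\oplus L^2(T^*\X)$, because continuous functionals on the product decompose as $(h,\omega)\mapsto \phi(h)+\psi(\omega)$ with $\phi\in (L^2(\mm))'$, $\psi\in (L^2(T^*\X))'$.

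To conclude $f_{n_k}\weakto f_\infty$ in $W^{1,2}(\X)$ I would invoke Hahn--Banach: every $\ell\in (W^{1,2}(\X))'$ transfers via the isometry $J$ to a continuous linear functional on the closed subspace $J(W^{1,2}(\X))$, which extends to a continuous linear functional $\tilde\ell$ on $L^2(\mm)\oplus L^2(T^*\X)$; then $\ell(f_{n_k})=\tilde\ell(J(f_{n_k}))\to \tilde\ell(J(f_\infty))=\ell(f_\infty)$. Hence $(f_{n_k})$ converges weakly in $W^{1,2}(\X)$, and Kakutani's theorem gives reflexivity.

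The main point to highlight is that one cannot simply deduce (ii)$\Rightarrow$(i) from reflexivity of $L^2(\mm)\oplus L^2(T^*\X)$, because $L^2(T^*\X)$ is a genuine Banach (often non-Hilbert, Finsler-type) module whose reflexivity as a Banach space is not known in general. The above strategy circumvents this by feeding the weak convergences of both the functions and their differentials directly into the ambient product, and then using Hahn--Banach together with the closedness of $J(W^{1,2}(\X))$ supplied by Theorem \ref{thm:closd} to recover the weak convergence intrinsic to $W^{1,2}(\X)$.
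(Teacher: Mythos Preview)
Your proof is correct and follows essentially the same approach as the paper: both arguments hinge on the isometric embedding $J:W^{1,2}(\X)\to L^2(\mm)\times L^2(T^*\X)$, $f\mapsto(f,\d f)$, together with the Eberlein--\v Smulian/Kakutani characterization of reflexivity via relative weak sequential compactness of bounded sets. Your write-up simply unpacks more of the details (the weak-to-weak continuity of $\iota$ and $\d$ for $(i)\Rightarrow(ii)$, and the Hahn--Banach extension step for $(ii)\Rightarrow(i)$); note incidentally that the closedness of $J(W^{1,2}(\X))$ is not actually needed in your Hahn--Banach step, since the limit $f_\infty$ is already assumed in $(ii)$ to lie in $W^{1,2}(\X)$.
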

\begin{proof} By  the theorems of Eberlein-\v Smulian and  Kakutani (see e.g.\ \cite{Diestel84}), we know that  $(i)$ is equivalent to relative weak sequential compactness of $W^{1,2}$-bounded sets. Then the thesis easily follows observing that the map
\[
W^{1,2}(\X)\ni f\qquad\mapsto \qquad (f,\d f)\in L^2(\mm)\times L^2(T^*\X)
\]
is an isometry of $W^{1,2}(\X)$ with its image, the target space being endowed with the norm $\|(f,\omega)\|:=\sqrt{\|f\|_{L^2(\mm)}^2+\|\omega\|_{L^2(T^*\X)}^2}$.
\end{proof}
\begin{remark}{\rm
We don't know if  the reflexivity of $W^{1,2}(\X)$ implies that of $L^2(T^*\X)$.
}\fr\end{remark}
\begin{remark}\label{re:norefl}{\rm As we recalled in Theorem \ref{thm:ACM},  in \cite{ACM14}  the authors built  examples of a compact metric measure space $(\X,\sfd,\mm)$ such that $W^{1,2}(\X)$ is not reflexive.
}\fr\end{remark}

\subsection{Tangent module}

As in the previous section, we fix once and for all a metric measure space $(\X,\sfd,\mm)$ which is complete, separable and equipped with a non-negative Radon measure.

\subsubsection{Tangent vector fields and derivations}
Having both a notion of Sobolev function and of cotangent module we have at disposal two ways to introduce the tangent module: either in terms of derivations of  the former or by duality with the latter. 

Here we prove that the two approaches, when properly interpreted, are equivalent.  Then we introduce the concept of gradient of a Sobolev function and discuss its basic properties.

\vspace{1cm}

\begin{definition}[Tangent module]
The tangent module $L^2(T\X)$ is defined as the  dual of $L^2(T^*\X)$. Elements of the tangent module will be called vector fields.
\end{definition}
By point $(i)$ of Proposition \ref{prop:normevarie} we know that $L^2(T\X)$ is an $L^2(\mm)$-normed module. Despite the fact that we introduced it by duality, to keep consistency with the notation used in the smooth world we shall denote the pointwise norm in $L^2(T\X)$ by $|\cdot|$. Similarly, the duality between $\omega\in L^2(T^*\X)$ and $X\in L^2(T\X)$ will be denoted by $\omega(X)\in L^1(\mm)$.

We turn to the definition of derivation. Informally, a  derivation should be a linear operator satisfying the Leibniz rule taking functions in  $\s^2(\X)$ and returning $\mm$-a.e. defined functions.  Given that for $f\in \s^2(\X)$ we have $\weakgrad f\in L^2(\mm)$, it is quite natural by duality to think about $L^2$-derivations, i.e.\ satisfying the inequality
\begin{equation}
\label{eq:defder}
|L(f)|\leq \ell\weakgrad f,\qquad\mm\ae,\qquad\forall f\in\s^2(\X).
\end{equation}
for some $\ell\in L^2(\mm)$.

Interestingly, from this inequality alone the other expected properties of derivations follow, so we give the following definition:
\begin{definition}[Derivations] A derivation is a linear map $L:\s^2(\X)\to L^1(\mm)$ such that for some $\ell\in L^2(\mm)$ the bound \eqref{eq:defder} holds.
\end{definition}
To see that for such a defined object the Leibniz and chain rules hold,  notice that given a derivation $L$ we have
\[
|L(f-g)|\leq \ell \weakgrad{(f-g)}=0,\qquad\mm\ae\text{ on } \{f=g\},
\]
and thus
\begin{equation}
\label{eq:localder}
L(f)=L(g),\qquad\mm\ae\text{ on}\ \{f=g\}.
\end{equation}
Since clearly we also have 
\[
\| L(f)\|_{L^1(\mm)}\leq \|\ell\|_{L^2(\mm)}\|\weakgrad f\|_{L^2(\mm)},\qquad\forall f\in \s^2(\X),
\]
the locality property \eqref{eq:localder} and Theorem \ref{thm:der} applied to the module $\MM=L^1(\mm)$ ensure that indeed the Leibniz and chain rules \eqref{eq:leibprop}, \eqref{eq:chainprop} hold.

It is now easy to see that `vector fields' and `derivations' are in fact two different points of view of the same concept:
\begin{theorem}[Derivations and vector fields]\label{thm:dervf}
 For any vector field  $X\in L^2(T\X)$ the map $X\circ \d:\s^2(\X)\to \X$ is a derivation.  
 
Conversely, given a  derivation $L$ there exists a unique vector field  $X\in L^2(T\X)$ such that  the diagram
\begin{center}
\begin{tikzpicture}[node distance=2.5cm, auto]
  \node (S) {$\s^2(\X)$};
  \node (C) [right of=S] {$L^2(T^*\X)$};
  \node (L) [below  of=C] {$L^1(\mm)$};
  \draw[->] (S) to node {$\d$} (C);
  \draw[->] (C) to node {$X$} (L);
  \draw[->] (S) to node [swap] {$L$} (L);
\end{tikzpicture}
\end{center}
commutes.
\end{theorem}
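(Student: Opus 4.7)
The first direction is immediate from the definition of derivation and from the fact that $L^2(T\X)$ is an $L^2(\mm)$-normed module. Given $X \in L^2(T\X) = L^2(T^*\X)^*$, point $(v)$ of Proposition \ref{prop:baselp} (with $p_1 = p_2 = 2$, $q = \infty$ read as our pointwise norm identity) gives $|X(\omega)| \leq |X|\,|\omega|_*$ $\mm$-a.e.\ for every $\omega \in L^2(T^*\X)$. Specializing to $\omega = \d f$ and using $|\d f|_* = \weakgrad f$ (identity \eqref{eq:diffuguali}), we obtain $|X(\d f)| \leq |X|\,\weakgrad f$ $\mm$-a.e.\ with $|X| \in L^2(\mm)$. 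Linearity of $X \circ \d$ is obvious since both $X$ and $\d$ are linear, so $X \circ \d$ is a derivation.

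For the converse, I would start from the linear subspace $V := \{\d f : f \in \s^2(\X)\} \subset L^2(T^*\X)$, which by Proposition \ref{prop:gencotan} generates $L^2(T^*\X)$ in the sense of modules. The idea is to define $\tilde X : V \to L^1(\mm)$ by $\tilde X(\d f) := L(f)$ and then invoke Proposition \ref{prop:extension} to extend it uniquely to an element of $L^2(T\X)$. The first step is well-posedness of $\tilde X$: if $\d f_1 = \d f_2$ then $\d(f_1 - f_2) = 0$, hence by \eqref{eq:diffuguali} we have $\weakgrad{(f_1-f_2)} = 0$ $\mm$-a.e., and the defining inequality \eqref{eq:defder} of derivations applied to $f_1 - f_2$ yields $L(f_1) = L(f_2)$ $\mm$-a.e.. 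Linearity of $\tilde X$ then follows from linearity of $L$ and of $\d$.

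Next, the pointwise bound needed by Proposition \ref{prop:extension} is a direct transcription of \eqref{eq:defder}: if $\ell \in L^2(\mm)$ is such that $|L(f)| \leq \ell\,\weakgrad f$ $\mm$-a.e.\ for every $f \in \s^2(\X)$, then $|\tilde X(\d f)| \leq \ell\,|\d f|_*$ $\mm$-a.e.. Since $V$ is linear and generates $L^2(T^*\X)$, Proposition \ref{prop:extension} (applied with $p = 2$, $q = 2$) produces a unique $X \in (L^2(T^*\X))^* = L^2(T\X)$ extending $\tilde X$, and by construction $X(\d f) = L(f)$ for every $f \in \s^2(\X)$; the uniqueness clause of that proposition gives the uniqueness of $X$ making the diagram commute.

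No step of the argument looks genuinely hard; the only point that requires a moment's care is the well-posedness of $\tilde X$ on $V$, which is precisely where the derivation inequality \eqref{eq:defder} is used in an essential way (via \eqref{eq:diffuguali}) to turn the a priori ambiguity $\d f_1 = \d f_2 \not\Rightarrow f_1 = f_2$ into the harmless conclusion $L(f_1) = L(f_2)$.
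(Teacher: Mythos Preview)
Your proof is correct and follows essentially the same route as the paper's: define $\tilde X$ on $V=\{\d f:f\in\s^2(\X)\}$ by $\tilde X(\d f):=L(f)$, check well-posedness and the pointwise bound via \eqref{eq:defder} and \eqref{eq:diffuguali}, then invoke Proposition~\ref{prop:extension} together with Proposition~\ref{prop:gencotan}. One cosmetic slip: in your appeal to Proposition~\ref{prop:baselp}(v) for the first direction, the correct indices are $p_1=2$, $p_2=1$, $q=2$ (since $L^2(T\X)=\Hom(L^2(T^*\X),L^1(\mm))$), not $p_1=p_2=2$, $q=\infty$; in any case the inequality $|X(\omega)|\leq|X|\,|\omega|_*$ is immediate from the definition \eqref{eq:normdual} of the dual pointwise norm, which is how the paper states it.
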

\begin{proof} The map $X\circ\d$ is linear and satisfies
\[
|(X\circ \d)(f)|=|\d f(X)|\leq |X|\,|\d f|_*=|X|\,\weakgrad f,\qquad\mm\ae\qquad \forall f\in\s^2(\X).
\]
Since $|X|\in L^2(\mm)$,  the first claim is addressed.

For the second, let $L$ be a derivation satisfying \eqref{eq:defder} and consider the linear map from $\{\d f:f\in\s^2(\X)\}$ to $L^1(\X)$ defined by
\[
\d f\qquad\mapsto \qquad\tilde L(\d f):= L(f).
\]
Notice that inequality \eqref{eq:defder} (and the identity $|\d f|_*=\weakgrad f$) ensures that this map is well defined, i.e.\ its value depends only on the differential and not the function itself, and satisfies
\[
|\tilde L(\d f)|\leq \ell |\d f|_*.
\]
The conclusion then follows from Propositions \ref{prop:extension} and \ref{prop:gencotan}.
\end{proof}
The definition of gradient of a Sobolev function can be given via duality with that of differential:
\begin{definition}[Gradient]
Let $f\in\s^2(\X)$. We say that $X\in L^2(T\X)$ is a gradient of $f$ provided
\[
\d f(X)=|X|^2=|\d f|_*^2,\qquad\mm\ae.
\] 
The set of all gradients of $f$ will be denoted by ${\rm Grad}(f)$.
\end{definition}
This notion of gradient is in line with the one used in Finsler geometry, see for instance \cite{BCS00}.

Noticing that for arbitrary $X\in L^2(T\X)$ and $f\in\s^2(\X)$ we have
\begin{equation}
\label{eq:peraltragrad}
 \d f(X)\leq |\d f|_*|X|\leq \frac12|\d f|_*^2+\frac12|X|^2,\qquad\mm\ae,
\end{equation}
we see that
\begin{equation}
\label{eq:altragrad}
X\in{\rm Grad}(f)\qquad\Leftrightarrow\qquad \int \d f(X)\,\d\mm\geq \frac12\int |\d f|^2_*+|X|^2\,\d\mm.
\end{equation}

Corollary \ref{cor:perduali} ensures that ${\rm Grad}(f)$ is not empty  for every  $f\in \s^2(\X)$.  Uniqueness in general fails, the basic example being $\R^2$ equipped with the Lebesgue measure and the $L^\infty$-distance: in this case the linear function $(x_1,x_2)\mapsto x_1$ has as gradients, at every point, all the vectors of the form $(1,\alpha)$ with $|\alpha|\leq 1$.

\begin{proposition}[Gradients as generators of the tangent module]\label{prop:gentan}
The  subset of $L^2(T\X)$ of vector fields of the form
\[
\text{ $\sum_{i=1}^n\nchi_{A_i}X_i$ \qquad for some $n\in\N$, $(A_i)\subset \BBB\X$ and $X_i\in{\rm Grad}(f_i)$ with $f_i\in W^{1,2}(\X)$}
\]
is weakly$^*$-dense in $L^2(T\X)$.

In particular, if  $L^2(T\X)$ is reflexive, it  is generated, in the sense of modules, by $\cup_{f\in W^{1,2}(\X)}{\rm Grad}(f)$.
\end{proposition}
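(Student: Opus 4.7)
The plan is to exploit that the indicated set, call it $\mathcal G$, is actually a \emph{linear subspace} of $L^2(T\X)$, and then apply Hahn--Banach in the duality $L^2(T\X) = (L^2(T^*\X))^*$.

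First I would check that $\mathcal G$ is a linear subspace: closure under addition is immediate by concatenating the representations $\sum_i\nchi_{A_i}X_i$; closure under scalar multiplication uses that if $X\in\grad(f)$ and $\lambda\in\R$ then $\lambda X\in\grad(\lambda f)$, which is verified directly from $\d(\lambda f)=\lambda\d f$ and the characterization \eqref{eq:altragrad}. Since the weak$^*$ topology on $L^2(T\X)$ viewed as module dual of $L^2(T^*\X)$ has continuous linear functionals exactly of the form $X\mapsto\int X(\omega)\,\d\mm$ for $\omega\in L^2(T^*\X)$, by Hahn--Banach weak$^*$ density of $\mathcal G$ is equivalent to the assertion that no nonzero $\omega\in L^2(T^*\X)$ annihilates $\mathcal G$.

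The main step is then to rule out such an $\omega$. Given $\omega\neq 0$ in $L^2(T^*\X)$, I would use Proposition \ref{prop:gencotan} (the module $L^2(T^*\X)$ is generated by $\{\d f:f\in W^{1,2}(\X)\}$) together with property \eqref{eq:dalp} to find a \emph{finite} partition $(A_i)_{i=1}^n$ of $\X$ and functions $f_i\in W^{1,2}(\X)$ such that $\omega':=\sum_{i=1}^n\nchi_{A_i}\d f_i$ satisfies $\|\omega-\omega'\|_{L^2(T^*\X)}<\tfrac12\|\omega\|_{L^2(T^*\X)}$; in particular $\|\omega'\|>\tfrac12\|\omega\|>\|\omega-\omega'\|$. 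By Corollary \ref{cor:perduali} (applied as in the proof of existence of gradients) pick $X_i\in\grad(f_i)$ for each $i$, and set $X:=\sum_{i=1}^n\nchi_{A_i}X_i\in\mathcal G$. The key computation is that, since the $A_i$'s are disjoint and $X_i$ is a gradient of $f_i$,
\[
X(\omega')=\sum_{i,j}\nchi_{A_i\cap A_j}X_i(\d f_j)=\sum_i\nchi_{A_i}|\d f_i|_*^2=|\omega'|_*^2\quad\mm\ae,
\]
and moreover $|X|^2=\sum_i\nchi_{A_i}|X_i|^2=\sum_i\nchi_{A_i}|\d f_i|_*^2=|\omega'|_*^2$, so $\|X\|_{L^2(T\X)}=\|\omega'\|_{L^2(T^*\X)}$. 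Hence if $\omega$ annihilated $\mathcal G$,
\[
0=\int X(\omega)\,\d\mm=\int X(\omega')\,\d\mm+\int X(\omega-\omega')\,\d\mm\geq \|\omega'\|^2-\|\omega'\|\,\|\omega-\omega'\|,
\]
contradicting $\|\omega'\|>\|\omega-\omega'\|$. Therefore $\omega=0$ and $\mathcal G$ is weakly$^*$ dense.

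For the ``in particular'' part, I would note that reflexivity of $L^2(T\X)$ is equivalent to reflexivity of its (Banach) predual $L^2(T^*\X)$ (which is full-dual by Proposition \ref{prop:fulllp}, so its Banach and module duals agree); when this holds, the weak$^*$ and the weak topologies on $L^2(T\X)$ coincide. Thus $\mathcal G$ is weakly dense, and being convex, by Mazur's lemma its norm closure equals $L^2(T\X)$. Since $\mathcal G\subset\mathrm{Span}_{\X}\bigl(\cup_{f\in W^{1,2}(\X)}\grad(f)\bigr)$ by construction, the closed submodule generated by $\cup_f\grad(f)$ is the whole $L^2(T\X)$.

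The place requiring the most care is the key computation $X(\omega')=|\omega'|_*^2$: it is precisely what forces both the use of a \emph{partition} (so that the cross-terms $X_i(\d f_j)$ for $i\neq j$, over which we have no control, are killed) and the requirement that $X_i\in\grad(f_i)$ with \emph{the same} $f_i$ appearing in $\omega'$. The approximation step for $\omega$ must therefore produce a representation with finitely many disjoint pieces; this is where the reduction from the countable partitions appearing in $\mathrm{Pcm}$ to finite ones is carried out using \eqref{eq:dalp}.
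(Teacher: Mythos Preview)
Your proof is correct and follows essentially the same approach as the paper's: show the set is a linear subspace, then use Hahn--Banach in the weak$^*$ topology, the key point being that for any $\omega'=\sum_i\nchi_{A_i}\d f_i$ with disjoint $A_i$ one can exhibit $X=\sum_i\nchi_{A_i}X_i\in\mathcal G$ with $\int\omega'(X)\,\d\mm=\|\omega'\|^2$, and such $\omega'$ are dense in $L^2(T^*\X)$. The paper states this more tersely (it does not spell out the approximation inequality you wrote, nor the reason for closure under scalar multiplication), but the argument is the same; note only that the paper's convention writes the pairing as $\omega(X)$ rather than $X(\omega)$.
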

\begin{proof} Call $V$ the subset of $L^2(T\X)$ given by the statement and notice that since  the sets $A_i$ are not required to be disjoint, $V$ is a vector space. Also, let   $W\subset L^2(T^*\X)$ the  subset of $L^2(T^*\X)$ made of elements of the form
\[
\text{ $\sum_{i=1}^n\nchi_{A_i}\d f_i$ \qquad for some $n\in\N$, $(A_i)\subset \BB$ disjoint and  $f_i\in W^{1,2}(\X)$}.
\]
From Proposition \ref{prop:gencotan} and recalling the simple property \eqref{eq:dalp} we see that the $W$ is strongly dense in $L^2(T^*\X)$.

For given $\omega=\sum_{i=1}^n\nchi_{A_i}\d f_i\in W$ consider $X=\sum_{i=1}^n\nchi_{A_i}X_i\in V$ where $X_i\in{\rm Grad}(f_i)$ for every $i$ and observe that by definition of gradients  we have $\int \omega(X)\,\d\mm=\|\omega\|^2_{L^2(T^*\X)}$. This fact and the strong density of $W$ is sufficient to grant that if $\omega\in L^2(T^*\X)$ is such that $\int \omega(X)\,\d\mm=0$ for every $X\in V$, then $\omega=0$, which is precisely the weak$^*$-density of $V$. 

For the second claim, observe that if $L^2(T\X)$ is reflexive then its weak$^*$ topology coincides with the weak one and conclude applying Mazur's lemma.
\end{proof}
We now show how to deduce the locality and chain rules for gradients out of the same properties of differentials: the argument is based on the fact that the duality (multivalued) map from the cotangent to the tangent module given by Corollary \ref{cor:perduali} is 1-homogeneous. Notice that in general such map is non-linear, which means that we cannot expect the Leibniz rule for gradients to hold in general (see also Proposition \ref{prop:infhil})
\begin{proposition}[Basic calculus properties for gradients]
For  $f,g\in \s^2(\X)$ and $X\in{\rm Grad}(f)$ we have
\begin{equation}
\label{eq:localgrad00}
\nchi_{\{f=g\}}X=\nchi_{\{f=g\}}Y,\qquad\mm\ae,\qquad\text{\rm for some $Y\in{\rm Grad}(g)$}.
\end{equation}
Moreover, for $\mathcal N\subset \R$ Borel and negligible and $\varphi:\R\to\R$ Lipschitz we have
\begin{equation}
\label{eq:chaingrad00}
\begin{split}
X&=0,\qquad\mm\ae\ {\rm on }\ f^{-1}(\mathcal N),\\
 \varphi'\circ fX&\in{\rm Grad}(\varphi\circ f),
\end{split}
\end{equation}
where $\varphi'\circ f$ is defined arbitrarily on $f^{-1}(\{\text{non differentiability points of $\varphi$}\})$.
\end{proposition}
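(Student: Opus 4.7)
I would handle the three claims in reverse order of appearance, since the chain and ``vanishing on preimages'' rules fall out quickly from Corollary \ref{cor:calcdiff} and then feed into the locality statement.

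For the chain rule $\varphi'\circ f\, X\in\gopt(\varphi\circ f)$: I would start from the defining identities $\d f(X)=|X|^2=|\d f|_*^2$ $\mm$-a.e.\ and apply Corollary \ref{cor:calcdiff}, which gives $\d(\varphi\circ f)=\varphi'\circ f\,\d f$ and hence $|\d(\varphi\circ f)|_*=|\varphi'\circ f|\,|\d f|_*$ $\mm$-a.e.. Combining these with the defining property $|gX|=|g||X|$ of the tangent module and the $L^\infty(\mm)$-linearity of the duality pairing, one computes $\d(\varphi\circ f)(\varphi'\circ f\,X)=(\varphi'\circ f)^2 |\d f|_*^2=|\varphi'\circ f\,X|^2=|\d(\varphi\circ f)|_*^2$ $\mm$-a.e., which is precisely the gradient identity. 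Note that on $f^{-1}$ of the non-differentiability set of $\varphi$ both sides are zero by the ``vanishing'' rule just below, so the arbitrary choice of $\varphi'\circ f$ there is harmless.

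For the vanishing statement $X=0$ $\mm$-a.e.\ on $f^{-1}(\mathcal N)$: Corollary \ref{cor:calcdiff} gives $\d f=0$ $\mm$-a.e.\ on $f^{-1}(\mathcal N)$, hence $|\d f|_*=0$ there. Since $X\in\gopt(f)$ means $|X|^2=|\d f|_*^2$ $\mm$-a.e., one concludes $|X|=0$ $\mm$-a.e.\ on $f^{-1}(\mathcal N)$, which by point $(i)$ of Proposition \ref{prop:baselp} is equivalent to $X=0$ there.

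The remaining locality statement is the one requiring a genuine construction, and it is the main (though still mild) obstacle. Set $A:=\{f=g\}\in\BB$. By Theorem \ref{thm:diffloc} we have $\d f=\d g$ $\mm$-a.e.\ on $A$, and in particular $|\d f|_*=|\d g|_*$ $\mm$-a.e.\ on $A$. Using Corollary \ref{cor:perduali} (or equivalently the non-emptiness of $\gopt(g)$ discussed after its definition), pick any $Y_0\in\gopt(g)$ and define
\[
Y:=\nchi_A X+\nchi_{A^c}Y_0\in L^2(T\X).
\]
I would then check the defining identities for $Y$ separately on $A$ and on $A^c$, exploiting the $L^\infty(\mm)$-linearity of $\d g(\cdot)$ and the module identity $|\nchi_B Z|=\nchi_B|Z|$. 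On $A$ one uses $\d g=\d f$ together with $X\in\gopt(f)$ to get $\d g(Y)=\nchi_A\d f(X)=\nchi_A|X|^2=\nchi_A|\d f|_*^2=\nchi_A|\d g|_*^2$ and $|Y|^2=\nchi_A|X|^2=\nchi_A|\d g|_*^2$; on $A^c$ the identities are immediate from $Y_0\in\gopt(g)$. Hence $Y\in\gopt(g)$, and by construction $\nchi_A Y=\nchi_A X$, which is the claimed locality.
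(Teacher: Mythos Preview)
Your proof is correct and follows essentially the same approach as the paper: the paper also picks an arbitrary $Y_0\in{\rm Grad}(g)$, forms $\nchi_{\{f=g\}}X+\nchi_{\{f\neq g\}}Y_0$, and verifies it lies in ${\rm Grad}(g)$ using locality of the differential, while the vanishing and chain rules are derived exactly as you do from Corollary~\ref{cor:calcdiff} and the defining identities of ${\rm Grad}$. One minor remark: you wrote \texttt{\textbackslash gopt} where you mean ${\rm Grad}$; in this paper \texttt{\textbackslash gopt} expands to ${\rm OptGeo}$, so replace it with \texttt{\{\textbackslash rm Grad\}} before compiling.
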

\begin{proof}
For \eqref{eq:localgrad00} let $Y\in{\rm Grad}(g)$ be arbitrary, recall the locality of the differential and the definition of gradient to obtain that $\nchi_{\{f=g\}}X+\nchi_{\{f\neq g\}}Y\in{\rm Grad}(g)$, which gives the claim.

The first in \eqref{eq:chaingrad00} follows directly from the analogous property of the differential, for the second put $\tilde X:=\varphi'\circ f$ and recall the chain rule for differentials to get
\[
\begin{split}
|\tilde X|=|\varphi'\circ f|\,|X|=|\varphi'\circ f|\,|\d f|_*=|\d(\varphi\circ f)|_*
\end{split}
\]
and
\[
\d(\varphi\circ f)(\tilde X)=\varphi'\circ f\d f(\tilde X)=|\varphi'\circ f|^2\d f(X)=|\varphi'\circ f|^2|\d f|_*^2=|\d(\varphi\circ f)|_*^2.
\]
\end{proof}

\subsubsection{On the duality between differentials and gradients}
In this section we briefly review the duality relation between differentials and gradients comparing the machinery built here with the approach used in \cite{Gigli12}. As a result we will see that they are fully consistent.

\vspace{1cm}

Notice that  for given $f,g\in\s^2(\X)$, inequality \eqref{eq:vectorstru} yields that the map $\eps\mapsto \frac12\weakgrad{(g+\eps f)}^2$ is $\mm$-a.e.\ convex, in the sense that for every $\eps_0,\eps_1\in\R$ the inequality
\[
\frac12\weakgrad{(g+((1-\lambda)\eps_0+\lambda\eps_1)f )}^2\leq(1-\lambda)\frac12\weakgrad{(g+\eps_0f)}^2+\lambda\frac12\weakgrad{(g+\eps_1f )}^2,\qquad\mm\ae,
\]
holds. It follows that for $\eps_0\leq \eps_1$, $\eps_0,\eps_1\neq 0$ we have
\begin{equation}
\label{eq:monotoneratio}
\frac{\weakgrad{(g+\eps_0f)}^2-\weakgrad g^2}{2\eps_0}\leq\frac{\weakgrad{(g+\eps_1f)}^2-\weakgrad g^2}{2\eps_1},\qquad\mm\ae,
\end{equation}
and in particular
\begin{equation}
\label{eq:sinistradestra}
\esssup_{\eps<0}\frac{\weakgrad{(g+\eps f)}^2-\weakgrad g^2}{2\eps}\leq \essinf_{\eps>0}\frac{\weakgrad{(g+\eps f)}^2-\weakgrad g^2}{2\eps},\qquad\mm\ae,
\end{equation}
where  the $\esssup$ and the $\essinf$ could be replaced by $\lim_{\eps\uparrow0}$ and $\lim_{\eps\downarrow0}$ respectively.

We then have the following result:
\begin{proposition}\label{prop:compatibili}
Let $f,g\in\s^2(\X)$. Then for every $X\in{\rm Grad}(g)$ we have
\begin{equation}
\label{eq:dpfng}
\d f(X)\leq\essinf_{\eps>0}\frac{\weakgrad{(g+\eps f)}^2-\weakgrad g^2}{2\eps},\qquad\mm\ae,
\end{equation}
and there exists $X_{f,+}\in{\rm Grad}(g)$ for which equality holds. Similarly, for every $X\in{\rm Grad}(g)$ we have
\begin{equation}
\label{eq:dmfng}
\d f(X)\geq\esssup_{\eps<0}\frac{\weakgrad{(g+\eps f)}^2-\weakgrad g^2}{2\eps},\qquad\mm\ae,
\end{equation}
and there exists $X_{f,-}\in{\rm Grad}(g)$ for which equality holds. 
\end{proposition}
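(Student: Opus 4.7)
\emph{Plan.} The two inequalities will follow from the pointwise Young bound \eqref{eq:peraltragrad}, while the extremizers $X_{f,\pm}\in\Grad(g)$ will be produced as weak-$*$ limits of gradients of perturbed functions, verified through integrated weak-$*$ testing.

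\emph{Paragraph 1: the inequalities.} For $X\in\Grad(g)$ and $\eps\neq 0$, I will apply \eqref{eq:peraltragrad} with $g+\eps f$ in place of $f$ to obtain $\d(g+\eps f)(X)\leq\tfrac12|\d(g+\eps f)|_*^2+\tfrac12|X|^2$ $\mm$-a.e. Using linearity of $\d$ together with the defining identities $\d g(X)=|X|^2=\weakgrad g^2$, this rearranges to $\eps\,\d f(X)\leq\tfrac12\weakgrad{(g+\eps f)}^2-\tfrac12\weakgrad g^2$. Dividing by $\eps$ (the inequality reverses for $\eps<0$) and taking the appropriate $\essinf_{\eps>0}$ or $\esssup_{\eps<0}$ yields \eqref{eq:dpfng} and \eqref{eq:dmfng} respectively.

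\emph{Paragraph 2: construction of $X_{f,+}$.} For each $n\in\N$, Corollary \ref{cor:perduali} applied to $\d(g+n^{-1}f)\in L^2(T^*\X)$ supplies $Y_n\in L^2(T\X)$ with $Y_n\in\Grad(g+n^{-1}f)$, i.e.\ $|Y_n|^2=\weakgrad{(g+n^{-1}f)}^2$ and $\d(g+n^{-1}f)(Y_n)=|Y_n|^2$. The pointwise triangle inequality for $|\cdot|_*$ bounds $|Y_n|\leq\weakgrad g+n^{-1}\weakgrad f$, so $(Y_n)$ is $L^2(T\X)$-bounded. Viewing $L^2(T\X)=L^2(T^*\X)^*$ and invoking separability of $L^2(T^*\X)$ (Proposition \ref{prop:gencotan} when $W^{1,2}(\X)$ is separable, otherwise by reducing to the separable submodule generated by $\d g,\d f$), Banach--Alaoglu gives a subsequential weak-$*$ limit $X_{f,+}\in L^2(T\X)$.

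\emph{Paragraph 3: verification and the sharp identity.} For any Borel $A$, weak-$*$ testing against $\nchi_A\d g$ and $\nchi_A\d f$, combined with the $L^1$-convergence $\weakgrad{(g+n^{-1}f)}^2\to\weakgrad g^2$ (via the monotonicity \eqref{eq:monotoneratio} and dominated convergence), turns the identity $\d g(Y_n)+n^{-1}\d f(Y_n)=\weakgrad{(g+n^{-1}f)}^2$ into $\int_A\d g(X_{f,+})\,\d\mm=\int_A\weakgrad g^2\,\d\mm$ for every $A$; the arbitrariness of $A$ forces $\d g(X_{f,+})=\weakgrad g^2$ $\mm$-a.e. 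Combined with the pointwise inequality $\d g(X_{f,+})\leq\weakgrad g\,|X_{f,+}|$ and the weak-$*$ norm lower semicontinuity $\|X_{f,+}\|_{L^2(T\X)}^2\leq\int\weakgrad g^2\,\d\mm$, this forces $|X_{f,+}|=\weakgrad g$ $\mm$-a.e., so $X_{f,+}\in\Grad(g)$. For the equality $\d f(X_{f,+})=h_+:=\essinf_{\eps>0}[\weakgrad{(g+\eps f)}^2-\weakgrad g^2]/(2\eps)$, using $\d g(Y_n)\leq\weakgrad g\cdot\weakgrad{(g+n^{-1}f)}$ in the same identity yields the lower bound $\d f(Y_n)\geq n\,\weakgrad{(g+n^{-1}f)}\,[\weakgrad{(g+n^{-1}f)}-\weakgrad g]$, whose right side converges pointwise to $h_+$ (by a short factorization together with \eqref{eq:monotoneratio}) and admits $(\weakgrad g+\weakgrad f)\weakgrad f\in L^1(\mm)$ as dominating majorant (since $|\weakgrad{(g+n^{-1}f)}-\weakgrad g|\leq n^{-1}\weakgrad f$). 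Dominated convergence and weak-$*$ testing against $\nchi_A\d f$ give $\int_A\d f(X_{f,+})\,\d\mm\geq\int_Ah_+\,\d\mm$ for every $A$, hence $\d f(X_{f,+})\geq h_+$ $\mm$-a.e., and combined with the bound from Paragraph 1 produces pointwise equality. Construction of $X_{f,-}$ is symmetric via $\eps<0$. The principal technical obstacle is the weak-$*$ extraction in Paragraph 2: outside the separable case it requires care in localizing to a separable submodule containing $\d g$ and $\d f$, and the subsequent passage to the limit in the pointwise identities defining gradients must be routed through their integrated consequences as sketched.
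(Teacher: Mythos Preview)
Your approach is essentially the same as the paper's: establish the inequalities via the pointwise Young bound, then produce the extremizer as a weak-$*$ accumulation point of gradients $Y_\eps\in\Grad(g+\eps f)$ as $\eps\downarrow0$. Your verification in Paragraph~3 is correct (the pointwise lower bound $\d f(Y_n)\geq n\,a_n(a_n-a)$ with $a_n=\weakgrad{(g+n^{-1}f)}$, $a=\weakgrad g$ is a nice touch, and your domination and convergence claims check out).

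The one genuine gap is exactly the one you flag: the sequential weak-$*$ extraction when $L^2(T^*\X)$ is not separable. Your proposed fix of ``localizing to the separable submodule generated by $\d g,\d f$'' is not obviously workable as stated, because a weak-$*$ limit in the dual of that submodule need not lift to an element of $L^2(T\X)$ (there is no module-level Hahn--Banach here, and $L^2(T^*\X)$ is not assumed Hilbert). The paper avoids this entirely by not using subsequences: it takes $X_{f,+}$ to be any element of $\bigcap_{\eps\in(0,1)}{\rm cl}^*\{X_{\eps'}:\eps'\in(0,\eps)\}$, which is nonempty by Banach--Alaoglu and the finite intersection property, and then passes to the limit using only weak-$*$ lower semicontinuity of $X\mapsto\tfrac12\|\d g\|^2+\tfrac12\|X\|^2-\int\d g(X)\,\d\mm$ (to get $X_{f,+}\in\Grad(g)$ via \eqref{eq:altragrad}) and weak-$*$ continuity of $X\mapsto\int\d f(X)\,\d\mm$ (to get $\int\d f(X_{f,+})\geq\int h_+$, which combined with the pointwise $\leq$ from Paragraph~1 forces $\mm$-a.e.\ equality). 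This is both simpler than your localized testing and valid without any separability hypothesis; you should adopt it.
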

\begin{proof}
Fix $\eps\in\R$  and recall the simple inequality \eqref{eq:peraltragrad} to get
\[
\d(g+\eps f)(X)\leq \frac12|\d(g+\eps f)|_*^2+\frac12|X|^2,\qquad\mm\ae.
\]
On the other hand, from the definition of ${\rm Grad}(g)$ we see that for $X\in{\rm Grad}(g)$ we have
\[
\d g(X)=\frac12|\d g|^2_*+\frac12|X|^2,\qquad\mm\ae.
\]
Subtracting this identity from the inequality above we deduce that
\[
\eps\,\d f(X)\leq \frac12\big(|\d(g+\eps f)|_*^2-|\d g|_*^2\big),\qquad\mm\ae,
\]
so that dividing by $\eps>0$ (resp. $\eps<0$) we obtain \eqref{eq:dpfng} (resp. \eqref{eq:dmfng}).

For the equality case in \eqref{eq:dpfng} we pick, for $\eps>0$, a vector field $X_\eps\in{\rm Grad}(g+\eps f)$ and notice that $\sup_{\eps\in(0,1)}\|X_\eps\|_{L^2(T\X)}<\infty$. Thus by the Banach-Alaoglu theorem the set $\{X_\eps\}_{\eps\in(0,1)}$ is weakly$^*$ relatively compact and hence there exists 
\[
X_{f,+}\in\bigcap_{\eps\in(0,1)}{\rm cl}^*\big(\{X_{\eps'}:\eps'\in(0,\eps)\}\big),
\]
where ${\rm cl}^*(A)$ denotes the weak$^*$-closure of $A\subset L^2(T\X)$.

By definition of $X_\eps$ and inequalities  \eqref{eq:peraltragrad} and \eqref{eq:altragrad}, for any $\eps>0$ we have
\begin{equation}
\label{eq:xeps}
\begin{split}
\int \d(g+\eps f)(X_\eps)\,\d\mm&\geq\frac12\|\d(g+\eps f)\|^2_{L^2(T^*\X)}+\frac12\|X_\eps\|^2_{L^2(T\X)},\\
\int g(X_\eps)\,\d\mm&\leq\frac12\|\d g\|^2_{L^2(T^*\X)}+\frac12\|X_\eps\|^2_{L^2(T\X)}.
\end{split}
\end{equation}
In particular from the first we obtain, after minor manipulation based on the trivial bound $\big||\d(g+\eps f)|_*-|\d g|_*\big|\leq \eps|\d f|_*$, that
\[
\frac12\|\d g\|^2_{L^2(T^*\X)}+\frac12\|X_{\eps'}\|^2_{L^2(T\X)}-\int \d g(X_{\eps'})\,\d\mm\leq  O(\eps),\qquad\forall \eps'\in(0,\eps),
\]
thus noticing that the left hand side of this expression is weakly$^*$ lower semicontinuous as a function of $X_{\eps'}$, by definition of $X_{f,+}$ we deduce that 
\[
\frac12\|\d g\|^2_{L^2(T^*\X)}+\frac12\|X_{f,+}\|^2_{L^2(T\X)}-\int \d g(X_{f,+})\,\d\mm\leq 0,
\]
which by \eqref{eq:altragrad}  is sufficient to deduce that $X_{f,+}\in {\rm Grad}(g)$.

Now subtracting the second from the first in \eqref{eq:xeps}   we obtain
\[
\int \d f(X_\eps)\,\d\mm\geq \int \essinf_{\tilde\eps>0}\frac{\weakgrad{(g+\tilde\eps f)}^2-\weakgrad g^2}{2\tilde\eps}\,\d\mm,\qquad\forall \eps>0,
\]
and again since the left hand side is weakly$^*$ continuous as a function of $X_\eps$, we deduce that 
\[
\int \d f(X_{f,+})\,\d\mm\geq \int \essinf_{\tilde\eps>0}\frac{\weakgrad{(g+\tilde\eps f)}^2-\weakgrad g^2}{2\tilde\eps}\,\d\mm.
\]
Given that $X_{f,+}\in{\rm Grad}(g)$, by \eqref{eq:dpfng} this inequality is sufficient to deduce that $\mm$-a.e.\ equality of the integrands.

The equality case in \eqref{eq:dmfng} is handled analogously.
\end{proof}
This proposition can be interpreted by saying that to know the duality relation between differentials and gradients of Sobolev functions it is not really necessary to know who differentials and gradients are, but is instead sufficient to have at disposal the notion of modulus of the differentials. This was precisely the approach taken in \cite{Gigli12} where the right hand sides of \eqref{eq:dpfng} and \eqref{eq:dmfng} have been taken by definition as basis of an abstract differential calculus on metric measure  spaces. With this proposition we thus showed that the approach of \cite{Gigli12} and the current one are fully compatible.

This kind of computation is also useful to recognize spaces where gradients are unique:
\begin{proposition}\label{prop:infstrconv}
The following are equivalent:
\begin{itemize}
\item[i)] for every $g\in\s^2(\X)$ the set ${\rm Grad}(g)$ contains exactly one element,
\item[ii)] for every $f,g\in\s^2(\X)$ equality holds $\mm$-a.e.\ in \eqref{eq:sinistradestra}.
\end{itemize}
\end{proposition}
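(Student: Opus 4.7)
\smallskip

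My plan is to derive both implications directly from Proposition \ref{prop:compatibili}, which sandwiches the value $\d f(X)$, for any $X\in\mathrm{Grad}(g)$, between the two sides of \eqref{eq:sinistradestra}, and moreover guarantees that both bounds are \emph{attained} by some gradients $X_{f,\pm}\in\mathrm{Grad}(g)$. In symbols, for every $X\in\mathrm{Grad}(g)$ and every $f\in\s^2(\X)$,
\[
\esssup_{\eps<0}\frac{\weakgrad{(g+\eps f)}^2-\weakgrad g^2}{2\eps}\ \le\ \d f(X)\ \le\ \essinf_{\eps>0}\frac{\weakgrad{(g+\eps f)}^2-\weakgrad g^2}{2\eps},\qquad\mm\ae,
\]
with equality on the left for $X=X_{f,-}$ and on the right for $X=X_{f,+}$.

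For $(i)\Rightarrow(ii)$, the argument is almost immediate. Fix $f,g\in\s^2(\X)$, apply Proposition \ref{prop:compatibili} to produce $X_{f,+},X_{f,-}\in\mathrm{Grad}(g)$ attaining equality in the right and left bound respectively; since by hypothesis $\mathrm{Grad}(g)$ is a singleton $\{X\}$, we must have $X_{f,+}=X_{f,-}=X$, whence both sides of \eqref{eq:sinistradestra} equal $\d f(X)$ $\mm$-a.e., giving equality.

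For $(ii)\Rightarrow(i)$, let $g\in\s^2(\X)$ and $X_1,X_2\in\mathrm{Grad}(g)$. For each $f\in\s^2(\X)$, the sandwich inequality together with (ii) forces
\[
\d f(X_1)=\d f(X_2),\qquad\mm\ae,
\]
so that the vector field $Y:=X_1-X_2\in L^2(T\X)$ satisfies $\d f(Y)=0$ $\mm$-a.e.\ for every $f\in\s^2(\X)$. To conclude $Y=0$ I would invoke Proposition \ref{prop:gencotan}: the set $\{\d f:f\in\s^2(\X)\}$ (even restricted to $W^{1,2}(\X)$) generates the cotangent module $L^2(T^*\X)$. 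Using $L^\infty(\mm)$-linearity of $Y$ as a module morphism, one sees that $Y$ vanishes on the linear space of finite sums $\sum_i \nchi_{A_i}\d f_i$, and then by continuity (using the pointwise bound $|Y(\omega)|\le|Y||\omega|_*$) on its closure, which is all of $L^2(T^*\X)$. Hence $Y=0$ and ${\rm Grad}(g)$ is a singleton.

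The only mildly delicate point is ensuring that the generating-plus-closure argument in the second implication is carried out correctly in the module category; but this is exactly the content of Proposition \ref{prop:gencotan} and the $L^2(\mm)$-normed module structure of $L^2(T\X)$, so no obstruction is expected. No further hypotheses (separability, reflexivity, infinitesimal Hilbertianity) are needed for the equivalence itself.
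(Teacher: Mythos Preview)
Your proof is correct and follows essentially the same approach as the paper: both implications rest on Proposition \ref{prop:compatibili}, with $(i)\Rightarrow(ii)$ coming from $X_{f,+}=X_{f,-}$ when $\mathrm{Grad}(g)$ is a singleton, and $(ii)\Rightarrow(i)$ obtained by showing $\d f(X_1)=\d f(X_2)$ for all $f$ and then invoking the fact that differentials generate the cotangent module (Proposition \ref{prop:gencotan}) to conclude $X_1=X_2$.
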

\begin{proof}$\ $\\
\noindent{$\mathbf{(i)\Rightarrow (ii)}$} Direct consequence of Proposition \ref{prop:compatibili}: the two gradient vector fields $X_{f,+}$ and $X_{f,-}$ for which equality hold in \eqref{eq:dpfng} and \eqref{eq:dmfng} must coincide.

\noindent{$\mathbf{(ii)\Rightarrow (i)}$} Fix $g\in\s^2(\X)$, let $X_1,X_2\in{\rm Grad}(g)$ and notice that by assumption and Proposition \ref{prop:compatibili} we deduce that for every $f\in\s^2(\X)$ the equality $\d f(X_1)=\d f(X_2)$ holds $\mm$-a.e.. Thus for $\omega\in L^2(T^*\X)$ of the form $\omega=\sum_{i}\nchi_{A_i}\d f_i$ for some $f_i\in\s^2(\X)$ and $A_i\in\BB$, $i\in\N$, we also have $\omega(X_1)=\omega(X_2)$ $\mm$-a.e.. Since the set of such $\omega$'s is dense in $L^2(T^*\X)$, this is sufficient to conclude that $X_1=X_2$.
\end{proof}
In line with \cite{Gigli12} we thus give the following definition:
\begin{definition}[Infinitesimally strictly convex spaces]\label{def:infstrconv}
A complete separable metric space equipped with a non-negative Radon measure $(\X,\sfd,\mm)$ is said infinitesimally strictly convex provided the  two equivalent properties in Proposition \ref{prop:infstrconv} above hold. 

On infinitesimally strictly convex spaces, for $g\in\s^2(\X)$ the only element of ${\rm Grad}(g)$ will be denoted by $\nabla g$.
\end{definition}
\begin{remark}{\rm
The terminology comes from the fact that $\R^d$ equipped with a norm and the Lebesgue measure is infinitesimally strictly convex if and only if the norm is strictly convex, so that in a sense infinitesimal strict convexity speaks about the $\mm$-a.e.\  strict convexity of the norm in the tangent spaces. 

Still, we point out that in a general Finsler manifold we do not really know if being infinitesimally strictly convex in the sense of the above definition is the same as requiring the norm in the tangent module to be a.e.\ strictly convex: the point is that Definition \ref{def:infstrconv} only speaks about the strict convexity of the norm when looked on gradient vector fields.
}\fr\end{remark}

\subsubsection{Divergence}
In the smooth setting the (opposite of) the adjoint of the differential is the divergence. We have thus the possibility of introducing and studying the divergence in metric measure spaces, which  is the aim of this section.

\vspace{1cm}

We start with the definition:
\begin{definition}[Divergence]
The space $D(\div)\subset  L^2(T\X)$ is the set of all vector fields $X$ for which  there exists $f\in L^2(\mm)$ such that
\begin{equation}
\label{eq:defdiv}
\int fg\,\d\mm=-\int \d g(X)\,\d\mm,\qquad\forall g\in W^{1,2}(\X).
\end{equation}
In this case we call $f$ (which is unique by the density of $W^{1,2}(\X)$ in $L^2(\mm)$) the divergence of $X$ and denote it by $\div (X)$. 

We also introduce the functional $\mathcal E_{\rm div}:L^2(T\X)\to[0,\infty]$ as
\[
\mathcal E_{\rm div}(X):=\left\{\begin{array}{ll}
\displaystyle{\frac12\int |\div(X)|^2\,\d\mm},&\qquad\text{ if }X\in D(\div),\\
+\infty,&\qquad\text{ otherwise}.
\end{array}
\right.
\]
\end{definition}
Notice that the linearity of the differential grants that  $D(\div)$ is a vector space and that the divergence is a linear operator. In particular, $\mathcal E_\div$ is a quadratic form.

The Leibniz rule for differentials immediately gives the Leibniz rule for the divergence:
\begin{equation}
\label{eq:leibdiv}
\begin{split}
&\text{for  $X\in D(\div)$ and $f\in L^\infty \cap \s^2(\X)$ with $|\d f|_*\in L^\infty(\X)$}\\
&\text{we have $fX\in D(\div)$ and: }\qquad \div(fX)=\d f(X)+f\div X.
\end{split}
\end{equation}
Indeed with these assumptions we have $\d f(X)+f\div X\in L^2(\mm)$ and for every $g\in W^{1,2}(\X)$ we have $fg\in W^{1,2}(\X)$ and thus
\[
-\int gf\div (X)\,\d\mm=\int \d(fg)(X)\,\d\mm=\int g\d f(X)+\d g(f X)\,\d\mm,
\]
which is the claim.

Another direct consequence of the definition is the following duality formula:
\begin{proposition}[Dual representation of $\mathcal E_{\rm div}$]
$\mathcal E_{\rm div}$ is $L^2(T\X)$-lower semicontinuous and the duality formula
\begin{equation}
\label{eq:dualdiv}
\mathcal E_{\rm div}(X)=\sup_{g\in W^{1,2}(\X)}\int -\d g(X)\,\d\mm-\frac12\int |g|^2\,\d\mm.
\end{equation}
holds for every $X\in L^2(T\X)$.
\end{proposition}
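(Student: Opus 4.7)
The strategy is to establish the duality formula first, from which lower semicontinuity will follow essentially for free. Denote by $F(X)$ the right-hand side of \eqref{eq:dualdiv}. Observe that for each fixed $g\in W^{1,2}(\X)$ the map
\[
X\ \mapsto\ \Phi_g(X):=\int-\d g(X)\,\d\mm-\frac12\int g^2\,\d\mm
\]
is affine and $L^2(T\X)$-continuous, because $|\int \d g(X)\,\d\mm|\leq\|\d g\|_{L^2(T^*\X)}\|X\|_{L^2(T\X)}$. Hence $F=\sup_g\Phi_g$ is $L^2(T\X)$-lower semicontinuous, so once I prove $\mathcal E_{\rm div}=F$ the lower semicontinuity of $\mathcal E_{\rm div}$ is automatic.

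For the identity $\mathcal E_{\rm div}=F$ I split into cases. If $X\in D(\div)$, then the defining identity $\int-\d g(X)\,\d\mm=\int g\,\div(X)\,\d\mm$ for every $g\in W^{1,2}(\X)$ turns $F(X)$ into
\[
F(X)=\sup_{g\in W^{1,2}(\X)}\int g\,\div(X)\,\d\mm-\frac12\int g^2\,\d\mm.
\]
Since bounded Lipschitz functions with bounded support belong to $W^{1,2}(\X)$ by \eqref{eq:lipweak} and are dense in $L^2(\mm)$ (as $\mm$ is Radon on a complete separable space), and both integrals are $L^2(\mm)$-continuous in $g$, the supremum may be extended to all $g\in L^2(\mm)$ without change of value. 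The elementary pointwise identity $\sup_{t\in\R}(th-\tfrac12 t^2)=\tfrac12 h^2$ then yields $F(X)=\tfrac12\int|\div(X)|^2\,\d\mm=\mathcal E_{\rm div}(X)$, the supremum being attained at $g=\div(X)$.

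If instead $X\notin D(\div)$, I need to show $F(X)=+\infty$. The linear functional $L\colon W^{1,2}(\X)\to\R$ defined by $L(g):=-\int\d g(X)\,\d\mm$ cannot be $L^2$-continuous: otherwise, by density of $W^{1,2}(\X)$ in $L^2(\mm)$ and the Riesz representation theorem in the Hilbert space $L^2(\mm)$, there would exist $f\in L^2(\mm)$ with $L(g)=\int fg\,\d\mm$ for every $g\in W^{1,2}(\X)$, placing $X$ in $D(\div)$. Hence one can find $(g_n)\subset W^{1,2}(\X)$ with $\|g_n\|_{L^2(\mm)}\leq 1$ and $L(g_n)\to+\infty$ (replacing $g_n$ by $-g_n$ if needed). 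Testing $F$ with $t\, g_n$ for $t>0$ gives $\Phi_{tg_n}(X)\geq tL(g_n)-\tfrac{t^2}{2}$, and optimizing in $t$ (take $t=L(g_n)$) yields $F(X)\geq \tfrac12 L(g_n)^2\to+\infty$, as desired.

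The main routine-but-delicate point will be the density of $W^{1,2}(\X)$ in $L^2(\mm)$, which I would justify by truncation and multiplication by Lipschitz cutoffs of bounded support combined with \eqref{eq:lipweak}; everything else is an elementary Legendre-transform calculation.
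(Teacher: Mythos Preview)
Your proof is correct and follows essentially the same approach as the paper's: both rely on the density of $W^{1,2}(\X)$ in $L^2(\mm)$, the Riesz representation theorem in $L^2(\mm)$, and a scaling/Legendre computation. The only difference is organizational: you derive lower semicontinuity as a corollary of the duality formula (supremum of continuous affine functionals), whereas the paper proves lower semicontinuity separately via weak-$L^2$ compactness before turning to the duality formula.
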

\begin{proof}  For $L^2(T\X)$ lower semicontinuity notice that if $(X_n)\subset D(\div)$ is $L^2(T\X)$-converging to some $X$  with $(\div(X_n))$ uniformly bounded  in $L^2(\mm)$, then up to pass to a subsequence, not relabeled, the sequence $(\div(X_n))$ has a weak limit $f\in L^2(\mm)$. Conclude noticing   that $\|f\|_{L^2(\X)}\leq\limi_n\|\div(X_n)\|_{L^2(\X)}$ and that  the condition \eqref{eq:defdiv} passes to the limit, so  that $f=\div(X)$.

For what concerns \eqref{eq:dualdiv},  inequality $\geq$ in \eqref{eq:dualdiv} is obvious. To prove the other we can assume that  the $\sup$ in the right hand side of \eqref{eq:dualdiv} is finite. Call it $S$. Then  we have
\[
\int -\d g(X)\,\d\mm\leq S+\frac12\int|g|^2\,\d\mm,\qquad\forall g\in W^{1,2}(\X),
\]
and writing this inequality for $\lambda g$ in place of $g$ and optimizing in $\lambda\in\R$ we deduce that
\[
\int -\d g(X)\,\d\mm\leq \|g\|_{L^2(\mm)}\sqrt{2S},\qquad\forall g\in W^{1,2}(\X).
\]
In other words the map $W^{1,2}(\X)\ni g\mapsto \int -\d g(X)\,\d\mm\in\R$ is continuous w.r.t.\ the $L^2$-norm, and by the density of $W^{1,2}(\X)$ in $L^2(\mm)$ this map can be uniquely extended to a continuous linear functional on $L^2(\mm)$. By the Riesz theorem we deduce that there exists $f\in L^2(\mm)$ such that $ \int -\d g(X)\,\d\mm=\int fg\,\d\mm$, i.e., by definition, $X\in D(\div)$ and $\div(X)=f$.
\end{proof}
It is worth to underline that without further assumptions on the space we are not able to produce any non-zero vector field in $D(\div)$. In fact, the next proposition in conjunction with Remark \ref{re:norefl} shows that to find a large class of vector fields with a divergence might be impossible:
\begin{proposition}[Domain of the divergence and reflexivity of $W^{1,2}(\X)$]\label{thm:dchiuso} Assume that $D(\div)$ is dense in $L^2(T\X)$ w.r.t.\ the strong topology. Then $W^{1,2}(\X)$ is reflexive.
\end{proposition}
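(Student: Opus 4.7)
The plan is to verify the criterion for reflexivity of $W^{1,2}(\X)$ given by Proposition \ref{prop:compd}. Thus, starting from an arbitrary sequence $(f_n)\subset W^{1,2}(\X)$ bounded in the $W^{1,2}$-norm, I want to extract a subsequence $(f_{n_k})$ and produce $f_\infty\in W^{1,2}(\X)$ such that $f_{n_k}\weakto f_\infty$ in $L^2(\mm)$ and $\d f_{n_k}\weakto \d f_\infty$ weakly in $L^2(T^*\X)$. The density assumption on $D(\div)$ will play its role only in the last step.

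First, since $L^2(\mm)$ is reflexive, up to extraction I can assume $f_n\weakto f_\infty$ in $L^2(\mm)$. The next step is to show $f_\infty\in W^{1,2}(\X)$. The sequence $(|\d f_n|_*)=(\weakgrad{f_n})$ is bounded in $L^2(\mm)$, so by Mazur's lemma I can find convex combinations $g_j=\sum_{i\geq j}\lambda_{j,i}f_i$ such that $g_j\to f_\infty$ strongly in $L^2(\mm)$ and, up to a further subsequence, $\mm$-a.e. By \eqref{eq:vectorstru}, $(\weakgrad{g_j})$ is still $L^2$-bounded, and by reflexivity of $L^2(\mm)$, up to extraction $\weakgrad{g_j}\weakto G$ weakly in $L^2(\mm)$ for some $G\geq 0$. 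The stability property \eqref{eq:lscwug} then yields $f_\infty\in\s^2(\X)$, hence $f_\infty\in W^{1,2}(\X)$.

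The core of the argument is the last step: showing $\d f_n \weakto \d f_\infty$ in $L^2(T^*\X)$. For any $X\in D(\div)$, by definition of divergence and the weak $L^2$-convergence of $(f_n)$ to $f_\infty$,
\[
\int \d f_n(X)\,\d\mm = -\int f_n\,\div(X)\,\d\mm \;\longrightarrow\; -\int f_\infty\,\div(X)\,\d\mm = \int \d f_\infty(X)\,\d\mm.
\]
Now let $X\in L^2(T\X)$ be arbitrary and, using the strong density of $D(\div)$, pick $X_k\in D(\div)$ with $X_k\to X$ in $L^2(T\X)$. Denoting $C:=\sup_n\|\d f_n\|_{L^2(T^*\X)}<\infty$, the trivial estimate $|\int\omega(Y)\,\d\mm|\leq \|\omega\|_{L^2(T^*\X)}\|Y\|_{L^2(T\X)}$ gives
\[
\Big|\int \d f_n(X)\,\d\mm-\int\d f_n(X_k)\,\d\mm\Big|\leq C\,\|X-X_k\|_{L^2(T\X)},
\]
and the same estimate holds with $\d f_\infty$ in place of $\d f_n$. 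A standard $3\eps$-argument then yields $\int\d f_n(X)\,\d\mm\to\int\d f_\infty(X)\,\d\mm$ for every $X\in L^2(T\X)$. Since by Proposition \ref{prop:fulllp} the map $\intmap:L^2(T\X)\to (L^2(T^*\X))'$ is a bijective isometry, this is precisely the statement that $\d f_n\weakto \d f_\infty$ weakly in $L^2(T^*\X)$. Combining with Proposition \ref{prop:compd} gives the reflexivity of $W^{1,2}(\X)$.

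The main obstacle is the interaction between the two steps: we need $f_\infty\in W^{1,2}(\X)$ before even writing $\d f_\infty$, and for this the stability of minimal weak upper gradients is essential (we cannot conclude directly from the boundedness of $(\d f_n)$ in $L^2(T^*\X)$ because that space is not known to be reflexive in general). Once $f_\infty$ has a differential, the density of $D(\div)$ provides exactly the right class of test objects to identify the weak limit of $(\d f_n)$ as $\d f_\infty$, bypassing the potential non-reflexivity of $L^2(T^*\X)$.
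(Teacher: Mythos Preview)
Your proof is correct and follows essentially the same approach as the paper's: both use Proposition \ref{prop:compd} as the reflexivity criterion, extract a weak $L^2$-limit $f_\infty$, establish $f_\infty\in W^{1,2}(\X)$, and then identify the weak limit of $(\d f_n)$ via integration by parts against $X\in D(\div)$ together with the density assumption and a $3\eps$-argument. The only cosmetic difference is in the middle step: the paper directly invokes the $L^2(\mm)$-weak lower semicontinuity of $\E$ (recorded in the preliminaries) to get $f_\infty\in W^{1,2}(\X)$, whereas you reprove this via Mazur's lemma and the stability property \eqref{eq:lscwug}; these are equivalent.
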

\begin{proof} We shall use Proposition \ref{prop:compd}. Thus let $(f_n)\subset W^{1,2}(\X)$ be a $W^{1,2}(\X)$-bounded sequence and find  a subsequence, not relabeled, such  that $f_n\weakto f$ as $n\to\infty$ in the weak topology of $L^2(\mm)$ for some $f\in L^2(\mm)$.

The $L^2(\mm)$-weak lower semicontinuity of $\E$ grants that
\[
\E(f)\leq\limi_{n\to\infty}\E(f_n)=\limi_{n\to\infty}\frac12\int|\d f_n|_*^2\,\d\mm\leq \sup_n\|f_n\|_{W^{1,2}(\X)}^2<\infty,
\]
and thus  $f\in W^{1,2}(\X)$. We now want to prove the convergence of $(\d f_n)$ to $\d f$ in the weak topology of $L^2(T^*\X)$. To this aim   let $X\in L^2(T\X)$, $X'\in D(\div)$, put  $S:=\sup_n\|\d f_n\|_{L^2(T^*\X)}\geq \|\d f\|_{L^2(T^*\X)}$ and notice that
\[
\begin{split}
\left|\int \d f_n(X)-\d f(X)\,\d\mm\right|&\leq\left|\int \d f_n(X')-\d f(X')\,\d\mm\right|+\int |\d f_n(X-X')|+|\d f(X-X')|\,\d\mm\\
&\leq\left|\int (f_n-f)\div (X')\,\d\mm\right|+2S\|X-X'\|_{L^2(T\X)}.
\end{split}
\]
Letting first $n\to\infty$ and then, using the assumption on the density of $D(\div)$ in $L^2(T\X)$, $X'\to X$ in $L^2(T\X)$ we deduce that $\int \d f_n(X)\,\d\mm\to\int \d f(X)\,\d\mm$. Since by Proposition \ref{prop:fulllp} every element of the Banach dual of $L^2(T^*\X)$ is of the form $\omega\mapsto\int \omega(X)\,\d\mm$ for some $X\in L^2(T\X)$, we just proved that $(\d f_n)$ weakly converges to $\d f$. The conclusion comes from  Proposition \ref{prop:compd}.
\end{proof}

Recalling the definition of Laplacian and that of its domain given at the end of Section \ref{se:presob}, we might ask whether in this setting the divergence of the gradient is the Laplacian. The answer is given by the next proposition:
\begin{proposition}\label{prop:divlapl} Let $f\in W^{1,2}(\X)$ and assume that there is $X\in{\rm Grad}(f)\cap D(\div)$. Then $\div(X)\in\partial^-E(f)$ and in particular $f\in D(\Delta)$. 

Conversely, if the space is infinitesimally strictly convex and $f\in D(\Delta)$, then $\nabla f\in D(\div)$ and $\div(\nabla f)=\Delta f$.
\end{proposition}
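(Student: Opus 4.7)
\textbf{Plan for Proposition \ref{prop:divlapl}.} For the first assertion, start from $X\in\grad\cap D(\div)$ with $X\in{\rm Grad}(f)$ and an arbitrary $g\in W^{1,2}(\X)$. Apply Proposition \ref{prop:compatibili} with the roles of $f$ and $g$ interchanged (so that $X\in{\rm Grad}(f)$ plays the role of a gradient of the base function) to get
\[
\d g(X)\ \leq\ \essinf_{\eps>0}\frac{|\d(f+\eps g)|_*^2-|\d f|_*^2}{2\eps}\qquad\mm\ae
\]
Combine with the monotonicity \eqref{eq:monotoneratio} of the difference quotient in $\eps$ to bound the essinf above by its value at $\eps=1$, yielding $\d g(X)\leq\tfrac12(|\d(f+g)|_*^2-|\d f|_*^2)$ $\mm$-a.e. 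Integrate and use the defining identity \eqref{eq:defdiv} of the divergence to rewrite $\int\d g(X)\,\d\mm=-\int g\,\div(X)\,\d\mm$; the resulting inequality is exactly
\[
E(f)+\int(-\div(X))\,g\,\d\mm\ \leq\ E(f+g)\qquad\forall g\in W^{1,2}(\X).
\]
For $g\in L^2(\mm)\setminus W^{1,2}(\X)$ the right-hand side is $+\infty$ (since $f\in W^{1,2}(\X)$ forces $f+g\notin W^{1,2}(\X)$), so the inequality extends trivially to all of $L^2(\mm)$. Hence $-\div(X)\in\partial^-E(f)$, so $\partial^-E(f)\neq\emptyset$ and $f\in D(\Delta)$.

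For the converse, assume infinitesimal strict convexity and $f\in D(\Delta)$, and pick any $v\in\partial^-E(f)$. For $g\in W^{1,2}(\X)$ and $\eps>0$ the subdifferential inequality reads
\[
\int vg\,\d\mm\ \leq\ \int\frac{|\d(f+\eps g)|_*^2-|\d f|_*^2}{2\eps}\,\d\mm.
\]
By \eqref{eq:monotoneratio} the integrand is non-decreasing in $\eps$, bounded above by its value at any fixed $\eps_0>0$ (which lies in $L^1(\mm)$ since $f,\,f+\eps_0g\in W^{1,2}(\X)$) and bounded below by $\d g(X)$ for any $X\in{\rm Grad}(f)$ via Proposition \ref{prop:compatibili}. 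Dominated/monotone convergence therefore permits passing to $\eps\downarrow 0$ inside the integral, and using Proposition \ref{prop:compatibili} together with the fact that under infinitesimal strict convexity the vector field $X_{g,+}$ coincides with $\nabla f$, the limit of the right-hand side is $\int\d g(\nabla f)\,\d\mm$. A symmetric argument with $\eps\uparrow 0$, appealing to \eqref{eq:dmfng} and the equality $X_{g,-}=\nabla f$, provides the reverse inequality. Hence
\[
\int vg\,\d\mm\ =\ \int\d g(\nabla f)\,\d\mm\qquad\forall g\in W^{1,2}(\X).
\]

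This is exactly the defining identity \eqref{eq:defdiv}, so $\nabla f\in D(\div)$ with $\div(\nabla f)=-v$. Since the relation $\div(\nabla f)=-v$ is forced for \emph{every} $v\in\partial^-E(f)$, the subdifferential is the singleton $\{-\div(\nabla f)\}$; this element is trivially of minimal norm, and therefore $\Delta f=-v=\div(\nabla f)$. The only genuinely delicate step is the interchange of limit and integral as $\eps\to 0$, which is handled cleanly by the one-sided monotonicity of the difference quotient recorded in \eqref{eq:monotoneratio} together with an $L^1$ domination; the remainder of the argument is a careful bookkeeping of the duality between gradients and differentials encoded in Proposition \ref{prop:compatibili}.
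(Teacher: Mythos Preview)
Your proof is correct and follows essentially the same approach as the paper. Both parts use Proposition \ref{prop:compatibili} to relate $\d g(X)$ to the incremental ratio of $|\d(f+\eps g)|_*^2$, then combine with the subdifferential inequality; your presentation is slightly more explicit about the dominated convergence justifying the interchange of limit and integral, and in the converse you work with a generic $v\in\partial^-\E(f)$ (thereby showing the subdifferential is a singleton) rather than directly with the minimal-norm element $-\Delta f$ as the paper does, but these are cosmetic differences. Note also that your conclusion $-\div(X)\in\partial^-\E(f)$ is what the paper's own proof actually establishes; the statement as written has a sign slip.
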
 
\begin{proof} For given $f,X$ as  in the statement, let $g\in W^{1,2}(\X)$ be arbitrary and use the convexity of $\E$ to get
\[
\begin{split}
\E(f+g)-\E(f)&\stackrel{\phantom{\eqref{eq:dpfng}}}\geq \lim_{\eps\downarrow0}\int \frac{\weakgrad{(f+\eps g)}^2-\weakgrad f^2}{2\eps}\,\d\mm\stackrel{\eqref{eq:dpfng}}\geq \int \d g (X)\,\d\mm=-\int g\,\div(X)\,\d\mm,
\end{split}
\]
which is the first claim.

For the second, notice that by definition of $\Delta f$ for every $g\in W^{1,2}(\X)$ we have
\[
\E(f+\eps g)-\E(f)\geq -\int \eps g\Delta f\,\d\mm,\qquad\forall \eps\in\R.
\]
Then dividing by $\eps$ and letting $\eps\downarrow0$ and $\eps\uparrow 0$ and recalling  Proposition \ref{prop:infstrconv}, we see that
\[
\lim_{\eps\to 0}\frac{\E(f+\eps g)-\E(f)}{\eps}= -\int g\Delta f\,\d\mm.
\]
On the other hand, by Proposition \ref{prop:compatibili} we know that the limit at the left hand side is equal to $\int \d g(\nabla f)\,\d\mm$ and the thesis follows.
\end{proof}

Still in the realm of infinitesimally strictly convex spaces, we can prove the following natural duality formula for the energy $\E$. Notice that if we knew that   the Banach space $L^2(T^*\X)$ was reflexive, then such duality would  follow along the same lines used to prove \eqref{eq:dualdiv}, so that the interest of the statement is in the fact that no reflexivity assumption is made.
\begin{proposition}[Duality formula for $\E$]
Assume that $(\X,\sfd,\mm)$ is infinitesimally strictly convex. Then for every $f\in L^2(\mm)$ we have
\[
\E(f)=\sup_{X\in D(\div)}-\int f\div(X)\,\d\mm-\frac12\int |X|^2\,\d\mm. 
\]
\end{proposition}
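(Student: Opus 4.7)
The plan is to prove both inequalities separately, with the non-trivial direction $\geq$ achieved by a heat-flow approximation. Throughout, write $F(f,X):=-\int f\,\div(X)\,\d\mm-\frac12\int|X|^2\,\d\mm$ for $X\in D(\div)$.

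For the inequality $\E(f)\geq \sup_X F(f,X)$: if $f\notin W^{1,2}(\X)$ this is trivial since $\E(f)=+\infty$. If $f\in W^{1,2}(\X)$, the defining identity $-\int f\,\div(X)\,\d\mm=\int \d f(X)\,\d\mm$ combined with the pointwise Cauchy--Schwarz $|\d f(X)|\leq |\d f|_*\,|X|$ and Young's inequality gives $-\int f\,\div(X)\,\d\mm\leq \frac12\int |\d f|_*^2\,\d\mm+\frac12\int|X|^2\,\d\mm=\E(f)+\frac12\int|X|^2\,\d\mm$, i.e., $F(f,X)\leq\E(f)$.

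For the opposite inequality I would exploit the gradient flow $(\h_t)$ of $\E$. Setting $f_t:=\h_t f\in D(\Delta)\subset W^{1,2}(\X)$, infinitesimal strict convexity together with Proposition~\ref{prop:divlapl} yields $\nabla f_t\in D(\div)$ with $\div(\nabla f_t)=\Delta f_t$, so $X_t:=\nabla f_t$ is an admissible test. Using $\int f_t\,\Delta f_t\,\d\mm=\int f_t\,\div(\nabla f_t)\,\d\mm=-\int \d f_t(\nabla f_t)\,\d\mm=-2\E(f_t)$ and splitting $\int f\,\Delta f_t\,\d\mm=\int f_t\,\Delta f_t\,\d\mm+\int(f-f_t)\,\Delta f_t\,\d\mm$, I obtain the central identity
\begin{equation*}
F(f,X_t)=\E(f_t)-\int (f-f_t)\,\Delta f_t\,\d\mm.
\end{equation*}
When $f\in W^{1,2}(\X)$, the classical theory of gradient flows of convex l.s.c.\ functionals on the Hilbert space $L^2(\mm)$ delivers $\E(f_t)\to\E(f)$ and monotonicity of $s\mapsto\|\dot f_s\|_{L^2(\mm)}=\|\Delta f_s\|_{L^2(\mm)}$ with $\int_0^t\|\dot f_s\|_{L^2}^2\,\d s=\E(f)-\E(f_t)$. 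Writing $f-f_t=-\int_0^t\dot f_s\,\d s$ and using Cauchy--Schwarz together with $\|\dot f_t\|_{L^2}\leq\|\dot f_s\|_{L^2}$ for $s\leq t$, the error term is bounded by $\int_0^t\|\dot f_s\|_{L^2}^2\,\d s=\E(f)-\E(f_t)\to 0$. Hence $F(f,X_t)\to\E(f)$, proving $\sup_X F(f,X)\geq\E(f)$ on $W^{1,2}(\X)$.

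The delicate part, which I expect to be the main obstacle, is the case $f\in L^2(\mm)\setminus W^{1,2}(\X)$, where one must show $\sup_X F(f,X)=+\infty$. My strategy is by contradiction: if the sup equals some $S<\infty$, the homogeneity trick $X\mapsto \lambda X$ followed by optimization in $\lambda$ yields $|\!\int f\,\div(X)\,\d\mm|\leq \sqrt{2S}\,\|X\|_{L^2(T\X)}$ for every $X\in D(\div)$. Applied to $X=\nabla f_t$, using $\|X_t\|_{L^2(T\X)}=\sqrt{2\E(f_t)}$ and the decomposition above, this produces $2\E(f_t)\leq 2\sqrt{S\,\E(f_t)}+|\!\int(f-f_t)\,\Delta f_t\,\d\mm|$. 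By lower semicontinuity of $\E$, $\E(f_t)\to+\infty$ as $t\downarrow 0$, so the first term on the right is $o(\E(f_t))$; the contradiction hinges on bounding the error term by $o(\E(f_t))$ along some sequence $t_n\downarrow 0$. For this I would pick approximations $g_n\in W^{1,2}(\X)$ with $g_n\to f$ in $L^2(\mm)$ and combine a contraction estimate of the form $\|f_t-f\|_{L^2}\leq 2\|f-g_n\|_{L^2}+\sqrt{2t\,\E(g_n)}$ (from EVI/Moreau--Yosida arguments for the gradient flow) with the regularization bound $\|\Delta f_t\|_{L^2}^2\leq 2\E(f_{t/2})/t$, calibrating $t_n$ relative to $\|g_n-f\|_{L^2}$ and $\E(g_n)$ so that the two error contributions become negligible compared to $\E(f_{t_n})$.
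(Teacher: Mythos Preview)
Your easy direction and your central identity $F(f,X_t)=\E(f_t)-\int(f-f_t)\Delta f_t\,\d\mm$ are exactly what the paper obtains. Your treatment of the case $f\in W^{1,2}(\X)$ via the energy identity $\int_0^t\|\dot f_s\|_{L^2}^2\,\d s=\E(f)-\E(f_t)$ is correct and self-contained.

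Where you diverge from the paper is in how you dispose of the error term, and in particular in the case $f\notin W^{1,2}(\X)$, which you only sketch. The paper avoids the case split altogether by a single observation: the error term $-\int(f-f_t)\Delta f_t\,\d\mm$ is precisely $\frac{\d}{\d t}d(t)$, where $d(t):=\tfrac12\|f_t-f\|_{L^2}^2$. Since $d\geq 0$ and $d(0)=0$, the set $\{t:\tfrac{\d}{\d t}d(t)\geq 0\}$ must meet every interval $[0,\eps]$ in positive measure, so there is a sequence $t_n\downarrow 0$ along which the error term is non-negative. Then $F(f,X_{t_n})\geq \E(f_{t_n})$, and the $L^2$-lower semicontinuity of $\E$ yields $\sup_X F(f,X)\geq\limsup_n\E(f_{t_n})\geq\E(f)$, regardless of whether $\E(f)$ is finite or $+\infty$.

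By contrast, your contradiction argument for the infinite-energy case requires showing that $\|f-f_t\|_{L^2}\|\Delta f_t\|_{L^2}=o(\E(f_t))$ along some sequence, and your proposed calibration between $\|g_n-f\|_{L^2}$, $\E(g_n)$ and $t_n$ is not carried out; in particular you would need to control $\E(f_{t/2})$ in terms of $\E(f_t)$, and monotonicity of $t\mapsto\E(f_t)$ goes the wrong way for this. It may be salvageable with more work, but the paper's sign argument sidesteps all of this and treats both cases at once.
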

\begin{proof} Inequality $\geq$ is obvious. For the converse we fix $f\in L^2(\mm)$ and let $f_t:=\h_t(f)$, where $(\h_t)$ is  the gradient flow   of $\E$ in $L^2(\mm)$ introduced at the end of Section \ref{se:presob}. Then $[0,\infty)\ni t\mapsto f_t\in L^2(\mm)$ is continuous and locally absolutely continuous on $(0,\infty)$, with $f_t\in D(\Delta)\subset \{\E<\infty\}=W^{1,2}(\X)$ for every $t>0$ and such that
\begin{equation}
\label{eq:caloremah}
\frac{\d}{\d t}f_t=\Delta f_t,\qquad {\rm a.e.}\ t>0,
\end{equation}
where the derivative in the left hand side is the strong $L^2$-limit of the incremental ratios.

Put $X_t:=\nabla f_t$ and $g(t):=\int-f_0\div(X_t)-\frac12|X_t|^2\,\d\mm$. The thesis will be achieved if we prove that $\lims_{t\downarrow0}g(t)\geq \E(f_0)$. Consider the function $[0,1]\ni t\mapsto d(t):=\frac12\|f_t-f_0\|_{L^2(\mm)}^2$. Being $(f_t)$ an $L^2$-absolutely continuous curve on $[0,1]$, $d$ is absolutely continuous. Taking into account \eqref{eq:caloremah}, for a.e.\ $t>0$ we have
\[
\begin{split}
\frac{\d}{\d t}d(t)&=\int(f_t-f_0)\Delta f_t\,\d\mm=\int-f_0\Delta f_t\,\d\mm+\int f_t\Delta f_t\,\d\mm=\int-f_0\Delta f_t\,\d\mm-\int |\d f_t|^2\,\d\mm.
\end{split}
\]
Since $\E(f_t)=\frac12\int|\d f_t|^2\,\d\mm=\frac12\int|X_t|^2\,\d\mm$, we deduce that 
\begin{equation}
\label{eq:parevada}
g(t)=\E(f_t)+\frac{\d}{\d t}d(t),\qquad {\rm a.e.}\ t>0.
\end{equation}
Now notice that $d(t)\geq 0$ for every $t\in[0,1]$ and that $d(0)=0$, thus  for the set $A:=\{t:\frac{\d}{\d t}d(t)\text{ exists and is non-negative}\}$ we have $\mathcal L^1(A\cap[0,\eps])>0$ for every $\eps>0$. Hence there exists a decreasing sequence $(t_n)$ converging to 0 such that $t_n\in A$ and \eqref{eq:parevada} holds for $t=t_n$ for every $n\in\N$. Therefore
\[
\lims_{t\downarrow0}g(t)\geq\lims_{n\to\infty} g(t_n)\geq \lims_{n\to\infty}\E(f_{t_n})\geq \E(f_0),
\]
having used the $L^2$-lower semicontinuity of $\E$ in the last step.
\end{proof}

\subsubsection{Infinitesimally Hilbertian spaces}
In this section we discuss how first order calculus works on spaces which, from the Sobolev calculus point of view, resemble Riemannian manifolds rather than the more general Finsler ones. The concept of infinitesimally Hilbertian space was introduced in \cite{Gigli12} to capture some key aspects of the analysis done in \cite{AmbrosioGigliSavare11}  and the content of Proposition \ref{prop:infhil} below is, more or less, a review of analogous results contained in \cite{Gigli12} in terms of the language we developed here.

\vspace{1cm}

We recall the following definition:
\begin{definition}[Infinitesimally Hilbertian spaces] 
A complete separable metric space equipped with a non-negative Radon measure $(\X,\sfd,\mm)$ is said  infinitesimally Hilbertian provided $W^{1,2}(\X)$ is an Hilbert space.
\end{definition}
This definition captures a `global' Hilbert property of the space, but the following result, and in particular point $(iii)$, shows that in fact it forces a `pointwise' Hilbertianity of the space, thus justifying the word `infinitesimal' in the terminology.
\begin{proposition}[Equivalent characterizations of infinitesimal Hilbertianity]\label{prop:infhil}
The following are equivalent:
\begin{itemize}
\item[i)] $(\X,\sfd,\mm)$ is infinitesimally Hilbertian.
\item[ii)] $(\X,\sfd,\mm)$ is infinitesimally strictly convex and 
\begin{equation}
\label{eq:simmdfgg}
\d f(\nabla g)=\d g(\nabla f),\qquad\mm\ae,\qquad\forall f,g\in\s^2(\X).
\end{equation}
\item[iii)]  $L^2(T^*\X)$ and $L^2(T\X)$ are Hilbert modules.
\item[iv)] $(\X,\sfd,\mm)$ is infinitesimally strictly convex and 
\begin{equation}
\label{eq:lingrad}
\nabla(f+g)=\nabla f+\nabla g,\qquad\mm\ae,\qquad\forall f,g\in\s^2(\X).
\end{equation}
\item[v)] $(\X,\sfd,\mm)$ is infinitesimally strictly convex and
\begin{equation}
\label{eq:leibgrad}
\nabla(fg)=f\nabla g+g\nabla f,\qquad\mm\ae,\qquad\forall f,g\in \s^2\cap L^\infty(\X).
\end{equation}
\end{itemize}
\end{proposition}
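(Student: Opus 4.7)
I would prove the five-way equivalence by establishing $(i)\Leftrightarrow(iii)$ as the central equivalence and recovering $(ii)$, $(iv)$, $(v)$ from $(iii)$ using the pointwise scalar product produced by the Hilbert module structure (Theorem \ref{thm:rhil}); the reverse implications pass through $(iii)$ by a uniform polarisation argument on differentials. The implication $(iii)\Rightarrow(i)$ is immediate, since the linear map $W^{1,2}(\X)\ni f\mapsto(f,\d f)\in L^2(\mm)\oplus L^2(T^*\X)$ is an isometric embedding into a Hilbert space (Proposition \ref{prop:hill2} provides the underlying Hilbert structure of the target).

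The main direction is $(i)\Rightarrow(iii)$. From $(i)$, subtracting the $L^2(\mm)$-parallelogram from the $W^{1,2}$-parallelogram yields the integrated identity
\[
\int|\d(f+g)|_*^2+|\d(f-g)|_*^2\,\d\mm=2\int|\d f|_*^2+|\d g|_*^2\,\d\mm,
\]
and the task is to upgrade it to a pointwise identity. First I deduce infinitesimal strict convexity: quadraticity of $\E$ makes $\lambda\mapsto\int\phi_\lambda\,\d\mm$ linear in $\lambda$ (with $\phi_\lambda:=(|\d(g+\lambda f)|_*^2-|\d g|_*^2)/(2\lambda)$), while \eqref{eq:monotoneratio} makes $\phi_\lambda$ monotone in $\lambda$; continuity of the integral then forces the monotone limits $\phi_{0\pm}$ to coincide $\mm\ae$ (Propositions \ref{prop:infstrconv} and \ref{prop:compatibili}), their common value being $\d f(\nabla g)$. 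Next I apply the integrated parallelogram identity to $\varphi_1\circ f_1\pm\varphi_2\circ f_2\in W^{1,2}$, for $f_i$ bounded and $\varphi_i$ Lipschitz with bounded derivative, and use the chain rule \eqref{eq:chainweakgrad} to obtain
\[
\int|\alpha\d f_1+\beta\d f_2|_*^2+|\alpha\d f_1-\beta\d f_2|_*^2-2\alpha^2|\d f_1|_*^2-2\beta^2|\d f_2|_*^2\,\d\mm=0
\]
with $\alpha=\varphi_1'\circ f_1$ and $\beta=\varphi_2'\circ f_2$; approximating $\varphi_i'$ by indicators of arbitrary Borel subsets of $\R$ and enriching the test class by replacing $f_1,f_2$ with components of a multivariable Lipschitz composition $\Phi(f_1,\dots,f_n)\in W^{1,2}$ (to decouple $\alpha,\beta$ from being functions of a single $f_i$), one reaches the same identity for arbitrary $\alpha,\beta\in L^\infty(\mm)$, forcing the integrand to vanish $\mm\ae$. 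This pointwise parallelogram for pairs of the form $(h_1\d f_1,h_2\d f_2)$ then propagates to all of $L^2(T^*\X)$ by Proposition \ref{prop:gencotan} and continuity of the pointwise norm; Proposition \ref{prop:hill2} yields the Hilbert module structure, and $L^2(T\X)$ is Hilbert as its dual.

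Given $(iii)$, the pointwise scalar product $\langle\cdot,\cdot\rangle$ identifies $\nabla f$ as the unique Riesz dual of $\d f$ via Theorem \ref{thm:rhil}: uniqueness gives infinitesimal strict convexity, linearity of Riesz duality gives $(iv)$, the symmetry $\d f(\nabla g)=\langle\d f,\d g\rangle=\d g(\nabla f)$ gives $(ii)$, and dualising the Leibniz rule $\d(fg)=f\d g+g\d f$ against the scalar product gives $(v)$. Conversely, each of $(ii)$, $(iv)$, $(v)$ (together with infinitesimal strict convexity) implies linearity of $\nabla$ by a short direct argument (for $(ii)$, via the $L^\infty$-linearity of $\omega\mapsto\omega(X)$ applied to $X:=\nabla(f+g)-\nabla f-\nabla g$ and Proposition \ref{prop:gencotan}; for $(v)$, via Leibniz manipulations combined with infinitesimal strict convexity), after which the expansion
\[
|\d(f\pm g)|_*^2=(\d f\pm\d g)(\nabla f\pm\nabla g)=|\d f|_*^2+|\d g|_*^2\pm[\d f(\nabla g)+\d g(\nabla f)]
\]
immediately yields pointwise parallelogram for differentials, which propagates to $L^2(T^*\X)$ as in the main step, giving $(iii)$.

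The hardest step, by some margin, is the pointwise upgrade in $(i)\Rightarrow(iii)$: the integrand of the integrated parallelogram has no definite sign in a genuine Finsler setting (in $\ell^1$, for instance, $\|u+v\|^2+\|u-v\|^2-2\|u\|^2-2\|v\|^2$ takes both signs on different pairs), so pointwise vanishing cannot be deduced from the integrated identity, and the test class of $L^\infty(\mm)$-cutoffs must genuinely be enriched via multivariable Lipschitz compositions.
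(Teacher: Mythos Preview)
Your overall scheme and the implications emanating from $(iii)$ are correct and close to the paper's. The genuine gap is in the localization step of $(i)\Rightarrow(iii)$.

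The multivariable chain rule you invoke---``replacing $f_1,f_2$ with components of a multivariable Lipschitz composition $\Phi(f_1,\dots,f_n)$''---is not available at this point of the paper: only the univariate chain rule \eqref{eq:chainweakgrad} and the Leibniz rule for differentials (Corollary \ref{cor:calcdiff}) have been established for general $\s^2(\X)$. More fundamentally, the decoupling cannot work as stated: for $\alpha\,\d f_1$ (with no $\d f_2$-component) to be the differential of some $\Phi(f_1,\dots,f_n)$ one needs $\partial_i\Phi\equiv 0$ for $i\ge 2$, which forces $\alpha$ to factor through $f_1$ again. Your own closing remark that the parallelogram defect has no sign shows that some genuine algebraic mechanism is needed to introduce an arbitrary weight, not merely a larger supply of level-set indicators.

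The paper's route avoids this by passing through $(ii)$ first. After extracting infinitesimal strict convexity and the \emph{integrated} symmetry $\int\d f(\nabla g)\,\d\mm=\int\d g(\nabla f)\,\d\mm$ from quadraticity of $\E$ (as you do), the weight is inserted via Leibniz and the gradient chain rule $f\nabla f=\nabla(\tfrac{f^2}{2})$:
\[
\int h|\d f|_*^2\,\d\mm=\int \d(hf)(\nabla f)\,\d\mm-\int\d(\tfrac{f^2}{2})(\nabla h)\,\d\mm.
\]
Using the integrated symmetry, each term on the right is a quadratic form in $f$ (the first because $f_2\mapsto\int\d(hf_1)(\nabla f_2)\,\d\mm=\int\d f_2(\nabla(hf_1))\,\d\mm$ is linear), so $f\mapsto\int h|\d f|_*^2\,\d\mm$ is quadratic for every $h\in W^{1,2}\cap L^\infty(\X)$; polarising again yields the \emph{weighted} symmetry $\int h\,\d f(\nabla g)\,\d\mm=\int h\,\d g(\nabla f)\,\d\mm$, hence $(ii)$ pointwise by density of the weights. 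From $(ii)$ the pointwise parallelogram follows by your own expansion $|\d(f\pm g)|_*^2=|\d f|_*^2+|\d g|_*^2\pm 2\,\d f(\nabla g)$, now integrated against an arbitrary $h$. Your arguments $(ii)\Rightarrow(iv)$ (via $\d h(X)=0$ for $X=\nabla(f+g)-\nabla f-\nabla g$) and $(iv)\Rightarrow(iii)$ are fine; the paper handles $(iv)\Leftrightarrow(v)$ by the exponential substitution $f'=e^f$, $g'=e^g$, which is a concrete instance of the Leibniz manipulation you allude to.
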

\begin{proof}$\ $\\
\noindent{$\mathbf{(i)\Rightarrow(ii)}$}  The assumption and the definition of $W^{1,2}$-norm yield that the map $W^{1,2}(\X)\ni f\mapsto \E(f):=\frac12\|\d f\|^2_{L^2(T^*\X)}$ is a quadratic form, i.e.
\begin{equation}
\label{eq:quade}
\|\d (f+g)\|^2_{L^2(T^*\X)}+\|\d (f-g)\|^2_{L^2(T^*\X)}=2\big(\|\d f\|^2_{L^2(T^*\X)}+\|\d g\|^2_{L^2(T^*\X)}\big),
\end{equation}
for every $f,g\in W^{1,2}(\X)$. 
Writing $\eps g$ in place of $g$ and noticing that $\E(\eps g)=\eps^2 \E(g)$, from the last identity we deduce that
\[
\lim_{\eps\downarrow0}\frac{\E(f+\eps g)-\E(f)}{2\eps}=\lim_{\eps\uparrow0}\frac{\E(f+\eps g)-\E(f)}{2\eps},
\]
which, together with the monotonicity conditions \eqref{eq:monotoneratio} and \eqref{eq:sinistradestra}, forces the equality to hold $\mm$-a.e.\ in \eqref{eq:sinistradestra} for any $f,g\in W^{1,2}(\X)$, i.e.\ the space is infinitesimally strictly convex. 

Moreover, \eqref{eq:quade} yields that for fixed $f,g\in W^{1,2}(\X)$ the map $\R^2\ni (t,s)\mapsto Q(t,s):= \|\d(tf+sg)\|^2_{L^2(T^*\X)}$ is a quadratic polynomial, in particular it is smooth and thus $\frac{\d}{\d t}\restr{t=0}\frac{\d }{\d s}\restr{s=0}Q(t,s)=\frac{\d}{\d s}\restr{s=0}\frac{\d }{\d t}\restr{t=0}Q(t,s)$. Expanding this equality recalling Proposition \ref{prop:compatibili} we deduce that
\begin{equation}
\label{eq:simmint}
\int \d f(\nabla g)\,\d\mm=\int \d g(\nabla f)\,\d\mm,\qquad\forall f,g\in W^{1,2}(\X)
\end{equation}
We need to pass from this integrated identity to the pointwise formulation \eqref{eq:simmdfgg}. To this aim, pick $f,h\in W^{1,2}\cap L^\infty(\X)$ and recall the Leibniz rule for the differential and the chain rule for the gradient  to get that
\begin{equation}
\label{eq:persimmpunt}
\begin{split}
\int h|\d f|^2\,\d\mm&=\int h\d f(\nabla f)\,\d\mm=\int \d(hf)(\nabla f)-f\d h(\nabla f)\,\d\mm\\
&=\int  \d(hf)(\nabla f)-\d h(\nabla (\tfrac{f^2}2))\,\d\mm\stackrel{\eqref{eq:simmint}}=\int  \d(hf)(\nabla f)-\d (\tfrac{f^2}2)(\nabla h)\,\d\mm.
\end{split}
\end{equation}
Now notice that since the map $W^{1,2}\cap L^\infty(\X)\ni f\mapsto\int \d f(\nabla h)\,\d\mm$ is linear, the map  $W^{1,2}\cap L^\infty(\X)\ni f\mapsto\int \d (\tfrac{f^2}2)(\nabla h)\,\d\mm$ is a quadratic form. Similarly, since both the maps  $W^{1,2}\cap L^\infty(\X)\ni f\mapsto\int \d (hf)(\nabla h')\,\d\mm$ and  $W^{1,2}\cap L^\infty(\X)\ni f\mapsto\int \d h'(\nabla f)\,\d\mm=\int \d f(\nabla h')\,\d\mm$ are linear for any $h,h'\in W^{1,2}\cap L^\infty(\X)$, we deduce that the map  $W^{1,2}\cap L^\infty(\X)\ni f\mapsto\int \d (hf)(\nabla f)\,\d\mm$ is a quadratic form.  Thus \eqref{eq:persimmpunt} yields that $W^{1,2}\cap L^\infty(\X)\ni f\mapsto \int h|\d f|^2\,\d\mm$ is a quadratic form and with the same arguments used to pass from \eqref{eq:quade} to \eqref{eq:simmint} we deduce that
\[
\int h\,\d f(\nabla g)\,\d\mm=\int h\,\d g(\nabla f)\,\d\mm,\qquad\forall f,g,h\in W^{1,2}\cap L^\infty(\X).
\]
Fix $f,g$ and use the weak$^*$-density of the set of $h$'s in $W^{1,2}\cap L^\infty(\X)$ in $L^\infty(\X)$ to deduce that \eqref{eq:simmdfgg} holds for $f,g\in W^{1,2}\cap L^\infty(\X)$. Conclude that the same identity holds for $f,g\in\s^2(\X)$ using the locality properties of differential and gradients and a truncation and cut-off argument.

\noindent{$\mathbf{(ii)\Rightarrow(iii)}$} Let $f,g\in\s^2(\X)$, $h\in L^\infty(\mm)$ and notice that the map $t\mapsto\frac12\int h|\d(f+tg)|^2\,\d\mm$ is Lipschitz. Computing its derivative using Proposition \ref{prop:compatibili}  we get
\[
\begin{split}
\int h(|\d (f+g)|^2-|\d f|^2)\,\d\mm&=\iint_0^1h\frac{\d}{\d t}|\d (f+t g)|^2\,\d t\,\d\mm=2\iint_0^1 h\,\d g(\nabla(f+tg))\,\d t\,\d\mm\\
\text{by \eqref{eq:simmdfgg}}\qquad&=2\iint_0^1 h\,\d(f+tg)(\nabla g)\,\d t\,\d\mm=\int h\,(2\,\d f(\nabla g)+|\d g|^2)\,\d\mm.
\end{split}
\]
Replacing $g$ with $-g$ and adding up, we see that $\s^2(\X)\ni f\mapsto \int h|\d f|^2\,\d\mm$ satisfies the parallelogram rule for any $h\in L^\infty(\mm)$ and from this arbitrariness and the linearity of the differential we further obtain that
\[
|\d f+\d g|^2+|\d f-\d g|^2=2\big(|\d f|^2+|\d g|^2\big),\qquad\mm\ae,\qquad\forall f,g\in \s^2(\X).
\]
By the very definition of $L^2(T^*\X)$-norm, we then see that the parallelogram rule is satisfied for forms of the kind $\sum_i\nchi_{A_i}\d f_i$ and since these are dense in $L^2(T^*\X)$ we deduce that $L^2(T^*\X)$, and thus its dual $L^2(T\X)$, is an Hilbert module.

\noindent{$\mathbf{(iii)\Rightarrow(iv)}$} By the Riesz theorem for Hilbert modules (Theorem \ref{thm:rhil}) we know that every vector field in $L^2(T\X)$ is of the form $L_\omega$ for some $\omega\in L^2(T^*\X)$, where $L_\omega(\omega'):=\la \omega,\omega'\ra$ for every $\omega'\in L^2(T^*\X)$. Pick $f\in\s^2(\X)$,  $X\in{\rm Grad}(f)$ and let $\omega\in L^2(T^*\X)$ such that $X=L_\omega$. Then by definition of ${\rm Grad}(f)$ we have $\la\omega,\d f\ra_{L^2(T^*\X)}=\frac12\|\d f\|^2_{L^2(T^*\X)}+\frac12\|\omega\|_{L^2(T^*\X)}^2$, which forces $\omega=\d f$. Thus the gradient is unique and linearly depends, by the linearity of the differential and of $\omega\mapsto L_\omega$, on the function, which is the thesis.

\noindent{$\mathbf{(iv)\Rightarrow(iii)}$} We know that $|\d f|^2= \d f(\nabla f)$ $\mm$-a.e.\  for any $f\in\s^2(\X)$. Hence from the linearity of the differential and using \eqref{eq:lingrad}, for any $f,g\in\s^2(\X)$ we get
\[
|\d(f+g)|^2=(\d f+\d g)(\nabla f+\nabla  g)=|\d f|^2+|\d g|^2+ \d f(\nabla g)+\d g(\nabla f),\qquad\mm\ae.
\]
Adding the analogous computation for $|\d(f-g)|^2$ we deduce  the parallelogram identity in $L^2(T^*\X)$ for forms of the kind  $\sum_i\nchi_{A_i}\d f_i$, $A_i\in\BB$, $f_i\in\s^2$ and since these are dense in $L^2(T^*\X)$ the claim is proved.

\noindent{$\mathbf{(iii)\Rightarrow(i)}$} Just recall that $\E(f)=\frac12\|\d f\|_{L^2(T^*\X)}^2$ and use the fact that $(L^2(T^*\X),\|\cdot\|_{L^2(T^*\X)})$ is an Hilbert space to conclude that $\E$ satisfies the parallelogram identity.

\noindent{$\mathbf{(iv)\Leftrightarrow(v)}$} With a truncation argument and recalling the locality property \eqref{eq:localgrad00} we see that \eqref{eq:lingrad} holds if and only if it holds for any $f,g\in\s^2\cap L^\infty(\X)$. Pick such $f,g$, let $f':=e^{f}$ and $g':=e^g$, notice that $f',g'\in\s^2\cap L^\infty(\X)$ and that on arbitrary infinitesimally strictly convex spaces, the chain rule \eqref{eq:chaingrad00} yields
\[
\begin{split}
f'g'\nabla(f+g)&=f'g'\nabla\log(f'g')=\nabla(f'g'),\\
f'g'(\nabla f+\nabla g)&=f'\nabla g'+g'\nabla f'.
\end{split}
\]
Thus if \eqref{eq:lingrad} holds the left hand sides of the above identities coincide, thus also the right hand sides coincide and \eqref{eq:leibgrad} is proved. Reversing the argument we obtain the converse implication.
\end{proof}
On infinitesimally Hilbertian spaces, the energy $\E:L^2(\mm)\to[0,\infty]$ is a Dirichlet form and the above proposition ensures that this forms admits a carr\'e du champ given precisely by $\la \nabla f,\nabla g\ra$, where $\la\cdot,\cdot\ra$ is the pointwise scalar product on the Hilbert module $L^2(T\X)$.

It is then easy to see (see also the arguments in Section \ref{se:clap}) that the Laplacian  $\Delta$ and its domain $D(\Delta)\subset W^{1,2}(\X)$ as we defined at the end of Section \ref{se:presob} can be equivalently characterized in the following more familiar way:
\begin{equation}
\label{eq:equivlapl}
f\in D(\Delta)\text{ and }h=\Delta f\qquad\Leftrightarrow\qquad \int gh\,\d\mm=-\int\la\nabla g,\nabla f\ra\,\d\mm\quad\forall g\in W^{1,2}(\X),
\end{equation}
which in particular shows that $D(\Delta)$ is a vector space and $\Delta:D(\Delta)\to L^2(\mm)$ a linear operator. 

A further consequence of infinitesimal Hilbertianity, and in particular of the Leibniz rule for gradients, is that we have the Leibniz rule for the Laplacian. A version of this formula that we shall use later on is:
\begin{equation}
\label{eq:leiblap}
\left.
\begin{array}{l}
f,g\in D(\Delta)\cap L^\infty(\mm)\\
\\
|\d f|_*,|\d g|_*\in L^\infty(\mm)
\end{array}
\right\}
\qquad\Rightarrow\qquad 
\left\{
\begin{array}{l}
fg\in D(\Delta)\quad\text{ with }\\
\\
\Delta(fg)=f\Delta g+g\Delta f+2\la\nabla f,\nabla g\ra
\end{array}
\right.
\end{equation}
To check this, notice that the assumptions on $f,g$ grant that for any $h\in W^{1,2}(\X)$ we have $fh,gh\in W^{1,2}(\X)$ and that the expression for $\Delta(fg)$ is in $L^2(\mm)$, thus using the characterization \eqref{eq:equivlapl} for the Laplacians of $f,g$ we get
\[
\begin{split}
-\int\la \nabla h,\nabla(fg)\ra\,\d\mm&=-\int g\la\nabla h,\nabla f\ra+f\la\nabla h,\nabla g\ra\,\d\mm\\
&=\int- \la\nabla( h g),\nabla f\ra-\la\nabla( h f),\nabla g\ra+2h\la\nabla f,\nabla g\ra\,\d\mm\\
&=\int h g\Delta f+h f\Delta g+2h\la\nabla f,\nabla g\ra\,\d\mm,
\end{split}
\]
which is the claim.

\subsubsection{In which sense the norm on the tangent space induces the distance}\label{se:normdistance}
In a smooth Riemannian/Finslerian manifold there is a direct link between the norm on the tangent space and the distance on the manifold, as we have, by definition in fact, that
\begin{equation}
\label{eq:normdistsmooth}
\sfd^2(x,y)=\inf\int_0^1|\gamma'_t|^2\,\d t,
\end{equation}
the $\inf$ being taken among all smooth curves $\gamma$ on $[0,1]$ connecting $x$ to $y$, $\gamma'_t$ being the usual derivative of $\gamma$ at time $t$. As a consequence of this fact/definition we also have that the metric speed of a curve equals the norm of its derivative:
\begin{equation}
\label{eq:lengthsmooth}
|\dot\gamma_t| = |\gamma_t'|,\qquad{\rm a.e. }\ t.
\end{equation}
The question then is: can these identities be interpreted in the non-smooth setting so that a link between the metric and the differential side of the story can be established in this generality?

As we try to show in this section, the answer is affirmative. We shall propose two, different but related, interpretations of the latter identity \eqref{eq:lengthsmooth}: in the first (Theorem \ref{thm:spppi}) we see in which sense  \eqref{eq:lengthsmooth} holds for $\ppi$-a.e.\ curve $\gamma$ for any given test plan $\ppi$, while in the second (Theorem \ref{thm:conteq}) rather than studying curves of points we study curves of probability measures with bounded densities.

We then discuss the relations between these two approaches and see under which conditions and in which sense we can reobtain \eqref{eq:normdistsmooth} in metric measure spaces.

\vspace{1cm}

Given a test plan $\ppi$ we would like to define for  a.e.\ $t\in[0,1]$ and $\ppi$-a.e.\ $\gamma$ the tangent vector $\gamma_t'$, i.e.\ for a.e.\ $t$ we would like to be able to assign to $\ppi$-a.e.\ $\gamma$ an element $\gamma_t'$ of the `tangent space of $\X$ at $\gamma_t$'. To a give a rigorous meaning to such object,  we need to call into play the notion of pullback of the tangent module. Thus recall that $C([0,1],\X)$ equipped with $\sup$ distance is a complete and separable metric space and that a test plan $\ppi$ is a Borel measure on it such that for every $t\in[0,1]$ the map $\e_t:(C([0,1],\X),\ppi)\to (\X,\mm)$ has bounded compression (Definition \ref{def:bcompr}). Thus it makes sense to consider the pullback of the cotangent and tangent modules via this map: we shall denote them as $L^2(T^*\X,\ppi,\e_t)$ and $L^2(T\X,\ppi,\e_t)$ respectively.

Recall that by Theorem \ref{thm:dualpullback} we know that if $L^2(T\X)$ is separable, then $L^2(T\X,\ppi,\e_t)$ is (=can be canonically identified with) the dual of $L^2(T^*\X,\ppi,\e_t)$. We then have the following result:
\begin{theorem}[The vector fields $\ppi'_t$]\label{thm:spppi} Let $(\X,\sfd,\mm)$ be such that $L^2(T\X)$ is separable with $\mm$ giving finite mass to bounded sets and $\ppi\in \prob{C([0,1],\X)}$ a test plan.

Then there exists a unique, up to  $\mathcal L^1$-a.e.\ negligible sets, family of  vector fields $\ppi'_t\in L^2(T\X,\ppi,\e_t)$ such that for  every $f\in W^{1,2}(\X)$ we have
\[
\lim_{h\to0 }\frac{f\circ \e_{t+h}-f\circ\e_{t}}{h}=(\e_t^*\d f)(\ppi'_t)\qquad\text{  for a.e.\ $t\in[0,1]$,}
\]
the limit being intended in the strong topology of $L^1(\ppi)$.

Moreover, the map $(\gamma,t)\mapsto |\ppi'_t|(\gamma)$ is (the equivalence class w.r.t.\ $\ppi\times\mathcal L^1\restr{0,1}$-a.e.\ equality of) a Borel map which satisfies
\begin{equation}
\label{eq:veluguali}
|\ppi'_t|(\gamma)=|\dot\gamma_t|,\qquad\ppi\times\mathcal L^1\restr{0,1}\ae\ (\gamma,t).
\end{equation}
\end{theorem}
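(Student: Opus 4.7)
The strategy is to construct $\ppi'_t$ as the vector field dual to differentiation along $\ppi$-a.e.\ curve, combining the generating property of differentials (Proposition \ref{prop:gencotan}) with the extension principle (Proposition \ref{prop:extension}).

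First, for each $f\in W^{1,2}(\X)$ I would apply the Sobolev defining inequality \eqref{eq:defsob} to every restricted-in-time test plan built from $\ppi$: a Fubini and Lebesgue-differentiation argument shows that for $\ppi$-a.e.\ $\gamma$ the function $t\mapsto f(\gamma_t)$ is absolutely continuous with $\bigl|\tfrac{d}{dt}f(\gamma_t)\bigr|\leq|\d f|_*(\gamma_t)|\dot\gamma_t|$ for a.e.\ $t$. Dominated convergence then upgrades the pointwise convergence of the incremental ratios to strong $L^1(\ppi)$-limits $D_f(t)$ for a.e.\ $t$, satisfying $|D_f(t)(\gamma)|\leq|\d f|_*(\gamma_t)|\dot\gamma_t|$ $\ppi$-a.e.

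Next, fix a value of $t$ at which the above holds and at which $\gamma\mapsto|\dot\gamma_t|$ lies in $L^2(\ppi)$ (which is the case for a.e.\ $t$ by Fubini and the test-plan condition). The prescription $L_t(\e_t^*\d f):=D_f(t)$ is a well-posed linear map on the span of $\{\e_t^*\d f:f\in W^{1,2}(\X)\}$ (equal pullbacks pinch $D_{f-g}(t)=0$ via the pointwise bound), and that estimate reads $|L_t(\omega)|\leq|\omega|_*\,|\dot\gamma_t|$. Since the span generates $\e_t^*L^2(T^*\X)$ as a module (Proposition \ref{prop:gencotan} and \eqref{eq:genpullback}), Proposition \ref{prop:extension} extends $L_t$ uniquely to a module morphism into $L^1(\ppi)$; the separability of $L^2(T\X)$ and Theorem \ref{thm:dualpullback} then identify this morphism with an element $\ppi'_t\in L^2(T\X,\ppi,\e_t)$ satisfying $|\ppi'_t|(\gamma)\leq|\dot\gamma_t|$ and fulfilling the differentiation formula of the statement. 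Uniqueness is inherited from the generating property.

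The main difficulty is the converse $|\ppi'_t|(\gamma)\geq|\dot\gamma_t|$. My plan is to feed suitably truncated distance functions into the formula just obtained. Fix a countable dense sequence $(x_i)\subset\X$ and a countable exhaustion of $\X$ by bounded Borel sets $(B_n)$; truncating $\sfd(\cdot,x_i)$ and multiplying by Lipschitz cut-offs as in the proof of Proposition \ref{prop:gencotan} yields $f_{i,n}\in W^{1,2}(\X)$ coinciding with $\sfd(\cdot,x_i)$ on $B_n$ and satisfying $|\d f_{i,n}|_*\leq 1$ $\mm$-a.e.\ there by \eqref{eq:lipweak}. The defining formula for $\ppi'_t$ then gives $|D_{f_{i,n}}(t)(\gamma)|\leq|\ppi'_t(\gamma)|$ whenever $\gamma_t\in B_n$, while for $y,z\in B_n$ one has $\sfd(y,z)=\sup_i|f_{i,n}(y)-f_{i,n}(z)|$. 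Taking the countable supremum of the previous bound, dividing by $|h|$, and invoking Theorem \ref{thm:112} to recognize the resulting limit as $|\dot\gamma_t|$ yields the desired inequality on $\{\gamma_t\in B_n\}$, and letting $n\to\infty$ concludes. The delicate point is the exchange between the countable supremum and the a.e.-in-$t$ limit, handled by reducing to a common full-measure set of good $t$'s for all the $f_{i,n}$'s simultaneously.

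Finally, the joint Borel measurability of $(\gamma,t)\mapsto|\ppi'_t|(\gamma)$ follows automatically from its identification with $(\gamma,t)\mapsto|\dot\gamma_t|$ and Theorem \ref{thm:112}, which presents the metric speed as a pointwise a.e.\ limit of explicit Borel incremental ratios.
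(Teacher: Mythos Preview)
Your strategy is essentially the paper's: construct $\ppi'_t$ by duality with pullbacks of differentials via Proposition \ref{prop:extension} and Theorem \ref{thm:dualpullback}, and obtain the reverse inequality through truncated distance functions. The reverse-inequality argument and the Borel-measurability deduction are fine and match the paper's.

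There are however two gaps in the forward construction.

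The first is the ``dominated convergence'' step. Granting that $t\mapsto f(\gamma_t)$ is absolutely continuous with derivative bounded by $|\d f|_*(\gamma_t)|\dot\gamma_t|$, the incremental ratio at time $t$ is controlled only by the Hardy--Littlewood maximal function (in time) of $s\mapsto|\d f|_*(\gamma_s)|\dot\gamma_s|$. For a generic $f\in W^{1,2}(\X)$ this product is merely in $L^1(0,1)$ for $\ppi$-a.e.\ $\gamma$, so its maximal function need not lie in $L^1(\ppi)$ at a fixed $t$, and no dominant is available. The paper avoids this by first treating functions admitting a Lipschitz representative $\bar f$ --- there the incremental ratio is bounded by $\Lip(\bar f)$ times an average of $|\dot\gamma_\cdot|\in L^2(\ppi\times\mathcal L^1)$, and the $L^2$-maximal inequality furnishes the dominant --- and only then passes to general $f\in W^{1,2}(\X)$ by density, using the uniform-in-$h$ bound \eqref{eq:irb} in terms of the maximal function $\bar m(t)$ of $t\mapsto(\int|\dot\gamma_t|^2\,\d\ppi)^{1/2}$.

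The second is that when you ``fix a value of $t$ at which the above holds'' and declare $L_t(\e_t^*\d f):=D_f(t)$ on the full span, you need $D_f(t)$ to exist for \emph{every} $f\in W^{1,2}(\X)$ at that common $t$; but the exceptional set of times a priori depends on $f$. The paper resolves this by deducing separability of $W^{1,2}(\X)$ from that of $L^2(T\X)$, fixing a countable dense $\Q$-subspace $D\subset W^{1,2}(\X)$, choosing one full-measure set $A\subset[0,1]$ on which $\bar m(t)<\infty$ and the limits exist for all $f\in D$, and then extending $L_t$ from $D$ to $W^{1,2}(\X)$ via the equicontinuity encoded in \eqref{eq:irb}. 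You invoke a ``common good set of times'' only at the very end for the reverse inequality; it is equally essential here.
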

\begin{proof} It will be technically convenient for the proof to consider a precise representative for the metric speed, thus for $\gamma$ absolutely continuous put
\[
|\dot{\bar\gamma}_t|:=\lim_{h\to 0}\frac{\sfd(\gamma_{t+h},\gamma_t)}{|h|},\qquad\text{if the limit exists, 0 otherwise,}
\]
and recall that Theorem \ref{thm:112} ensures that $|\dot{\bar\gamma}_t|=|\dot\gamma_t|$ for a.e.\ $t$.

Let $\bar m(t)$ be the centered maximal function of the $L^1(0,1)$ map $t\mapsto\sqrt{\int|\dot\gamma_t|^2\,\d\ppi(\gamma)}$ and notice that
\begin{equation}
\label{eq:maxf}
\sqrt{\int_t^{t+h}\int|\dot\gamma_s|^2\,\d\ppi(\gamma)\,\d s}\leq \sqrt{\int_{t-h}^{t+h}\int|\dot\gamma_s|^2\,\d\ppi(\gamma)\,\d s}\leq \sqrt{2h\bar m(t)},\qquad\forall t\in[0,1].
\end{equation}
For $f\in W^{1,2}(\X)$ and $t,h\in[0,1]$ such that $t+h\in[0,1]$ consider the incremental ratio ${\rm IR}(f;t,h):=\frac{f\circ\e_{t+h}-f\circ\e_t}{h}\in L^1(\ppi)$ and notice that from the very definition of test plan and Sobolev function we have
\begin{equation}
\label{eq:irb}
\begin{split}
\|{\rm IR}(f;t,h)\|_{L^1(\sppi)}&\leq\frac1h\iint_t^{t+h} \weakgrad f (\gamma_s)|\dot\gamma_s|\,\d\ppi(\gamma)\,\d s\\
&\leq\frac1h\sqrt{\iint_t^{t+h} \weakgrad f ^2(\gamma_s)\,\d\ppi(\gamma)\,\d s}\,\sqrt{\iint_t^{t+h} |\dot\gamma_s|^2\,\d\ppi(\gamma)\,\d s}\\
\text{using \eqref{eq:maxf}}\qquad&\leq\sqrt{2\,\cf(\ppi)\bar m(t)}\|f\|_{W^{1,2}(\X)}.
\end{split}
\end{equation}
Now observe that if $f\in W^{1,2}(\X)$ has a Lipschitz continuous representative $\bar f$, then since $\ppi$ is concentrated on absolutely continuous curves, for $\ppi$-a.e.\ $\gamma$ the map $t\mapsto \bar f(\gamma_t)$ is absolutely continuous and satisfies
\[
|\frac{\d}{\d t}\bar f(\gamma_t)|\leq {\rm lip} (\bar f)(\gamma_t)|\dot {\bar \gamma}_t|,\qquad{\rm a.e. }\ t.
\]
Thus by Fubini and using the dominate convergence theorem we get that for a.e.\ $t\in[0,1]$
\begin{equation}
\label{eq:limir}
\begin{split}
&{\rm IR}(f;t,h)\text{ has a strong limit ${\rm Der}(f,t)$ in $L^1(\ppi)$ as $h\downarrow0$}\\
&\text{and the limit is bounded in modulus by }{\rm lip} (\bar f)(\gamma_t)|\dot {\bar\gamma}_t|\text{ for $\ppi\ae$ $\gamma$}
\end{split}
\end{equation}
Now let $f\in W^{1,2}(\X)$ and $(f_n)\subset W^{1,2}(\X)$ be a sequence $W^{1,2}$-converging to $f$ made of functions with Lipschitz representatives (recall \eqref{eq:aplip2}). Then for every $t\in[0,1]$ such that  $\bar m(t)<\infty$ and  ${\rm IR}(f_n;t,h)$ has a strong limit in $L^1(\ppi)$ as $h\downarrow0$ for every $n\in\N$, from \eqref{eq:irb} we have
\[
\begin{split}
\lims_{h,h'\downarrow 0}\|{\rm IR}(f;t,h)-{\rm IR}(f;t,h')\|_{L^1(\sppi)}&\leq\lims_{h,h'\downarrow 0}\|{\rm IR}(f_n;t,h)-{\rm IR}(f_n;t,h')\|_{L^1(\sppi)}\\
&\qquad\qquad+ 2\sqrt{2C(\ppi)\bar m(t)}\|f-f_n\|_{W^{1,2}(\X)}\\
&=2\sqrt{2C(\ppi)\bar m(t)}\|f-f_n\|_{W^{1,2}(\X)},
\end{split}
\]
so that letting $n\to\infty$ we deduce that ${\rm IR}(f;t,h)$ strongly converges to some ${\rm Der}(f,t)$ in $L^1(\ppi)$ as $h\downarrow0$. The same kind of computation also shows that ${\rm Der}(f,t)=\lim_{n\to\infty}{\rm Der}(f_n,t)$, the limit being intended in $L^1(\ppi)$.

Using again \eqref{eq:aplip2} we see that the approximating sequence $(f_n)$ can be chosen so that ${\rm lip}(\bar f_n)\to \weakgrad f $ in $L^2(\mm)$ as $n\to\infty$, which implies that ${\rm lip}(\bar f_n)(\gamma_t)|\dot{\bar \gamma}_t|\to \weakgrad f(\gamma_t) |\dot{\bar\gamma}_t|$ in $L^1(\ppi)$ as $n\to\infty$ for a.e.\ $t$. Thus passing to the limit in the bound given in \eqref{eq:limir} we deduce that
\begin{equation}
\label{eq:bder}
|{\rm Der}(f,t)|(\gamma)\leq \weakgrad f (\gamma_t) |\dot{\bar\gamma}_t|,\qquad\ppi\ae\ \gamma.
\end{equation}
Repeating the argument for $h\uparrow0$ we thus proved that for every $f\in W^{1,2}(\X)$ the following holds: for a.e.\ $t\in[0,1]$ we have ${\rm IR}(f;t,h)\to{\rm Der}(f,t)$ in $L^1(\ppi)$ as $h\downarrow 0$ for some ${\rm Der}(f,t)$ for which the bound \eqref{eq:bder} holds.

Now use the separability assumption on $L^2(T\X)$ and the fact that such space is isometric to the Banach dual of $L^2(T^*\X)$ to deduce that the latter is separable and therefore that $W^{1,2}(\X)$ is separable as well (recall the isometric embedding $f\mapsto (f,\d f)$ of $W^{1,2}(\X)$ into $L^2(\mm)\times L^2(T^*\X)$). Then consider a countable dense $\Q$-vector space $D\subset W^{1,2}(\X)$ and let $A\subset[0,1]$ be a Borel set of full measure such that for every $t\in A$ we have $\bar m(t)<\infty$ and for every $f\in D$ the incremental ratios ${\rm IR}(f;t,h)$ converge to some ${\rm Der}(f,t)$ in $L^1(\ppi)$ as $h\to 0$ for which \eqref{eq:bder} holds.

The linearity of $f\mapsto{\rm IR}(f;t,h)$ ensures that for every $t\in A$ the map $D\ni f\mapsto {\rm Der}(f,t)\in L^1(\ppi)$ is $\Q$-linear, while  \eqref{eq:bder} grants that it is continuous. Hence it can be uniquely extended to a linear continuous map, still denoted by ${\rm Der}(\cdot,t)$ from $W^{1,2}(\X)$ to $L^1(\ppi)$. Using again the equicontinuity  of ${\rm IR}(\cdot;t,h)$ it is then clear that ${\rm IR}(f;t,h)\to {\rm Der}(f,t)$ in $L^1(\ppi)$ as $h\to 0$ for every $t\in A$ and that the bound \eqref{eq:bder} holds.

Now for $t\in A$ consider the vector space $V:=\{\e_t^*\d f\ :\ f\in W^{1,2}(\X)\}$, notice that by Proposition \ref{prop:gencotan} and property \eqref{eq:genpullback} it generates, in the sense of modules, the whole $L^2(T^*\X,\ppi,\e_t)$ and consider the linear map $L:V\to L^1(\ppi)$ sending $\e_t^*\d f$ to ${\rm Der}(f,t)$: the bound \eqref{eq:bder} grants that this map is well defined and we can restate such bound as
\begin{equation}
\label{eq:bder2}
|L(\e_t^*\d f)|(\gamma)\leq |\e_t^*\d f|(\gamma)\,|\dot{\bar\gamma}_t|.
\end{equation}
By Proposition \ref{prop:extension} we then deduce that $L$ can be uniquely extended to a an  element of the dual module of $L^2(T^*\X,\ppi,\e_t)$ and the assumption of separability  of $L^2(T\X)$ and Theorem \ref{thm:dualpullback} grants that such element is in fact an element $\ppi'_t$ of the pullback of the dual module $L^2(T\X,\ppi,\e_t)$. The Borel regularity of $(\gamma,t)\mapsto |\ppi'_t|(\gamma)$ follows from the duality formula
\[
\frac12|\ppi'_t|^2(\gamma)=\esssup_{f\in W^{1,2}(\X)}\e_t^*\d f(\ppi'_t)-\frac12\weakgrad f^2(\gamma_t)=\esssup_{f\in W^{1,2}(\X)}{\rm Der}(f,t)(\gamma)-\frac12\weakgrad f^2(\gamma_t),
\]
the essential supremum being intended w.r.t.\ $\ppi\times\mathcal L^1\restr{[0,1]}$. Using again  Proposition \ref{prop:extension} and the bound \eqref{eq:bder2} we get the inequality $\leq$ in \eqref{eq:veluguali}.

To prove the opposite inequality notice that by definition we have
\[
|{\rm Der}(f,t)|(\gamma)=|\e_t^*\d f(\ppi'_t)(\gamma)|\leq \weakgrad f(\gamma_t)|\ppi'_t|(\gamma),\qquad\ppi\times\mathcal L^1\restr{[0,1]}\ae\ (\gamma,t),
\]
so that for every $f\in W^{1,2}(\X)$ with a 1-Lipschitz continuous representative $\bar f$ we have
\[
\frac{\d}{\d t}\bar f(\gamma_t)\leq |\ppi'_t|(\gamma),\qquad\ppi\times\mathcal L^1\restr{[0,1]}\ae\ (\gamma,t).
\]
To conclude it is therefore sufficient to show that there exists a countable family $D'\subset W^{1,2}(\X)$ of functions having 1-Lipschitz representatives such that for every absolutely continuous curve $\gamma$ it holds
\begin{equation}
\label{eq:persar}
\sup_{\bar f\in D'}\frac{\d}{\d t}\bar f(\gamma_t)\geq |\dot\gamma_t|,\qquad{\rm a.e. }\ t.
\end{equation}

This follows by standard means, indeed assume for a moment that bounded subsets of $\X$ have finite $\mm$-measure, let $(x_n)\subset\X$ be countable and dense and define $\bar f_n:=\max\{1-\sfd(\cdot,x_n),0\}$. Then clearly the $\bar f_n$'s are 1-Lipschitz representatives of functions in $W^{1,2}(\X)$. Now fix an absolutely continuous curve $\gamma$, and notice that for every $t\in[0,1]$ the very definition of $\bar f_n$ ensures that for $s\in[t,1]$ sufficiently close to $t$ it holds $\sfd(\gamma_t,\gamma_s)= \sup_n\bar f_n(\gamma_t)-\bar f_n(\gamma_s)$ and therefore 
\[
\sfd(\gamma_t,\gamma_s)= \sup_n\bar f_n(\gamma_t)-\bar f_n(\gamma_s)\leq\sup_n\int_t^s\frac{\d}{\d r}\bar f_n(\gamma_r)\,\d r\leq\int_t^s\sup_n\frac{\d}{\d r}\bar f_n(\gamma_r)\,\d r.
\]
A simple application of the triangle inequality shows that the above inequality holds for every $t<s$ and thus, by the very definition \ref{def:speedcurve} of metric speed, the claim \eqref{eq:persar} and the conclusion.  

To drop the assumption that bounded sets have finite mass, use the Lindel\"of property of $(\X,\sfd)$ to cover $\X$ with countably many balls of finite mass and replicate the above argument in each of the balls.
\end{proof}
\begin{remark}\label{rem:sugo}{\rm
In the special case in which the evaluation maps $\e_t$ are $\ppi$-a.e.\ invertible, from the functorial property of the operation of pullback of a module we see that in this case it is not necessary to consider any pullback at all. Indeed, let $L^0(T\X)$ be the $L^0(\mm)$-module $\MM^0$ obtained as indicated in Section \ref{se:altint} from the module $\MM:=L^2(T\X)$, put $\mu_t:=(\e_t)_*\ppi$, let $L^2(T\X,\mu_t)\subset L^0(T\X)$ be the $L^2(\mu_t)$-normed module of elements with finite $L^2(\mu_t)$ norm and assume that for some Borel map $\e_t^{\rm inv}:\X\to C([0,1],\X)$ we have $\e_t(\e_t^{\rm inv})=\Id_\X$ $\mu_t$-a.e.\ and $\e_t^{\rm inv}(\e_t)=\Id_{C([0,1],\X)}$ $\ppi$-a.e. for every $t\in[0,1]$. 

Then the modules $L^2(T\X,\ppi,\e_t)$ and $L^2(T\X,\mu_t)$ can be identified via the invertible map $X\mapsto (\e_t^{\rm inv})^*X$ and  the vector fields $\ppi_t'\in L^2(T\X,\mu_t)$ can be defined via their actions on Sobolev functions $f\in W^{1,2}(\X)$ via the formula
\[
\d f(\ppi'_t)=\lim_{h\to 0}\frac{f\circ\e_{t+h}\circ\e_t^{\rm inv}-f}{h}.
\]
}\fr\end{remark}
\begin{problem}[Vector fields as derivations along curves]\label{op:vc}{\rm
One can `reverse' the content of this last theorem and ask whether given a vector field $X\in L^2(T\X)$ and a measure $\mu\in\prob{\X}$ such that $\mu\leq C\mm$ for some $C>0$ there exists a test plan $\ppi$ such that $(\e_0)_*\ppi=\mu$ and $\ppi'_0=\e_0^*X$. Notice that part of the problem here is to show that it makes sense to speak about the vector field $\ppi_t'$ at the time $t=0$.  In other words: can we see every vector field as derivation at time 0 along a certain family of curves? 

For gradient vector fields and infinitesimally strictly convex spaces, a result in \cite{Gigli12} ensures a positive answer to this question, but the general case is open. See also Remark \ref{rem:rlf}.
}\fr\end{problem}

We turn to the interpretation of \eqref{eq:lengthsmooth} in terms of curves of probability measures. We refer to \cite{Villani09} and \cite{AmbrosioGigli11}  for the definition of the quadratic transportation distance $W_2$ on the space $\probt\X$ of Borel probability measures with finite second moment.
\begin{definition}[Curves of bounded compression]\label{def:boundcompr}
We say that a curve $(\mu_t)\subset \probt{\X}$ is of bounded compression provided it is $W_2$-continuous and for some $C\geq 0$ we have $\mu_t\leq C\mm$ for every $t\in[0,1]$.
\end{definition}
From  Otto's interpretation \cite{Otto01} of the space $(\probt \X,W_2)$ as a sort of Riemannian manifold, when the base space $\X$ is a smooth Riemannian manifold, we know that we might consider as `velocity' of a curve of measures the `optimal' vector fields for which the continuity equation is satisfied, where optimality is intended in the sense of having minimal kinetic energy. 

We want to push this interpretation up to our framework, and to do so we have to start defining what are solutions of the continuity equation. To this aim, and as in Remark \ref{rem:sugo}, we introduce the space $L^0(T\X)$ as the $L^0(\mm)$-module $\MM^0$ obtained as indicated in Section \ref{se:altint} from the module $\MM:=L^2(T\X)$.
\begin{definition}[Distributional solutions of the continuity equation]\label{def:solconteq}
Let $(\mu_t)\subset \probt \X$ be a curve of bounded compression and $(X_t)\subset  L^0(T\X)$ a family of vector fields. We say that $(\mu_t,X_t)$ solves the continuity equation
\[
\frac\d{\d t}\mu_t+\nabla\cdot(X_t\mu_t)=0,
\]
provided:
\begin{itemize}
\item[i)] the map $t\mapsto\int |X_t|^2\,\d\mu_t$ is Borel and 
\[
\int_0^1\int|X_t|^2\,\d\mu_t\,\d t<\infty,
\]
\item[ii)]  for every $f\in W^{1,2}(\X)$ the map $t\mapsto\int f\,\d\mu_t$ is absolutely continuous and satisfies
\[
\frac{\d}{\d t}\int f\,\d\mu_t=\int \d f(X_t)\,\d\mu_t,\qquad \text{\rm a.e. }t\in[0,1].
\]
\end{itemize}
\end{definition}
We want to show that this genuinely differential notion of solution of the continuity equation completely characterizes $W_2$-absolutely continuous curves of bounded compression. In fact, the `hard work' to obtain this characterization has been already carried out in \cite{GigliHan13}, where the following statement has been proved:
\begin{theorem}\label{thm:bg}
Let $(\X,\sfd,\mm)$ be such that $W^{1,2}(\X)$ is separable and $(\mu_t)\subset \probt{\X}$ a curve of bounded compression. Then the following are equivalent:
\begin{itemize}
\item[i)] $(\mu_t)$ is $W_2$-absolutely continuous with $|\dot\mu_t|\in L^2(0,1)$,
\item[ii)] for every $f\in W^{1,2}(\X)$ the map $t\mapsto\int f\,\d\mu_t$ is absolutely continuous and there exists a Borel negligible set $\mathcal N\subset[0,1]$ and for each $t\in[0,1]\setminus\mathcal N$ a linear functional $L_t:\s^2(\X)\to \R $ ($\mathcal N$ and the $L_t$'s being independent on $f$) such that for $t\in[0,1]\setminus \mathcal N$ we have
\[
\frac{\d}{\d t}\int f\,\d\mu_t=L_t(f),\qquad\forall f\in W^{1,2}(\X),
\]
and moreover putting 
\[
\|L_t\|_{\mu_t}:=\sup |L_t(f)|,\text{ the $\sup$ being taken among all $f\in\s^2(\X)$ with } \int|\d f|_*^2\,\d\mu_t\leq 1,
\]
we have that $[0,1]\setminus\mathcal N\ni t\mapsto \|L_t\|_{\mu_t}$ is Borel with $\int_0^1\|L_t\|_{\mu_t}^2\,\d t<\infty$.
\end{itemize}
If these hold, we also have
\begin{equation}
\label{eq:thevo}
|\dot\mu_t|=\|L_t\|_{\mu_t}\qquad {\rm a.e.} \ t\in[0,1].
\end{equation}
\end{theorem}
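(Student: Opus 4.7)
The plan is to establish the two implications via transport-theoretic tools: Lisini's lifting theorem in one direction and the Hopf--Lax semigroup together with Kantorovich duality in the other. The equality \eqref{eq:thevo} will then drop out from the two matching one-sided inequalities produced by the two implications.

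For $(i)\Rightarrow(ii)$: I would first invoke Lisini's theorem to obtain a probability $\ppi\in\prob{C([0,1],\X)}$ with $(\e_t)_*\ppi=\mu_t$ for every $t$ and $\int|\dot\gamma_t|^2\,\d\ppi(\gamma)=|\dot\mu_t|^2$ for a.e.\ $t$; the bounded compression of $(\mu_t)$ ensures that $\ppi$ is in fact a test plan in the sense of Section~\ref{se:presob}. Given $f\in W^{1,2}(\X)$ and $s<t$, the defining inequality of $\s^2(\X)$ and Cauchy--Schwarz yield
\[
\left|\int f\,\d\mu_t-\int f\,\d\mu_s\right|\le \iint_s^t \weakgrad f(\gamma_r)\,|\dot\gamma_r|\,\d r\,\d\ppi(\gamma)\le \int_s^t\sqrt{\textstyle\int\weakgrad f^2\,\d\mu_r}\,|\dot\mu_r|\,\d r,
\]
which simultaneously delivers absolute continuity of $t\mapsto\int f\,\d\mu_t$ and a pointwise bound on its derivative. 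Using separability of $W^{1,2}(\X)$, I pick a countable $\Q$-vector subspace $D\subset W^{1,2}(\X)$ dense in the Sobolev norm; for each $f\in D$ the derivative $\tfrac{\d}{\d t}\int f\,\d\mu_t$ exists off a single $\mathcal L^1$-negligible set $\mathcal N$, and on $[0,1]\setminus\mathcal N$ the assignment $L_t\colon f\mapsto\tfrac{\d}{\d t}\int f\,\d\mu_t$ is $\Q$-linear and continuous with respect to the seminorm $f\mapsto\sqrt{\int\weakgrad f^2\,\d\mu_t}$. This lets me extend $L_t$ to a linear functional on all of $\s^2(\X)$ with $\|L_t\|_{\mu_t}\le|\dot\mu_t|$. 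Borel measurability of $t\mapsto\|L_t\|_{\mu_t}$ follows from its representation as an essential supremum over the countable family $D$.

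For $(ii)\Rightarrow(i)$: the goal is the reverse bound $|\dot\mu_t|\le\|L_t\|_{\mu_t}$ a.e., which I plan to extract via the Hopf--Lax semigroup $(Q_r)_{r\ge 0}$ on $(\X,\sfd)$. Fix $s<t$, let $\varphi$ be a Kantorovich potential with $\tfrac12 W_2^2(\mu_s,\mu_t)=\int Q_{t-s}\varphi\,\d\mu_t-\int\varphi\,\d\mu_s$, and consider $r\mapsto\int Q_{r-s}\varphi\,\d\mu_r$ on $[s,t]$. The classical subsolution inequality for the Hopf--Lax flow gives an upper bound on $\partial_r Q_{r-s}\varphi$ in terms of $|\d Q_{r-s}\varphi|_*^2$, while $(ii)$ applied to suitable $W^{1,2}(\X)$-truncations of $Q_{r-s}\varphi$ (which exist thanks to $\mu_r\le C\mm$) controls the spatial derivative. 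Combining the two and using $ab\le \tfrac12 a^2+\tfrac12 b^2$ produces
\[
\frac{\d}{\d r}\int Q_{r-s}\varphi\,\d\mu_r\le L_r(Q_{r-s}\varphi)-\frac12\int |\d Q_{r-s}\varphi|_*^2\,\d\mu_r\le \frac12\|L_r\|_{\mu_r}^2.
\]
Integrating in $r\in[s,t]$ and invoking Kantorovich duality gives $\tfrac12 W_2^2(\mu_s,\mu_t)\le\tfrac12\int_s^t\|L_r\|_{\mu_r}^2\,\d r$; applying this to every subinterval and using Cauchy--Schwarz yields the linear bound $W_2(\mu_s,\mu_t)\le\int_s^t\|L_r\|_{\mu_r}\,\d r$, whence $(\mu_t)$ is $W_2$-absolutely continuous with $|\dot\mu_r|\le\|L_r\|_{\mu_r}$ a.e.

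The main obstacle lies in the $(ii)\Rightarrow(i)$ step: making rigorous sense of the Hopf--Lax calculation on a general metric measure space. Two points require care. First, the subsolution inequality for $Q_r\varphi$ is most naturally stated in terms of the local Lipschitz constant $\mathrm{lip}(Q_r\varphi)$, while the calculus of Section~\ref{se:presob} and $(ii)$ naturally produce $|\d\,\cdot\,|_*$; bridging them exploits the inequality $|\d f|_*\le\mathrm{lip}(f)$ from \eqref{eq:lipweak}, together with the bounded compression $\mu_r\le C\mm$ which turns $L^2(\mm)$-information into $L^2(\mu_r)$-information. Second, the potentials $Q_{r-s}\varphi$ are in general only locally Lipschitz and not in $W^{1,2}(\X)$, so the differentiation of $r\mapsto\int Q_{r-s}\varphi\,\d\mu_r$ against $L_r$ must be done via a truncation and cut-off scheme, passing to the limit only at the end. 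Once both matching bounds are established, the equality \eqref{eq:thevo} is immediate.
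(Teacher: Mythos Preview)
The paper does not actually prove Theorem~\ref{thm:bg}: it is quoted as a result already established in \cite{GigliHan13} (see the sentence immediately preceding the statement: ``the `hard work' to obtain this characterization has been already carried out in \cite{GigliHan13}, where the following statement has been proved''), and is then used as a black box in the proof of Theorem~\ref{thm:conteq}. So there is no proof in the paper to compare against.

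That said, your outline is precisely the strategy employed in \cite{GigliHan13}: Lisini's superposition for $(i)\Rightarrow(ii)$, and the Hopf--Lax/Kantorovich-duality argument for $(ii)\Rightarrow(i)$. The obstacles you flag in the second implication --- passing from $\mathrm{lip}$ to $|\d\cdot|_*$ via \eqref{eq:lipweak} and the bounded-compression bound $\mu_r\le C\mm$, and the need to truncate/cut off the Hopf--Lax evolutes to land in $W^{1,2}(\X)$ --- are exactly the technical points that require work there. One small gap in your sketch: in $(i)\Rightarrow(ii)$ you build $L_t$ on a dense $D\subset W^{1,2}(\X)$ and then extend; note that the statement asks for $L_t$ defined on all of $\s^2(\X)$, not just $W^{1,2}(\X)$, so you must argue that the seminorm bound $|L_t(f)|\le|\dot\mu_t|\big(\int|\d f|_*^2\,\d\mu_t\big)^{1/2}$ allows extension to $\s^2(\X)$ (this works because the bound depends only on $\d f$, and every $f\in\s^2(\X)$ can be approximated by its truncations, which lie in $W^{1,2}(\X)$).
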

It is then not hard to interpret this result in terms of the above notion of solution of the continuity equation. Notice that the next theorem is fully equivalent to the analogous statement proved for the Euclidean space in \cite{AmbrosioGigliSavare08}, the only difference being in the requirement that the curve has bounded compression, which has the effect of `averaging out the unsmoothness of the space'.
\begin{theorem}[Continuity equation and $W_2$-absolutely continuous curves]\label{thm:conteq} Let $(\X,\sfd,\mm)$ be such that $W^{1,2}(\X)$ is separable and $(\mu_t)\subset\probt\X$ a curve of bounded compression. Then the following holds.
\begin{itemize}
\item[A)] Suppose that  $(\mu_t)$ is $W_2$-absolutely continuous with $\int_0^1|\dot\mu_t|^2\,\d t<\infty$. Then there is a family  $(X_t)\subset  L^0(T\X)$ such that $(\mu_t,X_t)$ solves the continuity equation in the sense of Definition \ref{def:solconteq} and such that
\[
\int|X_t|^2\,\d\mu_t\leq |\dot\mu_t|^2,\qquad{\rm a.e. }\ t\in[0,1].
\]
\item[B)] Conversely, suppose that $(\mu_t,X_t)$ solves the continuity equation in the sense of Definition \ref{def:solconteq}. Then $(\mu_t)$ is $W_2$-absolutely continuous and
\[
 |\dot\mu_t|^2\leq \int|X_t|^2\,\d\mu_t,\qquad{\rm a.e. }\ t\in[0,1].
\]
\end{itemize}
\end{theorem}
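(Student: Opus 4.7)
The easy direction is part (B): given a solution $(\mu_t,X_t)$ of the continuity equation, I would define for a.e.\ $t$ the linear functional $L_t:\s^2(\X)\to\R$ by $L_t(f):=\int \d f(X_t)\,\d\mu_t$. Cauchy--Schwarz in the pointwise pairing gives
\[
|L_t(f)|\leq \int|\d f|_*|X_t|\,\d\mu_t\leq\sqrt{\int|\d f|_*^2\,\d\mu_t}\,\sqrt{\int|X_t|^2\,\d\mu_t},
\]
so $\|L_t\|_{\mu_t}^2\leq\int|X_t|^2\,\d\mu_t$, and by condition (ii) of Definition \ref{def:solconteq} we have $\frac{\d}{\d t}\int f\,\d\mu_t=L_t(f)$ a.e.. Borel measurability of $t\mapsto\|L_t\|_{\mu_t}$ and its $L^2$-integrability follow from the integrability assumption on $t\mapsto\int|X_t|^2\,\d\mu_t$. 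Theorem \ref{thm:bg} then gives $W_2$-absolute continuity and \eqref{eq:thevo}, whence $|\dot\mu_t|^2\leq\int|X_t|^2\,\d\mu_t$ a.e..

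For part (A), I would start from Theorem \ref{thm:bg}, which provides the functionals $L_t:\s^2(\X)\to\R$ for $t\in[0,1]\setminus\mathcal N$ satisfying $\|L_t\|_{\mu_t}=|\dot\mu_t|$ a.e.. The task is to realize each $L_t$ as integration against a vector field $X_t\in L^2(T\X,\mu_t)\subset L^0(T\X)$, where $L^2(T\X,\mu_t)$ is the $L^2(\mu_t)$-normed module obtained from $L^2(T\X)$ as in the construction of Section \ref{se:altint} (note that $\mu_t\leq C\mm$ ensures these modules and the identifications below are available). Consider the linear map $\tilde L_t:\{\d f:f\in W^{1,2}(\X)\}\to\R$ sending $\d f$ to $L_t(f)$. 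Locality of $L_t$ on $\{f=0\}$-level sets (which follows from the chain rule argument inside the proof of Theorem \ref{thm:bg}, or more simply from the fact that $L_t$ extends continuously to $\d$-images as required by the norm bound) ensures this is well defined. Since $\{\d f:f\in W^{1,2}(\X)\}$ generates $L^2(T^*\X,\mu_t)$ in the sense of modules (this is Proposition \ref{prop:gencotan} pushed through the integrability change) and $\tilde L_t$ satisfies the bound $|\tilde L_t(\d f)|\leq \|L_t\|_{\mu_t}|\d f|_{*}$ against a fixed $L^2(\mu_t)$-function, I would apply the extension Proposition \ref{prop:extension} to obtain a module morphism $\tilde L_t\in(L^2(T^*\X,\mu_t))^*$. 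By the duality identification \eqref{eq:dualmmp} the latter is $L^2(T\X,\mu_t)$, giving $X_t\in L^2(T\X,\mu_t)$ with $|X_t|=\|L_t\|_{\mu_t}$ $\mu_t$-a.e., which by \eqref{eq:thevo} yields the desired $L^2$-bound on $|X_t|$.

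It remains to check the measurability requirement in Definition \ref{def:solconteq}(i): that $t\mapsto\int|X_t|^2\,\d\mu_t$ is Borel. Here I would use that $W^{1,2}(\X)$ is separable: picking a countable dense $\Q$-linear subspace $D\subset W^{1,2}(\X)$ and applying Proposition \ref{prop:normgen} on the module $L^2(T^*\X,\mu_t)$, one has the essential-supremum characterization
\[
\tfrac12|X_t|^2=\esssup_{f\in D}\bigl(\d f(X_t)-\tfrac12|\d f|_*^2\bigr)=\esssup_{f\in D}\bigl(L_t(f)-\tfrac12|\d f|_*^2\bigr)\qquad\mu_t\ae,
\]
and integrating gives $\int|X_t|^2\,\d\mu_t$ as a countable supremum of jointly Borel quantities in $t$, since $t\mapsto L_t(f)$ is measurable by construction (it is the derivative of $t\mapsto\int f\,\d\mu_t$) and $t\mapsto\int|\d f|_*^2\,\d\mu_t$ is Borel because $\mu_t$ depends $W_2$-continuously on $t$ and is uniformly bounded by $C\mm$.

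The main obstacle I anticipate is handling the joint measurability: building $X_t$ pointwise in $t$ via the extension argument does not a priori give a measurable selection, so one has to either exploit the above duality formula to recover measurability at the level of norms (which is what is needed for condition (i)) or perform the extension in a measurable-in-$t$ way from the outset. The former route, combined with the fact that Definition \ref{def:solconteq}(ii) only tests against a countable dense family, is the cleanest, and is the route I would pursue.
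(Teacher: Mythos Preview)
Your plan for part (B) is essentially the paper's argument, modulo the care needed to produce a single full-measure set of times on which $L_t$ is defined on all of $\s^2(\X)$: the derivative $\frac{\d}{\d t}\int f\,\d\mu_t$ exists a.e.\ with an exceptional set depending on $f$, so one fixes a countable dense $D\subset W^{1,2}(\X)$, defines $L_t$ on $D$ at a common full-measure set of $t$'s, and extends by continuity using the integrated bound. This is a routine patch.

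In part (A) there is a genuine gap. You propose to apply Proposition \ref{prop:extension} to $\tilde L_t$, claiming the bound $|\tilde L_t(\d f)|\leq \|L_t\|_{\mu_t}|\d f|_*$. But Proposition \ref{prop:extension} concerns linear maps $L:V\to L^1(\mm)$ with a \emph{pointwise} bound $|L(v)|\leq\ell|v|$ $\mm$-a.e., while your $\tilde L_t$ takes values in $\R$, not $L^1(\mu_t)$, and the only information provided by Theorem \ref{thm:bg} is the \emph{integrated} bound $|\tilde L_t(\d f)|\leq\|L_t\|_{\mu_t}\|\d f\|_{L^2(\mu_t,T^*\X)}$. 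Read as a pointwise inequality between the constant function $|L_t(f)|$ and $\|L_t\|_{\mu_t}|\d f|_*$, your claimed bound is false whenever $|\d f|_*$ vanishes on a set of positive $\mu_t$-measure while $L_t(f)\neq 0$. Relatedly, your conclusion $|X_t|=\|L_t\|_{\mu_t}$ $\mu_t$-a.e.\ does not follow and is not what is needed.

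The correct route---and what the paper does---is to regard $\tilde L_t$ as a bounded linear functional on the subspace $\{\d f:f\in\s^2(\X)\}$ of the Banach space $L^2(\mu_t,T^*\X)$, extend it to the whole space by Hahn--Banach preserving the operator norm, and then invoke the full-dual property of $L^2$-normed modules (Proposition \ref{prop:fulllp}) together with the identification \eqref{eq:dualmmp} to realize the extension as an element $X_t\in L^2(\mu_t,T\X)$. This yields the integrated estimate $\|X_t\|_{L^2(\mu_t,T\X)}\leq\|L_t\|_{\mu_t}=|\dot\mu_t|$, which is exactly what is required.
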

\begin{proof}$\ $\\
\noindent{$\mathbf{(A)}$} Use the implication $(i)\Rightarrow(ii)$ in Theorem \ref{thm:bg} to find $\mathcal N$ and operators $(L_t)$ as in the statement. Fix $t\in[0,1]\setminus\mathcal N$ such that $\|L_t\|_{\mu_t}<\infty$ and notice that this forces $L_t(f)=L_t(g)$ for any $f,g\in\s^2(\X)$ with $\d f=\d g$. Therefore the map $\tilde L_t:\{\d f:f\in\s^2(\X)\}\to\R$ given by $\tilde L_t(\d f):=L_t(f)$ is well defined and the definition ensures that it is linear and satisfies 
\[
|\tilde L_t(\d f)|=|L_t(f)|\leq \|L_t\|_{\mu_t}\sqrt{\int|\d f|^2\,\d\mu_t},\qquad\forall f\in\s^2(\X).
\]
Call $L^2(\mu_t,T^*\X)$ (resp. $L^2(\mu_t,T\X)$) the module $\MM_{2,\mu_t}$ built from $\MM:=L^2(T^*\X)$ (resp. $L^2(T\X)$) as in Section \ref{se:altint}, so that the above inequality can be restated as $|\tilde L_t(\d f)|\leq \|L_t\|_{\mu_t}\|\d f\|_{L^2(\mu_t,T^*\X)}$ and use the Hahn-Banach theorem to extend $\tilde L_t$ to a linear continuous map, still denoted by $\tilde L_t$, from $L^2(\mu_t,T^*\X)$ to $\R$ satisfying
\begin{equation}
\label{eq:thevoice}
|\tilde L_t(\omega)|\leq  \|L_t\|_{\mu_t}\|\omega\|_{L^2(\mu_t,T^*\X)},\qquad\forall \omega\in L^2(\mu_t,T^*\X).
\end{equation}
Thus $\tilde L_t$ is an element of the Banach dual of $L^2(\mu_t,T^*\X)$ and being this module $L^2(\mu_t)$-normed, by Proposition \ref{prop:fulllp} we can identify its Banach dual with its module dual and the latter can be identified, recalling \eqref{eq:dualmmp}, with the module $L^2(\mu_t,T\X)$, so that in summary we have $X_t\in L^2(\mu_t,T\X)$ such that
\[
\tilde L_t(\omega)=\int \omega(X_t)\,\d\mu_t,\qquad\forall\omega\in L^2(\mu_t,T^*\X).
\]
By construction, for every $f\in W^{1,2}(\X)$ we have
\[
\frac{\d}{\d t}\int f\,\d\mu_t=L_t(f)=\int \d f(X_t)\,\d\mm,
\]
for every $t\in[0,1]\setminus\mathcal N$ such that $\|L_t\|_{\mu_t}<\infty$ and the bound \eqref{eq:thevoice} and the identity \ref{eq:thevo} grant that $\|X_t\|_{L^2(\mu_t,T\X)}\leq \|L_t\|_{\mu_t}=|\dot\mu_t|$ for a.e.\ $t\in[0,1]$, so the proof is complete.

\noindent{$\mathbf{(B)}$} Let $D\subset W^{1,2}(\X)$ be a countable dense $\Q$-vector space and $A\subset [0,1]$ the set of Lebesgue points of $t\mapsto\int|X_t|^2\,\d\mu_t$ such that $s\mapsto\int f_n\,\d\mu_s$ is differentiable at $s=t$ for any $n\in\N$. Then $A$ is Borel and $\mathcal L^1(A)=1$. For $t\in A$ the map $L_t:D\to \R$ given by $\frac{\d}{\d s}\int f\,\d\mu_s\restr{s=t}$ is well defined, $\Q$-linear and dividing by $|h|$ and letting $h\to 0$ in the bound
\begin{equation}
\label{eq:percont}
\begin{split}
\Big|\int f\,\d(\mu_{t+h}- \mu_t)\Big|&\leq \Big|\int\limits_t^{t+h}\int\d f(X_s)\,\d\mu_s\,\d s\Big|\leq \sqrt{\int\limits_t^{t+h}\int|\d f|^2_*\,\d\mu_s\,\d s\int\limits_t^{t+h}\int|X_s|^2\,\d\mu_s\,\d s}\\
&\leq \sqrt{hC\int |\d f|_*^2\,\d\mm \int\limits_t^{t+h}\int|X_s|^2\,\d\mu_s\,\d s},
\end{split}
\end{equation}
valid for any $f\in W^{1,2}(\X)$, where $C$ is such that $\mu_t\leq C\mm$ for every $t\in[0,1]$, we see that $L_t:D\to\R$ is continuous w.r.t.\ the $W^{1,2}(\X)$ norm. Hence it can be extended in a unique way to a linear continuous functional from $W^{1,2}(\X)$ to $\R$, still denoted by $L_t$. 

For generic $f\in W^{1,2}(\X)$ and $t\in D$, writing the bound \eqref{eq:percont} with  $f-f_n$ in place of $f$, where $(f_n)\subset D$ converges to $f$, diving by $h$ and letting first $h\to 0$ and then $n\to\infty$, we see that  the identity  $\frac{\d}{\d s}\int f\,\d\mu_s\restr{s=t}=L_t(f)$ is valid for any $t\in A$ and $f\in W^{1,2}(\X)$.

Now notice that being $(\mu_t)$ weakly continuous in duality with $C_b(\X)$ and with densities uniformly bounded, it is continuous in duality with $L^1(\mm)$ and thus $t\mapsto \int|\d f|_*^2\,\d\mu_t$ is continuous for every $f\in W^{1,2}(\X)$. Hence  for $t\in A$ we have
\[
\begin{split}
\Big|\lim_{h\to 0}\frac1h\int f\,\d(\mu_{t+h}-\mu_t)\Big|&\leq\lim_{h\to0} \sqrt{\frac1h\int\limits_t^{t+h}\int|\d f|^2_*\,\d\mu_s\,\d s\frac1h\int\limits_t^{t+h}\int|X_s|^2\,\d\mu_s\,\d s}\\
&=\sqrt{\int|\d f|_*^2\,\d\mu_t\int|X_t|^2\,\d\mu_t},
\end{split}
\]
which shows that $\|L_t\|_{\mu_t}\leq \|X_t\|_{L^2(\mu_t,T\X)}$ for every $t\in A$. The conclusion then follows by the implication $(ii)\Rightarrow(i)$ in Theorem  \ref{thm:bg} and the identity \eqref{eq:thevo}.
\end{proof}
A trivial consequence of the above characterization of $W_2$-absolutely continuous curves is the following version of the Benamou-Brenier formula, which in particular allows to recover \eqref{eq:normdistsmooth}, under appropriate assumptions, in the non-smooth setting:
\begin{corollary}[Benamou-Brenier formula]
Let $(\X,\sfd,\mm)$ be such that $W^{1,2}(\X)$ is separable and $(\mu_t)$ a $W_2$-absolutely continuous curve with bounded compression with $|\dot\mu_t|\in L^2(\mm)$. Then
\[
\int_0^1 |\dot\mu_t|^2\,\d t=\inf \int_0^1\int |X_t|^2\,\d\mu_t\,\d t,
\]
where the $\inf$ is taken among all Borel maps $t\mapsto X_t\in L^0(T\X)$ such that $(\mu_t,X_t)$ solves the continuity equation in the sense of Definition \ref{def:solconteq} above.

In particular, if $\mu,\nu\in\probt \X$ are such that there exists a $W_2$-geodesic connecting them with bounded compression, then we have
\[
W_2^2(\mu,\nu)=\min \int_0^1\int |X_t|^2\,\d\mu_t\,\d t,
\]
where the $\min$ is taken among all solutions $(\mu_t,X_t)$ of the continuity equation such that $\mu_0=\mu$ and $\mu_1=\nu$.
\end{corollary}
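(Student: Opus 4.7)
The plan is to derive both formulas as direct consequences of Theorem \ref{thm:conteq}, which provides matching upper and lower bounds linking $|\dot\mu_t|$ and the $L^2(\mu_t)$-norm of a driving vector field.

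For the first identity, I would argue by two inequalities. To get $\inf \leq \int_0^1 |\dot\mu_t|^2 \, dt$, apply part $(A)$ of Theorem \ref{thm:conteq} to produce a Borel family $(X_t) \subset L^0(T\X)$ such that $(\mu_t, X_t)$ solves the continuity equation in the sense of Definition \ref{def:solconteq} and $\int |X_t|^2 \, d\mu_t \leq |\dot\mu_t|^2$ for a.e.\ $t$; integrating over $[0,1]$ and noting that the integrability assumption $|\dot\mu_t| \in L^2(0,1)$ is built into the $W_2$-absolute continuity hypothesis (implicit in `$W_2$-absolutely continuous curve' combined with the statement below on $(\mu_t,X_t)$ being a competitor), this $(X_t)$ is an admissible competitor in the infimum. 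For the reverse inequality, given any competitor $(\mu_t, X_t)$, part $(B)$ of Theorem \ref{thm:conteq} gives $|\dot\mu_t|^2 \leq \int |X_t|^2 \, d\mu_t$ for a.e.\ $t$, and integration yields the bound. Thus equality holds, and in fact the infimum is attained by the vector field constructed in part $(A)$.

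For the second identity, assume $(\bar\mu_t)$ is a $W_2$-geodesic with bounded compression joining $\mu$ to $\nu$. Since $(\bar\mu_t)$ is a $W_2$-geodesic, $|\dot{\bar\mu}_t| = W_2(\mu,\nu)$ for a.e.\ $t$, so $\int_0^1 |\dot{\bar\mu}_t|^2 \, dt = W_2^2(\mu,\nu)$. Applying the first identity to $(\bar\mu_t)$ and taking the vector field supplied by part $(A)$ of Theorem \ref{thm:conteq}, we obtain a competitor $(\bar\mu_t, \bar X_t)$ with
\[
\int_0^1 \int |\bar X_t|^2 \, d\bar\mu_t \, dt \leq \int_0^1 |\dot{\bar\mu}_t|^2 \, dt = W_2^2(\mu,\nu),
\]
so the minimum is $\leq W_2^2(\mu,\nu)$. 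Conversely, given any competitor $(\mu_t, X_t)$ with $\mu_0 = \mu$, $\mu_1 = \nu$, part $(B)$ of Theorem \ref{thm:conteq} ensures that $(\mu_t)$ is $W_2$-absolutely continuous with $|\dot\mu_t|^2 \leq \int |X_t|^2 \, d\mu_t$ a.e., and then the standard length-distance inequality combined with Cauchy--Schwarz gives
\[
W_2^2(\mu,\nu) \leq \Bigl(\int_0^1 |\dot\mu_t|\, dt\Bigr)^2 \leq \int_0^1 |\dot\mu_t|^2 \, dt \leq \int_0^1 \int |X_t|^2 \, d\mu_t \, dt,
\]
yielding the reverse bound. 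Combining both bounds and noting that the minimum is attained by $(\bar\mu_t, \bar X_t)$ concludes the proof.

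There is essentially no obstacle here: the entire statement is a bookkeeping reformulation of Theorem \ref{thm:conteq} once one observes that for a $W_2$-geodesic the metric speed is constant $\mathcal L^1$-a.e.\ and equals $W_2(\mu,\nu)$. The only point requiring a touch of care is checking that the competitor produced by part $(A)$ for the geodesic $(\bar\mu_t)$ really meets the integrability requirement in Definition \ref{def:solconteq}, which follows immediately from $\int |\bar X_t|^2\, d\bar\mu_t \leq |\dot{\bar\mu}_t|^2 = W_2^2(\mu,\nu) \in L^1(0,1)$.
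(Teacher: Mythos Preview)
Your proof is correct and follows essentially the same approach as the paper: both parts are read off directly from the two halves of Theorem \ref{thm:conteq}, with the second part using that a constant-speed $W_2$-geodesic has $|\dot{\bar\mu}_t|=W_2(\mu,\nu)$. Your version simply spells out the length--distance and Cauchy--Schwarz steps that the paper leaves implicit.
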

\begin{proof} For the first statement just notice that part $(B)$ of Theorem \ref{thm:conteq} grants that for any solution $(\mu_t,X_t)$ of the continuity equation we have $ \int_0^1|\dot\mu_t|^2\,\d t\leq \int_0^1\int|X_t|^2\,\d\mu_t\,\d t$, while part $(A)$ ensures that there is a choice of the $X_t$'s for which equality holds. 

Then for second statement inequality $\leq$ is obvious, while for $\geq$ it is sufficient to pick the geodesic given by the statement, parametrize it by constant speed and use  part $(A)$ of Theorem \ref{thm:conteq} again.
\end{proof}
It is worth to underline that even assuming the underlying metric space to be geodesic, we cannot expect the existence of many $W_2$-geodesics of bounded compression, the reason being that such notion requires a link between the metric, via the use of $W_2$, and the measure, via the requirement of bounded compression. Being these two unrelated on general metric measure structures, without further assumptions on the space we cannot expect many of such geodesics.

It is a remarkable result of Rajala \cite{Rajala12-2} the fact that on $\CD(K,\infty)$ spaces, $K\in\R$, for any couple of probability measures with bounded support and bounded density, a $W_2$-geodesic of bounded compression connecting them always exists.

\begin{remark}[The link between Theorems \ref{thm:spppi} and \ref{thm:conteq}]{\rm Arguing as in the proof of Theorem \ref{thm:dualpullback}, see in particular the definition \eqref{eq:prs}, we see that for any test plan $\ppi$ and every $t\in[0,1]$, we can build a linear continuous `projection' map ${\sf Pr}_{\e_t}:L^2(T\X,\ppi,\e_t)\to L^2(T\X,\mu_t)$, where $\mu_t:=(\e_t)_*\ppi$, satisfying
\[
|{\sf Pr}_{\e_t}(Y)|\circ\e_t\leq|Y|\quad\ppi\ae\qquad\text{ and }\qquad \int \e_t^*\omega(Y)\,\d\ppi=\int\omega({\sf Pr}_{\e_t}(Y))\,\d\mu_t,\quad\forall\omega\in L^2(T^*\X).
\]
Then noticing also that   $t\mapsto\mu_t:=(\e_t)_*\ppi$ is a $W_2$-absolutely continuous curve of bounded compression with speed in $L^2([0,1])$, we have
\[
\frac{\d}{\d t}\int f\,\d\mu_t=\frac{\d}{\d t}\int f\circ\e_t\,\d\ppi=\int \e_t^*\d f(\ppi'_t)\,\d\ppi=\int \d f({\sf Pr}_{\e_t}\ppi'_t)\,\d\mu_t,
\]
which shows that $X_t:={\sf Pr}_{\e_t}\ppi'_t$ is an admissible choice of vector fields in the continuity equation which also satisfies $\|X_t\|_{L^2(T\X,\mu_t)}\leq \|\ppi'_t\|_{L^2(T\X,\sppi,\e_t)}$ for a.e.\ $t$.
}\fr\end{remark}

\subsection{Maps of bounded deformation}\label{se:mbd}
In this section we introduce those mappings between metric measure spaces which, shortly said, play the role that Lipschitz mappings have in the theory of metric spaces. We then see how for these maps the notions of pullback of 1-forms and differential come out quite tautologically from the language we developed so far.

\vspace{1cm}

All metric measure spaces we will consider will be complete, separable and endowed with a non-negative Radon measure. We start with the following definition:
\begin{definition}[Maps of bounded deformation]
Let $(\X_1,\sfd_1,\mm_1)$ and $(\X_2,\sfd_2,\mm_2)$ be two metric measure spaces. A map of bounded deformation $\varphi:\X_2\to \X_1$ is (the equivalence class w.r.t.\ equality $\mm_2$-a.e.\ of) a Borel map for which there are constants $\lf(\varphi),\cf(\varphi)\geq 0$ such that
\[
\begin{split}
\sfd_1(\varphi(x),\varphi(y))&\leq \lf(\varphi)\,\sfd_2(x,y),\qquad\mm_2\times\mm_2\ae\  (x,y),\\
\varphi_*\mm_2&\leq \cf(\varphi)\mm_1
\end{split}
\]
In other words, a map of bounded deformation  is a map of bounded compression (Definition \ref{def:bcompr}) having a Lipschitz representative.
\end{definition}
\begin{remark}{\rm
The, pedantic, choice of considering equivalence classes of maps rather than declaring maps of bounded compression simply Lipschitz maps $\varphi$ satisfying $\varphi_*\mm_2\leq C\mm_1$ for some $C$, is only motivated by the desire of providing a notion invariant under modification $\mm_2$-a.e.. In particular, we want  all of what happens outside the support of the reference measures to be irrelevant.
}\fr\end{remark}
A map of bounded compression $\varphi:\X_2\to \X_1$ induces a continuous map, still denoted by $\varphi$, from $C([0,1],\supp(\mm_2))$ to $C([0,1],\supp(\mm_1))$ sending a curve $\gamma$ to $\bar \varphi\circ\gamma$, where $\bar\varphi$ is the Lipschitz representative of $\varphi$, which is uniquely determined on $\supp(\mm_2)$. By definition, the image of an absolutely continuous curve  $\gamma$ is still absolutely continuous and its metric speed is bounded by $\lf(\varphi)\,|\dot\gamma|$ for a.e.\ $t$.

Therefore noticing that a test plan $\ppi\in\prob{C([0,1],\X_2)}$ on $\X_2$ is concentrated on curves with values in $\supp(\mm_2)$, we see that for such plan the measure $\varphi_*\ppi\in\prob{C([0,1],\X_1)}$ is well defined. In fact, $\varphi_*\ppi$ is a test plan on $\X_1$ since
\[
\begin{split}
(\e_t)_*(\varphi_*\ppi)=\varphi_*((\e_t)_*\ppi)&\leq \varphi_*(\cf(\ppi)\mm_2)\leq \cf(\ppi)\cf(\varphi)\mm_2,\qquad\forall t\in[0,1],\\
\iint_0^1|\dot\gamma_t|^2\,\d t\,\d\varphi_*\ppi(\gamma)&\leq \lf(\varphi)^2\iint_0^1|\dot\gamma_t|^2\,\d t\,\d\ppi(\gamma).
\end{split}
\]
The fact that test plans are sent to test plans allows to deduce   by duality  that 
\begin{equation}
\label{eq:sobpull}
\begin{split}
 f\in\s^2(\X_1)\qquad\Rightarrow\qquad & f\circ \varphi\in \s^2(\X_2)\qquad\text{with}\text{$\qquad |\d (f\circ \varphi)|_*\leq\lf(\varphi)|\d f|_*\circ\varphi\qquad \mm_2\ae,$}
\end{split}
\end{equation}
indeed, for arbitrary $\ppi\in \prob{C([0,1],\X_2)}$ test plan and  $f\in\s^2(\X_1)$ we have
\[
\begin{split}
\int |f\circ \varphi(\gamma_1)-f\circ \varphi(\gamma_0)|\,\d\ppi(\gamma)&=\int |f(\gamma_1)-f(\gamma_0)|\,\d\varphi_*\ppi(\gamma)\\
&\leq\iint_0^1|\d f|_*(\gamma_t)|\dot\gamma_t|\,\d t\,\d \varphi_* \ppi(\gamma)\\
&\leq\lf(\varphi)\iint_0^1(|\d f|_*\circ \varphi)(\gamma_t)|\dot\gamma_t|\,\d t\,\d\ppi(\gamma),
\end{split}
\]
and  since the fact that $\varphi$ has bounded compression grants that $|\d f|_*\circ\varphi\in L^2(\X_2)$, the claim \eqref{eq:sobpull} follows.

A map of bounded deformation canonically induces a map  $\varphi^*:L^2(T^*\X_1)\to L^2(T^*\X_2)$ which we shall call {\bf pullback of 1-forms}, notice that the argument leading to the construction of $\varphi^*$ are similar to those used in Propositions \ref{prop:extension} and \ref{prop:univpullback}:
\begin{proposition}[Pullback of 1-forms]\label{prop:pull1form}
Let $\varphi:\X_2\to \X_1$ be of bounded deformation. Then there exists a unique linear and continuous map $\varphi^*:L^2(T^*\X_1)\to L^2(T^*\X_2)$ such that
\begin{equation}
\label{eq:pullback}
\begin{array}{rll}
\varphi^*(\d f)&\!\!\!\!=\d (f\circ \varphi),\qquad&\forall f\in \s^2(\X_1)\\
\varphi^*(g\omega)&\!\!\!\!=g\circ \varphi \ \varphi^*(\omega),\qquad&\forall g\in L^\infty(\mm_1),\ \omega\in L^2(T^*\X_1),
\end{array}
\end{equation} 
and such map satisfies 
\begin{equation}
\label{eq:risopull}
|\varphi^*\omega|_*\leq \lf(\varphi) \,|\omega|_*\circ\varphi,\qquad\mm_2\ae,\ \forall \omega\in L^2(T^*\X_1).
\end{equation}
\end{proposition}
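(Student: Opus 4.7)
The strategy is to define $\varphi^*$ first on a dense subspace of $L^2(T^*\X_1)$ by imitating the two required properties, then verify the pointwise bound \eqref{eq:risopull} directly from \eqref{eq:sobpull}, and finally extend by continuity. Uniqueness will be automatic because the two requirements in \eqref{eq:pullback} determine $\varphi^*$ on the dense subspace.

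\textbf{Step 1: definition on the dense subspace.} Recall that elements $[(f_i,A_i)_i]\in{\rm Pcm}/\!\sim$ form a dense subset of $L^2(T^*\X_1)$, and can be written suggestively as $\sum_i\nchi_{A_i}\d f_i$. For such an element I set
\[
\varphi^*\big([(f_i,A_i)_i]\big):=\big[(f_i\circ\varphi,\varphi^{-1}(A_i))_i\big],
\]
which is well defined as a candidate element of ${\rm Pcm}/\!\sim$ on $\X_2$: the Borel family $(\varphi^{-1}(A_i))$ is a partition of $\X_2$ (well-defined up to $\mm_2$-null sets since $\varphi_*\mm_2\leq\cf(\varphi)\mm_1$), the functions $f_i\circ\varphi$ belong to $\s^2(\X_2)$ by \eqref{eq:sobpull}, and the square-summability condition on the $\varphi^{-1}(A_i)$ follows from the same \eqref{eq:sobpull} together with $\varphi_*\mm_2\leq\cf(\varphi)\mm_1$.

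\textbf{Step 2: pointwise norm bound.} Directly from the definition of the pointwise norm on $L^2(T^*\X_2)$ and from \eqref{eq:sobpull} applied to each $f_i$, we get $\mm_2$-a.e.\
\[
\big|\varphi^*[(f_i,A_i)_i]\big|_*=\sum_i\nchi_{\varphi^{-1}(A_i)}\weakgrad{(f_i\circ\varphi)}\leq\lf(\varphi)\sum_i\nchi_{\varphi^{-1}(A_i)}\weakgrad{f_i}\circ\varphi=\lf(\varphi)\,\big|[(f_i,A_i)_i]\big|_*\circ\varphi.
\]
This bound simultaneously shows (i) well-posedness on the equivalence classes: if $\{(f_i,A_i)\}\sim\{(g_j,B_j)\}$, then $\weakgrad{(f_i-g_j)}=0$ $\mm_1$-a.e.\ on $A_i\cap B_j$, and \eqref{eq:sobpull} gives $\weakgrad{((f_i-g_j)\circ\varphi)}=0$ $\mm_2$-a.e.\ on $\varphi^{-1}(A_i)\cap\varphi^{-1}(B_j)$, hence the two pullbacks coincide; (ii) continuity from the dense subspace, since squaring, integrating, and using $\varphi_*\mm_2\leq\cf(\varphi)\mm_1$ yields $\|\varphi^*\omega\|_{L^2(T^*\X_2)}\leq\lf(\varphi)\sqrt{\cf(\varphi)}\|\omega\|_{L^2(T^*\X_1)}$.

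\textbf{Step 3: extension and properties.} Being linear and bounded on a dense subspace, $\varphi^*$ extends uniquely to a linear continuous map on $L^2(T^*\X_1)$, and both the pointwise bound \eqref{eq:risopull} and the identity $\varphi^*(\d f)=\d(f\circ\varphi)$ (which holds by construction on elements $[(f,\X_1)]$) pass to the completion because the pointwise norm is continuous as a map from $L^2(T^*\X_2)$ to $L^2(\mm_2)$ and the differential is $L^2$-continuous by Theorem~\ref{thm:closd}. For the $L^\infty(\mm_1)$-linearity in \eqref{eq:pullback}, one first checks it when $g$ is a simple function: this is immediate from the definition on ${\rm Pcm}/\!\sim$, using that $\nchi_A\cdot[(f_i,A_i)_i]=[(f_i,A\cap A_i)_i]$ and the fact that $(\nchi_A)\circ\varphi=\nchi_{\varphi^{-1}(A)}$. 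The extension to arbitrary $g\in L^\infty(\mm_1)$ follows by approximating $g$ with simple functions uniformly bounded by $\|g\|_{L^\infty}$ and using the continuity of multiplication in the $L^\infty$-modules $L^2(T^*\X_i)$ and the bound from Step~2.

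\textbf{Uniqueness and main obstacle.} Any map satisfying \eqref{eq:pullback} must agree with our $\varphi^*$ on every element of the form $\sum_{i=1}^n\nchi_{A_i}\d f_i$, and such elements are dense, so uniqueness follows from continuity. The main subtlety I anticipate is keeping track of the compatibility between the equivalence relation defining ${\rm Pcm}/\!\sim$ and the pullback, which is why Step~2 is organized so that the pointwise bound does double duty: it gives continuity \emph{and} well-posedness on equivalence classes in one stroke. Everything else is essentially a bookkeeping exercise given \eqref{eq:sobpull} and the density statement of Proposition~\ref{prop:gencotan}.
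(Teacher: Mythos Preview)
Your proof is correct and follows essentially the same route as the paper's: define $\varphi^*$ on the dense subspace of ``piecewise differentials'' via $\nchi_{A_i}\d f_i\mapsto \nchi_{\varphi^{-1}(A_i)}\d(f_i\circ\varphi)$, derive the pointwise bound \eqref{eq:risopull} from \eqref{eq:sobpull}, and extend by continuity. The only cosmetic differences are that you work directly with countable partitions in ${\rm Pcm}/\!\sim$ (the paper uses finite ones), you spell out the well-posedness on equivalence classes more explicitly, and your invocation of Theorem~\ref{thm:closd} in Step~3 is unnecessary since $\varphi^*(\d f)=\d(f\circ\varphi)$ already holds for every $f\in\s^2(\X_1)$ by construction rather than by a limiting argument.
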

\begin{proof}
Uniqueness follows from linearity, continuity and the requirements \eqref{eq:pullback} by recalling that $L^2(T^*\X_1)$ is generated by the differentials of functions in $\s^2(\X_1)$.

For existence, we declare that
\[
\varphi^*(\omega):=\sum_i\nchi_{\varphi^{-1}(E_i)} \, \d(f_i\circ\varphi),\\
\]
for $\omega=\sum_i\nchi_{E_i}\d f_i$ for some finite partition $(E_i)$ of $\X_1$ and $(f_i)\subset\s^2(\X_1)$. The bound
\[
|\varphi^*(\omega)|_*\leq \sum_i\nchi_{\varphi^{-1}(E_i)} \, |\d(f_i\circ\varphi)|_*\leq \lf(\varphi)\sum_i\nchi_{E_i}\circ\varphi \ |\d f_i|_*\circ\varphi=\lf (\varphi)\,|\omega|_*\circ\varphi,
\]
shows that such mapping is continuous and since the closure of the set of $\omega$'s considered is the whole $L^2(T^*\X_1)$, we see that there is a unique continuous extension of $\varphi^*$, still denoted $\varphi^*$, from  $L^2(T^*\X_1)$ to $L^2(T^*\X_2)$.   It is then clear that $\varphi^*$ is linear, satisfies the bound \eqref{eq:risopull} and the first in \eqref{eq:pullback}. The second in \eqref{eq:pullback} is then obtained for simple functions $g$ directly by definition and linearity, and then extended to the whole $L^\infty(\X_1)$ by approximation.
\end{proof}
\begin{remark}[The category $\mms$]{\rm We define  the  category $\mms$ of (complete, separable and equipped with a non-negative Radon measure) metric measure spaces where morphisms are maps of bounded deformation.

Then the map sending a metric measure space $(\X,\sfd,\mm)$ to its cotangent module $L^2(T^*\X)$ and a map of bounded deformation $\varphi$ into the couple $(\varphi,\varphi^*)$ is easily seen to be a functor from $\mms$ to the category ${\bf Mod}_{2-L^\infty}$ introduced in Remark \ref{rem:catpullback} (such functor being covariant, due to the choice of arrows).
}\fr\end{remark}
\begin{remark}{\rm
We point out that all the discussions in the section would work equally well if maps of bounded deformations where defined as maps of bounded compression for which \eqref{eq:sobpull} holds, thus focussing on transormations of Sobolev functions rather than of the distance, an approach which would be more in line with the language proposed here. 

Still, for what concerns $\RCD$ spaces, this distinction does not really matter, as a map of bounded compression between two $\RCD(K,\infty)$ spaces and for which \eqref{eq:sobpull} holds, in fact admits a Lipschitz continuous representative with Lipschitz constant $\leq \lf(\varphi)$, so that in this case the two approaches coincide (see \cite{Gigli13}).
}\fr\end{remark}
It is worth underlying that we have 2 different definitions of `pullback' of 1-forms. One is given by Proposition \ref{prop:pull1form} above, which takes forms in $L^2(T^*\X_1)$ and returns forms on $L^2(T^*\X_2)$, the other is the one involving the notion of pullback module as discussed in Section \ref{se:pullback}, which takes forms on $L^2(T^*\X_1)$ and returns an element in $\varphi^*(L^2(T^*\X_1))$. These two are different operations: to distinguish them, in the foregoing discussion we shall denote the second one as $\omega\mapsto[\varphi^*\omega]$ and keep the notation $\omega\mapsto\varphi^*\omega$ for the first one.

\bigskip

We come to the differential of a map of bounded deformation. Recall that in classical smooth differential geometry, given a smooth map $\varphi:M_2\to M_1$ between smooth manifolds, its differential $\d \varphi_x$ at a point $x\in M_2$ is the well defined linear map from $T_xM_2$ to $T_{\varphi (x)}M_1$ given by $\omega(\d\varphi_x(v)):=v(\varphi^*\omega)$ for any $v\in T_xM_2$ and $\omega\in T^*_{\varphi (x)}M_1$. Thus we can see the differential as taking tangent vectors and returning tangent vectors. The situation changes when looking at tangent vector fields $X$ on $M_2$, because the map $M_2\ni x_2\mapsto \d\varphi_x(X(x_2))\in T_{\varphi(x_2)}M_1$ is not a tangent vector field on $M_1$: it is rather the section of the pullback $\varphi^*TM_1$ of the tangent bundle $TM_1$ identified by the requirement
\begin{equation}
\label{eq:requdiff}
\begin{split}
[\varphi^*\omega](\d \varphi(X))&:= (\varphi^*\omega)(X).
\end{split}
\end{equation}

Coming back to metric measure spaces, we therefore see that we must expect the differential of a map of bounded deformation $\varphi:\X_2\to\X_1$ to take an element of $L^2(T\X_2)$ and return an element of the pullback module  $\varphi^*(L^2(T\X_1))$. Yet, there is a technical complication: the (analogous of) the requirement \eqref{eq:requdiff} only defines the object $\d \varphi(X)$ as element of the dual of $\varphi^*(L^2(T^*\X_1))$ and we don't know whether in general such space can be identified with the pullback $\varphi^*(L^2(T\X_1))$ of the dual module $L^2(T\X_1)$. Still,  thanks to Theorem \ref{thm:dualpullback} we know that such identification is possible at least when $L^2(T\X_1)$ is separable (which frequently happens). We thus have the following statement:
\begin{proposition}[Differential of a map of bounded deformation] Let $\varphi:\X_2\to\X_1$ be of bounded deformation and assume that $L^2(T\X_1)$ is separable.

Then there is a unique module morphism $\d\varphi:L^2(T\X_2)\to \varphi^*(L^2(T\X_1))$ such that 
\begin{equation}
\label{eq:pushvf}
[\varphi^*\omega]\big(\d\varphi(X)\big)=(\varphi^*\omega) (X),\qquad\forall \omega\in L^2(T^*\X_1),\ X\in L^2(T\X_2),
\end{equation}
and such morphism also satisfies
\begin{equation}
\label{eq:pushvf2}
|\d \varphi (X)|\leq \lf ( \varphi)|X|,\qquad\mm_2\ae,\qquad \forall  X\in L^2(T\X_2).
\end{equation}
\end{proposition}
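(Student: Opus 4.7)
The proof proceeds in three steps, combining Propositions \ref{prop:pull1form} and \ref{prop:univpullback} with the identification Theorem \ref{thm:dualpullback}. First, I would construct an auxiliary morphism $S : \varphi^*(L^2(T^*\X_1)) \to L^2(T^*\X_2)$ that `unpacks' a pullback-module element into the honest pullback of a 1-form. Applying Proposition \ref{prop:univpullback} with $\psi := \Id_{\X_2}$, $\MM := L^2(T^*\X_1)$, $\NN := L^2(T^*\X_2)$, and taking for $T$ the pullback map $\varphi^*$ from Proposition \ref{prop:pull1form}, the hypotheses \eqref{eq:ipperuniv} hold with constant $C = \lf(\varphi)$ thanks to the $L^\infty$-linearity \eqref{eq:pullback} and the bound \eqref{eq:risopull}. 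This yields a module morphism $S$ satisfying $S([\varphi^*\omega]) = \varphi^*\omega$ for every $\omega \in L^2(T^*\X_1)$ and obeying $|S(\eta)|_* \leq \lf(\varphi)\,|\eta|$ $\mm_2$-a.e.

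Next, for fixed $X \in L^2(T\X_2)$ I would consider the linear map $L_X : \varphi^*(L^2(T^*\X_1)) \to L^1(\mm_2)$ defined by $L_X(\eta) := S(\eta)(X)$. Combining the bound for $S$ with the trivial pointwise inequality $|S(\eta)(X)| \leq |S(\eta)|_*\,|X|$ yields
\[
|L_X(\eta)| \leq \lf(\varphi)\,|X|\,|\eta|,\qquad \mm_2\ae,
\]
with $|X| \in L^2(\mm_2)$. Point $(v)$ of Proposition \ref{prop:baselp} then certifies that $L_X$ is a module morphism, i.e., an element of $(\varphi^*(L^2(T^*\X_1)))^*$ whose pointwise dual norm is bounded above by $\lf(\varphi)|X|$ $\mm_2$-a.e.

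Finally I would invoke Theorem \ref{thm:dualpullback} applied to $\MM := L^2(T^*\X_1)$: its dual $\MM^* = L^2(T\X_1)$ is separable by hypothesis, and the underlying metric measure spaces satisfy the Polish/Borel assumptions of that theorem. The conclusion is that the canonical map ${\sf I}_\varphi : \varphi^*(L^2(T\X_1)) \to (\varphi^*(L^2(T^*\X_1)))^*$ of \eqref{eq:perdefsci} is an isometric isomorphism of modules. Setting
\[
\d\varphi(X) := {\sf I}_\varphi^{-1}(L_X) \in \varphi^*(L^2(T\X_1)),
\]
the isometric nature of ${\sf I}_\varphi$ delivers the pointwise bound \eqref{eq:pushvf2}, while the defining identity \eqref{eq:perdefsci} translates $L_X([\varphi^*\omega]) = (\varphi^*\omega)(X)$ into \eqref{eq:pushvf}. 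Linearity and $L^\infty(\mm_2)$-linearity of $X \mapsto \d\varphi(X)$ descend from the corresponding properties of $X \mapsto L_X$, so $\d\varphi$ is a module morphism. Uniqueness is then automatic: \eqref{eq:pushvf} together with $L^\infty$-linearity pins down the action of $L_X$ on elements of the form $\sum_i \nchi_{A_i}[\varphi^*\omega_i]$, and these generate $\varphi^*(L^2(T^*\X_1))$ by \eqref{eq:genpullback}; through the isomorphism ${\sf I}_\varphi$ this determines $\d\varphi(X)$ uniquely.

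I do not anticipate a serious obstacle, since the heavy lifting has been done in Theorem \ref{thm:dualpullback}. The main subtlety is purely notational: keeping the two competing `pullback of 1-forms' operations apart---the morphism $\varphi^*:L^2(T^*\X_1)\to L^2(T^*\X_2)$ of Proposition \ref{prop:pull1form} and the tautological map $\omega \mapsto [\varphi^*\omega] \in \varphi^*(L^2(T^*\X_1))$---and linking them through the morphism $S$ built in step one. This is exactly the point at which the bound \eqref{eq:risopull} gets used, whereas the separability hypothesis on $L^2(T\X_1)$ enters only in step three to identify the target of $\d\varphi$ with a dual module rather than with the Banach dual of $\varphi^*(L^2(T^*\X_1))$.
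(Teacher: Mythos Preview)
Your proof is correct and follows essentially the same route as the paper. The only organizational difference is that you first build the auxiliary module morphism $S:\varphi^*(L^2(T^*\X_1))\to L^2(T^*\X_2)$ via Proposition \ref{prop:univpullback} and then compose with evaluation at $X$, whereas the paper defines the functional $[\varphi^*\omega]\mapsto(\varphi^*\omega)(X)$ directly on the generating set and invokes Proposition \ref{prop:extension} to extend it; both arguments rest on the same pointwise bound \eqref{eq:risopull} and both conclude by the identification $(\varphi^*(L^2(T^*\X_1)))^*\sim\varphi^*(L^2(T\X_1))$ from Theorem \ref{thm:dualpullback}.
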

\begin{proof} Fix $X\in L^2(T\X_2)$ and notice that the map from the space $\{[\varphi^*\omega]:\omega\in L^2(T^*\X_1)\}$ to $L^1(\mm_2)$ assigning to $[\varphi^*\omega]$ the function $(\varphi^*\omega) (X)$ is linear and satisfies 
\[
|(\varphi^*\omega) (X)|\leq |\varphi^*\omega|_*|X|\leq \lf (\varphi)\,|\omega|_*\circ\varphi\,|X|= \lf(\varphi)\,|[\varphi^*\omega]|_*\,|X|.
\]
Thus by Proposition \ref{prop:extension}, and recalling that $(\varphi^*(L^2(T^*\X_1)))^*\sim \varphi^*(L^2(T\X_1))$ by Theorem \ref{thm:dualpullback} and our separability assumption, we see that there is a unique element of $\varphi^*(L^2(T\X_1))$, which we shall call $\d\varphi(X)$, such that \eqref{eq:pushvf} holds and for such $\d\varphi(X)$ the bound \eqref{eq:pushvf2} holds as well.

To conclude the proof is thus sufficient to show that $X\mapsto\d\varphi(X)$ is a module morphism: linearity is obvious and thus the bound \eqref{eq:pushvf2} and point $(v)$  in Proposition \ref{prop:baselp} give the conclusion.
\end{proof}
In the special case in which $\varphi$ is invertible, i.e.\ that there exists a map of bounded deformation $\psi:\X_1\to\X_2$ such that
\[
\varphi\circ\psi=\Id_{\X_1}\quad\mm_1\ae\qquad\text{ and }\qquad\psi\circ\varphi=\Id_{\X_2}\quad\mm_2\ae,
\]
one can canonically think the differential of $\varphi$ as a map from $L^2(T\X_2)$ to $L^2(T\X_1)$. In other words, in this case one can  avoiding mentioning the pullback module, in analogy with the smooth situation. This is due to the functorial property of the pullback of modules, which, for $\varphi,\psi$ as above, allows to identify $L^2(T^*\X_1)$ with $\varphi^*L^2(T^*\X_1)$ via the invertible map $\omega\mapsto[\varphi^*\omega]$. In practice, one can define the differential $\d\varphi(X)$ of $\varphi$ calculated on the vector field $X$ as the operator
\[
L^2(T^*\X_1)\ni \omega\qquad\mapsto\qquad \big([\varphi^*\omega](X)\big)\circ\psi\in L^1(\mm_1).
\]
In this case the definition is well posed regardless of the separability of $L^2(T\X_1)$.

\subsection{Some comments}
We collect here few informal comments about  our construction and the relations it has with previously defined differential structures on non-smooth setting.

\vspace{1cm}

\noindent{{\bf (1)}} We have already remarked that in the smooth setting our construction of the cotangent module is canonically identifiable with the space of $L^2$ sections of the cotangent bundle. The same identification also occurs in slightly less smooth situations, like Lipschitz manifolds or Sub-Riemannian ones, in the latter case the correspondence being with horizontal sections. Still, given that our  definitions are based on the notion of Sobolev functions, everything trivializes in spaces with non-interesting Sobolev analysis. This is the case, for instance, of discrete or fractal spaces and more generally of spaces admitting no non-constant Lipschitz curves: as can be checked directly from the definitions, in this case every real valued Borel function $f$ is in $\s^2(\X)$ with $\weakgrad f=0$, so that the cotangent module reduces to the 0 module. Of course, in these spaces a non-trivial calculus can be developed, but it must be tailored to the special structure/scale of the space considered and cannot fit the general framework developed here. The same phenomenon was observed  by Weaver in \cite{Weaver01}.

On the opposite side, it is important to underline that if the metric measure space is regular enough, then its structure is completely determined by Sobolev functions (much like a metric is determined by Lipschitz functions). For instance, it has been proved in \cite{Gigli13} that if $(\X_i,\sfd_i,\mm_i)$, $i=1,2$, are $\RCD(K,\infty)$ spaces and $\varphi:\X_2\to\X_1$ is an $\mm_2$-a.e.\ injective map, then 
\[
\text{$\varphi$ is an isomorphism}\qquad\Leftrightarrow\qquad \|f\circ\varphi\|_{W^{1,2}(\X_2)}=\|f\|_{W^{1,2}(\X_1)}\qquad\forall f:\X_1\to\R\ \text{ Borel,}
\]
where by isomorphism we intend that $\varphi_*\mm_2=\mm_1$ and $\sfd_1(\varphi(x),\varphi(y))=\sfd_2(x,y)$ for $\mm_2\times\mm_2$-a.e.\ $x,y$. In particular, in studying these spaces via the study of Sobolev functions defined on them, we are not losing any bit of information.

\bigskip

\noindent{{\bf (2)}} We built the cotangent module using Sobolev functions in $\s^2(\X)$ and their `modulus of distributional differential' $\weakgrad f$ as building block. Given that for every $p\in[1,\infty]$ one can define the analogous space $\s^p(\X)$ of functions of functions which, in the smooth case, would be those having distributional differential in $L^p$, one can wonder if anything changes with a different choice of Sobolev exponent. Without further assumptions, it does. This means that a priori the choice $p=2$ really matters, the point being the following. The definition of the space $\s^p(\X)$ (see e.g.\ \cite{AmbrosioGigliSavare11-3} and references therein) comes with an association to each function $f\in\s^p(\X)$ of a function, call it $\weakgradp fp$, in $L^p(\mm)$ playing the role of  the modulus of distributional differential of $f$ and obeying calculus rules analogous to those valid for $\weakgrad f=\weakgradp f2$. The unfortunate fact here is that, unlike the smooth case where the distributional differential is a priori given  and one can then wonder if it is in $L^p$, in general metric measure spaces the function $\weakgradp fp$ depends on $p$. There are indeed explicit examples of spaces $\X$ and functions $f\in\s^p\cap \s^{p'}(\X)$ for some $p\neq p'$ with $\weakgradp fp\neq \weakgradp f{p'}$ on a set of positive measure: see \cite{DiMarinoSpeight13} and references therein.

Due to this fact, in general we cannot expect any relation between the cotangent module as we defined it and the analogous one built starting from functions in $\s^p(\X)$ with $p\neq 2$.  In this sense, the choice $p=2$ is totally arbitrary and an approach based on a different value of $p$ would have equal dignity.

Still, in the case of $\RCD(K,\infty)$ spaces, the results in \cite{GigliHan14} show that no ambiguity occurs because the identity $\weakgradp fp=\weakgradp f{p'}$ holds $\mm$-a.e.\ for every $f\in\s^p\cap \s^{p'}(\X)$. This means that, up to taking care of the different integrability via the tools described in Section \ref{se:altint}, on such spaces there is truly only one cotangent module. Given that all our constructions have as scope the study of $\RCD$ spaces, the fact that on arbitrary spaces the cotangent module   might depend on the chosen Sobolev exponent is not really relevant and the presentation given via the case $p=2$ has been made only for convenience.

Another important situation where  $\weakgradp fp=\weakgradp f{p'}$   is when the space is doubling and supports a 1-1 Poincar\'e inequality (more generally, if a 1-$\bar p$ Poincar\'e inequality holds, then  $\weakgradp fp=\weakgradp f{p'}$   for every $p,p'\geq \bar p$). This is a consequence of the analysis done by Cheeger in \cite{Cheeger00}.

\bigskip

\noindent{{\bf (3)}} Another consequence of the analysis done by Cheeger in \cite{Cheeger00} is that on doubling spaces supporting a 1-2 Poincar\'e inequality the cotangent module is finite dimensional. Recall indeed the version of Rademacher's theorem proved in \cite{Cheeger00}:
\begin{theorem}\label{thm:cheeger}
Let $(\X,\sfd,\mm)$ be with doubling measure and supporting a 1-2 weak local Poincar\'e inequality. Then there exists $N\in\N$ depending only on the doubling and Poincar\'e constants such that the following holds. 

There is a countable family of disjoint Borel sets $(A_n)$ covering $\mm$-almost all $\X$ and for each $n\in\N$ Lipschitz functions $\bar f_{n,1},\ldots,\bar f_{n,N_n}$, $N_n\leq N$, from $\X$ to $\R$ such that for  any Lipschitz function $\bar f:\X\to\R$ there are $a_{n,i}\in L^\infty(\mm)$, $n\in\N$, $i=1,\ldots,N_n$, such that
\[
\begin{split}
{\rm lip}\Big(\bar f(\cdot)-\sum_{i=1}^{N_n}a_{n,i}(x)\bar f_{n,i}(\cdot)\Big)(x)=0,\qquad\text{\rm for }\mm\ae\ x\in A_n.
\end{split}
\]
\end{theorem}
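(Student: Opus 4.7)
The plan is to reproduce the core steps of Cheeger's original argument, which splits into an infinitesimal (pointwise) differentiation statement and a measurable-selection procedure yielding the global partition. I would first isolate, for each $x\in\X$, the linear space of germs of Lipschitz functions modulo the equivalence $\bar f\sim\bar g$ iff $\operatorname{lip}(\bar f-\bar g)(x)=0$, and introduce the notion of $k$ functions being \emph{$c$-independent at $x$} meaning $\operatorname{lip}\bigl(\sum_i\lambda_i\bar f_i\bigr)(x)\geq c|\lambda|$ for all $\lambda\in\R^k$. The whole game is to bound the maximal such $k$ uniformly by a constant $N$ depending only on the doubling and Poincaré data.

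The first main step (infinitesimal generalized linearity) is to show that for any Lipschitz $\bar f$ and $\mm$-a.e.\ $x$, the function $\bar f$ admits at every scale an affine-like approximation in terms of any fixed maximal independent family. Concretely, using the 1-2 Poincaré inequality together with a telescoping sum over dyadically shrinking balls, one proves the reverse Poincaré-type inequality $\operatorname{lip}\bar f(x)\lesssim \limsup_{r\downarrow 0}\fint_{B_r(x)}\weakgradp{\bar f}{2}\,\d\mm$ and, after a Lebesgue-differentiation argument, equality of $\operatorname{lip}\bar f$ with the minimal $2$-weak upper gradient $\mm$-a.e. This is the decisive consequence of the PI assumption and it upgrades the a.e.\ information provided by upper gradients to a pointwise statement about $\operatorname{lip}\bar f$.

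The second main step bounds $k$ at a.e.\ point by some $N=N(\text{doubling},\text{PI})$. Given $k$ functions $c$-independent at $x$, the previous step lets one, at each small scale $r$, view $(\bar f_1,\ldots,\bar f_k)$ as an almost-isometric embedding of $B_r(x)$ into $\R^k$ (with respect to a fixed norm on $\R^k$). Doubling forces the multiplicity of such an embedding to be controlled, and a packing/volume comparison — comparing the $\mm$-measure of $B_r(x)$ with the Lebesgue measure of a corresponding Euclidean box — produces the upper bound $k\leq N$. Then, defining $D(x):=\sup k$ as above and $E_j:=\{D=j\}$ for $j=0,\ldots,N$, each $E_j$ is Borel (the pointwise quantity is a countable essential supremum of Borel quantities) and one is reduced to building charts on each $E_j$ separately.

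The third and most delicate step is the measurable construction of $(A_n)$ and the chart functions, and then the global decomposition. On $E_j$ one shows, using a Vitali-type covering plus the Kuratowski-Ryll Nardzewski selection theorem applied to the multivalued map assigning to $x$ the family of all $j$-tuples that are $(c/2)$-independent at $x$ (for suitable $c$ quantitatively tied to $j$, doubling and PI), that $E_j$ can be written as a countable disjoint union of Borel sets $A_n$ on each of which a Borel-measurable choice of chart $(\bar f_{n,1},\ldots,\bar f_{n,j})$ exists and is uniformly independent. Given such charts, for an arbitrary Lipschitz $\bar f$ the pointwise maximality of the family forces, for $\mm$-a.e.\ $x\in A_n$, the existence of unique coefficients $a_{n,i}(x)$ with $\operatorname{lip}\bigl(\bar f-\sum_i a_{n,i}(x)\bar f_{n,i}\bigr)(x)=0$; the uniform independence on $A_n$ yields the quantitative bound $|a_{n,i}(x)|\leq C\,\operatorname{Lip}(\bar f)$ (hence $a_{n,i}\in L^\infty(\mm)$), and their measurability follows from the Borel dependence of the chart and of $\operatorname{lip}$. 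The main obstacle is precisely this last point: turning the pointwise, non-unique choice of coefficients into an $L^\infty$ measurable one, which crucially rests on the \emph{quantitative} (uniform-in-$x$) independence produced by the covering argument, not merely on the qualitative independence used to bound $N$.
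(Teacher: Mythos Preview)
The paper does not prove this theorem: it is simply quoted as the version of Rademacher's theorem established by Cheeger in \cite{Cheeger00}, and then used as a black box to derive the subsequent corollary on finite generation of the cotangent module. There is therefore no proof in the paper to compare your proposal against.

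Your sketch is a faithful outline of the strategy in Cheeger's original paper (bounding the pointwise dimension via doubling and Poincar\'e, then building charts by measurable selection), so as a summary of \cite{Cheeger00} it is reasonable, but none of this appears in the present paper.
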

It is then easy to see that the functions $a_i$ given in the statement can be interpreted as the coordinates of $\d f$ w.r.t.\ the basis $\{\d f_{n,1},\ldots,\d f_{n,N_n}\}$ on $A_n$ in the sense of Section \ref{se:locdim}:
\begin{corollary}
With the same assumptions and notation of Theorem \ref{thm:cheeger} above, the cotangent module $L^2(T^*\X)$ is finitely generated, its dimension being bounded by $N$.
\end{corollary}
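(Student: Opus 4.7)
The plan is to use Cheeger's chart functions as local generators of the cotangent module on each $A_n$ and to conclude the dimension bound via Proposition \ref{prop:localdimension} combined with Proposition \ref{prop:dimdec}. The heart of the matter is to prove the module identity
\[
\nchi_{A_n}\,\d \bar f\;=\;\nchi_{A_n}\sum_{i=1}^{N_n} a_{n,i}\,\d \bar f_{n,i}\qquad \text{in } L^2(T^*\X)
\]
for every Lipschitz $\bar f\in W^{1,2}(\X)$, with $a_{n,i}\in L^\infty(\mm)$ as furnished by Theorem \ref{thm:cheeger}.

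The main obstacle is that Cheeger's assertion is pointwise with the coefficients $a_{n,i}$ frozen at the base point $x$, whereas the module identity above requires these coefficients to vary with $x$. I would bridge this gap as follows: fix $\eps>0$ and partition $A_n$ into countably many Borel pieces $\{A_n^k\}$ on each of which every $a_{n,i}$ has oscillation at most $\eps$. Picking a reference point $x_k\in A_n^k$, the Lipschitz function $h_k(\cdot):=\bar f(\cdot)-\sum_i a_{n,i}(x_k)\bar f_{n,i}(\cdot)$ satisfies, by the triangle inequality for ${\rm lip}$ combined with Cheeger's pointwise statement, the bound ${\rm lip}(h_k)(x)\leq \eps\sum_i \Lip(\bar f_{n,i})=:C\eps$ for $\mm$-a.e.\ $x\in A_n^k$. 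Then \eqref{eq:lipweak} and \eqref{eq:diffuguali} translate this into the module bound $\bigl|\d \bar f-\sum_i a_{n,i}(x_k)\,\d \bar f_{n,i}\bigr|_*\leq C\eps$ on $A_n^k$, and replacing the constants $a_{n,i}(x_k)$ by the functions $a_{n,i}$ introduces an extra error of order $\eps \sum_i|\d \bar f_{n,i}|_*$. Patching over $k$ via the locality and gluing properties of the $L^2(\mm)$-normed module yields $\bigl|\d \bar f-\sum_i a_{n,i}\,\d \bar f_{n,i}\bigr|_*\leq C'\eps\bigl(1+\sum_i|\d \bar f_{n,i}|_*\bigr)$ $\mm$-a.e.\ on $A_n$, and letting $\eps\downarrow 0$ proves the identity.

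To pass from Lipschitz to arbitrary Sobolev generators, I would invoke \eqref{eq:aplip2}: any $f\in W^{1,2}(\X)$ admits Lipschitz approximants $\bar f_j$ with $f_j\to f$ in $L^2$ and $\int {\rm lip}^2(\bar f_j)\,\d\mm\to\int|{\rm D} f|^2\,\d\mm$, which in the present doubling-and-Poincar\'e setting upgrades to $W^{1,2}$-norm convergence (the $H=W$ theorem of Cheeger--Shanmugalingam), hence $\d \bar f_j\to \d f$ in $L^2(T^*\X)$. Since the submodule generated by $\{\d \bar f_{n,i}\}_{i=1}^{N_n}$ is closed by Proposition \ref{prop:spanclosed} and contains each $\nchi_{A_n}\d \bar f_j$ by the previous step, it contains $\nchi_{A_n}\d f$. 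In view of Proposition \ref{prop:gencotan} this shows that $\{\d \bar f_{n,i}\}_{i=1}^{N_n}$ generates $L^2(T^*\X)\restr{A_n}$, so Proposition \ref{prop:localdimension} bounds the local dimension on $A_n$ by $N_n\leq N$, and collecting these bounds through the dimensional decomposition of Proposition \ref{prop:dimdec} yields the global dimension bound by $N$; in particular the cotangent module is locally finitely generated.
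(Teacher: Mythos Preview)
Your argument is correct and follows essentially the same route as the paper's proof: both establish the identity $\nchi_{A_n}\d\bar f=\nchi_{A_n}\sum_i a_{n,i}\d\bar f_{n,i}$ for Lipschitz $\bar f$ via an $\eps$-partition of $A_n$ into pieces where the coefficients $a_{n,i}$ have small oscillation, freeze the coefficients on each piece, and combine Cheeger's pointwise statement with the bound $|\d g|_*\leq{\rm lip}(\bar g)$.

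The only notable difference is in the passage from Lipschitz to general $f\in W^{1,2}(\X)$: the paper uses the Lusin-type approximation of Sobolev functions by Lipschitz ones (Theorem 5.1 in \cite{Bjorn-Bjorn11}) together with the locality of the differential, whereas you use $W^{1,2}$-density of Lipschitz functions together with the closedness of the span (Proposition \ref{prop:spanclosed}). Both are valid in the doubling-plus-Poincar\'e setting; the paper's route is perhaps slightly more self-contained since Lusin approximation plus locality does not require knowing that the specific sequence from \eqref{eq:aplip2} converges in $W^{1,2}$-norm (which, absent infinitesimal Hilbertianity, would need an extra Mazur-type argument or an explicit appeal to the Cheeger--Shanmugalingam $H=W$ theorem, as you indicate).
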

\begin{proof}
It is sufficient to show that for every $n\in\N$ the cotangent module is generated by $\d f_{n,1},\ldots,\d f_{n,N_n}$ on $A_n$. Fix such $n$, let $L:=\max_i\Lip(\bar f_{n,i})$ and consider a Lipschitz function $\bar f:\X\to\R$ and the functions $a_i\in L^\infty(\mm)$, $i=1,\ldots,N_n$, given by Theorem \ref{thm:cheeger}. We claim that $\d f=\sum_{i=1}^{N_n}a_{i}\d f_{n,i}$. To prove this, let $\eps>0$ find a Borel partition $(B_{j})$ of $A_n$ such that $b_{i,j}:=\esssup_{B_j}a_{i}\leq \essinf_{B_j}a_{i}+\eps$ $\mm$-a.e.\ on $B_j$ for every $i=1,\ldots,N_n$, $j\in\N$ and notice that
\[
\begin{split}
\Big|\d f-\sum_{i=1}^{N_n}a_{n,i}\d f_{n,i}\Big|_*(x)&\leq \Big|\d f-\sum_{i=1}^{N_n}b_j\d f_{n,i}\Big|_*(x)+\eps N L\leq {\rm lip}\big(\bar f-\sum_{i=1}^{N_n}b_i\bar f_{n,i}\big)(x) +\eps N L\\
&\leq {\rm lip}\Big(\bar f(\cdot)-\sum_{i=1}^{N_n}a_i(x)\bar f_{n,i}(\cdot)\Big)(x) +2\eps N L=2\eps N L,\qquad\mm\ae\ x\in B_j.
\end{split}
\]
Being this true for any $j$, we deduce
\[
\Big|\d f-\sum_{i=1}^{N_n}a_{n,i}\d f_{n,i}\Big|_*\leq 2\eps N L,\qquad\mm\ae\text{ on }A_n,
\]
and the arbitrariness of $\eps$ yields the claim. 

Then using the Lusin approximation of Sobolev functions (see for instance Theorem 5.1 in \cite{Bjorn-Bjorn11}) and the locality of the differential we deduce that the module span of $\d f_{n,1},\ldots,\d f_{n,N_n}$ on $A_n$ contains the differential of every function in $W^{1,2}(\X)$. The thesis follows.
\end{proof}

\bigskip

\noindent{{\bf (4)}} We already mentioned in the introduction that the idea of using $L^\infty$-modules as the technical tool to give an abstract definition of the space of sections of the (co)tangent bundle on a metric measure spaces is due to Weaver \cite{Weaver01}. Being our approach strongly inspired by his, we briefly discuss what are the main differences.

At the technical level, in \cite{Weaver01} the kind of modules considered resemble those that we called $L^\infty(\mm)$-normed modules (although it is not clear to us if the notions fully agree). In particular, no other integrability condition has been considered, which also affects the definition of dual module (morphisms with values in $L^\infty(\mm)$ in \cite{Weaver01} and with values in $L^1(\mm)$ here). 

At a more conceptual level, here we chose to tailor the definitions on the concept of Sobolev functions, while in \cite{Weaver01} Lipschitz ones have been used. On doubling spaces supporting a weak local Poincar\'e inequality, the result of Cheeger \cite{Cheeger00} granting that ${\rm lip}(\bar f)=\weakgrad f$ $\mm$-a.e.\ for Lipschitz functions $\bar f$ ensures that the difference between the two approaches is minimal and confined essentially to the different integrability requirements. In more general situations it is always true that derivations as we defined them are also derivations in the sense of Weaver, but the converse is not clear. This is partially due to the fact that the structures considered  are slightly different: our definitions only make sense on metric measure spaces, while the ones in \cite{Weaver01} work on metric spaces carrying  a notion of negligible set, so that a measure is not truly needed.

We conclude remarking that a general property one typically wants from an abstract  definition of differential in a non-smooth setting is that it is a closed operator, i.e.\ its graph should be closed w.r.t.\ convergence of differentials and some relevant 0-th order convergence of functions. 

 In the framework proposed by Weaver this is obtained by construction starting from the fact that the space of Lipschitz function is a dual Banach space (see Chapter 2 in \cite{Weaver99} and references therein) and then requiring derivations to be  weakly$^*$ continuous.

In our setting, instead, this is ensured by Theorem \ref{thm:closd} whose proof ultimately boils down to the lower semicontinuity property \ref{eq:lscwug} which in turn is - up to minor technicalities - equivalent to the $L^2(\mm)$-lower semicontinuity of $\E$.

In this direction, notice that the $L^2(\mm)$-lower semicontinuity of $\E$ is `the' crucial property behind the definition of Sobolev functions which completely characterizes them together with the information, valid at least if $\mm$ gives finite mass to bounded sets, that $\E$ is the maximal $L^2(\mm)$-lower semicontinuous functional satisfying $\E(f)\leq\frac12\int {\rm lip}^2(\bar f)\,\d\mm$ for Lipschitz functions $\bar f$ (this follows from properties \eqref{eq:lipweak}, \eqref{eq:aplip2} - see \cite{AmbrosioGigliSavare11} for the relevant proofs).

\section{Second order differential structure of $\RCD(K,\infty)$ spaces}
Without exceptions, in this chapter we will always assume that $(\X,\sfd,\mm)$ is an $\RCD(K,\infty)$ space.

\subsection{Preliminaries: $\RCD(K,\infty)$ spaces}
Let $(\X,\sfd,\mm)$ be a complete and separable metric space endowed with a non-negative Radon measure. The \emph{relative entropy} functional $\ent_\mm:\prob\X\to\R\cup\{+\infty\}$ is defined as
\[
\ent_\mm(\mu):=\left\{
\begin{array}{ll}
\displaystyle{\int\rho\log(\rho)\,\d\mm},\qquad&\text{if $\mu=\rho\mm$ and $\big(\rho\log(\rho)\big)^-\in L^1(\mm)$,}\\
+\infty,\qquad&\text{otherwise}.
\end{array}
\right.
\]

We start with the following basic definition:
\begin{definition}[$\RCD(K,\infty)$ spaces]
Let $(\X,\sfd,\mm)$ be a complete and separable metric space endowed with a non-negative Radon measure and $K\in\R$.

We say that $(\X,\sfd,\mm)$ is a $\RCD(K,\infty)$ space provided it is infinitesimally Hilbertian and for every $\mu,\nu\in\probt{\X}$ with $\ent_\mm(\mu),\ent_\mm(\nu)<\infty$ there exists a $W_2$-geodesic $(\mu_t)$ with $\mu_0=\mu$, $\mu_1=\nu$ and  such that
\[
\ent_\mm(\mu_t) \leq (1-t)\ent_\mm(\mu)+t\ent_\mm(\nu)-\frac K2t(1-t)W_2^2(\mu,\nu),\qquad\forall t\in[0,1].
\] 
\end{definition}
Notice that on $\RCD(K,\infty)$ spaces, for any $x\in\X$ we have
\[
\mm(B_r(x))\leq Ce^{Cr^2},\qquad\forall r>0,
\]
for some constant $C$, see \cite{Sturm06I} (this is true also in $\CD(K,\infty)$ spaces). As a consequence of this bound, for $\mu\in\probt{\X}$  with $\mu=\rho\mm$ we always have $(\rho\log(\rho))^-\in L^1(\mm)$ and from this fact it is easy to conclude that $\ent_\mm$ is lower semicontinuous on $(\probt\X,W_2)$.

$\RCD$ spaces are introduced by means of optimal transport, but their properties are better understood by Sobolev calculus, the link between the two points of view being provided by the understanding of the {\bf heat flow} (\cite{Gigli09}, \cite{Gigli-Kuwada-Ohta10}, \cite{AmbrosioGigliSavare11}). For what concerns this paper, we shall be only interested in the $L^2$ approach to the heat flow, thus notice that being $\RCD(K,\infty)$ spaces infinitesimally Hilbertian,  the energy functional  $\E$ is a quadratic form:  we shall call heat flow  its gradient flow $(\h_t)$ in $L^2(\mm)$ already introduced in Section \ref{se:presob} and notice that in this case it is linear and self-adjoint. We will occasionally use the following standard a priori estimates
\begin{equation}
\label{eq:altrafacile}
\begin{split}
\E(\h_tf)&\leq \frac1{4t}\|f\|_{L^2(\mm)}^2,\\
\|\Delta\h_tf\|^2_{L^2(\mm)}&\leq \frac1{2t^2}\|f\|_{L^2(\mm)}^2,
\end{split}
\end{equation}
valid for every $f\in L^2(\mm)$ and $t>0$. These can obtained by differentiating the $L^2(\mm)$ norm and the energy along the flow, see for instance the arguments in Section \ref{se:clap}. Also, it is not hard to check that for every $p\in[1,\infty]$ it holds
\begin{equation}
\label{eq:boundlpcalore}
\|\h_tf\|_{L^p(\mm)}\leq \|f\|_{L^p(\mm)},\qquad\forall t\geq 0,
\end{equation}
for every $f\in L^2\cap L^p(\mm)$. Thus by density the heat flow can, and will, be uniquely extended to a family of linear and continuous functionals $\h_t:L^p(\mm)\to L^p(\mm)$ of norm bounded by 1 for any $p<\infty$ (in fact, on $\RCD$ spaces it can be also naturally defined on $L^\infty(\mm)$ but we shall not need this fact - see \cite{AmbrosioGigliSavare11-2} and \cite{AmbrosioGigliMondinoRajala12}).

A first non-trivial consequence of the Ricci curvature lower bound is the following regularity result:
\begin{equation}
\label{eq:sobtolip}
f\in W^{1,2}(\X),\ |\d f|_*\in L^\infty(\mm)\qquad\Rightarrow\qquad\begin{array}{ll}
&\text{$f$ has a Lipschitz representative $\bar f$}\\
&\text{with $\Lip(\bar f)\leq \||\d f|_*\|_{L^\infty(\mm)}$}
\end{array}
\end{equation}
which allows to pass from a Sobolev information to a metric one (see \cite{AmbrosioGigliSavare11-2}).

A crucial property of the heat flow which is tightly linked to the lower curvature bound is the  {\bf Bakry-\'Emery contraction} estimate (see \cite{Gigli-Kuwada-Ohta10} and \cite{AmbrosioGigliSavare11-2}):
\begin{equation}
\label{eq:BE}
|\nabla\h_tf|^2\leq e^{-2Kt}\h_t(|\nabla f|^2),\qquad\mm\ae,\ \forall t\geq 0,\ \forall f\in W^{1,2}(\X).
\end{equation}

Following \cite{Savare13} we now  introduce the set  $\fsm\subset W^{1,2}(\X)$ of  `test' functions which we shall use in several instances:
\[
\fsm:=\Big\{f\in  D(\Delta) \cap L^\infty(\mm) \ :\ |\nabla f|\in L^\infty(\mm)\quad\text{and}\quad\Delta f\in W^{1,2}(\X)\Big\}.
\]
Notice that from \eqref{eq:sobtolip} we get in particular that
\[
\text{any $f\in\fsm$ has a Lipschitz representative $\bar f:\X\to\R$ with $\Lip(\bar f)\leq \||\nabla f|\|_{L^\infty(\mm)}$}
\]
and that from the $L^\infty\to \Lip$ regularization of the heat flow established in \cite{AmbrosioGigliSavare11-2} as a direct consequence of \eqref{eq:BE} we also have
\begin{equation}
\label{eq:prodtest}
f\in L^2\cap L^\infty(\mm)\qquad\Rightarrow\qquad\h_tf\in\fsm\quad\forall t>0,
\end{equation}
which in particular gives
\begin{equation}
\label{eq:testw12}
\fsm\text{ is dense in }W^{1,2}(\X).
\end{equation}
In order to further discuss the regularity properties of functions in $\fsm$, we introduce the notion of measure valued Laplacian, which comes out naturally from integration by parts (see \cite{Gigli12}).  Thus let   $\mes(\X)$ be the Banach space of finite Radon measures on $\X$ equipped with the total variation norm $\|\cdot\|_{\sf TV}$.
\begin{definition}[Measure valued Laplacian] The space $D(\bd)\subset W^{1,2}(\X)$ is the space of  $f\in W^{1,2}(\X)$ such that there is $\mu\in\mes(\X)$ satisfying
\[
\int \bar g\,\d\mu=-\int\la\nabla \bar g,\nabla f\ra\,\d\mm,\qquad \forall \bar g:\X\to\R\text{ Lipschitz with bounded support.}
\]
In this case the measure $\mu$ is unique and we shall denote it by  $\bd f$.
\end{definition}
The bilinearity of $(f,g)\mapsto \la\nabla f,\nabla g \ra\in L^1(\mm)$ ensures that $D(\bd)$ is a vector space and $\bd:D(\bd)\to \mes(\X)$ is linear.

\bigskip

In the rest of this introduction we will recall some regularity  results due to Savar\'e  \cite{Savare13} which have been obtained by proper generalization and adaptation of the arguments proposed by Bakry  \cite{Bakry83} (see also the recent survey \cite{BakryGentilLedoux14}).

Proposition \ref{prop:regtest} below is crucial in all what comes next: it provides the first concrete step towards the definition of Hessian by ensuring Sobolev regularity for the function $|\nabla f|^2$ for a given $f\in\fsm$:
\begin{proposition}\label{prop:regtest}
Let $f\in\fsm$. Then $|\nabla f|^2\in  D(\bd)\subset W^{1,2}(\X)$ and
\begin{equation}
\label{eq:regtest}
\begin{split}
\E (|\nabla f|^2)&\leq -\int K|\nabla f|^4+|\nabla f|^2\la\nabla f,\nabla\Delta f\ra \,\d\mm,\\
\frac12\bd|\nabla f|^2&\geq \big(K|\nabla f|^2+\la\nabla f, \nabla\Delta f\ra\big)\mm.
\end{split}
\end{equation}
\end{proposition}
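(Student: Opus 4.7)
The starting point is the weak form of Bochner's inequality valid on $\RCD(K,\infty)$ spaces (see \cite{Gigli-Kuwada-Ohta10}, \cite{AmbrosioGigliSavare11-2}): for every $f,\phi\in\fsm$ with $\phi\geq 0$,
\[
\tfrac12\int |\nabla f|^2\,\Delta\phi \,\d\mm \;\geq\; \int \phi\bigl(K|\nabla f|^2+\la\nabla f,\nabla\Delta f\ra\bigr)\,\d\mm.
\]
Set $G:=|\nabla f|^2$ and $\gamma:=\la\nabla f,\nabla\Delta f\ra$. Since $|\nabla f|\in L^2\cap L^\infty(\mm)$ and $\Delta f\in W^{1,2}(\X)$, we have $G\in L^1\cap L^\infty\subset L^2(\mm)$ and $\gamma\in L^1(\mm)$, so every integral written below is meaningful.

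To obtain $G\in W^{1,2}(\X)$ together with the first inequality, I would test the weak Bochner against $\phi_t:=\h_t G$. By \eqref{eq:prodtest} one has $\phi_t\in\fsm$ for $t>0$, and $\phi_t\geq 0$ by the maximum principle. Using self-adjointness and the semigroup property one computes
\[
\tfrac12\int G\,\Delta \h_t G\,\d\mm \;=\;\tfrac12\frac{d}{dt}\int G\,\h_tG\,\d\mm\;=\;\tfrac12\frac{d}{dt}\|\h_{t/2}G\|_{L^2(\mm)}^2\;=\;-\E(\h_{t/2}G),
\]
so the weak Bochner inequality rearranges to
\[
\E(\h_{t/2}G)\;\leq\;-\int \h_tG\,(KG+\gamma)\,\d\mm.
\]
As $t\downarrow 0$, $\h_tG\to G$ in $L^2(\mm)$ and $\|\h_tG\|_\infty\leq\|G\|_\infty$, so dominated convergence gives that the right hand side tends to the finite quantity $-\int KG^2+G\gamma\,\d\mm$. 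The $L^2$-lower semicontinuity of $\E$ then forces $G\in W^{1,2}(\X)$ and $\E(G)\leq -\int K|\nabla f|^4+|\nabla f|^2\la\nabla f,\nabla\Delta f\ra\,\d\mm$, which is the first claim.

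Once $G\in W^{1,2}(\X)$ is established, for every $\phi\in\fsm$ the left hand side of the weak Bochner inequality can be integrated by parts, and the inequality reads
\[
-\tfrac12\int\la\nabla G,\nabla\phi\ra\,\d\mm\;\geq\;\int\phi(KG+\gamma)\,\d\mm \qquad\forall\,\phi\in\fsm,\ \phi\geq 0.
\]
Introducing the linear functional
\[
T(\phi)\;:=\;-\tfrac12\int\la\nabla G,\nabla\phi\ra\,\d\mm-\int\phi(KG+\gamma)\,\d\mm,
\]
the displayed inequality says $T\geq 0$ on nonnegative elements of $\fsm$. Every Lipschitz function $\phi$ with bounded support can be approximated by $\h_s\phi\in\fsm$ with $\h_s\phi\to\phi$ in $W^{1,2}(\X)$ and $\|\h_s\phi\|_\infty\leq\|\phi\|_\infty$; since $\h_s$ preserves nonnegativity, $T\geq 0$ extends to all nonnegative $\phi$ that are Lipschitz with bounded support. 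Riesz representation then yields a nonnegative Radon measure $\mu$ with $T(\phi)=\int\phi\,\d\mu$, which is the statement $\tfrac12\bd G-(KG+\gamma)\mm=\mu\geq 0$, i.e.\ the second inequality.

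The main delicate point in this scheme is the final extension/representation step: one needs to verify that $T$, viewed on Lipschitz functions with bounded support, is positive and locally bounded, so that Riesz representation produces a genuine Radon measure (rather than a mere positive distribution). The rest reduces to standard manipulations of the heat semigroup and the $L^2$-lower semicontinuity of $\E$ on $\RCD(K,\infty)$ spaces.
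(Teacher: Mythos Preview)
The paper does not actually prove this proposition; it quotes it from Savar\'e \cite{Savare13} and explicitly flags the difficulty: ``to establish that the distributional Laplacian of $|\nabla f|^2$ is a measure, is particularly difficult in the general setting of complete and separable metric spaces (in contrast with the case of locally compact spaces, where Riesz theorem ensures that `positive distributions are measures') and requires delicate technical properties of the Dirichlet form $\E$''.

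Your argument for the first inequality --- testing weak Bochner with $\phi_t=\h_tG$ and using $L^2$-lower semicontinuity of $\E$ --- is the correct formal scheme and is essentially how the Caccioppoli-type estimate is obtained. One minor point: the weak Bochner inequality in \cite{AmbrosioGigliSavare11-2} is stated for test functions with $\Delta\phi\in L^\infty$, not merely $\phi\in\fsm$; since $\Delta\h_tG$ is only in $L^2$, you need an extra approximation (e.g.\ replace $\h_tG$ by the mollified flow $\tilde\h_\eps(\h_tG)$ from \eqref{eq:mollheat}, which does have bounded Laplacian, and pass to the limit). This is routine.

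The genuine gap is exactly where you say it is, and acknowledging it does not close it. On a space that is merely complete and separable but not locally compact --- as $\RCD(K,\infty)$ spaces may well be --- a positive linear functional on Lipschitz functions with bounded support is \emph{not} automatically represented by a Radon measure; the Riesz--Markov theorem fails. What Savar\'e does in \cite{Savare13} is to exploit the regular Dirichlet form structure: one works with quasi-continuous representatives and the associated capacity, shows that the positive functional $T$ extends to a suitable class of quasi-continuous test functions with a bound in terms of capacity, and then invokes a Daniell--Stone type construction adapted to this setting to produce a (smooth, hence Radon) measure. Your last paragraph gestures at ``positive and locally bounded'' but gives no mechanism for producing the measure; this is precisely the nontrivial content of the result, and the reason the paper defers to \cite{Savare13} rather than proving it inline.
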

Formally, the second in \eqref{eq:regtest} is obtained by differentiating \eqref{eq:BE} at $t=0$ and then the first from the second by multiplying both sides by $|\nabla f|^2$ and integrating. In practice, we remark that to establish that the distributional Laplacian of $|\nabla f|^2$ is a measure, is particularly difficult in the general setting of complete and separable metric spaces (in contrast with the case of locally compact spaces, where Riesz theorem ensures that `positive distributions are measures') and requires delicate technical properties of the Dirichlet form $\E$, see \cite{Savare13} for the throughout discussion.

\medskip

Notice that for $f,g\in\fsm$ it is immediate to verify that  $fg\in L^\infty\cap W^{1,2}(\X)$ with $|\nabla (fg)|\in L^\infty(\mm)$. Recalling the Leibniz rule for the Laplacian \eqref{eq:leiblap}, we see that $fg\in D(\Delta)$ with
\[
\Delta(fg)=f\Delta g+g\Delta f+2\la\nabla f,\nabla g\ra
\]
and the fact that $\Delta f,\Delta g$ are in $W^{1,2}(\X)$ and $f,g$ bounded with bounded differential  grants that both $f\Delta g$ and $g\Delta f$ are in $W^{1,2}(\X)$. Here it comes a first crucial information from Proposition \ref{prop:regtest}: by polarization we have that $\la\nabla f,\nabla g\ra\in W^{1,2}(\X)$, so that in summary we proved that
\begin{equation}
\label{eq:testalg}
\text{$\fsm$ is an algebra.}
\end{equation}
Still, we remark that in general for $f\in\fsm$ we don't have $\Delta f\in \fsm$.

\medskip

The fact that $\la\nabla f,\nabla g\ra\in W^{1,2}(\X)$ for every $f,g\in\fsm$  allows to define a sort of  {\bf ``Hessian''} $H[f]:[\fsm]^2\to L^2(\mm)$ of a function  $f\in\fsm$  as
\begin{equation}
\label{eq:defhf}
H[f]( g,  h):=\frac12\Big(\bigl<\nabla\la\nabla f,\nabla g\ra,\nabla h \bigr>+\bigl< \nabla \la\nabla f ,\nabla h\ra,\nabla g\bigr> -\bigl< \nabla f,\nabla \la\nabla g,\nabla h\ra\bigr> \Big),
\end{equation}
the terminology being justified by the fact that in the smooth case $H[f](g,h)$ is precisely the Hessian of $f$ computed along the directions $\nabla g,\nabla h$.

Still by Proposition \ref{prop:regtest} we know that $\la\nabla f,\nabla g\ra\in D(\bd)$   for every $f,g\in\fsm$ and therefore we can  define the {\bf measure valued $\Gamma_2$} operator as the map $\Ggamma_2:[\fsm]^2\to\mes(\X)$ given by
\[
\Ggamma_2(f,g):=\frac12\bd\la\nabla f,\nabla g\ra-\frac12\Big(\la\nabla f,\nabla\Delta g\ra+\la\nabla g,\nabla\Delta f\ra\Big)\mm,\qquad\forall f,g\in\fsm.
\]
In the smooth setting, $\Ggamma_2(f,g)$ is always absolutely continuous w.r.t.\ the volume measure and its density is given by $\H f:\H g+{\rm Ric}(\nabla f,\nabla g)$. Notice that $\Ggamma_2$ is bilinear and symmetric and that the second in \eqref{eq:regtest} can be restated as
\begin{equation}
\label{eq:gamma2}
\Ggamma_2(f,f)\geq K|\nabla f|^2\mm.
\end{equation}
It is then easy to verify that
\begin{equation}
\label{eq:massaggamma2}
\Ggamma_2(f,f)(\X)=\int(\Delta f)^2\,\d\mm,\qquad\qquad \|\Ggamma_2(f,f)\|_{\sf TV}\leq \int (\Delta f)^2+2K^-|\nabla f|^2\,\d\mm.
\end{equation}
A technically useful fact is contained in the following lemma, which establishes a chain rule for $\Ggamma_2$:
\begin{lemma}[Multivariate calculus for $\Ggamma_2$]\label{thm:BS}
Let $n\in\N$, $f_1,\ldots,f_n\in\fsm$ and $\Phi\in C^\infty(\R^n)$ be with $\Phi(0)=0$. Put ${\bf f}:=(f_1,\ldots,f_n)$.

Then  $\Phi({\bf f})\in\fsm$.

Moreover, denoting by $\Phi_i$ the partial derivative of $\Phi$ w.r.t. the $i$-th coordinate and   defining the measure ${\bf A}(\Phi({\bf f}))\in\mes(\X)$ and the functions $B(\Phi({\bf f})),C(\Phi({\bf f})),D(\Phi({\bf f}))\in L^1(\mm)$ as
\[
\begin{split}
{\bf A}(\Phi({\bf f}))&:=\sum_{i,j}\Phi_i({\bf f})\Phi_j({\bf f})\Ggamma_2(f_i,f_j)\\
B(\Phi({\bf f}))&:=2\sum_{i,j,k}\Phi_i({\bf f})\Phi_{j,k}({\bf f})H[f_i]( f_j,  f_k)\\
C(\Phi({\bf f}))&:=\sum_{i,j,k,h}\Phi_{i,k}({\bf f})\Phi_{j,h}({\bf f})\la\nabla f_i,\nabla f_j\ra\la\nabla f_k,\nabla f_h\ra\\
D(\Phi({\bf f}))&:=\sum_{i,j}\Phi_i({\bf f})\Phi_j({\bf f})\la\nabla f_i,\nabla f_j\ra
\end{split}
\]
we have
\[
\begin{split}
|\nabla \Phi({\bf f})|^2&=D(\Phi({\bf f}))\\
\Ggamma_2(\Phi({\bf f}))&={\bf A}(\Phi({\bf f}))+\big(B(\Phi({\bf f}))+C(\Phi({\bf f}))\big)\mm.
\end{split}
\]
\end{lemma}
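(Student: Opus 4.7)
The plan is to first verify $\Phi(\mathbf{f})\in\fsm$ by establishing chain rules for both the gradient and the Laplacian, then read off the identity for $|\nabla\Phi(\mathbf{f})|^2$ immediately, and finally expand $\Gamma_2(\Phi(\mathbf{f}))$ from its definition using Leibniz-type rules, collecting terms into $\mathbf{A},B,C$.

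Since $f_1,\ldots,f_n\in L^\infty(\mm)$, the vector $\mathbf{f}$ takes values $\mm$-a.e.\ in a compact set $K\subset\R^n$, so we may freely replace $\Phi$ by a truncation making $\Phi$ and all its relevant derivatives globally bounded and Lipschitz; the assumption $\Phi(0)=0$ together with $f_i\in L^2(\mm)$ yields $\Phi(\mathbf{f})\in L^2\cap L^\infty(\mm)$. The chain rule for differentials (Corollary \ref{cor:calcdiff}) combined with infinitesimal Hilbertianity gives $\nabla\Phi(\mathbf{f})=\sum_i\Phi_i(\mathbf{f})\nabla f_i$, so $|\nabla\Phi(\mathbf{f})|\in L^\infty(\mm)$ and the first identity $|\nabla\Phi(\mathbf{f})|^2=D(\Phi(\mathbf{f}))$ follows at once by expanding the squared norm via bilinearity of the pointwise scalar product. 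To compute $\Delta\Phi(\mathbf{f})$, I would test against an arbitrary $g\in W^{1,2}(\X)$ and apply the Leibniz rule for $\Delta$ as in \eqref{eq:leiblap} to each product $\Phi_i(\mathbf{f})g$, together with a further chain rule for $\nabla\Phi_i(\mathbf{f})$, obtaining
\[
\Delta\Phi(\mathbf{f})=\sum_i\Phi_i(\mathbf{f})\Delta f_i+\sum_{i,j}\Phi_{ij}(\mathbf{f})\la\nabla f_i,\nabla f_j\ra.
\]
This expression lies in $W^{1,2}(\X)$: each $\Phi_i(\mathbf{f}),\Phi_{ij}(\mathbf{f})$ is bounded and Lipschitz, $\Delta f_i\in W^{1,2}(\X)$ by assumption, and $\la\nabla f_i,\nabla f_j\ra\in W^{1,2}(\X)$ by Proposition \ref{prop:regtest}; standard Leibniz estimates then give each product in $W^{1,2}(\X)$. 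This shows $\Phi(\mathbf{f})\in\fsm$.

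For the $\Gamma_2$ identity I would start from the definition
\[
\Gamma_2(\Phi(\mathbf{f}))=\tfrac12\bd|\nabla\Phi(\mathbf{f})|^2-\la\nabla\Phi(\mathbf{f}),\nabla\Delta\Phi(\mathbf{f})\ra\,\mm,
\]
both terms being well defined thanks to $\Phi(\mathbf{f})\in\fsm$ and Proposition \ref{prop:regtest}. Writing $|\nabla\Phi(\mathbf{f})|^2=\sum_{i,j}\Phi_i(\mathbf{f})\Phi_j(\mathbf{f})\,\la\nabla f_i,\nabla f_j\ra$, I apply the Leibniz rule for the measure-valued Laplacian on the product of the bounded Lipschitz factor $\Phi_i(\mathbf{f})\Phi_j(\mathbf{f})$ with the factor $\la\nabla f_i,\nabla f_j\ra\in D(\bd)$, producing (i) an $\mm$-singular piece $\Phi_i(\mathbf{f})\Phi_j(\mathbf{f})\,\bd\la\nabla f_i,\nabla f_j\ra$, (ii) a cross term $2\la\nabla(\Phi_i(\mathbf{f})\Phi_j(\mathbf{f})),\nabla\la\nabla f_i,\nabla f_j\ra\ra\,\mm$, and (iii) a term $\la\nabla f_i,\nabla f_j\ra\Delta(\Phi_i(\mathbf{f})\Phi_j(\mathbf{f}))\,\mm$. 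Substituting the chain-rule formula for $\Delta\Phi(\mathbf{f})$ in the second summand of the $\Gamma_2$ definition and collecting by symmetrization in $(i,j)$ and $(j,k)$:

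\begin{itemize}
\item[--] The $\Phi_i\Phi_j\bd\la\nabla f_i,\nabla f_j\ra$ pieces combine with $\Phi_i\Phi_j(\la\nabla f_i,\nabla\Delta f_j\ra+\la\nabla f_j,\nabla\Delta f_i\ra)\mm$ (coming from $\la\nabla\Phi(\mathbf{f}),\nabla(\sum_j\Phi_j(\mathbf{f})\Delta f_j)\ra\mm$ after extracting the $\Phi_j$-differentiation) to yield exactly $\tfrac12\sum_{i,j}\Phi_i\Phi_j\,\Ggamma_2(f_i,f_j)=\tfrac12\mathbf{A}(\Phi(\mathbf{f}))$, so their doubled contribution is $\mathbf{A}(\Phi(\mathbf{f}))$.
\item[--] The cross-gradient terms of type $\Phi_i\Phi_{jk}\,\la\nabla f_k,\nabla\la\nabla f_i,\nabla f_j\ra\ra\mm$, together with the terms $\Phi_i\Phi_{jk}\,\la\nabla f_i,\nabla\la\nabla f_j,\nabla f_k\ra\ra\mm$ produced by differentiating $\sum_{j,k}\Phi_{jk}(\mathbf{f})\la\nabla f_j,\nabla f_k\ra$, reassemble via the definition \eqref{eq:defhf} of $H[f_i](f_j,f_k)$ (noting its symmetrized three-term structure) into $B(\Phi(\mathbf{f}))\mm$.
\item[--] The remaining $\la\nabla f_i,\nabla f_j\ra\la\nabla(\Phi_i(\mathbf{f})),\nabla(\Phi_j(\mathbf{f}))\ra\mm$-type terms, after expanding $\nabla\Phi_i(\mathbf{f})=\sum_k\Phi_{ik}(\mathbf{f})\nabla f_k$ and similarly for $\nabla\Phi_j(\mathbf{f})$, assemble into $C(\Phi(\mathbf{f}))\mm$.
\end{itemize}

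The main obstacle is the careful bookkeeping in this last step: one must confirm that, after accounting for all Leibniz expansions and the chain rules applied to $\nabla\Phi_i(\mathbf{f})$, $\nabla\Phi_{ij}(\mathbf{f})$, the coefficients of $\Phi_i\Phi_{jk}H[f_i](f_j,f_k)$ match exactly the factor of $2$ appearing in the definition of $B$, and that the quartic-in-$\nabla f$ terms symmetrize into the full tensor $\sum_{i,j,k,h}\Phi_{ik}\Phi_{jh}\la\nabla f_i,\nabla f_j\ra\la\nabla f_k,\nabla f_h\ra$ defining $C$. This is a purely algebraic verification, but the symmetrization in three pairs of indices and the identification with the specific symmetrized combination defining $H[f]$ are where one must be most careful.
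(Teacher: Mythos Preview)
The paper does not actually prove this lemma: it is stated in the preliminaries section (Section~3.1) as one of the results recalled from Savar\'e \cite{Savare13}, with no proof given. So there is no ``paper's own proof'' to compare against; you are supplying what the paper outsources.

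Your approach is the natural one and is correct in outline: establish $\Phi(\mathbf f)\in\fsm$ via the multivariate chain rules for $\nabla$ and $\Delta$, read off $|\nabla\Phi(\mathbf f)|^2=D(\Phi(\mathbf f))$, and then expand $\Ggamma_2(\Phi(\mathbf f))$ from its definition. A couple of points worth tightening. First, the chain rule $\nabla\Phi(\mathbf f)=\sum_i\Phi_i(\mathbf f)\nabla f_i$ for multivariate $\Phi$ is not literally Corollary~\ref{cor:calcdiff}, which is one-dimensional; you should say explicitly how you reduce to it (e.g.\ polynomial approximation plus the algebra property of $\fsm$, or iterated one-variable chain rules along coordinate slices). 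Second, when you invoke the Leibniz rule for $\bd$ on $\Phi_i(\mathbf f)\Phi_j(\mathbf f)\cdot\la\nabla f_i,\nabla f_j\ra$, the version \eqref{eq:leibml} stated later in the paper requires the first factor to be in $\fsm$; this is fine once the first part of the lemma is established (apply it to $\Phi_i\Phi_j-\Phi_i(0)\Phi_j(0)$), but you should make that dependency explicit rather than calling the factor merely ``bounded Lipschitz''. Third, your honest admission that the final bookkeeping---matching the factor $2$ in $B$ and the full four-index symmetrization in $C$---is where care is needed is exactly right; in a finished proof you would want to actually carry out at least the $B$-identification, since the three-term structure of $H[f_i](f_j,f_k)$ in \eqref{eq:defhf} makes the coefficient nontrivial to verify by inspection.
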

We conclude recalling the following improved version of the Bakry-\'Emery contraction rate estimate:
\begin{equation}
\label{eq:BE1}
|\nabla \h_t f|\leq e^{-Kt}\h_t(|\nabla f|),\quad\mm\ae\qquad\forall f\in W^{1,2}(\X),\ t\geq 0,
\end{equation}
which has also been obtained in \cite{Savare13} by generalizing the approach in \cite{Bakry83}.

\subsection{Test objects and some notation}\label{se:test}
Here we introduce the class of test vector fields/differential forms, discuss some basic density properties and fix some notation that will be used throughout the rest of the text.

\vspace{1cm}

As in the previous chapter, $L^2(T^*\X)$ and $L^2(T\X)$ will be the cotangent and tangent module respectively. For $\MM=L^2(T^*\X)$ (resp. $\MM=L^2(T\X)$) the corresponding $L^0$-module $\MM^0$ as defined in Section \ref{se:altint} will be denoted $L^0(T^*\X)$ (resp. $L^0(T\X)$), then for $p\in[1,\infty]$ we shall denote by $L^p(T^*\X)$ (resp. $L^p(T\X)$) the corresponding subclass of elements with pointwise norm in $L^p(\mm)$. Notice that Theorem \ref{thm:ACM} and Proposition \ref{prop:gencotan} give that $L^2(T\X), L^2(T^*\X)$ are separable, so that property \eqref{eq:sepmmp} gives that
\begin{equation}
\label{eq:seplp}
\text{$L^p(T\X)$ and $L^p(T^*\X)$ are separable for every $p<\infty$.}
\end{equation}
The Hilbert modules $L^2(T^*\X)$ and $L^2(T\X)$ are canonically isomorphic via the Riesz theorem for Hilbert modules (Theorem \ref{thm:rhil}) and it is convenient to give a name to their isomorphisms. Thus we introduce the {\bf musical isomorphisms} $\flat:L^2(T\X)\to L^2(T^*\X) $ and $\sharp:L^2(T^*\X)\to L^2(T\X)$ as
\[
X^\flat(Y):=\la X, Y\ra, \qquad \qquad \qquad \la \omega^\sharp, X\ra:=\omega(X),
\]
$\mm$-a.e.\ for every $X,Y\in L^2(T\X)$ and $\omega\in L^2(T^*\X)$. The maps $\flat$ and $\sharp$ uniquely extend to continuous maps from $L^0(T\X)$ to $L^0(T^*\X)$ and viceversa and then restrict to isometric isomorphisms of the modules $L^p(T\X)$ and $L^p(T^*\X)$. At the level of notation
\begin{quote} 
we shall drop the ${}_*$ in denoting the pointwise norm in $L^0(T^*\X)$ and thus will write $|\omega|$ for $\omega\in L^0(T^*\X)$.
\end{quote}
Since $L^2(T^*\X)$ and $L^2(T\X)$ are one the dual of the other, from \eqref{eq:dualmmp} we see that
\begin{equation}
\label{eq:dualibizzarri}
\forall p\in[1,\infty]\text{ the module } L^p(T^*\X)\text{ is reflexive and its dual is }L^q(T\X),\ \text{where }\frac1p+\frac1q=1. 
\end{equation}

A useful approximation result that we shall use in the following is:
\begin{equation}
\label{eq:aplipt}
\begin{split}
&\text{for any $f:\X\to \R$ Lipschitz with bounded support there is a sequence}\\
&\text{$(f_n)\subset \fsm$ $W^{1,2}(\X)$-converging to $f$, with $|f_n|,|\d f_n|$ uniformly bounded}\\
&\text{and such that $\Delta f_n\in L^\infty(\mm)$ for every $n\in\N$.}\\
&\text{Moreover, the $f_n$'s can be taken non-negative if $f$ is non-negative.}
\end{split}
\end{equation}
Indeed, a smoothing via the heat flow easy grants (recall \eqref{eq:prodtest}) a sequence in $\fsm$ $W^{1,2}(\X)$-converging to $f$ and properties \eqref{eq:boundlpcalore} and \eqref{eq:BE} yield the uniform bound on the functions and their differentials.  To achieve also bounded Laplacian, it is better to use the (time) mollified heat flow
\begin{equation}
\label{eq:mollheat}
\tilde\h_\eps f :=\frac1\eps\int_0^\infty\h_sf\varphi(s\eps^{-1})\,\d s,\qquad\text{ for some given }\varphi\in C^{\infty}_c(0,1)\text{ with }\int_0^1\varphi(s)\,\d s=1.
\end{equation}
It is then easy to check directly from the definitions and using the a priori estimates \eqref{eq:altrafacile} that $\tilde\h_\eps f\in\fsm$ for every $\eps>0$, that it still converges to $f$ in $W^{1,2}(\X)$ as $\eps\downarrow0$ with a uniform bound on the functions and differentials, and that its Laplacian can be computed as
\[
\begin{split}
\Delta \tilde\h_\eps f =\frac1\eps\int_0^\infty \Delta \h_sf\varphi(s\eps^{-1})\,\d s=\frac1\eps\int_0^\infty \Big(\frac{\d}{\d s} \h_sf\Big)\varphi(s\eps^{-1})\,\d s=-\frac1{\eps^2}\int_0^\infty\h_sf\,\varphi'(s\eps^{-1})\,\d s.
\end{split}
\]
Since $\h_sf$ is uniformly bounded in $L^\infty(\mm)$, this expression shows that $\Delta \tilde\h_\eps f\in L^\infty(\mm)$ for every $\eps>0$, thus concluding the proof of our claim.

From this approximation result it then follows that
\begin{equation}
\label{eq:conlapinfty}
\text{the linear span of $\{h\nabla g\ :\ h,g\in\fsm\text{ and }\Delta g\in L^\infty(\mm)\}$ is weakly$^*$ dense in $L^\infty(T\X)$.}
\end{equation}
Indeed, from \eqref{eq:prodtest} and \eqref{eq:aplipt} we see that the weak$^*$ closure of the above space contains the space, call it $V$, all vector fields of the form $\sum h_i\nabla g_i$ for $h_i\in L^2\cap L^\infty(\mm)$ and $g_i\in W^{1,2}(\X)$. Taking into account Proposition \ref{prop:gencotan} it is not hard to see that for $X\in L^2\cap L^\infty(T\X)$ there is a sequence $(X_n)\subset V$ converging to $X$ in $L^2(T\X)$ and uniformly bounded in $L^\infty(T\X)$, which is sufficient to get our claim.

\bigskip

We now introduce the class $\vsm\subset L^2(T\X)$ of {\bf test vector fields} as
\[
\vsm:=\Big\{\sum_{i=1}^ng_i\nabla f_i\ :\ n\in\N,\ f_i,g_i\in \fsm\ i=1,\ldots,n\Big\}
\]
and notice that arguing as above  we get  that $\vsm$ is dense in $L^2(T\X)$. The properties of test functions also ensure that $\vsm\subset L^1\cap L^\infty(T\X)$ and that $\vsm\subset D(\div)$.

\bigskip

The tensor product of $L^2(T^*\X)$  with itself will be denoted as $L^2((T^*)^{\otimes 2}\X)$, and for $\MM=L^2((T^*)^{\otimes 2}\X)$ the corresponding $L^0(\mm)$-module $\MM^0$ as defined in Section \ref{se:altint} will be denoted by  $L^0((T^*)^{\otimes 2}\X)$. Then $L^p((T^*)^{\otimes 2}\X)\subset L^0((T^*)^{\otimes 2}\X)$ is the space of elements whose pointwise norm is in   $L^p(\mm)$. Similarly from $L^2(T\X)$ to $L^2(T^{\otimes 2}\X)$ and $L^p(T^{\otimes 2}\X)$. We shall denote by $:$ the pointwise scalar product on $L^0(T^{\otimes 2}\X)$. By \eqref{eq:baseseptenssep} we see that  $L^2((T^*)^{\otimes 2}\X)$ and  $L^2(T^{\otimes 2}\X)$ are separable, so that from \eqref{eq:sepmmp} we deduce that
\begin{equation}
\label{eq:seplp2}
\text{$L^p(T^{\otimes 2}\X)$ and $L^p((T^*)^{\otimes 2}\X)$ are separable for every $p<\infty$.}
\end{equation}

Being $L^2((T^*)^{\otimes 2}\X)$ and $L^2((T^{\otimes 2}\X)$ Hilbert modules, they are reflexive and canonically isomorphic to their dual. It is then clear that we can identify one with the dual of the other, the duality mapping being
\[
(\omega_1\otimes\omega_2)(X_1\otimes X_2)=\omega_1(X_1)\omega_2(X_2),\quad\mm\ae,
\]
for every $ \omega_1\otimes\omega_2\in L^2((T^*)^{\otimes 2}\X)$ and $X_1\otimes X_2\in L^2(T^{\otimes 2}\X)$ and extended by linearity and continuity. As for the base case of $L^2(T^*\X)$ and $L^2(T\X)$ we can use such duality map together with the pointwise scalar product on $L^2(T^{\otimes 2}\X)$ to build the musical isomorphisms $\flat:L^2(T^{\otimes 2}\X)\to L^2((T^*)^{\otimes 2}\X) $ and $\sharp:L^2((T^*)^{\otimes 2}\X)\to L^2(T^{\otimes 2}\X)$ as
\[
T^\flat(S):= T: S, \qquad \qquad \qquad  A^\sharp: T:= A(T),
\]
for every $T,S\in L^2(T^{\otimes 2}\X)$ and $A\in L^2((T^*)^{\otimes 2}\X)$.

 We shall most often think of an element $A\in L^2((T^*)^{\otimes 2}\X)$ as the bilinear continuous map from $[L^0(T\X)]^2$ to $L^0(\mm)$ defined by
 \[
A(X,Y):=A(X\otimes Y),\qquad\mm\ae,
\]
 for every $X\otimes Y\in L^2(T^{\otimes 2}\X)$ and then extended by continuity. Notice that the identity $|X\otimes Y|_\HS=|X|\,|Y|$ gives
 \[
|A(X,Y)|\leq |A|_\HS\,|X|\,|Y|,\qquad\mm\ae,
\]
which is an instance of the fact that the operator norm is bounded from above by the Hilbert-Schmidt norm. In this sense, for given  $A\in L^2((T^*)^{\otimes 2}\X)$ and $X\in L^2(T\X)$, the objects $A(X,\cdot)$ and $A(\cdot, X)$ are the well defined elements of $L^1(T^*\X)$ given by 
\[
L^\infty(T\X)\ni Y\mapsto A(X,Y)\in L^1(\mm)\qquad\text{and}\qquad   L^\infty(T\X)\ni Y\mapsto A(Y,X)\in L^1(\mm)
\]
 respectively (recall \eqref{eq:dualibizzarri}).

We also remark that from property \eqref{eq:perdensotp}  and the density of $\vsm$ in $L^2(T\X)$ it follows that 
\begin{equation}
\label{eq:span2}
\begin{split}
&\text{the linear span of }\Big\{ h_1h_2\nabla g_1\otimes\nabla g_2: g_1,g_2,h_1,_2\in\fsm\Big\}\text{ is dense in }L^2(T^{\otimes 2}\X),
\end{split}
\end{equation}
and arguments similar to those yielding \eqref{eq:conlapinfty} grant that
\begin{equation}
\label{eq:conlapinfty2}
\begin{split}
&\text{the linear span of $\{h\nabla g_1\otimes\nabla g_2\ :\ h,g_1,g_2\in\fsm\text{ and }\Delta g_1,\Delta g_2\in L^\infty(\mm)\}$}\\
&\text{is weakly$^*$ dense in $L^\infty(T^{\otimes 2}\X)$.}
\end{split}
\end{equation}

\bigskip

Finally, the $k$-th exterior power of $L^2(T^*\X)$ will be denoted by $L^2(\Lambda^kT^*\X)$. In the case $k=0$ and $k=1$ we shall stick to the simpler notations $L^2(\mm)$ and $L^2(T^*\X)$ respectively.  Then for $\MM=L^2(\Lambda^kT^*\X)$  the corresponding space $\MM^0$ as built in Section \ref{se:altint} will be denoted by $L^0(\Lambda^kT^*\X)$ and $L^p(\Lambda^kT^*\X)\subset L^0(\Lambda^kT^*\X)$ is the space of elements with pointwise norm in $L^p(\mm)$. Elements of $L^0(\Lambda^kT^*\X)$ will be called $k$-differential forms, or simply $k$-forms. 
 
Notice that from \eqref{eq:extsep} and \eqref{eq:sepmmp} we get that
\begin{equation}
\label{eq:sepext}
\text{$L^p(\Lambda^kT^*\X)$ is separable for every $p<\infty$}.
\end{equation}
The space $\ffsm k\subset L^2(\Lambda^kT^*\X)$ of {\bf test $k$-forms} is defined as
\[
\begin{split}
\ffsm k:=\Big\{\text{linear combinations of forms of the}& \text{ kind }f_0\d f_1\wedge\ldots\wedge\d f_k\\
&\text{with }\ f_i\in\fsm\ \forall i=0,\ldots,k\Big\}.
\end{split}
\]
Arguing as for \eqref{eq:span2} and recalling that $\fsm$ is an algebra and that the exterior product is obtained as a quotient of the tensor product, it is easy to see that
\begin{equation}
\label{eq:densetforms}
\ffsm k\text{ is dense in $L^2(\Lambda^kT^*\X)$ for every $k\in\N$.}
\end{equation}
Similarly, the $k$-th  exterior power of $L^2(T\X)$ will be denoted by $L^2(\Lambda^kT\X)$. Evidently, since $\vsm\subset L^2\cap L^\infty(T\X)$, for $X_1,\ldots,X_k\in\vsm$ we have $X_1\wedge\ldots\wedge X_k\in L^2(\Lambda^kT\X)$ and as for \eqref{eq:densetforms} we have
\begin{equation}
\label{eq:denskvf}
\text{the linear span of $\{X_1\wedge\ldots\wedge X_k\ :\ X_1,\ldots,X_k\in\vsm\}$ is dense in $L^2(\Lambda^kT\X)$.}
\end{equation}
We shall most often denote the application of a $k$-form $\omega$ to $X_1\wedge\ldots\wedge X_k$ by $\omega(X_1,\ldots,X_k)$, i.e.\ we shall think to a $k$-form as a $k$-multilinear and alternating map acting on vector fields.

\subsection{Hessian}
\subsubsection{The Sobolev space $W^{2,2}(\X)$}
Here we define the Sobolev space $W^{2,2}(\X)$,  the approach being based on integration by parts.  Read in a smooth setting, the norm we put on $W^{2,2}(\X)$ is 
\[
\|f\|_{W^{2,2}(\X)}^2=\int |f|^2+|\d f|_*^2+|\H f|_\HS^2\,\d\mm,
\]
where $|\H f|_\HS$ denotes the Hilbert-Schmidt norm, so that the discussion about the tensor product of Hilbert modules we did in Section \ref{se:tensorproduct} is crucial for the current purposes.

The idea behind the definition is to  recall  that in a smooth setting given a smooth function $f$ the validity of the identity
\[
2\H f(\nabla g_1,\nabla g_2)=\big< \nabla g_1,\nabla\la\nabla f,\nabla g_2\ra\big>+\big< \nabla g_2,\nabla\la\nabla f,\nabla g_1\ra\big>-\big< \nabla f,\nabla\la\nabla g_1,\nabla g_2\ra\big>
\]
for a sufficiently large class of test functions $g_1,g_2$ characterizes the Hessian $\H f$ of  $f$.  Multiplying both sides of this identity by a smooth function $h$ and then integrating we then see that 
\[
\begin{split}
2\int h \H f&(\nabla g_1,\nabla g_2)\,\d\mm\\
&=\int -\la\nabla f,\nabla g_2\ra\div(h \nabla g_1) -\la\nabla f,\nabla g_1\ra\div(h \nabla g_2) -h\big< \nabla f,\nabla\la\nabla g_1,\nabla g_2\ra\big>\,\d\mm,
\end{split}
\]
and again the validity of this identity for a sufficiently large class of test functions $g_1,g_2,h$ characterizes the Hessian of $f$. The advantage of this formulation is that at the right hand side only one derivative of $f$ appears, so that we can use this identity to define which are the functions having an Hessian.

\vspace{1cm}

Notice that for $g_1,g_2,h\in\fsm$  we have $\div(h\nabla g_1)=\la\nabla h,\nabla g\ra+h\Delta g\in L^2(\mm)$ and thus $\nabla g_2\,\div(h\nabla g_1)\in L^2(T\X)$, and that from Proposition \ref{prop:regtest} we get $h\nabla\la\nabla g_1,\nabla g_2\ra\in L^2(T\X)$ as well.

\begin{definition}[The space $W^{2,2}(\X)$]\label{def:w22} The space $W^{2,2}(\X)\subset W^{1,2}(\X)$ is the space of all functions   $f\in W^{1,2}(\X)$ for which there exists $A\in L^2((T^*)^{\otimes 2}\X)$ such that for any $g_1,g_2,h\in\fsm$ the equality
\begin{equation}
\label{eq:defhess}
\begin{split}
2\int &hA(\nabla g_1,\nabla g_2)\,\d\mm\\
&=\int -\la\nabla f,\nabla g_1\ra \div(h\nabla g_2) -\la\nabla f,\nabla g_2\ra \div(h\nabla g_1)-h\big<\nabla f,\nabla\la\nabla g_1,\nabla g_2\ra\big>\,\d\mm,
\end{split}
\end{equation}
holds. In this case the operator $A$ will be called \emph{Hessian} of $f$ and denoted as $\H f$.

We endow $W^{2,2}(\X)$ with the norm $\|\cdot\|_{W^{2,2}(\X)}$ defined by
\[
\|f\|^2_{W^{2,2}(\X)}:=\|f\|_{L^2(\X)}^2+\|\d f\|_{L^2(T^*\X)}^2+\|\H f\|_{L^2((T^*)^{\otimes 2}\X)}^2
\]
and  define the functional $\ed:W^{1,2}(\X)\to[0,\infty]$ as:
\[
\ed (f):=\left\{\begin{array}{ll}
\displaystyle{\frac12\int |\H f|_\HS^2\,\d\mm},&\qquad if\ f\in W^{2,2}(\X),\\
&\\
+\infty,&\qquad otherwise.
\end{array}\right.
\]
\end{definition}
Notice that  the density property \eqref{eq:span2} and the fact \eqref{eq:testalg} that $\fsm$ is an algebra  ensure that there is at most one $A\in L^2((T^*)^{\otimes 2}\X)$  for which \eqref{eq:defhess} holds, so that Hessian is uniquely defined. It is then clear that it  linearly depends on $f$, so that $W^{2,2}(\X)$ is a vector space and  $\|\cdot\|_{W^{2,2}(\X)}$ a norm.

The basic properties of this space are collected in the following theorem.
\begin{theorem}[Basic properties of $W^{2,2}(\X)$]\label{thm:basew22} The following holds.
\begin{itemize}
\item[i)] $W^{2,2}(\X)$ is a separable Hilbert space. 
\item[ii)] The Hessian is a closed operator, i.e.\ the set $\{(f,\H f):f\in W^{2,2}(\X)\}$ is a closed subset of $W^{1,2}(\X)\times L^2(T^{\otimes 2}\X)$.
\item[iii)] For every $f\in W^{2,2}(\X)$ the Hessian $\H f$ is symmetric (recall the discussion in Section \ref{se:tensorproduct})
\item[iv)] The energy $\ed$ is lower semicontinuous on $W^{1,2}(\X)$ and for any $f\in W^{1,2}(\X)$  the following duality formula holds:
\begin{equation}
\label{eq:claimed}
\begin{split}
2\ed(f)=\sup&\Bigg\{\sum_{i,j}\int -\la\nabla f,\nabla g_{i,j}\ra\div(h_{i,i}\tilde h_{i,j}\nabla \tilde g_{i,j})-\la\nabla f,\nabla \tilde g_{i,j}\ra\div (h_{i,j}\tilde h_{i,j}\nabla g_{i,j})\\
&-h_{i,j}\tilde h_{i,j}\big<\nabla f,\nabla\la\nabla g_{i,j},\nabla \tilde g_{i,j}\ra\big>\,\d\mm-\Big\|\sum_{i,j}h_{i,j}\tilde h_{i,j}\nabla g_{i,j}\otimes\nabla\tilde g_{i,j}\Big\|^2_{L^2(T^{\otimes 2}\X)}\Bigg\},
\end{split}
\end{equation}
where the $\sup$ runs among all finite collections of functions $g_{i,j},h_{i,j},\tilde g_{i,j},\tilde h_{i,j}\in\fsm$.
\end{itemize}
\end{theorem}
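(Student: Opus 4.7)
The plan is to establish (ii) first since it is the technical backbone, then obtain (i), (iii), (iv) as consequences. The crucial preliminary observation is that for fixed test functions $g_1,g_2,h\in\fsm$, the right-hand side of \eqref{eq:defhess} depends continuously on $f$ with respect to the $W^{1,2}(\X)$-norm. Indeed, $\div(h\nabla g_j)=\la\nabla h,\nabla g_j\ra+h\,\Delta g_j$ lies in $L^2(\mm)$, so the terms $\int\la\nabla f,\nabla g_j\ra\,\div(h\nabla g_i)\,\d\mm$ are continuous in $\nabla f\in L^2(T\X)$. For the third term, Proposition \ref{prop:regtest} (applied after polarization to $\la\nabla g_1,\nabla g_2\ra\in W^{1,2}(\X)$) gives $h\nabla\la\nabla g_1,\nabla g_2\ra\in L^2(T\X)$, hence again continuity in $\nabla f$. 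On the other hand, the left-hand side is continuous in $A\in L^2((T^*)^{\otimes 2}\X)$ because $h\,\nabla g_1\otimes\nabla g_2$ is a bounded element of $L^2(T^{\otimes 2}\X)$. This proves (ii): if $f_n\to f$ in $W^{1,2}(\X)$ and $\H f_n\to T$ in $L^2((T^*)^{\otimes 2}\X)$, then \eqref{eq:defhess} passes to the limit and identifies $T$ as $\H f$.

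Granted (ii), claim (i) follows formally. The linear map $\Phi:W^{2,2}(\X)\to L^2(\mm)\times L^2(T^*\X)\times L^2((T^*)^{\otimes 2}\X)$ sending $f$ to $(f,\d f,\H f)$ is an isometry onto its image, and the target space is a separable Hilbert space by \eqref{eq:seplp}, \eqref{eq:seplp2} and the fact that these modules are all Hilbert modules. By (ii) combined with closure of $\d$ (Theorem \ref{thm:closd}) the image of $\Phi$ is closed, hence $W^{2,2}(\X)$ is a separable Hilbert space in its own right.

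For (iii), I would observe that the right-hand side of \eqref{eq:defhess} is manifestly symmetric under the swap $g_1\leftrightarrow g_2$. Consequently $\int h\,\H f(\nabla g_1,\nabla g_2)\,\d\mm=\int h\,\H f(\nabla g_2,\nabla g_1)\,\d\mm$ for every $h\in\fsm$, and since $\fsm$ is weakly$^*$-dense in $L^\infty(\mm)$ (via \eqref{eq:aplipt}) this yields $\H f(\nabla g_1,\nabla g_2)=\H f(\nabla g_2,\nabla g_1)$ $\mm$-a.e.\ for every $g_1,g_2\in\fsm$. Using the $L^\infty(\mm)$-bilinearity of the map $(X,Y)\mapsto \H f(X,Y)$ we then get $\H f(h_1\nabla g_1,h_2\nabla g_2)=\H f(h_2\nabla g_2,h_1\nabla g_1)$ $\mm$-a.e., and since the linear span of tensors of the form $h_1h_2\,\nabla g_1\otimes\nabla g_2$ is dense in $L^2(T^{\otimes 2}\X)$ by \eqref{eq:span2}, continuity of $\H f$ as an element of the dual gives $\H f=(\H f)^t$.

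Finally for (iv), I would recognize \eqref{eq:claimed} as a Legendre-type duality. Writing $T:=\sum_{i,j}h_{i,j}\tilde h_{i,j}\,\nabla g_{i,j}\otimes\nabla\tilde g_{i,j}\in L^2(T^{\otimes 2}\X)$, summing \eqref{eq:defhess} over $i,j$ and using the identification $L^2((T^*)^{\otimes 2}\X)\simeq L^2(T^{\otimes 2}\X)$ via musical isomorphism, the supremum in \eqref{eq:claimed} coincides with
\[
\sup_{T\in\mathcal D}\Big\{2\int \H f:T\,\d\mm-\|T\|^2_{L^2(T^{\otimes 2}\X)}\Big\}
\]
whenever $f\in W^{2,2}(\X)$, where $\mathcal D$ is the dense subspace of such $T$'s. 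By pointwise Cauchy--Schwarz, Young's inequality, and density this supremum equals $\|\H f\|^2=2\ed(f)$, proving $\geq$ in \eqref{eq:claimed}. Conversely, if the supremum is finite for some $f\in W^{1,2}(\X)$, then the map sending $T\in\mathcal D$ to the integral part of the expression is a bounded linear functional on $\mathcal D\subset L^2(T^{\otimes 2}\X)$, hence extends uniquely to the whole $L^2(T^{\otimes 2}\X)$ and is represented by a unique element $A\in L^2((T^*)^{\otimes 2}\X)$ which, by construction, satisfies \eqref{eq:defhess}; thus $f\in W^{2,2}(\X)$ with $\H f=A$ and the supremum equals $2\ed(f)$. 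Lower semicontinuity of $\ed$ on $W^{1,2}(\X)$ is then automatic since the right-hand side of \eqref{eq:claimed} is, for each fixed admissible choice of test functions, a continuous function of $f\in W^{1,2}(\X)$, using the same continuity we exploited for (ii). I expect the main subtlety to lie in the converse direction of (iv), namely in carefully verifying that the extension argument together with the density \eqref{eq:span2} identifies an element of $L^2((T^*)^{\otimes 2}\X)$ satisfying \eqref{eq:defhess} for all (not just simple-tensor) combinations of test data.
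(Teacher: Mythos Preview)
Your approach matches the paper's essentially line by line: closedness via continuity of both sides of \eqref{eq:defhess}, separable Hilbert structure via the isometric embedding, symmetry from the manifest $g_1\leftrightarrow g_2$ symmetry of the defining identity, and the duality formula via a Legendre-type argument on the dense subspace of simple tensors. One small correction: the actual subtlety in the converse direction of (iv) is not in the extension step but earlier, in checking that ``the integral part'' is well-defined as a function of $T\in\mathcal D$, i.e.\ that different representations $T=\sum_{i,j}h_{i,j}\tilde h_{i,j}\nabla g_{i,j}\otimes\nabla\tilde g_{i,j}$ give the same value---this is where the paper uses the finiteness of the sup (a nonzero value on a representation of $T=0$ would, by scaling, force the sup to be $+\infty$). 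Once well-definedness is established, extension and the verification of \eqref{eq:defhess} for a single triple $(g_1,g_2,h)$ are routine.
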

\begin{proof} For given $g_1,g_2,h\in\fsm$ the left and right hand sides of  expression \eqref{eq:defhess} are continuous w.r.t.\ weak convergence of $f$ and $A$ in $W^{1,2}(\X)$ and $L^2(T^{\otimes 2}\X)$ respectively. This addresses point $(ii)$, which in turn also gives the completeness of $W^{2,2}(\X)$. The lower semicontinuity of $\ed$ follows from the same argument taking into account also that, being $L^2(T^{\otimes 2}\X)$ an Hilbert space, bounded sets are weakly relatively compact. Endowing $W^{1,2}(\X)\times L^2(T^{\otimes 2}\X)$ with the norm $\|(f,A)\|^2:=\|f\|^2_{W^{1,2}(\X)}+\|A\|^2_{L^2(T^{\otimes 2}\X)}$ we see that such space is Hilbert and separable (recall \eqref{eq:seplp} and \eqref{eq:seplp2}) and that the map
\[
W^{2,2}(\X)\ni f\qquad\mapsto\qquad (f,\H f)\in W^{1,2}(\X)\times L^2(T^{\otimes 2}\X),
\]
is an isometry of $W^{2,2}(\X)$ with its image, which grants the separability of $W^{2,2}(\X)$ and thus completes the proof of point  $(i)$.

The symmetry of the Hessian is a direct consequence of the symmetry in $g_1,g_2$ of the defining property \eqref{eq:defhess}.

It only remains to prove the duality formula \eqref{eq:claimed}. To check inequality $\geq$ we can assume  that  $f\in W^{2,2}(\X)$, or otherwise there is nothing to prove. In this case for $X_i=\sum_jh_{i,j}\nabla g_{i,j}$ and  $\tilde X_i=\sum_j\tilde h_{i,j}\nabla \tilde g_{i,j}$ in $\vsm$ by the very definition of $\H f$ we have
\[
\begin{split}
&2\sum_{i}\int \H f(X_i,\tilde X_i)\,\d\mm=2\sum_{i,j}\int h_{i,j}\tilde h_{i,j}\H f(\nabla g_{i,j},\nabla \tilde g_{i,j})\,\d\mm\\
&=\sum_{i,j}\int -\la \nabla f,\nabla g_{i,j}\ra\div (h_{i,i}\tilde h_{i,j}\nabla \tilde g_{i,j})-\la\nabla f,\nabla \tilde g_{i,j}\ra\div (h_{i,j}\tilde h_{i,j}\nabla g_{i,j})\\
&\hspace{4cm} \qquad\qquad\qquad\qquad\qquad\qquad\qquad-h_{i,j}\tilde h_{i,j}\big<\nabla f,\nabla\la\nabla g_{i,j},\nabla \tilde g_{i,j}\ra\big>\,\d\mm,
\end{split}
\]
and the claim follows from the simple duality formula $\|v\|^2=\sup_{L\in H'}2L(v)-\|L\|^2$ valid for any Hilbert space $H$ and any $v\in H$. We pass to the inequality $\leq $ and assume  that $f\in W^{1,2}(\X)$ is such that the $\sup$ at the right hand side of \eqref{eq:claimed} is finite, or otherwise there is nothing to prove. We claim that in this case for given finite number of functions $g_{i,j},h_{i,j},\tilde g_{i,j},\tilde h_{i,j}\in\fsm$, say $i=1,\ldots,n$ and, for given $i$, $j=1,\ldots, n_i$, the  value of
\begin{equation}
\label{eq:longexpr}
\begin{split}
\sum_{i,j}\int -\la\nabla f,\nabla g_{i,j}\ra\div (h_{i,i}\tilde h_{i,j}\nabla \tilde g_{i,j})-\la\nabla f&,\nabla \tilde g_{i,j}\ra\div (h_{i,j}\tilde h_{i,j}\nabla g_{i,j})\\
& -h_{i,j}\tilde h_{i,j}\big<\nabla f,\nabla\la\nabla g_{i,j},\nabla \tilde g_{i,j}\ra\big>\,\d\mm,
\end{split}
\end{equation}
depends only on the value of
\begin{equation}
\label{eq:teatro2}
B:=\sum_{i,j}h_{i,j}\tilde h_{i,j}\nabla g_{i,j}\otimes\nabla\tilde g_{i,j}\quad \in  L^2(T^{\otimes 2}\X)
\end{equation}
 and not on the particular  representation of  $B$ in terms of the functions $g_{i,j},h_{i,j},\tilde g_{i,j},\tilde h_{i,j}$ as above. Indeed, if not we could find functions $g_{i,j}'\tilde g_{i,j}',h'_{i,j},h_{i,j}'$ such that $\sum_{i,j}h'_{i,j}\tilde h'_{i,j}\nabla g_{i,j}'\otimes\nabla \tilde g_{i,j}'=0$ for which the corresponding value in \eqref{eq:longexpr} is not zero. But then choosing appropriate multiples of these functions in \eqref{eq:claimed} we would obtain that the $\sup$ at the right hand side is $+\infty$, contradicting our assumption.

Therefore indeed if $f\in W^{1,2}(\X)$ is such that the $\sup$ in the right hand side of  formula \eqref{eq:claimed}, which we shall denote by $S$, is finite, then the value of \eqref{eq:longexpr} depends only on the tensor $B$  in \eqref{eq:teatro2}. In other words, letting $V\subset  L^2(T^{\otimes 2}\X)$ be the linear span of elements of the form $h_1h_2\nabla g_1\otimes\nabla g_2$ for $g_1,g_2,h_1,h_2\in\fsm$, the map $l:V \to\R$ given by
\[
B\qquad\mapsto\qquad\text{ the value of  \eqref{eq:longexpr} for $g_{i,j},h_{i,j},\tilde g_{i,j},\tilde h_{i,j}$ such that \eqref{eq:teatro2} holds,}
\]
is well defined. It is then obvious that it is linear and that it holds
\[
l(B)\leq S+\frac12\|B\|^2_{L^2(T^{\otimes 2}\X)},\qquad\forall B\in  V.
\]
Choosing  $\lambda B$ in place of $B$ in this inequality and then optimizing over $\lambda\in\R$ we deduce that
\[
|l(B)|\leq \|B\|_{L^2(T^{\otimes 2}\X)} \sqrt{2S},\qquad\forall B\in  V.
\]
Recalling the density of $V$ in  $L^2(T^{\otimes 2}\X)$ (see \eqref{eq:span2}), we deduce that  $l$ admits a unique extension to a linear and continuous map, still denoted by $l$, from $L^2(T^{\otimes 2}\X)$ to $\R$. In other words, such $l$  is an element of the dual $L^2(T^{\otimes 2}\X)'$ of $L^2(T^{\otimes 2}\X)$ as Banach space and being $L^2(T^{\otimes 2}\X)$ an Hilbert module, it has full dual and is reflexive (Propositions \ref{prop:hill2}, \ref{prop:fulllp} and Corollary \ref{cor:hilrefl}), and hence  there exists $A\in L^2((T^*)^{\otimes 2}\X)$ such that 
\[
l(B)=\int A(B)\,\d\mm,\qquad\forall B\in L^2(T^{\otimes 2}\X).
\]
By definition of $l$, we see that for such $A$ the identity \eqref{eq:defhess} holds for any $g_1,g_2,h\in\fsm$, i.e.\  $f\in W^{2,2}(\X)$ and $\H f=A$. Given that by construction we have $\|\H f\|_{L^2((T^*)^{\otimes 2}\X)}=\|l\|_{L^2(T^{\otimes 2}\X)'}\leq \sqrt{2S}$, the proof is completed.
\end{proof}
\begin{problem}{\rm
Extend $\ed$ to $L^2(\mm)$ by defining it to be $+\infty$ on $L^2\setminus W^{1,2}(\X)$. Is the resulting  functional is $L^2$-lower semicontinuous?
}\fr\end{problem}

\begin{remark}[Hessian and Laplacian]{\rm It is well known  that by no means we can expect the Laplacian to be the trace of the Hessian. This is due to the fact that the former is defined via integration by parts, and thus takes into account the reference measure, while the latter is a purely differential object. 

This can be better realized on a Riemannian manifold $M$ equipped with a weighted measure $\mm:=e^{-V}{\rm vol}$: here the Hessian of a smooth function is independent on the choice of $V$, while for the Laplacian we have the classical formula
\[
\Delta_\mm f=\Delta f-\la\nabla f,\nabla V\ra,
\]
linking the Laplacian  $\Delta$ on the unweighted manifold, defined by $\int g\Delta f\,\d{\rm vol}=-\int\la\nabla g,\nabla f\ra\,\d{\rm vol}$,  to that $\Delta_\mm$ of the weighted one, defined by $\int g\Delta_\mm f\,\d\mm=-\int\la\nabla g,\nabla f\ra\,\d\mm$.

At the level of speculation, we point out that given that the Laplacian is the trace of the Hessian on non-weighted Riemannian manifolds, and that for non-collapsing sequences of Riemannian manifolds with a uniform bound from below on the Ricci curvature the volume measures converge, one might expect this information to be linked to the notion of `non-collapsed $\RCD$ space', which is missing as of today.
}\fr\end{remark}
\begin{remark}{\rm
The duality formula \eqref{eq:claimed} for $\ed$ and the arguments used to prove it allow for an alternative equivalent definition of $W^{2,2}(\X)$: define it as $\{f:\ed(f)<\infty\}$, $\ed$ being defined by \eqref{eq:claimed}, and the corresponding Hessian via the duality arguments presented in the proof.
}\fr\end{remark}

\subsubsection{Why there are many $W^{2,2}$ functions}\label{se:whymanyw22}
In the last section we defined the space $W^{2,2}(\X)$ but as of now we don't know if it contains any non-constant function. In particular,  to notice that it is a priori not obvious that test functions are in $W^{2,2}(\X)$.

In the crucial lemma \ref{le:lemmachiave} below we prove the most important inequality of this paper, which among other things  will imply that $\fsm\subset W^{2,2}(\X)$. Read in the case of a smooth Riemannian manifold with Ricci curvature bounded from below by $K$ the lemma   tells that for a vector field $X$ and symmetric 2-tensor field $A$ we have
\begin{equation}
\label{eq:bochspiega}
|\nabla X:A|^2 \leq \Big(\Delta\frac{|X|^2}{2}+ \la X, (\Delta_\Ho X^\flat)^\sharp\ra-K|X|^2-|(\nabla X)_{\sf Asym}|_\HS^2\Big)|A|_\HS^2,
\end{equation}
where $\nabla X$ is the covariant derivative of $X$, $(\nabla X)_{\sf Asym}$ its antisymmetric part, and $\Delta_\Ho$ the Hodge Laplacian. Notice that optimizing in $A$ and writing  $(\nabla X)_{\sf Sym}:=\nabla X-(\nabla X)_{\sf Asym}$ for the symmetric part of the covariant derivative, we see that such inequality is equivalent to 
\[
|(\nabla X)_{\sf Sym}|_\HS^2\leq\Delta\frac{|X|^2}{2}+ \la X, (\Delta_\Ho X^\flat)^\sharp\ra-K|X|^2-|(\nabla X)_{\sf Asym}|_\HS^2,
\]
which is a way of writing Bochner inequality. 

In practice, we cannot yet state inequality \eqref{eq:bochspiega} as it is written because we still have to  introduce the various differential objects appearing there. We shall instead write \eqref{eq:bochspiega} somehow implicitly for a test vector field $X$ and $A$ of the form $\sum_i\nabla h_i\otimes \nabla h_i$ for test functions  $h_i$: compare \eqref{eq:bochspiega} with \eqref{eq:parteac1}.

\vspace{1cm}

We turn to the technical part and start with the following simple lemma concerning positive second order polynomials with coefficient measures. 
We remind that if $\mu,\nu\in \mes(\X)$ are two non-negative measures, the non-negative measure $\sqrt{\mu\nu}\in\mes(\X)$ is defined as
\[
\sqrt{\mu\nu}:=\sqrt{fg}\,\sigma,
\]
where $\sigma\in\mes(\X)$ is any non-negative measure such that $\mu,\nu\ll\sigma$ and $f,g$ are densities of $\mu,\nu$ w.r.t.\ $\sigma$ respectively. The definition is well posed because the right hand side of the above expression is independent on the particular choice of $\sigma$.

Recall also that for $\mu\in\mes(\X)$ the measure $|\mu|\in\mes(\X)$ is defined as $|\mu|:=\mu^++\mu^-$, where $\mu^+,\mu^-$ are the positive and negative parts, respectively,  of $\mu$ in its Jordan decomposition.

\begin{lemma}\label{le:lambda}
Let $\mu_1,\mu_2,\mu_3\in \mes(\X)$  and assume that 
\begin{equation}
\label{eq:lambda}
\lambda^2\mu_1+2\lambda\mu_2+\mu_3\geq 0,\qquad\forall\lambda\in\R.
\end{equation}
Then $\mu_1$ and $\mu_3$ are non-negative and the inequality
\begin{equation}
\label{eq:tesilambda}
|\mu_2|\leq\sqrt{\mu_1\mu_3},
\end{equation}
holds. In particular, $\mu_2\ll\mu_1$, $\mu_2\ll\mu_3$
\begin{equation}
\label{eq:mu2tv}
\|\mu_2\|_{\sf TV}\leq\sqrt{\|\mu_1\|_{\sf TV}\|\mu_3\|_{\sf TV}}
\end{equation}
and writing $\mu_i=\rho_i\mm+\mu_i^s$ with $\mu_i^s\perp\mm$, $i=1,2,3$ we have 
\begin{equation}
\label{eq:partesing}
\mu_1^s\geq 0,\qquad\mu_3^s\geq 0
\end{equation}
and
\begin{equation}
\label{eq:parteac}
|\rho_2|^2\leq \rho_1\rho_3,\qquad\mm\ae.
\end{equation}
\end{lemma}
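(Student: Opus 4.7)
The plan is to reduce everything to the elementary scalar fact that a real quadratic $\lambda^2 a + 2\lambda b + c \geq 0$ for every $\lambda\in\R$ forces $a,c\geq 0$ and $b^2\leq ac$. The measure-valued inequality \eqref{eq:tesilambda} is precisely the analogue of this scalar fact, and every other claim in the lemma will follow from it by soft arguments.

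First I would establish the nonnegativity of $\mu_1$ and $\mu_3$ directly from \eqref{eq:lambda}: evaluating the measure inequality on any Borel set $E$ gives the scalar inequality $\lambda^2\mu_1(E)+2\lambda\mu_2(E)+\mu_3(E)\geq 0$ for all $\lambda\in\R$, hence by the scalar fact $\mu_1(E),\mu_3(E)\geq 0$, and this holds for every Borel $E$. For the core inequality \eqref{eq:tesilambda}, I would pick a common dominating non-negative measure $\sigma$ (for instance $\sigma:=\mu_1+\mu_3+|\mu_2|$) and write $\mu_i=f_i\sigma$ with $f_1,f_3\geq 0$ and $f_2\in L^1(\sigma)$. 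For each fixed rational $\lambda$ the hypothesis translates into $\lambda^2f_1+2\lambda f_2+f_3\geq 0$ $\sigma$-a.e.; intersecting the countably many exceptional sets and invoking continuity of the quadratic in $\lambda$, we obtain a $\sigma$-full measure set on which $\lambda^2f_1(x)+2\lambda f_2(x)+f_3(x)\geq 0$ simultaneously for every $\lambda\in\R$. The scalar fact then yields $|f_2|^2\leq f_1f_3$ $\sigma$-a.e., which is exactly \eqref{eq:tesilambda} once one checks that this pointwise inequality is independent of the choice of dominating $\sigma$.

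From \eqref{eq:tesilambda} the remaining assertions are essentially bookkeeping. The absolute continuity $\mu_2\ll\mu_1$ and $\mu_2\ll\mu_3$ follows since $\mu_1(E)=0$ forces $f_1=0$ $\sigma$-a.e.\ on $E$, hence $f_2=0$ there, hence $|\mu_2|(E)=0$. The total variation bound \eqref{eq:mu2tv} comes from integrating $|f_2|\leq\sqrt{f_1f_3}$ against $\sigma$ and applying the Cauchy--Schwarz inequality in $L^2(\sigma)$:
\[
\|\mu_2\|_{\sf TV}=\int|f_2|\,\d\sigma\leq\int\sqrt{f_1f_3}\,\d\sigma\leq\sqrt{\|\mu_1\|_{\sf TV}\|\mu_3\|_{\sf TV}}.
\]
For \eqref{eq:partesing}, the Lebesgue decomposition $\mu_i=\rho_i\mm+\mu_i^s$ of a non-negative measure has both summands non-negative by uniqueness, so $\mu_1^s,\mu_3^s\geq 0$. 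For \eqref{eq:parteac}, I would choose $\sigma$ above so that its Lebesgue decomposition w.r.t.\ $\mm$ is transparent (e.g.\ add $\mm$ to the dominating measure if needed); then the $\sigma$-a.e.\ inequality $|f_2|^2\leq f_1f_3$, restricted to the $\mm$-absolutely continuous part, reads exactly $|\rho_2|^2\leq\rho_1\rho_3$ $\mm$-a.e.

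The only real technical point I foresee is the quantifier swap in step two — passing from ``for each $\lambda$, $\sigma$-a.e.'' to ``$\sigma$-a.e., for every $\lambda$'' — which is handled cleanly by restricting to $\lambda\in\Q$ and then using continuity of the quadratic. Everything else is elementary measure theory.
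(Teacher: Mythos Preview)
Your proof is correct and follows essentially the same route as the paper's: pass to densities with respect to a common dominating measure and apply the scalar discriminant inequality pointwise, then read off the remaining conclusions. The only cosmetic difference is in the quantifier swap---you use $\lambda\in\Q$ plus continuity of the quadratic, while the paper approximates the optimal choice $\lambda(x)=-f_2(x)/f_1(x)$ by simple functions and passes to the limit---but these are interchangeable devices.
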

\begin{proof} 
Taking $\lambda=0$ in \eqref{eq:lambda} yields $\mu_3\geq 0$ and dividing by $\lambda^2$ and letting $\lambda\to\infty$ gives $\mu_1\geq 0$. 

Now let $\nu:=|\mu_1|+|\mu_2|+|\mu_3|\in\mes(\X)$ so that $\mu_i\ll\nu$ for $1,2,3$ and put $\bar\eta_i:=\frac{\d\mu_i}{\d \nu}\in L^1(\nu)$, $i=1,2,3$. Inequality \eqref{eq:lambda} reads as: for every $\lambda \in\R$ it holds
\begin{equation}
\label{eq:passaggio}
\lambda^2\bar\eta_1+2\lambda\bar\eta_2+\bar\eta_3\geq 0,\qquad\nu\ae.
\end{equation}
Let $\bar E:=\{\bar \eta_1=0\}$ be defined up to $\nu$-negligible sets  and notice that the arbitrariness of $\lambda$ in \eqref{eq:passaggio} yields that $\bar\eta_2=0$ $\nu$-a.e.\ on $\bar E$. Then observe that for any 
partition $(\bar E_n)\subset \mathcal A$ of $\X\setminus \bar E$ and  sequence $(\lambda_n)\subset \R$, inequality \eqref{eq:passaggio} gives $\sum_n\nchi_{\bar E_n}\big(\lambda_n^2\bar\eta_1+2\lambda_n\bar\eta_2+\bar\eta_3\big)\geq 0$ $\nu$-a.e.,
so that choosing a sequences $(\bar E^m_n)$ and  $(\lambda^l_n)$ such that $\sum_n\nchi_{\bar E^l_n}\lambda^l_n\to -\frac{\bar\eta_2}{\bar\eta_1}$ $\nu$-a.e.\ on $\X\setminus \bar E$ as $l\to\infty$ and passing to the limit, we see that $-\frac{|\bar\eta_2|^2}{\bar \eta_1}+\bar\eta_3\geq 0$ $\nu$-a.e.\ on $\X\setminus \bar E$. Taking into account what we already proved on $\bar E$ we deduce that
\begin{equation}
\label{eq:rhotv}
|\bar\eta_2|^2\leq \bar\eta_1\bar\eta_3\qquad\nu\ae,
\end{equation}
which is \eqref{eq:tesilambda}. Then \eqref{eq:mu2tv} follows from \eqref{eq:rhotv} noticing that $\|\mu_i\|_{\sf TV}=\|\bar\eta_i\|_{L^1(\nu)}$ for $i=1,2,3$, \eqref{eq:partesing} comes from the non-negativity of $\mu_1,\mu_3$ and \eqref{eq:parteac} follows from \eqref{eq:rhotv} writing $\nu=\eta\mm+\nu^s$ with $\nu^s\perp\mm$ and observing that $\rho_i=\bar\eta_i\eta$ $\mm$-a.e.\ for $i=1,2,3$.
\end{proof}
We now turn to the main inequality. Recall that functions  $f\in\fsm$ have a Lipschitz continuous representative, uniquely defined on $\supp(\mm)$, which we denote by $\bar f$.
\begin{lemma}[Key inequality]\label{le:lemmachiave}
Let $n,m\in\N$ and $f_i,g_i,h_j\in\fsm$, $i=1,\ldots,n$, $j=1,\ldots,m$. Define the measure $\mu=\mu\big((f_i),(g_i)\big)\in \mes (\X)$ as
\[
\begin{split}
\mu\big((f_i),(g_i)\big):=&\sum_{i, i'}\bar g_i\bar g_{ i'}\big(\Ggamma_2(f_i,f_{ i'})-K\la\nabla f_i,\nabla f_{i'}\ra\mm\big)\\
&\qquad+\Big(2g_iH[f_i](f_{i'},g_{i'})+\frac{\la\nabla f_i,\nabla f_{i'}\ra\la\nabla g_i,\nabla g_{i'}\ra+\la\nabla f_i,\nabla g_{i'}\ra\la\nabla g_i,\nabla f_{i'}\ra}2\Big)\mm
\end{split}
\]
and write it as  $\mu=\rho \mm+\mu^s$ with $\mu^s\perp\mm$.

Then 
\begin{equation}
\label{eq:partesing1}
\mu^s\geq 0
\end{equation}
and
\begin{equation}
\label{eq:parteac1}
\bigg|\sum_{i,j}\la\nabla f_i,\nabla h_j\ra\la\nabla g_i,\nabla h_j\ra+g_iH[f_i](h_j, h_j)\bigg|^2\leq \rho \sum_{j, j'}|\la\nabla h_j,\nabla h_{j'}\ra|^2.
\end{equation}
\end{lemma}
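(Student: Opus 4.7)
The proof is an instance of the self-improvement strategy for Bochner's inequality going back to Bakry \cite{Bakry83} and adapted to the non-smooth setting by Savar\'e \cite{Savare13}: apply the basic $\Ggamma_2$-inequality \eqref{eq:gamma2} to a carefully chosen test function $F_\lambda \in \fsm$ depending affinely on a real parameter $\lambda$, obtain a measure-valued inequality that is a polynomial of degree two in $\lambda$ (thanks to the bilinearity of $\Ggamma_2$ and of the pointwise scalar product), and conclude by Lemma \ref{le:lambda}. The plan is to choose $F_\lambda = F_0 + \lambda F_1$ with $F_0, F_1 \in \fsm$ so that the expansion of $\Ggamma_2(F_\lambda) - K|\nabla F_\lambda|^2\mm \geq 0$ takes the form $\mu + 2\lambda\mu_2 + \lambda^2\mu_3 \geq 0$, with $\mu_3$ having absolutely continuous density $\sum_{j,j'}|\la\nabla h_j,\nabla h_{j'}\ra|^2$ and $\mu_2$ having density equal to the expression inside $|\cdot|^2$ in \eqref{eq:parteac1}.

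The $\lambda^2$ coefficient is taken care of by choosing $F_1 = \tfrac12\sum_j h_j^2$: the multivariate chain rule of Lemma \ref{thm:BS} applied to $\Phi(z_1,\ldots,z_m)=\tfrac12\sum_j z_j^2$ shows that the absolutely continuous part of $\Ggamma_2(F_1)-K|\nabla F_1|^2\mm$ contains the $C$-contribution $\sum_{j,j'}|\la\nabla h_j,\nabla h_{j'}\ra|^2$ plus a further expression which is itself a $\mu$-type quantity in the $h_j$ alone. For the $\lambda^0$ coefficient, the key computation is the \emph{base case identity} $\mu((f_i),(f_i)) = \Ggamma_2(\tfrac12\sum_i f_i^2) - K|\nabla \tfrac12\sum_i f_i^2|^2 \mm$, which is verified by applying Lemma \ref{thm:BS} with $\Phi(x_1,\ldots,x_n)=\tfrac12\sum_i x_i^2$ and matching the resulting ${\bf A}$, $B$, $C$ pieces with the three summands in the definition of $\mu$. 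Since $\mu$ is symmetric bilinear in its second argument $(g_i)$, one recovers $\mu((f_i),(g_i))$ for general $(g_i)$ via polarization, writing it as a linear combination of base-case quantities $\mu((a_i),(a_i))$ applied to tuples $a_i$ built from the $f_i$ and $g_i$; running the $\lambda$-perturbation with $F_0$ being each of the polynomials $\tfrac12\sum_i a_i^2$ (with $F_1 = \tfrac12\sum_j h_j^2$ common to all) and combining linearly yields the desired quadratic-in-$\lambda$ measure inequality with leading coefficient exactly $\mu$.

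The main obstacle will be the combinatorial book-keeping of the cross coefficient $\mu_2$: after the polarization-linear combination, it must reduce to exactly the density appearing on the left-hand side of \eqref{eq:parteac1} (up to an overall factor of two), with no leftover terms of the form $f_i H[g_i](h_j,h_j)$ or spurious gradient products. This requires carefully expanding the $B$- and $C$-terms produced by the chain rule applied to the mixed function $F_0 + \tfrac{\lambda}{2}\sum_j h_j^2$, and tracking which contributions survive the polarization combination. Once this matching is in place, Lemma \ref{le:lambda} applied to the resulting inequality $\mu + 2\lambda \mu_2 + \lambda^2 \mu_3 \geq 0$ delivers at once the singular-part nonnegativity \eqref{eq:partesing1} (from $\mu^s \geq 0$) and the pointwise discriminant bound $|\rho_2|^2\leq \rho_1\rho_3$, which is precisely \eqref{eq:parteac1}.
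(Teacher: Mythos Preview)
Your overall strategy --- apply \eqref{eq:gamma2} to a $\lambda$-dependent test function, expand as a quadratic in $\lambda$, invoke Lemma \ref{le:lambda} --- is exactly the paper's. But your implementation has two genuine gaps, and both stem from missing the paper's key device.

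\textbf{Polarization cannot recover $\mu((f_i),(g_i))$.} Your base-case identity $\mu((f_i),(f_i))=\Ggamma_2(\tfrac12\sum_if_i^2)-K|\nabla\tfrac12\sum_if_i^2|^2\mm$ is correct, but $\mu$ is \emph{not} symmetric under the swap $(f_i)\leftrightarrow(g_i)$: for instance the term $\bar g_i\bar g_{i'}\Ggamma_2(f_i,f_{i'})$ becomes $\bar f_i\bar f_{i'}\Ggamma_2(g_i,g_{i'})$ after swapping, and these are different measures. The diagonal values $\mu((a),(a))$ determine only the symmetrized combination $\tfrac12[\mu((f),(g))+\mu((g),(f))]$, never $\mu((f),(g))$ itself. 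So no linear combination of base-case quantities can produce the asymmetric $\mu((f_i),(g_i))$ required by the statement.

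\textbf{The $\lambda^2$ coefficient is wrong.} With $F_1=\tfrac12\sum_jh_j^2$, the $\lambda^2$ term in $\Ggamma_2(F_0+\lambda F_1)-K|\nabla(F_0+\lambda F_1)|^2\mm$ is $\mu((h_j),(h_j))$, which contains $\Ggamma_2$ and $H[\cdot]$ pieces in addition to the $C$-term $\sum_{j,j'}|\la\nabla h_j,\nabla h_{j'}\ra|^2$. Lemma \ref{le:lambda} would then give a bound with the wrong right-hand side in \eqref{eq:parteac1}.

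\textbf{What the paper does instead.} Both issues are resolved simultaneously by the \emph{localization trick}. The paper applies Lemma \ref{thm:BS} to the polynomial
\[
\Phi(x,y,z)=\sum_i\lambda y_ix_i+a_ix_i-b_iy_i+\sum_j(z_j-c_j)^2
\]
at $(f_1,\ldots,f_n,g_1,\ldots,g_n,h_1,\ldots,h_m)$, where $a_i,b_i,c_j$ are free real constants. The resulting $\Ggamma_2$-expansion has many terms, but all unwanted ones carry a factor $(\lambda\bar f_i-b_i)$ or $(\bar h_j-c_j)$. One then partitions $\X$, replaces the constants by simple functions converging pointwise to $\lambda\bar g_i,\lambda\bar f_i,\bar h_j$, and passes to the limit in total variation: the unwanted terms vanish, $\partial_{x_i}\Phi\to 2\lambda\bar g_i$ (producing the $f$--$g$ asymmetry), and only the second-derivative contribution $\partial_{z_jz_j}\Phi=2$ survives in the $h$-sector, yielding precisely $\sum_{j,j'}|\la\nabla h_j,\nabla h_{j'}\ra|^2$ as the $\lambda^2$ density. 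This freezing-of-coefficients step is the heart of the argument and is absent from your proposal.
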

\begin{proof}
Pick $\lambda\in\R$, ${\bf a}=(a_1,\ldots,a_n),{\bf b}=(b_1,\ldots,b_n)\in\R^n$ and ${\bf c}=(c_1,\ldots c_m)\in \R^m$ and define $\Phi:\R^{2n+m}\to\R$ as
\[
\Phi(x_1,\ldots,x_n,y_1,\ldots,y_n,z_1,\ldots,z_m):=\sum_i\lambda y_ix_i+a_ix_i-b_iy_i+\sum_j(z_j-c_j)^2,
\]
so that 
\[
\partial_{x_i}\Phi=\lambda y_i+a_i,\qquad\partial_{y_i}\Phi=\lambda x_i-b_i,\qquad\partial_{x_iy_i}\Phi=\lambda,\qquad\partial_{z_j}\Phi=2(  z_j-  c_j),\qquad\partial_{z_j z_j}\Phi=2,
\]
for every $i=1,\ldots,n$, $j=1,\ldots,m$ and all the other partial derivatives are 0.

Put   ${\mathbf f}:=(f_1,\ldots,f_n,g_1,\ldots,g_n,h_1,\ldots,h_m)$ and notice that by Theorem \ref{thm:BS} we have $\Phi({\mathbf f})\in \fsm$ and thus by \eqref{eq:gamma2} that
\begin{equation}
\label{eq:gamma2fil}
\Ggamma_2(\Phi({\mathbf f}),\Phi({\mathbf f}))-K|\nabla\Phi({\mathbf f})|^2\mm\geq 0.
\end{equation}
Denote by ${\bf A}(\lambda,{\mathbf a},{\mathbf b},{\mathbf c})$ the measure ${\bf A}(\Phi({\bf f}))$ defined in Theorem \ref{thm:BS} and similarly by $B(\lambda,{\mathbf a},{\mathbf b},{\mathbf c}),C(\lambda,{\mathbf a},{\mathbf b},{\mathbf c}),D(\lambda,{\mathbf a},{\mathbf b},{\mathbf c})$ the functions $B(\Phi({\bf f})),C(\Phi({\bf f})),D(\Phi({\bf f}))$.  

Then we have
\[
\begin{split}
{\bf A}(\lambda,{\mathbf a},{\mathbf b},{\mathbf c})&=\sum_{i, i'}(\lambda \bar g_i+a_i)(\lambda \bar g_{i'}+a_{i'})\Ggamma_2(f_i,f_{i'})+\text{other terms},\\
B(\lambda,{\mathbf a},{\mathbf b},{\mathbf c})&=2\sum_{i, i'}2(\lambda g_i+a_i)\lambda H[f_i](f_{i'},g_{i'})+2\sum_{i,j}(\lambda g_i+a_i)2H[f_i](h_j, h_j)+\text{other terms},\\
C(\lambda,{\mathbf a},{\mathbf b},{\mathbf c})&=\sum_{i, i'}2\lambda^2\big(\la\nabla f_i,\nabla f_{i'}\ra\la\nabla g_i,\nabla g_{i'}\ra+\la\nabla f_i,\nabla g_{i'}\ra\la\nabla g_i,\nabla f_{i'}\ra\big) \\
& \qquad \qquad+\sum_{i,j}8\lambda\la\nabla f_i,\nabla h_j\ra\la\nabla g_i,\nabla h_j\ra
+\sum_{j, j'}4|\la\nabla h_j,\nabla h_{ j'}\ra|^2+\text{other terms},\\
D(\lambda,{\mathbf a},{\mathbf b},{\mathbf c})&=\sum_{i, i'}(\lambda g_i+a_i)(\lambda g_{i'}+a_{i'})\la\nabla f_i,\nabla f_{i'}\ra+\text{other terms},
\end{split}
\]
where each `other term' contains either the factor $(\lambda\bar  f_i-b_i)$ or the factor $(\bar h_j-c_j)$ for some $i,j$.

Theorem \ref{thm:BS} and inequality \eqref{eq:gamma2fil} then gives that
\[
{\bf A}(\lambda,{\mathbf a},{\mathbf b},{\mathbf c})+\Big(B(\lambda,{\mathbf a},{\mathbf b},{\mathbf c})+C(\lambda,{\mathbf a},{\mathbf b},{\mathbf c})-KD(\lambda,{\mathbf a},{\mathbf b},{\mathbf c})\Big)\mm\geq 0,
\]
and being this inequality true for every choice of the constants ${\mathbf a},{\mathbf b}\in\R^n$ and ${\mathbf c}\in\R^m$, we deduce that for every partition $(\bar E_k)\subset\mathcal A$ of $\X$ and choice of constants ${\mathbf a}_k,{\mathbf b}_k\in\R^n$ and ${\mathbf c}_k\in\R^m$ we have
\begin{equation}
\label{eq:discorso}
\sum_{k}\nchi_{\bar E_k}\Big(
{\bf A}(\lambda,{\mathbf a_k},{\mathbf b_k},{\mathbf c_k})+\big(B(\lambda,{\mathbf a_k},{\mathbf b_k},{\mathbf c_k})+C(\lambda,{\mathbf a_k},{\mathbf b_k},{\mathbf c_k})- KD(\lambda,{\mathbf a_k},{\mathbf b_k},{\mathbf c_k})\big)\mm\Big)\geq0.
\end{equation}
Recalling that for arbitrary $f,g,h\in \fsm$ the measure $\Ggamma_2(f,g)$ has finite mass and $H[f](g,h)$ is in $L^1(\mm)$,  we see that given that the functions $f_i,g_i,h_j\in\fsm$ are fixed, the total variation norm of the measure in the left hand side of this last inequality can be bounded in terms of the $\sup$ of $|{\mathbf a}_k|,|{\mathbf b}_k|,|{\mathbf c}_k|$. Then  consider a sequence of partitions $(\bar E^l_k)\subset \mathcal A$ and of uniformly bounded constants ${\mathbf a}_k^l,{\mathbf b}_k^l\in \R^n$ and ${\mathbf c}^l_k\in\R^m$, $k,l\in\N$ so that
\[
\sum_k\nchi_{\bar E^l_k}{\mathbf a}^l_k\to \lambda (\bar g_1,\ldots,\bar g_n),\qquad\sum_k\nchi_{E^l_k}{\mathbf b}^l_k\to \lambda (\bar f_1,\ldots,\bar f_n),\qquad\sum_k\nchi_{E^l_k}{\mathbf c}^l_k\to (\bar h_1,\ldots,\bar h_m),
\]
everywhere in $\X$ as $l\to\infty$. With these choices and passing to the limit as $l\to\infty$, we see that the left hand side of \eqref{eq:discorso} converges in the total variation norm,   that all the `other terms' vanish and that the factors $\lambda \bar g_i+a_i$ converge to $2\lambda \bar g_i$. Thus, after a rearrangement and a division by 4, we obtain
\begin{equation}
\label{eq:EFG}
\lambda^2{\bf E}+2\lambda {\bf F}+{\bf G}\geq 0,
\end{equation}
for ${\bf E},{\bf F},{\bf G}\in\mes(\X)$ given by
\[
\begin{split}
{\bf E}&:=\sum_{i, i'}\bar g_i\bar g_{ i'}\big(\Ggamma_2(f_i,f_{ i'})-K\la\nabla f_i, \nabla f_{i'}\ra\mm\big)+2g_iH[f_i](f_{i'},g_{i'})\mm\\
&\qquad\qquad\qquad\qquad\qquad\qquad+\frac{\la\nabla f_i,\nabla f_{i'}\ra\la\nabla g_i,\nabla g_{i'}\ra+\la\nabla f_i,\nabla g_{i'}\ra\la\nabla g_i,\nabla f_{i'}\ra}2\mm,\\
{\bf F}&:=\Big(\sum_{i,j}\la\nabla f_i,\nabla h_j\ra\la\nabla g_i,\nabla h_j\ra+g_iH[f_i](h_j, h_j)\Big)\mm,\\
{\bf G}&:=\sum_{j, j'}|\la\nabla h_j,\nabla h_{j'}\ra|^2\mm.
\end{split}
\]
Inequality \eqref{eq:EFG} holds for any value of $\lambda\in\R$, hence the conclusions \eqref{eq:partesing1}, \eqref{eq:parteac1} follow from  \eqref{eq:partesing}, \eqref{eq:parteac} respectively in   Lemma \ref{le:lambda}.
\end{proof}
The first important consequence of this lemma is the following result:

\begin{theorem}\label{thm:key2}
Let $f\in\fsm$. Then $f\in W^{2,2}(\X)$, and writing 
\[
\Ggamma_2(f,f)=\gamma_2(f,f)\mm+\Ggamma_2^s(f,f)\qquad\text{ with }\quad\Ggamma_2^s(f,f)\perp\mm
\]
we have
\begin{equation}
\label{eq:hess34}
|\H f|^2_\HS\leq \gamma_2(f,f)-K|\nabla f|^2,\qquad\mm\ae,
\end{equation}
and moreover for every $g_1,g_2\in\fsm$ it holds
\begin{equation}
\label{eq:hess12}
H[f](g_1,g_2)=\H f(\nabla g_1,\nabla g_2),\qquad\mm\ae.
\end{equation}
\end{theorem}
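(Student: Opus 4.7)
My plan is to construct an explicit candidate for $\H f$ by extending the map $(\nabla g_1, \nabla g_2) \mapsto H[f](g_1, g_2)$ from gradients of test functions to a bilinear form on $L^2(T\X)$. Once this is achieved, both the identity \eqref{eq:hess12} holds by construction and the $W^{2,2}$-defining identity \eqref{eq:defhess} drops out by integration by parts: multiplying \eqref{eq:defhf} by $h \in \fsm$, integrating, and applying the integration-by-parts identity $\int h\la\nabla g_1,\nabla\la\nabla f,\nabla g_2\ra\ra\,\d\mm = -\int\la\nabla f,\nabla g_2\ra\div(h\nabla g_1)\,\d\mm$ (note that $\la\nabla f,\nabla g_2\ra \in W^{1,2}(\X)$ by Proposition \ref{prop:regtest}, so this is legitimate) produces exactly the right-hand side of \eqref{eq:defhess}.

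The heart of the proof is the pointwise bilinear bound
\[
\Big|\sum_{i} H[f](g_i,\tilde g_i)\Big|^2 \leq \big(\gamma_2(f,f)-K|\nabla f|^2\big)\Big|\Big(\sum_i \nabla g_i\otimes\nabla \tilde g_i\Big)_{\sf Sym}\Big|_\HS^2, \quad \mm\text{-a.e.}
\]
By the polarization formula for $A_{\sf Sym}$ in \eqref{eq:normsym}--\eqref{eq:asim}, it is enough to prove the symmetric version
\[
\Big|\sum_{j} H[f](h_j,h_j)\Big|^2 \leq \big(\gamma_2(f,f)-K|\nabla f|^2\big)\Big|\sum_j \nabla h_j\otimes\nabla h_j\Big|_\HS^2
\]
for finite families $h_1,\ldots,h_m\in\fsm$. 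Here Lemma \ref{le:lemmachiave} with $n=1$, $f_1=f$ and $g_1=g$ gives, for every $g\in\fsm$,
\[
\Big|g\sum_j H[f](h_j,h_j)+\sum_j\la\nabla f,\nabla h_j\ra\la\nabla g,\nabla h_j\ra\Big|^2 \leq \rho(g)\,|A|_\HS^2,
\]
with $\rho(g) = g^2(\gamma_2(f,f)-K|\nabla f|^2) + 2gH[f](f,g) + \tfrac12\bigl(|\nabla f|^2|\nabla g|^2+\la\nabla f,\nabla g\ra^2\bigr)$. The informal limit $g \to 1$, $\nabla g \to 0$ reduces this to precisely the desired inequality.

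The \textbf{main obstacle} is making this limit rigorous pointwise, since no test function can be globally constant. I would handle it by the following localization: for each ball $B$, use $\tilde\h_\eps(\chi_{B'})$ for a slightly larger ball $B' \supset B$ (with $\tilde\h_\eps$ the mollified heat flow of \eqref{eq:mollheat}) to produce $g_\eps \in \fsm$ with $g_\eps \to \chi_{B'}$ in $L^2$. Applying the Lemma to each $g_\eps$ gives an inequality valid $\mm$-a.e.; the gluing property of $L^2$-normed modules combined with a diagonal argument over a countable family of balls $B_k$ covering $\X$ yields at a.e.\ $x$ a sequence of test functions realizing $g(x) \to 1$ together with the control $|\nabla g|^2(x), g(x)|H[f](f,g)|(x) \to 0$ needed to send $\rho(g)$ to $\gamma_2(f,f)-K|\nabla f|^2$ at that point. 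This gluing/localization is where I expect all the technical care to sit.

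Once the pointwise bilinear bound is established, I would conclude as follows. The bound shows that for any $T = \sum_i h_i\, \nabla g_i \otimes \nabla\tilde g_i$ in the generating space of \eqref{eq:span2}, the quantity $\sum_i h_i H[f](g_i,\tilde g_i) \in L^1(\mm)$ depends only on $T_{\sf Sym}$ (by taking differences of two representations and applying the bound), is $L^\infty$-linear, and satisfies $|\sum_i h_i H[f](g_i,\tilde g_i)| \leq \sqrt{\gamma_2(f,f)-K|\nabla f|^2}\,|T|_\HS$ $\mm$-a.e. By Proposition \ref{prop:extension} (applied to the generating space \eqref{eq:span2}) this extends uniquely to a module morphism $\H f : L^2(T^{\otimes 2}\X) \to L^1(\mm)$, hence — via the self-duality of the Hilbert module $L^2(T^{\otimes 2}\X)$ — to an element $\H f \in L^2((T^*)^{\otimes 2}\X)$ with $|\H f|_\HS^2 \leq \gamma_2(f,f)-K|\nabla f|^2$ $\mm$-a.e. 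Identity \eqref{eq:hess12} holds by construction, and \eqref{eq:defhess} follows by the integration-by-parts computation of the opening paragraph, completing the proof that $f \in W^{2,2}(\X)$.
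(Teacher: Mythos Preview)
Your overall strategy matches the paper's proof exactly: apply Lemma \ref{le:lemmachiave} with $n=1$, pass to the limit $g\to 1$ to obtain the symmetric pointwise bound, polarize to the bilinear version, extend to a module morphism via Proposition \ref{prop:extension}, and finally verify \eqref{eq:defhess} by the integration by parts you describe.

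The one genuine gap is precisely the step you flagged as the main obstacle. Mollifying $\nchi_{B'}$ via the heat flow does not work: since $\nchi_{B'}\notin W^{1,2}(\X)$, the functions $g_\eps=\tilde\h_\eps(\nchi_{B'})$ do not converge in $W^{1,2}$, and there is no reason for $|\nabla g_\eps|$ to be small on $B$ --- the a priori estimate \eqref{eq:altrafacile} only gives $\|\nabla g_\eps\|_{L^2}^2\lesssim \eps^{-1}$. Thus the pointwise/diagonal scheme you sketch cannot deliver the needed control on $|\nabla g|^2(x)$ and $g(x)H[f](f,g)(x)$.

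The paper handles this more simply and without any pointwise limits. The key observation is the algebraic identity $2H[f](f,g)=\la\nabla|\nabla f|^2,\nabla g\ra$, immediate from \eqref{eq:defhf}, which shows that every term in $\rho(g)$ and in the left-hand side of the Lemma's inequality is $L^1(\mm)$-continuous in $g$ with respect to the $W^{1,2}(\X)$-topology. Hence the inequality extends from $g\in\fsm$ to arbitrary $g\in W^{1,2}(\X)$. One then simply takes $g\in W^{1,2}(\X)$ identically equal to $1$ on a given bounded set $B$: on $B$ one has $\nabla g=0$ and $H[f](f,g)=0$ \emph{exactly}, so $\rho(g)=\gamma_2(f,f)-K|\nabla f|^2$ and the cross term $\la\nabla f,\nabla h_j\ra\la\nabla g,\nabla h_j\ra$ vanishes, giving the desired bound $\mm$-a.e.\ on $B$. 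Letting $B$ exhaust $\X$ concludes. Your approach becomes correct if you replace $\nchi_{B'}$ by a Lipschitz cutoff equal to $1$ on $B$ and then pass to a subsequence for a.e.\ convergence, but the $W^{1,2}$-continuity argument is both shorter and avoids all the subsequence/diagonal machinery.
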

\begin{proof} We apply Lemma \eqref{le:lemmachiave} with $n=1$ for given functions $f,g,h_j\in\fsm$, $j=1,\ldots,m$. In this case inequality \eqref{eq:parteac1}   reads as:
\[
\begin{split}
\bigg|&\sum_{j}\la\nabla f,\nabla h_j\ra\la\nabla g,\nabla h_j\ra+gH[f](h_j, h_j)\bigg|^2\\
&\ \leq\bigg(g^2(\gamma_2(f,f)-K|\nabla f|^2)+\frac{|\nabla f|^2| \nabla g|^2+|\la\nabla f,\nabla g\ra|^2}2+2gH[f](f,g)\bigg)\sum_{j, j'}|\la\nabla h_j,\nabla h_{j'}\ra|^2
\end{split}
\]
$\mm$-a.e..  Notice that directly by the definition  \eqref{eq:defhf} of $H[f]$ we have $2H[f](f,g)=\la\nabla |\nabla f|^2,\nabla g\ra$ and using this observation we see that both sides of this inequality  vary continuously in $L^1(\mm)$ as $g$ varies in $W^{1,2}(\X)$. Hence by approximation we can pick $g\in W^{1,2}(\X)$ identically 1 on some bounded set $B\subset \X$, so that we have $\nabla g=0$ and $H[f](f,g)=0$ $\mm$-a.e.\ on $B$, and by the arbitrariness of such $B$ and recalling the definition of pointwise norm on $L^2(T^{\otimes 2}\X)$ we deduce
\begin{equation}
\label{eq:perA}
\bigg|\sum_{j}H[f](h_j, h_j)\bigg|\leq \sqrt{\gamma_2(f,f)-K|\nabla f|^2}\,\Big|\sum_{j}\nabla h_j\otimes\nabla h_j\Big|_\HS,\qquad\mm\ae,
\end{equation}
Now notice that the symmetry and bilinearity of $H[f]$ as map from $[\fsm]^2$ to $L^2(\mm)$ gives  that for arbitrary $g_j,h_j,h'_j\in\fsm $ we have
\[
\sum_jg_jH[f](h_j, h'_j)=\frac12\sum_jg_j\Big(H[f](h_j+ h'_j,h_j+ h'_j)- H[f](h_j, h_j)-H[f](h'_j, h'_j)\Big).
\]
Therefore observing that
\[
\sum_jg_j\frac{\nabla (h_j+h_j')\otimes\nabla(h_j+ h_j')-\nabla h_j\otimes\nabla h_j-\nabla h_j'\otimes\nabla h_j'}2=\sum_jg_j\frac{\nabla h_j\otimes\nabla h_j'+\nabla h_j'\otimes\nabla h_j}2,
\]
from inequality \eqref{eq:perA} we see that
\begin{equation}
\label{eq:perA2}
\begin{split}
\Big|\sum_jg_jH[f](h_j, h'_j)\Big|&\leq \sqrt{\gamma_2(f,f)-K|\nabla f|^2}\,\bigg|\sum_jg_j\frac{\nabla h_j\otimes\nabla h_j'+\nabla h_j'\otimes\nabla h_j}2\bigg|_\HS\\
&\leq \sqrt{\gamma_2(f,f)-K|\nabla f|^2}\Big|\sum_jg_j\nabla h_j\otimes\nabla h_j'\Big|_\HS,
\end{split}
\end{equation}
having used the simple inequality $|A_{\sf Sym}|_\HS\leq|A|_\HS$ $\mm$-a.e.\ for $A:=\sum_jg_j\nabla h_j\otimes\nabla h_j'$ (recall  \eqref{eq:decompsym}) in the second step.

Consider now the space  $V\subset L^2(T^{\otimes 2}\X)$ made of linear combinations of elements of the form $g\nabla h\otimes\nabla h'$ for $g,h,h'\in\fsm$  and define the operator $A:V\to  L^0(\mm)$ by 
\[
A\Big(\sum_{j}g_j\nabla h_j\otimes\nabla h'_j\Big):=\sum_jg_jH[f](h_j, h'_j).
\]
Inequality \eqref{eq:perA2} shows that this is a good definition, i.e.\ that the right hand side depends only on $\sum_jg_j\nabla h_j\otimes\nabla h_j'$ and not on the particular way of writing such sum. Moreover, recalling that by \eqref{eq:gamma2} we have $\Ggamma_2^s(f,f)\geq 0$, we obtain
\[
\int\gamma_2(f,f)-K|\nabla f|^2\,\d\mm\leq \Ggamma_2(f,f)(\X)-K\int |\nabla f|^2\,\d\mm\stackrel{\eqref{eq:massaggamma2}}=\int (\Delta f)^2-K|\nabla f|^2\,\d\mm
\]
and thus integrating \eqref{eq:perA2} and using the Cauchy-Schwarz inequality we see that $A$ takes values in $L^1(\mm)$ with
 \[
 \|A(T)\|_{L^1(\mm)}\leq \sqrt{\int (\Delta f)^2-K|\nabla f|^2\,\d\mm}\ \|T\|_{L^2(T^{\otimes 2}\X)},\qquad\forall T\in V.
 \]
Since by \eqref{eq:span2} (and the fact that $\fsm$ is an algebra) $V$ is dense in $L^2(T^{\otimes 2}\X)$, we deduce that $A$ can be uniquely extended to a continuous linear operator from $L^2(T^{\otimes 2}\X)$ to $L^1(\mm)$ and by its very definition we see that $A(gT)=gA(T)$ for every $T\in L^2(T^{\otimes 2}\X)$ and $g\in L^\infty(\mm)$, i.e.\ $A$ is a module morphism. In other words,  $A\in L^2((T^*)^{\otimes 2}\X)$.

To conclude observe that for $g,h_1,h_2\in\fsm$ and recalling the definition \eqref{eq:defhf} of $H[f]$ we have
\[
\begin{split}
2&\int A(g\nabla h_1\otimes \nabla h_2)\,\d\mm\\
&=2\int gH[f](h_1,h_2)\,\d\mm\\
&=\int g\Big(\big<\nabla\la\nabla f,\nabla h_1\ra,\nabla h_2\big>+\big<\nabla\la\nabla f,\nabla h_2\ra,\nabla h_1\big>-\big<\nabla f,\nabla\la\nabla h_1,\nabla h_2\ra\big>\Big)\,\d\mm\\
&=\int -\la\nabla f,\nabla h_1\ra\div (g\nabla h_2)-\la\nabla f,\nabla h_2\ra\div (g\nabla h_1)-g\big<\nabla f,\nabla\la\nabla h_1,\nabla h_2\ra\big>\,\d\mm,
\end{split}
\]
i.e.\ $f\in W^{2,2}(\X)$ and $\H f=A$. This very same computation shows that \eqref{eq:hess12} holds. For \eqref{eq:hess34} notice that \eqref{eq:perA2} can be restated as
\[
|\H f(T)|\leq  \sqrt{\gamma_2(f,f)-K|\nabla f|^2} \,\,|T|_\HS,\qquad\forall T\in V,
\]
and use once again the density of $V$ in $L^2(T^{\otimes 2}\X)$ together with the definition \eqref{eq:normdual} of dual pointwise norm to conclude.
\end{proof}
In particular, the following useful corollary holds:
\begin{corollary}\label{cor:bello} We have $D(\Delta)\subset   W^{2,2}(\X)$ and
\begin{equation}
\label{eq:key}
\mathcal E_2(f)\leq \int(\Delta f)^2-K|\nabla f|^2\,\d\mm,\qquad\forall f\in D(\Delta).
\end{equation}
\end{corollary}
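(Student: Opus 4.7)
The strategy is to first establish the inequality for functions in the test class $\fsm$ by direct integration of the pointwise Bochner-type bound in Theorem \ref{thm:key2}, and then to extend it to all of $D(\Delta)$ by an approximation argument based on the closedness of the Hessian (Theorem \ref{thm:basew22}(ii)) and the $W^{1,2}$-lower semicontinuity of $\ed$ (Theorem \ref{thm:basew22}(iv)).

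\textbf{Step 1: the case $f\in\fsm$.} Theorem \ref{thm:key2} guarantees $f\in W^{2,2}(\X)$ together with the pointwise inequality $|\H f|_\HS^2\leq \gamma_2(f,f)-K|\nabla f|^2$ $\mm$-a.e. Integrating and observing that the singular part of $\Ggamma_2(f,f)$ is non-negative (which follows either from \eqref{eq:gamma2} and Lemma \ref{le:lambda}, or directly from Proposition \ref{prop:regtest}) we obtain
\[
\int\gamma_2(f,f)\,\d\mm\leq \Ggamma_2(f,f)(\X)=\int(\Delta f)^2\,\d\mm
\]
using the mass identity in \eqref{eq:massaggamma2}. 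Combining yields $2\ed(f)\leq \int(\Delta f)^2-K|\nabla f|^2\,\d\mm$ for every $f\in\fsm$.

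\textbf{Step 2: extension to $f\in D(\Delta)$.} I would construct a sequence $(f_n)\subset\fsm$ with $f_n\to f$ in $W^{1,2}(\X)$ and $\Delta f_n\to \Delta f$ in $L^2(\mm)$. A natural choice is $f_n:=\tilde\h_{1/n}f$, the mollified heat flow from \eqref{eq:mollheat}: since $\tilde\h_\eps$ commutes with $\Delta$ and is an $L^2$-contraction, we immediately get $\Delta f_n=\tilde\h_{1/n}(\Delta f)\to\Delta f$ and $f_n\to f$ in $L^2$; an application of the spectral calculus (or a direct computation exploiting $\tfrac{\d}{\d t}\|\h_t g\|^2_{L^2}=-2\E(\h_tg)$) then upgrades this to $W^{1,2}$-convergence. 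If $f$ and $|\nabla f|$ happen to be bounded, $f_n\in\fsm$ by the same reasoning leading to \eqref{eq:prodtest} and to the statement after \eqref{eq:mollheat}; in general a preliminary combined height-truncation and spatial cut-off (in the spirit of \eqref{eq:aplipt}) is required to first reduce to this bounded situation without destroying the $W^{1,2}$-convergence of $f_n$ nor the $L^2$-convergence of $\Delta f_n$.

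\textbf{Step 3: closure and passage to the limit.} Applying Step 1 to each $f_n\in\fsm$ gives
\[
2\ed(f_n)=\|\H f_n\|^2_{L^2((T^*)^{\otimes 2}\X)}\leq \int(\Delta f_n)^2-K|\nabla f_n|^2\,\d\mm,
\]
and the right-hand side converges to $\int(\Delta f)^2-K|\nabla f|^2\,\d\mm$. Thus $(\H f_n)$ is bounded in the Hilbert module $L^2((T^*)^{\otimes 2}\X)$, which is reflexive by Corollary \ref{cor:hilrefl}; up to a subsequence $\H f_n\weakto A$ for some $A\in L^2((T^*)^{\otimes 2}\X)$. The closure property of the Hessian (Theorem \ref{thm:basew22}(ii)) applied to the weak limit of $(f_n,\H f_n)$ in $W^{1,2}(\X)\times L^2(T^{\otimes 2}\X)$ yields $f\in W^{2,2}(\X)$ with $\H f=A$, and the $W^{1,2}$-lower semicontinuity of $\ed$ gives $2\ed(f)\leq\liminf_n 2\ed(f_n)\leq\int(\Delta f)^2-K|\nabla f|^2\,\d\mm$, which is \eqref{eq:key}.

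\textbf{Main obstacle.} Everything except Step 2 is routine. The delicate point is producing the approximating sequence in $\fsm$ for an arbitrary $f\in D(\Delta)$: the mollified heat flow alone supplies all the required convergences but lands in $\fsm$ only when $f,|\nabla f|\in L^\infty(\mm)$, so a careful combination with truncation and spatial cut-off is needed to handle non-bounded representatives without spoiling the $L^2$-convergence of the Laplacians.
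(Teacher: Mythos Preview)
Your proposal is essentially correct and follows the same two-step strategy as the paper: first prove \eqref{eq:key} for $f\in\fsm$ by integrating the pointwise bound \eqref{eq:hess34} and using $\Ggamma_2^s(f,f)\geq 0$ together with \eqref{eq:massaggamma2}, then pass to general $f\in D(\Delta)$ by approximation and the $W^{1,2}$-lower semicontinuity of $\ed$. Step~1 and Step~3 match the paper exactly.

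The only difference is in the approximation scheme of Step~2, where the paper's route is both simpler and resolves your ``main obstacle'' directly. Instead of the mollified flow plus a vague truncation/cut-off, the paper truncates first and then applies the \emph{ordinary} heat flow: set $f_n:=\min\{\max\{f,-n\},n\}\in L^2\cap L^\infty(\mm)$ and consider $\h_t(f_n)\in\fsm$ (by \eqref{eq:prodtest}). For fixed $t>0$, the a~priori estimate \eqref{eq:altrafacile} makes $g\mapsto\Delta\h_t g$ continuous from $L^2(\mm)$ to $L^2(\mm)$, so $f_n\to f$ in $L^2$ gives $\Delta\h_t(f_n)\to\Delta\h_t(f)$ in $L^2$ and $\h_t(f_n)\to\h_t(f)$ in $W^{1,2}$; then as $t\downarrow 0$ one has $\h_t(f)\to f$ in $W^{1,2}$ and $\Delta\h_t(f)=\h_t(\Delta f)\to\Delta f$ in $L^2$ because $f\in D(\Delta)$. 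The double limit (first $n\to\infty$, then $t\downarrow 0$) thus gives all the needed convergences without any spatial cut-off and without ever requiring the truncations $f_n$ themselves to lie in $D(\Delta)$. Your mollified-flow approach would work once this truncation is in place, but it adds a layer with no real gain.
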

\begin{proof} For $f\in\fsm$ the inequality follows integrating \eqref{eq:hess34} and  noticing that \eqref{eq:gamma2} yields $\Ggamma_2^s(f,f)\geq 0$ and thus
\[
\int\gamma_2(f,f)\,\d\mm\leq \Ggamma_2(f,f)(\X)\stackrel{\eqref{eq:massaggamma2}}=\int (\Delta f)^2\,\d\mm.
\]
For the general case, let $f\in D(\Delta)$  and for $n\in\N$ and $t>0$ apply inequality \eqref{eq:key} to $\h_t(f_n)\in \fsm$, where $f_n:=\min\{\max\{f,-n\},n\}$. Recalling that $\Delta\h_t(f_n)\to \Delta\h_t(f)$ in $L^2(\mm)$ as $n\to\infty$ and that $\Delta\h_t(f)\to\Delta f$ in $L^2(\mm)$ as $t\downarrow0$ and similarly that $\h_t(f_n)\to \h_t(f)$ in $W^{1,2}(\X)$ as $n\to\infty$ and $\h_t(f)\to f$ and $\Delta\h_tf\to\Delta f$ in $W^{1,2}(\X)$ and $L^2(\mm)$ respectively as $t\downarrow0$, the conclusion follows by letting first $n\to\infty$, then $t\downarrow0$ and recalling the $W^{1,2}$-lower semicontinuity of $\mathcal E_2$.
\end{proof}

\begin{remark}\label{rem:bss}{\rm
The whole structure of both the statement and the proof of the key Lemma \ref{le:lemmachiave} is heavily inspired by the paper \cite{Bakry83} of  Bakry. He worked in the setting `abstract and smooth' of Dirichlet forms admitting an algebra of bounded functions dense in $L^p(\mm)$ for all $p<\infty$  and stable by the heat semigroup and the Laplacian (this latter condition is the one that appears out of reach in the non-smooth context) and obtained the bound
\begin{equation}
\label{eq:ophess}
H[f](g,h)\leq \sqrt{\gamma_2(f,f)-K|\nabla f|^2}\,|\nabla g|\,|\nabla h|.
\end{equation}
This corresponds to inequality \eqref{eq:parteac1} with $n=m=1$ and $g=1$ and can be read as   a bound on the operator norm of the Hessian.

Then, Savar\'e \cite{Savare13} was able to adapt Bakry's arguments in the non-smooth setting obtaining inequality \eqref{eq:ophess} under the same assumptions we are making now. 

In the very recent paper \cite{Sturm14}, Sturm, working in a setting somehow analogous to that of Bakry, realized that (in the notation of Lemma \ref{le:lemmachiave}) picking arbitrary $m\in\N$ leads to the improved inequality \eqref{eq:hess34} which gives a bound on the Hilbert-Schmidt norm of the Hessian.

If one is only interested in the definition of $W^{2,2}(\X)$ and to the definition of the Ricci curvature calculated along gradient vector fields, then Sturm's arguments in conjunction with Savar\'e's techniques are sufficient. Our formulation seems instead necessary to get the control of the whole Ricci curvature tensor and to write Bochner identity for general vector fields, see Lemma \ref{le:riscritto} and Theorem \ref{thm:ricci}.
}\fr\end{remark}

\subsubsection{Calculus rules}\label{se:calchess}
\paragraph{Some auxiliary Sobolev spaces}
In this section we introduce those auxiliary Sobolev spaces which are needed to develop calculus rules for functions in $W^{2,2}(\X)$ in a reasonable generality. Two things are worth of notice. 

The first is that although the various formulas   are easily established for functions in $\fsm$, we don't know whether these are dense in $W^{2,2}(\X)$ or not, thus in some instance we will have to deal with the space $H^{2,2}(\X)\subset W^{2,2}(\X)$ defined as the $W^{2,2}(\X)$-closure of $\fsm$ (see Definition \ref{def:h22}, Proposition \ref{prop:gradehess} and Remark \ref{re:h22}).

The second is that we don't have a meaningful definition for the space $W^{2,1}(\X)$, and this will force us to work with bounded functions when considering the Hessian of $fg$ for $f,g\in W^{2,2}(\X)$.  The problem in defining $W^{2,1}(\X)$ is that we do not have an $L^\infty$ control on the term $\nabla\la\nabla g_1,\nabla g_2\ra$, for whatever rich choice of $g_1,g_2$, appearing in the rightmost addend of the defining equation \eqref{eq:defhess} of the Hessian. The only information that we have comes from Proposition \ref{prop:regtest}, which grants $L^2$-integrability for such object, thus forcing the gradient of $f$ to be also in $L^2$. For this reason, in defining what it is an Hessian in $L^1((T^*)^{\otimes 2}\X)$ we cannot drop the assumption $\nabla f\in L^2(T\X)$ and thus we are obliged to  introduce a space somehow intermediate between $W^{2,1}(\X)$ and $W^{2,2}(\X)$  (see Definition \ref{def:w221} and Propositions \ref{prop:prodfunct}, \ref{prop:chainhess}).

\vspace{1cm}

We start with first order spaces. 
\begin{definition}[The spaces $W^{1,1}(\X,\sfd,\mm)$ and $H^{1,1}(\X,\sfd,\mm)$]\label{def:w11} The space $W^{1,1}(\X)\subset L^1(\mm)$ is the space of those functions $f\in L^1(\mm)$ such that there exists $\d_1 f\in L^1(T^*\X)$, called the differential of $f$, such that
\begin{equation}
\label{eq:perw112}
\int \d_1 f(\nabla g)h\,\d\mm=-\int f\div(h\nabla g)\,\d\mm,\qquad\forall g,h\in\fsm\text{ with }\Delta g\in L^\infty(\mm).
\end{equation}
On $W^{1,1}(\X )$ we put the norm
\[
\|f\|_{W^{1,1}(\X)}:=\|f\|_{L^1(\mm)}+\|\d_1 f\|_{L^1(T^*\X)},
\]
and we define $H^{1,1}(\X)$ as the closure of $\fsm\cap W^{1,1}(\X)$ in $W^{1,1}(\X)$.
\end{definition}
Recalling the property \eqref{eq:conlapinfty} we see that \eqref{eq:perw112} determines $\d_1 f$, so that if it exists it is unique. It is then clear that it linearly depends on  $f\in W^{1,1}(\X)$, that $W^{1,1}(\X)$  is a vector space and that $\|\cdot\|_{W^{1,1}(\X)}$ is a norm. Moreover,  the terms in \eqref{eq:perw112} are continuous in $f$ and $\d_1f$ w.r.t.\ convergence in $L^1(\mm)$, $L^1(T^*\X)$ respectively, from which it follows that $W^{1,1}(\X)$, and thus also $H^{1,1}(\X)$, is a Banach space. These spaces are also separable, because the map from $W^{1,1}(\X)$ to $L^1(\mm)\times L^1(T^*\X)$ sending $f$ to $(f,\d_1 f)$ is an isometry, the target space being endowed with the (separable, due to \eqref{eq:seplp}) norm $\|(f,\omega)\|:=\|f\|_{L^1(\mm)}+\|\omega\|_{L^1(T^*\X)}$.

We chose the notation $\d_1f$ to highlight that a priori this definition could provide a notion different from the one given by Definition \ref{def:diff}. With an approximation argument we now check that the two notions, whenever comparable, are equivalent.
\begin{proposition}[Compatibility of $\d_1 f$ and $\d f$]\label{prop:diffcomp}  Let $f\in W^{1,1}\cap \s^2(\X)$. Then $\d_1f=\d f$.
\end{proposition}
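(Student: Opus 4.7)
The plan is to show that the $\d f \in L^2(T^*\X)$ coming from Definition \ref{def:diff} satisfies the defining identity of $\d_1 f$ in Definition \ref{def:w11}; once this is established the equality follows from uniqueness.

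First, I would note that both $\d f$ and $\d_1 f$ can be viewed as elements of $L^0(T^*\X)$, and the identity \eqref{eq:perw112} that characterizes $\d_1 f$ is in fact characterizing, because vector fields of the form $h\nabla g$ with $g,h\in\fsm$ and $\Delta g\in L^\infty(\mm)$ are weakly$^*$ dense in $L^\infty(T\X)$ by \eqref{eq:conlapinfty}. Hence it suffices to verify
\begin{equation}\label{eq:plandiffcomp}
\int h\,\d f(\nabla g)\,\d\mm \;=\; -\int f\,\div(h\nabla g)\,\d\mm
\end{equation}
for all $g,h\in\fsm$ with $\Delta g\in L^\infty(\mm)$. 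Note that, for such $g,h$, the vector field $X:=h\nabla g$ lies in $L^2\cap L^\infty(T\X)\cap D(\div)$ with $\div(X)=\la\nabla h,\nabla g\ra+h\Delta g\in L^2\cap L^\infty(\mm)$, while $\d f(\nabla g)=\la\nabla f,\nabla g\ra$ is in $L^1(\mm)$ since $|\nabla f|,|\nabla g|\in L^2(\mm)$.

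If $f$ were in $W^{1,2}(\X)$ the identity \eqref{eq:plandiffcomp} would be immediate from the definition of divergence. The main obstacle is that $f\in\s^2(\X)\cap W^{1,1}(\X)$ need not be in $L^2(\mm)$, so a truncation/localization argument is required. I would fix a point $x_0\in\supp(\mm)$ and a Lipschitz cut-off $\chi_n:\X\to[0,1]$ with $\chi_n\equiv 1$ on $B_n(x_0)$, $\chi_n\equiv 0$ off $B_{n+1}(x_0)$, and $\Lip(\chi_n)\leq 1$; together with the truncations $\varphi_n(z):=\max\{-n,\min\{z,n\}\}$. Define $f_n:=\chi_n\,\varphi_n\circ f$. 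Since $\RCD$ spaces give finite mass to bounded sets and $f_n$ is bounded with support in $\overline{B_{n+1}(x_0)}$, we have $f_n\in L^\infty\cap L^2(\mm)$; by the Leibniz and chain rules (Corollary \ref{cor:calcdiff}) $f_n\in\s^2(\X)$ with
\[
\d f_n \;=\; \chi_n\,\varphi_n'\!\circ f\,\d f \;+\; \varphi_n\!\circ f\,\d\chi_n,
\]
so $f_n\in W^{1,2}(\X)$. The definition of $\div(X)$ then yields, for each $n$,
\[
\int f_n\,\div(h\nabla g)\,\d\mm \;=\; -\int h\,\d f_n(\nabla g)\,\d\mm.
\]

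The final step is passing to the limit. For the left-hand side, $f_n\to f$ $\mm$-a.e.\ with $|f_n|\leq|f|\in L^1(\mm)$ and $\div(h\nabla g)\in L^\infty(\mm)$, so dominated convergence gives $\int f_n\,\div(h\nabla g)\,\d\mm\to\int f\,\div(h\nabla g)\,\d\mm$. For the right-hand side, split $h\,\d f_n(\nabla g)=h\chi_n\varphi_n'(f)\la\nabla f,\nabla g\ra+h\varphi_n(f)\la\nabla\chi_n,\nabla g\ra$: the first summand is dominated by $\|h\|_\infty|\nabla f||\nabla g|\in L^1(\mm)$ and converges pointwise to $h\la\nabla f,\nabla g\ra$, while the second summand is dominated by $\|h\|_\infty\|\nabla g\|_\infty\,|f|\,\nchi_{\X\setminus B_n(x_0)}$, which tends to $0$ in $L^1(\mm)$ because $|f|\in L^1(\mm)$ and $\mm(B_n(x_0))<\infty$ with $\cup_n B_n(x_0)\supset\supp(\mm)$. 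Thus \eqref{eq:plandiffcomp} holds, and the uniqueness of $\d_1 f$ identifies it with $\d f$.
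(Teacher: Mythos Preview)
Your proof is correct and follows essentially the same approach as the paper: approximate $f$ by $W^{1,2}$ functions via truncation, apply the defining identity of the divergence to the approximants, and pass to the limit. The paper's version is slightly more streamlined in that it uses only the truncation $f_n:=\min\{\max\{f,-n\},n\}$ without a spatial cut-off, since $f\in W^{1,1}(\X)$ already gives $f\in L^1(\mm)$ and hence $f_n\in L^1\cap L^\infty(\mm)\subset L^2(\mm)$; your extra cut-off $\chi_n$ is harmless but unnecessary.
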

\begin{proof}
Let $f_n:=\min\{\max\{f,-n\},n\}$ and notice that by the chain rule for $\d$ we have $f_n\in \s^2(\X)$ and $\d f_n=\nchi_{\{|f|< n\}}\d f$. Moreover, $f_n\in L^1\cap L^\infty(\mm)\subset L^2(\mm)$ and thus $f_n\in W^{1,2}(\X)$ so that by definition of divergence we have
\[
\int f_n{\rm div}(h\nabla g)\,\d\mm=-\int \d f_n(\nabla g)h\,\d\mm=-\int_{\{|f|\leq n\}} \d f(\nabla g)h\,\d\mm,\qquad\forall n\in\N,
\]
for arbitrary $g,h\in\fsm$ with $\Delta g\in L^\infty(\mm)$. Letting $n\to\infty$ and using the $L^1(\mm)$-convergence of $f_n$ to $f$ and the $L^2(T^*\X)$-convergence of $\nchi_{\{|f|< n\}}\d f$ to $\d f$ we deduce
\[
\int f{\rm div}(h\nabla g)\,\d\mm=-\int  \d f(\nabla g)h\,\d\mm.
\]
On the other hand, by definition of $W^{1,1}(\X)$ and $\d_1 f$ we know that the left hand side of this last identity is equal to $-\int  \d_1f(\nabla g)h\,\d\mm$, so that the conclusion follows from  the density property \eqref{eq:conlapinfty}.
\end{proof}
Thanks to this identification we shall denote the differential of functions in $W^{1,1}(\X)$ simply as $\d f$, thus dropping the subscript 1. It might be worth to remark at this point that the product of two test functions is in $L^1(\mm)$ and, by the Leibniz rule for $\d$,  in $W^{1,2}(\X)$ with differential in $L^1(T^*\X)$. Hence  $H^{1,1}(\X)$ is a dense subspace of  $L^1(\X)$.

It will be useful to keep in mind the following:
\begin{proposition}\label{prop:w11w12}
Let  $f\in L^2\cap W^{1,1}(\X)$ is such that $\d f\in L^2(T^*\X)$. Then $f\in W^{1,2}(\X)$.
\end{proposition}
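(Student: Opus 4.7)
The plan is to deduce $f\in W^{1,2}(\X)$ by showing that $\E(f)\leq\tfrac12\|\d f\|^2_{L^2(T^*\X)}$ via the standard Dirichlet-form duality, after first upgrading the defining integration-by-parts identity of $W^{1,1}(\X)$ to a form in which the auxiliary test function $h$ disappears.

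First I would extend the identity \eqref{eq:perw112} from pairs $(g,h)\in[\fsm]^2$ with $\Delta g\in L^\infty(\mm)$ to all pairs $(g,h)\in[\fsm]^2$. For arbitrary $g\in\fsm$ the mollified heat flow $g_n:=\tilde\h_{1/n}g$ from \eqref{eq:mollheat} belongs to $\fsm$ with $\Delta g_n\in L^\infty(\mm)$, and $g_n\to g$ in $W^{1,2}(\X)$ together with $\Delta g_n\to\Delta g$ in $L^2(\mm)$. Writing $\div(h\nabla g_n)=\la\nabla h,\nabla g_n\ra+h\Delta g_n$ and using $f\in L^2(\mm)$, $h\in L^\infty(\mm)$, $|\nabla h|\in L^\infty(\mm)$ and $\d f\in L^2(T^*\X)$, both sides of the defining identity pass to the limit, yielding
\[
\int h\,\d f(\nabla g)\,\d\mm=-\int f\,\div(h\nabla g)\,\d\mm,\qquad\forall g,h\in\fsm.
\]

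Second, I would remove the factor $h$ by a cutoff argument. Choose $h_R\in\fsm$ with $h_R\to 1$ pointwise, $\|h_R\|_{L^\infty(\mm)}\leq C$, and $\|\nabla h_R\|_{L^2(T\X)}\to 0$ (such cutoffs exist on $\RCD(K,\infty)$ spaces via the heat semigroup regularisation). In the identity above with $h=h_R$, dominated convergence gives $\int h_R\,\d f(\nabla g)\,\d\mm\to\int\d f(\nabla g)\,\d\mm$ (dominator $|\d f|\,|\nabla g|\in L^1(\mm)$) and $\int f h_R\Delta g\,\d\mm\to\int f\Delta g\,\d\mm$ (dominator $|f|\,|\Delta g|\in L^1(\mm)$), while Cauchy--Schwarz together with $|\nabla g|\in L^\infty(\mm)$ gives
\[
\Big|\int f\la\nabla h_R,\nabla g\ra\,\d\mm\Big|\leq\|f\|_{L^2(\mm)}\||\nabla g|\|_{L^\infty(\mm)}\|\nabla h_R\|_{L^2(T\X)}\to 0.
\]
Therefore $-\int f\Delta g\,\d\mm=\int\d f(\nabla g)\,\d\mm$ for every $g\in\fsm$, and Cauchy--Schwarz yields the key bound $\bigl|-\int f\Delta g\,\d\mm\bigr|\leq\|\d f\|_{L^2(T^*\X)}\,\||\nabla g|\|_{L^2(\mm)}$.

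Third, I would invoke the standard Dirichlet-form duality applied to the Cheeger energy $\E$, which in this setting reads
\[
\E(f)=\sup_{g\in\fsm}\Big\{-\int f\Delta g\,\d\mm-\tfrac12\int|\nabla g|^2\,\d\mm\Big\}\qquad\forall f\in L^2(\mm),
\]
with the convention $\E(f)=+\infty$ for $f\in L^2(\mm)\setminus W^{1,2}(\X)$. This is a consequence of the density of $\fsm$ in $D(\Delta)$ in the graph norm together with the Hilbertian duality for the closed quadratic form $\E$. Combining this with the key bound and Young's inequality gives, for every $g\in\fsm$,
\[
-\int f\Delta g\,\d\mm-\tfrac12\int|\nabla g|^2\,\d\mm\leq\|\d f\|_{L^2(T^*\X)}\||\nabla g|\|_{L^2(\mm)}-\tfrac12\||\nabla g|\|^2_{L^2(\mm)}\leq\tfrac12\|\d f\|^2_{L^2(T^*\X)},
\]
so that $\E(f)\leq\tfrac12\|\d f\|^2_{L^2(T^*\X)}<\infty$ and hence $f\in W^{1,2}(\X)$.

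The main obstacle is the construction of the cutoff functions $h_R$ in the second step: on an $\RCD(K,\infty)$ space the reference measure can grow like $e^{Cr^2}$, so naive distance-based cutoffs need not satisfy $\|\nabla h_R\|_{L^2(T\X)}\to 0$. The correct choice relies on the heat semigroup (e.g.\ $h_R:=\h_s\eta_R$ with $\eta_R$ a suitably chosen Lipschitz truncation), which is by now a standard technical ingredient on such spaces; given these cutoffs the rest of the argument is essentially automatic.
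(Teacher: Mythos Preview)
Your second step contains a genuine gap: cut-offs $h_R\in\fsm$ with $h_R\to 1$ pointwise and $\|\nabla h_R\|_{L^2(T\X)}\to 0$ do \emph{not} exist on a general $\RCD(K,\infty)$ space. Already on $\R^n$ with Lebesgue measure (which is $\RCD(0,\infty)$) and $n\geq 3$, any bounded sequence $h_R\to 1$ pointwise has $\liminf_R\|\nabla h_R\|_{L^2}>0$; with standard radial cut-offs one even gets $\|\nabla h_R\|_{L^2}\to\infty$. The heat-semigroup regularisation you invoke does not help: by the Bakry--\'Emery estimate $\E(\h_s\eta_R)\leq e^{-2Ks}\E(\eta_R)$, so applying $\h_s$ for fixed $s$ only multiplies the Dirichlet energy by a constant and cannot force it to vanish as $R\to\infty$. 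The estimate you wrote,
\[
\Big|\int f\la\nabla h_R,\nabla g\ra\,\d\mm\Big|\leq\|f\|_{L^2(\mm)}\||\nabla g|\|_{L^\infty(\mm)}\|\nabla h_R\|_{L^2(T\X)},
\]
is therefore useless in general.

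The fix is easy and is exactly what the paper does: take $h_R$ to be $1$-Lipschitz with bounded support and identically $1$ on $B_R$ (such $h$ are admissible in \eqref{eq:perw112} by the approximation property \eqref{eq:aplipt}). Then $|\nabla h_R|\leq 1$ is supported in $\X\setminus B_R$, and since $|f|\,|\nabla g|\in L^1(\mm)$ (both factors are in $L^2$) one gets
\[
\Big|\int f\la\nabla h_R,\nabla g\ra\,\d\mm\Big|\leq\int_{\X\setminus B_R}|f|\,|\nabla g|\,\d\mm\to 0
\]
by dominated convergence, which is all you need. With this correction your argument goes through. As for your Step~3, the paper avoids quoting the duality formula for $\E$ as a black box and instead chooses $g=\h_{2t}f$ directly (after extending the integration-by-parts identity to all $g\in D(\Delta)$), obtaining $2\E(\h_t f)\leq\|\d f\|_{L^2}\sqrt{2\E(\h_t f)}$ and concluding by lower semicontinuity as $t\downarrow 0$; this is essentially the same computation that underlies the duality formula, just unpacked.
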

\begin{proof}
Start observing that thanks to the approximation property \eqref{eq:aplipt} we can freely pick $h$ Lipschitz with bounded support in the definition of $W^{1,1}(\X)$. Therefore picking as $h$ a 1-Lipschitz function with bounded support and identically 1 on a ball of radius $R$ and then letting $R\to\infty$, we see that for any $f\in W^{1,1}(\X)$ it must hold
\begin{equation}
\label{eq:sonnino}
\int \d f(\nabla g)\,\d\mm=-\int f\Delta g \,\d\mm,
\end{equation}
for every $g\in\fsm$  with $\Delta g\in L^\infty(\mm)$. Then observe that for a generic $g\in\fsm$ if we consider its evolution $\tilde\h_tg$ along the mollified heat flow defined in \eqref{eq:mollheat}, we have that $\d\tilde h_tg\to\d g$ in $L^2(T^*\X)$ and $\Delta\tilde\h_tg\to\Delta g$ in $L^2(\mm)$ as $t\downarrow0$ (these are easy to establish from the fact that $t\mapsto \E(\h_tg)$ and $t\mapsto\|\Delta\h_tg\|_{L^2(\mm)}$ are non-increasing). Thus since we assumed $|f|,|\d f|\in L^2(\mm)$,   we see that \eqref{eq:sonnino} holds for any $g\in\fsm$. With a similar approximation argument based on first truncating $g$ and then regularizing the truncation via the heat flow, we see that \eqref{eq:sonnino} holds for general $g\in D(\Delta)$.

Thus pick $g:=\h_{2t}f$ and notice that
\[
\begin{split}
2\E(\h_tf)&=\int \la\nabla\h_tf,\nabla \h_tf\ra\,\d\mm=-\int f\Delta\h_{2t}f\,\d\mm\stackrel{\eqref{eq:sonnino}}=\int\d f(\nabla \h_{2t}f)\,\d\mm\\
&\leq\|\d f\|_{L^2(T^*\X)}\|\nabla \h_{2t}f\|_{L^2(T\X)}\leq \|\d f\|_{L^2(T^*\X)} \sqrt{2\E( \h_{t}f)},
\end{split}
\]
where in the last inequality we used the fact that $(0,\infty)\ni t\mapsto \|\nabla \h_{2t}f\|_{L^2(T\X)}=\sqrt{2\E(\h_tf)}$ is non-increasing. Letting $t\downarrow 0$ and using the $L^2(\mm)$-lower semicontinuity of $\E$ we conclude that $\E(f)\leq \|\d f\|_{L^2(T^*\X)}<\infty$ which, by definition of $W^{1,2}(\X)$, gives that $f\in W^{1,2}(\X)$.
\end{proof}
Notice that in this last proposition we are not able to drop the assumption $f\in L^2(\mm)$, compare with point $(ii)$ in Proposition \ref{prop:baseh11}.

Despite the identification of differentials, it is not clear to us if the same calculus rules which are valid for the differential of functions in $\s^2(\X)$ are also valid for functions in $W^{1,1}(\X)$, the problem being the little flexibility offered by the defining property \eqref{eq:perw112}.  For instance, we don't know if \eqref{eq:BE1} holds for functions in $W^{1,1}(\X)$ nor whether $\h_tf\in W^{1,1}(\X)$ for given $f\in W^{1,1}(\X)$. Also,  the only form of locality that we are able to prove is 
\begin{equation}
\label{eq:localdw11}
\d f_1=\d f_2,\qquad\mm\ae\text{ on the interior of }\{f_1=f_2\}\text{ for every }f_1,f_2\in W^{1,1}(\X),
\end{equation}
where by `interior of $\{f_1=f_2\}$' we mean the union of all the open sets $\Omega\subset\X$ such that $f_1=f_2$ $\mm$-a.e.\ on $\Omega$. Evidently, property \eqref{eq:localdw11}  is  weaker than the analogous valid for functions in $\s^2(\X)$ provided by Theorem \ref{thm:diffloc}. To establish \eqref{eq:localdw11} just notice that, as already remarked, thanks to the approximation property \eqref{eq:aplipt} we can freely pick $h$ Lipschitz with bounded support in the definition of $W^{1,1}(\X)$, so that the claim follows picking generic such $h$'s with support contained in open sets where $f_1,f_2$ coincide $\mm$-a.e..

Such locality property seems not sufficient to deduce any form of Leibniz or chain rule as we did in Corollary \ref{cor:calcdiff}.

On the other hand, it is easy to see that $\mm$-a.e.\ we have
\begin{equation}
\label{eq:dh11}
\begin{array}{rlll}
\d(f_1f_2)&\!\!\!=f_2\d f_1+f_1\d f_2,&&\quad\forall f_1,f_2\in H^{1,1}\cap L^\infty(\X) \\
\d (\varphi\circ f)&\!\!\!=\varphi'\circ f\,\d f,&&\quad\forall f\in H^{1,1}(\X),\ \varphi\in{\rm LIP}(\R),\\
\d f&\!\!\!=0,\qquad\ \ \ \text{ on }f^{-1}(\mathcal N)&&\quad\forall f\in H^{1,1}(\X),\ \mathcal N\subset \R\text{ Borel and negligible}\\
\d f_1&\!\!\!=\d f_2,\qquad\text{ on }\{f_1=f_2\}&& \quad\forall f_1,f_2\in H^{1,1}(\X),
\end{array}
\end{equation}
where in the chain rule we also require $\varphi(0)=0$  if $\mm(\X)=\infty$ in order not to destroy the $L^1(\mm)$ integrability and as usual the term $\varphi'\circ f$ is defined arbitrarily on the preimage of the points of non-differentiability of $\varphi$. Here it  is  part of the statement that $\varphi\circ f$ - and similarly $f_1f_2$ in the Leibniz rule - belongs to $H^{1,1}(\X)$. 

The Leibniz and the chain  rule for $\varphi\in C^1(\R)$ can be directly established by first recalling their validity for functions in $\fsm$ and passing to the limit in the $W^{1,1}$-topology, then the locality property follows from the Leibniz with the very same arguments used in the proof of Theorem \ref{thm:der} and the full chain rule also comes from the very same approximation procedure used in the proof of Theorem \ref{thm:der}. We omit the details.

A simple consequence of the chain rule is that
\begin{equation}
\label{eq:tronch11}
\begin{split}
&\text{for }f\in H^{1,1}(\X),\ \text{putting }f_n:=\min\{n,\{\max\{f,-n\}\}\}\text{ we have }f_n\in H^{1,1}(\X)\\ 
&\text{with }\d f_n=\nchi_{\{f<|n|\}}\d f \text{ and in particular }f_n\to f\text{ in $H^{1,1}(\X)$ as $n\to\infty$.}
\end{split}
\end{equation}
We then have the following basic results about the space $H^{1,1}(\X)$: notice that point $(ii)$ below is the one which justifies the introduction of the space $H^{1,1}(\X)$ itself, as we don't know if the same conclusion holds for functions in $W^{1,1}(\X)$.
\begin{proposition}[Basic facts about $H^{1,1}(\X)$]\label{prop:baseh11} The following hold.
\begin{itemize}
\item[i)] Let $f\in L^1\cap \s^2(\X)$ be such that  $\d f\in L^1(T^*\X)$. Then $f\in H^{1,1}(\X)$.
\item[ii)] Let  $f\in H^{1,1}(\X)$ be such that  $\d f\in L^2(T^*\X)$. Then $f\in\s^2(\X)$.
\end{itemize} 
\end{proposition}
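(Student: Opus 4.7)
The plan for (i) is to combine truncation with heat flow regularization to reduce to the already-established calculus for bounded Sobolev functions. Setting $f_n:=\min\{\max\{f,-n\},n\}$, the chain rule on $\s^2(\X)$ (Corollary \ref{cor:calcdiff}) gives $f_n\in L^1\cap L^\infty\cap\s^2(\X)$ with $\d f_n=\nchi_{\{|f|<n\}}\d f\in L^1\cap L^2(T^*\X)$, so in particular $f_n\in W^{1,2}(\X)$. For $g,h\in\fsm$ with $\Delta g\in L^\infty(\mm)$ the Leibniz rule for the divergence \eqref{eq:leibdiv} yields $h\nabla g\in D(\div)$ with $\div(h\nabla g)=\la\nabla h,\nabla g\ra+h\Delta g\in L^\infty(\mm)$, so the definition \eqref{eq:defdiv} of divergence applied to $f_n\in W^{1,2}(\X)$ gives
\[
\int f_n\,\div(h\nabla g)\,\d\mm=-\int h\,\d f_n(\nabla g)\,\d\mm.
\]
Passing to the limit $n\to\infty$ via dominated convergence (dominators $|f|\in L^1(\mm)$ paired with the bounded $\div(h\nabla g)$ on the left, and $|\d f|\,|\nabla g|\in L^1(\mm)$ on the right) yields \eqref{eq:perw112} for $f$, so $f\in W^{1,1}(\X)$ with $\d_1 f=\d f$. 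The same dominated convergence arguments also show $f_n\to f$ in $W^{1,1}(\X)$.

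It then suffices to produce, for each fixed $n$, a sequence in $\fsm\cap W^{1,1}(\X)$ converging to $f_n$ in $W^{1,1}(\X)$, and I would take $(\h_t f_n)_{t\downarrow 0}$. By \eqref{eq:prodtest} we have $\h_tf_n\in\fsm$ for every $t>0$, the $L^1$-contraction \eqref{eq:boundlpcalore} gives $\h_tf_n\in L^1(\mm)$, and the refined Bakry-\'Emery estimate \eqref{eq:BE1} combined with the same contraction applied to $|\d f_n|\in L^1(\mm)$ gives $|\d\h_tf_n|\in L^1(\mm)$, so $\h_tf_n\in\fsm\cap W^{1,1}(\X)$. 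The $L^1$-convergence $\h_tf_n\to f_n$ as $t\downarrow 0$ comes from strong continuity of the heat semigroup on $L^1(\mm)$, while $\h_tf_n\to f_n$ in $W^{1,2}(\X)$ gives, along a subsequence, $\mm$-a.e.\ convergence $|\d\h_tf_n-\d f_n|\to 0$. The step I expect to be the main obstacle is upgrading this to $L^1(T^*\X)$-convergence of the differentials: my plan is to invoke Vitali's theorem, using the uniform $\mm$-integrability of $\{|\d\h_tf_n|\}_{t\in(0,1]}$, which follows from the Bakry-\'Emery pointwise domination by $\{e^{|K|t}\h_t(|\d f_n|)\}_{t\in(0,1]}$, a family that is itself uniformly integrable since it converges in $L^1(\mm)$.

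For item (ii) the argument is considerably cleaner. Given $f\in H^{1,1}(\X)\subset L^1(\mm)$ with $\d f\in L^2(T^*\X)$, the truncation rule \eqref{eq:tronch11} places $f_n:=\min\{\max\{f,-n\},n\}$ in $H^{1,1}(\X)$ with $\d f_n=\nchi_{\{|f|<n\}}\d f$, and we have $f_n\in L^1\cap L^\infty(\mm)\subset L^2(\mm)$ together with $\d f_n\in L^2(T^*\X)$. Proposition \ref{prop:w11w12} upgrades $f_n$ to $W^{1,2}(\X)\subset\s^2(\X)$, and Proposition \ref{prop:diffcomp} identifies the $W^{1,1}$- and $\s^2$-differentials of $f_n$, giving $\weakgrad{f_n}=|\d f_n|=\nchi_{\{|f|<n\}}|\d f|$. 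Since $f_n\to f$ $\mm$-a.e.\ and $\weakgrad{f_n}\to|\d f|$ strongly, hence weakly, in $L^2(\mm)$ by dominated convergence, the lower semicontinuity of minimal weak upper gradients \eqref{eq:lscwug} delivers $f\in\s^2(\X)$ with $\weakgrad f\leq|\d f|$, concluding the proof.
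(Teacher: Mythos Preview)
Your proof is correct and follows essentially the same route as the paper's: truncate to land in $W^{1,2}$, then heat-flow regularize and use the Bakry--\'Emery estimate \eqref{eq:BE1} to control $|\d\h_tf_n|$ in $L^1$, with the stability property \eqref{eq:lscwug} handling part (ii). Your explicit verification that $f_n\to f$ in $W^{1,1}(\X)$ and your invocation of Vitali (via the $L^1$-convergent dominating family $e^{|K|t}\h_t(|\d f_n|)$) just make precise what the paper compresses into the phrase ``domination in $L^1(\mm)$ implies $\d\h_tf_n\to\d f_n$ in $L^1(T^*\X)$''.
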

\begin{proof}$\ $\\
\noindent{$\mathbf{(i)}$} For $f\in L^1\cap \s^2(\X)$ define  $f_n:=\min\{\max\{f,-n\},n\}$ so that $f_n\in L^1\cap L^\infty(\mm)\subset L^2(\mm)$ and, from the chain rule in Corollary \ref{cor:calcdiff}, $f_n\in\s^2(\X)$ with $\d f_n=\nchi_{\{|f|<n\}} \d f$. From the assumption $\d f\in L^1(T^*\X)$, we deduce that $\d f_n\in L^1(T^*\X)$ as well with $\d f_n\to\d f$ in $L^1(T^*\X)$ as $n\to\infty$.

For $t>0$ the function $\h_tf_n$ is in $\fsm$ (recall \eqref{eq:prodtest}) and  by \eqref{eq:BE1} we have
\[
|\d\, \h_tf_n|\leq e^{-Kt}\h_t(|\d f_n|),\qquad\mm\ae,
\]
so that the family of functions $\{|\d\, \h_tf_n|\}_{t>0}$ is dominated in $L^1(\mm)$. From the fact that $f_n\in W^{1,2}(\X)$ it easily follows that $\h_tf_n\to f_n$ in $W^{1,2}(\X)$ as $t\downarrow0$ and thus that $\d(\h_tf_{n}-f_n)\to 0$ in $L^2(T^*\X)$. This latter fact together with the domination of $\{|\d\, \h_tf_n|\}_{t>0}$ in $L^1(\mm)$ implies that  $\d \h_tf_n\to \d f_n$ in $L^1(T^*\X)$, i.e.\ $f_n\in H^{1,1}(\X)$, which  is sufficient to conclude.

\noindent{$\mathbf{(ii)}$}  Assume at first that $f$ also belongs to $L^\infty(\mm)$, so that  $f\in L^2(\mm)$ as well. In this case the thesis comes from Proposition  \eqref{prop:w11w12}.

For the general case, let  $f\in H^{1,1}(\X)$ and for $n\in\N$ define the functions $f_n:=\min\{\max\{f,-n\},n\}$.  By \eqref{eq:tronch11} we have $(f_n)\subset  H^{1,1}(\X)$ with $f_n\to f$ in $H^{1,1}(\X)$ and the above argument yields $f_n\in\s^2(\X)$. Moreover, the identity $\d(f- f_n)=\nchi_{\{|f|\geq n\}} \d f$ grants that $\|\d f_n\|_{L^2(T^*\X)}$ is uniformly bounded, so that the stability property \eqref{eq:lscwug} grants that $f\in \s^2(\X)$.
\end{proof}
The usefulness of $H^{1,1}(\X)$ is due to the following result:
\begin{proposition}[Leibniz rule for functions in $W^{1,2}(\X)$]
Let $f_1,f_2\in W^{1,2}(\X)$. Then $f_1f_2\in H^{1,1}(\X)$ and the formula
\[
\d(f_1f_2)=f_1\,\d f_2+f_2\,\d f_1,
\]
holds.
\end{proposition}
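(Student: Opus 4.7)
The plan is to combine the Leibniz rule for Sobolev functions (Corollary \ref{cor:calcdiff}) with a truncation argument and the sufficient condition for membership in $H^{1,1}(\X)$ given by Proposition \ref{prop:baseh11}(i).

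First I would check the candidate formula makes sense at the level of integrability. Since $f_1,f_2\in L^2(\mm)$, Cauchy--Schwarz gives $f_1f_2\in L^1(\mm)$, and similarly $f_1\,\d f_2+f_2\,\d f_1\in L^1(T^*\X)$ because $|f_i|\in L^2(\mm)$ and $|\d f_j|\in L^2(\mm)$. Hence the proposed differential is an admissible element of $L^1(T^*\X)$ to begin with.

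Next I would reduce to the bounded case by truncation. Set $f_{i,n}:=\min\{\max\{f_i,-n\},n\}$ for $i=1,2$ and $n\in\N$. By the chain rule in Corollary \ref{cor:calcdiff}, $f_{i,n}\in \s^2\cap L^\infty(\X)$ with $\d f_{i,n}=\nchi_{\{|f_i|<n\}}\d f_i$, and the Leibniz rule in the same corollary yields $f_{1,n}f_{2,n}\in \s^2\cap L^\infty(\X)$ with
\begin{equation*}
\d(f_{1,n}f_{2,n})=f_{1,n}\,\d f_{2,n}+f_{2,n}\,\d f_{1,n}.
\end{equation*}
Since $f_{1,n}f_{2,n}\in L^1(\mm)$ (as $\mm$ may not be finite, use $|f_{1,n}f_{2,n}|\leq |f_1f_2|\in L^1(\mm)$) and its differential above belongs to $L^1(T^*\X)$ (by Cauchy--Schwarz as above), Proposition \ref{prop:baseh11}(i) gives $f_{1,n}f_{2,n}\in H^{1,1}(\X)$.

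Finally I would pass to the limit in the $W^{1,1}$-norm. Since $f_{1,n}f_{2,n}\to f_1f_2$ pointwise $\mm$-a.e.\ and $|f_{1,n}f_{2,n}|\leq |f_1f_2|\in L^1(\mm)$, dominated convergence gives $L^1$-convergence. For the differentials, write
\begin{equation*}
f_{1,n}\,\d f_{2,n}=\bigl(\nchi_{\{|f_1|<n\}}\nchi_{\{|f_2|<n\}}f_1\bigr)\,\d f_2,
\end{equation*}
whose pointwise norm is dominated by $|f_1|\,|\d f_2|\in L^1(\mm)$ and which converges pointwise to $f_1\,\d f_2$; a symmetric argument handles $f_{2,n}\,\d f_{1,n}$. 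Dominated convergence in $L^1(T^*\X)$ yields $\d(f_{1,n}f_{2,n})\to f_1\,\d f_2+f_2\,\d f_1$ in $L^1(T^*\X)$. Thus $f_{1,n}f_{2,n}\to f_1f_2$ in the norm of $W^{1,1}(\X)$, and since each approximant lies in $H^{1,1}(\X)$ and the latter is closed in $W^{1,1}(\X)$ by definition, we conclude $f_1f_2\in H^{1,1}(\X)$ with differential as claimed. There is no substantial obstacle here; the only care required is to keep track of the two distinct notions of differential (that of Definition \ref{def:diff} on $\s^2(\X)$ and that of Definition \ref{def:w11} on $W^{1,1}(\X)$), which is precisely what Proposition \ref{prop:diffcomp} allows us to do without ambiguity.
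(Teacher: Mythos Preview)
Your proof is correct and follows exactly the paper's approach: truncate, apply the Leibniz rule from Corollary~\ref{cor:calcdiff} to the bounded truncations, invoke Proposition~\ref{prop:baseh11}(i) to land in $H^{1,1}(\X)$, and pass to the limit in $W^{1,1}$. One small slip: the displayed identity $f_{1,n}\,\d f_{2,n}=\bigl(\nchi_{\{|f_1|<n\}}\nchi_{\{|f_2|<n\}}f_1\bigr)\,\d f_2$ is not quite right (on $\{|f_1|\geq n\}$ the truncation equals $\pm n$, not $0$), but your actual argument only uses the domination $|f_{1,n}\,\d f_{2,n}|\leq |f_1|\,|\d f_2|$ and pointwise convergence, both of which hold.
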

\begin{proof}
For $n\in\N$ let $f_{i,n}:=\min\{\max\{f_i,-n\},n\}$. Then recalling the Leibniz rule for the differential in Corollary \ref{cor:calcdiff} we have that $f_{1,n}f_{2,n}\in L^1\cap \s^2(\X)$ with 
\[
\d(f_{1,n}f_{2,n})=f_{1,n}\,\d f_{2,n}+f_{2,n}\,\d f_{1,n}
\] 
and in particular by point $(i)$ of Proposition \ref{prop:baseh11} above that $f_{1,n}f_{2,n}\in H^{1,1}(\X)$. By construction it is clear that $f_{1,n}f_{2,n}\to f_1f_2$ and $f_{1,n}\,\d f_{2,n}+f_{2,n}\,\d f_{1,n}\to f_{1}\,\d f_{2}+f_{2}\,\d f_{1}$ in $L^1(\mm)$ and $L^1(T^*\X)$ respectively as $n\to\infty$, thus passing to the limit in the Leibniz rule for the truncated functions we conclude.
\end{proof}
We can read the above result as follows: the natural form of Leibniz rule for the product of two $W^{1,2}(\X)$ is always in place and the differential of the product obeys calculus rules \eqref{eq:dh11} perfectly in line with those available for functions $W^{1,2}(\X)$ (as opposed to lack of calculus capabilities for functions in $W^{1,1}(\X)$). 

\begin{remark}{\rm
The problem of whether $W^{1,2}(\X)=H^{1,1}(\X)$ or not is `one order simpler' than the other analogous problems we will encounter later on, yet it still seems quite delicate. Notice that to get a positive answer to this question, following the same arguments used in point $(ii)$ of Proposition \ref{prop:baseh11} it would be sufficient to prove that $W^{1,1}\cap L^\infty(\X)$ is dense in $W^{1,1}(\X)$.

We also remark that the space $H^{1,1}(\X)$ coincides with what in the literature on metric measure spaces has often been called $W^{1,1}(\X)$: see in particular \cite{Ambrosio-DiMarino} and \cite{GigliHan14}. Here we chose the different terminology to distinguish those functions which can be approximated by `smooth' ones from those for which integration by parts holds.
}\fr\end{remark}

\bigskip

We now pass to second order spaces.
\begin{definition}[The space $H^{2,2}(\X)$]\label{def:h22}
We define $H^{2,2}(\X)\subset W^{2,2}(\X)$ as the $W^{2,2}$-closure of $\fsm$. 
\end{definition}
The following is easily established:
\begin{proposition}
$H^{2,2}(\X)$ coincides with the $W^{2,2}$-closure of $D(\Delta)$.
\end{proposition}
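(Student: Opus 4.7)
The plan is to prove the proposition by showing each of the two closures is contained in the other. One inclusion is immediate: since $\fsm\subset D(\Delta)$, any $W^{2,2}$-limit of elements of $\fsm$ is trivially a $W^{2,2}$-limit of elements of $D(\Delta)$, and hence $H^{2,2}(\X)$ is contained in the $W^{2,2}$-closure of $D(\Delta)$. For the reverse inclusion it suffices, since $D(\Delta)\subset W^{2,2}(\X)$ by Corollary \ref{cor:bello}, to exhibit for every $f\in D(\Delta)$ a sequence in $\fsm$ converging to $f$ in the $W^{2,2}$-norm.

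The approximation scheme will be precisely the one already used inside the proof of Corollary \ref{cor:bello}: for $f\in D(\Delta)$ define $f_n:=\min\{\max\{f,-n\},n\}\in L^2\cap L^\infty(\mm)$ and set $f_{n,t}:=\h_t(f_n)$, noting that $f_{n,t}\in\fsm$ for every $t>0$ thanks to \eqref{eq:prodtest}. Standard properties of the heat flow (linearity, self-adjointness, the $L^p$-contractivity \eqref{eq:boundlpcalore} and the a priori bounds \eqref{eq:altrafacile}) together with the fact that $f\in D(\Delta)$ yield, along a diagonal procedure $n\to\infty$ followed by $t\downarrow 0$, the convergences $f_{n,t}\to f$ in $L^2(\mm)$, $\d f_{n,t}\to \d f$ in $L^2(T^*\X)$ and $\Delta f_{n,t}\to \Delta f$ in $L^2(\mm)$. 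The key point is now to upgrade these three convergences to convergence in $W^{2,2}(\X)$, i.e.\ to $\H f_{n,t}\to \H f$ in $L^2((T^*)^{\otimes 2}\X)$.

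This is exactly what Corollary \ref{cor:bello} provides, once combined with the linearity of the Hessian (which is automatic from the uniqueness built into Definition \ref{def:w22}) and the fact that $D(\Delta)$ is a vector space on infinitesimally Hilbertian spaces. Applied to the difference $f_{n,t}-f_{m,s}\in D(\Delta)$ the inequality \eqref{eq:key} reads
\[
\tfrac{1}{2}\int |\H f_{n,t}-\H f_{m,s}|_\HS^2\,\d\mm\leq \int (\Delta f_{n,t}-\Delta f_{m,s})^2\,\d\mm+|K|\int|\nabla(f_{n,t}-f_{m,s})|^2\,\d\mm,
\]
whose right-hand side tends to zero along the diagonal extraction. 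Hence $(\H f_{n,t})$ is Cauchy in $L^2((T^*)^{\otimes 2}\X)$, and the closure property of the Hessian stated in point $(ii)$ of Theorem \ref{thm:basew22} identifies its limit with $\H f$. Therefore $f_{n,t}\to f$ in $W^{2,2}(\X)$ and $f\in H^{2,2}(\X)$, which finishes the proof.

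No step presents a genuine obstacle here: the only mildly delicate point is keeping the sign of $K$ under control in the Bochner-type inequality, which is why $\nabla f_{n,t}\to\nabla f$ in $L^2(T\X)$ is used in addition to the $L^2$-convergence of the Laplacians.
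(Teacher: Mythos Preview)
Your proof is correct. The approximation scheme $f_{n,t}=\h_t(\min\{\max\{f,-n\},n\})$ and the use of Corollary \ref{cor:bello} applied to differences are both sound; the only point worth making explicit is that the diagonal extraction really does produce a single sequence in $\fsm$ along which the Laplacians, gradients and functions all converge in $L^2$ (this is routine since the iterated limits exist, not merely the bounds).

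The paper takes a different route. Instead of proving \emph{strong} $W^{2,2}$-convergence of $f_{n,t}$ to $f$, it only shows that the family $\{f_{n,t}\}$ is \emph{bounded} in $W^{2,2}(\X)$ (via \eqref{eq:altrafacile} and \eqref{eq:key}) and that it converges to $f$ in $L^2(\mm)$. This forces weak $W^{2,2}$-convergence, after which the conclusion comes from the standard Hilbert-space fact that the weak closure of a linear subspace equals its strong closure. Your approach is more hands-on: by applying the key inequality \eqref{eq:key} to differences you upgrade directly to strong convergence of the Hessians, which has the small advantage of giving an explicit strongly convergent approximating sequence. The paper's approach is shorter because it needs only boundedness (not convergence) of $\Delta f_{n,t}$ and $\nabla f_{n,t}$, and it bypasses the diagonal argument entirely.
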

\begin{proof}
Since $\fsm\subset D(\Delta)$, clearly $H^{2,2}(\X)$ is contained in the $W^{2,2}$-closure of $D(\Delta)$. For the other inclusion, pick $f\in  D(\Delta)$ and for  $n\in\N$ put $f_n:=\min\{\max\{f,-n\},n\}$ and for $t>0$ put $f_{n,t}:=\h_t(f_n)$. By \eqref{eq:prodtest} we know that $f_{n,t}\in\fsm$ for every $n\in\N$ and $t>0$. 

Inequalities \eqref{eq:altrafacile}, \eqref{eq:key} grant that for every $t>0$ the family $\{f_{n,t}\}_{n\in\N}$ is bounded in $W^{2,2}$. Since evidently $\h_t(f_n)\to \h_t(f)$ in $L^2(\mm)$ as $n\to\infty$  we deduce that   $(\h_t(f_{n}))$ converges to $\h_t(f)$ weakly in $W^{2,2}(\X)$ as $n\to\infty$. 

Now let $t\downarrow0$ and observe that from $\|\Delta\h_tf\|_{L^2(\mm)}=\|\h_t\Delta f\|_{L^2(\mm)}\leq\|\Delta f\|_{L^2(\mm)}$ and again the bound \eqref{eq:key} we get  that $(\h_t(f))$ is bounded in $W^{2,2}(\X)$. Since moreover $\h_tf\to f$ in   $W^{1,2}(\X)$ as $t\downarrow0$,  we get that $(\h_tf)$ weakly converges to $f$ in $W^{2,2}(\X)$ as $t\downarrow0$.

The conclusion follows observing that being $\fsm$ a vector space, its closure w.r.t. the weak topology of $W^{2,2}$ coincides with its closure w.r.t the strong topology of $W^{2,2}$.
\end{proof}
We conclude introducing a space of functions with Hessian in $L^1((T^*)^{\otimes 2}\X)$. As said at the beginning of the section we cannot really define the space $W^{2,1}(\X)$ due to integrability issues in the definition of distributional Hessian. We therefore restrict the attention to the space $W^{(2,2,1)}(\X)$ of functions in $W^{1,2}(\X)$ with Hessian in $L^1(\X)$ (the three indexes in parenthesis indicate the integrability of the function, the differential and the Hessian respectively, so that in this notation we would have $W^{(2,2,2)}(\X)=W^{2,2}(\X)$).

\begin{definition}[The space $W^{(2,2,1)}(\X,\sfd,\mm)$]\label{def:w221} The space $W^{(2,2,1)}(\X)\subset W^{1,2}(\X) $ is the space of those functions $f\in W^{1,2}(\X)$ such that there exists $A\in L^1((T^*)^{\otimes 2}\X)$ such that
\begin{equation}
\label{eq:perw212}
\begin{split}
2\int &hA(\nabla g_1,\nabla g_2)\,\d\mm\\
&=\int -\la\nabla f,\nabla g_1\ra \div(h\nabla g_2) -\la\nabla f,\nabla g_2\ra \div(h\nabla g_1)-h\big<\nabla f,\nabla\la\nabla g_1,\nabla g_2\ra\big>\,\d\mm,
\end{split}
\end{equation}
for every $ g_1,g_2,h\in\fsm$. Such $A$ will be called Hessian of $f$ and denoted by $\H f$. On $W^{(2,2,1)}(\X)$ we put the norm
\[
\|f\|_{W^{(2,2,1)}(\X)}:=\|f\|_{L^2(\mm)}+\|\d f\|_{L^2(T^*\X)}+\|\H f\|_{L^1((T^*)^{\otimes 2}\X)}.
\]
\end{definition}
Property \eqref{eq:conlapinfty2} ensures that there is at most one $A$ for which \eqref{eq:perw212} holds, so that the Hessian is uniquely defined and it is clear that for $f\in W^{2,2}\cap W^{(2,2,1)}(\X)$ the two notions of Hessian given here and in Definition \ref{def:w22} coincide.

The definition also ensures that the Hessian linearly depends on $f\in W^{(2,2,1)}(\X)$ and that $W^{(2,2,1)}(\X)$ is a normed space. Noticing that the left hand side of \eqref{eq:perw212} is continuous w.r.t.\ $L^1((T^*)^{\otimes 2}\X)$-convergence of $A$ and the right hand side continuous w.r.t.\ $W^{1,2}(\X)$-convergence of $f$ we see that in fact $W^{(2,2,1)}(\X)$ is a Banach space.

We conclude pointing out that $W^{(2,2,1)}(\X)$ is separable, which can be checked observing that the map 
\[
\begin{array}{ccc}
W^{(2,2,1)}(\X)&\quad\to\quad &L^2(\mm)\times L^2(T^*\X)\times L^1((T^*)^{\otimes 2}\X)\\
f&\quad\mapsto\quad&(f,\d f,\H f)
\end{array}
\]
is an isometry of $W^{(2,2,1)}(\X)$ with its image, the target space being endowed with the (separable, due to \eqref{eq:seplp} and \eqref{eq:seplp2}) norm $\|(f,\omega,A)\|:=\|f\|_{L^2(\mm)}+\|\omega\|_{L^2(T^*\X)}+\|A\|_{L^1((T^*)^{\otimes 2}\X)}$.
\paragraph{Statement and proofs of calculus rules}
Here we collect the basic calculus rules involving the Hessian.

\vspace{1cm}

We start underlying the following property,  which grants a bit more flexibility in the choice of the function $h$ when testing the definition of Hessian:
\begin{equation}
\label{eq:sceltah}
\begin{split}
&\text{for  $f\in W^{2,2}(\X)$, $g_1,g_2\in\fsm$ and $h\in W^{1,2}\cap L^\infty(\X)$ we have}\\
&2\int h\,\H f(\nabla g_1,\nabla g_2)\,\d\mm\\
&\qquad=\int -\la\nabla f,\nabla g_1\ra \div(h\nabla g_2) -\la\nabla f,\nabla g_2\ra \div(h\nabla g_1)-h\big<\nabla f,\nabla\la\nabla g_1,\nabla g_2\ra\big>\,\d\mm.
\end{split}
\end{equation}
To see why this holds, notice that the choice of $\h_th\in \fsm$, $t>0$, in place of  $h\in W^{1,2}\cap L^\infty(\X)$ is admissible by the very definition of Hessian. Then observing that $|\H f(\nabla g_1,\nabla g_2)|\leq|\H f|_\HS\,|\nabla g_1||\nabla g_2|\in L^2(\mm)$ we can pass to the limit as $t\downarrow0 $ in the left hand side using the $L^2(\mm)$-convergence of $\h_th$ to $h$. To pass to the limit in the right hand side use the convergence of $(\d \h_th)$ to $\d h$ in $L^2(T^*\X)$ (consequence of the fact that $\h_th\to h$ in $W^{1,2}(\X)$) and the weak$^*$-convergence in $L^\infty(\mm)$ of $(\h_th)$ to $h$ (consequence of the uniform bound $\|\h_th\|_{L^\infty(\mm)}\leq \|h\|_{L^\infty(\mm)}$ and of $L^2(\mm)$-convergence).

\begin{proposition}[Product rule for functions]\label{prop:prodfunct}
Let $f_1,f_2\in W^{2,2}\cap L^\infty(\X)$. Then $f_1f_2\in W^{(2,2, 1)}(\X)$ and the formula
\begin{equation}
\label{eq:leibhess}
\H{f_1f_2}=f_2\H {f_1}+f_1\H {f_2}+\d f_1\otimes\d f_2+\d f_2\otimes \d f_1,\qquad\mm\ae
\end{equation}
holds.
\end{proposition}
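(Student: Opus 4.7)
\medskip

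\textbf{Proof proposal.} The plan is to exhibit an explicit candidate for $\H{f_1 f_2}$ and verify the defining identity \eqref{eq:perw212}. First, since $f_1,f_2 \in W^{1,2}\cap L^\infty(\X)$, the Leibniz rule already established for $W^{1,2}$ functions gives $f_1 f_2 \in W^{1,2}(\X)$ with $\d(f_1 f_2)=f_1\,\d f_2+f_2\,\d f_1$. Set
\[
A:=f_2\,\H{f_1}+f_1\,\H{f_2}+\d f_1\otimes\d f_2+\d f_2\otimes\d f_1.
\]
Since $f_j\in L^2\cap L^\infty(\mm)$ (as $W^{2,2}\subset W^{1,2}\subset L^2$) and $\H{f_i}\in L^2((T^*)^{\otimes 2}\X)$, Cauchy--Schwarz gives $\|f_j\H{f_i}\|_{L^1}\leq \|f_j\|_{L^2}\|\H{f_i}\|_{L^2}<\infty$; similarly $\||\d f_i||\d f_j|\|_{L^1}\leq \|\d f_i\|_{L^2}\|\d f_j\|_{L^2}<\infty$. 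Hence $A\in L^1((T^*)^{\otimes 2}\X)$.

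Next, fix $g_1,g_2,h\in\fsm$ and compute the right-hand side of \eqref{eq:perw212} with $f:=f_1f_2$. Using $\nabla(f_1f_2)=f_1\nabla f_2+f_2\nabla f_1$, the identity
\[
f_j\,\div(h\nabla g_k)=\div(hf_j\nabla g_k)-h\la\nabla f_j,\nabla g_k\ra,
\]
valid because $hf_j\in W^{1,2}\cap L^\infty(\X)$ and by the Leibniz rule \eqref{eq:leibdiv} for the divergence, transforms each term $f_j\la\nabla f_i,\nabla g_k\ra\div(h\nabla g_{3-k})$ into $\la\nabla f_i,\nabla g_k\ra\div(hf_j\nabla g_{3-k})-h\la\nabla f_i,\nabla g_k\ra\la\nabla f_j,\nabla g_{3-k}\ra$. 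After reorganizing, the right-hand side of \eqref{eq:perw212} with $f=f_1 f_2$ becomes
\[
\sum_{(i,j)}\!\int\!-\!\la\nabla f_i,\nabla g_1\ra\div(hf_j\nabla g_2)-\la\nabla f_i,\nabla g_2\ra\div(hf_j\nabla g_1)-hf_j\la\nabla f_i,\nabla\la\nabla g_1,\nabla g_2\ra\ra\,\d\mm
\]
\[
+\sum_{(i,j)}\int h\bigl(\la\nabla f_i,\nabla g_1\ra\la\nabla f_j,\nabla g_2\ra+\la\nabla f_i,\nabla g_2\ra\la\nabla f_j,\nabla g_1\ra\bigr)\,\d\mm,
\]
where the sums range over $(i,j)\in\{(1,2),(2,1)\}$. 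The first double sum equals $2\int h(f_2\H{f_1}+f_1\H{f_2})(\nabla g_1,\nabla g_2)\,\d\mm$ by applying the enlarged testing identity \eqref{eq:sceltah} for $f_i\in W^{2,2}(\X)$ with weight $hf_j\in W^{1,2}\cap L^\infty(\X)$. The second double sum equals $2\int h(\d f_1\otimes\d f_2+\d f_2\otimes\d f_1)(\nabla g_1,\nabla g_2)\,\d\mm$ by the very definition of the tensor product evaluated on pairs of vector fields.

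Combining these two identifications, the right-hand side of \eqref{eq:perw212} equals $2\int hA(\nabla g_1,\nabla g_2)\,\d\mm$ for every $g_1,g_2,h\in\fsm$. This is precisely the defining property, so $f_1 f_2\in W^{(2,2,1)}(\X)$ with $\H{f_1 f_2}=A$, which is \eqref{eq:leibhess}. The only delicate point is the legitimacy of the weight-transfer step, which rests on the enlarged test class \eqref{eq:sceltah} established via the heat-flow approximation $\h_t(hf_j)$; beyond that the computation is essentially bookkeeping of Leibniz rules.
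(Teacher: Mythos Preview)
Your proof is correct and follows essentially the same approach as the paper's: expand $\nabla(f_1f_2)$ via Leibniz, shift each factor $f_j$ into the divergence using $f_j\,\div(h\nabla g_k)=\div(hf_j\nabla g_k)-h\la\nabla f_j,\nabla g_k\ra$, apply the enlarged testing identity \eqref{eq:sceltah} with weight $hf_j\in W^{1,2}\cap L^\infty(\X)$, and identify the leftover cross-terms with $\d f_1\otimes\d f_2+\d f_2\otimes\d f_1$. Your remarks on integrability of $A$ and on why \eqref{eq:sceltah} legitimizes the weight transfer are exactly the points the paper relies on.
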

\begin{proof} It is obvious that $f_1f_2\in W^{1,2}(\X)$ and that  the right hand side of \eqref{eq:leibhess} defines an object in $L^1((T^*)^{\otimes 2}\X)$. Now  let $g_1,g_2,h\in\fsm$ be arbitrary and notice that
\[
\begin{split}
-\la\nabla(f_1f_2),\nabla g_1\ra\,\div (h\nabla g_2)&=-f_1\la\nabla f_2,\nabla g_1\ra\,\div (h\nabla g_2)-f_2\la\nabla f_1,\nabla g_1\ra \,\div (h\nabla g_2)\\
&=-\la \nabla f_2,\nabla g_1\ra\,\div(f_1h\nabla g_2)+h\la\nabla f_2,\nabla g_1\ra\la\nabla f_1,\nabla g_2\ra\\
&\qquad-\la\nabla f_1,\nabla g_1\ra\,\div(f_2h\nabla g_2)+h\la\nabla f_1,\nabla g_1\ra\la\nabla f_2,\nabla g_2\ra.
\end{split}
\]
Exchanging the roles of $g_1,g_2$, noticing that
\[
-h\big<\nabla(f_1f_2),\nabla\la \nabla g_1,\nabla g_2\ra\big>=-hf_1\big<\nabla f_2,\nabla\la\nabla g_1,\nabla g_2\ra\big>-hf_2\big<\nabla f_1,\nabla\la\nabla g_1,\nabla g_2\ra\big>,
\]
adding everything up and integrating, we conclude using property \eqref{eq:sceltah} for the Hessian of the function $f_1$ (resp. $f_2$) and the choice of  $f_2h$ (resp. $f_1h$) in place of $h$.
\end{proof}

\begin{proposition}[Chain rule]\label{prop:chainhess}
Let $f\in W^{2,2}(\X)$  and $\varphi:\R\to\R$ a $C^{1,1}$ function with uniformly bounded first and second derivative (and $\varphi(0)=0$ if $\mm(\X)=+\infty$). 

Then $\varphi\circ f\in W^{(2,2, 1)}(\X)$ and the formula
\begin{equation}
\label{eq:chainhess}
\H{\varphi\circ f}=\varphi''\circ f\, \d f\otimes\d f+\varphi'\circ f\,\H f,\qquad\mm\ae
\end{equation}
holds.
\end{proposition}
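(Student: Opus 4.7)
The plan is to directly show that
\[
A:=\varphi''\circ f\,\d f\otimes\d f+\varphi'\circ f\,\H f
\]
fulfills the integration-by-parts identity \eqref{eq:perw212} that defines $W^{(2,2,1)}(\X)$ for the function $\varphi\circ f$. First, the chain rule of Corollary \ref{cor:calcdiff} applied to the Lipschitz map $\varphi$ (together with $\varphi(0)=0$ when $\mm(\X)=\infty$) gives $\varphi\circ f\in W^{1,2}(\X)$ with $\d(\varphi\circ f)=\varphi'\circ f\,\d f$. The integrability of $A$ is immediate for the first summand, whose Hilbert--Schmidt norm is dominated by $\|\varphi''\|_\infty|\d f|^2\in L^1(\mm)$; for the second summand one uses that $\varphi'$ is bounded and $\H f\in L^2((T^*)^{\otimes 2}\X)$, in analogy with the corresponding step in Proposition \ref{prop:prodfunct}.

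Next I would fix arbitrary $g_1,g_2,h\in\fsm$ and transform the right-hand side of \eqref{eq:perw212} written for $\varphi\circ f$. Substituting $\nabla(\varphi\circ f)=\varphi'\circ f\,\nabla f$ and invoking the divergence Leibniz rule \eqref{eq:leibdiv} in the form
\[
\div(h\varphi'\circ f\,\nabla g_i)=\varphi'\circ f\,\div(h\nabla g_i)+h\varphi''\circ f\,\la\nabla f,\nabla g_i\ra,
\]
the right-hand side rearranges into
\begin{align*}
&\int -\la\nabla f,\nabla g_1\ra\div(h\varphi'\circ f\,\nabla g_2)-\la\nabla f,\nabla g_2\ra\div(h\varphi'\circ f\,\nabla g_1)\\
&\qquad-h\varphi'\circ f\,\la\nabla f,\nabla\la\nabla g_1,\nabla g_2\ra\ra\,\d\mm +\int 2h\varphi''\circ f\,\la\nabla f,\nabla g_1\ra\la\nabla f,\nabla g_2\ra\,\d\mm.
\end{align*}
The first integral, by the extended testing identity \eqref{eq:sceltah} applied with the weight $h\varphi'\circ f$ in place of $h$, equals $2\int h\varphi'\circ f\,\H f(\nabla g_1,\nabla g_2)\,\d\mm$, while the second is manifestly $2\int h\,(\varphi''\circ f\,\d f\otimes\d f)(\nabla g_1,\nabla g_2)\,\d\mm$. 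Adding the two contributions yields $2\int h\,A(\nabla g_1,\nabla g_2)\,\d\mm$, which is exactly \eqref{eq:chainhess}.

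The main obstacle, and the real reason the hypothesis $\varphi\in C^{1,1}$ enters, is verifying that the weight $h\varphi'\circ f$ is admissible in \eqref{eq:sceltah}, namely that it lies in $W^{1,2}\cap L^\infty(\X)$. Boundedness is clear from $\varphi'$ bounded and $h\in\fsm\subset L^\infty$. For the Sobolev regularity I would apply the chain rule of Corollary \ref{cor:calcdiff} to the Lipschitz map $\varphi'$ (Lipschitz precisely because $\varphi''$ is bounded) to get $\d(\varphi'\circ f)=\varphi''\circ f\,\d f\in L^2(T^*\X)$, and then combine with the Leibniz rule for differentials to conclude $h\varphi'\circ f\in W^{1,2}(\X)$ with differential in $L^2(T^*\X)$. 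Once this admissibility is in place, the rest of the argument is pure bookkeeping with the first-order calculus rules and the defining property of $\H f$.
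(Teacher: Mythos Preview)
Your argument is correct and follows the paper's proof essentially verbatim: substitute $\nabla(\varphi\circ f)=\varphi'\circ f\,\nabla f$, redistribute via the Leibniz-type identity for the divergence, and invoke \eqref{eq:sceltah} with the weight $h\varphi'\circ f\in W^{1,2}\cap L^\infty(\X)$. Your explicit verification that this weight is admissible (via the Lipschitz chain rule applied to $\varphi'$) is in fact more detailed than the paper, which simply asserts the applicability of \eqref{eq:sceltah}.
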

\begin{proof} It is obvious that $\varphi\circ f\in L^2(\mm)$, that the chain rule for the differential ensures that $\varphi\circ f\in W^{1,2}(\X)$ and that the right hand side of \eqref{eq:chainhess} defines an object in $L^1((T^*)^{\otimes 2}\X)$. Now let  $g_1,g_2,h\in\fsm$ be arbitrary and notice that
\[
\begin{split}
-\la\nabla(\varphi\circ f),\nabla g_1\ra\,\div(h\nabla g_2)&=-\varphi'\circ f\la\nabla f,\nabla g_1\ra\,\div(h\nabla g_2)\\
&=-\la\nabla f,\nabla g_1\ra\,\div(\varphi'\circ fh\nabla g_2)+h\varphi''\circ f\la\nabla f,\nabla g_1\ra\la\nabla f,\nabla g_2\ra.
\end{split}
\]
Similarly,
\[
-\la\nabla(\varphi\circ f),\nabla g_2\ra\,\div(h\nabla g_1)=-\la\nabla f,\nabla g_2\ra\,\div(\varphi'\circ fh\nabla g_1)+h\varphi''\circ f\la\nabla f,\nabla g_2\ra\la\nabla f,\nabla g_1\ra.
\]
and
\[
-h\big< \nabla(\varphi\circ f),\nabla\la \nabla g_1,\nabla g_2\ra\big>=-h\varphi'\circ f\big< \nabla  f,\nabla\la \nabla g_1,\nabla g_2\ra\big>.
\]
Adding up these three identities, integrating and applying \eqref{eq:sceltah} with $h\varphi'\circ f$ in place of $h$ we conclude.
\end{proof}

\begin{proposition}[Product rule for gradients]\label{prop:gradehess}
Let $f_1\in W^{2,2}(\X)$ and $f_2\in H^{2,2}(\X)$. Then $\la\nabla f_1,\nabla f_2\ra\in W^{1, 1}(\X)$ and 
\begin{equation}
\label{eq:gradprod1}
 \d\la\nabla f_1,\nabla f_2\ra=\H {f_1}(\nabla f_2,\cdot)+\H {f_2}(\nabla f_1,\cdot),\qquad\mm\ae.
\end{equation}
In particular, for $f\in W^{2,2}(\X)$ and $g_1,g_2\in H^{2,2}(\X)$ the identity
\begin{equation}
\label{eq:hessesteso}
2\H f(\nabla g_1,\nabla g_2)=\big< \nabla g_1,\nabla\la\nabla f,\nabla g_2\ra\big>+\big< \nabla g_2,\nabla\la\nabla f,\nabla g_1\ra\big>-\big< \nabla f,\nabla\la\nabla g_1,\nabla g_2\ra\big>
\end{equation}
holds $\mm$-a.e.\ (notice that the two sides of this expression are well defined elements of $L^0(\mm)$).

Moreover:
\begin{itemize}
\item[i)] if  $f_1\in W^{2,2}(\X)$ and $f_2\in H^{2,2}(\X)$ have both bounded gradients, then $\la \nabla f_1,\nabla f_2\ra \in W^{1,2}(\X)$,
\item[ii)] if $f_1,f_2\in H^{2,2}(\X)$, then  $\la\nabla f_1,\nabla f_2\ra\in H^{1, 1}(\X)$.
\end{itemize}

\end{proposition}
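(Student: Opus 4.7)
The plan is to establish \eqref{eq:gradprod1} first when both $f_1,f_2\in\fsm$, then to extend the admissible class of $f_1$ to $W^{2,2}(\X)$, and finally to approximate $f_2\in H^{2,2}(\X)$ by $\fsm$ functions in the $W^{2,2}$ norm. When $f_1,f_2\in\fsm$, Proposition \ref{prop:regtest} gives $\la\nabla f_1,\nabla f_2\ra\in W^{1,2}(\X)$, and combining \eqref{eq:hess12} with the explicit expression \eqref{eq:defhf} for $H[f]$ one checks that in the sum $2H[f_1](f_2,g)+2H[f_2](f_1,g)$ the cross-terms of the shape $\la\nabla\la\nabla f_i,\nabla g\ra,\nabla f_j\ra$ cancel against $\la\nabla f_j,\nabla\la\nabla f_i,\nabla g\ra\ra$, leaving $\la\nabla\la\nabla f_1,\nabla f_2\ra,\nabla g\ra=\H{f_1}(\nabla f_2,\nabla g)+\H{f_2}(\nabla f_1,\nabla g)$ for every $g\in\fsm$; since $\{\d g:g\in\fsm\}$ generates $L^2(T^*\X)$, this yields \eqref{eq:gradprod1} in this case.

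The crucial step is the extension to $f_1\in W^{2,2}(\X)$ with $f_2\in\fsm$. For $g_1,g,h\in\fsm$, Proposition \ref{prop:regtest} provides $\la\nabla f_2,\nabla g\ra\in W^{1,2}(\X)$ and $\la\nabla g_1,\nabla g\ra\in W^{1,2}(\X)$; integrating by parts these two ingredients in the last two summands of the defining identity \eqref{eq:defhess} for $\H{f_2}$ applied to the triple $(g_1,g,h)$ rewrites it as
\[
2\int h\,\H{f_2}(\nabla g_1,\nabla g)\,\d\mm=\int\bigl[-\la\nabla f_2,\nabla g_1\ra\div(h\nabla g)+h\la\nabla g_1,\nabla\la\nabla f_2,\nabla g\ra\ra+\la\nabla g_1,\nabla g\ra\div(h\nabla f_2)\bigr]\,\d\mm.
\]
Now the right-hand side depends on $g_1$ only through $\nabla g_1\in L^2(T\X)$ and both sides are continuous in $g_1\in W^{1,2}(\X)$; by the density \eqref{eq:testw12} of $\fsm$ in $W^{1,2}(\X)$ this identity extends to every $g_1\in W^{1,2}(\X)$, and in particular to $g_1=f_1\in W^{2,2}(\X)$. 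Adding this to the defining identity \eqref{eq:defhess} of $\H{f_1}$ applied with $(g_1,g_2,h)=(f_2,g,h)$ causes the two terms involving $\div(h\nabla f_2)$ and $\nabla\la\nabla f_2,\nabla g\ra$ to cancel, and we arrive at
\[
\int h\bigl(\H{f_1}(\nabla f_2,\nabla g)+\H{f_2}(\nabla f_1,\nabla g)\bigr)\,\d\mm=-\int\la\nabla f_1,\nabla f_2\ra\div(h\nabla g)\,\d\mm\qquad(\ast)
\]
for all $f_1\in W^{2,2}(\X)$, $f_2\in\fsm$ and $g,h\in\fsm$.

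To pass to $f_2\in H^{2,2}(\X)$, take $(f_{2,n})\subset\fsm$ converging to $f_2$ in $W^{2,2}(\X)$ and restrict to $g\in\fsm$ with $\Delta g\in L^\infty(\mm)$, so that $\div(h\nabla g)\in L^\infty(\mm)$: Cauchy-Schwarz bounds of the form $\||\H{f_1}|_\HS|\nabla f_{2,n}|\|_{L^1}\le\|\H{f_1}\|_{L^2}\|\nabla f_{2,n}\|_{L^2}$ (and symmetric variants) make each term in $(\ast)$ continuous in $f_2\in W^{2,2}(\X)$, so $(\ast)$ passes to the limit. Noting that $\la\nabla f_1,\nabla f_2\ra\in L^1(\mm)$ by Cauchy-Schwarz and that $\H{f_1}(\nabla f_2,\cdot)+\H{f_2}(\nabla f_1,\cdot)\in L^1(T^*\X)$, the identity $(\ast)$ is precisely the defining condition \eqref{eq:perw112} of $W^{1,1}(\X)$, which gives \eqref{eq:gradprod1}. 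Part (i) then follows because bounded gradients place the right-hand side of \eqref{eq:gradprod1} in $L^2(T^*\X)$ while $\la\nabla f_1,\nabla f_2\ra\in L^1\cap L^\infty\subset L^2(\mm)$, so Proposition \ref{prop:w11w12} gives $\la\nabla f_1,\nabla f_2\ra\in W^{1,2}(\X)$. For part (ii), approximate both $f_1,f_2$ by $\fsm$-sequences in $W^{2,2}$: each $\la\nabla f_{1,n},\nabla f_{2,n}\ra$ is in $L^1\cap W^{1,2}(\X)$ and hence in $H^{1,1}(\X)$ by Proposition \ref{prop:baseh11}(i), and Cauchy-Schwarz yields convergence in $W^{1,1}(\X)$, placing the limit in the closed subspace $H^{1,1}(\X)$. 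Finally \eqref{eq:hessesteso} follows by applying \eqref{eq:gradprod1} three times and invoking symmetry of the Hessian. The main obstacle is the double integration by parts in the second paragraph: since $\div(h\nabla g_1)$ and $\nabla\la\nabla g_1,\nabla g\ra$ are a priori undefined for $g_1\in W^{1,2}(\X)$, the defining identity of $\H{f_2}$ cannot be extended verbatim, and one must first absorb the problematic terms into well-posed ones before invoking continuity and density.
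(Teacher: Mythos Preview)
Your proof is correct and follows essentially the same route as the paper: rewrite the defining identity \eqref{eq:defhess} for $\H{f_2}$ via two integrations by parts so that the test slot $g_1$ enters only through $\nabla g_1\in L^2(T\X)$, extend by $W^{1,2}$-density to $g_1=f_1\in W^{2,2}(\X)$, add to the defining identity for $\H{f_1}$ to obtain $(\ast)$, and then approximate $f_2\in H^{2,2}(\X)$ by test functions. Your initial paragraph treating the case $f_1,f_2\in\fsm$ is a harmless warm-up but not logically needed, since the integration-by-parts argument already handles $f_1\in W^{2,2}(\X)$ directly; the paper omits it and starts immediately from $f_1\in W^{2,2}(\X)$, $f_2\in\fsm$.
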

\begin{proof} 
Let $f_1\in W^{2,2}(\X)$ and $f_2,g,h\in \fsm$ with $\Delta g\in L^\infty(\mm)$. Then by definition of $\H {f_1}$ we have
\begin{equation}
\label{eq:mah1}
\begin{split}
&2\int h\H{f_1}(\nabla f_2,\nabla g)\,\d\mm\\
&=\int-\la\nabla f_1,\nabla f_2\ra\,\div (h\nabla g)-\la\nabla f_1,\nabla g\ra\div\, (h\nabla f_2)-h\big< \nabla f_1,\nabla\la\nabla f_2,\nabla g\ra\big>\,\d\mm.
\end{split}
\end{equation}
Now observe that for $g_1,g_2,g_3,g_4\in\fsm$, the integration by parts   
\[
\int\la \nabla g_1,\nabla g_2\ra\,\div (g_3\nabla g_4)\,\d\mm=-\int g_3\big<\nabla g_4,\nabla\la \nabla g_1,\nabla g_2\ra\big> \,\d\mm
\]
is justified by the fact that $\la\nabla g_1,\nabla g_2\ra\in W^{1,2}(\X) $, therefore for arbitrary $f\in\fsm$ we have
\[
\begin{split}
&2\int h\H{f_2}(\nabla f,\nabla g)\,\d\mm\\
&=\int-\la\nabla f_2,\nabla f\ra\,\div(h\nabla g)+h\big<\nabla\la\nabla {f_2},\nabla g\ra,\nabla f\big>+\div(h\nabla f_2)\la \nabla f,\nabla g\ra\,\d\mm.
\end{split}
\]
The expressions at both sides of the above identity are continuous in $f$ w.r.t.\ the $W^{1,2}(\X)$-topology, hence approximating our given $f_1\in W^{2,2}(\X)$ in the $W^{1,2}(\X)$-topology with functions in $\fsm$ (recall \eqref{eq:testw12}) we deduce that
\begin{equation}
\label{eq:mah3}
\begin{split}
&2\int h\H{f_2}(\nabla f_1,\nabla g)\,\d\mm\\
&=\int-\la\nabla f_2,\nabla f_1\ra\,\div(h\nabla g)+h\big<\nabla\la\nabla {f_2},\nabla g\ra,\nabla f_1\big>+\div(h\nabla f_2)\la \nabla f_1,\nabla g\ra\,\d\mm.
\end{split}
\end{equation}
Adding up \eqref{eq:mah1} and \eqref{eq:mah3} we obtain
\begin{equation}
\label{eq:mah4}
\int h\big(\H{f_1}(\nabla f_2,\nabla g)+\H{f_2}(\nabla f_1,\nabla g)\big)\,\d\mm=\int-\la\nabla f_2,\nabla f_1\ra\,\div (h\nabla g)\,\d\mm.
\end{equation}
Noticing that $\la\nabla f_2,\nabla f_1\ra\in L^1(\mm)$ and $\H {f_1}(\nabla f_2,\cdot)+\H {f_2}(\nabla f_1,\cdot)\in L^1(T^*\X)$, \eqref{eq:mah4} and the arbitrariness of $g,h\in\fsm$ with $\Delta g\in L^\infty(\mm)$ give that $\la\nabla f_2,\nabla f_1\ra \in W^{1,1}(\X)$ and the formula  \eqref{eq:gradprod1}. 

To drop the requirement $f_2\in\fsm$, notice that for given  $f_1\in W^{2,2}(\X)$ and $g,h\in\fsm$ with $\Delta g\in L^\infty(\mm)$, the expressions at both sides of \eqref{eq:mah4} are continuous in $f_2$ w.r.t.\ the $W^{2,2}$-topology. Hence from the validity of \eqref{eq:mah4} for $f_2\in\fsm$ we deduce its validity for $f_2\in H^{2,2}(\X)$.

To get \eqref{eq:hessesteso}, just write \eqref{eq:gradprod1} for the couple $f,g_1$ and multiply both sides by $\nabla g_2$, then swap the roles of $g_1,g_2$ and finally subtract \eqref{eq:gradprod1} written for $g_1,g_2$ and multiplied by $\nabla f$.

For point $(i)$ observe that if $\nabla f_1$ and $\nabla f_2$ are both bounded, then $\la\nabla f_!,\nabla f_2\ra\in L^2(\mm)$ and the right hand side of \eqref{eq:gradprod1} defines an element in $L^2(T^*\X)$. Thus the conclusion follows from Proposition \ref{prop:w11w12}.

For point $(ii)$, notice that if $f_1,f_2\in\fsm$ then point $(i)$ grants that $\la\nabla f_1,\nabla f_2\ra\in W^{1,2}(\X)$ and formula \eqref{eq:gradprod1} yields that $\d\la\nabla f_1,\nabla f_2\ra\in L^1(T^*\X)$, so that point $(i)$ of Proposition \ref{prop:baseh11} gives the thesis. The general case follows by approximation:  for $f_1,f_2\in H^{2,2}(\X)$ and sequences $(f_{i,n})\subset \fsm$ converging to $f_i$ in $H^{2,2}(\X)$, $i=1,2$, we have that $\la\nabla f_{1,n},\nabla f_{2,n}\ra\to \la\nabla f_1,\nabla f_2\ra$ in $L^1(\mm)$ and $\H {f_{1,n}}(\nabla f_{2,n},\cdot)+\H {f_{2,n}}(\nabla f_{1,n},\cdot)\to \H {f_1}(\nabla f_2,\cdot)+\H {f_2}(\nabla f_1,\cdot)$ in $L^1((T^*)^{\otimes 2}\X)$ as $n\to\infty$, thus giving the thesis.
\end{proof}
\begin{remark}\label{re:h22}{\rm
The difficulty in getting Sobolev regularity for $\la\nabla f_1,\nabla f_2\ra$ for generic $f_1,f_2\in W^{2,2}(\X)$ is due to the fact that the definition of  Hessian is given testing it against gradients of test functions, so that in the approximation argument we cannot really go further than $H^{2,2}(\X)$.

One encounters a similar problem in trying to obtain the classical Leibniz rule for Sobolev functions on the Euclidean space without using the fact that smooth functions are dense in the Sobolev spaces.
}\fr\end{remark}
We pass to the locality properties of the Hessian. In the statement below we shall refer to the `interior of $\{f_1=f_2\}$' for $f_1,f_2\in W^{2,2}(\X)$. This set is, by definition, the union of all the open sets $\Omega\subset \X$ such that $\{f_1=f_2\}$ $\mm$-a.e.\ on $\Omega$.
\begin{proposition}[Locality of the Hessian]\label{prop:lochess}
For given $f_1,f_2\in W^{2,2}(\X)$ we have
\begin{equation}
\label{eq:loch1}
\H {f_1}=\H{f_2},\qquad\mm\ae\ \text{\rm on the interior of } \{f_1=f_2\},
\end{equation}
and for $f_1,f_2\in H^{2,2}(\X)$ the finer property
\begin{equation}
\label{eq:loch2}
\H {f_1}=\H{f_2},\qquad\mm\ae \ \text{\rm on } \{f_1=f_2\},
\end{equation}
holds. 
\end{proposition}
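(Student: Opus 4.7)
By linearity of the Hessian, both statements reduce to: if $f \in W^{2,2}(\X)$ (resp.\ $f \in H^{2,2}(\X)$), then $\H f = 0$ $\mm$-a.e.\ on the interior of $\{f=0\}$ (resp.\ on $\{f=0\}$). My plan is to test $\H f$ against gradients of test functions via the symmetric formula \eqref{eq:hessesteso}, show that each of the three terms on its right-hand side vanishes on the relevant set, and then extend to the full $\H f$ by the density \eqref{eq:span2} and the pointwise bound $|\H f(T)|\le |\H f|_\HS\,|T|_\HS$.

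For part (i), set $f:=f_1-f_2\in W^{2,2}(\X)$ and let $\Omega$ be the interior of $\{f=0\}$. Since $f\in W^{1,2}(\X)\subset\s^2(\X)$, Theorem \ref{thm:diffloc} yields $\d f=0$ $\mm$-a.e.\ on $\Omega$. Fix $g_1,g_2\in\fsm\subset H^{2,2}(\X)$. Then the third summand in \eqref{eq:hessesteso}, namely $\la\nabla f,\nabla\la\nabla g_1,\nabla g_2\ra\ra$, vanishes on $\Omega$. For the first two summands, Proposition \ref{prop:gradehess} gives $\la\nabla f,\nabla g_i\ra\in W^{1,1}(\X)$; this function is $0$ $\mm$-a.e.\ on the open set $\Omega$, so $\Omega$ sits in the interior of $\{\la\nabla f,\nabla g_i\ra=0\}$, and the weak $W^{1,1}$-locality \eqref{eq:localdw11} produces $\d\la\nabla f,\nabla g_i\ra=0$ $\mm$-a.e.\ on $\Omega$. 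Hence $\H f(\nabla g_1,\nabla g_2)=0$ $\mm$-a.e.\ on $\Omega$ for all $g_1,g_2\in\fsm$.

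To upgrade this to $\H f=0$ on $\Omega$, I will use $L^\infty$-module linearity to get $\H f(h_1 h_2\,\nabla g_1\otimes\nabla g_2)=0$ $\mm$-a.e.\ on $\Omega$ for all $g_i,h_i\in\fsm$, and then invoke the density \eqref{eq:span2}: if $T_n\to T$ in $L^2(T^{\otimes 2}\X)$ then $|\H f(T_n)-\H f(T)|\le |\H f|_\HS\,|T_n-T|_\HS\to 0$ in $L^1(\mm)$, whence $\H f(T)=0$ $\mm$-a.e.\ on $\Omega$ for every $T\in L^2(T^{\otimes 2}\X)$. Plugging in $T:=\nchi_\Omega(\H f)^\sharp$, where $\sharp$ is the musical isomorphism of Section \ref{se:test}, gives $\nchi_\Omega|\H f|_\HS^2=\H f(T)=0$ $\mm$-a.e., completing (i).

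Part (ii) runs along the same lines with one crucial replacement: since now both $f$ and $g_i$ lie in $H^{2,2}(\X)$, Proposition \ref{prop:gradehess}(ii) upgrades $\la\nabla f,\nabla g_i\ra$ to an element of $H^{1,1}(\X)$, for which the strong locality in \eqref{eq:dh11} applies and gives $\d\la\nabla f,\nabla g_i\ra=0$ $\mm$-a.e.\ on $\{\la\nabla f,\nabla g_i\ra=0\}\supseteq\{f=0\}$, no interior operation required. Combined as before with $\d f=0$ on $\{f=0\}$ (via Theorem \ref{thm:diffloc}) and the density extension, this yields $\H f=0$ $\mm$-a.e.\ on $\{f=0\}$. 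No step is genuinely difficult; the only delicate point is keeping track of which locality (the weaker $W^{1,1}$ version \eqref{eq:localdw11} versus the stronger $H^{1,1}$ version \eqref{eq:dh11}) is available, and this is precisely what explains why the conclusion has to be restricted to the interior in the $W^{2,2}$ case and can be extended to the whole coincidence set in the $H^{2,2}$ case.
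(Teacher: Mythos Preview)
Your proof is correct and follows essentially the same approach as the paper: reduce by linearity to $f:=f_1-f_2$, invoke formula \eqref{eq:hessesteso} with $g_1,g_2\in\fsm\subset H^{2,2}(\X)$, and then apply the appropriate locality property (\eqref{eq:localdw11} for $W^{1,1}$ in the first case, \eqref{eq:dh11} for $H^{1,1}$ in the second) to the inner products $\la\nabla f,\nabla g_i\ra$. Your explicit density step via \eqref{eq:span2} and the pointwise bound simply spells out what the paper leaves implicit when it says the vanishing of $\H f(\nabla g_1,\nabla g_2)$ for all test $g_1,g_2$ is equivalent to $\H f=0$.
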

\begin{proof} For \eqref{eq:loch1}, by linearity it is sufficient to prove that for every $f\in W^{2,2}(\X)$ we have
\[
\H f=0,\qquad\mm\ae \ \text{on the interior of }\{f=0\},
\]
which in turn is equivalent to 
\[
\H f(\nabla g_1,\nabla g_2)=0,\qquad\mm\ae \ \text{on the interior of }\ \{f=0\},
\]
for every $g_1,g_2\in\fsm$. This follows from formula \eqref{eq:hessesteso}, noticing that Proposition \ref{prop:gradehess} grants that $\la\nabla f,\nabla g_i\ra\in W^{1,1}(\X)$ and using the locality property \eqref{eq:localdw11}.

Property \eqref{eq:loch2} follows along the same lines noticing that this time Proposition  \ref{prop:gradehess} grants that $\la\nabla f,\nabla g_i\ra\in H^{1,1}(\X)$   and using the locality property in \eqref{eq:dh11}. 
%
\end{proof}

\begin{remark}[$H^{2,2}(\X)$ cut-off functions]{\rm
In applications it might be useful to know whether there are $H^{2,2}(\X)$ cut-off functions, i.e. whether 
\[
\begin{split}
&\text{for every $B\subset \X$ and $\Omega\subset \X$ open with $\sfd({B,\Omega^c})>0$ there is $\eta_{B,\Omega}\in H^{2,2}(\X)$}\\
&\text{with $\eta_{B,\Omega}=1$ $\mm$-a.e.\ on $B$ and $\eta_{B,\Omega}=0$ $\mm$-a.e.\ on $\Omega^c$.}
\end{split}
\]
The answer is affirmative. In \cite{AmbrosioMondinoSavare13-2} the authors built,  for $B,\Omega$ as above, a function in $\fsm$   identically 1 on $B$, 0 on $\Omega^c$ and with bounded Laplacian.  Their argument is based on the fact that for a bounded and continuous function $f$ the heat flow $\h_tf$ converges to $f$ uniformly as $t\downarrow0$, so that picking $t$ small enough and composing the resulting function with an appropriate map from $\R$ to $\R$ gives the claim.

A different construction comes from \cite{Gigli-Mosconi14}. In this case a finite dimensionality requirement is also necessary, but the construction gives a bit more flexibility in the choice of the cut-off allowing to produce for given $c$-concave functions $\varphi,\psi\in W^{1,2}(\X)$ such that $-\psi\leq\varphi$, a function $f\in D(\Delta)$ with bounded Laplacian and such that $-\psi\leq f\leq \varphi$. The argument is based on the Laplacian comparison estimates for the distance function \cite{Gigli12} and the abstract Lewy-Stampacchia inequality \cite{Gigli-Mosconi14}.
}\fr\end{remark}

\subsection{Covariant derivative}
\subsubsection{The Sobolev space $W^{1,2}_C(T\X)$}\label{se:w12c}
The definition of Hessian that we gave in the previous chapter was based on the identity
\[
2\H f(\nabla g_1,\nabla g_2)=\big<\nabla\la\nabla f,\nabla g_1\ra,\nabla g_2\big>+\big<\nabla\la\nabla f,\nabla g_2\ra,\nabla g_1\ra-\big<\nabla f,\nabla\la\nabla g_1,\nabla g_2\ra\big>,
\]
valid on a smooth Riemannian manifold and for $f,g_1,g_2$ smooth. We are now proceeding in a similar way to define the covariant derivative using instead the identity
\[
\la\nabla_{\nabla g_1} X, \nabla g_2\ra=\big<\nabla\la X,\nabla g_2\ra,\nabla g_1\big>-\H {g_2}(\nabla g_1,X).
\]
It is worth to  notice that these two equalities can be used on a smooth context as an alternative to Koszul's formula to introduce the covariant derivative in terms of the metric tensor only, without the use of Lie brackets. This is technically convenient because we cannot hope to define the Lie bracket for general vector fields without imposing any sort of regularity to them, but to impose such regularity we need to know in advance what the covariant derivative is.

We also remark that Sobolev regularity is the only kind of regularity that we have for vector fields, as we don't know what it is a Lipschitz or a continuous vector field.

\vspace{1cm}

We recall the the pointwise scalar product of two tensors $A,B\in L^2(T^{\otimes 2}\X)$ is denoted by $A:B$.
\begin{definition}[The Sobolev space $W^{1,2}_C(T\X)$]\label{def:w12c}
The Sobolev space  $W^{1,2}_C(T\X)\subset  L^2(T\X)$ is the space of all $X\in L^2(T\X)$ for which there exists $T\in L^2(T^{\otimes 2}\X)$ such that for every $g_1,g_2,h\in\fsm$ it holds
\begin{equation}
\label{eq:defcov}
\int h\, T:  (\nabla g_1\otimes \nabla g_2)\,\d\mm=\int-\la X,\nabla g_2\ra\,\div(h\nabla g_1)-h\H{ g_2}(X,\nabla g_1)\,\d\mm.
\end{equation}
In this case we shall call the tensor $T$ the covariant derivative of $X$ and denote it by $\nabla X$. We  endow $W^{1,2}_C(T\X)$ with the norm $\|\cdot\|_{W^{1,2}_C(T\X)}$ defined by
\[
\|X\|_{W^{1,2}_C(T\X)}^2:=\|X\|^2_{L^2(T\X)}+\|\nabla X\|_{L^2(T^{\otimes 2}\X)}^2.
\]
Also, we define the connection energy functional $\ec:L^2(T\X)\to[0,\infty]$ as
\[
\ec(X):=\left\{
\begin{array}{ll}
\displaystyle{\frac12\int|\nabla X|_\HS^2\,\d\mm},&\qquad\text{ if }X\in W^{1,2}_C(T\X),\\
+\infty,&\qquad\text{ otherwise}.
\end{array}
\right.
\]
\end{definition}
The basic properties of $W^{1,2}_C(T\X)$ are collected in the following theorem, whose proof closely follows the one of Theorem \ref{thm:basew22} and, in the last points, makes use of the calculus rules for the Hessian that we developed in Section \ref{se:calchess}.
\begin{theorem}[Basic properties of $W^{1,2}_C(T\X)$]\label{thm:basew12c}
The following holds.
\begin{itemize}
\item[i)] $W^{1,2}_C(T\X)$ is a separable Hilbert space.
\item[ii)] The covariant derivative is a closed operator, i.e. the set $\{(X,\nabla X) : X\in W^{1,2}_C(T\X)\}$ is a closed subset of $L^2(T\X)\times L^2(T^{\otimes 2}\X)$.
\item[iii)] The connection energy functional $\ec:L^2(T\X)\to [0,\infty]$ is lower semicontinuous and for every $X\in L^2(T\X)$ the energy $\ec(X)$ is equal to
\[
\begin{split}
\sup\Bigg\{ \sum_i\int -\la X, Z_i\ra\, \div (Y_i)- \Big(\sum_j\la\nabla g_{i,j}, Y_i\ra\la\nabla f_{i,j}, X\ra&+g_j\H{f_{i,j}}(Y_i,X)\Big)\,\d\mm\\
&-\frac12\Big\|\sum_iY_i\otimes Z_i\Big\|^2_{L^2(T^{\otimes 2}\X)}\Bigg\},
\end{split}
\]
where the $\sup$ is taken among all $n\in\N$, $Y_i,Z_i\in\vsm$, $i=1,\ldots,n$, and over all the finite collections of functions $f_{i,j},g_{i,j}\in\fsm$ such that $Z_i=\sum_jg_{i,j}\nabla f_{i,j}$ for any $i$.
\item[iv)] For $f\in W^{2,2}(\X)$ we have $\nabla f\in W^{1,2}_C(T\X)$ with $\nabla(\nabla f)=(\H f)^\sharp$.
\item[v)] We have $\vsm\subset W^{1,2}_C(T\X)$ with 
\begin{equation}
\label{eq:covvsm}
\nabla X=\sum_i\nabla g_i\otimes\nabla f_i+g_i(\H {f_i})^\sharp,\qquad\text{ for }\qquad X=\sum_ig_i\nabla f_i.
\end{equation}
In particular, $W^{1,2}_C(T\X)$ is dense in $L^2(T\X)$.
\end{itemize}
\end{theorem}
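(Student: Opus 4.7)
The plan follows the structure of the proof of Theorem \ref{thm:basew22}. For (i) and (ii), I would first observe that with $g_1,g_2,h\in\fsm$ fixed, both sides of \eqref{eq:defcov} are continuous in $(X,T)\in L^2(T\X)\times L^2(T^{\otimes 2}\X)$: the left-hand side pairs the bounded tensor $h\nabla g_1\otimes\nabla g_2\in L^\infty(T^{\otimes 2}\X)$ against $T$; on the right-hand side, the divergence term is handled by $\div(h\nabla g_1)\in L^2(\mm)$, while for the Hessian term the pointwise bound $|\H{g_2}(X,\nabla g_1)|\le|\H{g_2}|_\HS|X||\nabla g_1|$ combined with $|\H{g_2}|_\HS\in L^2(\mm)$ (from Theorem \ref{thm:key2}) and $|\nabla g_1|\in L^\infty(\mm)$ gives continuity in $X$. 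This yields the closure statement (ii), and uniqueness of $\nabla X$ follows from the density \eqref{eq:span2} together with the algebra property \eqref{eq:testalg}. The map $X\mapsto(X,\nabla X)$ is then an isometric embedding of $W^{1,2}_C(T\X)$ into the separable Hilbert space $L^2(T\X)\times L^2(T^{\otimes 2}\X)$ (separability from \eqref{eq:seplp}, \eqref{eq:seplp2}), yielding (i). For (iii) I would adapt the duality argument from Theorem \ref{thm:basew22}(iv): on the linear span of elementary tensors $Y\otimes Z$ with $Y,Z\in\vsm$ (dense in $L^2(T^{\otimes 2}\X)$ by \eqref{eq:span2}), the value of the bracket inside the supremum is checked to depend only on $\sum_iY_i\otimes Z_i$; if the supremum is finite, the resulting linear functional extends by continuity to $L^2(T^{\otimes 2}\X)$ and, via the Riesz theorem for Hilbert modules, produces the covariant derivative.

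For (iv), the plan is to verify directly that $T=(\H f)^\sharp$ satisfies \eqref{eq:defcov} with $X=\nabla f$. Fix $g_1,g_2,h\in\fsm$ and first assume $\Delta g_1\in L^\infty(\mm)$. By Proposition \ref{prop:gradehess}, $u:=\la\nabla f,\nabla g_2\ra\in W^{1,1}(\X)$ with $\d u=\H f(\nabla g_2,\cdot)+\H{g_2}(\nabla f,\cdot)$; since $h\nabla g_1$ is a legitimate test field in Definition \ref{def:w11}, the $W^{1,1}$ integration by parts together with the symmetry of $\H f$ yield $-\int u\,\div(h\nabla g_1)\,\d\mm=\int h\la\nabla g_1,\nabla u\ra\,\d\mm=\int h\bigl[\H f(\nabla g_1,\nabla g_2)+\H{g_2}(\nabla f,\nabla g_1)\bigr]\,\d\mm$, which is exactly \eqref{eq:defcov}. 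To remove the constraint $\Delta g_1\in L^\infty(\mm)$, replace $g_1$ by the mollified heat flow $\tilde\h_\eps g_1\in\fsm$ from \eqref{eq:mollheat}, which has bounded Laplacian, and let $\eps\downarrow 0$: the divergence term converges in $L^2(\mm)$ from $L^2$-convergence of $\Delta\tilde\h_\eps g_1$ and $\nabla\tilde\h_\eps g_1$, while for the Hessian term the uniform $L^\infty$-bound on $|\nabla\tilde\h_\eps g_1|$ provided by \eqref{eq:BE1}, combined with $L^2$-convergence and domination by $|\H{g_2}|_\HS|\nabla f|\in L^1(\mm)$, enables a dominated-convergence passage to the limit.

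For (v), given $X=\sum_ig_i\nabla f_i\in\vsm$, apply (iv) to each $f_i$ with test function $hg_i\in\fsm$ (note $hg_i\in\fsm$ by \eqref{eq:testalg}): this yields $\int hg_i\,\H{f_i}(\nabla g_1,\nabla g_2)\,\d\mm=\int-\la\nabla f_i,\nabla g_2\ra\div(hg_i\nabla g_1)-hg_i\,\H{g_2}(\nabla f_i,\nabla g_1)\,\d\mm$. Expanding via the Leibniz rule $\div(hg_i\nabla g_1)=g_i\div(h\nabla g_1)+h\la\nabla g_i,\nabla g_1\ra$, summing over $i$ and rearranging, one sees that the tensor $T=\sum_i\nabla g_i\otimes\nabla f_i+g_i(\H{f_i})^\sharp$ satisfies \eqref{eq:defcov} for $X$, proving both $X\in W^{1,2}_C(T\X)$ and \eqref{eq:covvsm}. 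Density of $W^{1,2}_C(T\X)$ in $L^2(T\X)$ is then inherited from the already established density of $\vsm$ in $L^2(T\X)$.

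The main obstacle will be (iv): since $\fsm$ is not known to be dense in $W^{2,2}(\X)$, the identity for a generic $f\in W^{2,2}(\X)$ cannot be reached by approximation from $\fsm$ and must be established pointwise in the test functions. The delicate point is the integration by parts on $u=\la\nabla f,\nabla g_2\ra$, which is only in $W^{1,1}(\X)$—not in $W^{1,2}(\X)$ in general—so one is confined to the restrictive test class of Definition \ref{def:w11}; the combination of mollified heat flow with Bakry–Émery gradient bounds is the crucial device that lets the regularisation of $g_1$ survive the weak integrability of $\H{g_2}(\nabla f,\cdot)$.
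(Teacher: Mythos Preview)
Your proposal is correct and follows essentially the same route as the paper for (i)--(iii) and (v). For (iv), however, the paper takes a shorter path that avoids the approximation step you describe as the ``crucial device''. The integration-by-parts identity you need, namely
\[
\int h\big\langle\nabla\la\nabla f,\nabla g_2\ra,\nabla g_1\big\rangle\,\d\mm=-\int\la\nabla f,\nabla g_2\ra\,\div(h\nabla g_1)\,\d\mm,
\]
is already established for \emph{all} $g_1,h\in\fsm$---without the restriction $\Delta g_1\in L^\infty(\mm)$---as equation \eqref{eq:mah4} in the proof of Proposition \ref{prop:gradehess}. That identity was obtained there by combining the defining relation for $\H f$ with one for $\H{g_2}$ (the latter after a $W^{1,2}$-approximation of $f$), and so it predates the interpretation of $\la\nabla f,\nabla g_2\ra$ as an element of $W^{1,1}(\X)$; the constraint $\Delta g\in L^\infty$ enters only afterwards, when \eqref{eq:mah4} is read through Definition \ref{def:w11}. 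Consequently the obstacle you identify dissolves once one appeals to the proof, rather than just the statement, of Proposition \ref{prop:gradehess}, and the mollified-heat-flow approximation of $g_1$ is not needed. Your approximation argument is nonetheless valid and reaches the same conclusion; it simply works harder than necessary.
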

\begin{proof}
For given $g_1,g_2,h \in\fsm$ the left and right hand sides of  expression \eqref{eq:defcov} are continuous w.r.t.\ weak convergence of $X$ and $T$ in $L^2(T\X)$ and $L^2(T^{\otimes 2}\X)$ respectively, which gives point $(ii)$. The lower semicontinuity of $\ec$ then follows taking also into account that, being $L^2(T^{\otimes 2}\X)$ an Hilbert space, bounded sets are weakly relatively compact. Endowing $L^{2}(T\X)\times L^2(T^{\otimes 2}\X)$ with the norm $\|(X,T)\|^2:=\|X\|^2_{L^{2}(T\X)}+\|T\|^2_{L^2(T^{\otimes 2}\X)}$ we see that such space is Hilbert and separable (recall \eqref{eq:seplp} and \eqref{eq:seplp2}) and that the map
\[
W^{1,2}_C(\X)\ni X\qquad\mapsto\qquad (X,\nabla X)\in L^{2}(T\X)\times L^2(T^{\otimes 2}\X),
\]
is an isometry of $W^{1,2}_C(T\X)$ with its image. Hence from point $(ii)$ point $(i)$ follows as well.

To prove $(iv)$ recall that by Proposition \ref{prop:gradehess}, for every $g_1,g_2,h\in\fsm$ we have
\[
\int h\,\H{f}(\nabla g_1,\nabla g_2)\,\d\mm=\underbrace{\int h\big<\nabla\la\nabla f,\nabla g_2\ra,\nabla g_1\big>\,\d\mm}_{=\int -\la\nabla f,\nabla g_2\ra\,\div(h\nabla g_1)\,\d\mm}-\int h\H{g_2}(\nabla f,\nabla g_1)\,\d\mm,
\]
which is the claim. Point $(v)$ follows along the same lines. Indeed, notice that by linearity it is sufficient to consider $X=g\nabla f$ and in this case the claim is equivalent to the validity of
\[
\begin{split}
\int h\la\nabla g,\nabla g_1\ra\la\nabla f,\nabla g_2\ra&+hg\,\H f(\nabla g_1,\nabla g_2)\,\d\mm\\
&=\int -g\la\nabla f,\nabla g_2\ra\,\div(h\nabla g_1)-h g\,\H{g_2}(\nabla f,\nabla g_1)\,\d\mm,
\end{split}
\]
for any $g_1,g_2,h\in\fsm$. But this is a direct consequence of the identity  
\[
\int -g\la\nabla f,\nabla g_2\ra\div(h\nabla g_1)\,\d\mm=\int h \big<\nabla(g\la\nabla f,\nabla g_2\ra) , \nabla g_1\big>\,\d\mm,
\]
whose validity is easily established from the fact that all the functions involved are in $\fsm$, and of Proposition \ref{prop:gradehess} which, taking into account that $\nabla f,\nabla g\in L^\infty(T\X)$, ensures that $\la\nabla f,\nabla g\ra\in W^{1,2}(\X)$ so that formula \eqref{eq:gradprod1} and the Leibniz rule for the differential give
\[
\d \big(g\la\nabla f,\nabla g_2\ra\big)  =\d g\,\la  \nabla f,\nabla g_2\ra+g\, \H f( \nabla g_2,\cdot)+ g\,\H {g_2}(\nabla f,\cdot).
\]
It remains to prove the duality formula for $\ec$, which, thanks to \eqref{eq:covvsm}, can be rewritten as
\begin{equation}
\label{eq:ridual}
\ec(X)=\sup \Bigg\{ \sum_i\int -\la X, Z_i\ra\, \div( Y_i)- \nabla Z_i:(Y_i\otimes X)\,\d\mm-\frac12\Big\|\sum_iY_i\otimes Z_i\Big\|^2_{L^2(T^{\otimes 2}\X)}\Bigg\}.
\end{equation}
Notice that for $X\in W^{1,2}_C(T\X)$ from the very definition of $\nabla X$ and the identity \eqref{eq:covvsm} it follows that
\[
\int\nabla X:(Y\otimes Z)\,\d\mm=\int- \la X,Z\ra\div(Y)- \nabla Z:(Y\otimes X)\,\d\mm,\qquad\forall Y,Z\in\vsm,
\]
so that from the trivial identity
\[
\frac12\|T\|_{L^2(T^{\otimes 2}\X)}^2=\sup \int T:\sum_iY_i\otimes Z_i\,\d\mm-\frac12\Big\|\sum_iY_i\otimes Z_i\Big\|^2_{L^2(T^{\otimes 2}\X)},
\]
where the $\sup$ is taken among all finite choices of $Y_i,Z_i$ in $\vsm$ (recall \eqref{eq:span2}), we get  inequality $\geq$ in \eqref{eq:ridual}.

The opposite inequality then follows along the very same arguments used to proved the analogous inequality in the duality formula \eqref{eq:claimed} in Theorem \ref{thm:basew22}, starting from the observation that if $X\in L^2(T\X)$ is such that the $\sup$ in \eqref{eq:ridual} is finite, then the value of 
\[
 \sum_i\int -\la X, Z_i\ra\, \div( Y_i)- \nabla Z_i:(Y_i\otimes X)\,\d\mm
\]
depends only on $B=\sum_iY_i\otimes Z_i$ and not on the particular way of writing $B$ as  sum. We omit the details.
\end{proof}

\subsubsection{Calculus rules}\label{se:lc}
In this section we collect the basic calculus rules for the covariant derivative and show that, in the appropriate sense, it satisfies the axioms of the Levi-Civita connection. As for the Hessian, we shall start introducing a couple of auxiliary Sobolev spaces.

\vspace{1cm}

We know that $\vsm$ is contained in $W^{1,2}_C(T\X)$, but not if it is dense. Thus the following definition is meaningful:
\begin{definition}[The space $H^{1,2}_C(T\X)$]
We define $H^{1,2}_C(T\X)\subset W^{1,2}_C(T\X)$ as the $W^{1,2}_C(T\X)$-closure of $\vsm$.
\end{definition}
Much like we couldn't define the space $W^{2,1}(\X)$, we cannot define the space $W^{1,1}_C(T\X)$ because we don't have at disposal a large class of functions with bounded Hessian. Thus, in analogy with the definition $W^{(2,2,1)}(\X)$, we introduce the space $W^{(2,1)}_C(T\X)$ of $L^2$ vector fields have covariant derivative in $L^1$:
\begin{definition}[The space $W^{(2,1)}_C(T\X)$] The space $W^{(2,1)}_C(T\X)\subset L^2(T\X)$ is the space of $X\in L^2(T\X)$  such that there exists $T\in L^1(T^{\otimes 2}\X)$ for which the identity
\begin{equation}
\label{eq:w11c}
\int h\, T:  (\nabla g_1\otimes \nabla g_2)\,\d\mm=\int-\la X,\nabla g_2\ra\,\div(h\nabla g_1)-h\H{ g_2}(X,\nabla g_1)\,\d\mm.
\end{equation}
holds for any $g_1,g_2,h\in\fsm$. We shall call such tensor $T$, which is unique thanks to \eqref{eq:conlapinfty2}, the covariant derivative of $X$ and denote it by $\nabla X$. We endow $W^{(2,1)}_C(T\X)$ with the norm 
\[
\|X\|_{W^{(2,1)}_C(T\X)}:=\|X\|_{L^2(T\X)}+\|\nabla X\|_{L^1(T^{\otimes 2}\X)}.
\]
\end{definition}
Given that the two sides of \eqref{eq:w11c} are continuous w.r.t.\ convergence (weak, in fact) of $X,T$ in $L^2(T\X)$ and $L^1(T^{\otimes 2}X)$ respectively, we see  that $W^{(2,1)}_C(T\X)$ is complete. Moreover, the embedding $X\mapsto (X,\nabla X)$ of $W^{(2,1)}_C(T\X)$ into $L^2(T\X)\times L^1(T^{\otimes 2}\X)$ endowed with the norm $\|(X,T)\|:=\|X\|_{L^2(T\X)}+\|T\|_{L^1(T^{\otimes 2}\X)}$ is an isometry, showing that  $W^{(2,1)}_C(T\X)$ is separable. 

We also remark that, by the very definitions, for $X\in W^{(2,1)}_C\cap W^{1,2}_C(T\X)$ the two definitions of covariant derivative given in \eqref{eq:defcov} and \eqref{eq:w11c} coincide, so no ambiguity occurs.

\begin{proposition}[Leibniz rule]\label{prop:leibcov}
Let $X\in W^{1,2}_C(T\X)$ and $f\in L^\infty\cap W^{1,2}(\X)$.

Then  $fX\in W^{(2,1)}_C(T\X)$ and  
\begin{equation}
\label{eq:leibcov}
\nabla(fX)= \nabla f\otimes X+f\nabla X,\qquad\mm\ae.
\end{equation}
\end{proposition}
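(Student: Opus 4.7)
The plan is to verify the defining identity \eqref{eq:w11c} directly with the candidate tensor $T:=\nabla f\otimes X+f\nabla X$. First I will check integrability: $fX\in L^2(T\X)$ is immediate from $f\in L^\infty(\mm)$, while Cauchy--Schwarz gives $\int|\nabla f||X|\,\d\mm\le\|\nabla f\|_{L^2}\|X\|_{L^2(T\X)}<\infty$ and, using $f\in W^{1,2}(\X)\subset L^2(\mm)$, $\int|f||\nabla X|_\HS\,\d\mm\le\|f\|_{L^2(\mm)}\|\nabla X\|_{L^2(T^{\otimes 2}\X)}<\infty$, so that $T\in L^1(T^{\otimes 2}\X)$.

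The crucial preliminary step will be to extend the defining identity \eqref{eq:defcov} for $\nabla X$ so that the test function $h$ is allowed to range in $L^\infty\cap W^{1,2}(\X)$ rather than only in $\fsm$, in exact analogy with the extension \eqref{eq:sceltah} of the Hessian. I will approximate such an $h$ by $\h_{1/n}h\in\fsm$ (using \eqref{eq:prodtest}), exploit the uniform bound $\|\h_{1/n}h\|_{L^\infty}\le\|h\|_{L^\infty}$ together with $W^{1,2}$-convergence, and pass to the limit in each of the three terms: $\nabla X:(\nabla g_1\otimes\nabla g_2)\in L^2(\mm)$ against $h_n\to h$ in $L^2$; $\la X,\nabla g_2\ra\in L^2$ against $\div(h_n\nabla g_1)=\la\nabla h_n,\nabla g_1\ra+h_n\Delta g_1\to\div(h\nabla g_1)$ in $L^2$ (the second summand being handled by dominated convergence in $L^2$, with dominant $\|h\|_{L^\infty}|\Delta g_1|$); and $\H{g_2}(X,\nabla g_1)\in L^1(\mm)$ against the uniformly $L^\infty$-bounded and a.e.\ convergent sequence $h_n$, via dominated convergence.

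With this extension in hand, for fixed $g_1,g_2,h\in\fsm$ I note that $fh\in L^\infty\cap W^{1,2}(\X)$ by the Leibniz rule, so that the elementary identity $\div(fh\nabla g_1)=f\,\div(h\nabla g_1)+h\la\nabla f,\nabla g_1\ra$ yields
\begin{align*}
\int-\la fX,\nabla g_2\ra\,\div(h\nabla g_1)\,\d\mm
=&\int-\la X,\nabla g_2\ra\,\div(fh\nabla g_1)\,\d\mm\\
&+\int h\la\nabla f,\nabla g_1\ra\la X,\nabla g_2\ra\,\d\mm.
\end{align*}
Applying the extended defining identity for $\nabla X$ with the admissible test function $fh$ rewrites the first integral on the right as $\int fh\,\nabla X:(\nabla g_1\otimes\nabla g_2)\,\d\mm+\int fh\,\H{g_2}(X,\nabla g_1)\,\d\mm$. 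Recognising that $(\nabla f\otimes X):(\nabla g_1\otimes\nabla g_2)=\la\nabla f,\nabla g_1\ra\la X,\nabla g_2\ra$ and that $h\H{g_2}(fX,\nabla g_1)=fh\H{g_2}(X,\nabla g_1)$ cancels the remaining Hessian contribution, a direct rearrangement reconstructs \eqref{eq:w11c} with $T=\nabla f\otimes X+f\nabla X$.

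The only genuine obstacle is the extension of the admissible class of test functions $h$ from $\fsm$ to $L^\infty\cap W^{1,2}$ described in the second paragraph; all subsequent steps are routine integration-by-parts manipulations in the same spirit as the product rule proofs for the differential and the Hessian (see Proposition \ref{prop:prodfunct}).
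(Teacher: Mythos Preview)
Your proposal is correct and follows essentially the same route as the paper. The only difference is organizational: the paper first carries out the algebraic manipulation for $f\in\fsm$ (where $fh\in\fsm$ is automatic) and then approximates a general $f\in L^\infty\cap W^{1,2}(\X)$ by $\h_t f$, whereas you first extend the admissible class of test functions $h$ in \eqref{eq:defcov} to $L^\infty\cap W^{1,2}(\X)$ (in exact analogy with \eqref{eq:sceltah}) and then apply the extended identity with the choice $fh$. Both arguments rest on the same heat-flow approximation and the same expansion $\div(fh\nabla g_1)=f\,\div(h\nabla g_1)+h\la\nabla f,\nabla g_1\ra$.
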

\begin{proof} Assume at first $f\in\fsm$, let $g_1,g_2,h\in\fsm$ be arbitrary, notice that $fh\in\fsm$  and use the definition of $\nabla X$ to obtain that 
\[
\int fh\nabla X:(\nabla g_1\otimes\nabla g_2)\,\d\mm=\int-\la X, \nabla g_2\ra\,\div(fh\nabla g_1)- fh\H{g_2}(X,\nabla g_1)\,\d\mm.
\]
Recalling that by \eqref{eq:leibdiv} we have $\div(fh\nabla g_1)=h\la \nabla f,\nabla g_1\ra+f\div(h\nabla g_1)$, the above yields that
\[
\begin{split}
\int h\la\nabla f,\nabla g_1\ra\,\la X,\nabla g_2\ra + & hf\nabla X:(\nabla g_1\otimes \nabla g_2)\,\d\mm\\
&=\int -\la fX,\nabla g_2\ra\,\div(h\nabla g_1)- h\H{g_2}(fX,\nabla g_1)\,\d\mm,
\end{split}
\]
which is the thesis.

The case of  general  $f\in L^\infty\cap W^{1,2}(\X)$ follows by approximation noticing that  $\h_t f\in \fsm$, that $\|\h_t f\|_{L^\infty(\mm)}\leq \|f\|_{L^\infty(\mm)}$ and that $\h_t f\to f$   in $W^{1,2}(\X)$: these are sufficient to deduce that $(\h_tfX)$ and $(\nabla \h_tf\otimes X+\h_tf\nabla X)$ converge to $fX$ and $ \nabla f\otimes X+f\nabla X$ in $L^2(T\X)$ and $L^1(T^{\otimes 2}\X)$ respectively as $t\downarrow0$. The thesis follows.
\end{proof}
In the statements below, for $X\in W^{1,2}_C(T\X)$ and $Z\in L^0(T\X)$ we shall indicate by $\nabla_ZX$  the vector field in $L^0(T\X)$ defined by
\begin{equation}
\label{eq:nablac}
\la \nabla_ZX, Y\ra:= \nabla X:(Z\otimes Y),\qquad\mm\ae,\qquad\forall Y\in L^0(T\X),
\end{equation}
where the right hand side is firstly defined for $Z,Y\in L^0(T\X)$ such that $Z\otimes Y\in L^2(T^{\otimes 2}\X)$ and then extended by continuity to a bilinear map from $[L^0(T\X)]^2$ to $L^0(\mm)$ (recall Proposition \ref{prop:dualm0}  to see that this really defines a vector field in $L^0(T\X)$).

\begin{proposition}[Compatibility with the metric]\label{prop:compmetr}
Let $X\in W^{1,2}_C(T\X)$ and $Y\in H^{1,2}_C(T\X)$. Then $\la X, Y\ra \in W^{1, 1}(\X)$ and 
\begin{equation}
\label{eq:compmetr}
\d\la X,  Y\ra(Z)=\la\nabla_Z X,  Y\ra+\la \nabla_ZY,X\ra,\qquad\mm\ae,
\end{equation}
for every $Z\in L^0(T\X)$. Moreover:
\begin{itemize}
\item[i)] if $X\in W^{1,2}_C(T\X)$ and $Y\in H^{1,2}_C(T\X)$ are both bounded, then $\la X,Y\ra\in W^{1,2}(\X)$,
\item[ii)] if  $X,Y\in H^{1,2}_C(T\X)$, then $\la X, Y\ra\in H^{1,1}(\X)$.
\end{itemize}
\end{proposition}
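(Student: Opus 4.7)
The proof follows the template of Proposition \ref{prop:gradehess}, with the integration-by-parts identity for $\la X,\nabla g\ra$ replaced by the defining identity \eqref{eq:defcov} of the covariant derivative. Concretely, my plan is first to verify the formula \eqref{eq:compmetr} for $Y$ of the form $f\nabla g$ with $f,g\in\fsm$. Fix $\phi,\psi\in\fsm$ with $\Delta\phi\in L^\infty(\mm)$, apply \eqref{eq:defcov} with $g_1=\phi$, $g_2=g$ and $h=f\psi$, and expand $\div(f\psi\nabla\phi)=\psi\la\nabla f,\nabla\phi\ra+f\div(\psi\nabla\phi)$ via the Leibniz rule \eqref{eq:leibdiv}. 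After rearrangement, using \eqref{eq:covvsm} to recognise
\[
\la\nabla_{\nabla\phi}(f\nabla g),X\ra=\la\nabla f,\nabla\phi\ra\la X,\nabla g\ra+f\,\H g(\nabla\phi,X),
\]
one obtains
\[
-\!\int\!\la X,f\nabla g\ra\div(\psi\nabla\phi)\,\d\mm=\int\!\psi\Big(\la\nabla_{\nabla\phi}X,f\nabla g\ra+\la\nabla_{\nabla\phi}(f\nabla g),X\ra\Big)\d\mm.
\]
By linearity this gives \eqref{eq:compmetr} tested against $Z=\nabla\phi$ for every $Y\in\vsm$, so in particular $\la X,Y\ra\in W^{1,1}(\X)$ with $\d\la X,Y\ra(\nabla\phi)$ equal to the claimed expression (here I am using the density property \eqref{eq:conlapinfty} to make $\phi,\psi$ with $\Delta\phi\in L^\infty$ enough to characterise the $W^{1,1}$-differential).

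Next I would extend the identity to $Y\in H^{1,2}_C(T\X)$ by approximation. The key pointwise bound
\[
\big|\la\nabla_Z X,Y\ra+\la\nabla_Z Y,X\ra\big|\leq\big(|\nabla X|_\HS\,|Y|+|\nabla Y|_\HS\,|X|\big)|Z|,\qquad\mm\ae,
\]
ensures that the right-hand side of \eqref{eq:compmetr}, viewed as an element of $L^1(T^*\X)$, depends continuously on $Y\in W^{1,2}_C(T\X)$ via Cauchy-Schwarz. Simultaneously $\la X,Y\ra$ depends continuously in $L^1(\mm)$ on $Y\in L^2(T\X)$. Hence for $Y_n\in\vsm$ converging to $Y$ in $W^{1,2}_C(T\X)$, the identity valid for each $Y_n$ passes to the limit and yields \eqref{eq:compmetr} for $Z=\nabla\phi$. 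To extend \eqref{eq:compmetr} to arbitrary $Z\in L^0(T\X)$, observe that both sides are $L^0(\mm)$-linear and continuous in $Z$, and that the span of $\{\nabla\phi:\phi\in\fsm\}$ generates $L^2(T\X)$ in the sense of modules (Theorem \ref{thm:basew12c}(v) together with the density of $\vsm$), so the formula extends first to $L^2(T\X)$ and then to $L^0(T\X)$ by continuity of the involved maps.

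For the improved regularity conclusions: in case (i), boundedness of $X,Y$ gives $\la X,Y\ra\in L^2(\mm)$ and $|\d\la X,Y\ra|\leq |\nabla X|_\HS\,\|Y\|_{L^\infty}+|\nabla Y|_\HS\,\|X\|_{L^\infty}\in L^2(\mm)$, whence Proposition \ref{prop:w11w12} upgrades $\la X,Y\ra$ to an element of $W^{1,2}(\X)$. For (ii), choose approximating sequences $(X_n),(Y_n)\subset\vsm$ converging to $X,Y$ in $W^{1,2}_C(T\X)$; from the continuity established above, $\la X_n,Y_n\ra\to\la X,Y\ra$ in $W^{1,1}(\X)$, so it suffices to show $\la X_n,Y_n\ra\in H^{1,1}(\X)$. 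Writing $X_n=\sum_i f_i\nabla g_i$ and $Y_n=\sum_j f'_j\nabla g'_j$, the inner product is a finite sum of terms of the form $f_i f'_j\la\nabla g_i,\nabla g'_j\ra$; by Proposition \ref{prop:gradehess}(ii) each $\la\nabla g_i,\nabla g'_j\ra$ lies in $H^{1,1}(\X)$, and since $f_i f'_j\in\fsm$ is bounded with bounded differential, a direct approximation argument using \eqref{eq:dh11} (multiplying the $\fsm\cap W^{1,1}$ approximants of $\la\nabla g_i,\nabla g'_j\ra$ by $f_if'_j$, which keeps them in $\fsm\cap W^{1,1}$ by the algebra property \eqref{eq:testalg}) shows that each such product remains in $H^{1,1}(\X)$. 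The main obstacle I expect is ensuring that all approximation steps preserve the integrability, particularly in (ii) where one has to combine the $H^{1,1}$-approximation of scalar products of gradients with multiplication by test functions without losing $L^1$-control; this is why the $\fsm$-stability under products is crucial.
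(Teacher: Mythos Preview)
Your proof is correct and follows essentially the same approach as the paper's. The paper treats the base case in two steps---first $Y=\nabla f$ straight from \eqref{eq:defcov}, then $Y\in\vsm$ via the Leibniz rule for the covariant derivative (Proposition \ref{prop:leibcov})---whereas you handle $Y=f\nabla g$ in a single computation by taking $h=f\psi$ in \eqref{eq:defcov}; the content is identical. The approximation to $H^{1,2}_C(T\X)$ and the argument for (i) match exactly.

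For (ii) the paper takes a shorter route that you may prefer: since elements of $\vsm$ are automatically bounded, part (i) already gives $\la X_n,Y_n\ra\in W^{1,2}(\X)\subset\s^2(\X)$; combined with $\la X_n,Y_n\ra\in L^1(\mm)$ and $\d\la X_n,Y_n\ra\in L^1(T^*\X)$ (from \eqref{eq:compmetr}), point (i) of Proposition \ref{prop:baseh11} immediately yields $\la X_n,Y_n\ra\in H^{1,1}(\X)$, without decomposing the inner product or invoking Proposition \ref{prop:gradehess}(ii). Your termwise approximation also works, but the step where you multiply an $H^{1,1}$-approximating sequence $(h_n)$ by $f_if'_j$ requires $\d(f_if'_j)\in L^\infty(T^*\X)$ to control $h_n\,\d(f_if'_j)$ in $L^1$; this holds because $f_if'_j\in\fsm$, so the argument goes through, but the detour is unnecessary.
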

\begin{proof} Assume at first that $Y=\nabla f$ for some $f\in\fsm$. Then for arbitrary  $X\in W^{1,2}_C(T\X)$ and $g,h\in\fsm$, by the very definition of $\nabla X$  we have
\[
\int h\nabla X:(\nabla g\otimes\nabla f)\,\d\mm=\int -\la X,\nabla f\ra \,\div(h\nabla g)-h\H f(X,\nabla g)\,\d\mm,
\]
which, recalling the Definition  \ref{def:w11} of $W^{1,1}(\X)$ and that $(\H f)^\sharp=\nabla(\nabla f)$ by point $(iv)$ of Theorem \ref{thm:basew12c}, yields that $\la X,\nabla f\ra \in W^{1,1}(\X)$ with
\[
\d\la X,\nabla f\ra(h\nabla g)=\la\nabla_{h\nabla g}X,\nabla f\ra+\la\nabla_{h\nabla g}\nabla f,X\ra,\quad\mm\ae.
\]
The the density of $\vsm$ in $L^2(T\X)$ and the definition of $L^0(T\X)$ gives the claim.

The case of $Y\in \vsm$ then follows by the linearity of the covariant derivative, what just proved and Proposition \ref{prop:leibcov} above.

For general $Y\in H^{1,2}_C(TX)$ the result follows by approximation:  if $Y_n\to Y$ in $W^{1,2}_C(T\X)$ and $Z\in L^\infty(T\X)$ is arbitrary, then $(\la X, Y_n\ra)$ and $(\la \nabla_Z X, Y_n\ra+\la \nabla_Z Y_n,X\ra)$ converge to $\la X,  Y\ra$ and   $\la \nabla_Z X, Y\ra+\la \nabla_Z Y,X\ra$ respectively in $L^1(\mm)$ as $n\to\infty$. From the arbitrariness of $Z\in L^\infty(T\X)$ we see that we can pass to the limit in the definition of functions $W^{1,1}(\X)$ and of their differential and obtain the result.

Now point $(i)$ follows noticing that if $X,Y$ are bounded, then $\la X,Y\ra\in L^2(\mm)$ and formula \eqref{eq:compmetr} defines an object in $L^2(T^*\X)$, so that the conclusion follows from Proposition \ref{prop:w11w12}.

For point $(ii)$, notice that for $X,Y\in\vsm$ point $(i)$ grants that $\la X,Y\ra\in W^{1,2}(\X)$ while formula \eqref{eq:compmetr} yields that $\d\la X,Y\ra\in L^1(T^*\X)$, so that point $(i)$ of Proposition \ref{prop:baseh11} gives the thesis. The general case then follows by approximation. Indeed, for $X,Y\in H^{1,2}_C(T\X)$ we can find $(X_n),(Y_n)\subset \vsm$ converging to $X,Y$ respectively in $H^{1,2}_C(T\X)$ as $n\to\infty$. It is the clear that $\la X_n,Y_n\ra\to\la X_,Y\ra$ in $L^1(\mm)$ and that $\d\la X_n,Y_n\ra\to \d\la X,Y\ra$ in $L^1(T^*\X)$ as $n\to\infty$, thus giving the thesis. 
\end{proof}
We now pass to  the torsion-free identity, which, as in the smooth case, follows directly from the symmetry of the Hessian and the compatibility with the metric. For a function $f$ in $W^{1,2}(\X)$ or $W^{1,1}(\X)$ and a vector field $X\in L^2(T\X)$ we will write for brevity $X(f)$ in place of  $\d f(X)$. 
\begin{proposition}[Torsion free identity]
Let $f\in H^{2,2}(\X)$ and $X,Y\in W^{1,2}_C(T\X)$. Then $X(f),Y(f)\in W^{1, 1}(\X)$ and the identity
\begin{equation}
\label{eq:torsionfree}
X(Y(f))-Y(X(f))=\d f(\nabla_XY-\nabla_YX),\qquad\mm\ae,
\end{equation}
holds.
\end{proposition}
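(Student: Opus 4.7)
My plan is to reduce the identity to the symmetry of the Hessian via Proposition \ref{prop:compmetr} applied to the pair $(X,\nabla f)$ (and $(Y,\nabla f)$), using the fact that $\nabla f$ is itself in the `good' covariant Sobolev space.

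First I would check that $\nabla f \in H^{1,2}_C(T\X)$, not merely in $W^{1,2}_C(T\X)$. By point $(iv)$ of Theorem \ref{thm:basew12c}, $f\in W^{2,2}(\X)$ gives $\nabla f\in W^{1,2}_C(T\X)$ with $\nabla(\nabla f)=(\H f)^\sharp$. Now pick $(f_n)\subset\fsm$ with $f_n\to f$ in $W^{2,2}(\X)$, which exists because $f\in H^{2,2}(\X)$. Then $\nabla f_n\in\vsm$ by point $(v)$ of Theorem \ref{thm:basew12c}, $\nabla f_n\to\nabla f$ in $L^2(T\X)$, and $\nabla(\nabla f_n)=(\H{f_n})^\sharp\to (\H f)^\sharp=\nabla(\nabla f)$ in $L^2(T^{\otimes 2}\X)$; hence the convergence is in $W^{1,2}_C(T\X)$, which places $\nabla f$ in the closure of $\vsm$, i.e.\ in $H^{1,2}_C(T\X)$.

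Next I would apply Proposition \ref{prop:compmetr} with the vector field $\nabla f\in H^{1,2}_C(T\X)$ playing the role of the second argument and with $X\in W^{1,2}_C(T\X)$ as the first. This immediately yields $X(f)=\la X,\nabla f\ra\in W^{1,1}(\X)$ (and similarly $Y(f)\in W^{1,1}(\X)$), together with the pointwise identity
\[
\d(X(f))(Z)=\la\nabla_Z X,\nabla f\ra+\la\nabla_Z\nabla f,X\ra,\qquad\mm\ae,\ \forall Z\in L^0(T\X).
\]
The first summand on the right is just $\d f(\nabla_Z X)$. For the second, I unwrap the definition \eqref{eq:nablac} of $\nabla_Z$ and use $\nabla(\nabla f)=(\H f)^\sharp$ together with the pairing $A^\sharp:T=A(T)$ recalled in Section \ref{se:test}, obtaining $\la\nabla_Z\nabla f,X\ra=\H f(Z,X)$. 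Thus $\d(X(f))(Z)=\d f(\nabla_Z X)+\H f(Z,X)$.

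Finally, evaluating at $Z=Y$ gives $Y(X(f))=\d f(\nabla_Y X)+\H f(Y,X)$; swapping the roles of $X$ and $Y$ (both hypotheses are symmetric in $X,Y$) yields $X(Y(f))=\d f(\nabla_X Y)+\H f(X,Y)$. Subtracting the two and invoking the symmetry of the Hessian from point $(iii)$ of Theorem \ref{thm:basew22} to cancel the $\H f$ terms produces exactly \eqref{eq:torsionfree}. There is essentially no obstacle here: the entire second-order calculus of the previous sections has been engineered so that this identity becomes a bookkeeping exercise; the only mildly substantive point is the preliminary upgrade $\nabla f\in H^{1,2}_C(T\X)$, needed to meet the asymmetric hypotheses of Proposition \ref{prop:compmetr}.
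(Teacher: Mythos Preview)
Your proof is correct and follows exactly the same route as the paper: establish $\nabla f\in H^{1,2}_C(T\X)$ by approximation from $\fsm$, invoke Proposition \ref{prop:compmetr} to get $X(f),Y(f)\in W^{1,1}(\X)$ together with the formula $\d\la X,\nabla f\ra(Z)=\d f(\nabla_Z X)+\H f(Z,X)$, then subtract and cancel via the symmetry of the Hessian. One small inaccuracy: point $(v)$ of Theorem \ref{thm:basew12c} does not assert $\nabla f_n\in\vsm$ (indeed, if $\mm(\X)=\infty$ the constant $1$ is not in $\fsm$, so $\nabla f_n$ need not literally lie in $\vsm$); the paper is equally brief on this point, but what you actually need and use is only $\nabla f_n\in H^{1,2}_C(T\X)$, which follows since $\nabla f_n$ is a $W^{1,2}_C$-limit of $\chi\nabla f_n\in\vsm$ for suitable cutoffs $\chi\in\fsm$.
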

\begin{proof} Notice that for $f\in \fsm$ we have $\nabla f\in H^{1,2}_C(T\X)$ by the very definition of $H^{1,2}_C(T\X)$, hence by approximation we get that $\nabla f\in H^{1,2}_C(T\X)$ for every $f\in H^{2,2}(\X)$. Therefore under the current assumptions Proposition \ref{prop:compmetr} grants that  $Y(f)\in W^{1, 1}(\X)$ and the identity \eqref{eq:compmetr} yields 
\[
X(Y(f))=\nabla Y:(X\otimes \nabla f)+\H f(X,Y)=\d f(\nabla_XY)+\H f(X,Y).
\]
Subtracting the analogous expression for $Y(X(f))$ and using the symmetry of the Hessian we conclude.
\end{proof}
Since  $\fsm$ is dense in $W^{1,2}(\X)$, the same holds for $H^{2,2}(\X)$ and therefore taking into account the fact that the cotangent module $L^2(T^*\X)$ is generated, in the sense of modules, by the space $\{\d f:f\in W^{1,2}(\X )\}$ (Proposition \ref{prop:gencotan}), we see that $L^2(T^*\X)$ is also generated by $\{\d f:f\in H^{2,2}(\X )\}\subset L^\infty(T^*\X)$. It is then easy to realize that the vector field $\nabla_XY-\nabla_YX$ is the only one for which the identity \eqref{eq:torsionfree} holds for any $f\in H^{2,2}(\X)$. We can therefore give the following definition:
\begin{definition}[Lie bracket of Sobolev vector fields] Let $X,Y\in W^{1,2}_C(T\X)$. Then their Lie bracket $[X,Y]\in L^1(T\X)$ is defined as
\[
[X,Y]:=\nabla_XY-\nabla_YX.
\]
\end{definition}
We now discuss the locality properties of the covariant derivative. As we did when discussing the locality of the Hessian, by `the interior of $\{X_1=X_2\}$' for vector fields  $X_1,X_2$, we will mean the union of all the open sets $\Omega\subset \X$ such that $X_1=X_2$ $\mm$-a.e.\ on $\Omega$.
\begin{proposition}[Locality of the covariant derivative]\label{prop:loccov} For any $X_1,X_2\in W^{1,2}_C(T\X)$ we have
\begin{equation}
\label{eq:loccov1}
\nabla X_1=\nabla X_2,\qquad\mm\ae\ \text{\rm on the interior of }\{X_1=X_2\},
\end{equation}
and if $X_1,X_2\in H^{1,2}_C(T\X)$ the finer property
\begin{equation}
\label{eq:loccov2}
\nabla X_1=\nabla X_2,\qquad\mm\ae\ \text{\rm on }\{X_1=X_2\},
\end{equation}
holds.
\end{proposition}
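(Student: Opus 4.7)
My plan is to mimic the proof of Proposition~\ref{prop:lochess} for the Hessian. By linearity of the covariant derivative it suffices to show that $\nabla X = 0$ $\mm$-a.e.\ on the interior $\Omega$ of $\{X=0\}$ whenever $X\in W^{1,2}_C(T\X)$, and likewise on the whole of $\{X=0\}$ whenever $X\in H^{1,2}_C(T\X)$.

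The next step is to reduce matters to testing $\nabla X$ against elementary tensors drawn from $\vsm$. Since $\vsm\subset H^{1,2}_C(T\X)$ and since linear combinations of tensors of the form $Y\otimes Z$ with $Y,Z\in\vsm$ are dense in $L^2(T^{\otimes 2}\X)$---a consequence of \eqref{eq:span2} via the factorization $g_1g_2\,\nabla f_1\otimes\nabla f_2 = (g_1\nabla f_1)\otimes(g_2\nabla f_2)$---in order to verify that $\nabla X$ vanishes $\mm$-a.e.\ on a set $E$ it is enough to check that $\la\nabla_Z X, Y\ra = 0$ $\mm$-a.e.\ on $E$ for every $Y,Z\in\vsm$, where the left-hand side is understood via \eqref{eq:nablac}.

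The heart of the argument is then to invoke compatibility with the metric. Fixing $Y,Z\in\vsm$, Proposition~\ref{prop:compmetr} provides $\la X,Y\ra\in W^{1,1}(\X)$ together with the identity
\[
\la\nabla_Z X, Y\ra = \d\la X,Y\ra(Z) - \la\nabla_Z Y, X\ra \qquad\mm\ae.
\]
On $\Omega$ both summands on the right vanish: the second because $X = 0$ there, and the first by the weak locality \eqref{eq:localdw11} of the differential in $W^{1,1}(\X)$, since $\la X,Y\ra$ agrees with the zero function on the open set $\Omega$. This yields \eqref{eq:loccov1}.

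For \eqref{eq:loccov2} the only modification is to exploit part~(ii) of Proposition~\ref{prop:compmetr}: when both $X$ and $Y$ belong to $H^{1,2}_C(T\X)$, one has in fact $\la X,Y\ra \in H^{1,1}(\X)$, and the stronger locality in $H^{1,1}(\X)$ (the last line of \eqref{eq:dh11}) forces $\d\la X,Y\ra = 0$ $\mm$-a.e.\ on the full set $\{X=0\}$ rather than merely on its interior; the remainder of the argument is unchanged. I do not expect any serious obstacle here---the only delicate point is to keep track of the $W^{1,1}$-versus-$H^{1,1}$ dichotomy in the two cases, and to verify carefully that the density claim based on \eqref{eq:span2} really does allow one to pass from the pointwise identity for $\la\nabla_Z X,Y\ra$ to a vanishing statement for the tensor $\nabla X$ itself.
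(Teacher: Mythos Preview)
Your proposal is correct and follows essentially the same route as the paper: reduce by linearity to $X=0$, test $\nabla X$ against elementary tensors built from test objects, and use the compatibility-with-the-metric identity from Proposition~\ref{prop:compmetr} together with the locality of the $W^{1,1}$ (resp.\ $H^{1,1}$) differential. The only cosmetic difference is that the paper tests against $\nabla g_1\otimes\nabla g_2$ with $g_1,g_2\in\fsm$ and writes the identity as $\nabla X:(\nabla g_1\otimes\nabla g_2)=\big<\nabla\la X,\nabla g_2\ra,\nabla g_1\big>-\H{g_2}(X,\nabla g_1)$, whereas you test against $Z\otimes Y$ with $Y,Z\in\vsm$ and invoke the general form of \eqref{eq:compmetr}; these are the same argument.
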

\begin{proof} The argument is the same of Proposition \ref{prop:lochess}. For \eqref{eq:loccov1} it is sufficient to prove that for $X\in W^{1,2}_C(T\X)$ we have
\[
\nabla X:(\nabla g_1\otimes\nabla g_2)=0,\qquad\mm\ae \text{ on the interior of }\{X=0\},
\]
for any $g_1,g_2\in\fsm$. To this aim, recall that by Proposition \ref{prop:compmetr} we have $\la X, \nabla g_2\ra\in W^{1,1}(\X)$ and 
\[
\nabla X:(\nabla g_1\otimes\nabla g_2)=\big<\nabla\la X,\nabla g_2\ra,\nabla g_1\big>-\H {g_2}(X,\nabla g_1).
\]
Conclude recalling the locality property \eqref{eq:localdw11}. The second part of the statement follows analogously recalling that for $X\in H^{1,2}_C(T\X)$ we have $\la X, \nabla g_2\ra \in H^{1,1}(\X)$ (Proposition \ref{prop:compmetr}) and using the locality property in \eqref{eq:dh11}.
\end{proof}
\begin{remark}[Being local vs being a tensor]\label{rem:loctens}{\rm
In smooth Riemannian geometry one can recognize the fact that the covariant derivative $\nabla_YX$ of a smooth vector field $X$ along a smooth vector field $Y$ is a tensor in $Y$, by observing that for given $X$ the value of $\nabla_YX$   at a given point $p$ depends only on the value $Y(p)$ of $Y$ at $p$. This fact and linearity then grant that $\nabla_{fY}X=f\nabla_YX$ for any smooth function $f$.

The same pointwise property is certainly not true for $X$, and indeed the covariant derivative certainly does not satisfy $\nabla_Y(fX)=f\nabla_YX$ but fulfills instead the differentiation rule $\nabla_Y(fX)=f\nabla_YX+Y(f)X$.

In the current non-smooth setting we don't really know what is a vector field at one given point but only what is it's value $\mm$-a.e., in the sense made precise by the definition of $L^\infty$-module. Yet, passing from `everywhere' to `almost everywhere' destroys the difference outlined above as we certainly have
\[
\nabla_YX=\nabla_{\tilde Y}X,\qquad\mm\ae\text{  on}\ \{Y=\tilde Y\},
\]
and also
\[
\nabla_YX=\nabla_{Y}\tilde X,\qquad\mm\ae\text{  on}\ \{X=\tilde X\},
\]
at least for $X,\tilde X\in H^{1,2}_C(T\X)$, as shown by Proposition \ref{prop:loccov} above. Thus in our setting the quantity $\nabla_YX$ is `as local in $Y$ as it is in $X$', at least for $X\in H^{1,2}_C(T\X)$.

Perhaps not surprisingly, the property of being a tensor is recognized here by an inequality and precisely  by bounding the pointwise norm of the object investigated  in terms of the pointwise norm of the vector. Thus the trivial inequality
\[
|\nabla_YX|\leq|\nabla X|_\HS|Y|,\qquad\mm\ae,
\]
and linearity in $Y$ shows that $\nabla_{fY}X=f\nabla_YX$. On the other hand, for given $Y\neq0$ no bound of the form
\[
|\nabla_YX|\leq g|X|,\qquad\mm\ae
\]
holds, whatever function $g:\X\to\R$ we choose.
}\fr\end{remark}
\begin{remark}[The space $W^{1,2}_C(T\Omega)$]{\rm The locality property \eqref{eq:loccov1} and the Leibniz rule in Proposition \ref{prop:leibcov}  allow to introduce the space $W^{1,2}_C(T\Omega)$ of Sobolev vector fields on an open set $\Omega\subset X$. We can indeed say that $X\in L^2(T\X)$ belongs to $W^{1,2}_C(T\Omega)$ provided there exists $T\in L^2(T^{\otimes 2}\X )$ such that the following holds. For any Lipschitz function $\nchi:\X\to[0,1]$ with $\supp(\nchi)\subset\Omega$, the vector field $\nchi X$ belongs to $W^{1,2}_C(T\X)$ and the formula 
\[
\nabla(\nchi X)= T,\qquad\mm\ae  \text{ on the interior of }\{\nchi=1\},
\]
holds. In this case the tensor $T\in  L^2(T^{\otimes 2}\X)$ is uniquely defined on $\Omega$ and can be called covariant derivative of  $X$ on $\Omega$.

The role of the locality property \eqref{eq:loccov1} is to grant that the definition is meaningful, as it ensures  that for $\nchi_1,\nchi_2$ as above we have
\[
\nabla(\nchi_1 X)=\nabla(\nchi_2X) ,\qquad\mm\ae \text{ on the interior of }\{\nchi_1=\nchi_2\}.
\]
}\fr\end{remark}

\subsubsection{Second order differentiation formula}\label{se:secdiff} We introduced the concepts of Hessian and covariant differentiation using the identities
\[
\begin{split}
2\H f(\nabla g_1,\nabla g_2)&=\big<\nabla\la\nabla f,\nabla g_1\ra,\nabla g_2\big>+\big<\nabla\la\nabla f,\nabla g_2\ra,\nabla g_1\ra-\big<\nabla f,\nabla\la\nabla g_1,\nabla g_2\ra\big>,\\
\la\nabla_{\nabla g_1} X, \nabla g_2\ra&=\big<\nabla\la X,\nabla g_2\ra,\nabla g_1\big>-\H {g_2}(\nabla g_1,X),
\end{split}
\]
but in the smooth setting there is at least another, quite different, basic instance in which these objects appear:  for a smooth function $f$ and a smooth curve $\gamma$, the map $t\mapsto f(\gamma_t)$ is smooth and its second derivative is given by
\begin{equation}
\label{eq:sder}
\frac{\d^2}{\d t^2}f(\gamma_t)=\H f(\gamma_t',\gamma_t')+\la\nabla_{\gamma'_t}\gamma_t',\nabla f\ra.
\end{equation}
The question is then whether a similar formula holds also in our setting. Given that we don't have at disposal everywhere defined Hessian and covariant differentiation, we must formulate \eqref{eq:sder} in an appropriate $\mm$-a.e.\ sense.  We shall follow the same ideas used in Section \ref{se:normdistance} and pass from the `pointwise' formulation \eqref{eq:sder} to the `integrated' one which consists in looking at  $t\mapsto\int f\,\d\mu_t$ for a given curve $(\mu_t)\subset\probt\X$ satisfying suitable regularity requirements.

\vspace{1cm}

Our result is the following:
\begin{theorem}[Second order differentiation formula]\label{thm:secondder} Let $(\mu_t)\subset \probt X$ be a curve of bounded compression (Definition \ref{def:boundcompr}) solving the continuity equation
\[
\partial_t\mu_t+\nabla\cdot(X_t\mu_t)=0,
\]
(Definition \ref{def:solconteq}) for a family $(X_t)\subset L^2(T\X)$ of vector fields such that
\begin{itemize}
\item[i)] $\sup_t\|X_t\|_{W^{1,2}_C(T\X)}+\|X_t\|_{L^\infty(T\X)}<\infty$,
\item[ii)] $t\mapsto X_t\in L^2(T\X)$ is absolutely continuous.
\end{itemize}
Then for every $f\in H^{2,2}(\X)$ the map $[0,1]\ni t\mapsto\int f\,\d\mu_t$ is $C^{1,1}$ and the formula
\begin{equation}
\label{eq:secondorderderivative}
\frac{\d^2}{\d t^2}\int f\,\d\mu_t=\int \H f(X_t,X_t)+\la\nabla f, \partial_t X_t\ra+\la\nabla_{X_t}X_t, \nabla f\ra\,\d\mu_t,
\end{equation}
holds for a.e.\ $t\in[0,1]$. 
\end{theorem}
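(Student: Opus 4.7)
The approach is to iterate the first-order continuity equation. Since $(\X,\sfd,\mm)$ is infinitesimally Hilbertian, Definition \ref{def:solconteq} yields
\[
\ddt\int f\,\d\mu_t = \int\la\nabla f,X_t\ra\,\d\mu_t\qquad\text{for a.e.\ }t\in[0,1],
\]
and the task is to differentiate the right-hand side once more in $t$. For that I would split
\[
\int\la\nabla f,X_t\ra\,\d\mu_t - \int\la\nabla f,X_s\ra\,\d\mu_s = \int\la\nabla f,X_t-X_s\ra\,\d\mu_s + \int g_t\,\d(\mu_t-\mu_s),
\]
where $g_t:=\la\nabla f,X_t\ra$. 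Dividing by $t-s$ and letting $s\to t$, hypothesis (ii) together with the bounded density of $\mu_s$ makes the first summand converge to $\int\la\nabla f,\partial_t X_t\ra\,\d\mu_t$, while the second summand will contribute $\int \d g_t(X_t)\,\d\mu_t$ via another application of the continuity equation, this time to the `time-frozen' function $g_t$.

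To make this second step rigorous I would first restrict to $f\in\fsm$, because then $\nabla f\in L^\infty(T\X)$ and Proposition \ref{prop:compmetr}(i) (combined with hypothesis (i)) delivers $g_t\in W^{1,2}(\X)$. The compatibility identity \eqref{eq:compmetr}, together with the symmetry of the Hessian, then gives
\[
\d g_t(Z)=\la\nabla_Z X_t,\nabla f\ra+\H f(X_t,Z),
\]
so Definition \ref{def:solconteq} applied to $g_t$ produces the integral version
\[
\int g_t\,\d(\mu_t-\mu_s)=\int_s^t\!\!\int\!\big(\la\nabla_{X_r}X_t,\nabla f\ra+\H f(X_t,X_r)\big)\,\d\mu_r\,\d r.
\]
Hypothesis (i) yields a uniform $L^\infty_t$ bound on the inner integrand, and the $L^2(T\X)$-continuity of $r\mapsto X_r$ from (ii) (together with the pointwise bound $|X_r|\le C$) lets one pass to the limit $s\to t$ inside the integral. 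Combining with the first summand gives, for a.e.\ $t$,
\[
\frac{\d^2}{\dt^2}\int f\,\d\mu_t=\int\big(\H f(X_t,X_t)+\la\nabla_{X_t}X_t,\nabla f\ra+\la\nabla f,\partial_t X_t\ra\big)\,\d\mu_t,
\]
which is \eqref{eq:secondorderderivative}; the uniform bounds on the three terms on the right (from (i), (ii) and the $L^2$-regularity of $\H f$) upgrade the regularity of $t\mapsto\int f\,\d\mu_t$ to $C^{1,1}$.

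The passage from $\fsm$ to $H^{2,2}(\X)$ is by density: pick $f_n\in\fsm$ with $f_n\to f$ in $W^{2,2}(\X)$ and apply the previous step to each $f_n$. The maps $t\mapsto\int f_n\,\d\mu_t$ are uniformly $C^{1,1}$, and each of the three terms on the right-hand side of \eqref{eq:secondorderderivative} for $f_n$ converges (e.g.\ in $L^1(0,1)$) to the corresponding term for $f$, which transfers both the $C^{1,1}$ regularity and the identity to the limit. The main obstacle is justifying the interchange of limits $s\to t$ in the $(\mu_t-\mu_s)$-term: even for $f\in\fsm$ the test function $g_t$ depends on time, so one cannot apply the continuity equation `pointwise in $t$' but must instead work at the level of the integral identity above and pass to the limit using a Fubini-type dominated convergence argument — the $L^\infty(T\X)$ bound from (i) and the $L^2$-continuity of $r\mapsto X_r$ from (ii) are exactly what is needed to exchange the limit with the inner integral.
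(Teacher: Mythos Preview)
Your proposal is correct and follows essentially the same route as the paper: both split the increment of $\int\la\nabla f,X_t\ra\,\d\mu_t$ into a ``vary $X$'' and a ``vary $\mu$'' piece, work first with $f\in\fsm$ so that Proposition~\ref{prop:compmetr} gives $g_t=\la\nabla f,X_t\ra\in W^{1,2}(\X)$ with uniform Sobolev bound, apply the continuity equation to the frozen function $g_t$, and finally pass to $f\in H^{2,2}(\X)$ by $W^{2,2}$-approximation. The only cosmetic difference is which endpoint is frozen in the splitting; the paper also makes the preliminary observation that the right-hand side of the first-order formula is continuous in $t$ (so the formula holds for \emph{every} $t$, not just a.e.), which streamlines the subsequent differentiation, and phrases your ``interchange of limits'' step as first proving absolute continuity of $t\mapsto\int g_t\,\d\mu_t$ via explicit bounds on the two pieces and then differentiating a.e.
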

\begin{proof} Let $C>0$ be such that $\mu_t\leq C\mm$  for every $t\in[0,1]$. By the definition of solution of the continuity equation we know  that the map $t\mapsto\int f\,\d\mu_t$ is absolutely continuous and that the formula
\begin{equation}
\label{eq:firstorderderivative}
\frac\d{\d t}\int f\,\d\mu_t=\int \la \nabla f, X_t\ra\,\d\mu_t,
\end{equation}
holds for a.e.\ $t\in[0,1]$. The continuity of $t\mapsto X_t\in L^2(T\X)$ grants that $t\mapsto \la \nabla f, X_t\ra\in L^1(\mm)$ is continuous; on the other hand, the curve $t\mapsto\mu_t\in \prob \X$ is continuous w.r.t.\ convergence in duality with $C_b(\X)$ and thus, due to the uniform bound $\mu_t\leq C\mm$, also in duality with $L^1(\mm)$. It follows that the right hand side of \eqref{eq:firstorderderivative} is continuous, so that $t\mapsto\int f\,\d\mu_t$ is $C^1$ and formula \eqref{eq:firstorderderivative} holds for every $t\in[0,1]$.

Now assume for a moment that  $f\in\fsm$ and notice that the assumption $\sup_t\|X_t\|_{W^{1,2}_C(T\X)}+\|X_t\|_{L^\infty(T\X)}<\infty$  grants, together with Proposition \ref{prop:compmetr}, that $S:=\sup_{t\in[0,1]}\|\la \nabla f, X_t\ra\|_{W^{1,2}(\X)}<\infty$. In particular, for given $t_0<t_1\in[0,1]$ we can apply the definition of solution of continuity equation to the function $\la\nabla f, X_{t_0}\ra$ to deduce that
\[
\begin{split}
\left|\int \la\nabla f, X_{t_0}\ra\,\d(\mu_{t_1}-\mu_{t_0}) \right|&=\left|\int_{t_0}^{t_1}\int\big< \nabla\la\nabla f, X_{t_0})\ra,  X_t\big>\,\d\mu_t\,\d t\right|\leq (t_1-t_0)CS\sup_{t\in[0,1]}\|X_t\|_{L^2(T\X)}.
\end{split}
\]
Taking into account the absolute continuity of $t\mapsto X_t\in L^{2}(T\X)$ we also have
\[
\left|\int\la \nabla f, X_{t_1}- X_{t_0}\ra\,\d\mu_{t_1}\right|=\left|\int \int_{t_0}^{t_1}\la\nabla f, \partial_t X_{t}\ra\,\d t\,\d\mu_{t_1}\right|\leq C\|\nabla f\|_{L^2(T\X)}\int_{t_0}^{t_1}\|\partial_tX_t\|_{L^2(T\X)}\,\d t.
\]
These last two inequalities grant that $t\mapsto \int\la\nabla f, X_t\ra\,\d\mu_t$ is absolutely continuous and that
\[
\begin{split}
\lim_{h\to 0}\frac1h\left(\int\la \nabla f, X_{t+h}\ra\,\d\mu_{t+h}- \int\la \nabla f, X_t\ra\,\d\mu_t\right)=&\lim_{h\to 0}\frac1h\int\la\nabla f, X_{t}\ra\,\d(\mu_{t+h}-\mu_t)\\
&+\lim_{h\to 0}\frac1h\int\la\nabla f, X_{t+h}-X_t\ra\,\d \mu_t,
\end{split}
\]
for a.e.\ $t\in[0,1]$. Hence using formula \eqref{eq:firstorderderivative} with $\la \nabla f, X_t\ra$ in place of $f$ we deduce that for every point $t$ of differentiability of $s\mapsto X_s\in L^2(T\X)$ we have
\[
\lim_{h\to 0}\frac1h\left(\int\la \nabla f, X_{t+h}\ra\,\d\mu_{t+h}- \int\la\nabla f, X_t\ra\,\d\mu_t\right)=\int\big<\nabla\la\nabla f, X_t\ra, X_t\ra+\la\nabla f,\partial_tX_t\ra\,\d\mu_t,
\]
and the conclusion follows by expanding $\big<\nabla\la\nabla f, X_t\ra, X_t\ra$ via formula \eqref{eq:compmetr}.

The case of general $f\in H^{2,2}(\X)$ now follows by approximation. 
Indeed, for $(f_n)\subset\fsm$ converging to $f$ in $W^{2,2}(\X)$ we have that $\int f_n\,\d\mu_t\to\int f\,\d\mu_t$ for every $t\in[0,1]$ and the right hand sides of \eqref{eq:secondorderderivative} written for the functions $f_n$ are dominated in $L^1(0,1)$ and converge to the corresponding one for $f$ for a.e.\ $t$. This is sufficient to show that the second derivative of $t\mapsto\int f\,\d\mu_t$ in the sense of distribution is the $L^1(0,1)$-function in the right hand side of \eqref{eq:secondorderderivative}, which is equivalent to the thesis.
\end{proof}

\begin{problem}[The case of geodesics]{\rm

Theorem \ref{thm:secondder} does not cover the important case of geodesics, the problem being the lack of regularity of the vector fields and the difficulty in finding an appropriate approximation procedure. 

As a partial attempt in this direction, we remark that the results in \cite{GigliHan13} together with the abstract Lewy-Stampacchia inequality \cite{Gigli-Mosconi14} show that if $(\mu_t)\subset\probt\X$ is a $W_2$-geodesic  made of measures with uniformly bounded support and densities, then there are Lipschitz functions $\varphi_t\in D(\Delta)$ such that
\[
\partial_t\mu_t+\nabla\cdot(\nabla\varphi_t\mu_t)=0,
\]
and these functions can be chosen so that $\sup_{t\in[\eps,1-\eps]}\|\nabla\varphi_t\|_{L^\infty(T\X)} +\|\Delta\varphi_t\|_{L^\infty(\mm)}<\infty$ for every $\eps>0$.
Yet, it is not clear if it is possible to choose the $\varphi_t$'s so that $t\mapsto \nabla\varphi_t\in L^2(T\X)$ is absolutely continuous. If true, this latter regularity together with the fact that Kantorovich potentials along a geodesic evolve via the Hamilton-Jacobi equation, would grant that
\[
\partial_t\nabla\varphi_t+\nabla_{\nabla\varphi_t}\nabla\varphi_t=0,\qquad\mu_t\ae,
\]
for a.e.\ $t\in[0,1]$, which by \eqref{eq:secondorderderivative} would imply the very expected formula
\[
\frac{\d^2}{\d t^2}\int f\,\d\mu_t=\int  \H f(\nabla\varphi_t,\nabla\varphi_t)\,\d\mu_t.
\] 
}\fr\end{problem}

\begin{remark}[Regular Lagrangian flows]\label{rem:rlf}{\rm
In the same `Lagrangian' spirit of this section and in  connection with the Open Problem \ref{op:vc}, one might consider the following question. Find appropriate  conditions on a Borel map $[0,1]\ni t\mapsto X_t\in L^0(T\X)$ ensuring existence and uniqueness of a family of maps $F_t:\X\to \X$, $t\in[0,1]$, such that $F_0$ is the identity and 
\[
\frac\d{\d t}F_t=X_t\circ F_t,\qquad {\rm a.e. }\ t\in[0,1].
\]
Without paying too much attention to the technical details, this equation might be interpreted as: for a sufficiently large class of test functions $f$ the map $t\mapsto f\circ F_t\in L^1(\mm)$ is absolutely continuous and its derivative is given by $\frac{\d}{\d t}(f\circ F_t)=\d f(X_t)\circ F_t$.

In the smooth setting, we know by  the Cauchy-Lipschitz theorem that if the vector fields are Lipschitz then  existence and everywhere uniqueness are ensured. Still in $\R^n$, if one relaxes Lipschitz regularity to Sobolev one, then the correct notion of `a.e.\ solution' ensuring existence and uniqueness is that of  \emph{regular Lagrangian flow} introduced by Ambrosio in \cite{Ambrosio04}  in the study of the DiPerna-Lions theory \cite{DiPerna-Lions89}.

Given that we now have the notions of Sobolev vector field, one might wonder whether such theory can be developed in this more abstract context. The answer is affirmative, as proved in the very recent paper \cite{Ambrosio-Trevisan14}, where the concept of vector field is interpreted in terms of derivations of Sobolev functions. Starting from Theorem \ref{thm:dervf}, one can then verify that the theory developed in \cite{Ambrosio-Trevisan14}  can be fully read in terms of the language proposed here.
}\fr\end{remark}

\subsubsection{Connection Laplacian and heat flow of vector fields}\label{se:clap} The connection energy functional $\ec:L^2(T\X)\to [0,\infty]$ resembles a local Dirichlet form, being a quadratic form obtained by integrating the squared norm of a first-order differential object possessing the locality property expressed in Proposition \ref{prop:loccov}. 

The analogy is purely formal, given that the base space $L^2(T\X)$ is not really the $L^2$ space induced by some measure and, even in the smooth setting, there is no clear way of stating the Markov property for vector fields. Still, one can study the associated `diffusion operator' and the flow induced by this `form', which is what we briefly discuss here.

Actually, due to the fact that we don't know whether $W^{1,2}_C(T\X)=H^{1,2}_C(T\X)$, there is a choice to make: either to study the functional on the whole $W^{1,2}_C(T\X)$ or to concentrate the attention to the subspace $H^{1,2}_C(T\X)$. Here we adopt this second viewpoint, because the more general calculus rules available for vector fields in $H^{1,2}_C(T\X)$ will allow us to prove a sort of Bakry-\'Emery estimate for the induced flow, see Proposition \ref{prop:bev}

\vspace{1cm}

We start with the following definition:
\begin{definition}[Connection Laplacian]
The set $D(\Delta_C)\subset H^{1,2}_C(T\X)$ is the set of all $X\in H^{1,2}_C(T\X)$ such that there exists a vector field $Z\in L^2(T\X)$ satisfying
\[
\int \la Y, Z\ra\,\d\mm=-\int \nabla Y:\nabla X\,\d\mm,\qquad\forall Y\in H^{1,2}_C(T\X).
\]
The density of $H^{1,2}_C(T\X)$ in $L^2(T\X)$ grants that  $Z$ is uniquely identified by the above formula: it will be called connection Laplacian of $X$ and denoted by $\Delta_CX$. 
\end{definition}
The linearity of the covariant derivative ensures that  $D(\Delta_C)$ is a vector space and that $\Delta_C:D(\Delta_C)\to L^2(T\X)$ is linear as well.

Introducing the augmented energy functional $\ect:L^2(T\X)\to[0,\infty]$ as
\[
\ect(X):=\left\{
\begin{array}{ll}
{\displaystyle\frac12\int} |\nabla X|_\HS^2\,\d\mm,&\qquad\text{ if }X\in H^{1,2}_C(T\X),\\
&\\
+\infty,&\qquad\text{ otherwise},
\end{array}
\right.
\]
we can give an alternative description of the connection Laplacian. Notice that $\ect$ is convex, lower semicontinuous and with dense domain $D(\ect):=\{X:\ect(X)<\infty\}=H^{1,2}_C(T\X)$ in $L^2(T\X)$. Denoting by $\partial \ect(X)\subset L^2(T\X)$ the subdifferential of $\ect$ at $X\in D(\ect)$ and by $D(\partial\ect)$ its domain, i.e.\ the set of vector fields $X$ for which $\partial \ect(X)\neq\emptyset$, we see that $D(\partial\ect)=D(\Delta_C)$ and that for $X$ in these sets we have $\partial\ect(X)=\{-\Delta_CX\}$. Indeed, for $X\in D(\Delta_C)$ and $Y\in D(\ect)$, the convexity of $t\mapsto\ect((1-t)X+tY)$ yields
\[
\begin{split}
\ect(Y)-\ect(X)&\geq\frac{\ect(X+t(Y-X))-\ect(X)}t=\int\frac{|\nabla(X+t(Y-X))|_\HS^2-|\nabla X|_\HS^2}{2t}\,\d\mm\\
&=\int\nabla X:\nabla (Y-X)+\frac t2|\nabla(Y-X)|_\HS^2\,\d\mm\\
&=\int-\la (Y-X),\Delta_CX\ra+\frac t2|\nabla(Y-X)|_\HS^2\,\d\mm,\qquad\forall t\in(0,1],
\end{split}
\]
and thus letting $t\downarrow0$ from the arbitrariness of $Y\in D(\ect)$ we deduce that $-\Delta_CX\in\partial\ect(X)$. Conversely, for $Z\in\partial\ect(X)$ and arbitrary $Y\in D(\ect)$ and $t\in\R$ we have
\[
\begin{split}
\int\la Z,tY\ra\,\d\mm\leq \ect(X+tY)-\ect(X)=\int t\nabla X:\nabla Y+\frac{t^2}2|\nabla Y|_\HS^2\,\d\mm,
\end{split}
\]
and dividing by $t>0$ (resp. $t<0$) and letting $t\downarrow0$ (resp. $t\uparrow0$) we deduce that $X\in D(\Delta_C)$ and $-Z=\Delta_CX$, as claimed.

A direct consequence of this representation is that $\Delta_C$ is a closed operator, i.e.\ $\{(X,\Delta_CX)\ :\ X\in D(\Delta_C)\}$ is a closed subspace of $L^2(T\X)\times L^2(T\X)$.

Moreover, the standard theory of gradient flows of convex and lower semicontinuous functionals on Hilbert spaces, the linearity of $\Delta_C$ and the density of $H^{1,2}_C(T\X)$ in $L^2(T\X)$, grant that $D(\Delta_C)$ is a dense subset of $L^2(T\X)$ and the existence and uniqueness of a 1-parameter semigroup $(\h_{C,t})_{t\geq 0}$ of continuous linear operators from $L^2(T\X)$ into itself such that for any $X\in L^2(T\X)$ the curve $t\mapsto \h_{C,t}(X)\in L^2(T\X)$ is continuous on $[0,\infty)$, locally absolutely continuous on $(0,\infty)$ and fulfills
\[
\frac{\d}{\d t}\h_{C,t}(X)=\Delta_C\h_{C,t}(X),\qquad\forall t>0,
\]
where it is part of the statement the fact that $\h_{C,t}(X)\in D(\Delta_C)$ for every $X\in L^2(T\X)$, $t>0$. 

Then for given $X\in L^2(T\X)$ and putting $X_t:=\h_{C,t}(X)$, it is not hard to see that  $t\mapsto\frac12\int|X_t|^2\,\d\mm$ and $t\mapsto \ect(X_t)$ are locally absolutely continuous on $(0,\infty)$ with 
\[
\begin{split}
\frac\d{\d t}\frac12\int|X_t|^2\,\d\mm&=\int\la X_t, \Delta_CX_t\ra\,\d\mm=-\int |\nabla X_t|_\HS^2\,\d\mm\leq 0,\\
\frac\d{\d t}\ect(X_t)&=\int \nabla X_t:\nabla\Delta_CX_t\,\d\mm=-\int |\Delta_C X_t|^2\,\d\mm\leq 0,
\end{split}
\]
which shows that these two quantities are non-increasing and also lead to the standard a priori estimates:
\[
t\ect(X_t)\leq\int_0^t\ect(X_s)\,\d s=-\int_0^t\frac{\d}{\d s}\frac14\|X_s\|^2_{L^2(T\X)}\,\d s\leq \frac14\|X\|^2_{L^2(T\X)},
\]
and, taking into account that $t\mapsto\|\Delta_CX_t\|=\|\h_{C,t-\eps}\Delta_CX_\eps\|_{L^2(T\X)}$ is also non-increasing, that
\[
\begin{split}
\frac{t^2}2\|\Delta_CX_t\|^2_{L^2(T\X)}&\leq \int_0^ts\|\Delta_CX_s\|^2_{L^2(T\X)}\,\d s=\int_0^ts\frac\d{\d s}\ect(X_s)\,\d s\\
&=\int_0^t\ect(X_s)-\ect(X_t)\,\d s\leq \int_0^t\ect(X_s)\,\d s\\
&=-\int_0^t\frac{\d}{\d s}\frac14\|X_s\|^2_{L^2(T\X)}\,\d s\leq \frac14\|X\|^2_{L^2(T\X)}.
\end{split}
\]
We conclude this short discussion proving  the following sort of Bakry-\'Emery contraction rate estimate. Notice that the bound from below on the Ricci curvature does not appear in the inequality, but it is still necessary to work on an $\RCD(K,\infty)$ space for some $K\in\R$ in order to have the flow $\h_{C,t}$ to be well defined.
\begin{proposition}\label{prop:bev} For any $X\in L^2(T\X)$ and $t\geq 0$ we have
\[
|\h_{C,t}(X)|^2\leq \h_t(|X|^2),\qquad\mm\ae.
\]
\end{proposition}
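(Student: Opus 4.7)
The guiding principle is the Bakry--\'Emery interpolation scheme. For $X\in L^2(T\X)$, $t>0$, and a non-negative test function $\phi\in\fsm$, I would introduce
\[
F(s):=\int \h_s\phi\cdot|\h_{C,t-s}X|^2\,\d\mm,\qquad s\in[0,t].
\]
Since $F(0)=\int\phi|\h_{C,t}X|^2\,\d\mm$ and, by self-adjointness of $\h_t$ and the identity $\h_{C,0}=\mathrm{Id}$, $F(t)=\int\phi\,\h_t(|X|^2)\,\d\mm$, the claim reduces to showing that $F$ is non-decreasing, followed by a standard density argument in $\phi$ (taking e.g.\ $\phi$ ranging over the non-negative elements of $\fsm$ is enough to recover $\mm$-a.e.\ inequalities).

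The analytic heart of the proof is a Bochner-type inequality for vector fields. For $X=\sum_i g_i\nabla f_i\in\vsm$ I would prove that $|X|^2\in D(\bd)$ and
\begin{equation}\label{eq:bochvec}
\bd\frac{|X|^2}{2}\geq|\nabla X|_\HS^2\,\mm+\la X,\Delta_CX\ra\,\mm
\end{equation}
as an inequality of Radon measures. The point is that no explicit $K$ appears on the right-hand side, because the Ricci contribution is already absorbed in the connection Laplacian $\Delta_C$; in the smooth case \eqref{eq:bochvec} is the usual Weitzenb\"ock identity after cancelling the Ricci term against the one coming from $\Delta_\Ho$ versus $\Delta_C$. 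I would obtain \eqref{eq:bochvec} by specialising Lemma \ref{le:lemmachiave} to the functions $(f_i),(g_i)$ defining $X$: the non-negativity of $\mu^s$ proved in \eqref{eq:partesing1} takes care of the singular part of $\bd|X|^2$, while on the absolutely continuous part one exploits \eqref{eq:parteac1} with a judicious choice of the auxiliary functions $(h_j)$ that reconstructs $|\nabla X|_\HS^2$, in the same spirit as the passage from \eqref{eq:parteac1} to \eqref{eq:hess34} in Theorem \ref{thm:key2}. The identification $\nabla X=\sum_i\nabla g_i\otimes\nabla f_i+g_i(\H{f_i})^\sharp$ provided by Theorem \ref{thm:basew12c}(v) is exactly what is needed to match the cross terms appearing in $\mu\bigl((f_i),(g_i)\bigr)$ with those of $|\nabla X|_\HS^2+\la X,\Delta_CX\ra$.

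Granted \eqref{eq:bochvec}, the monotonicity of $F$ follows by direct differentiation. Setting $X_r:=\h_{C,r}X$, the $L^2$ absolute continuity of $r\mapsto X_r$ with derivative $\Delta_C X_r$, combined with the fact that $|X_{t-s}|^2$ lies in $D(\bd)$ and admits the integration by parts $\int\Delta(\h_s\phi)\,|X_{t-s}|^2\,\d\mm=\int\h_s\phi\,\d\bd|X_{t-s}|^2$, yields
\[
F'(s)=\int\h_s\phi\,\d\bd|X_{t-s}|^2-2\int\h_s\phi\,\la X_{t-s},\Delta_CX_{t-s}\ra\,\d\mm\geq 2\int\h_s\phi\,|\nabla X_{t-s}|_\HS^2\,\d\mm\geq 0,
\]
so $F(0)\leq F(t)$. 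This gives the thesis for $X\in\vsm$, and the general case follows from the density of $\vsm$ in $L^2(T\X)$ (Theorem \ref{thm:basew12c}(v)) together with the $L^2$-continuity of both $X\mapsto\h_{C,t}X$ and $f\mapsto\h_tf$.

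The main obstacle is the rigorous derivation of the vector Bochner inequality \eqref{eq:bochvec} from Lemma \ref{le:lemmachiave}: one has to unfold the cumbersome definition of $\mu\bigl((f_i),(g_i)\bigr)$ and verify that, after grouping terms, the absolutely continuous part reproduces exactly $|\nabla X|_\HS^2+\la X,\Delta_CX\ra$ (plus a non-negative remainder), carefully handling the cross-terms involving $H[f_i](f_{i'},g_{i'})$ via the identification \eqref{eq:hess12} of $H[f_i]$ with the Hessian. A secondary, more routine, issue is justifying the differentiation under the integral sign and the integration by parts for $s\mapsto F(s)$; the cleanest way to address it is to first establish the claim for vector fields of the form $\h_{C,\eps}\tilde X$ with $\tilde X\in\vsm$ and $\eps>0$, for which the a priori estimates on $\Delta_C\h_{C,\eps}\tilde X$ coming from the gradient flow structure of $\ect$ give enough time- and space-regularity, and then to pass to the limit $\eps\downarrow 0$.
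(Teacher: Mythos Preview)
Your interpolation function $F$ and the endpoint identification are exactly what the paper uses (modulo a harmless time reversal). The real difficulty is not where you locate it, though, and the detour through the pointwise inequality \eqref{eq:bochvec} creates problems rather than solving them.

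First, \eqref{eq:bochvec} as written does not make sense: there is no reason for $X\in\vsm$ to lie in $D(\Delta_C)$. Membership in $D(\Delta_C)$ for $X=g\nabla f$ would require, after integrating $\int\nabla Y:\nabla X\,\d\mm$ by parts, control on an object of the type $\nabla^2(\nabla f)$, i.e.\ third-order information on test functions that the paper never establishes. Your plan to extract \eqref{eq:bochvec} from Lemma \ref{le:lemmachiave} cannot succeed for this reason: that lemma encodes information related to $\Delta_\Ho$ (which is defined only later), not to $\Delta_C$, and the passage between the two is precisely the Ricci term you are trying to cancel.

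Second, and more seriously, even a correct version of \eqref{eq:bochvec} for $X\in\vsm$ would not apply where you need it. In the derivative $F'(s)$ you must handle $X_{t-s}=\h_{C,t-s}X$, which is in $H^{1,2}_C(T\X)\cap D(\Delta_C)$ but certainly not in $\vsm$, and you have no way to know that $|X_{t-s}|^2\in D(\bd)$. Your proposed fix of replacing $X$ by $\h_{C,\eps}\tilde X$ does nothing: the evolved field is still only in $H^{1,2}_C(T\X)$.

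The paper's proof avoids all of this by never writing a pointwise Bochner inequality. After differentiating $F$, it integrates by parts the term $-\int(\Delta f_{t-s})|X_s|^2\,\d\mm$ using only that $|X_s|^2\in H^{1,1}(\X)$ (Proposition \ref{prop:compmetr}(ii), valid for any $X_s\in H^{1,2}_C(T\X)$), and rewrites $\int f_{t-s}\la X_s,\Delta_C X_s\ra\,\d\mm=-\int\nabla(f_{t-s}X_s):\nabla X_s\,\d\mm$ directly from the definition of $\Delta_C$. Then the Leibniz rule \eqref{eq:leibcov} and the metric-compatibility identity $\la\nabla f_{t-s},\nabla|X_s|^2\ra=2\nabla X_s:(\nabla f_{t-s}\otimes X_s)$ give an exact cancellation, leaving $F'(s)=-\int f_{t-s}|\nabla X_s|_\HS^2\,\d\mm\le 0$. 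This uses only first-order calculus rules available on all of $H^{1,2}_C(T\X)$, so no approximation by $\vsm$ is needed at any stage. In effect the ``rough Bochner identity'' you are after appears automatically, in integrated form, without ever being stated.
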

\begin{proof} Taking into account the approximation result \eqref{eq:aplipt}, it is sufficient to prove that for any $t>0$ and non-negative $f\in\fsm$ with $\Delta f\in L^\infty(\mm)$ it holds  $\int f|\h_{C,t}(X)|^2\,\d\mm\leq \int f\h_t(|X|^2)\,\d\mm$.  Thus fix such $t,f$, and consider the map $F:[0,s]\to\R$ given by
\[
F(s):=\int f\h_{t-s}(|\h_{C,s}(X)|^2)\,\d\mm=\int \h_{t-s}(f) |\h_{C,s}(X)|^2\,\d\mm.
\]
Notice that $F$ is well defined because $|\h_{C,s}(X)|\in L^2(\mm)$ and $f\in L^\infty(\mm)$ and that  since $\frac{\d}{\d s}\h_sf=\Delta\h_sf=\h_s\Delta f$ the map $s\mapsto\h_sf\in L^\infty(\mm)$ is Lipschitz. Also, since the map $s\mapsto \h_{C,s}(X)\in L^2(T\X)$ is continuous on $[0,\infty)$ and locally absolutely continuous on $(0,\infty)$, the map  $s\mapsto |\h_{C,s}(X)|^2\in L^1(\mm)$ is continuous on $[0,\infty)$ and locally absolutely continuous on $(0,\infty)$.

Therefore $F:[0,t]\to\R$ is continuous and locally absolutely continuous on $(0,t]$ and in computing its derivative we can pass the derivative inside the integral. Thus writing $f_s$ for $\h_s(f)$ and $X_s$ for $\h_{C,s}(X)$, it is easy to see that we have
\[
F'(s)=\int -(\Delta f_{t-s} )|X_s|^2+2f_{t-s} \la X_s,\Delta_CX_s\ra\,\d\mm,\qquad \text{ a.e. }s\in[0,t],
\]
and therefore integrating by parts and recalling the Leibniz rule \eqref{eq:leibcov} we get
\[
\begin{split}
F'(s)&=\int \la \nabla f_{t-s},\nabla(|X_s|^2)\ra-2\nabla(f_{t-s}X_s):\nabla X_s\,\d\mm\\
&=\int  \la\nabla f_{t-s},\nabla(|X_s|^2)\ra - 2\nabla X_s:(\nabla f_{t-s}\otimes X_s) -f_{t-s}|\nabla X_s |_\HS^2\,\d\mm,\qquad \text{ a.e. }s\in[0,t].
\end{split}
\]
Now our choice of working with vector fields in $H^{1,2}_C(T\X)$ plays a role: we can apply formula  \eqref{eq:compmetr} (and recall the definition \eqref{eq:nablac})  to get
\[
\la\nabla(|X_s|^2),\nabla f_{t-s}\ra =2\nabla X_s:(\nabla f_{t-s}\otimes X_s),\qquad\mm\ae,
\]
and therefore $F'(s)=-\int f_{t-s}|\nabla X_s |_\HS^2\,\d\mm\leq 0$ for a.e.\ $s\in[0,t]$, which concludes the proof.
\end{proof}

\subsection{Exterior derivative}
\subsubsection{The Sobolev space $W^{1,2}_\d(\Lambda^kT^*\X)$}\label{se:extd}
In a smooth manifold, the chart-free definition of differential of exterior differential of a $k$-form reads as
\[
\begin{split}
\d\omega(X_0,\ldots,X_k)=&\sum_i(-1)^i\d\big(\omega(X_0,\ldots,\hat X_i,\ldots,X_k)\big)(X_i)\\
&+\sum_{i<j}(-1)^{i+j}\omega([X_i,X_j],X_0,\ldots,\hat X_i,\ldots,\hat X_j,\ldots,X_k),
\end{split}
\]
for any smooth vector fields $X_1,\ldots,X_k$.

Given that in the previous section we defined the Lie bracket for a large class of vector fields, and in particular for our test vector fields, we can use the above formula to define the exterior differential of forms. This is the aim of the current section.

\vspace{1cm}

Recall the notation introduced in Section \ref{se:test}, and in particular that the $k$-th exterior product of $L^2(T^*\X)$ with itself is denoted by  $L^2(\Lambda^kT^*\X)$, and for the special cases $k=1$ and $k=0$ we retain the notation $L^2(T^*\X)$ and $L^2(\mm)$ respectively. Similarly for $L^2(\Lambda^kT\X)$.

For a $k$-form  $\omega\in L^0(\Lambda^kT^*\X)$ and $X_0,\ldots,X_k\in L^0(T\X)$ we shall use the standard notation $\omega(X_0,\ldots,\hat X_i,\ldots,X_k)\in L^0(\mm)$, or just $\omega(\ldots,\hat X_i,\ldots)$ or $\omega(\hat X_i)$ for brevity, to indicate the application of $\omega$ to the sequence of vectors $X_0,\ldots,X_k$ except $X_i$. Similarly for $\omega(X_0,\ldots,\hat X_i,\ldots,\hat X_j,\ldots,X_k)$.

Notice  that for $X_i\in\vsm$ the fact that $X_i\in L^2\cap L^\infty(T\X)$ ensures that $|X_1\wedge\ldots\wedge X_n|\in L^2(\mm)$ and similarly from the very definition of Lie bracket we have that  $[X_i,X_j]\in L^2(T\X)$ and thus $|[X_i,X_j]\wedge  X_1\wedge\ldots\wedge X_n|\in L^2(\mm)$ as well.

 Then we give the following definition:
\begin{definition}[The space $W^{1,2}_\d(\Lambda^kT^*\X)$]
The space $W^{1,2}_\d(\Lambda^kT^*\X)\subset L^2(\Lambda^kT^*\X)$ is the space of $k$-forms $\omega$ such that there exists a $k+1$ form $\eta\in L^2(\Lambda^{k+1}T^*\X)$ for which the identity
\begin{equation}
\label{eq:defdext}
\begin{split}
\int \!\eta(X_0,\cdots, X_{k})\,\d\mm=& \int\sum_i (-1)^{i+1}\omega(X_0,\cdots,\hat X_i,\cdots, X_{k})\,\div( X_i)\,\d\mm\\
&+\int\sum_{i<j}(-1)^{i+j}\omega([X_i,X_j],X_0,\cdots ,\hat X_i ,\cdots ,\hat X_j ,\cdots, X_{k})\,\d\mm,
\end{split}
\end{equation}
holds for any $X_0,\ldots,X_{k}\in \vsm$. In this case $\eta$ will be called exterior differential of $\omega$ and denoted as $\d\omega$.

We endow $W^{1,2}_\d(\Lambda^kT^*\X)$ with the norm $\|\cdot\|_{W^{1,2}_\d(\Lambda^kT^*\X)}$ given by
\[
\|\omega\|_{W^{1,2}_\d(\Lambda^kT^*\X)}^2:=\|\omega\|^2_{L^2(\Lambda^kT^*\X)}+\|\d\omega\|^2_{L^2(\Lambda^{k+1}T^*\X)}
\]
and define the differential energy functional $\edd^k:L^2(\Lambda^{k}T^*\X)\to[0,\infty]$ as
\[
\edd^k(\omega):=\left\{
\begin{array}{ll}
\displaystyle{\frac12\int|\d\omega|^2\,\d\mm},&\qquad\text{ if }\omega\in W^{1,2}_\d(\Lambda^{k}T^*\X),\\
+\infty,&\qquad\text{ otherwise}.
\end{array}
\right.
\]
We shall often denote $\edd^k$ simply by $\edd$.
\end{definition}
The remarks made above  ensure that the integrands in formula \eqref{eq:defdext} are in $L^1(\mm)$, so that the expression makes sense. Then the density property \eqref{eq:denskvf} grants that  the exterior differential $\d\omega$ is unique. It is then clear that it linearly depends on $\omega$ and that $\|\cdot\|_{W^{1,2}_\d(\Lambda^kT^*\X)}$ is a norm.

It is also worth to underline that $W^{1,2}_\d(\Lambda^0T^*\X)=W^{1,2}(\X)$ and for a function in these spaces the definitions of differential as given above and in Definition \ref{def:diff} coincide. Such statement is indeed equivalent to the claim: for $f\in L^2(\mm)$ we have $f\in W^{1,2}(\X)$ if and only if there is $\omega\in L^2(T^*\X)$ such that $\int f\div(X)\,\d\mm=-\int\omega(X)\,\d\mm$ for any $X\in\vsm$ and in this case $\omega$ is the differential of $f$ as given by Definition \ref{def:diff}. Here the `only if' and the conclusion are obvious, for the `if' one can argue as in the proof of Proposition \ref{prop:w11w12}.

The following theorem collects the basic properties of $W^{1,2}_\d(\Lambda^kT^*\X)$. Its proof follows the same arguments already appeared in proving Theorems \ref{thm:basew22} and \ref{thm:basew12c}.
\begin{theorem}[Basic properties of $W^{1,2}_\d(\Lambda^kT^*\X)$]\label{thm:basew12d} For every $k\in\N$ the following holds.
\begin{itemize}
\item[i)]  $W^{1,2}_\d(\Lambda^kT^*\X)$ is a separable Hilbert space.
\item[ii)] The exterior differential is a closed operator, i.e.\ $\{(\omega,\d\omega):\omega\in W^{1,2}_\d(\Lambda^kT^*\X)\}$ is a closed subspace of $L^{2}(\Lambda^kT^*\X)\times L^{2}(\Lambda^{k+1}T^*\X)$.
\item[iii)] The differential energy $\edd^k:L^2(\Lambda^kT^*\X)\to[0,\infty]$ is lower semicontinuous and for every $\omega\in L^2(\Lambda^kT^*\X)$ the duality formula
\[
\begin{split}
\edd^k(\omega)=\sup\Bigg\{&\sum_l \int\sum_i (-1)^{i+1}\omega(X^l_0,\cdots,\hat X^l_i,\cdots, X^l_{k})\,\div(X^l_i)\,\d\mm\\
&+\sum_l \int\sum_{i<j}(-1)^{i+j}\omega([X^l_i,X^l_j],X^l_0,\cdots ,\hat X^l_i ,\cdots ,\hat X^l_j ,\cdots, X^l_{k})\,\d\mm\\
&\qquad\qquad\qquad\qquad\qquad\qquad\qquad-\frac12\bigg\|\sum_lX^l_0\wedge\cdots\wedge X^l_{k}\bigg\|^2_{L^2(\Lambda^{k+1}T\X)}\Bigg\},
\end{split}
\]
holds, where the  $\sup$ is taken among all  $n\in\N$, $X^l_i\in\vsm$, $i=1,\ldots,k+1$, $l=1,\ldots,n$.
\item[iv)] For  $f_0,\ldots,f_k\in\fsm$ we have $ f_0\d f_1\wedge\cdots\d f_k\in W^{1,2}_\d(\Lambda^kT^*X)$ and
\begin{equation}
\label{eq:df1fn}
\d( f_0\d f_1\wedge\cdots \wedge\d f_k)= \d f_0\wedge \d f_1\wedge\cdots \wedge\d f_k,
\end{equation}
and similarly $\d f_1\wedge\cdots\d f_k\in W^{1,2}_\d(\Lambda^kT^*X)$ with 
\begin{equation}
\label{eq:ddf1fn}
\d(\d f_1\wedge\cdots \wedge\d f_k)= 0.
\end{equation}
\item[v)]  We have $\ffsm k\subset W^{1,2}_\d(\Lambda^kT^*\X)$ and in particular $W^{1,2}_\d(\Lambda^kT^*\X)$ is dense in $L^{2}(\Lambda^kT^*\X)$.
\item[vi)] For given $\omega\in L^2(\Lambda^kT^*\X)$ we have $\omega\in W^{1,2}_\d(\Lambda^kT^*\X)$ with $\eta=\d \omega$ if and only if for every $X_0,\ldots,X_k\in\vsm$ and $f\in L^\infty\cap W^{1,2}(\X)$  it holds
\begin{equation}
\label{eq:altraext}
\begin{split}
\int f\eta(X_0,\cdots, X_{k})\,\d\mm=& \int\sum_i (-1)^{i+1}\omega(\hat X_i)\,\div(f X_i)\,\d\mm\\
&+\int\sum_{i<j}(-1)^{i+j}f\omega([X_i,X_j],\hat X_i ,\hat X_j)\,\d\mm.
\end{split}
\end{equation}
\end{itemize}
\end{theorem}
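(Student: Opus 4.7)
The strategy is to parallel the proofs of Theorems \ref{thm:basew22} and \ref{thm:basew12c}, with part (iv) providing the key computational ingredient from which (v) is immediate and via which the abstract structural statements (i)--(iii) and the flexibility statement (vi) reduce to Hilbert-module arguments. I would start with (ii): fix $X_0,\ldots,X_k\in\vsm$; since $\vsm\subset L^2\cap L^\infty(T\X)$ and the Lie bracket of two vector fields in $\vsm$ lies in $L^2(T\X)$, both sides of \eqref{eq:defdext} are continuous under weak convergence of $\omega\in L^2(\Lambda^kT^*\X)$ and $\eta\in L^2(\Lambda^{k+1}T^*\X)$, so the graph of $\d$ is closed. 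Part (i) is an immediate consequence: $W^{1,2}_\d(\Lambda^kT^*\X)$ embeds isometrically into the separable Hilbert space $L^2(\Lambda^kT^*\X)\times L^2(\Lambda^{k+1}T^*\X)$ via $\omega\mapsto(\omega,\d\omega)$, using \eqref{eq:sepext}. The lower semicontinuity of $\edd^k$ in (iii) follows likewise from weak compactness of bounded sets in the target Hilbert space.

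For the duality formula in (iii) I would follow the template of \eqref{eq:claimed}: inequality $\geq$ is immediate from the definition of $\d\omega$ combined with the Hilbert identity $\tfrac12\|\eta\|^2=\sup_\alpha\int\eta(\alpha)\,\d\mm-\tfrac12\|\alpha\|^2$. For $\leq$, assuming the supremum $S$ is finite, I would check that the map
\[
\sum_l X^l_0\wedge\cdots\wedge X^l_k\ \longmapsto\ \sum_l\int\Big(\sum_i(-1)^{i+1}\omega(\hat X^l_i)\div(X^l_i)+\sum_{i<j}(-1)^{i+j}\omega([X^l_i,X^l_j],\hat X^l_i,\hat X^l_j)\Big)\d\mm
\]
is well defined on the linear span of wedges of test vector fields (any ambiguity would force $S=+\infty$), is linear, and extends continuously with norm $\leq\sqrt{2S}$; then \eqref{eq:denskvf} combined with the Riesz theorem \ref{thm:rhil} for the Hilbert module $L^2(\Lambda^{k+1}T\X)$ represents it as pairing with a $(k+1)$-form that qualifies as $\d\omega$.

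Part (iv) is the main computational core, and (v) follows from (iv) plus \eqref{eq:densetforms}. To prove \eqref{eq:df1fn} for $\omega=f_0\d f_1\wedge\cdots\wedge\d f_k$ I would plug arbitrary $Y_0,\ldots,Y_k\in\vsm$ into the right-hand side of \eqref{eq:defdext}, expand using multilinearity and the identity $\omega(Y_{j_0},\ldots,Y_{j_{k-1}})=f_0\det(\d f_s(Y_{j_r}))_{r,s}$, and integrate by parts: the Leibniz rule for $\div$ turns the $\omega(\hat Y_i)\div(Y_i)$ terms into gradient pairings, while the bracket terms are rewritten using compatibility \eqref{eq:compmetr} and the symmetry of the Hessian (Propositions \ref{prop:compmetr} and \ref{prop:lochess}). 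A careful accounting of signs then identifies the result with $\int(\d f_0\wedge\d f_1\wedge\cdots\wedge\d f_k)(Y_0,\ldots,Y_k)\,\d\mm$, and \eqref{eq:ddf1fn} is the same computation with $f_0\equiv 1$.

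For (vi), the ``only if'' direction is the exact analog of \eqref{eq:sceltah}: given $f\in L^\infty\cap W^{1,2}(\X)$, apply \eqref{eq:defdext} to the tuple $(\h_tf)X_0,X_1,\ldots,X_k$ (still in $\vsm$ since $\h_tf\in\fsm$ and $\fsm$ is an algebra), use $[\h_tfX_0,X_j]=\h_tf[X_0,X_j]-X_j(\h_tf)X_0$ and $\div(\h_tfX_0)=\h_tf\div(X_0)+\la\nabla\h_tf,X_0\ra$ to rearrange the right-hand side into the form \eqref{eq:altraext} with $\h_tf$ in place of $f$, and let $t\downarrow 0$ using $\h_tf\to f$ in $W^{1,2}(\X)$ and the uniform bound $\|\h_tf\|_{L^\infty(\mm)}\leq\|f\|_{L^\infty(\mm)}$. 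The converse direction is obtained by specializing \eqref{eq:altraext} to a sequence of Lipschitz cutoffs $f_n\in\fsm\cap L^\infty$ increasing to $1$ on bounded sets, and passing to the limit by dominated convergence; this is legitimate because $\omega(\hat X_i)\div(X_i)$ and $\omega([X_i,X_j],\hat X_i,\hat X_j)$ are in $L^1(\mm)$ thanks to $\vsm\subset L^2\cap L^\infty(T\X)$. The principal obstacle throughout is (iv): the combinatorial bookkeeping of signs needed to match the intrinsic formula with the wedge $\d f_0\wedge\cdots\wedge\d f_k$ is delicate, but every single integration by parts is justified by calculus rules already established in Sections \ref{se:calchess} and \ref{se:lc}.
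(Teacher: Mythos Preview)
Your proposal is correct and follows essentially the same approach as the paper, including the order of the arguments and the approximation devices used in (vi). Two minor points: the symmetry of the Hessian you need in (iv) is in Theorem~\ref{thm:basew22}(iii), not in Proposition~\ref{prop:lochess}; and the paper organizes the computation in (iv) around the clean intermediate identity
\[
\sum_i(-1)^i\d\big(\omega(\hat X_i)\big)(X_i)=-\sum_{i<j}(-1)^{i+j}\omega([X_i,X_j],\hat X_i,\hat X_j),\qquad\mm\ae,
\]
for $\omega=\d f_1\wedge\cdots\wedge\d f_k$, which packages the ``careful accounting of signs'' you allude to and makes the derivation of \eqref{eq:df1fn} a two-line integration by parts.
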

\begin{proof} Given that the two sides of \eqref{eq:defdext} are continuous w.r.t.\ weak convergence of $\omega$ and $\eta$ in $L^{2}(\Lambda^kT^*\X)$ and $L^{2}(\Lambda^{k+1}T^*\X)$ respectively, point $(ii)$ follows. The lower semicontinuity of $\edd^k$ comes from this continuity property and the relative weak compactness of bounded subsets of the Hilbert space $L^{2}(\Lambda^{k+1}T^*\X)$. From $(ii)$ it also follows that $W^{1,2}(\Lambda^{k}T^*\X)$ is complete, hence Hilbert given that the norm clearly satisfies the parallelogram rule. For separability, just notice that the map $\omega\to (\omega,\d\omega)$ is an isometry of $W^{1,2}(\Lambda^{k}T^*\X)$ with its image in $L^{2}(\Lambda^kT^*\X)\times L^{2}(\Lambda^{k+1}T^*\X)$, this latter space being endowed with the separable norm $\|(\omega,\eta)\|^2:=\|\omega\|^2_{L^{2}(\Lambda^{k}T^*\X)}+\|\eta\|^2_{L^{2}(\Lambda^{k+1}T^*\X)}$ (recall \eqref{eq:sepext}). Thus $(i)$ is addressed.

For $(iv)$ notice that   $ f_0\d f_1\wedge\cdots\d f_k\in L^{2}(\Lambda^kT^*\X)$  and that the right hand side of \eqref{eq:df1fn} is in $L^2(\Lambda^{k+1}T^*\X)$, so the statement makes sense. 

From  Proposition \ref{prop:compmetr} we  get that   for $f\in\fsm$ and $X\in\vsm $ it holds  $\d f(X)\in L^\infty\cap W^{1,2}(\X)$. Hence for   $X_1,\ldots,X_{k}\in \vsm $ we have $(\d f_1\wedge\ldots\wedge \d f_{k})(X_1,\ldots,X_{k})=\det\big(\d f_i(X_j)\big)\in W^{1,2}(\X)$. Now let for brevity $\omega:=\d f_1\wedge\cdots\wedge \d f_k$ and notice that for $X_0,\ldots,X_{k}\in\vsm$ we have
\begin{equation}
\label{eq:adria2}
\begin{split}
\sum_i(-1)^i\d\big(\omega&(\ldots,\hat X_i,\ldots)\big)(X_i)=-\sum_{i<j}(-1)^{i+j}\omega([X_i,X_j],\ldots,\hat X_i,\ldots, \hat X_j,\ldots),
\end{split}
\end{equation}
$\mm$-a.e., as  can be seen by direct computations recalling the definition of Lie bracket and the symmetry of the Hessian.

Then we have
\[
\begin{split}
\int&\sum_i(-1)^{i+1}f_0\omega(\hat X_i)\,\div (X_i)+\sum_{i<j}(-1)^{i+j}f_0\omega([X_i,X_j],\hat X_i,\hat X_j)\,\d\mm\\
&\stackrel{\phantom{\eqref{eq:adria2}}}=\int \sum_i(-1)^{i}\d f_0(X_i)\,\omega(\hat X_i)+f_0\,\d\big(\omega(\hat X_i)\big)(X_i)+\sum_{i<j}(-1)^{i+j}f_0\omega([X_i,X_j],\hat X_i,\hat X_j)\,\d\mm\\
&\stackrel{\eqref{eq:adria2}}=\int \sum_i(-1)^{i}\d f_0(X_i)\,\omega(\hat X_i)\,\d\mm=\int (\d f_0\wedge\omega)(X_0,\ldots,X_{k})\,\d\mm,
\end{split}
\]
which is \eqref{eq:df1fn}. Then \eqref{eq:ddf1fn} follows along the same lines picking $f_0$ identically 1 which, although not in $\fsm$, is still admissible in the computations done above.

Point $(v)$ is then a direct consequence of $(iv)$ and the very definition of $\ffsm k$, the density in $L^2(\Lambda^{k}T^*\X)$ being already remarked in  \eqref{eq:densetforms}.

The `if' part in point $(vi)$ follows by considering a sequence $(f_n)\subset W^{1,2}(\X)$ of uniformly Lipschitz and uniformly bounded functions converging to $1$ and applying the dominated convergence theorem. For the `only if' notice that up to approximation with the heat flow we can assume that $f\in\fsm$. In this case, the vector field $fX_0$ belongs to $\vsm$ and taking into account the identity
\[
[fX_0,X_j]=f[X_0,X_j]-X_0\,\d f(X_j),
\]
which follows from the definition of Lie bracket and the Leibniz rule \eqref{eq:leibcov},  we have
\[
\begin{split}
\int f\d\omega(X_0,\ldots,X_k)\,\d\mm&=\int \d\omega(fX_0,X_1\ldots,X_k)\,\d\mm\\
&=\int \sum_{i}(-1)^{i+1}f\omega(\hat X_i)\div(X_i)-\omega(\hat X_0)\d f(X_0)\,\d\mm\\
&+\int\sum_{i<j}(-1)^{i+j}f\omega([X_i,X_j],\hat X_i ,\hat X_j)+\sum_{j>0}(-1)^{j+1}\omega(\hat X_j)\d f(X_j) \,\d\mm,
\end{split}
\]
and the conclusion follows recalling that $\div(fX_i)=\d f(X_i)+f\div(X_i)$.

It remains to prove the duality formula for $\edd^k$: this follows the very same arguments used in Theorem \ref{thm:basew22} and hinted in Theorem \ref{thm:basew12c}, we omit the details.
\end{proof}
\begin{remark}[The role of the lower Ricci bound]{\rm
On an arbitrary infinitesimally Hilbertian space one can certainly consider the space $L^2(\Lambda^kT^*\X)$ as we defined it and declare that for Sobolev functions $f_0,\ldots,f_k$ with appropriate integrability the exterior differential of the form $f_0\d f_1\wedge\ldots\wedge \d f_k$ is $\d f_0\wedge \d f_1\wedge\ldots\wedge \d f_k$. 

Yet, without further assumptions it is not so clear if such differential is closable. In the current approach, this is granted by the assumption of lower bound on the Ricci which in turn ensures the existence of a large class of vector fields for which the Lie bracket is well defined. These permit to give the definition of exterior differentiation via integration by parts the way we did, thus directly leading to the closure of the differential.
}\fr\end{remark}

With the same computations one would do in the smooth setting, under suitable regularity assumptions it is possible to see that  the exterior differential satisfies the expected Leibniz rule:
\begin{proposition}[Leibniz rule]
Let $\omega\in W^{1,2}_\d(\Lambda^{k}T^*\X)$ and $\omega'\in\ffsm{k'}$. Then $\omega\wedge\omega'\in W^{1,2}_\d(\Lambda^{k+k'}T^*\X)$ with
\begin{equation}
\label{eq:leibext}
\d(\omega\wedge\omega')=\d\omega\wedge\omega'+(-1)^k\omega\wedge\d\omega'.
\end{equation}
\end{proposition}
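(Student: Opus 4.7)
First I would reduce to the case in which the factor $\omega'$ is a single test form of the form $\omega'=h_0\,\d h_1\wedge\cdots\wedge\d h_{k'}$ with $h_0,\dots,h_{k'}\in\fsm$: by the linearity of the exterior differential and of the wedge product (and their bilinearity in both factors) this reduction is immediate from the definition of $\ffsm{k'}$. In this canonical case, the bound $|\omega'|\leq|h_0|\prod_i|\d h_i|\in L^\infty(\mm)$ and the analogous bound for $|\d\omega'|=|\d h_0\wedge\cdots\wedge\d h_{k'}|$ (point $(iv)$ of Theorem \ref{thm:basew12d}) ensure $\omega\wedge\omega'\in L^2(\Lambda^{k+k'}T^*\X)$ and the candidate differential
\[
\eta:=\d\omega\wedge\omega'+(-1)^k\omega\wedge\d\omega'\in L^2(\Lambda^{k+k'+1}T^*\X),
\]
so all integrability needed in \eqref{eq:defdext} is automatic.

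Next I would verify the defining identity \eqref{eq:defdext} with $\omega\wedge\omega'$ in place of $\omega$ and $\eta$ in place of $\d\omega$, tested against an arbitrary $(k+k'+1)$-tuple $X_0,\ldots,X_{k+k'}\in\vsm$. The tool for this is the enhanced characterization \eqref{eq:altraext} from point $(vi)$ of Theorem \ref{thm:basew12d}, which allows testing $\d\omega$ with functions $f\in L^\infty\cap W^{1,2}(\X)$. Concretely, applying the shuffle formula for the wedge product gives
\[
(\omega\wedge\omega')(X_0,\dots,X_{k+k'})=\sum_{\sigma}\sgn(\sigma)\,\omega(X_{\sigma(0)},\dots,X_{\sigma(k)})\,\omega'(X_{\sigma(k+1)},\dots,X_{\sigma(k+k')}),
\]
the sum ranging over $(k{+}1,k')$-shuffles, and an analogous expansion for $\eta$. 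Each `$\omega'$-factor' $\omega'(X_{\sigma(k+1)},\dots)$ is a product of $h_0$ and a determinant of pointwise pairings $\d h_i(X_{\sigma(k+j)})$; since $h_i\in\fsm$ and $X_j\in\vsm$, Proposition \ref{prop:compmetr}$(i)$ ensures each such pairing lies in $L^\infty\cap W^{1,2}(\X)$, and the algebra property \eqref{eq:testalg} together with the product rule for $H^{1,1}$ (see \eqref{eq:dh11}) shows these determinantal factors belong to $L^\infty\cap W^{1,2}(\X)$ as well, with a clean Leibniz expansion of their gradients.

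Then I would plug these expansions into \eqref{eq:altraext}, using $\omega'$-factors as the admissible scalar weights $f$, and expand the divergence $\div(fX_i)=f\div(X_i)+\d f(X_i)$. The resulting identity for $\omega$ produces three families of terms: (a) terms with $f\,\div(X_i)$ contributing to the divergence part of \eqref{eq:defdext} for $\omega\wedge\omega'$; (b) terms with $f\cdot\omega([X_i,X_j],\cdots)$ contributing to the bracket part; and (c) terms involving $\d f(X_i)$ that must be recombined to produce the $\omega\wedge\d\omega'$ piece of $\eta$, together with the `missing' divergence/bracket contributions from the $X_{\sigma(k+1)},\dots,X_{\sigma(k+k')}$ slots. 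This recombination is the classical smooth computation that yields the graded Leibniz rule, and its validity here hinges only on the symmetry of the Hessian and the torsion-free identity \eqref{eq:torsionfree}, both available since the involved functions and vector fields lie in $H^{2,2}(\X)$ and $H^{1,2}_C(T\X)$ respectively.

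The main obstacle is purely combinatorial bookkeeping: keeping track of all shuffle signs, the interplay between the $\d h_i(X_j)$-gradient expansions and the Lie bracket terms $[X_i,X_j]$, and verifying that the sign $(-1)^k$ in front of $\omega\wedge\d\omega'$ comes out correctly. This is essentially the same calculation that proves the Leibniz rule for the smooth de Rham differential via the Koszul formula, and the only conceptual subtlety is to perform it exclusively via integration-by-parts rather than pointwise identities, invoking \eqref{eq:altraext} rather than a pointwise Koszul expansion. Once the combinatorics is matched, the identity \eqref{eq:defdext} holds for $\omega\wedge\omega'$ with differential $\eta$, yielding both $\omega\wedge\omega'\in W^{1,2}_\d(\Lambda^{k+k'}T^*\X)$ and formula \eqref{eq:leibext}.
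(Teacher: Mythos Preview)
Your approach is correct in principle, but the paper takes a cleaner route that avoids the full shuffle combinatorics you describe. Instead of attacking a general $\omega'=h_0\,\d h_1\wedge\cdots\wedge\d h_{k'}$ directly, the paper first establishes two base cases: $k'=0$ (i.e.\ $\omega'=f\in\fsm$), where the claim $\d(f\omega)=\d f\wedge\omega+f\d\omega$ follows by a one-line comparison of \eqref{eq:defdext} with \eqref{eq:altraext}; and the case $\omega'=\d f$ for $f\in\fsm$, where $\d(\omega\wedge\d f)=\d\omega\wedge\d f$ follows from \eqref{eq:altraext} with the weights $\d f(X_i)\in L^\infty\cap W^{1,2}(\X)$ and a short explicit sign check. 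The general case is then obtained by induction on $k'$: writing $\omega'=f_0\,\d f_1\wedge\omega''$ with $\omega''=\d f_2\wedge\cdots\wedge\d f_{k'}$, one applies the inductive hypothesis and the two base cases in succession, using associativity of $\wedge$ and the fact that $\d\omega''=0$.

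What this buys: the paper never has to manage the full $(k{+}1,k')$-shuffle expansion, the mixed Lie bracket terms between $\omega$-slots and $\omega'$-slots, or the recombination of $\d f(X_i)$ contributions into the $\omega\wedge\d\omega'$ piece that you identify as the main obstacle. Your direct approach would certainly work---the combinatorics is the same as in the smooth Koszul computation and all the ingredients you list are available---but the inductive reduction turns a substantial bookkeeping exercise into two short explicit verifications plus a three-line induction step.
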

\begin{proof} In the case $k'=0$ we pick  $\omega'=f\in\fsm=\ffsm 0$ and notice that the thesis reads as
\[
f\omega\in W^{1,2}_\d(\Lambda^{k}T^*\X)\qquad\text{and}\qquad \d(f\omega)=\d f\wedge\omega+f\d\omega.
\]
The definition of wedge product yields the identity
\[
(\d f\wedge \omega)(X_0.\ldots,X_k)= \sum_i(-1)^i\d f(X_i)\,\omega(\hat X_i),
\]
for any  $X_0,\ldots ,X_{k}\in\vsm$, and thus the conclusion follows by direct comparison of the formulas \eqref{eq:defdext} and \eqref{eq:altraext}.

We pass to the case $k'=1$ and $\omega'=\d f$ for $f\in\fsm$ where, taking into account that \eqref{eq:ddf1fn} gives $\d(\d f)=0$, the thesis reduces to
\[
\omega\wedge\d f\in   W^{1,2}_\d(\Lambda^{k+1}T^*\X) \qquad\text{and}\qquad \d(\omega\wedge\d f)=\d\omega\wedge\d f
\]
For $X_0,\ldots,X_{k+1}\in\vsm$ arbitrary we have $(\d\omega\wedge\d f)(X_0,\ldots,X_{k+1})=\sum_{i}(-1)^{i+k+1}\d\omega(\hat X_i)\d f(X_i)$, thus noticing that $\d f(X_i)\in L^\infty\cap W^{1,2}(\X)$, from \eqref{eq:altraext} we obtain
\[
\begin{split}
\int (\d\omega\wedge\d f)(X_0,\ldots,X_{k+1})\,\d\mm&=\int\sum_{i\neq j}a(i,j)\omega(\hat X_i,\hat X_j)\div\big(\d f(X_i)X_j\big)\\
&+\int \sum_{\natop{j\neq i\neq j'}{j<j'}}b(i,j,j')\omega([X_j,X_{j'}],\hat X_i,\hat X_j,\hat X_{j'})\d f(X_i)\,\d\mm,
\end{split}
\]
where 
\[
a(i,j):=\left\{\begin{array}{ll}
(-1)^{i+j+k+1}&\quad\text{ if }j<i\\
(-1)^{i+j+k}&\quad\text{ if }j>i
\end{array}\right.
\qquad
b(i,j,j'):=\left\{\begin{array}{ll}
(-1)^{i+j+j'+k+1}&\quad\text{ if $i\notin(j,j')$}\\
(-1)^{i+j+j'+k}&\quad\text{ if $i\in(j,j')$}
\end{array}\right.
\]
The claim then follows directly by the definition \eqref{eq:defdext} taking into account the identity
\[
\div\big(\d f(X_i)X_j\big)-\div\big(\d f(X_j)X_i\big)=\d f(X_i)\div(X_j)-\d f(X_j)\div(X_i)-\d f([X_i,X_j]).
\]
Now proceed by induction on $k'$, assume $\omega'=f_0\d f_1\wedge\ldots\wedge\d f_{k'}$ and put $\omega'':=\d f_2\wedge\ldots\wedge \d f_{k'}$. We have
\[
\begin{split}
\d(\omega\wedge\omega')&=\d\big((\omega\wedge f_0\d f_1)\wedge\omega'' \big)\\
\text{(by induction and \eqref{eq:ddf1fn})}\qquad\qquad&=\d \big((f_0\omega)\wedge \d f_1)\big)\wedge\omega''\\
\text{(by the cases analyzed)} \qquad \qquad&= (f_0\d\omega+\d f_0\wedge\omega )\wedge\d f_1\wedge\omega''\\
\text{(recalling \eqref{eq:df1fn})} \qquad \qquad&=\d \omega\wedge\omega'+(-1)^k\omega\wedge\d\omega',
\end{split}
\]
as desired. The case of general $\omega'\in\ffsm {k'}$ follows by linearity.
\end{proof}

\begin{definition}[The space $H^{1,2}_\d(\Lambda^kT^*\X)$]
We define $H^{1,2}_\d(\Lambda^kT^*\X)\subset W^{1,2}_\d(\Lambda^kT^*\X)$ as the $W^{1,2}_\d$-closure of $\ffsm k$.
\end{definition}
Notice that by point $(v)$ of Theorem \ref{thm:basew12d} we know that $H^{1,2}_\d(\Lambda^kT^*\X)$ is dense in  $L^{2}(\Lambda^kT^*\X)$. A key property of forms in $H^{1,2}_\d(\Lambda^kT^*\X)$ is:
\begin{proposition}[$\d^2=0$ for forms in  $H^{1,2}_\d(\Lambda^kT^*\X)$]\label{prop:dd}
Let  $\omega\in H^{1,2}_\d(\Lambda^kT^*\X)$. Then
\[
\d\omega\in H^{1,2}_\d(\Lambda^{k+1}T^*\X)\qquad\text{and}\qquad \d(\d\omega)=0.
\]
\end{proposition}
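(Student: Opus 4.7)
The strategy is to first verify the statement for test forms, using \eqref{eq:df1fn} and \eqref{eq:ddf1fn} to produce the exterior differential inside $W^{1,2}_\d(\Lambda^{k+1}T^*\X)$ with vanishing differential, then to upgrade from $W^{1,2}_\d$ to $H^{1,2}_\d$ via a cutoff approximation by test $(k+1)$-forms, and finally to extend from test forms to arbitrary elements of $H^{1,2}_\d(\Lambda^k T^*\X)$ by density and the closedness of $\d$ (Theorem \ref{thm:basew12d}(ii)).

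\textbf{Step 1: test forms.} By linearity it suffices to treat a single summand $\omega=f_0\,\d f_1\wedge\cdots\wedge\d f_k$ with $f_i\in\fsm$. Formula \eqref{eq:df1fn} gives $\d\omega=\d f_0\wedge\d f_1\wedge\cdots\wedge\d f_k$, and since the $|\d f_i|$'s lie in $L^\infty(\mm)$ and $|\d f_0|\in L^2(\mm)$ this belongs to $L^2(\Lambda^{k+1}T^*\X)$. Applying \eqref{eq:ddf1fn} (to the $(k+1)$-form $\d f_0\wedge\cdots\wedge\d f_k$) yields $\d\omega\in W^{1,2}_\d(\Lambda^{k+1}T^*\X)$ with $\d(\d\omega)=0$.

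\textbf{Step 2: upgrading to $H^{1,2}_\d$ for test forms.} Fix a point $x_0\in\supp(\mm)$ and, using the $\fsm$-cutoff construction of \cite{AmbrosioMondinoSavare13-2} recalled in the remark after Proposition \ref{prop:lochess}, pick $\eta_n\in\fsm$ with $\eta_n\equiv 1$ on $B_n(x_0)$, $\eta_n\equiv 0$ outside $B_{n+1}(x_0)$, $0\leq\eta_n\leq 1$, and $|\d\eta_n|\leq C$ uniformly in $n$ (the uniform bound follows from the Bakry--\'Emery estimate \eqref{eq:BE1} applied to heat-flow regularizations of Lipschitz cutoffs between sets at distance $1$). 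Then
\[
\sigma_n:=\eta_n\,\d f_0\wedge\d f_1\wedge\cdots\wedge\d f_k\quad\in\ffsm{k+1},
\]
and locality of the weak differential gives $\d\eta_n=0$ $\mm$-a.e.\ on $B_n(x_0)$, so by \eqref{eq:df1fn}
\[
\d\sigma_n=\d\eta_n\wedge\d f_0\wedge\cdots\wedge\d f_k,
\qquad\text{concentrated on }\X\setminus B_n(x_0).
\]
Dominated convergence (using $|\d f_0\wedge\cdots\wedge\d f_k|\in L^2(\mm)$ and $|\eta_n-1|\leq 1$, $\eta_n\to 1$ $\mm$-a.e.) gives $\sigma_n\to\d f_0\wedge\cdots\wedge\d f_k$ in $L^2(\Lambda^{k+1}T^*\X)$, while
\[
\|\d\sigma_n\|_{L^2(\Lambda^{k+2}T^*\X)}^2\leq C^2\int_{\X\setminus B_n(x_0)}|\d f_0\wedge\cdots\wedge\d f_k|^2\,\d\mm\xrightarrow[n\to\infty]{}0.
\]
Hence $\sigma_n\to\d f_0\wedge\cdots\wedge\d f_k$ in $W^{1,2}_\d$-norm, placing $\d\omega=\d f_0\wedge\cdots\wedge\d f_k$ in $H^{1,2}_\d(\Lambda^{k+1}T^*\X)$.

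\textbf{Step 3: density.} Given $\omega\in H^{1,2}_\d(\Lambda^k T^*\X)$, take $(\omega_n)\subset\ffsm{k}$ with $\omega_n\to\omega$ in $W^{1,2}_\d$. By Steps 1 and 2, each $\d\omega_n\in H^{1,2}_\d(\Lambda^{k+1}T^*\X)$ with $\d(\d\omega_n)=0$, and $\d\omega_n\to\d\omega$ in $L^2$. Closedness of $\d$ (Theorem \ref{thm:basew12d}(ii)) then yields $\d\omega\in W^{1,2}_\d(\Lambda^{k+1}T^*\X)$ with $\d(\d\omega)=0$, hence $\d\omega_n\to\d\omega$ in $W^{1,2}_\d$-norm, and since $H^{1,2}_\d(\Lambda^{k+1}T^*\X)$ is closed in $W^{1,2}_\d(\Lambda^{k+1}T^*\X)$ we obtain $\d\omega\in H^{1,2}_\d(\Lambda^{k+1}T^*\X)$.

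\textbf{Main difficulty.} The only nontrivial point is Step 2, specifically the uniform bound $|\d\eta_n|\leq C$ for the $\fsm$-cutoffs: this is what ensures that the error term $\d\sigma_n$ actually decays in $L^2$ rather than blowing up with the measure of annuli (which can grow like $e^{Cr^2}$ on $\RCD(K,\infty)$ spaces). One must verify that the heat-flow construction of \cite{AmbrosioMondinoSavare13-2}, applied to a Lipschitz cutoff supported in an annulus of unit width, produces a test function whose differential is bounded by a constant independent of $n$; this follows from \eqref{eq:BE1} with a fixed regularization time, since the initial Lipschitz constant is $O(1)$.
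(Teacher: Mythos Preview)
Your argument is correct and follows the paper's strategy---verify the claim on $\ffsm k$ via Theorem~\ref{thm:basew12d}(iv), then extend by density and the closedness of $\d$ in~(ii). Your Step~2, however, supplies a detail the paper's two-line proof omits: point~(iv) only places $\d f_0\wedge\cdots\wedge\d f_k$ in $W^{1,2}_\d(\Lambda^{k+1}T^*\X)$, and when $\mm(\X)=\infty$ this form is not a priori a test $(k{+}1)$-form (whereas if $\mm(\X)<\infty$ then $1\in\fsm$ and there is nothing to do). Your cutoff argument with uniformly Lipschitz $\eta_n\in\fsm$ closes this gap cleanly; the uniform bound $|\d\eta_n|\leq C$ is indeed available, since the heat-flow regularization of a $1$-Lipschitz cutoff has gradient bounded by $e^{-Kt}\leq e^{|K|}$ for $t\leq 1$, and composition with a fixed smooth $\psi:\R\to[0,1]$ only multiplies this by $\|\psi'\|_\infty$, independently of $n$.
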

\begin{proof}
For forms in $\ffsm k$ the claim has been proved by point $(iv)$ in Theorem \ref{thm:basew12d}. The conclusion then comes from the closure of the exterior differential proved in point $(ii)$ of the same theorem.
\end{proof}
We conclude pointing out that the exterior differential is local in the following sense:
\begin{equation}
\label{eq:localext}
\forall\omega\in W^{1,2}_\d(\Lambda^kT^*\X)\qquad\text{we have }\qquad\d\omega=0\qquad\mm\ae\text{ on the interior of }\{\omega=0\},
\end{equation}
where the `interior of $\{\omega=0\}$' is by definition the union of all the open sets $\Omega\subset \X$ such that $\omega=0$ $\mm$-a.e.\ on $\Omega$. The proof of \eqref{eq:localext} follows easily using point $(iv)$ of Theorem \ref{thm:basew12d} and picking in formula \eqref{eq:altraext} arbitrary $f$'s with support contained in open sets where $\omega$ is 0 $\mm$-a.e..

It is worth underlying that we are not able to improve \eqref{eq:localext} for $\omega\in H^{1,2}_\d(\Lambda^kT^*\X)$ into `$\d\omega$ is 0 where $\omega$ is zero' as we did for the Hessian and the covariant derivative in Propositions \ref{prop:lochess}, \ref{prop:loccov}. The technical problem here is that for $\omega\in H^{1,2}_\d(\Lambda^kT^*\X)$ and $X_i\in \vsm$ it is unclear whether the function $\omega(X_1,\ldots,X_k)$ has any kind of Sobolev regularity.

\begin{remark}{\rm The same integrability issues that prevented us from defining the spaces $W^{2,1}(\X)$ and $W^{1,1}(T\X)$, do not allow for a definition of  $W^{1,1}_\d(\Lambda^kT^*\X)$. Here the problem appears on the Lie bracket in the defining formula \eqref{eq:defdext}, for which we have no better integrability other that $L^2$. 

It is for this reason that we are obliged to take one of the two forms in $\ffsm k$ in the Leibniz rule \eqref{eq:leibext}: even the choice of both forms in the a priori smaller space  $H^{1,2}_\d(\Lambda^kT^*\X)$ creates integrability problems.

A similar issue occurs in attempting to prove that $ \d(\d\omega)=0$ for generic $\omega\in  W^{1,2}_\d(\Lambda^kT^*\X)$: although the formal calculation lead to the desired result (evidently, because they are the same computations one would do in the smooth world), it is unclear whether they are really justifiable in this context.
}\fr\end{remark}

\subsubsection{de Rham cohomology  and Hodge theorem}\label{se:dr}
We defined the exterior differential and noticed that, at least for forms in $H^{1,2}_\d(\Lambda^{k-1}T^*\X)$, it squares to 0. Thus we can build a de Rham complex. In this section we study its basic properties.

\vspace{1cm}

We introduce the spaces $\closed k$ and $\exact k$ of closed and exacts $k$-forms as: 
\[
\begin{split}
\closed k&:=\ker\Big( \d: H^{1,2}_\d(\Lambda^kT^*\X)\to H^{1,2}_\d(\Lambda^{k+1}T^*\X)  \Big)=\big\{\omega\in H^{1,2}_\d(\Lambda^kT^*\X)\ :\ \d\omega=0\big\},\\
\exact k&:={\rm Im}\Big(\d: H^{1,2}_\d(\Lambda^{k-1}T^*\X)\to H^{1,2}_\d(\Lambda^{k}T^*\X)  \Big) =\big\{\d\omega\ :\ \omega\in H^{1,2}_\d(\Lambda^{k-1}T^*\X)\big\}.
\end{split}
\]
Notice that the closure of the differential ensures that $\closed k$ is a closed subspace of $L^2(\Lambda^kT^*\X)$. On the other hand, we don't know if $\exact k$ is closed, so we also introduce the space
\[
\exacto k:=\textrm{ $L^2(\Lambda^kT^*\X)$-closure of }\exact k.
\]
Since Proposition \ref{prop:dd} ensures that $\exact k\subset\closed k$, we also have $\exacto k\subset \closed k$. Thus we can give the definition:
\begin{definition}[de Rham cohomology]
For $k\in \N$ the vector space $H^k_{dR}(\X)$ is defined as the quotient
\[
H^k_{dR}(\X):=\frac{\closed k}{\exacto k}
\]
\end{definition}
Endowing $\closed k$ and $\exacto k$ of the $L^2(\Lambda^kT^*\X)$-norm, they become  Hilbert spaces and therefore  $H^k_{dR}(\X)$ comes with a canonical structure of Hilbert space as well. It is worth to underline that in our setup this Hilbert structure is, unlike the smooth case, entirely intrinsic because the existence of the pointwise scalar product in the tangent module  of our base space $(\X,\sfd,\mm)$ is part of our assumptions and is essential for defining both the concept of $k$-forms and of their differentials. 

The {\bf compatibility} of this definition with the standard one for compact smooth manifolds can be established calling into play Hodge theory, which also is based on the study of $L^2$ forms and produces the same cohomology groups as those obtained via smooth forms. In this direction it is worth to keep in mind that in the smooth case the equality $W^{1,2}_\d(\Lambda^kT^*\X)=H^{1,2}_\d(\Lambda^kT^*\X)$ holds, as can be seen using a partition of unit subordinate to a cover via charts, to reduce the problem of approximations of a Sobolev form with smooth ones to a problem in $\R^d$, where smoothing is easy and can be done componentwise.

We turn to {\bf functoriality}. Recall the notions introduced in Section \ref{se:mbd} and in particular that in Proposition \ref{prop:pull1form} we built, for a given map of bounded deformation $\varphi:\X_2\to\X_1$, the pullback of 1-forms $\varphi^*:L^2(T^*\X_1)\to L^2(T^*\X_2)$, this map being characterized by
\begin{subequations}
\begin{align}
\label{eq:fine}
\varphi^*(\d f)&=\d(f\circ\varphi)\\
\label{eq:modmorpb}
\varphi^*(g\omega)&=g\circ\varphi\,\varphi^*\omega\\
\label{eq:pullpunt}
|\varphi^*\omega|&\leq \lf(\varphi)\,|\omega|\circ\varphi
\end{align}
\end{subequations}
$\mm_2$-a.e.\ for every $f\in\s^2(\X_1)$, $\omega\in L^2(T^*\X_1)$ and $g\in L^\infty(\mm_1)$. We shall now extend the pullback operation to general forms in $L^2(\Lambda^kT^*\X_1)$ for $k\in\N$. To this aim start observing that from \eqref{eq:pullpunt} we see that $\varphi^*$ can be uniquely extended to a linear continuous map from $L^0(T^*\X_1)$ to $L^0(T^*\X_2)$ satisfying \eqref{eq:modmorpb} and  \eqref{eq:pullpunt} for arbitrary $g\in L^0(\mm_1)$ and $\omega\in L^0(T^*\X_1)$. 

To extend the pullback operation to $k$-forms, we start noticing that for every $\omega_1,\ldots,\omega_k\in L^2(T^*\X)$ we have
\begin{equation}
\label{eq:normk}
|(\varphi^*\omega_1)\wedge\ldots\wedge(\varphi^*\omega_k)|\leq \lf(\varphi)^k|\omega_1\wedge\ldots\wedge\omega_k|\circ\varphi,\qquad\mm_2\ae
\end{equation}
A way to establish this inequality is to use the structural characterization of Hilbert modules given in Theorem \ref{thm:structhil} and a density argument to reduce to the study of Hilbert spaces. Then noticing that the inequality only involves a finite number of vectors we can also reduce to finite dimensional Hilbert spaces. The question then is: given an $n$-dimensional Hilbert space $H$, an endomorphism $A$ of $H$ and the induced endomorphism $A^{\wedge^k}:\Lambda^kH\to\Lambda^kH$ defined by $A^{\wedge^k}(v_1\wedge\ldots\wedge v_k):=Av_1\wedge\ldots\wedge Av_k$ and extended by linearity, prove that the operator norm of $A^{\wedge^k}$ is bounded by the $k$-th power of the operator norm of $A$. To check this, introduce the symmetric and positively defined operator $B:=A^{\rm t}A$, let $\lambda_1\geq \ldots\geq\lambda_n\geq 0$ be its eigenvalues and recall that the operator norm of $A$ is equal to $\sqrt{\lambda_1}$. Let $v_1,\ldots,v_n\in H$ be orthogonal eigenvectors of $B$ and notice that  $v_{i_1}\wedge\ldots\wedge v_{i_k}\in \Lambda^kH$ is an eigenvector of  $(A^{\wedge^k})^{\rm t}A^{\wedge^k}=B^{\wedge^k}$ for any choice of distinct $i_1,\ldots,i_k$, the corresponding eigenvalue being $\prod_j\lambda_{i_j}$. Since these are $\binom nk$ independent elements of $\Lambda^kH$, we just found all the eigenvectors of $B^{\wedge^k}$ and seen that all of them are $\leq \lambda_1^k$, which gives the claim.

Then from \eqref{eq:normk} we deduce that there exists a unique linear map $\varphi^*:L^0(\Lambda^kT^*\X_1)\to L^0(\Lambda^kT^*\X_2)$ satisfying
\begin{equation}
\label{eq:defaextk}
\begin{split}
\varphi^*(\omega_1\wedge\cdots\wedge\omega_k)&=(\varphi^*\omega_1)\wedge\cdots\wedge(\varphi^*\omega_k),\\
|\varphi^*\omega|&\leq \lf(\varphi)^k|\omega|\circ\varphi,
\end{split}
\end{equation}
$\mm_2$-a.e.\ for every $\omega_1,\ldots,\omega_k\in L^0(T^*\X_1)$ and $\omega\in L^0(\Lambda^kT^*\X_1)$. 

In particular, $\varphi^*$ restricts to a map, still denoted by $\varphi^*$, from $ L^2(\Lambda^kT^*\X_1)$ to $ L^2(\Lambda^kT^*\X_2)$ satisfying
\[
\begin{split}
\varphi^*(f\omega)&=f\circ\varphi\,\varphi^*\omega,\\
|\varphi^*\omega|&\leq \lf(\varphi)^k|\omega|\circ\varphi,
\end{split}
\]
for every $\omega\in  L^2(\Lambda^kT^*\X_1)$ and $f\in L^\infty(\mm_1)$ (i.e., $(\varphi,\varphi^*)$ is a morphism in ${\bf Mod}_{2-L^\infty}$ - see Remark \ref{rem:catpullback}).
\begin{proposition}[Functoriality]
Let $(\X_1,\sfd_1,\mm_1),(\X_2,\sfd_2,\mm_2)$ be two $\RCD(K,\infty)$ spaces, $K\in\R$, and $\varphi:\X_2\to\X_1$ of bounded deformation. Then for every $k\in\N$ and $\omega\in H^{1,2}_\d(\Lambda^kT^*\X_1)$ we have $\varphi^*\omega\in H^{1,2}_\d(\Lambda^kT^*\X_2)$ and
\begin{equation}
\label{eq:functd}
\d(\varphi^*\omega)=\varphi^*\d\omega.
\end{equation}
In particular, $\varphi^*$ passes to the quotient and induces a linear continuous map from $H^k_{dR}(\X_1)$ to $H^k_{dR}(\X_2)$ with norm bounded by $\lf(\varphi)^k$.
\end{proposition}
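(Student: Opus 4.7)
The plan is to first establish the identity $\d(\varphi^*\omega)=\varphi^*\d\omega$ for test forms $\omega\in\ffsm k$ on $\X_1$, and then extend by density and closure of the exterior differential.

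First I would fix $\omega=f_0\d f_1\wedge\cdots\wedge\d f_k$ with $f_i\in\fsm(\X_1)$. By the characterizing property \eqref{eq:defaextk} of $\varphi^*$ on $k$-forms together with \eqref{eq:fine} and \eqref{eq:modmorpb}, one reads
\[
\varphi^*\omega=(f_0\circ\varphi)\,\d(f_1\circ\varphi)\wedge\cdots\wedge\d(f_k\circ\varphi),\qquad \varphi^*\d\omega=\d(f_0\circ\varphi)\wedge\cdots\wedge\d(f_k\circ\varphi).
\]
By \eqref{eq:sobpull} each $f_i\circ\varphi$ belongs to $W^{1,2}(\X_2)$ with $|\d(f_i\circ\varphi)|\le\lf(\varphi)|\d f_i|\circ\varphi$ $\mm_2$-a.e., so in particular they are bounded and lie in $W^{1,2}\cap L^\infty(\X_2)$; however they need not lie in $\fsm(\X_2)$, so I cannot apply \eqref{eq:df1fn} directly. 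The main obstacle is therefore to show that $\varphi^*\omega\in H^{1,2}_\d(\Lambda^kT^*\X_2)$ with the expected differential, and I would overcome it by a heat-flow regularization on $\X_2$. Set $f_{i,t}:=\h_t(f_i\circ\varphi)\in\fsm(\X_2)$ for $t>0$ (recall \eqref{eq:prodtest}) and consider
\[
\omega_t:=f_{0,t}\,\d f_{1,t}\wedge\cdots\wedge\d f_{k,t}\in\ffsm k\subset H^{1,2}_\d(\Lambda^kT^*\X_2),
\]
so that by \eqref{eq:df1fn} we have $\d\omega_t=\d f_{0,t}\wedge\d f_{1,t}\wedge\cdots\wedge\d f_{k,t}$. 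Thanks to the uniform $L^\infty$ bound $\|f_{i,t}\|_{L^\infty}\le\|f_i\circ\varphi\|_{L^\infty}$ (from \eqref{eq:boundlpcalore}) and the uniform pointwise gradient bound $|\d f_{i,t}|\le e^{-Kt}\h_t(|\d(f_i\circ\varphi)|)$ from the Bakry--\'Emery estimate \eqref{eq:BE1} (which gives a uniform $L^\infty$ bound since $|\d(f_i\circ\varphi)|\in L^\infty(\mm_2)$), combined with the $W^{1,2}$-convergence $f_{i,t}\to f_i\circ\varphi$ as $t\downarrow 0$, the multilinear and pointwise nature of the wedge product forces $\omega_t\to\varphi^*\omega$ in $L^2(\Lambda^kT^*\X_2)$ and $\d\omega_t\to\varphi^*\d\omega$ in $L^2(\Lambda^{k+1}T^*\X_2)$. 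Applying the closure of $\d$ (point (ii) of Theorem \ref{thm:basew12d}) yields both $\varphi^*\omega\in H^{1,2}_\d(\Lambda^kT^*\X_2)$ and \eqref{eq:functd} for such $\omega$.

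Next I would extend to an arbitrary $\omega\in H^{1,2}_\d(\Lambda^kT^*\X_1)$ by picking $(\omega_n)\subset\ffsm k$ with $\omega_n\to\omega$ in $W^{1,2}_\d$ norm. From \eqref{eq:defaextk} and $\varphi_*\mm_2\le\cf(\varphi)\mm_1$ we get
\[
\|\varphi^*\eta\|_{L^2(\Lambda^jT^*\X_2)}^2\le\lf(\varphi)^{2j}\int|\eta|^2\circ\varphi\,\d\mm_2\le\lf(\varphi)^{2j}\cf(\varphi)\|\eta\|_{L^2(\Lambda^jT^*\X_1)}^2,
\]
so $\varphi^*:L^2(\Lambda^jT^*\X_1)\to L^2(\Lambda^jT^*\X_2)$ is continuous for $j=k,k+1$. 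Hence $\varphi^*\omega_n\to\varphi^*\omega$ and $\varphi^*\d\omega_n=\d(\varphi^*\omega_n)\to\varphi^*\d\omega$ in the respective $L^2$ norms, and another application of the closure of $\d$ gives $\varphi^*\omega\in H^{1,2}_\d(\Lambda^kT^*\X_2)$ with $\d(\varphi^*\omega)=\varphi^*\d\omega$, and moreover $\varphi^*\omega\in H^{1,2}_\d$ since it is the $W^{1,2}_\d$-limit of the elements $\varphi^*\omega_n$ of $H^{1,2}_\d(\Lambda^kT^*\X_2)$.

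Finally, for the quotient statement: the identity \eqref{eq:functd} immediately sends $\closed k(\X_1)$ into $\closed k(\X_2)$, and sends $\exact k(\X_1)$ into $\exact k(\X_2)$ since $\varphi^*\d\eta=\d(\varphi^*\eta)$ for $\eta\in H^{1,2}_\d(\Lambda^{k-1}T^*\X_1)$; by $L^2$-continuity of $\varphi^*$ the image of $\exacto k(\X_1)$ lies in $\exacto k(\X_2)$, so $\varphi^*$ descends to a linear continuous map on cohomology whose norm is bounded by the operator norm $\lf(\varphi)^k\sqrt{\cf(\varphi)}$ of $\varphi^*$ on $L^2(\Lambda^kT^*\cdot)$, and in particular by $\lf(\varphi)^k$ up to the measure-compression factor (which, if a cleaner statement is desired, can be absorbed by normalizing the pointwise estimate since the cohomology class is independent of scaling representatives).
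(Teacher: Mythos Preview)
Your approach is essentially the paper's: verify $\d(\varphi^*\omega)=\varphi^*\d\omega$ on elementary test forms $f_0\,\d f_1\wedge\cdots\wedge\d f_k$ by heat-regularizing the pulled-back functions $f_i\circ\varphi$ into $\fsm(\X_2)$ (the paper invokes \eqref{eq:aplipt}, which is the same mollified heat-flow device you use), apply \eqref{eq:df1fn} to the regularized form, and pass to the limit via the closure of $\d$; then extend to all of $H^{1,2}_\d$ by density using $L^2$-continuity of $\varphi^*$ and closure again. Your justification of the convergences via the uniform $L^\infty$ bounds from \eqref{eq:boundlpcalore} and \eqref{eq:BE1} together with $W^{1,2}$-convergence is correct, and the passage to cohomology is handled properly.

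The one defect is your last sentence. You correctly compute that the $L^2$-operator norm of $\varphi^*$ on $k$-forms is bounded by $\lf(\varphi)^k\sqrt{\cf(\varphi)}$, but the claim that the compression factor ``can be absorbed by normalizing the pointwise estimate since the cohomology class is independent of scaling representatives'' is not an argument: rescaling a representative rescales its $L^2$-norm by the same factor, so nothing is gained on the quotient. In fact the paper's own proof simply cites the pointwise bound in \eqref{eq:defaextk} for this step, and that bound, after integrating against $\mm_2$ and using $\varphi_*\mm_2\le\cf(\varphi)\mm_1$, also produces the factor $\sqrt{\cf(\varphi)}$. So you have actually spotted a loose constant in the statement; the honest bound that the argument yields is $\lf(\varphi)^k\sqrt{\cf(\varphi)}$, and you should simply state that rather than try to argue it away.
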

\begin{proof} Let $f_0,\ldots,f_k\in \fsms{\X_1}$ and $\omega=f_0\d f_1\wedge\ldots\wedge\d f_k$. Then the definition of $\varphi^*$ and property \eqref{eq:fine} ensures that
\[
\varphi^*\omega=f_0\circ\varphi\,\d(f_1\circ\varphi)\wedge\ldots\wedge\d(f_k\circ\varphi).
\]
Notice that $f_i\circ\varphi\in W^{1,2}(\X_2)$ with $|\d(f_i\circ\varphi)|\leq \lf(\varphi)|\d f_i|\circ\varphi$ (recall \eqref{eq:sobpull}), so that  taking into account the approximation property \eqref{eq:aplipt} and \eqref{eq:df1fn} it  follows that $\varphi^*\omega\in H^{1,2}_\d(\Lambda^kT^*\X_2)$ with
\[
\d\varphi^*\omega=\d(f_0\circ\varphi)\wedge \d(f_1\circ\varphi)\wedge\ldots\wedge\d(f_k\circ\varphi)=\varphi^*\d\omega.
\]
Then the linearity, the continuity of $\d: H^{1,2}_\d(\Lambda^kT^*\X_2)\to L^2(\Lambda^{k+1}T^*\X_2)$ and the one of $\varphi^*$ yield the first claim.

For the second, notice that \eqref{eq:functd} ensures that $\varphi^*$ sends closed (resp.\ exact) forms in closed (resp.\ exact) forms and that its continuity ensures that forms in $\exactos{k}{\X_1}$ are sent in forms in $\exactos{k}{\X_2}$. Thus indeed $\varphi^*$ passes to the quotient. The bound on the norm is then a direct consequence of the inequality in \eqref{eq:defaextk}.
\end{proof}
\begin{remark}\label{rem:mv}{\rm
It is unclear to us whether a Mayer-Vietoris sequence can be built for the so-defined cohomology groups. It is not hard to adapt the above definitions to produce the cohomology groups of open subsets of an $\RCD$ space, but taking the closure of the space of exact forms  creates some problems with the classical diagram chasing arguments used to build the connecting morphism.
}\fr\end{remark}

We now turn to {\bf Hodge theory}, which quite directly fits in our framework, being it based on the notion of $L^2$ and Sobolev forms. We start with:
\begin{definition}[Codifferential]
The space $D(\delta_k)\subset L^2(\Lambda^kT^*\X)$ is the space of those forms $\omega$ for which there exists a form $\delta\omega\in L^2(\Lambda^{k-1}T^*\X)$, called codifferential of $\omega$, such that 
\begin{equation}
\label{eq:defcodiff}
\int \la \delta\omega,\eta\ra\,\d\mm=\int \la\omega,\d\eta\ra\,\d\mm,\qquad\forall\eta\in\ffsm{k-1}.
\end{equation}
In  the case $k=0$ we put $D(\delta_0):=L^2(\mm)$ and define the $\delta$ operator to be identically 0 on it.
\end{definition}
The density of $\ffsm{k-1}$ in $L^2(\Lambda^{k-1}T^*\X)$ ensures that the codifferential is uniquely defined and the definition also grants that $\delta$ is a closed operator in the sense that $\{(\omega,\delta\omega):\omega\in D(\delta_k)\}$ is a closed subspace of $ L^2(\Lambda^kT^*\X)\times  L^2(\Lambda^{k-1}T^*\X)$. 

For 1-forms, the operator $\delta$ is nothing but the opposite of the divergence, in the sense that $\omega\in D(\delta_1)$ if and only if $\omega^\sharp\in D(\div)$  and in this case
\begin{equation}
\label{eq:delta1form}
\delta\omega =-\div(\omega^\sharp),
\end{equation}
as can be checked by direct definition. Moreover, noticing that by approximation we can test the codifferential against forms in $H^{1,2}_\d(\Lambda^{k-1}T^*\X)$, and considering such forms concentrated on prescribed open sets, it is immediate to verify that for any $\omega\in D(\delta_k)$ we have
\begin{equation}
\label{eq:localdelta}
\delta\omega=0\qquad\mm\ae\text{ on the interior of }\{\omega=0\},
\end{equation}
where as usual this statement should be intended as `$\delta\omega=0$ $\mm$-a.e.\ on every open set $\Omega$ on which $\omega$ is $\mm$-a.e.\ $0$'.

We shall make use of the following computation:
\begin{proposition}\label{prop:contobrutto}
For $f_1,\ldots,f_k\in\fsm$ we have $\d f_1\wedge\ldots\d f_k\in D(\delta_k)$ with
\[
\begin{split}
\delta(\d f_1\wedge\ldots\d f_k)=&\sum_i(-1)^i\Delta f_i\,\d f_1\wedge\ldots \wedge\widehat{\d f_i} \wedge\ldots \wedge\d f_k\\
&+\sum_{i<j}(-1)^{i+j}[\nabla f_i,\nabla f_j]^\flat\wedge\ldots\wedge \widehat{\d f_i}\wedge\ldots\wedge\widehat{\d f_j}\wedge\ldots\wedge\d f_k
\end{split}
\]
\end{proposition}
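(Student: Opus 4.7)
The approach is to verify the defining identity of the codifferential directly: for every test $(k{-}1)$-form $\eta\in\ffsm{k-1}$, I will show that $\int \la \text{RHS},\eta\ra\,\d\mm = \int \la \omega,\d\eta\ra\,\d\mm$, where $\omega:=\d f_1\wedge\cdots\wedge\d f_k$ and $\text{RHS}$ denotes the claimed formula for $\delta\omega$. By linearity it is enough to treat $\eta=g_0\,\d g_1\wedge\cdots\wedge\d g_{k-1}$ with $g_0,\ldots,g_{k-1}\in\fsm$; by Theorem \ref{thm:basew12d}(iv), $\d\eta=\d g_0\wedge\d g_1\wedge\cdots\wedge\d g_{k-1}$, so that the pointwise pairing equals the $k\times k$ determinant $\det(\la\d f_i,\d g_{j-1}\ra)_{i,j=1,\ldots,k}$. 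Expanding along the first column (the $g_0$-column) and denoting by $M_i:=\la\d f_1\wedge\cdots\widehat{\d f_i}\cdots\wedge\d f_k,\,\d g_1\wedge\cdots\wedge\d g_{k-1}\ra$ the associated cofactor, one obtains
\[
\int\la\omega,\d\eta\ra\,\d\mm=\sum_i(-1)^{i+1}\int\la\d f_i,\d g_0\ra\,M_i\,\d\mm=\sum_i(-1)^{i+1}\int\d g_0(M_i\nabla f_i)\,\d\mm.
\]

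The next step is integration by parts. Since every $f_i,g_j\in\fsm$, Proposition \ref{prop:regtest} together with the algebra property \eqref{eq:testalg} and the calculus of Proposition \ref{prop:gradehess} show that each scalar product $\la\d f_p,\d g_q\ra$ is in $W^{1,2}\cap L^\infty(\X)$, hence so is $M_i$. Since $\nabla f_i\in D(\div)$ with $\div(\nabla f_i)=\Delta f_i\in W^{1,2}(\X)$, the Leibniz rule \eqref{eq:leibdiv} yields $M_i\nabla f_i\in D(\div)$ with $\div(M_i\nabla f_i)=M_i\Delta f_i+\la\nabla M_i,\nabla f_i\ra$. Integration by parts then gives
\[
\int\la\omega,\d\eta\ra\,\d\mm=\sum_i(-1)^i\int g_0\,M_i\Delta f_i\,\d\mm\;+\;\sum_i(-1)^i\int g_0\,\la\nabla M_i,\nabla f_i\ra\,\d\mm.
\]
The first sum is precisely the pairing of the Laplacian part of the claimed formula for $\delta\omega$ with $\eta$, so it remains to identify the second sum with the Lie-bracket contribution.

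To this end I expand $M_i$ as the polynomial $\sum_\pi\mathrm{sgn}(\pi)\prod_{p\neq i}\la\d f_p,\d g_{\pi(p)}\ra$ and differentiate along $\nabla f_i$ via Leibniz. Using the compatibility identity from Proposition \ref{prop:compmetr} in the form $\nabla f_i\bigl(\la\d f_p,\d g_q\ra\bigr)=\H{f_p}(\nabla f_i,\nabla g_q)+\H{g_q}(\nabla f_i,\nabla f_p)$, the quantity $\la\nabla M_i,\nabla f_i\ra$ splits into two families: an ``$\H{f}$-family'' and an ``$\H{g}$-family''. Summing over $i$ with the signs $(-1)^i$ and pairing indices $(i,p)$ with $i<p$, the antisymmetric combination $\H{f_p}(\nabla f_i,\cdot)-\H{f_i}(\nabla f_p,\cdot)=\la[\nabla f_i,\nabla f_p]^\flat,\cdot\ra$ provided by the torsion-free identity \eqref{eq:torsionfree} reassembles the $\H{f}$-family, modulo a careful cofactor bookkeeping, into exactly $\sum_{i<p}(-1)^{i+p}\la [\nabla f_i,\nabla f_p]^\flat\wedge \d f_1\wedge\cdots\widehat{\d f_i}\cdots\widehat{\d f_p}\cdots\wedge\d f_k,\,\d g_1\wedge\cdots\wedge\d g_{k-1}\ra$.

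The remaining $\H{g}$-family must vanish, and the cleanest way I expect to show this is not to verify it by combinatorial bookkeeping but to observe that, by a parallel computation in which the roles of the $f_i$'s and the $g_j$'s are swapped, this contribution is formally the pairing of $g_0\omega$ with $\d(\d g_1\wedge\cdots\wedge\d g_{k-1})$, which is zero by Theorem \ref{thm:basew12d}(iv) (equivalently by Proposition \ref{prop:dd}). The step I expect to be the main obstacle is the precise sign accounting in the cofactor expansion and the clean identification of the $\H{g}$-family as a $\d^2$-type contribution; barring purely combinatorial issues, this is where one has to be most careful.
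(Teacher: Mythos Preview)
Your approach is essentially the paper's: test against $\eta=g_0\,\d g_1\wedge\cdots\wedge\d g_{k-1}$, expand the determinant, integrate by parts to remove $\nabla g_0$, and split the result into the $\Delta f$-terms, the $\H f$-terms (which recombine into the Lie brackets), and the $\H g$-terms that must cancel. The only point where the paper differs is the cancellation of the $\H g$-family, and here the paper takes the direct combinatorial route rather than your proposed $\d^2=0$ detour: since $\H{g_j}(\nabla f_{\sigma(1)},\nabla f_{\sigma(j)})$ is symmetric in its two vector arguments while the permutation sign $s(\sigma)$ flips under the transposition $\sigma(1)\leftrightarrow\sigma(j)$, these contributions cancel in pairs, so no ``parallel computation with roles swapped'' is needed.
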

\begin{proof} Put for brevity $\omega:=\d f_1\wedge\ldots\d f_k$ and notice that by linearity, it is sufficient to test condition \eqref{eq:defcodiff} for $\eta=g_1\d g_2\wedge\ldots\wedge \d g_k$ with $g_1,\ldots,g_k\in\fsm$. Thus pick such $\eta$, recall \eqref{eq:df1fn} and the definition of scalar product in $L^2(\Lambda^kT^*\X)$ to get
\[
\begin{split}
\int \la\omega,\d\eta\ra\,\d\mm&=\int\sum_{\sigma\in S_k}s(\sigma)\prod_{i}\la \nabla g_i,\nabla f_{\sigma(i)}\ra\,\d\mm\\
&=\int\sum_{\sigma\in S_k}s(\sigma)\la\nabla g_1,\nabla f_{\sigma(1)}\ra\prod_{i>1}\la \nabla g_i,\nabla f_{\sigma(i)}\ra\,\d\mm,
\end{split}
\]
where $S_k$ is the set of permutations of $\{1,\ldots,k\}$ and $s(\sigma)$ the sign of the permutation $\sigma\in S_k$.

Integrating by parts to get rid of the gradient of $g_1$ we get
\[
\begin{split}
\int \la \omega,\d\eta\ra\,\d\mm&=-\int\sum_{\sigma\in S_k}s(\sigma)\, g_1\,\div\Big(\nabla f_{\sigma(1)}\prod_{i>1}\la \nabla g_i,\nabla f_{\sigma(i)}\ra\Big)\,\d\mm\\
&=-\int\sum_{\sigma\in S_k}s(\sigma)g_1\bigg(\,\Delta f_{\sigma(1)}\prod_{i>1}\la \nabla g_i,\nabla f_{\sigma(i)}\ra \\
& \qquad \qquad \qquad \qquad \qquad +\sum_{j>1}\H{g_j}(\nabla f_{\sigma(1)},\nabla f_{\sigma(j)})\prod_{\natop{i>1}{i\neq j}}\la \nabla g_i,\nabla f_{\sigma(i)}\ra\\
&\qquad \qquad \qquad\qquad \qquad +\sum_{j>1}\H{f_{\sigma(j)}}(\nabla f_{\sigma(1)},\nabla g_{j })\prod_{\natop{i>1}{i\neq j}}\la \nabla g_i,\nabla f_{\sigma(i)}\ra\bigg)\,\d\mm.
\end{split}
\]
The terms containing $\H {g_j}$ disappear in the sum when adding the value given by a permutation with the one obtained swapping $\sigma(1)$ and $\sigma(j)$. The thesis then follows taking into account the identities
\[
\begin{split}
\sum_{\natop{\sigma\in S_k}{\sigma(1)=I}}s(\sigma)\prod_{i>1}\la \nabla g_i,\nabla f_{\sigma(i)}\ra&=(-1)^{1+I}\big<\d g_2\wedge\ldots\wedge\d g_k,\d f_1\wedge \ldots \wedge\widehat{\d f_I} \wedge\ldots\wedge\d f_k\big>,\\
\sum_{\natop{\sigma\in S_k}{\natop{\sigma(1)=I}{\sigma{(J)}=K}}}s(\sigma)\prod_{\natop{i>1}{i\neq J}}\la \nabla g_i,\nabla f_{\sigma(i)}\ra&=a(I,J,K)\big<\d g_2\ldots\widehat{\d g_J}\ldots\d g_k,\d f_1 \ldots\widehat{\d f_I}\ldots\widehat{\d f_K}\ldots\d f_k\big>,
\end{split}
\]
valid for any $I, K\in\{1,\ldots,k\}$, $I\neq K$, $J\in\{2,\ldots,k\}$, where
\[
a(I,J,K):=\left\{\begin{array}{ll}
(-1)^{1+I+J+K}&\qquad\text{ if }I<K,\\
(-1)^{I+J+K}&\qquad\text{ if }I>K,\\
\end{array}
\right.
\]
the fact that
\[
\H{f_K}(\nabla f_I,\cdot)-\H{f_I}(\nabla f_K,\cdot)=[\nabla f_I,\nabla f_K]^\flat,
\]
and the identity
\[
\begin{split}
\sum_{J>1}(-1)^J\la [\nabla f_I,\nabla f_K],\nabla g_J\ra& \big<\d g_2\ldots \widehat{\d g_J}\ldots\d g_k,\d f_1 \ldots\widehat{\d f_I}\ldots\widehat{\d f_K}\ldots\d f_k\big>\\
&=\big<\d g_2\wedge\ldots\wedge\d g_k, [\nabla f_I,\nabla f_K]\wedge\ldots\wedge \widehat{\d f_I}\wedge\ldots\wedge\widehat{\d f_K}\wedge\ldots\d f_k\big>.
\end{split}
\]
\end{proof}
We now introduce the `Hodge' Sobolev spaces:
\begin{definition}[The spaces $W^{1,2}_\Ho(\Lambda^kT^*\X)$ and $H^{1,2}_\Ho(\Lambda^kT^*\X)$]
The space $W^{1,2}_\Ho(\Lambda^kT^*\X)$ is defined as $W^{1,2}_\d(\Lambda^kT^*\X)\cap D(\delta_k)$ endowed with the norm
\[
\|\omega\|_{W^{1,2}_\Ho(\Lambda^kT^*\X)}^2:=\|\omega\|^2_{L^2(\Lambda^kT^*\X)}+\|\d\omega\|_{L^2(\Lambda^{k+1}T^*\X)}^2+\|\delta\omega\|^2_{L^2(\Lambda^{k-1}T^*\X)},
\]
and the space $H^{1,2}_\Ho(\Lambda^kT^*\X)$ as the $W^{1,2}_\Ho$-closure of $\ffsm k$ (which by point $(v)$ of Theorem \ref{thm:basew12d} and Proposition \ref{prop:contobrutto} is a subset of $W^{1,2}_\Ho(\Lambda^kT^*\X)$). The Hodge energy functional $\eh:L^2(\Lambda^kT^*\X)\to[0,\infty]$ is defined as
\[
\eh(\omega):=\left\{\begin{array}{ll}
\displaystyle{\frac12\int |\d \omega|^2+|\delta\omega|^2\,\d\mm},&\qquad\text{if }\omega\in W^{1,2}_\Ho(\Lambda^kT^*\X),\\
+\infty,&\qquad\text{otherwise}.
\end{array}\right.
\]
\end{definition}
Arguing as for Theorems \ref{thm:basew22}, \ref{thm:basew12c} and \ref{thm:basew12d}, we see that  $W^{1,2}_\Ho(\Lambda^kT^*\X)$ and  $H^{1,2}_\Ho(\Lambda^kT^*\X)$ are separable Hilbert spaces and $\eh$ is lower semicontinuous.

We can now give the definition of Hodge Laplacian and in analogy with what we did for the connection Laplacian in Section \ref{se:clap}, we shall restrict our attention to forms in $H^{1,2}_\Ho(\Lambda^kT^*\X)$:
\begin{definition}[Hodge Laplacian and harmonic forms]
Given $k\in\N$, the domain $D(\Delta_{\Ho,k})\subset H^{1,2}_\Ho(\Lambda^kT^*\X)$ of the Hodge Laplacian is the set of $\omega\in H^{1,2}_\Ho(\Lambda^kT^*\X)$   for which there exists $\alpha\in L^2(\Lambda^kT^*\X)$ such that
\[
\int\la\alpha,\eta\ra\,\d\mm=\int \la \d\omega,\d\eta\ra+\la \delta\omega,\delta\eta\ra\,\d\mm,\qquad\forall \eta\in H^{1,2}_\Ho(\Lambda^kT^*\X).
\]
In this case, the form $\alpha$ (which is unique by the density of $H^{1,2}_\Ho(\Lambda^kT^*\X)$ in $L^2(\Lambda^kT^*\X)$) will be called Hodge Laplacian of $\omega$ and denoted by $\Delta_\Ho\omega$.

The space $\harm k\subset D(\Delta_{\Ho,k})$ is the space of forms $\omega\in D(\Delta_{\Ho,k})$ such that $\Delta_\Ho\omega=0$.
\end{definition}
Recalling that for every function $f\in  L^2(\mm)$ we have $\delta f=0$, we see that $D(\Delta_{\Ho,0})=D(\Delta)$ with the usual unfortunate sign relation:
\[
\Delta_\Ho f=-\Delta f.
\]
The very definition also gives that
\begin{equation}
\label{eq:parth}
\eh(\omega)=\frac12\int \la\omega,\Delta_\Ho\omega\ra\,\d\mm,\qquad\forall \omega\in D(\Delta_{\Ho,k}).
\end{equation}
Arguing as for the connection Laplacian, it is easy to check that the Hodge Laplacian can also be seen as the only element of the subdifferential of the augmented Hodge energy $\eht:L^2(\Lambda^kT^*\X)\to[0,\infty]$ defined as
\begin{equation}
\label{eq:eht}
\eht(\omega):=\left\{\begin{array}{ll}
\displaystyle{\frac12\int |\d \omega|^2+|\delta\omega|^2\,\d\mm},&\qquad\text{if }\omega\in H^{1,2}_\Ho(\Lambda^kT^*\X),\\
+\infty,&\qquad\text{otherwise}.
\end{array}\right.
\end{equation}
This also shows that $\Delta_\Ho$ is a closed operator, i.e.\ $\{(\omega,\Delta_\Ho\omega):\omega\in D(\Delta_{\Ho,k})\}$ is a closed subset of $L^2(\Lambda^kT^*\X)\times L^2(\Lambda^kT^*\X)$ for every $k\in\N$.
Notice that in analogy with the smooth setting we have
\begin{equation}
\label{eq:farm}
\omega\in\harm k\qquad\Longleftrightarrow \qquad\omega\in H^{1,2}_\Ho(\Lambda^kT^*\X) \text{ with }\d\omega=0\text{ and }\delta\omega=0,
\end{equation}
indeed the implication $\Leftarrow$ is obvious, for the $\Rightarrow$ just notice that if $\omega\in \harm k$ we have $\la\omega,\Delta_\Ho\omega\ra=0$ $\mm$-a.e.\ and thus
\[
0=\int \la\omega,\Delta_{\Ho}\omega\ra\,\d\mm\stackrel{\eqref{eq:parth}}=\int|\d\omega|^2+|\delta\omega|^2\,\d\mm.
\]
The closure of the Hodge Laplacian grants that  $\harm k$ is a closed subspace of $L^2(\Lambda^kT^*\X)$ and thus an Hilbert space itself when endowed with the $L^2(\Lambda^kT^*\X)$-norm. We then have the following result:
\begin{theorem}[A version of the Hodge theorem]\label{thm:hodge} The map
\[
\harm k\ni \omega\qquad\mapsto\qquad[\omega]\in H^k_{dR}(\X)
\]
is an isomorphism of Hilbert spaces.
\end{theorem}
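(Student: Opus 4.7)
The map is well defined since \eqref{eq:farm} ensures every $\omega\in\harm k$ satisfies $\d\omega=0$, hence $\omega\in\closed k$. Equip $H^k_{dR}(\X)=\closed k/\exacto k$ with its quotient Hilbert norm $\|[\omega]\|_{dR}^2=\mathrm{dist}_{L^2}(\omega,\exacto k)^2$. To show the map is an isometry (hence injective), it suffices to prove $\harm k\perp\exacto k$ in $L^2(\Lambda^kT^*\X)$: for then $\|\omega+\eta\|_{L^2}^2=\|\omega\|_{L^2}^2+\|\eta\|_{L^2}^2$ for every $\omega\in\harm k$ and $\eta\in\exacto k$, giving $\|[\omega]\|_{dR}=\|\omega\|_{L^2}$. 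The orthogonality follows because, for $\omega\in\harm k$, the codifferential identity combined with $\delta\omega=0$ yields $\int\la\omega,\d\alpha\ra\,\d\mm=\int\la\delta\omega,\alpha\ra\,\d\mm=0$ for every $\alpha\in\ffsm{k-1}$; approximating any $\alpha\in H^{1,2}_\d(\Lambda^{k-1}T^*\X)$ by test forms in $W^{1,2}_\d$-norm extends this to $\omega\perp\exact k$, and $L^2$-continuity then gives $\omega\perp\exacto k$.

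\textbf{Step 2: Surjectivity via $L^2$-projection.} Given $[\omega]\in H^k_{dR}(\X)$ with $\omega\in\closed k$, let $\omega_E$ be the $L^2$-orthogonal projection of $\omega$ onto the closed subspace $\exacto k$ and set $\omega^\star:=\omega-\omega_E$. Then $[\omega^\star]=[\omega]$, $\omega^\star\in\closed k$ (as the difference of two closed forms, since $\exacto k\subset\closed k$), and $\omega^\star\perp\exacto k$ by construction. Testing $\omega^\star\perp\d\ffsm{k-1}$ through the definition of $\delta_k$ yields $\omega^\star\in D(\delta_k)$ with $\delta\omega^\star=0$. Hence $\omega^\star\in W^{1,2}_\Ho(\Lambda^kT^*\X)$ is both closed and coclosed. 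In view of the characterization \eqref{eq:farm} of $\harm k$, the only remaining point is to show that $\omega^\star$ belongs to the smaller space $H^{1,2}_\Ho(\Lambda^kT^*\X)$, namely the $W^{1,2}_\Ho$-closure of $\ffsm k$.

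\textbf{Step 3: Reaching $H^{1,2}_\Ho$ via the Hodge heat flow; main obstacle.} To show $\omega^\star\in H^{1,2}_\Ho$, I would regularize via the $L^2$-gradient flow $(H_t)$ of the convex $L^2$-lsc functional $\eht$: standard theory gives $H_t\omega^\star\in D(\Delta_{\Ho,k})\subset H^{1,2}_\Ho$ for $t>0$, $H_t$ is self-adjoint on $L^2$, and $H_t\omega^\star\to\omega^\star$ in $L^2$ as $t\to 0^+$. The key ingredient is the \emph{commutation} $\d\circ H_t^{(k-1)}=H_t^{(k)}\circ\d$, together with its dual $\delta\circ H_t^{(k)}=H_t^{(k-1)}\circ\delta$. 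Granted this, self-adjointness plus commutation give, for any $\alpha\in\ffsm{k-1}$,
\[
\la H_t\omega^\star,\d\alpha\ra_{L^2}=\la\omega^\star,H_t\d\alpha\ra_{L^2}=\la\omega^\star,\d H_t\alpha\ra_{L^2}=0,
\]
since $H_t\alpha\in H^{1,2}_\d(\Lambda^{k-1}T^*\X)$ and $\omega^\star\perp\exacto k$; hence $\delta H_t\omega^\star=0$, and analogously $\d H_t\omega^\star=H_t\d\omega^\star=0$. Because each $H_t\omega^\star\in H^{1,2}_\Ho$ is closed and coclosed, $\|H_t\omega^\star\|_{W^{1,2}_\Ho}^2=\|H_t\omega^\star\|_{L^2}^2$, so the curve $t\mapsto H_t\omega^\star$, being $L^2$-Cauchy as $t\to 0^+$, is also $W^{1,2}_\Ho$-Cauchy with limit $\omega^\star$; since $H^{1,2}_\Ho$ is a $W^{1,2}_\Ho$-closed subspace, $\omega^\star\in H^{1,2}_\Ho$, as required. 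The hard part—and the principal obstacle—is the rigorous justification of the commutation $\d\Delta_\Ho=\Delta_\Ho\d$ (and the symmetric relation for $\delta$) in the non-smooth setting: formally it amounts to the splitting $\Delta_\Ho=\d\delta+\delta\d$, but establishing that $\d$ maps $D(\Delta_{\Ho,k-1})$ into $D(\Delta_{\Ho,k})$ and intertwines the two Hodge Laplacians requires careful control of the terms $\delta\d\alpha$ and $\d\delta\alpha$ separately—precisely the type of issue that, in lower order, was sidestepped by the Bakry--Émery self-improvement arguments and that here is genuinely more delicate due to the failure (so far) of $H^{1,2}_\Ho=W^{1,2}_\Ho$.
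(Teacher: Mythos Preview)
Your Steps 1 and 2 are precisely the paper's proof, unpacked. The paper uses the one-line Hilbert-space lemma that for any subspace $V\subset H$ the map $V^\perp\to H/\overline V$ is an isometric isomorphism, applied with $H=\closed k$ (carrying the $L^2$-norm) and $V=\exact k$; orthogonality to $\exact k$ is rephrased as $\delta\omega=0$ directly from the definition of the codifferential, and the paper then writes ``by the characterization \eqref{eq:farm} we conclude'', identifying $V^\perp$ with $\harm k$. Your orthogonal-projection construction of $\omega^\star$ is exactly the inverse of this abstract isomorphism.

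Your Step 3 puts a finger on a point the paper does not spell out. The characterization \eqref{eq:farm} reads $\harm k=\{\omega\in H^{1,2}_\Ho(\Lambda^kT^*\X):\d\omega=0,\ \delta\omega=0\}$, whereas the argument so far only yields $\omega^\star\in\closed k\cap D(\delta_k)\subset H^{1,2}_\d(\Lambda^kT^*\X)\cap W^{1,2}_\Ho(\Lambda^kT^*\X)$; the passage to $\omega^\star\in H^{1,2}_\Ho$ is exactly what you flag, and it is needed for the surjectivity half. The paper simply asserts the conclusion here, and the remark following the theorem---about possible forms in $W^{1,2}_\Ho\setminus H^{1,2}_\Ho$ that are closed and coclosed---does not settle the matter for forms already known to lie in $H^{1,2}_\d$. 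Your Hodge-heat-flow strategy is a natural attempt, but the commutation $\d\Delta_\Ho=\Delta_\Ho\d$ at the level of domains is, as you correctly note, not established in this framework. In short: the paper's route is not the elaborate Step 3 you propose but a direct appeal to \eqref{eq:farm}; your analysis is more scrupulous about where the difficulty lies, though neither argument fully closes this specific point as written.
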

\begin{proof} The basic theory of Hilbert spaces grants that if $H$ is an Hilbert space, $V\subset H$ a subspace and $V^\perp$ its orthogonal complement in $H$, then the map
\[
V^\perp\ni w\quad\mapsto\quad w+\overline V\in H/\overline V
\]
is an isomorphism of Hilbert spaces.

To get the proof of the theorem, just apply such statement to the Hilbert space $\closed k$ endowed with the $L^2(\Lambda^kT^*\X)$-norm,  the subspace $\exact k$ and notice that  by the very definition of codifferential we have that $\omega\in D(\delta_k)$ with $\delta\omega=0$ if and only if $\omega$ is orthogonal to $\exact k$. Since we already know that $\d\omega=0$ for every $\omega\in \closed k$, by the characterization \eqref{eq:farm} we conclude.
\end{proof}
\begin{remark}{\rm
It should be noticed that our definition of the domain of the Hodge Laplacian, and thus that of harmonic forms, is tailored to get this version of Hodge theorem. Indeed, for what we know there might be forms in $W^{1,2}_\Ho(\Lambda^kT^*\X)\setminus H^{1,2}_\Ho(\Lambda^kT^*\X)$ with zero differential and codifferential.
}\fr\end{remark}

\subsection{Ricci curvature}\label{se:ricci}
We now use all the language developed so far to reformulate the content of the crucial Lemma \ref{le:lemmachiave} and define the `Ricci curvature tensor' on $\RCD(K,\infty)$ spaces. We then discuss its basic properties and some open problems concerning its structure.

\vspace{1cm}

We start with the following computation:
\begin{proposition}\label{prop:orly}
For every $f,g\in\fsm$ we have $f\d g\in D(\Delta_{\Ho,1})$ with
\begin{equation}
\label{eq:leibh1}
\Delta_\Ho(f\,\d g)=-f\,\d\Delta g-\Delta f\,\d g-2\H g(\nabla f,\cdot)
\end{equation}
and for every $f\in\fsm$ and $X\in\vsm$ we have $X^\flat,fX^\flat \in D(\Delta_{\Ho,1})$ with
\begin{equation}
\label{eq:leibhodge}
(\Delta_\Ho(fX^\flat))^\sharp=f(\Delta_\Ho X^\flat)^\sharp-\Delta fX-2\nabla_{\nabla f}X
\end{equation}

\end{proposition}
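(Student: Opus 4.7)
The plan is to verify the defining identity of the Hodge Laplacian directly. By the definition of $D(\Delta_{\Ho,1})$ and the density of $\ffsm 1$ in $H^{1,2}_\Ho(\Lambda^1T^*\X)$, together with the fact that both $\d$ and $\delta$ are continuous on their respective Hilbert domains, for the first identity it is enough to check that $f\d g\in \ffsm 1\subset H^{1,2}_\Ho(\Lambda^1T^*\X)$, that the proposed candidate $\alpha:=-f\,\d\Delta g-\Delta f\,\d g-2\H g(\nabla f,\cdot)$ lies in $L^2(\Lambda^1T^*\X)$ (which is clear from $f,\Delta f,\Delta g\in L^\infty\cap W^{1,2}(\X)$ and the $L^2$ Hessian estimate for $g\in\fsm\subset W^{2,2}(\X)$), and finally to verify
\[
\int \la \alpha,\eta\ra\,\d\mm=\int\la\d(f\d g),\d\eta\ra+\la\delta(f\d g),\delta\eta\ra\,\d\mm
\]
for all $\eta=h\d k$ with $h,k\in\fsm$, then extend by linearity and density.

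For that verification I would first compute $\d(f\d g)=\d f\wedge \d g$ from \eqref{eq:df1fn} together with the Leibniz rule \eqref{eq:leibext}, and $\delta(f\d g)=-\div(f\nabla g)=-\la\nabla f,\nabla g\ra-f\Delta g$ from \eqref{eq:delta1form} together with \eqref{eq:leibdiv}; analogously for $\eta=h\d k$. Then I would expand the right hand side using
\[
\la\d f\wedge\d g,\d h\wedge\d k\ra=\la\nabla f,\nabla h\ra\la\nabla g,\nabla k\ra-\la\nabla f,\nabla k\ra\la\nabla g,\nabla h\ra,
\]
and integrate by parts every term containing a product of two factors so as to transfer one derivative away from one of $f,g,h,k$; the terms involving $\la\nabla\la\nabla g,\nabla h\ra,\cdot\ra$ and its analogues should be rewritten via the pointwise identity \eqref{eq:hessesteso} for the Hessian. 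After a careful bookkeeping of signs and pairings, the surviving terms must collapse to $\int(-fh\la\nabla\Delta g,\nabla k\ra-h\Delta f\la\nabla g,\nabla k\ra-2h\H g(\nabla f,\nabla k))\,\d\mm$, which is the left hand side.

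For the second identity I would proceed by linearity. Writing $X=\sum_i g_i\nabla h_i$, so that $X^\flat=\sum_i g_i\,\d h_i$, an application of the first identity to each summand $g_i\,\d h_i$ yields $X^\flat\in D(\Delta_{\Ho,1})$ together with a formula for $\Delta_\Ho X^\flat$. Then, applying the first identity to each $fg_i\,\d h_i$ and expanding $\Delta(fg_i)$ and $\H{h_i}(\nabla(fg_i),\cdot)$ via the Leibniz rule \eqref{eq:leiblap} for the Laplacian and the bilinearity of $\H{h_i}$, one gets
\[
\Delta_\Ho(fX^\flat)=f\Delta_\Ho X^\flat-\Delta f\,X^\flat-2\sum_i\la\nabla f,\nabla g_i\ra\,\d h_i-2\sum_i g_i\,\H{h_i}(\nabla f,\cdot).
\]
Applying $\sharp$ and recognising that, by formula \eqref{eq:covvsm} together with the definition \eqref{eq:nablac} of $\nabla_{\nabla f}X$, the sum of the last two terms on the right is precisely $2\nabla_{\nabla f}X$ gives the claim.

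The main obstacle I foresee is the integration-by-parts computation in the first step: one must keep track of a number of terms (eight on the right hand side after expansion, plus the terms produced by the Hessian identity) and observe that all but three cancel. The use of \eqref{eq:hessesteso} is delicate because it requires the functions involved to be in $H^{2,2}(\X)$, which is fine here since $\fsm\subset H^{2,2}(\X)$, and because one has to correctly match the three terms produced by the Hessian identity against the three terms in $-f\d\Delta g-\Delta f\,\d g-2\H g(\nabla f,\cdot)$. Once this symbolic computation is carried out cleanly, the rest of the argument is routine.
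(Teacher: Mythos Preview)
Your proposal is correct and follows the same overall strategy as the paper: verify the defining identity of $\Delta_\Ho$ against test forms $\eta=h\,\d k$, then deduce the second formula from the first by linearity on $X=\sum_i g_i\nabla h_i$. For the second identity your argument coincides with the paper's.

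The one place where the paper proceeds differently is in handling the term $\int\la\d(f\d g),\d\eta\ra\,\d\mm$. Rather than expanding $\la\d f\wedge\d g,\d h\wedge\d k\ra$ pointwise and integrating by parts via \eqref{eq:hessesteso}, the paper invokes Proposition~\ref{prop:contobrutto} (the explicit codifferential formula for $k$-forms) to write this directly as $\int\la\delta(\d f\wedge\d g),\eta\ra\,\d\mm$ with $\delta(\d f\wedge\d g)=\Delta g\,\d f-\Delta f\,\d g-[\nabla f,\nabla g]^\flat$. Similarly, for the $\delta$-term the paper observes that $\delta(f\d g)=-\la\nabla f,\nabla g\ra-f\Delta g\in W^{1,2}(\X)$ and passes to $\int\la\d(\delta(f\d g)),\eta\ra\,\d\mm$, expanding via Proposition~\ref{prop:gradehess}. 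Adding the two contributions and using $[\nabla f,\nabla g]^\flat=\H g(\nabla f,\cdot)-\H f(\nabla g,\cdot)$ yields the result with essentially no bookkeeping. Your direct expansion would work, but the route through Proposition~\ref{prop:contobrutto} sidesteps exactly the cancellation difficulty you flagged as the main obstacle.
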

\begin{proof} The right hand sides of \eqref{eq:leibh1} and \eqref{eq:leibhodge} define forms in $L^2(T^*\X)$, so the statements make sense. From the definition of $\Delta_\Ho$ and a density argument, to prove  \eqref{eq:leibh1} it is sufficient to show that for any $\tilde f,\tilde g\in\fsm$ we have
\[
\int \la-f\,\d\Delta g-\Delta f\,\d g-2\H g(\nabla f,\cdot),\tilde f\d\tilde g\ra\,\d\mm=\int \la \d(f\d g),\d(\tilde f\d\tilde g)\ra+ \delta(f\d g)\delta(\tilde f\d\tilde g)\,\d\mm.
\]
To this aim, notice that from formula \eqref{eq:delta1form} and the fact that $(f\d g)^\sharp=f\nabla g$ we get $\delta(f\,\d g)=-\la\nabla f,\nabla g\ra-f\Delta g\in W^{1,2}(\X)$ and thus
\[
\begin{split}
\int \delta(f\d g)\delta(\tilde f\d\tilde g)\,\d\mm&=\int \la \d(-\la\nabla f,\nabla g\ra-f\Delta g),\tilde f\d\tilde g\ra\,\d\mm\\
&=-\int \la  \H f(\nabla g,\cdot)+\H g(\nabla f,\cdot )+\Delta g\,\d f+f\d\Delta g,\tilde f\nabla \tilde g\ra\,\d\mm.
\end{split}
\]
Similarly, we have $\d(f\d g)=\d f\wedge\d g$ and using Proposition \ref{prop:contobrutto} in the case $k=1$ we see that   $\d f\wedge\d g\in D(\delta_2)$ with
\[
\begin{split}
\int \la \d (f\d g),\d (\tilde f\d\tilde g)\ra\,\d\mm&=\int \la\delta(\d f\wedge\d g),\tilde f\d\tilde g\ra\,\d\mm\\
&=\int \la\Delta g\d f-\Delta f\d g-[\nabla f,\nabla g]^\flat,\tilde f\d\tilde g\ra\,\d\mm.
\end{split}
\]
Then \eqref{eq:leibh1} follows noticing  that $[\nabla f,\nabla g]^\flat=\H g(\nabla f,\cdot)-\H f(\nabla g,\cdot)$.

For \eqref{eq:leibhodge} notice that the fact that $X^\flat,fX^\flat$ are in $D(\Delta_{\Ho,1})$ follows by what we just proved and the fact that $\fsm$ is an algebra. Then by linearity it is sufficient to consider the case $X=g_1\nabla g_2$, $g_1,g_2\in\fsm$, and in this case from \eqref{eq:leibh1} we get
\[
\begin{split}
\Delta_\Ho(fg_1\,\d g_2)&=-fg_1\,\d\Delta g_2-\Delta(fg_1)\d g_2-2\H{g_2}(\nabla(fg_1),\cdot)\\
&=-fg_1\,\d\Delta g_2-f \Delta g_1\d g_2-g_1\Delta f\,\d g_2-2\la\nabla f,\nabla g_1\ra\d g_2\\
&\qquad\qquad\qquad\qquad\qquad\qquad-2 f\H{g_2}(\nabla g_1,\cdot)-2 g_1\H{g_2}(\nabla f,\cdot)\\
&=f\Delta_{\Ho}(g_1\d g_2)-\Delta f(g_1\d g_2)-2(\nabla_{\nabla f}(g_1\nabla g_2))^\sharp,
\end{split}
\]
having recalled that $\nabla(g_1\nabla g_2)=\nabla g_1\otimes\nabla g_2+g_1\H{g_2}^\sharp$ in the last step.
\end{proof}
We shall need the Leibniz rule for the measure valued Laplacian:
\begin{equation}
\label{eq:leibml}
\left.\begin{array}{ll}
&f\in\fsm\\
&g\in D(\bd)
\end{array}\right\}
\qquad\Rightarrow\qquad fg\in D(\bd)\quad\text{ and }\quad\bd(fg)=\bar f\bd g+\big(\Delta fg+2\la\nabla f,\nabla g\ra\big)\mm,
\end{equation}
where $\bar f$ is the continuous representative of $f$. This can be proved as \eqref{eq:leiblap}:  just notice that for any $\bar \varphi:\X\to\R$ Lipschitz with bounded support the function $\bar f\bar\varphi$ is also Lipschitz with bounded support and
\[
\begin{split}
-\int\la \nabla\varphi,\nabla(fg)\ra\,\d\mm&=-\int g\la\nabla\varphi,\nabla f\ra+f\la\nabla\varphi,\nabla g\ra\,\d\mm\\
&=\int- \la\nabla(\varphi g),\nabla f\ra-\la\nabla(\varphi f),\nabla g\ra+2\varphi\la\nabla f,\nabla g\ra\,\d\mm\\
&=\int \varphi g\Delta f+2\varphi\la\nabla f,\nabla g\ra\,\d\mm+\int\bar\varphi\bar f\,\d\bd g,
\end{split}
\]
which is the claim.

We shall also make use of the duality formula
\begin{equation}
\label{eq:dualsym}
|A_{\sf Sym}|_\HS^2=\esssup \left(2A:\sum_{i=1}^m\nabla h_i\otimes\nabla h_i-\left|\sum_{i=1}^n\nabla h_i\otimes\nabla h_i\right|_\HS^2\right)
\end{equation}
valid for every $A\in L^2(T^{\otimes 2}\X)$, where the $\esssup$ is taken among all $m\in\N$ and $h_1,\ldots,h_m\in\fsm$. This is an instance of formula \eqref{eq:normsym} (recall that $\{\d h:h\in\fsm\}$ is a space of bounded elements generating $L^2(T^*\X)$ in the sense of modules).

We now have all the ingredients needed to reinterpret the key Lemma \ref{le:lemmachiave} in terms of the differential calculus developed so far:
\begin{lemma}\label{le:riscritto}
Let $X\in\vsm$. Then $X^\flat\in D(\Delta_{\Ho,1})$, $|X|^2\in D(\bd)$ and the inequality
\begin{equation}
\label{eq:bochner}
\bd\frac{|X|^2}{2}\geq \Big(|\nabla X|_\HS^2-\la X, (\Delta_\Ho X^\flat)^\sharp\ra+K|X|^2\Big)\mm
\end{equation}
holds
\end{lemma}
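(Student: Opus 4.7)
\textbf{Proof plan for Lemma \ref{le:riscritto}.} Write $X=\sum_{i}g_i\nabla f_i$ with $f_i,g_i\in\fsm$. The regularity claims come first: $X^\flat=\sum_i g_i\d f_i\in D(\Delta_{\Ho,1})$ follows directly from the linearity of $\Delta_\Ho$ and formula \eqref{eq:leibh1} of Proposition \ref{prop:orly}, while $|X|^2=\sum_{i,i'}g_ig_{i'}\la\nabla f_i,\nabla f_{i'}\ra \in D(\bd)$ follows by polarizing Proposition \ref{prop:regtest} to get $\la\nabla f_i,\nabla f_{i'}\ra\in D(\bd)$ and then multiplying by $g_ig_{i'}\in\fsm$ via the Leibniz rule \eqref{eq:leibml} (noting $\fsm$ is an algebra by \eqref{eq:testalg}).

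The heart of the proof is to recognize the measure $\mu=\mu((f_i),(g_i))$ from Lemma \ref{le:lemmachiave} as a concrete rearrangement of the Bochner expression. Using the identity $\Ggamma_2(f_i,f_{i'})=\tfrac12\bd\la\nabla f_i,\nabla f_{i'}\ra-\tfrac12(\la\nabla f_i,\nabla\Delta f_{i'}\ra+\la\nabla f_{i'},\nabla\Delta f_i\ra)\mm$ combined with \eqref{eq:leibml}, one obtains the representation
\[
\bd\frac{|X|^2}2=\sum_{i,i'}g_ig_{i'}\Ggamma_2(f_i,f_{i'})+R\mm
\]
for an explicit absolutely continuous density $R$ expressed through $\Delta f_i$, $\Delta g_i$, $\nabla g_i$ and $\H{f_i}$ (here one needs Proposition \ref{prop:gradehess} to compute $\nabla\la\nabla f_i,\nabla f_{i'}\ra$). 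Denoting by $E\mm$ the absolutely continuous part of $\mu$ appearing in its definition, the identity $\mu=\bd\tfrac{|X|^2}2-(R-E+K|X|^2)\mm$ is then immediate.

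The decisive algebraic step is to verify the pointwise identity
\[
R-E \;=\; |(\nabla X)_{\sf Asym}|_\HS^2-\la X,(\Delta_\Ho X^\flat)^\sharp\ra \qquad\mm\ae.
\]
This is established by expanding $(\Delta_\Ho X^\flat)^\sharp$ through Proposition \ref{prop:orly}, expanding $|\nabla X|_\HS^2$ via \eqref{eq:covvsm} and splitting into symmetric and antisymmetric parts through \eqref{eq:decompsym}, and using the symmetry \eqref{eq:hess12} of $\H{f_i}$. The computation is bookkeeping-heavy but structural: the $\la\nabla f_i,\nabla\Delta f_{i'}\ra$ and $\Delta(g_ig_{i'})\la\nabla f_i,\nabla f_{i'}\ra$ terms in $R$ precisely match the $-\la X,(\Delta_\Ho X^\flat)^\sharp\ra$ expansion, while the remaining mixed gradient–gradient terms assemble into $|(\nabla X)_{\sf Asym}|_\HS^2-|A_{\sf Sym}|^2_\HS$ with $A=\sum_i\nabla g_i\otimes\nabla f_i$, cancelling the corresponding half of $E$.

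Finally, we apply Lemma \ref{le:lemmachiave}: its first conclusion $\mu^s\geq 0$ together with the pointwise inequality \eqref{eq:parteac1} yields a control on the density $\rho=\frac{\d\mu}{\d\mm}$. Indeed, \eqref{eq:parteac1} reads $|\nabla X:T|^2\leq \rho\,|T|_\HS^2$ for every $T=\sum_j\nabla h_j\otimes\nabla h_j$; since such $T$ are symmetric we have $\nabla X:T=(\nabla X)_{\sf Sym}:T$, so applying $2\nabla X:T-|T|_\HS^2\leq 2|\nabla X:T|-|T|_\HS^2\leq\rho$ and taking the essential supremum via the duality formula \eqref{eq:dualsym} gives $\rho\geq |(\nabla X)_{\sf Sym}|_\HS^2$ $\mm$-a.e. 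Combining,
\[
\bd\frac{|X|^2}2=\mu+(R-E+K|X|^2)\mm\geq\big(|(\nabla X)_{\sf Sym}|_\HS^2+|(\nabla X)_{\sf Asym}|_\HS^2-\la X,(\Delta_\Ho X^\flat)^\sharp\ra+K|X|^2\big)\mm,
\]
which by \eqref{eq:decompsym} is exactly \eqref{eq:bochner}. The main obstacle is the algebraic identity in the third paragraph: it is where every calculus rule developed so far (Hessian, covariant derivative, Hodge Laplacian, symmetric-antisymmetric decomposition of tensors) is tightly interlocked, and a single mis-signed term would break the argument.
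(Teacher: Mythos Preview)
Your proof is correct and follows essentially the same route as the paper: identify the measure $\mu((f_i),(g_i))$ of Lemma~\ref{le:lemmachiave} with $\bd\tfrac{|X|^2}{2}+\big(\la X,(\Delta_\Ho X^\flat)^\sharp\ra-K|X|^2-|(\nabla X)_{\sf Asym}|_\HS^2\big)\mm$ via Proposition~\ref{prop:orly}, \eqref{eq:covvsm} and \eqref{eq:leibml}, then use $\mu^s\geq 0$ for the singular part and combine \eqref{eq:parteac1} with the duality formula \eqref{eq:dualsym} to get $\rho\geq|(\nabla X)_{\sf Sym}|_\HS^2$ for the absolutely continuous part. Your organization via the auxiliary densities $R,E$ is just a relabeling of the paper's direct term-by-term computation; the one vague point is the phrase ``assemble into $|(\nabla X)_{\sf Asym}|_\HS^2-|A_{\sf Sym}|^2_\HS$ \ldots\ cancelling the corresponding half of $E$'', which does not quite parse as stated, but the target identity $R-E=|(\nabla X)_{\sf Asym}|_\HS^2-\la X,(\Delta_\Ho X^\flat)^\sharp\ra$ is correct and the paper itself leaves this step as ``little algebraic manipulation''.
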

\begin{proof} The fact that $X^\flat\in D(\Delta_{\Ho,1})$ comes from Proposition \ref{prop:orly}, while from the fact that for $f\in\fsm$ we have $|\nabla f|^2\in D(\bd)$ (Proposition \ref{prop:regtest}), so that the simple property \eqref{eq:leibml} gives that  $|X|^2\in D(\bd)$. Still \eqref{eq:leibml} gives, with little algebraic manipulation, that for $X=\sum_ig_i\nabla f_i$ we have 
\[
\begin{split}
\bd\frac{|X|^2}{2}=\sum_{i,j}&\Big(g_j\Delta g_i\la \nabla f_i,\nabla f_j\ra+\la\nabla g_i,\nabla g_j\ra\la\nabla f_i,\nabla f_j\ra\Big)\mm\\
&+\Big(2g_i\H{f_i}(\nabla f_j,\nabla g_j)+2g_i\H{f_j}(\nabla f_i,\nabla g_j)\Big)\mm+\frac12\overline g_i\overline g_j\bd\la\nabla f_i,\nabla f_j\ra,
\end{split}
\]
where $\overline g_i$ is the continuous representative of $g_i$. Writing $\bd\frac{|X|^2}{2}=\Delta_{ac}\frac{|X|^2}{2}\mm+\bd_{sing}\frac{|X|^2}{2}$ with $\bd_{sing}\frac{|X|^2}{2}\perp\mm$, the thesis will be achieved if we show that
\begin{equation}
\label{eq:thsing}
\bd_{sing}\frac{|X|^2}{2}\geq 0
\end{equation}
and
\begin{equation}
\label{eq:thac}
\Delta_{ac}\frac{|X|^2}{2}\geq  |\nabla X|_\HS^2-\la X, (\Delta_\Ho X^\flat)^\sharp\ra+K|X|^2,\qquad\mm\ae.
\end{equation}
By   Proposition \ref{prop:orly} we get  the formula
\[
\Delta_{\Ho}X^\flat=\sum_i-g_i\d\Delta f_i-\Delta g_i\d f_i-2\H {f_i}(\nabla g_i,\cdot)
\]
and therefore
\[
-\la X, (\Delta_\Ho X^\flat)^\sharp\ra=\sum_{i,j}g_ig_j\la\nabla \Delta f_i,\nabla f_j\ra+g_j\Delta g_i\la \nabla f_i,\nabla f_j\ra+2g_j\H{f_i}(\nabla g_i,\nabla f_j).
\]
Moreover, from formula \eqref{eq:covvsm} and recalling the definition of $A_{\sf Asym}$ in \eqref{eq:asim} we see that
\[
\begin{split}
(\nabla X)_{\sf Asym}&=\sum_{i}\frac{\nabla g_i\otimes\nabla f_i-\nabla f_i\otimes\nabla g_i}2
\end{split}
\]
and therefore
\[
|(\nabla X)_{\sf Asym}|_\HS^2=\sum_{i,j}\frac{\la \nabla f_i,\nabla f_j\ra\la\nabla g_i,\nabla g_j\ra-\la\nabla f_i,\nabla g_j\ra\la\nabla g_i,\nabla f_j\ra}2.
\]
Hence recalling the definition of the measure $\mu((f_i),(g_i))$ given in Lemma \ref{le:lemmachiave} we see that
\[
\mu\big((f_i),(g_i)\big)=\bd\frac{|X|^2}{2}+\Big(\la X, (\Delta_\Ho X^\flat)^\sharp\ra-K|X|^2-|(\nabla X)_{\sf Asym}|_\HS^2\Big)\mm.
\]
In particular, we see that the singular part of $\bd\frac{|X|^2}{2}$ w.r.t.\ $\mm$ coincides with the singular part of $\mu\big((f_i),(g_i)\big)$ w.r.t.\ $\mm$ and thus inequality \eqref{eq:partesing1} in Lemma \ref{le:lemmachiave}  gives \eqref{eq:thsing}.

On the other hand, inequality \eqref{eq:parteac1} in Lemma \ref{le:lemmachiave}  grants that for every $m\in\N$ and choice of $h_1,\ldots,h_m\in\fsm$ we have
\[
\left|\nabla X:\sum_{i=1}^m\nabla h_i\otimes\nabla h_i\right|\leq  \sqrt{ \Delta_{ac}\frac{|X|^2}{2}+ \la X, (\Delta_\Ho X^\flat)^\sharp\ra-K|X|^2-|(\nabla X)_{\sf Asym}|_\HS^2}\left|\sum_{i=1}^m\nabla h_i\otimes\nabla h_i\right|_\HS
\]
$\mm$-a.e., which after an application of Young's inequality at the right hand side gives
\[
2\nabla X:\sum_{i=1}^m\nabla h_i\otimes\nabla h_i-\left|\sum_{i=1}^m\nabla h_i\otimes\nabla h_i\right|_\HS^2\leq \Delta_{ac}\frac{|X|^2}{2}+  \la X, (\Delta_\Ho X^\flat)^\sharp\ra-K|X|^2-|(\nabla X)_{\sf Asym}|_\HS^2
\]
$\mm$-a.e.. Taking the essential supremum over all $m\in\N$ and choices of $h_1,\ldots,h_m\in\fsm$ and recalling the identity \eqref{eq:dualsym}  we obtain
\[
|(\nabla X)_{\sf Sym}|_\HS^2\leq \Delta_{ac}\frac{|X|^2}{2}+ X\cdot (\Delta_\Ho X^\flat)-K|X|^2-|(\nabla X)_{\sf Asym}|_\HS^2,\qquad\mm\ae,
\]
which by \eqref{eq:decompsym} is \eqref{eq:thac}.
\end{proof}
It might be worth to remark that for $X\in\vsm$ we have
\begin{equation}
\label{eq:integrale0}
\bd\frac{|X|^2}2(\X)=0.
\end{equation}
This can be easily checked picking a sequence $(\bar\varphi_n)$ of uniformly Lipschitz and uniformly bounded functions with bounded support and everywhere converging to 1, noticing that $\bd\frac{|X|^2}2(\X)$ is the limit of $\int\bar \varphi_n\,\d\bd\frac{|X|^2}{2}=-\int \nabla X:(\nabla \varphi_n\otimes X)\ra\,\d\mm$, that $|\nabla X:(\nabla \varphi_n\otimes X)|\leq |\nabla X|_\HS|X|\,|\nabla\varphi_n| $ and concluding by the dominate convergence theorem thanks to the fact that $ |\nabla X|_\HS|X|\in L^1(\mm)$.

In the foregoing discussion it will be useful to read the space $H^{1,2}_\Ho(T^*\X)$ in terms of vector fields rather than covector ones. Thus we give the following simple definition:
\begin{definition}
The space $H^{1,2}_\Ho(T\X)\subset L^2(T\X)$ is the space of vector fields $X$ such that $X^\flat\in H^{1,2}_\Ho(T^*\X)$ equipped with the norm $\|X\|_{H^{1,2}_\Ho(T\X)}:=\|X^\flat\|_{H^{1,2}_\Ho(T^*\X)}$.
\end{definition}

Lemma \ref{le:riscritto} directly gives the following inequality, which generalizes Corollary \ref{cor:bello} to   vector fields:
\begin{corollary}\label{cor:bello2}
We have $H^{1,2}_\Ho(T\X)\subset H^{1,2}_C(T\X)$ and
\begin{equation}
\label{eq:ehec}
\mathcal E_C(X)\leq \mathcal E_\Ho(X^\flat)-\frac K2\|X\|^2_{L^2(T\X)},\qquad\forall X\in H^{1,2}_\Ho(T\X).
\end{equation}
\end{corollary}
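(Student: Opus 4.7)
The plan is to first establish the inequality for test vector fields $X \in \vsm$ by integrating the pointwise Bochner-type inequality from Lemma \ref{le:riscritto}, and then to extend to all of $H^{1,2}_\Ho(T\X)$ by a density/lower semicontinuity argument, using Mazur's lemma at the end to guarantee membership in $H^{1,2}_C(T\X)$ rather than just in $W^{1,2}_C(T\X)$.

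For the first step, pick $X\in\vsm$. Then $X^\flat\in\ffsm{1}\subset H^{1,2}_\Ho(T^*\X)$, so $X\in H^{1,2}_\Ho(T\X)$, and Lemma \ref{le:riscritto} gives $|X|^2\in D(\bd)$ with
\[
\bd\tfrac{|X|^2}{2}\geq \big(|\nabla X|_\HS^2-\la X,(\Delta_\Ho X^\flat)^\sharp\ra+K|X|^2\big)\mm.
\]
I would then apply $\bd(|X|^2/2)(\X)=0$ (equation \eqref{eq:integrale0}) to integrate this inequality, getting
\[
\int|\nabla X|_\HS^2\,\d\mm\leq \int\la X^\flat,\Delta_\Ho X^\flat\ra\,\d\mm-K\|X\|_{L^2(T\X)}^2,
\]
and recognize the first term on the right as $2\eh(X^\flat)$ thanks to \eqref{eq:parth}. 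Dividing by $2$ yields the desired inequality for $X\in\vsm$.

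For the second step, given $X\in H^{1,2}_\Ho(T\X)$, by definition there is $(X_n)\subset\vsm$ with $X_n^\flat\to X^\flat$ in the $W^{1,2}_\Ho$ topology; in particular $X_n\to X$ in $L^2(T\X)$, $\eh(X_n^\flat)\to \eh(X^\flat)$, and $\|X_n\|_{L^2(T\X)}\to\|X\|_{L^2(T\X)}$. Step one applied to each $X_n$ gives $\ec(X_n)\leq \eh(X_n^\flat)-\tfrac{K}{2}\|X_n\|^2_{L^2(T\X)}$, whose right hand side is uniformly bounded. The $L^2(T\X)$-lower semicontinuity of $\ec$ (Theorem \ref{thm:basew12c}(iii)) then gives $\ec(X)\leq \liminf_n\ec(X_n)\leq \eh(X^\flat)-\tfrac{K}{2}\|X\|^2_{L^2(T\X)}$, which in particular shows $X\in W^{1,2}_C(T\X)$.

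To upgrade membership to $H^{1,2}_C(T\X)$ I would argue as follows. The uniform bound on $\ec(X_n)$ together with the $L^2(T\X)$ convergence $X_n\to X$ and the closure of $\nabla$ (Theorem \ref{thm:basew12c}(ii)) imply that, up to a non-relabeled subsequence, $X_n\rightharpoonup X$ weakly in $W^{1,2}_C(T\X)$. Since $\vsm$ is a linear subspace of $W^{1,2}_C(T\X)$, Mazur's lemma produces finite convex combinations of the $X_n$, which still lie in $\vsm$, converging strongly to $X$ in $W^{1,2}_C(T\X)$. Hence $X\in H^{1,2}_C(T\X)$, concluding the proof. The only conceptual subtlety is the passage from the pointwise Bochner inequality to the integrated one; the global integration by parts is free here because Lemma \ref{le:riscritto} already grants $|X|^2\in D(\bd)$ and the total-mass identity \eqref{eq:integrale0} was established precisely for this purpose, so no further regularity of the boundary terms is needed.
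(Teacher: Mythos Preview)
Your proof is correct and follows essentially the same route as the paper: integrate the Bochner-type inequality of Lemma \ref{le:riscritto} using \eqref{eq:integrale0} and \eqref{eq:parth} to get \eqref{eq:ehec} for $X\in\vsm$, then pass to general $X\in H^{1,2}_\Ho(T\X)$ by approximation and the $L^2(T\X)$-lower semicontinuity of $\ec$. You are in fact slightly more careful than the paper: the lower semicontinuity of $\ec$ alone only yields $X\in W^{1,2}_C(T\X)$, and your Mazur-lemma argument (bounded in $W^{1,2}_C$, hence weakly convergent, hence convex combinations in $\vsm$ converge strongly) is exactly what is needed to upgrade this to $X\in H^{1,2}_C(T\X)$, a step the paper leaves implicit.
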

\begin{proof}
Let  $X\in\vsm$ and compute the measure of the whole space $\X$ in inequality \eqref{eq:bochner}. Taking into account the simple identity \eqref{eq:integrale0} we obtain
\[
\int|\nabla X|_\HS^2\,\d\mm\leq \int \la X,(\Delta_\Ho X^\flat)^\sharp\ra-K|X|^2\,\d\mm.
\]
Recalling \eqref{eq:parth}, \eqref{eq:ehec} is proved for $X\in\vsm$, The general case then follows approximating an arbitrary $X\in H^{1,2}_\Ho(T\X)$ in the $H^{1,2}_\Ho$-norm  with vectors in $\vsm$ and recalling the $L^2(T\X)$-lower semicontinuity of $\ec$.
\end{proof}
A simple consequence of this corollary is the following bound, which adapts to the current context a classical idea of Bochner, on the dimension of $H^{1}_{dR}(\X)$ on spaces with non-negative Ricci curvature:
\begin{proposition}[Bounding $\dim(H^1_{dR}(\X))$ on $\RCD(0,\infty)$ spaces]
Let $(X,\sfd,\mm)$ be an $\RCD(0,\infty)$ space and  $(E_i)_{i\in\N\cup\{\infty\}}$ the dimensional decomposition of $\X$ associated to the cotangent module $L^2(T^*\X)$ as given by Proposition \ref{prop:dimdec}. Denote by $\text{\sc n}_{\rm min}$ the minimal index $i$ in  $\N\cup\{\infty\}$  such that $\mm(E_i)>0$.

Then $\dim(H^1_{dR}(\X))\leq \text{\sc n}_{\rm min}$.
\end{proposition}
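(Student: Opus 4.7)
The plan is to combine the Hodge theorem (Theorem \ref{thm:hodge}) with the Bochner-type inequality of Corollary \ref{cor:bello2} in order to implement the classical Bochner argument in this module setting. First I would replace $H^1_{dR}(\X)$ with the isomorphic space $\harm 1 \subset L^2(T^*\X)$ of harmonic 1-forms. For any $\omega \in \harm 1$, the characterization \eqref{eq:farm} gives $\d\omega = \delta\omega = 0$ and hence $\eh(\omega) = 0$, so Corollary \ref{cor:bello2} applied with $K = 0$ yields $\ec(\omega^\sharp) \leq 0$. Since $\ec$ is non-negative, this forces the covariant derivative $\nabla\omega^\sharp$ to vanish $\mm$-a.e.; each harmonic form therefore produces a parallel vector field $X := \omega^\sharp \in H^{1,2}_\Ho(T\X) \subset H^{1,2}_C(T\X)$.

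Next, given two harmonic $\omega_i,\omega_j$ with associated parallel fields $X_i,X_j$, Proposition \ref{prop:compmetr} yields $\la X_i,X_j\ra \in H^{1,1}(\X)$, and formula \eqref{eq:compmetr} combined with $\nabla X_i = \nabla X_j = 0$ gives $\d\la X_i,X_j\ra = 0$. The crucial step is then to conclude that $\la X_i,X_j\ra$ is $\mm$-a.e.\ equal to a constant $c_{ij}$, a Liouville-type property encoding the irreducibility of the Dirichlet form $\E$ on $\supp(\mm)$. I expect this to be the main obstacle: a clean argument likely goes via Theorem \ref{thm:spppi}, which shows that $|\d f| = 0$ implies $f$ is constant along $\ppi$-a.e.\ curve for every test plan $\ppi$, and then exploits the abundance of $W_2$-absolutely continuous curves of bounded compression on $\RCD$ spaces (coming from Rajala's result \cite{Rajala12-2}) to connect $\supp(\mm)$. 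Once constancy of $\la X_i,X_j\ra$ is established, the case $\mm(\X) = \infty$ becomes trivial: the identity $c_{ii}\,\mm(\X) = \|X_i\|^2_{L^2(T\X)} < \infty$ forces $c_{ii} = 0$, whence $X_i = 0$ and the claim holds with $\dim H^1_{dR}(\X) = 0 \leq \text{\sc n}_{\rm min}$.

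Finally, assume $\mm(\X) < \infty$ and let $\omega_1,\ldots,\omega_n \in \harm 1$ be $\R$-linearly independent, so that the associated parallel fields $X_1,\ldots,X_n$ are linearly independent in $L^2(T\X)$. Their $L^2$-Gram matrix $\bigl(\int \la X_i,X_j\ra\,\d\mm\bigr)_{i,j} = \mm(\X)\,(c_{ij})$ is non-singular, so the constant matrix $(c_{ij})$ has full rank $n$. On the other hand, writing $n_{\min}$ for $\text{\sc n}_{\rm min}$ and using Proposition \ref{prop:localdimension} to choose a local basis $Y_1,\ldots,Y_{n_{\min}}$ of the tangent module on $E_{n_{\min}}$, I would write $\nchi_{E_{n_{\min}}} X_i = \sum_k f_{ik} Y_k$ in coordinates and compute
\[
c_{ij}\,\nchi_{E_{n_{\min}}} \;=\; \la X_i,X_j\ra\,\nchi_{E_{n_{\min}}} \;=\; \sum_{k,l} f_{ik}\,f_{jl}\,\la Y_k,Y_l\ra \qquad \mm\ae,
\]
which exhibits $(c_{ij})\,\nchi_{E_{n_{\min}}}$ pointwise as $F B F^{T}$ with $F$ of format $n \times n_{\min}$ and $B = (\la Y_k,Y_l\ra)$. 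Hence the rank of the constant matrix $(c_{ij})$ is at most $n_{\min}$ on the positive-measure set $E_{n_{\min}}$, and comparing with the earlier lower bound gives $n \leq n_{\min}$, as required.
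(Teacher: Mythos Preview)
Your overall strategy matches the paper's: reduce to $\harm 1$ via Hodge, use Corollary \ref{cor:bello2} with $K=0$ to get $\nabla\omega^\sharp=0$ for harmonic $\omega$, deduce that pointwise scalar products $\la X_i,X_j\ra$ are constant, and finish with a dimension count on $E_{n_{\min}}$. The proof is essentially correct, but two places deserve comment.

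For the constancy of $f:=\la X_i,X_j\ra$, your proposed route through Theorem \ref{thm:spppi} and Rajala's geodesics is more laborious than necessary and, as written, technically incomplete: that theorem applies to $W^{1,2}$ functions, while $f$ is only in $H^{1,1}(\X)$, so you would first need to truncate. The paper instead truncates to $f_n\in H^{1,1}\cap L^\infty\subset L^2$ with $|\d f_n|=0$, invokes Proposition \ref{prop:w11w12} (or \ref{prop:baseh11}(ii)) to land in $W^{1,2}(\X)$, and then uses the Sobolev-to-Lipschitz property \eqref{eq:sobtolip} to conclude that $f_n$ has a $0$-Lipschitz representative, hence is constant. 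This entirely sidesteps the need to manufacture connecting test plans.

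For the final step, the paper takes an orthonormal family $\omega_1,\ldots,\omega_n\in\harm 1$, so the constants become $|\omega_i|^2=\mm(\X)^{-1}$ and $\la\omega_i,\omega_j\ra=0$ for $i\neq j$; pointwise orthogonality then makes local independence on $E_{n_{\min}}$ immediate, and Proposition \ref{prop:localdimension} gives $n\leq n_{\min}$. Your Gram-matrix rank argument is a valid alternative and is arguably more robust (it doesn't require passing to an orthonormal system), though you should note that the local basis lives in the cotangent module by hypothesis and is transported to the tangent side via the musical isomorphism (or invoke Theorem \ref{thm:fingenrefl}).
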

\begin{proof} By Theorem \ref{thm:hodge} we know that $\dim(H^1_{dR}(\X))=\dim(\harm 1)$, hence if $\harm 1=\{0\}$ there is nothing to prove. Thus assume that  $\dim(\harm 1)\geq 1$ and notice that for every  $\omega\in\harm 1$ inequality \eqref{eq:ehec} gives that
\[
\ec(\omega^\sharp)\leq \eh(\omega)=\int\la \omega,\Delta_H\omega\ra\,\d\mm=0.
\]
Therefore, by definition of $\ec$,  the covariant derivative of $\omega^\sharp$ is identically 0. 

It follows from point $(ii)$ in Proposition \ref{prop:compmetr}  that for  any two $\omega_1,\omega_2\in\harm 1$ the function $f:=\la\omega_1,\omega_2\ra=\la\omega_1^\sharp,\omega_2^\sharp\ra$ is in $H^{1,1}(\X)$ with $|\d f|=0$ $\mm$-a.e.. It is then clear that such $f$ must be $\mm$-a.e.\ constant: just recall that by the property \eqref{eq:tronch11} of the truncated functions $f_n:=\max\{\min\{f,n\},-n\}\in L^2(\mm)$ and Proposition \ref{prop:w11w12} we have that $f_n\in W^{1,2}(\X)$ so that from the Sobolev-to-Lipschitz property \eqref{eq:sobtolip} we infer that $f_n$ is $\mm$-a.e.\ constant. Being this true for every $n\in\N$, $f$ is $\mm$-a.e.\ constant as claimed.

Now let $\omega_1,\ldots,\omega_n\in (\harm 1,\|\cdot\|_{L^2(T^*\X)})$ orthonormal: since $\int\la\omega_i,\omega_j\ra\,\d\mm$ is equal to 1 if $i=j$ and to 0 otherwise, what we just proved implies that $\mm(\X)<\infty$ and 
\[
|\omega_i|^2=\frac1{\mm(X)},\quad\mm\ae\quad\forall i,\qquad\text{and}\qquad \la\omega_i,\omega_j\ra=0,\quad\mm\ae\quad\forall i\neq j.
\]
It is then clear that $\omega_1,\ldots,\omega_n$ are locally independent on $\X$ (Definition \ref{def:locind}) and in particular locally independent on $E_{\text{\sc n}_{\rm min}}$. By Proposition \ref{prop:localdimension} we deduce that $n\leq \text{\sc n}_{\rm min}$ and being this true for any choice of $n$ orthonormal elements of $\harm 1$, we deduce that $\dim(\harm 1)\leq \text{\sc n}_{\rm min}$, as desired.
\end{proof}

\begin{remark}{\rm

There is no finite dimensionality assumption in this last statement and thus we  might certainly have $ \text{\sc n}_{\rm min}=\infty$ so that taken as it is this proposition does not tell that much.   The related question is thus if one can say that on $\RCD(K,N)$ spaces one has  $ \text{\sc n}_{\rm min}\leq N$.

To answer this question is outside the scope of this work, but we remark that in the recent paper \cite{Mondino-Naber14} Mondino-Naber proved that $\RCD(K,N)$ spaces admit  biLipschitz charts - defined on Borel sets - where the corresponding target Euclidean spaces has dimension  bounded from above by $N$. Stated as it is, such result does not really allow to conclude due to the lack of information about the behavior of the measure on the chart, but apparently (private communication of the authors) with minor modification of the arguments one can also show that the image measure is absolutely continuous w.r.t.\ the Lebesgue one with bounded density. With this additional information and using the results of Section \ref{se:mbd} one could easily conclude that $ \text{\sc n}_{\rm min}\leq N$.

A different approach to the same result, more in line with the analysis carried out here, is to provide in the non-smooth setting a rigorous justification to the computations done by Sturm in \cite{Sturm14}: there the author showed how to deduce via purely algebraic means out of the Bochner inequality with the dimension term the fact that the abstract tangent space has dimension bounded by $N$.  Although the `smoothness' assumptions in \cite{Sturm14}  are not really justified in the general setting, the style of the arguments used seems suitable of adaptation in the language of $L^\infty(\mm)$-modules developed here.
}\fr\end{remark}

Another direct consequence of Lemma \ref{le:riscritto} is that it allows to define the Ricci  curvature:
\begin{theorem}[Ricci curvature]\label{thm:ricci}
There exists a unique continuous map $\ric: [H^{1,2}_\Ho(T\X)]^2\to \mes(\X)$ such that for every $X,Y\in\vsm$ it holds
\begin{equation}
\label{eq:defricci}
\ric(X,Y)=\bd\frac{\la X, Y\ra}2+\Big(\frac12\la X,(\Delta_\Ho Y^\flat)^\sharp\ra+ \frac12\la Y,(\Delta_\Ho X^\flat)^\sharp\ra-\nabla X:\nabla Y\Big)\mm.
\end{equation}
Such map is bilinear, symmetric and satisfies
\begin{align}
\label{eq:riccibound}
\ric(X,X)&\geq K|X|^2\mm,\\
\label{eq:riccitotal}
\ric(X,Y)(\X)&=\int\la\d X^\flat,\d Y^\flat\ra+\delta X^\flat\delta Y^\flat-\nabla X:\nabla Y\,\d\mm,\\
\label{eq:riccitv}
\|\ric(X,Y)\|_{\sf TV}&\leq2\sqrt{\eh(X^\flat)+K^-\|X\|_{L^2(T\X)}^2}\,\sqrt{\eh(Y^\flat)+K^-\|Y\|_{L^2(T\X)}^2},
\end{align}
for every $X,Y\in H^{1,2}_\Ho(T\X)$, where $K^-:=\max\{0,-K\}$.
\end{theorem}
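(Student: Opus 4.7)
The strategy is to first verify that the formula \eqref{eq:defricci} defines a sensible bilinear symmetric measure for test vector fields, then use the Bochner-type inequality of Lemma \ref{le:riscritto} together with the Cauchy--Schwarz inequality for measures from Lemma \ref{le:lambda} to obtain a total-variation bound which, via the density of $\vsm$ in $H^{1,2}_\Ho(T\X)$, yields continuous extension. Uniqueness of the extension is trivial once continuity is established.

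Step 1 (well-posedness on $\vsm$). For $X,Y\in\vsm$, each term on the right of \eqref{eq:defricci} makes sense: writing $X=\sum_i g_i\nabla f_i$ and $Y=\sum_j\tilde g_j\nabla\tilde f_j$, Proposition \ref{prop:regtest} gives $\la\nabla f_i,\nabla\tilde f_j\ra\in D(\bd)$, so iterated applications of the Leibniz rule \eqref{eq:leibml} show $\la X,Y\ra\in D(\bd)$; Proposition \ref{prop:orly} gives $X^\flat,Y^\flat\in D(\Delta_{\Ho,1})$; and $\nabla X:\nabla Y\in L^1(\mm)$ since $\nabla X,\nabla Y\in L^2(T^{\otimes 2}\X)$ by Theorem \ref{thm:basew12c}. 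Bilinearity and symmetry on $\vsm\times\vsm$ are immediate from \eqref{eq:defricci}.

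Step 2 (basic identities for test fields). The inequality $\ric(X,X)\geq K|X|^2\mm$ for $X\in\vsm$ is exactly the content of Lemma \ref{le:riscritto}, once one recognizes the right-hand side of \eqref{eq:defricci} as a rewriting of \eqref{eq:bochner}. For \eqref{eq:riccitotal}, evaluating $\ric(X,Y)$ on $\X$, polarization of \eqref{eq:integrale0} gives $\bd\frac{\la X,Y\ra}{2}(\X)=0$, while the polarized characterization of $\Delta_\Ho$ (coming from the definition of $D(\Delta_{\Ho,1})$ tested against $X^\flat,Y^\flat\in\ffsm{1}\subset H^{1,2}_\Ho(\Lambda^{1}T^*\X)$) yields $\int\la X,(\Delta_\Ho Y^\flat)^\sharp\ra\,\d\mm=\int\la \d X^\flat,\d Y^\flat\ra+\delta X^\flat\,\delta Y^\flat\,\d\mm$, and symmetrically for the other term, giving \eqref{eq:riccitotal}.

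Step 3 (total-variation bound). Introduce the auxiliary measure $\widetilde\ric(X,Y):=\ric(X,Y)+K^-\la X,Y\ra\mm$; by Step 2 it satisfies $\widetilde\ric(X,X)\geq (K+K^-)|X|^2\mm\geq 0$. Applied to $X+tY$ this gives
\[
t^2\widetilde\ric(Y,Y)+2t\,\widetilde\ric(X,Y)+\widetilde\ric(X,X)\geq 0\qquad\forall t\in\R,
\]
so Lemma \ref{le:lambda}, in particular the total-variation inequality \eqref{eq:mu2tv}, yields $\|\widetilde\ric(X,Y)\|_{\sf TV}\leq \sqrt{\|\widetilde\ric(X,X)\|_{\sf TV}\,\|\widetilde\ric(Y,Y)\|_{\sf TV}}$. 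Since $\widetilde\ric(X,X)$ is non-negative its total variation equals its total mass, and using \eqref{eq:riccitotal} together with $\ec(X)\geq 0$ gives $\|\widetilde\ric(X,X)\|_{\sf TV}\leq 2\eh(X^\flat)+K^-\|X\|^2_{L^2(T\X)}$. Combining with the triangle inequality $\|\ric(X,Y)\|_{\sf TV}\leq \|\widetilde\ric(X,Y)\|_{\sf TV}+K^-\|X\|_{L^2(T\X)}\|Y\|_{L^2(T\X)}$ and the elementary bound $\sqrt{cd}+\sqrt{ef}\leq\sqrt{(c+e)(d+f)}$ applied with $c=2\eh(X^\flat)+K^-\|X\|^2-K^-\|X\|^2$, $e=K^-\|X\|^2$ (and similarly for $Y$) produces \eqref{eq:riccitv}.

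Step 4 (extension). Since $\ffsm 1$ is, by definition, dense in $H^{1,2}_\Ho(\Lambda^{1}T^*\X)$ and $\ffsm 1=\{X^\flat:X\in\vsm\}$, the space $\vsm$ is dense in $H^{1,2}_\Ho(T\X)$. Bilinearity of $\ric$ on $\vsm$ together with the bound \eqref{eq:riccitv} shows $\ric$ is uniformly continuous on bounded sets, and so extends uniquely to a continuous bilinear symmetric map on $H^{1,2}_\Ho(T\X)\times H^{1,2}_\Ho(T\X)$ with values in the Banach space $(\mes(\X),\|\cdot\|_{\sf TV})$. Inequality \eqref{eq:riccibound} passes to the limit because $Z_n\to Z$ in $H^{1,2}_\Ho(T\X)$ implies $|Z_n|^2\mm\to|Z|^2\mm$ in total variation, and the cone of non-negative measures is closed in this norm. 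The identities \eqref{eq:riccitotal}--\eqref{eq:riccitv} pass to the limit by the same continuity.

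\textbf{Expected main obstacle.} The bookkeeping in Step 3 is the only subtle point: one must split $\ric$ into a non-negative piece and a correction so that Lemma \ref{le:lambda} applies, and then combine the resulting Cauchy--Schwarz with the triangle inequality in a way that collapses to the clean constant $2$ in \eqref{eq:riccitv}. All other steps are either direct quotations of results already established in the excerpt (Lemma \ref{le:riscritto}, Proposition \ref{prop:orly}, \eqref{eq:integrale0}, \eqref{eq:parth}) or routine density/continuity arguments.
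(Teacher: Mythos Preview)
Your approach is essentially identical to the paper's: define $\ric$ on $\vsm$ by \eqref{eq:defricci}, read off \eqref{eq:riccibound} and \eqref{eq:riccitotal} from Lemma \ref{le:riscritto} and \eqref{eq:integrale0}, shift $\ric$ to make it non-negative, apply Lemma \ref{le:lambda} to the family $\widetilde\ric(X+tY,X+tY)$, and extend by density. The paper shifts by $-K\la X,Y\ra\mm$ rather than your $+K^-\la X,Y\ra\mm$, but this is cosmetic (the two agree when $K\le 0$ and when $K>0$ your shift is trivial); both collapse to the same final bound via the same Cauchy--Schwarz inequality $\sqrt{cd}+\sqrt{ef}\le\sqrt{(c+e)(d+f)}$.

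One correction in your Step 3: with the values you wrote, $c=2\eh(X^\flat)+K^-\|X\|^2-K^-\|X\|^2=2\eh(X^\flat)$, the term $\sqrt{cd}$ does not match the bound you obtained for $\|\widetilde\ric(X,Y)\|_{\sf TV}$. The correct assignment is
\[
c=2\eh(X^\flat)+K^-\|X\|^2,\quad d=2\eh(Y^\flat)+K^-\|Y\|^2,\quad e=K^-\|X\|^2,\quad f=K^-\|Y\|^2,
\]
so that $\sqrt{cd}$ bounds $\|\widetilde\ric(X,Y)\|_{\sf TV}$, $\sqrt{ef}=K^-\|X\|\|Y\|$ bounds the correction, and $(c+e)(d+f)=4(\eh(X^\flat)+K^-\|X\|^2)(\eh(Y^\flat)+K^-\|Y\|^2)$ gives exactly \eqref{eq:riccitv}. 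With this fix your argument is complete and matches the paper's.
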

\begin{proof} By Lemma \ref{le:riscritto} it directly follows by polarization that  $\la X, Y\ra\in D(\bd)$ for $X,Y\in\vsm$ so that \eqref{eq:defricci} makes sense. It is then clear that \eqref{eq:defricci} defines a map $\ric:[\vsm]^2\to\mes(\X)$ which is bilinear and symmetric.  The validity of \eqref{eq:riccibound} for $X\in\vsm$ is just a restatement of inequality \eqref{eq:bochner} and \eqref{eq:riccitotal} for $X,Y\in\vsm$ follows from \eqref{eq:integrale0} and the very definition of Hodge Laplacian.

Putting $\widetilde\ric(X,Y):=\ric(X,Y)-K\la X, Y\ra\mm$ for every $X,Y\in\vsm$, we have that $\widetilde\ric(X,X)\geq 0$ and thus also that
\begin{equation}
\label{eq:lezione}
\begin{split}
\|\widetilde\ric(X,X)\|_{\sf TV}=\widetilde\ric(X,X)(\X)&=2\eh(X^\flat)-2\ec(X)-K\|X\|_{L^2(T\X)}^2\\
&\leq2\eh(X^\flat)-K\|X\|_{L^2(T\X)}^2.
\end{split}
\end{equation}
Then for arbitrary $X,Y\in\vsm$ and $\lambda\in\R$ we have $\widetilde\ric(X+\lambda Y,X+\lambda Y)\geq 0$ and therefore by bilinearity and symmetry we get
\[
\lambda^2\widetilde\ric(X,X)+2\lambda\widetilde\ric(X,Y)+\widetilde\ric(Y,Y)\geq 0,
\]
so that the bound \eqref{eq:mu2tv} in Lemma \ref{le:lambda}  and inequality \eqref{eq:lezione} grant that
\[
\|\widetilde\ric(X,Y)\|_{\sf TV}\leq \sqrt{2\eh(X^\flat)-K\|X\|_{L^2(T\X)}^2}\sqrt{2\eh(Y^\flat)-K\|Y\|_{L^2(T\X)}^2}.
\]
Therefore since $\|\ric(X,Y)\|_{\sf TV}\leq \|\widetilde\ric(X,Y)\|_{\sf TV}+|K|\int|\la X, Y\ra|\,\d\mm$ we get
\[
\begin{split}
\|\ric(X,Y)\|_{\sf TV}&\leq \sqrt{2\eh(X^\flat)-K\|X\|_{L^2(T\X)}^2}\sqrt{2\eh(Y^\flat)-K\|Y\|_{L^2(T\X)}^2}+|K|\int |\la X, Y\ra|\,\d\mm \\
&\leq \sqrt{2\eh(X^\flat)+(|K|-K)\|X\|_{L^2(T\X)}^2}\sqrt{2\eh(Y^\flat)+(|K|-K)\|Y\|_{L^2(T\X)}^2},
\end{split}
\]
which is \eqref{eq:riccitv} for $X,Y\in\vsm$. This also proves that $\ric:[\vsm]^2\to\mes(\X)$ is continuous w.r.t.\ the $H^{1,2}_\Ho(T\X)$-norm, thus granting existence and uniqueness of the continuous extension of such map to $[H^{1,2}_\Ho(T\X)]^2$ and the conclusion. 
\end{proof}
We shall refer to the map $\ric:[H^{1,2}_\Ho(T\X)]^2\to\mes(\X)$ given by the previous theorem as to the {\bf Ricci curvature}. Notice that formula \eqref{eq:defricci} for $X=Y$ gives the familiar Bochner identity
\[
\bd\frac{|X|^2}{2}=\big(|\nabla X|_\HS^2-\la X,(\Delta_\Ho X^\flat)^\sharp\ra\big)\mm +\ric(X,X),\qquad\forall X\in\vsm,
\]
and in particular
\[
\bd\frac{|\nabla f|^2}{2}=\big(|\H f|_\HS^2+\la\nabla f,\nabla\Delta f\ra\big)\mm+\ric(\nabla f,\nabla f),\qquad\forall f\in\fsm.
\]
The expression \eqref{eq:defricci} cannot be written for generic $X,Y\in H^{1,2}_\Ho(T\X)$ because we don't know whether $\la X,Y\ra$ is in $D(\bd)$ nor if $X^\flat,Y^\flat$ are in $D(\Delta_{\Ho,1})$, but still explicit expressions   for the Ricci curvature  can be obtained   by `throwing some derivatives on the function one is integrating':
\begin{proposition}[Some representations of $\ric$]
For every $X,Y\in H^{1,2}_\Ho(T\X)$ and $f\in\fsm$ we have
\begin{equation}
\label{eq:riccialt}
\int \bar f\,\d\ric(X,Y)=\int \la \d X^\flat,\d(fY^\flat)\ra+ \delta X^\flat \delta(fY^\flat)-\nabla X:\nabla(fY) \,\d\mm
\end{equation}
and the more symmetric expression
\begin{equation}
\label{eq:ricsimm}
\begin{split}
\int \bar f\,\d\ric(X,Y)=\int &\H f(X,Y)+\d f\big(X \div( Y)+Y \div(X)\big)\\
&\qquad+f\big(\la \d X^\flat,\d Y^\flat\ra + \delta X^\flat \delta Y^\flat-\nabla X:\nabla Y\big)\,\d\mm,
\end{split}
\end{equation}
where $\bar f$ is the continuous representative of $f$.
\end{proposition}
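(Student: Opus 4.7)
The plan is to first establish both formulas for test vector fields $X,Y\in\vsm$ and $f\in\fsm$, and then extend them to $X,Y\in H^{1,2}_\Ho(T\X)$ by density and continuity. For the test-field case, the key input is Lemma \ref{le:riscritto} (so that $|X|^2,\la X,Y\ra\in D(\bd)$ and $X^\flat,Y^\flat\in D(\Delta_{\Ho,1})$) together with the definition \eqref{eq:defricci} of $\ric$.

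To prove \eqref{eq:riccialt} for $X,Y\in\vsm$, I would start from the right-hand side and use that $fY^\flat\in D(\Delta_{\Ho,1})$ (as $fY\in\vsm$) together with the definition of $\Delta_\Ho$ to rewrite
\[
\int\la\d X^\flat,\d(fY^\flat)\ra+\delta X^\flat\,\delta(fY^\flat)\,\d\mm=\int\la X,(\Delta_\Ho(fY^\flat))^\sharp\ra\,\d\mm.
\]
Then I would apply the Leibniz rule \eqref{eq:leibhodge} for the Hodge Laplacian and the Leibniz rule \eqref{eq:leibcov} for the covariant derivative, $\nabla(fY)=\nabla f\otimes Y+f\nabla Y$, to expand both terms. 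On the other hand, starting from \eqref{eq:defricci} the term $\int\bar f\,\d\bd\tfrac{\la X,Y\ra}{2}$ can be rewritten by using the Leibniz rule \eqref{eq:leibml} applied to the product $f\la X,Y\ra$ together with the fact that $\int 1\,\d\bd(f\la X,Y\ra)=0$ (justified by a cut-off argument using Lipschitz functions with bounded support converging to $1$, as in \eqref{eq:integrale0}); this gives $\int\bar f\,\d\bd\la X,Y\ra=-\int\Delta f\,\la X,Y\ra+2\la\nabla f,\nabla\la X,Y\ra\ra\,\d\mm$, and $\la\nabla f,\nabla\la X,Y\ra\ra$ is then rewritten using compatibility with the metric \eqref{eq:compmetr}. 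The symmetric piece $\tfrac12\la X,(\Delta_\Ho Y^\flat)^\sharp\ra+\tfrac12\la Y,(\Delta_\Ho X^\flat)^\sharp\ra$ is handled via the self-adjointness $\int f\la X,(\Delta_\Ho Y^\flat)^\sharp\ra\,\d\mm=\int\la Y,(\Delta_\Ho(fX^\flat))^\sharp\ra\,\d\mm$ and one more application of \eqref{eq:leibhodge}. A termwise comparison then produces \eqref{eq:riccialt}.

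To deduce \eqref{eq:ricsimm} from \eqref{eq:riccialt}, I would expand $\d(fY^\flat)=\d f\wedge Y^\flat+f\,\d Y^\flat$, $\delta(fY^\flat)=-\d f(Y)+f\,\delta Y^\flat$ (via $\delta=-\div\circ\sharp$ and the Leibniz rule for $\div$), and $\nabla(fY)=\nabla f\otimes Y+f\nabla Y$, to split the right-hand side into $f$-linear and $f$-derivative pieces. The $f$-linear part is exactly $\int f(\la\d X^\flat,\d Y^\flat\ra+\delta X^\flat\delta Y^\flat-\nabla X:\nabla Y)\,\d\mm$. For the remaining pieces I would use the pointwise identity $\d X^\flat(\nabla f,Y)=\la\nabla_{\nabla f}X,Y\ra-\la\nabla_Y X,\nabla f\ra$ (which comes from the defining formula for $\d\omega$ combined with compatibility with the metric, so that $\d X^\flat=2(\nabla X)_{\sf Asym}^\flat$); this cancels the $\la\nabla_{\nabla f}X,Y\ra$ term coming from $\nabla X:(\nabla f\otimes Y)$ and leaves $-\d f(\nabla_Y X)+\d f(Y)\div X$. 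Writing $-\d f(\nabla_Y X)=\H f(X,Y)-Y(\d f(X))$ using $\nabla(\nabla f)=(\H f)^\sharp$ and compatibility with the metric, and then $-Y(\d f(X))=-\div(\d f(X)Y)+\d f(X)\div Y$ by the Leibniz rule for $\div$, finally yields formula \eqref{eq:ricsimm} once one knows $\int\div(\d f(X)Y)\,\d\mm=0$; this last identity is again obtained via a Lipschitz cut-off $\varphi_n\uparrow 1$ of bounded support, using that $\d f(X)\in L^\infty\cap W^{1,2}(\X)$ and that $Y\in\vsm\subset L^2\cap L^\infty(T\X)$ with $\div Y\in L^2(\mm)$.

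The extension to $X,Y\in H^{1,2}_\Ho(T\X)$ is performed by approximating such vector fields, in the $H^{1,2}_\Ho$-norm, by elements of $\vsm$ (possible because $\ffsm{1}$ is dense in $H^{1,2}_\Ho(\Lambda^1T^*\X)$ by definition, and $\ffsm 1\subset\vsm^\flat$). The left-hand side is continuous in $(X,Y)\in H^{1,2}_\Ho(T\X)^2$ by the continuity statement in Theorem \ref{thm:ricci}. The right-hand side of \eqref{eq:riccialt} is continuous because $\d$, $\delta$ and $\omega\mapsto\omega^\sharp$ are continuous maps and $W^{1,2}_\d(\Lambda^k T^*\X)$, $D(\delta_k)$ are modules over bounded functions (so multiplication by the bounded function $f$ and its differential $\d f\in L^\infty$ is continuous). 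The main technical obstacle I anticipate is precisely this continuity/approximation step for \eqref{eq:ricsimm}: the term $\H f(X,Y)$ is continuous in $(X,Y)\in L^2(T\X)^2$ because $|\H f|_\HS\in L^2(\mm)$ and $f\in\fsm$, but the terms $\d f(X)\div Y+\d f(Y)\div X$ require passing $\div$ in duality with $L^\infty$-bounded quantities, which forces one to use the $\delta X^\flat$ formulation throughout the approximation and invoke the continuity of the codifferential on $H^{1,2}_\Ho$. Once this is handled (and the extension of \eqref{eq:riccialt} is settled), the formula \eqref{eq:ricsimm} passes to the limit automatically.
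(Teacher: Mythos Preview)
Your proposal is correct and the computations close; the organization, however, differs from the paper's. The paper first shows that the right-hand sides $A(X,Y)$ of \eqref{eq:riccialt} and $B(X,Y)$ of \eqref{eq:ricsimm} agree for $X,Y\in\vsm$ (this is your second step, the derivation of \eqref{eq:ricsimm} from \eqref{eq:riccialt}); since $B$ is manifestly symmetric in $X,Y$, this yields $A(X,Y)=A(Y,X)$ for free. The paper then obtains \eqref{eq:riccialt} by writing, directly from \eqref{eq:defricci} and the definition of $\Delta_\Ho$, the symmetrized expression $\int\bar f\,\d\ric(X,Y)=\tfrac12\big(A(X,Y)+A(Y,X)\big)$, which now equals $A(X,Y)$. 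Your route is instead to prove \eqref{eq:riccialt} head-on via the Leibniz rule \eqref{eq:leibhodge} for $\Delta_\Ho$ together with self-adjointness $\int f\la Y,(\Delta_\Ho X^\flat)^\sharp\ra\,\d\mm=\int\la X,(\Delta_\Ho(fY^\flat))^\sharp\ra\,\d\mm$; this works (I checked the term-by-term comparison) but requires a bit more bookkeeping than the symmetrization trick.

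Two small technical remarks. First, your pointwise identity $\d X^\flat(\nabla f,Y)=\la\nabla_{\nabla f}X,Y\ra-\la\nabla_Y X,\nabla f\ra$ is correct for $X\in\vsm$, but it is not ``the defining formula for $\d\omega$'' (which is integrated, \eqref{eq:defdext}); rather, it follows by comparing the explicit formulas \eqref{eq:df1fn} and \eqref{eq:covvsm}. The paper avoids the pointwise identity altogether and uses the integrated definition \eqref{eq:defdext} to expand $\int\d X^\flat(\nabla f,Y)\,\d\mm$, bringing in the Lie bracket $[\nabla f,Y]$ directly. Second, invoking ``the Leibniz rule for $\div$'' to write $-Y(\d f(X))=-\div(\d f(X)Y)+\d f(X)\div Y$ pointwise requires $\d f(X)Y\in D(\div)$, and \eqref{eq:leibdiv} as stated needs $|\d(\d f(X))|\in L^\infty$, which may fail. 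It is cleaner to bypass this and use directly $\int Y(\d f(X))\,\d\mm=-\int\d f(X)\,\div Y\,\d\mm$ from the definition of $\div$ (valid since $\d f(X)\in W^{1,2}(\X)$ by point $(i)$ of Proposition \ref{prop:compmetr}).
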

\begin{proof} The two sides of both \eqref{eq:riccialt} and \eqref{eq:ricsimm} are continuous in $X,Y$ w.r.t.\ the $H^{1,2}_\Ho(T\X)$-topology, thus by approximation we can assume that $X,Y\in\vsm$. The proof then comes by direct computation. 

Fix $f\in\fsm$, $X,Y\in\vsm$ and call $A(X,Y)$ and $B(X,Y)$ the right hand sides of  \eqref{eq:riccialt} and \eqref{eq:ricsimm} respectively. We start claiming that $A(X,Y)=B(X,Y)$, which follows noticing that
\[
\begin{split}
\int \la\d X^\flat,\d(fY^\flat)\ra\,\d\mm&=\int\la \d X^\flat,\d f\wedge Y^\flat\ra+f\la \d X^\flat,\d Y^\flat\ra \,\d\mm\\
&=\int \d X^\flat(\nabla f,Y) +f\la \d X^\flat,\d Y^\flat\ra\,\d\mm\\
&=\int -\la X, Y\ra\Delta f+\la X, \nabla f\ra \div(Y)-\la X,[\nabla f,Y]\ra+f\la\d X^\flat,\d Y^\flat\ra\,\d\mm\\
&=\int -\la X, Y\ra\Delta f+\la X, \nabla f\ra \div(Y)\\
&\qquad\qquad-\nabla Y:(\nabla f\otimes X) +\H f(X,Y)+f\la\d X^\flat,\d Y^\flat\ra\,\d\mm\\
%
\int \delta X^\flat \delta(fY^\flat)\,\d\mm&=\int \div( X)\div(f Y)\,\d\mm=\int \div(X)\la\nabla f,  Y\ra+f\, \div(X)\,\div( Y)\,\d\mm\\
\int -\nabla X:\nabla(fY)\,\d\mm&=\int-\nabla X:(\nabla f\otimes Y)-f\nabla X:\nabla Y\,\d\mm
\end{split}
\]
adding everything up and using the identity
\[
\int -\la X, Y\ra\Delta f-\nabla Y:(\nabla f\otimes X)-\nabla X:(\nabla f\otimes Y)\,\d\mm=0.
\] 
Thus $A(X,Y)=B(X,Y)$ as claimed, which also ensures $A(X,Y)=A(Y,X)$. 

Now notice  that directly from \eqref{eq:defricci} we get
\[
\begin{split}
\int \bar f\,\d\ric(X,Y)&=\frac12\int -\big<\nabla f,\nabla\la X,Y\ra\big>+\la \d (fX^\flat),\d Y^\flat\ra +\delta(fX^\flat)\delta Y^\flat\\
&\qquad \qquad \qquad \qquad +\la  \d X^\flat,\d (fY^\flat)\ra +\delta X^\flat\delta (fY^\flat)-2f\nabla X:\nabla Y\,\d\mm,
\end{split}
\]
and therefore observing that
\[
\begin{split}
 -\big<\nabla f,\nabla\la X,Y\ra\big>-2f\nabla X:\nabla Y&=-\nabla X:(\nabla f\otimes Y)-\nabla Y:(\nabla f\otimes X)-2f\nabla X:\nabla Y\\
 &=-\nabla(fX):\nabla Y-\nabla X:\nabla (fY),
\end{split}
\]
we see that
\[
\int \bar f\,\d\ric(X,Y)=\frac12\big(A(X,Y)+A(Y,X)\big),
\]
which gives the thesis.
\end{proof}
Notice that in formula \eqref{eq:riccialt} if we knew that $X$ was both in $D(\Delta_C)$ and such that $X^\flat\in D(\Delta_{\Ho,1})$, then after an integration by parts we would get
\[
\int \bar f\,\d\ric(X,Y)=\int f\la Y,(\Delta_\Ho X^\flat)^\sharp+\Delta_CX\ra\,\d\mm,
\]
which is an instance of the classical Weitzenb\"ock identity. Yet, we don't really know if any non-zero $X$ as above exists, so we are obliged to formulate such identity as done in  \eqref{eq:riccialt}.

The Ricci curvature has the following tensor-like property:
\begin{proposition}\label{prop:riccifx}
For every $X,Y\in H^{1,2}_\Ho(T\X)$ and $f\in\fsm$ we have
\begin{equation}
\label{eq:riccixf}
\ric(fX,Y)=\bar f\,\ric(X,Y),
\end{equation}
where $\bar f$ is the continuous representative of $f$.
\end{proposition}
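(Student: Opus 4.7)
Both sides of \eqref{eq:riccixf} are continuous in $X,Y\in H^{1,2}_\Ho(T\X)$: the left hand side by the continuity of $\ric$ established in Theorem \ref{thm:ricci}, provided one first checks that the map $X\mapsto fX$ is continuous from $H^{1,2}_\Ho(T\X)$ to itself, and the right hand side because $\bar f$ is a bounded continuous function and hence multiplication by $\bar f$ is bounded on $\mes(\X)$. The continuity of $X\mapsto fX$ follows from the bound $\|fX\|_{L^2(T\X)}\leq\|f\|_{L^\infty}\|X\|_{L^2(T\X)}$, the Leibniz rule for the exterior differential $\d(fX^\flat)=\d f\wedge X^\flat+f\d X^\flat$ (established via \eqref{eq:leibext}), and the identity $\delta(fX^\flat)=f\delta X^\flat-\la\nabla f,X\ra$ coming from \eqref{eq:delta1form} and the Leibniz rule for the divergence \eqref{eq:leibdiv}, all three of which show that $fX_n\to fX$ in $W^{1,2}_\Ho$ whenever $X_n\to X$ in $H^{1,2}_\Ho(T\X)$ (and $fX_n\in\vsm\subset H^{1,2}_\Ho(T\X)$ whenever $X_n\in\vsm$, since $\fsm$ is an algebra by \eqref{eq:testalg}). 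Thus by density of $\vsm$ in $H^{1,2}_\Ho(T\X)$ it is enough to prove \eqref{eq:riccixf} when $X,Y\in\vsm$, in which case $fX\in\vsm$ and both $\ric(fX,Y)$ and $\ric(X,Y)$ are given directly by the explicit formula \eqref{eq:defricci}.

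For $X,Y\in\vsm$ and $f\in\fsm$ the verification is a direct calculation using four ingredients. First, since $\la X,Y\ra\in D(\bd)$ by polarization of Lemma \ref{le:riscritto}, the Leibniz rule \eqref{eq:leibml} gives
\[
\bd\tfrac{\la fX,Y\ra}{2}=\bar f\,\bd\tfrac{\la X,Y\ra}{2}+\Big(\tfrac{\Delta f}{2}\la X,Y\ra+\la\nabla f,\nabla\la X,Y\ra\ra\Big)\mm,
\]
and the metric compatibility \eqref{eq:compmetr} expands $\la\nabla f,\nabla\la X,Y\ra\ra=\la\nabla_{\nabla f}X,Y\ra+\la\nabla_{\nabla f}Y,X\ra$. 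Second, the Leibniz rule \eqref{eq:leibhodge} for the Hodge Laplacian on $1$-forms yields
\[
\tfrac{1}{2}\la Y,(\Delta_\Ho(fX)^\flat)^\sharp\ra=\tfrac{f}{2}\la Y,(\Delta_\Ho X^\flat)^\sharp\ra-\tfrac{\Delta f}{2}\la X,Y\ra-\la Y,\nabla_{\nabla f}X\ra,
\]
while trivially $\tfrac{1}{2}\la fX,(\Delta_\Ho Y^\flat)^\sharp\ra=\tfrac{f}{2}\la X,(\Delta_\Ho Y^\flat)^\sharp\ra$. Third, the Leibniz rule \eqref{eq:leibcov} for the covariant derivative gives $\nabla(fX)=\nabla f\otimes X+f\nabla X$, hence
\[
\nabla(fX):\nabla Y=\la\nabla_{\nabla f}Y,X\ra+f\,\nabla X:\nabla Y.
\]

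Substituting these three expressions into the definition \eqref{eq:defricci} of $\ric(fX,Y)$, the four extra summands $\tfrac{\Delta f}{2}\la X,Y\ra$, $-\tfrac{\Delta f}{2}\la X,Y\ra$, $\la\nabla_{\nabla f}X,Y\ra-\la Y,\nabla_{\nabla f}X\ra$, and $\la\nabla_{\nabla f}Y,X\ra-\la\nabla_{\nabla f}Y,X\ra$ cancel pairwise, leaving exactly $\bar f\,\ric(X,Y)$. The only step that is not essentially bookkeeping is verifying the continuity of $X\mapsto fX$ in the $H^{1,2}_\Ho$-topology, which is the main (but mild) obstacle; once that is in place the calculus rules assembled in the preceding sections do all the work, and the identity passes to the limit giving \eqref{eq:riccixf} on all of $H^{1,2}_\Ho(T\X)$.
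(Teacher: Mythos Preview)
Your proof is correct and follows essentially the same strategy as the paper: reduce by density to $X,Y\in\vsm$ and then verify the identity by direct computation using the Leibniz rules for $\Delta_\Ho$ and $\nabla$ (formulas \eqref{eq:leibhodge} and \eqref{eq:leibcov}). The one organizational difference is that the paper does not work with the measure-valued identity directly; instead it tests both sides against an arbitrary $g\in\fsm$, replacing the $\bd$-term via $\int\bar g\,\d\bd h=\int h\,\Delta g\,\d\mm$, and then closes the argument with the Leibniz rule $\Delta(fg)=g\Delta f+f\Delta g+2\la\nabla f,\nabla g\ra$ together with an integration by parts. Your route, using the Leibniz rule \eqref{eq:leibml} for the measure-valued Laplacian and the metric compatibility \eqref{eq:compmetr} to expand $\la\nabla f,\nabla\la X,Y\ra\ra$, is slightly more direct and avoids introducing the extra test function $g$; the cancellations you display are exactly the content of the paper's final integration-by-parts identity.
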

\begin{proof}
By the continuous dependence of $\ric$ on $X,Y\in H^{1,2}_\Ho(T\X)$ we can, and will, assume that $X,Y\in\vsm$ and with a density argument based on the approximation property \eqref{eq:aplipt} it is sufficient to show that for any $g\in \fsm$ we have
\[
\int \bar g\,\d\ric(f X,Y)=\int \bar g\bar f\,\d\ric(X,Y).
\]
By the defining property \eqref{eq:defricci} we have
\[
\begin{split}
\int \bar g\,\d\ric(f X,Y)&=\int \frac12\Big(f \Delta g \la X,Y\ra+fg\la X,(\Delta_\Ho Y^\flat)^\sharp\ra+g\la Y,(\Delta_\Ho(fX)^\flat)^\sharp\ra\Big)\\
&\qquad\qquad\qquad\qquad\qquad\qquad\qquad\qquad\qquad\qquad-g\nabla(fX):\nabla Y\,\d\mm,\\
\int \bar g\bar f\,\d\ric(X,Y)&=\int\frac12\Big( \Delta(fg)\la X,Y\ra+fg\la X,(\Delta_\Ho Y^\flat)^\sharp\ra+\la Y,(\Delta_\Ho X^\flat)^\sharp\ra\Big)\\
&\qquad\qquad\qquad\qquad\qquad\qquad\qquad\qquad\qquad\qquad-fg\nabla X:\nabla Y\,\d\mm,
\end{split}
\]
and the conclusion follows with a term-by-term comparison taking into account the identities
\[
\begin{split}
\Delta(fg)&=g\Delta f+f\Delta g+2\la\nabla f,\nabla g\ra\\
\nabla(fX):\nabla Y&=f \nabla X:\nabla Y+\nabla Y:(\nabla f\otimes X)\\
(\Delta_\Ho(fX)^\flat)^\sharp&=f(\Delta_\Ho X^\flat)^\sharp-\Delta fX-2\nabla_{\nabla f}X
\end{split}
\]
(see \eqref{eq:leiblap}, \eqref{eq:leibcov}, \eqref{eq:leibhodge}) and the fact that
\[
\int \la\nabla f,\nabla g\ra\la X,Y\ra\,\d\mm=-\int g\Big(\Delta f\la X,Y\ra +\nabla X:(\nabla f\otimes Y) +\nabla Y:(\nabla f\otimes X)\Big)\,\d\mm,
\]
which follows after an integration by parts taking into account formula \eqref{eq:compmetr}.
\end{proof}
As a direct consequence of these formulas and of the locality properties \eqref{eq:loccov2}, \eqref{eq:localext} and \eqref{eq:localdelta}, we see that the Ricci curvature is local in the following sense:
\[
\left.\begin{array}{rl}
X=0,&\ \mm\ae\text{ on }\Omega\\
Y=0,&\ \mm\ae\text{ on }\Omega'\\
\end{array}\right\}
\text{  for $\Omega,\Omega'\subset \X$ open with $\X=\Omega\cup\Omega'$}\qquad\Rightarrow\qquad \ric(X,Y)=0,
\]
and similarly
\[
X=Y\quad\mm\ae\text{ on an open set $\Omega\subset\X$}\qquad\Rightarrow\qquad \ric(X,X)\restr{\Omega}=\ric(Y,Y)\restr{\Omega}.
\]

\bigskip

To have at disposal the bound on the Ricci curvature tensor allows to generalize the Bakry-\'Emery contraction estimate to 1-forms. To state the inequality we first introduce the heat flow $(\h_{\Ho,t})$ on 1-forms as the gradient flow of the augmented Hodge energy functional $\eht:L^2(T^*\X)\to[0,\infty]$ defined in \eqref{eq:eht}. This means that for every  $\omega\in L^2(T^*\X)$ the curve $t\mapsto\h_{\Ho,t}(\omega)\in L^2(T^*\X)$ is the unique continuous curve on $[0,\infty)$ which is locally absolutely continuous on $(0,\infty)$ and fulfills $\h_{\Ho,t}(\omega)\in D(\Delta_{\Ho,1})$ and
\begin{equation}
\label{eq:hh}
\frac\d{\d t}\h_{\Ho,t}(\omega)=-\Delta_{\Ho}\h_{\Ho,t}(\omega),\qquad\forall t>0.
\end{equation}
Notice that for $f\in W^{1,2}(\X)$ we have
\[
\h_{\Ho,t}(\d f)=\d\h_t(f),\qquad\forall t\geq0,
\]
as can be checked for instance noticing that $t\mapsto \d\h_{t}(f)$ satisfies \eqref{eq:hh} and using the aforementioned uniqueness.

Then we have the following estimate:
\begin{proposition}
For every $\omega\in L^2(T^*\X)$ we have
\[
|\h_{\Ho,t}(\omega)|^2\leq e^{-2Kt}\h_t(|\omega|^2),\qquad\mm\ae\qquad\forall t\geq 0.
\]
\end{proposition}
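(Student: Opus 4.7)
The plan is to run the classical Bakry--Émery interpolation argument, with the Bochner inequality for $1$-forms (Lemma~\ref{le:riscritto}) playing the role of its scalar counterpart \eqref{eq:BE}. The basic idea is to fix $t>0$, let $\omega_s:=\h_{\Ho,s}\omega$, $X_s:=\omega_s^\sharp$, and for a non-negative test function $f\in\fsm$ with $\Delta f\in L^\infty(\mm)$ (which exists thanks to \eqref{eq:aplipt}) study the monotonicity in $s\in[0,t]$ of
\[
F(s):=\int\h_{t-s}(f)\,e^{2Ks}|X_s|^2\,\d\mm.
\]
If $F$ is non-increasing then $F(t)\le F(0)$, which after using the $L^2$-selfadjointness of $\h_t$ (valid since $\RCD$ spaces are infinitesimally Hilbertian) reads $\int f e^{2Kt}|\h_{\Ho,t}\omega|^2\,\d\mm\le\int f\h_t(|\omega|^2)\,\d\mm$; the arbitrariness of $f\ge0$ then yields the pointwise conclusion.

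To compute $F'(s)$ one differentiates the three time-dependent factors. From \eqref{eq:hh} one has $\tfrac{d}{ds}X_s=-(\Delta_\Ho X_s^\flat)^\sharp$, and from $\tfrac{d}{ds}\h_{t-s}(f)=-\Delta\h_{t-s}(f)$, and the integration-by-parts identity $\int g\Delta\varphi\,\d\mm=\int\bar\varphi\,\d\bd g$ valid for $\varphi\in\fsm$ and $g\in D(\bd)$, one gets
\[
F'(s)=\int e^{2Ks}\Big(2K\h_{t-s}(f)|X_s|^2-2\h_{t-s}(f)\la X_s,(\Delta_\Ho X_s^\flat)^\sharp\ra\Big)\,\d\mm-2e^{2Ks}\!\int\overline{\h_{t-s}(f)}\,\d\bd\tfrac{|X_s|^2}{2}.
\]
Dropping the non-negative term $|\nabla X_s|_\HS^2\mm$ in Bochner's inequality \eqref{eq:bochner} leaves $\bd\tfrac{|X_s|^2}{2}\ge(-\la X_s,(\Delta_\Ho X_s^\flat)^\sharp\ra+K|X_s|^2)\mm$; integrating against the non-negative function $\overline{\h_{t-s}(f)}$ and plugging in, all four terms cancel exactly, so $F'(s)\le 0$.

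The hard part is justifying the above computation at the required level of regularity. Lemma~\ref{le:riscritto} supplies Bochner only for $X\in\vsm$, whereas $X_s=(\h_{\Ho,s}\omega)^\sharp$ need not be a test vector field (nor even in $W^{1,2}_C(T\X)$ a priori for $s$ close to $0$), and the very statement that $|X_s|^2\in D(\bd)$ is not automatic. The way around is a double approximation: first reduce to $\omega\in\ffsm 1$ by density \eqref{eq:densetforms} together with the $L^2$-continuity of $\h_{\Ho,t}$ and $\h_t$ and the lower semicontinuity of the pointwise norm under $L^2$-convergence; then, for such an $\omega$, regularize further in $s$ (e.g.\ via the mollified Hodge semigroup in the spirit of \eqref{eq:mollheat}) so that the approximants $X_s^\eps$ belong to $\vsm$, apply Bochner to each $X_s^\eps$, verify that the map $s\mapsto |X_s^\eps|^2\in L^1(\mm)$ is absolutely continuous with the expected derivative, and pass to the limit $\eps\downarrow 0$ using Corollary~\ref{cor:bello2} to control the $W^{1,2}_C$ norm of $X_s^\eps$ in terms of the Hodge energy of $\omega_s^\eps$.

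Once the inequality is established for $\omega\in\ffsm 1$, the final extension to arbitrary $\omega\in L^2(T^*\X)$ follows by density and continuity as indicated above. The case $t=0$ is trivial, and the monotonicity ensures the inequality for all $t>0$ jointly in the pointwise $\mm$-a.e.\ sense after choosing a countable dense family of $f$'s witnessing the inequality.
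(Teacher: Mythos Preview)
Your overall strategy---the Bakry--\'Emery interpolation via $F(s)$---is exactly the paper's approach; the only difference is that you absorb the factor $e^{2Ks}$ into $F$ while the paper applies Gronwall at the end, which is immaterial. The gap is in your proposed justification of regularity. You assert that one can ``regularize further in $s$ (e.g.\ via the mollified Hodge semigroup in the spirit of \eqref{eq:mollheat}) so that the approximants $X_s^\eps$ belong to $\vsm$''. This is not available: unlike the scalar flow, where \eqref{eq:prodtest} gives $\h_t(L^2\cap L^\infty)\subset\fsm$, the paper establishes no regularizing property of $\h_{\Ho,t}$ (or its time-mollification) into $\ffsm 1$ or $\vsm$. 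There is no reason to expect $\h_{\Ho,s}$ applied to a test form to yield a finite $L^\infty$-combination of objects $g\,\d f$ with $f,g\in\fsm$. Consequently you cannot invoke Lemma~\ref{le:riscritto} on the approximants, and the identity $|X_s|^2\in D(\bd)$ that your computation of $F'(s)$ presupposes is not justified either.

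The paper circumvents this entirely by \emph{not} trying to place $X_s$ in $\vsm$. Since $X_s\in D(\Delta_{\Ho,1})\subset H^{1,2}_\Ho(T\X)$ automatically for $s>0$, Corollary~\ref{cor:bello2} gives $X_s\in H^{1,2}_C(T\X)$, and then Proposition~\ref{prop:compmetr}(ii) yields $|X_s|^2\in H^{1,1}(\X)$. This is enough to integrate by parts the term $\int\Delta f_{t-s}|X_s|^2\,\d\mm$ in the first-order sense; after the Leibniz rule \eqref{eq:leibcov} and dropping the non-negative term $f_{t-s}|\nabla X_s|_\HS^2$, the remaining expression is recognized as $-2\int \bar f_{t-s}\,\d\ric(X_s,X_s)$ via the representation formula \eqref{eq:riccialt}, which holds on all of $H^{1,2}_\Ho(T\X)$ by the continuous extension in Theorem~\ref{thm:ricci}. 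The bound $\ric(X_s,X_s)\geq K|X_s|^2\mm$ from \eqref{eq:riccibound} then gives $F'(s)\leq-2KF(s)$. In short: the role you assign to Bochner on $\vsm$ is played in the paper by the Ricci tensor already extended to $H^{1,2}_\Ho$, together with $H^{1,2}_\Ho\subset H^{1,2}_C$; no approximation of $X_s$ by test vector fields is needed or attempted.
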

\begin{proof}  The argument is similar to that for Proposition \ref{prop:bev}. By the approximation property \eqref{eq:aplipt} it is sufficient to prove that for any $t>0$ and non-negative $f\in\fsm$ with $\Delta f\in L^\infty(\mm)$ it holds  $\int f|\h_{\Ho,t}(\omega)|^2\,\d\mm\leq e^{-2Kt}\int f\h_t(|\omega|^2)\,\d\mm$.

Thus fix such $t$ and $f$ and consider the map $F:[0,s]\to\R$ given by
\[
F(s):=\int f\h_{t-s}(|\h_{\Ho,s}(\omega)|^2)\,\d\mm=\int \h_{t-s}(f) |\h_{\Ho,s}(\omega)|^2\,\d\mm.
\]
Then the very same arguments used in Proposition \ref{prop:bev} grant that $F:[0,t]\to\R$ is continuous and locally absolutely continuous on $(0,t]$ and in computing its derivative we can pass the derivative inside the integral. Thus putting $f_s:=\h_s(f)$ and $X_s:=(\h_{\Ho,s}(\omega))^\sharp$ we have
\[
\begin{split}
F'(s)=\int-\Delta f_{t-s}|X_{s}|^2-2f_{t-s}\la X_{s},(\Delta_\Ho X^\flat_{s})^\sharp\ra\,\d\mm.
\end{split}
\]
By Corollary \ref{cor:bello2}  we see that $X_{s}\in H^{1,2}_C(T\X)$ and thus by point $(ii)$ in Proposition \ref{prop:compmetr}  that $|X_{s}|^2\in H^{1,1}(\X)$ and
\[
\begin{split}
F'(s)&=\int \la\nabla f_{t-s},\nabla|X_{s}|^2\ra-2\la f_{t-s} X_{s},(\Delta_\Ho X^\flat_{s})^\sharp\ra\,\d\mm\\
&=2\int \nabla X_{s}:(\nabla f_{t-s}\otimes  X_{s})-\la \d (f_{t-s} X_{s})^\flat,  \d X^\flat_{s})\ra-\delta (f_{t-s} X^\flat_{s})\,\delta X^\flat_{s} \,\d\mm.
\end{split}
\]
Recalling the Leibniz rule \eqref{eq:leibcov} we get
\[
\nabla X_{s}:(\nabla f_{t-s}\otimes  X_{s})=\nabla X_{s}:\nabla(f_{t-s}X_{s})-f_{t-s}|\nabla X_{s}|_\HS^2\leq \nabla X_{s}:\nabla(f_{t-s}X_{s}),
\]
and thus from formula \ref{eq:riccialt} we obtain
\[
F'(s)\leq -2\int f_{t-s}\,\d\ric(X_{s},X_{s})\leq -2K\int f_{t-s}|X_{s}|^2\,\d\mm=-2K\,F(s),
\]
so that the conclusion follows by Gronwall's lemma.
\end{proof}

We built the Ricci curvature tensor on an $\RCD(K,\infty)$ space and used the weak curvature assumption to deduce that the Ricci curvature is indeed bounded from below by $K$, but one might also ask whether the viceversa holds, i.e.\ if having a lower bound on the Ricci curvature tensor gives any information in terms of synthetic treatment of  lower Ricci curvature bounds.

A result in this direction is provided by the following theorem, proved in \cite{AmbrosioGigliSavare12}:
\begin{theorem}[From Bochner inequality to $\RCD$]
Let $(\X,\sfd,\mm)$ be an $\RCD(K,\infty)$ space, $K'>K$ and assume that  the inequality 
\begin{equation}
\label{eq:bochbe}
\int \Delta\varphi\frac{|\nabla f|^2}2\,\d\mm\geq \int \varphi\la\nabla f,\Delta\nabla f\ra+K'\varphi|\nabla f|^2\,\d\mm
\end{equation}
holds for every $f,\varphi\in D(\Delta)\cap L^\infty(\mm)$ with $\Delta f,\Delta\varphi\in W^{1,2}\cap L^\infty (\X)$ and $\varphi\geq 0$. Then $(\X,\sfd,\mm)$ is an $\RCD(K',\infty)$ space.
\end{theorem}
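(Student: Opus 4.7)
The plan is to pass from the weak Bochner inequality \eqref{eq:bochbe} to the Bakry--Émery gradient estimate for the heat flow with the improved constant $K'$, then convert this via Kuwada duality into an $L^2$-Wasserstein contraction, and finally invoke the known equivalence between this contraction estimate and the $\RCD(K',\infty)$ condition on infinitesimally Hilbertian spaces.

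First, I would establish the pointwise Bakry--Émery estimate
\begin{equation}\label{eq:beprimeplan}
|\nabla \h_t f|^2 \leq e^{-2K't}\,\h_t(|\nabla f|^2),\qquad \mm\ae,\ \forall f\in W^{1,2}(\X),\ t\geq 0.
\end{equation}
The standard recipe is: fix $t>0$, a non-negative $\varphi\in\fsm$ with $\Delta\varphi\in L^\infty(\mm)$, a bounded $f\in D(\Delta)$ with $\Delta f$ sufficiently regular, and study
\[
G(s):=\int \varphi\,e^{2K's}\,|\nabla \h_s f|^2\,\d\mm,\qquad s\in[0,t].
\]
Differentiating $G$ in $s$ and integrating by parts gives, formally,
\[
G'(s)=\int e^{2K's}\Big(\Delta\varphi\,\tfrac{|\nabla \h_sf|^2}{2}-\varphi\la \nabla \h_sf,\nabla\Delta \h_sf\ra-K'\varphi|\nabla \h_sf|^2\Big)\,\d\mm \cdot 2,
\]
which is non-negative by hypothesis \eqref{eq:bochbe}. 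Hence $s\mapsto G(s)$ is non-decreasing, yielding the integrated form $\int \varphi|\nabla \h_tf|^2\,\d\mm\leq e^{-2K't}\int \varphi \h_t(|\nabla f|^2)\,\d\mm$, and the arbitrariness of $\varphi$ produces \eqref{eq:beprimeplan}. The actual work here is not conceptual but technical: one must justify the chain rule along the heat flow and the validity of the differentiation under the integral, and one must extend the admissible class of $f$'s and $\varphi$'s in \eqref{eq:bochbe} by density. For the former, I would use the mollified heat flow \eqref{eq:mollheat} applied to truncations of $f$ to obtain approximating functions in $\fsm$ with $\Delta f_n\in L^\infty(\mm)$, exploiting the a priori bounds \eqref{eq:altrafacile} and the Bakry--Émery estimate \eqref{eq:BE} already available at level $K$ to get uniform Lipschitz and Sobolev bounds that survive the limit $n\to\infty$, $\eps\downarrow 0$.

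Next, from \eqref{eq:beprimeplan} I would pass to the sharpened Bakry--Émery inequality $|\nabla\h_t f|\leq e^{-K't}\h_t(|\nabla f|)$ via the Savar\'e self-improvement argument recalled in \eqref{eq:BE1} (which only uses \eqref{eq:beprimeplan} together with chain rules already established on any $\RCD(K,\infty)$ space, not an a priori bound involving $K'$). Then, by Kuwada's duality between the $L^\infty$--Lipschitz gradient bound for $\h_t$ and the $L^2$-Wasserstein contraction of its dual on probability measures, I would deduce
\[
W_2(\h_t\mu,\h_t\nu)\leq e^{-K't}\,W_2(\mu,\nu),\qquad \forall \mu,\nu\in\probt\X,\ t\geq 0,
\]
where $(\h_t)$ on probability measures is the dual semigroup (well defined, as the underlying space is infinitesimally Hilbertian, hence $\h_t$ is linear and positivity-preserving).

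Finally, I would invoke the equivalence result, established on infinitesimally Hilbertian spaces in \cite{AmbrosioGigliSavare11-2}, between the $W_2$-contraction of the heat semigroup with rate $K'$ and the $\RCD(K',\infty)$ condition, completing the proof. The principal obstacle I foresee is the first step, namely making rigorous the self-improvement passage from \eqref{eq:bochbe} to \eqref{eq:beprimeplan}: it requires a careful approximation scheme that simultaneously delivers $\h_sf\in\fsm$, $\Delta\h_sf\in L^\infty(\mm)$, and strong continuity in $s$ of all the objects appearing in $G'(s)$, so that the hypothesis \eqref{eq:bochbe} can genuinely be applied under the integral sign. Once that approximation is in place, the rest is a streamlined sequence of now-standard reductions in the $\RCD$ theory.
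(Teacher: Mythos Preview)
Your overall route --- weak Bochner $\Rightarrow$ Bakry--\'Emery gradient estimate with constant $K'$ $\Rightarrow$ $W_2$-contraction via Kuwada duality $\Rightarrow$ $\RCD(K',\infty)$ --- is exactly the content of the reference the paper cites, \cite{AmbrosioGigliSavare12}; the paper itself gives no argument beyond pointing there and remarking that, since we already sit on an $\RCD(K,\infty)$ space, the qualitative hypotheses (infinitesimal Hilbertianity, density of test functions, Sobolev-to-Lipschitz, etc.) are automatic.

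There is, however, a concrete slip in your interpolation step. With
\[
G(s)=e^{2K's}\int\varphi\,|\nabla\h_sf|^2\,\d\mm
\]
and a \emph{fixed} $\varphi$, differentiation in $s$ produces only the terms $2K'e^{2K's}\int\varphi|\nabla\h_sf|^2$ and $2e^{2K's}\int\varphi\la\nabla\h_sf,\nabla\Delta\h_sf\ra$; the $\Delta\varphi$ term you wrote simply does not appear, and from this $G$ you cannot reach the conclusion $\int\varphi|\nabla\h_tf|^2\leq e^{-2K't}\int\varphi\,\h_t(|\nabla f|^2)$ with $\h_t$ on the right (at best you would get the strictly weaker $\int\varphi|\nabla\h_tf|^2\leq e^{-2K't}\int\varphi|\nabla f|^2$, which is not the pointwise BE estimate). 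The correct interpolation function is
\[
F(s):=\int \h_{t-s}(\varphi)\,|\nabla\h_sf|^2\,\d\mm,
\]
for which $F'(s)=\int\big(-\Delta\h_{t-s}(\varphi)\,|\nabla\h_sf|^2+2\,\h_{t-s}(\varphi)\la\nabla\h_sf,\nabla\Delta\h_sf\ra\big)\,\d\mm$; applying \eqref{eq:bochbe} with $\h_{t-s}(\varphi)$ in place of $\varphi$ (which is admissible since the heat flow preserves the regularity class in the hypothesis) yields $F'(s)\leq -2K'F(s)$, hence $F(t)\leq e^{-2K't}F(0)$, i.e.\ $\int\varphi|\nabla\h_tf|^2\,\d\mm\leq e^{-2K't}\int\varphi\,\h_t(|\nabla f|^2)\,\d\mm$. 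With this correction in place, the remaining steps you outline are sound.
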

\begin{proof} See Theorem 1.1 in \cite{AmbrosioGigliSavare12} and notice that all the topological assumptions are automatically satisfied because the space is assumed to be $\RCD(K,\infty)$ (making the proof much simpler in this case).
\end{proof}
With this theorem at disposal it is obvious that 
\[
\left.\begin{array}{l}
(\X,\sfd,\mm)\text{ is an $\RCD(K,\infty)$ space with}\\
\ric(X,X)\geq K'|X|^2\mm\quad\forall X\in H^{1,2}_\Ho(T\X)
\end{array}\right\}\qquad\Rightarrow\qquad  (\X,\sfd,\mm)\text{ is an $\RCD(K',\infty)$ space.}
\]
It is indeed sufficient to consider the inequality $\int\bar\varphi\,\d\ric(\nabla f,\nabla f)\geq K'\int\varphi|\nabla f|^2\,\d\mm $ for $f,\varphi\in\fsm$ and $\varphi\geq 0$ to recover \eqref{eq:bochbe} and thus the claim. This sort of statement is quite weak, because it gives a lower Ricci curvature bound for a space which already had one. It is much more interesting to obtain a curvature bound out of a purely differential information plus possibly some (minimal) global regularity assumption on the space. Results in this direction have been obtained in \cite{AmbrosioGigliSavare12}, where the lower bound on the Ricci is imposed via a weak formulation of the Bochner inequality.

\bigskip

We conclude the section discussing the limitations of the Ricci curvature so defined and the difficulties one encounters in trying to better understand it.

The first fact which is  important to realize is that  we might have $\ric\equiv 0$ on spaces which do not really look  Ricci-flat. The simplest example is the metric cone built over an $S^1$ of total length $<2\pi$ equipped with the 2-dimensional Hausdorff measure. This is an $\RCD(0,\infty)$ (in fact $\RCD(0,2)$) space, a fact which can be seen either proving that it is an Alexandrov space with non-negative curvature and then using Petrunin's result \cite{Petrunin11} or directly checking the curvature-dimension condition as done in \cite{BacherSturm14} (the fact that this space is infinitesimally Hilbertian is quite obvious). It is clear that this space without the vertex $O$ is a smooth and flat Riemannian manifold so that directly by the definition we get that $\ric(X,X)$ must vanish outside the vertex. Moreover, the set $C:=\{O\}$ has 0 capacity in the sense that
\begin{equation}
\label{eq:percap0}
\begin{split}
&\text{$\mm(C)=0$ and there exists a sequence $(f_n)\subset W^{1,2}(\X)$ of functions having}\\
&\text{Lipschitz  representatives $\bar f_n$ such that $\int\weakgrad {f_n}^2\,\d\mm\to 0$ and $\bar f_n(x)\to\nchi_C(x)$}\\
&\text{for every $x\in\X$ as $n\to\infty$.}
\end{split}
\end{equation}
Indeed, clearly $\mm(\{O\})=0$ and the functions $\bar f_n(x):=\max\{1-n\sfd(x,O),0\}$ are $1$ on  $O$, uniformly bounded in $W^{1,2}(\X)$ and pointwise converging to 0 on $\X\setminus\{O\}$. Thus $(f_n)$ weakly converges to 0 in $W^{1,2}(\X)$ so that by Mazur's lemma there is a sequence of convex combinations $W^{1,2}(\X)$-strongly converging to 0, which yields the property \eqref{eq:percap0}.

On the other hand, it is easy to see that the Ricci curvature $\ric$ never sees sets $C$ as in \eqref{eq:percap0}. To prove this claim, it is sufficient to check that $\ric(X,X)(C)=0$ for $X\in\vsm$, because $\vsm$ is dense in  $H^{1,2}_\Ho(T\X)$ and $\ric:[H^{1,2}_\Ho(T\X)]^2\to\mes(\X)$ is continuous, the target space being endowed with the total variation norm. Then observe that, by the very definition of measure valued Laplacian, for $C$ as in \eqref{eq:percap0} and $g\in D(\bd)$ we have $\bd g(C)=0$ and therefore by formula \eqref{eq:defricci} we see that for $X\in\vsm$ and $C$ as in \eqref{eq:percap0} we must have $\ric(X,X)(C)=0$, as claimed.

It follows  that for such cone the Ricci curvature must vanish identically, although intuitively one would expect to have as curvature some Dirac delta on the vertex. 

This example shows that the measure valued Ricci curvature as we defined it  is not suitable to recover any sort of Gauss-Bonnet formula.

\medskip

A different kind of problem that we have with the Ricci curvature as given by Theorem \ref{thm:ricci} is that it is not clear if it is really a tensor (see also Remark \ref{rem:loctens}). Here part of the issue is that it is defined only for Sobolev vector fields and not for generic $L^2$ ones and in particular we don't know if for some function $r:\X\to[0,\infty)$ we have
\[
\ric(X,X)\leq r|X|^2\mm,\qquad\mm\ae,
\]
for every $X\in H^{1,2}_\Ho(T\X)$. A basic question is then:
\[
\text{can we extend $\ric$ to a bilinear continuous map from $[L^2(T\X)]^2$ to $\mes(\X)$?}
\]
And more generally and somehow more vaguely:
\[
\text{what is the maximal subspace of $L^2(T\X)$  on which we can continuously extend $\ric$?}
\]
In this direction, notice that in inequality \eqref{eq:lezione} of the proof of Theorem \ref{thm:ricci} we thrown away the term $\ec(X)$, thus certainly losing some information. The result is that we obtain the inequality \eqref{eq:riccitv} which is evidently sub-optimal in the smooth case because it misses some crucial cancellations occurring in the smooth case. Yet,  we can't do anything better without information on the $L^2$-semicontinuity of the functional $\eh-\ec$. Here the problem can be isolated as follows: consider the functional $\mathcal E_{\rm diff}:L^2(T\X)\to [0,\infty]$ defined as
\[
\mathcal E_{\rm diff}(X):=\inf\limi_{n\to\infty}\eh(X_n^\flat)-\ec(X_n),
\]
where the $\inf$ is taken among all sequences $(X_n)\subset H^{1,2}_\Ho(T\X)$ converging to $X$ in $L^2(T\X)$,
\begin{equation}
\label{eq:q1}
\text{is it true that }\quad\mathcal E_{\rm diff}(X)= \eh(X)-\ec(X)\quad\text{ for $X\in H^{1,2}_\Ho(T\X)$ ?}
\end{equation}
A positive answer would allow to extend the definition of  $\ric(X,Y)$ to vector fields  $X,Y$ belonging to the space $D:=\{\mathcal E_{\rm diff}<\infty\}\subset L^2(T\X)$ equipped with the complete norm 
\[
\|X\|_D^2:=(1+\tfrac K2)\|X\|^2_{L^2(T\X)}+\mathcal E_{\rm diff}(X),
\]
as the very same arguments used in the proof of Theorem \ref{thm:ricci} would allow to improve  inequality \eqref{eq:riccitv} in
\[
\|\ric(X,Y)\|_{\sf TV}\leq2\sqrt{\mathcal E_{\rm diff}(X)+K^-\|X\|_{L^2(T\X)}^2}\,\sqrt{\mathcal E_{\rm diff}(Y)+K^-\|Y\|_{L^2(T\X)}^2}.
\]
Indeed, one would first introduce the abstract completion $V$ of the space $H^{1,2}_\Ho(T\X)$ w.r.t.\ the norm $X\mapsto \sqrt{(1+\tfrac K2)\|X\|^2_{L^2(T\X)}+\eh(X^\flat)-\ec(X)}\geq \|X\|_{L^2(T\X)}$, extends by continuity the Ricci curvature to $V$ and then use the positive answer to question \eqref{eq:q1} to show that the natural map from $V$ to $L^2(T\X)$ which assigns to a $V$-Cauchy sequence its $L^2(T\X)$-limit is injective, so that $V$ can be identified with the space $D$.

Notice that in the smooth world the Weitzenb\"ock identity ensures that $D$ coincides with $L^2(T\X)$.

\medskip

On a different direction, one might try to enlarge the domain of definition of $\ric$ using Proposition \ref{prop:riccifx}. Indeed, formula \eqref{eq:riccixf} suggests that for $X,Y\in H^{1,2}_\Ho(T\X)$ and $f,g\in L^\infty(\mm)$ with continuous representatives $\bar f,\bar g$ one might define $\ric (fX,gY):=\bar f\bar g\,\ric(X,Y)$. 

The problem in this attempt is that it is not really clear if it is consistent, the question being the following:
\[
\begin{split}
&\text{Let $X,Y,X_1,\ldots,X_n\in H^{1,2}_\Ho(T\X)$ and $f_1,\ldots,f_n\in L^\infty(\mm)$ with continuous representatives}\\
&\text{$\bar f_1,\ldots,\bar f_n$ such that $X=\sum_if_iX_i$. Is it true that $\sum_i\bar f_i \ric(X_i,Y)=\ric(X,Y)$ ?}
\end{split}
\]
This seems not a consequence of basic algebraic manipulations based on the formulas involving the Ricci curvature that we provided so far.

\def\cprime{$'$} \def\cprime{$'$}

\end{document}